\theoremstyle{plain}
\newtheorem{theorem}{Theorem}[section]
\newtheorem{lemma}[theorem]{Lemma}
\newtheorem{proposition}[theorem]{Proposition}
\newtheorem{corollary}[theorem]{Corollary}
\numberwithin{theorem}{chapter}
\numberwithin{equation}{chapter}
\theoremstyle{definition}
\newtheorem{definition}[theorem]{Definition}
\newtheorem{remark}[theorem]{Remark}
\newtheorem{example}[theorem]{Example}
\newtheorem{obs}[theorem]{Observation}
\theoremstyle{remark}
\newtheorem{notation}[theorem]{Notation}
\newcommand{\cB}{{\mathcal B}}
\newcommand{\cD}{{\mathcal D}}
\newcommand{\cE}{{\mathcal E}}
\newcommand{\cF}{{\mathcal F}}
\newcommand{\cG}{{\mathcal G}}
\newcommand{\cH}{{\mathcal H}}
\newcommand{\cI}{{\mathcal I}}
\newcommand{\cK}{{\mathcal K}}
\newcommand{\cL}{{\mathcal L}}
\newcommand{\cM}{{\mathcal M}}
\newcommand{\cO}{{\mathcal O}}
\newcommand{\cP}{{\mathcal P}}
\newcommand{\cQ}{{\mathcal Q}}
\newcommand{\cR}{{\mathcal R}}
\newcommand{\cS}{{\mathcal S}}
\newcommand{\cU}{{\mathcal U}}
\newcommand{\cW}{{\mathcal W}}
\newcommand{\cY}{{\mathcal Y}}
\newcommand{\be}{{\mathbf e}}
\newcommand{\biota}{{\boldsymbol \iota}}
\newcommand{\bTheta}{{\boldsymbol \Theta}}
\newcommand{\bT}{{\mathbf T}}
\newcommand{\boldW}{{\mathbf W}}
\newcommand{\bH}{{\mathbb{H}}}
\newcommand{\bW}{{\mathbb W}}
\newcommand{\bk}{{\mathbf k}}
\newcommand{\bS}{{\mathbf S}}
\newcommand{\bU}{{\mathbf{U}}}
\newcommand{\bV}{{\mathbf{V}}}
\newcommand{\bPi}{{\boldsymbol{\Pi}}}
\newcommand{\bcK}{{\boldsymbol{\mathcal K}}}
\newcommand{\btau}{{\boldsymbol{\tau}}}
\newcommand{\fH}{{\mathfrak H}}
\newcommand{\bD}{{\mathbb D}}
\newcommand{\bev}{\mathbf{ev}}
\newcommand{\sbm}[1]{\left[\begin{smallmatrix} #1
		\end{smallmatrix}\right]}
\newcommand{\ff}{{\mathfrak f}}
\newcommand{\boldm}{{\mathbf m}}
\newcommand{\boldn}{{\mathbf n}}
\newcommand{\blam}{{\boldsymbol{\lambda}}}
\newcommand{\bmu}{\boldsymbol{\mu}}
\newcommand{\boldz}{{\mathbf z}}
\newcommand{\bbW}{{\mathbb W}}
\newcommand{\bell}{{\boldsymbol \ell}}
\begin{document}
\keywords{Sch\"affer dilation, Douglas dilation, And\^o dilation, characteristic function, functional model, invariant subspace}

\mathclass{Primary: 47A13; Secondary: 47A20, 47A25, 47A56, 47A68, 30H10}

%\thanks{The research of the second author is supported by SERB Indo-US Postdoctoral Research Fellowship, 2017.}

\abbrevauthors{J. A. Ball and H. Sau}
\abbrevtitle{Dilations and Models for Commuting Contractions}

\title{Dilation and Model Theory for Pairs of Commuting Contractions}
\author{Joseph A. Ball}
\address{Department of Mathematics, Virginia Tech, Blacksburg, VA 24061-0123, USA\\ joball@math.vt.edu}
\author{Haripada Sau}
\address{Department of Mathematics, Indian Institute of Science Education and Research, Pashan,
Pune, Maharashta 411008, India \\
hsau@iiserpune. ac.in, haripadasau215@gmail.com}

\maketitledis

\tableofcontents

\begin{abstract}  The Sz.-Nagy dilation theorem is a seminal result in operator theory. In short, any contraction operator $T$ on $\cH$ has a minimal isometric lift $V$ on 
$\cK \supset \cH$ which is unique up to a unitary change of coordinates in $\cK$ and correspondingly in $\cH$.  The Sz.-Nagy--Foias functional-model identifies 
the change of coordinates which leads to a functional-model representation for $V$ on a
functional-model Hilbert space $\cK_\Theta$ and for $T$ on $\cH_\Theta \subset \cK_\Theta$ defined solely in terms of the Sz.-Nagy--Foias characteristic function
$\Theta = \Theta_T$ of $T$. This, combined with spectral theory for the unitary part of $T$ if $T$ has a unitary part, reduces the study of a general contraction operator
$T$ to the study of a contractive analytic function $\Theta$ on the unit disk, in principle a much simpler object than $T$ (at least in the case when $\Theta$ is matrix-valued). The purpose  of this manuscript is to obtain the analogue of these results for the case of a commuting contractive pair $(T_1, T_2)$ in place of a single contraction operator $T$.

The first step has already appeared in the 1963 result of And\^o:  {\em any commuting pair of Hilbert-space contraction can be lifted to a commuting isometric pair.}  We provide two more constructive new proofs of And\^o's result, each of which leads to a new functional-model representation for such a lift. The construction leads to the
identification of a set of additional free parameters which serves to classify the distinct unitary-equivalence classes of minimal And\^o lifts. However this lack of uniqueness limits the utility of such minimal And\^o lifts for the construction of a functional model for a commuting contractive pair $(T_1, T_2)$. We identify an intermediate type of lift, called {\em pseudo-commuting contractive lift} $(\bbW_1,$ $ \bbW_2)$ of $(T_1, T_2)$. The operators $\bbW_1, \bbW_2$ are no longer commuting isometries, but are characterized by a slight weakening of
the commutativity condition, which still guarantees
that $\bbW_1$, $\bbW_2$ are multiplication operators of a simple form acting on the Sz.-Nagy--Foias minimal isometric lift space of the product contraction
$T = T_1 T_2$. In the Sz.-Nagy--Foias-like model form, the characteristic function $\Theta_T$
is augmented by what is called the {\em Fundamental-Operator pair} $(G_1, G_2)$, together with a \textit{canonical pair of commuting unitary operators} $(W_{\sharp 1}, W_{\sharp 2})$,  so that the combined collection
$((G_1, G_2), (W_{\sharp 1}, W_{\sharp 2}), \Theta_{T_1 T_2})$ (called the {\em characteristic triple} for $(T_1, T_2)$) is a complete unitary invariant for $(T_1, T_2)$. There is also a notion of
{\em admissible triple} $\Xi:= ((G_1, G_2), (W_1, W_2), \Theta)$ as the substitute for a {\em purely contractive analytic function $\Theta$} in the Sz.-Nagy--Foias theory,
from which one can construct a functional-model commuting contractive operator-pair $(T_{\Xi, 1}, T_{\Xi, 2})$ having its characteristic triple
\textit{coinciding} with the original admissible triple  in an appropriate sense.
 \end{abstract}
 \makeabstract

 \chapter{Introduction}   \label{C:intro}

The starting point for many future developments in nonselfadjoint operator theory was the Sz.-Nagy dilation theorem
from 1953 \cite{sz-nagy}:   {\em if $T$ is a contraction operator on a Hilbert space $\cH$, then there is a
unitary operator $\cU$ on a  larger Hilbert space  $\widetilde \cK \supset \cH$ such that $T^n = P_\cH \cU^n |_\cH$
for all $n=0,1,2,\dots$.}
While the original proofs were more existential than constructive, there followed more concrete constructive proofs
(e.g., the Sch\"affer-matrix construction from \cite{Schaffer} to be discussed below)  which evolved into a detailed geometric picture
of the dilation space (see \cite[Chapter II]{Nagy-Foias}).  Analysis of how the original Hilbert space $\cH$ fits into
the dilation space $\widetilde \cK$ and the appropriate implementations of the discrete Fourier transform convert the abstract spaces to
spaces of functions (holomorphic or measurable as the case may be) led to the discovery of the
characteristic function $\Theta_T$ of any completely nonunitary (c.n.u.) contraction operator $T$ and how the c.n.u.\  contraction
operator $T$ can be represented  (up to unitary equivalence) as a compressed multiplication operator on a
functional-model Hilbert space constructed directly from $\Theta_T$.   Here we say that the contraction operator $T$ is a {\em c.n.u. contraction} if $T$ has no non-trivial reducing subspace on which $T$ is unitary.
We prefer to work with the equivalent notion of minimal isometric lifts $V$ rather than minimal unitary dilations $\cU$ of $T$; here we say that
an operator $V$ on $\cK \supset \cH$ is a {\em lift}  of $T$ on $\cH$ if $\cH$ is invariant for $V^*$ and $V^*|_\cH = T^*$. More generally, if
$\Pi \colon \cH \to \cK$ is an isometric embedding of $\cH$ into $\cK$ and $V^* \Pi = \Pi T^*$, we shall also say that $(\Pi, V)$ on $\cK$ is a lift of
$T$ on $\cH$.

To describe our results it is convenient to describe the Sz.-Nagy--Foias functional model for the c.n.u.~contraction operator and the associated minimal isometric
lift in some detail as follows.
We define the defect operators 
$$
   D_T = (I- T^*T)^{\frac{1}{2}}, \quad D_{T^*} = (I - T T^*)^{\frac{1}{2}}
$$
and defect spaces
$$
 \cD_T = \overline{\operatorname{Ran}}\, D_T, \quad \cD_{T^*} = \overline{\operatorname{Ran}}\, D_{T^*},
 $$
introduce the characteristic function of $T$
 $$
  \Theta_T(z)  = \left( -T^* + z D_{T^*} (I - z T^*)^{-1} D_{T}\right)|_{\cD_{T}} \colon \cD_{T} \to \cD_{T^*},
 $$
 and the pointwise-defect operator of $\Theta_T$:
 $$
 \Delta_{\Theta_T}(\zeta) = (I - \Theta_T(\zeta)^* \Theta_T(\zeta))^{\frac{1}{2}} \text{ for } \zeta \in {\mathbb T}.
 $$
 Define functional Hilbert spaces 
 $$
 \cK_{\Theta_T} := \begin{bmatrix} H^2(\cD_{T^*}) \\\overline{ \Delta_{\Theta_T} L^2(\cD_T)} \end{bmatrix}, \quad
 \cH_{\Theta_T} := \cK_{\Theta_T} \ominus \begin{bmatrix} \Theta_T \\ \Delta_{\Theta_T} \end{bmatrix} H^2(\cD_T)
 $$
 and define operators $V_{\Theta_T}$ on $\cK_{\Theta_T}$ and $T_{\Theta_T}$ on $\cH_{\Theta_T}$ by
 \begin{align}\label{VThetaT}
  V_{\Theta_T} = \begin{bmatrix} M_z^{\cD_{T^*}} & 0 \\ 0 & M_\zeta|_{\overline{ \Delta_{\Theta_T} L^2(\cD_T)}} \end{bmatrix} 
  \mbox{ and } T_{\Theta_T} = P_{\cH_{\Theta_T}} V_{\Theta_T} |_{\cH_{\Theta_T}}. 
 \end{align}
 Then we have:
 \smallskip
 
 \noindent
 \textbf{Theorem A.} {\sl $\cH_{\Theta_T}$ is invariant for $V_{\Theta_T}^*$.  If $V$ on $\cK$ is any minimal isometric lift of $T$ on $\cH$,  then there is a unitary transformation $\tau \colon \cK \to \cK_{\Theta_T}$ such that}
 $$
 \tau \cH = \cH_{\Theta_T}, \quad  \tau V = V_{\Theta_T} \tau, \quad (\tau|_\cH) T = T_{\Theta_T} (\tau|_\cH)
  $$
  (so $T$ is unitarily equivalent to $T_{\Theta_T}$ via the unitary operator $\tau|_\cH \colon \cH \to \cH_{\Theta_T}$).
 Conversely, if $(\cD, \cD_*, \Theta)$ is a purely contractive analytic function on ${\mathbb D}$ (meaning that $\Theta(z) \in \cB(\cD, \cD_*)$ for $z \in {\mathbb D}$
 and that $\| \Theta(0) d \| < \| d \|$ for $0 \ne d \in \cD$), and if we define $\Delta_\Theta(\zeta)$, $\cK_\Theta$, $\cH_\Theta$, $V_\Theta$, $T_\Theta$ as above with
 $(\cD, \cD_*, \Theta)$ in place of $(\cD_T, \cD_{T^*}, \Theta_T)$, then {\em $V_\Theta$ is the minimal isometric lift of the c.n.u.~contraction $T_\Theta$ and
 the characteristic function $(\cD_{T_\Theta}, \cD_{T^*_\Theta}, \Theta_{T_\Theta})$  coincides with $(\cD, \cD_*, \Theta)$}, 
  i.e., there are unitary operators $u \colon \cD \to \cD_{T_\Theta}$, 
 $u_* \colon \cD_* \to \cD_{T_\Theta^*}$ so that $\Theta_{T_\Theta}(z) u = u_* \Theta(z)$ for $z \in {\mathbb D}$.

\smallskip

The And\^o dilation theorem \cite{ando}, coming ten years later, provides a 2-variable analogue of the
Sz.-Nagy dilation theorem:  {\em given a commuting pair of contraction operators $(T_1, T_2)$ on a
Hilbert space $\cH$, there is a commuting pair of unitary operators $(\cU_1, \cU_2)$ on a larger Hilbert
space $\widetilde \cK \supset \cH$ so that, for all $n,m \ge 0$, $T_1^n T_2^m = P_\cH \cU_1^n \cU_2^m |_\cH$.}
The proof there is an expanded version of the Sch\"affer-matrix construction for the single-operator case
which failed to shed much light on the geometry of the dilation space (a consequence of the lack of uniqueness up to a
notion of unitary equivalence for And\^o dilations).  Consequently there has been essentially no follow-up
to the And\^o result in the direction of a Sz.-Nagy--Foias-type model theory for a commuting pair of contraction
operators as there was in the single-operator setting, although there have now  been some preliminary results in this direction
(see \cite{D-S-S, BV, A-M-Dist-Var}).

In an independent development, Berger-Coburn-Lebow \cite{B-C-L} obtained a model for a commuting-tuple
of isometries $(V_1, \dots, V_d)$ by considering the Wold decomposition for the product $V= V_1 \cdots V_d$ and
understanding what form the factors $V_1, \dots, V_d$ must take so as (i) to be themselves
commuting isometries, and (ii) to have product equal to $V$.  The conditions required to guarantee commutativity of the model isometries
$V_1, \dots, V_d$ are rather involved for the case $d \ge 3$ but are immediately transparent and succinct for the case $d=2$.  For $d=2$
the model is determined by a collection of objects $(\cF, P, U, W_1, W_2)$ which we call a BCL-{\em tuple} consisting of
\begin{itemize}
\item[(i)] a coefficient Hilbert space $\cF$,
\item[(ii)] a projection $P$ and a unitary operator $U$ on $\cF$, and
\item[(iii)] a commuting pair of unitary operators $W_1$, $W_2$ on a common Hilbert space $\cH_u$.
\end{itemize}
Given such a BCL-tuple, we associate a pair of operators in two distinct ways:
\begin{equation}   \label{Intro:BCL1model}
 V_1 = \begin{bmatrix} M_{P^\perp U + z P U} & 0 \\ 0 & W_1 \end{bmatrix}, \quad V_2 = \begin{bmatrix} M_{U^* P + z U^* P^\perp} & 0 \\ 0 & W_2 \end{bmatrix}\quad\mbox{on}\quad\begin{bmatrix}
 H^2(\cF)\\ \cH_u
 \end{bmatrix},
\end{equation}
or
\begin{equation}   \label{Intro:BCL2model}
 V_1 = \begin{bmatrix} M_{U^*P^\perp  + z U^*P } & 0 \\ 0 & W_1 \end{bmatrix}, \quad V_2 = \begin{bmatrix} M_{PU + z  P^\perp U} & 0 \\ 0 & W_2 \end{bmatrix}\quad\mbox{on}\quad\begin{bmatrix}
 H^2(\cF)\\ \cH_u
 \end{bmatrix}.
\end{equation}
Here, for a Hilbert space $\cF$, $H^2(\cF)$ denotes the $\cF$-valued \emph{Hardy space}
$$
H^2(\cF):=\{f:\bD\to\cF: f(z)=\sum_{n=0}^\infty z^nf_n\;\mbox{ and }\; \sum_{n=0}^\infty \|f_n\|_\cF^2<\infty\}.
$$
In the first case \eqref{Intro:BCL1model} we say that $(\cF, P, U, W_1, W_2)$ is a {\em BCL1-model for $(V_1, V_2)$}, while in the second case
\eqref{Intro:BCL2model}  we say that $(\cF, P, U, W_1, W_2)$ is a {\em BCL2-model} for $(V_1, V_2)$.
It is easily checked that  in either case $(V_1, V_2)$ is a commuting isometric pair.  In fact there is a simple correspondence between BCL1 and BCL2
models for a given $(V_1, V_2)$:  {\em $(\cF, P, U, W_1, W_2)$ is a BCL1-tuple for $(V_1, V_2)$ if and only if
$(\cF, U^* P^\perp U, U^*)$ is a BCL2-tuple  for $(V_1, V_2)$.}

The result from \cite{B-C-L} is the converse:

\smallskip

\noindent
\textbf{Theroem B.}  (See Theorem \ref{Thm:BCLmodel} below.)  {\sl Any commuting isometric pair $(V_1, V_2)$ on a Hilbert space $\cK$ is unitarily equivalent to
the BCL-model isometric pair (of either the BCL1 or BCL2  form)  for some BCL-tuple $(\cF, P, U, W_1, W_2)$.  If $(\cF', P', U', W_1', W_2')$ is another BCL-tuple
giving rise to a BCL model  commuting isometric pair $(V_1', V_2')$  of the same form (BCL1 or BCL2) as $(V_1, V_2)$ which is unitarily equivalent to $(V_1, V_2)$,
 then  $(\cF, P, U, W_1, W_2)$
and $(\cF', P', U', W_1',$ $W_2')$ {\em coincide} in the sense that there are unitary transformations $\omega \colon \cF \to \cF'$ and
$\tau \colon \cH_u \to \cH'_u$ so that}
$$
\omega P = P' \omega, \quad \omega U = U' \omega, \quad \tau W_j = W'_j \tau \text{ for } j=1,2.
$$

\smallskip

The goal of this manuscript is to develop a more complete analogue of the Sz.-Nagy--Foias dilation theory and operator model theory for the 
commuting contractive pair setting.  This proceeds in several steps.

\smallskip

\noindent
\textbf{1.  Parametrization of And\^o lifts.}   Suppose that $(T_1, T_2)$ is a commuting contractive pair on $\cH$ and $(\bPi, \bV_1, \bV_2)$ is a minimal And\^o lift
for $(T_1, T_2)$ on $\bcK$, where $\bPi \colon \cH \to \bcK$ is an isometric embedding of $\cH$ into $\bcK$.  Then up to a unitary equivalence we have that $T = T_1 T_2$ is in the 
Sz.-Nagy--Foias functional-model form:
\begin{equation}  \label{NFfuncmodel}
T \underset{u}\cong T_{\Theta_T} := \left. P_{\cH_{\Theta_T}} \sbm{ M_z^{\cD_{T^*}} & 0 \\ 0 & M_\zeta|_{\overline{\Delta_{\Theta_T} L^2(\cD_T)}} } \right|_{\cH_{\Theta_T}}
\end{equation}
and the operators $T_1$ and $T_2$
 are then commuting contraction operators on $\cH_{\Theta_T}$ which factor $T_{\Theta_T}$:  $T_{\Theta_T}  = T_1 T_2 = T_2 T_2$.  
 Furthermore we may assume that $(\bV_1, \bV_2)$ is in the BCL2-model form  \eqref{Intro:BCL2model} for some BCL-tuple $(\cF, P, U, W_1, W_2)$ acting on a space of the form 
 $\sbm{ H^2(\cF) \\ \cH_u}$.  Then it remains to describe the (concrete) isometric identification map $\bPi \colon \cH_{\Theta_T} \to \sbm{ H^2(\cF) \\ \cH_u}$.  The result is as follows 
 (see Theorem \ref{Thm:NFmodel} and Corollary \ref{Cor:NFCoin} below) 

 \smallskip
 
 \noindent
 \textbf{Theorem C.}  {\sl Without loss of generality we may assume that  the space $\cH_u$  is equal to the second component of the Sz.-Nagy--Foias functional-model space
 $\cH_u = \overline{\Delta_{\Theta_T} L^2(\cD_T)}$ and the operators $W_1$, $W_2$ are the operators $W_{\sharp 1}$, $W_{\sharp 2}$ canonically uniquely determined by the
 commuting, contractive operator-pair $(T_1, T_2)$ (see Theorem \ref{T:flats} together with the notation \eqref{WandVNFs} below), 
 and that the  operators $\bV_1, \bV_2$ are in the BCL2-model form
 \begin{equation}  \label{CNF1}
 (\bV_1, \bV_2) = \left( \begin{bmatrix} M_{U^* P^\perp  + z U^* P} & 0 \\ 0 & W_{\sharp 1} \end{bmatrix},
 \begin{bmatrix} M_{ P U + z  P^\perp U } & 0 \\ 0 & W_{\sharp 2} \end{bmatrix} \right)  \text{ on } \begin{bmatrix} H^2(\cF) \\ \overline{\Delta_{\Theta_T} L^2(\cD_T)}\end{bmatrix}
 \end{equation}
 Then there is an isometric operator $\Lambda \colon \cD_{T^*} \to \cF$ such that the embedding operator
 $ \bPi$ is given by
 \begin{equation}  \label{CNF2}
    \bPi = \left. \begin{bmatrix} I_{H^2} \otimes \Lambda  & 0 \\ 0 & I_{\overline{\Delta_{\Theta_T} L^2(\cD_T)}} \end{bmatrix} \right|_{\cH_{\Theta_T}} \colon \cH_{\Theta_T} \to \begin{bmatrix}
    H^2(\cF) \\ \overline{\Delta_{\Theta_T} L^2(\cD_T)} \end{bmatrix} 
 \end{equation}
 and the augmentation $(\cF, \Lambda, P, U)$  of the BCL-tuple $(\cF, P, U)$ is a Type I And\^o tuple for $(T_1^*, T_2^*)$ in the sense that $\Lambda$ 
 must satisfy two compatibility operator equations
 involving $T_1^*$, $T_2^*$ and the BCL-tuple parameters $(P, U)$  (namely, equations \eqref{AndoTuple1} and \eqref{AndoTuple2} with the subscript $*$'s dropped).
 Furthermore for the associated lift to be minimal, the And\^o tuple should also satisfy an additional
 minimality condition (see Definition \ref{D:min-Ando} below.).
  
 Moreover, if $(\bPi, \bV_1, \bV_2)$ and $(\bPi',\bV_1', \bV_2')$ are two such lifts corresponding to minimal Type I $(T_1^*, T_2^*)$-And\^o tuples $(\cF, \Lambda, P, U)$ and 
 $(\cF', \Lambda', P', U')$ respectively, then $(\bPi, \bV_1, \bV_2)$ and $(\bPi',\bV_1', \bV_2')$ are unitarily equivalent as lifts if and only if $(\cF, \Lambda, P, U)$ and 
 $(\cF', \Lambda', P', U')$ coincide in the sense that there is a unitary change of basis $\omega \colon \cF \to \cF'$ so that}
 $$
  \omega \Lambda = \Lambda', \quad  \omega P = P' \omega, \quad  \omega U = U' \omega.
 $$ 
 
 Putting all the pieces together, we can say:  {\em unitary-equivalence classes of minimal And\^o lifts $(\bV_1, \bV_2)$ of a given commuting contractive pair $(T_1, T_2)$ are in 
 one-to-one correspondence with coincidence-equivalence classes of minimal Type I And\^o tuples $(\cF, \Lambda, P, U)$ of the commuting contractive pair $(T_1^*, T_2^*)$.}
 
 \smallskip
 
 We note that it is not immediately clear that the system of operator equations \eqref{AndoTuple1} - \eqref{AndoTuple2} has a solution $(\cF, \Lambda, P, U)$ for a given commuting pair $(T_1, T_2)$.  However, there is a type of pre-And\^o tuple $(\cF, \Lambda, P, U)$ which we call a
 {\em special And\^o tuple} for which it is possible to verify by direct computation that equations \eqref{AndoTuple1} - \eqref{AndoTuple2} do hold.  This is discussed in
 Section 4.3 below.  Let us point out that a direct construction of an And\^o lift for a commuting contractive pair $(T_1, T_2)$ such that the product operator $T = T_1 T_2$ is of class $C_{\cdot 0}$ appears in the 2017 paper of Das-Sarkar-Sarkar \cite{D-S-S}.
 
 \smallskip
 
 \noindent
 \textbf{2. Pseudo-commuting contractive lifts.}  One take-away from the preceding discussion is that And\^o lifts for a given commuting contractive pair $(T_1, T_2)$
 always exist, but are not necessarily uniquely determined (up to lift-unitary equivalence) by the pair $(T_1, T_2)$.  We now introduce a weaker type of lift which is uniquely
 determined up to lift-unitary equivalence by $(T_1, T_2)$ and arguably is a better parallel to the Sz.-Nagy--Foias minimal isometric lift of a single contraction operator $T$
 for the pair case $(T_1, T_2)$.  

Towards this end, let us suppose that $(\bPi, \bV_1, \bV_2)$ is the Sz.-Nagy--Foias-like model for a minimal And\^o lift of $(T_1, T_2)$ as in \eqref{CNF1} and \eqref{CNF2}
with the product operator $T = T_1 T_2 = T_2 T_1$ again assumed to be in the Sz.-Nagy-Foias model form as in \eqref{NFfuncmodel}.
 Note that the map $\bPi$ has an obvious extension to a map $\widehat \bPi$ acting on all of $\cK_{\Theta_T}$ with range still in 
 $\sbm{ H^2(\cF) \\ \overline{\Delta_{\Theta _T}L^2(\cD_T)}}$:
 \begin{equation}  \label{hatbPi}
  \widehat \bPi = 
  \begin{bmatrix}  
  I_{H^2} \otimes \Lambda & 0 \\ 0 & I_{\overline{\Delta_{\Theta_T} L^2(\cD_T)}} 
  \end{bmatrix} \colon 
  \begin{bmatrix} H^2(\cD_{T^*}) \\
  \overline{ \Delta_{\Theta_T} L^2(\cD_T)} \end{bmatrix}  \to \begin{bmatrix} H^2(\cF) \\  \overline{ \Delta_{\Theta _T}L^2(\cD_T)} \end{bmatrix}.
 \end{equation}
 This map is still isometric and has the intertwining property
 $$
   \widehat \bPi \begin{bmatrix} M_z^{\cD_{T^*}} & 0 \\ 0 & M_\zeta |_{\overline{ \Delta_{\Theta_T} L^2(\cD_T)}} \end{bmatrix}
    =   \begin{bmatrix} M_z^{\cF} & 0 \\ 0 & M_\zeta |_{\overline{ \Delta_\Theta L^2(\cD)}} \end{bmatrix}  \widehat \bPi =
  $$
 i.e.,
 $$
    \widehat \bPi V_{\Theta_T} = \bV \widehat \bPi
  $$
  where $\bV = \bV_1 \bV_2$ and where $V_{\Theta_T}$ as in \eqref{VThetaT} is the minimal isometric lift of $T_{\Theta_T}$ on $\cK_{\Theta_T}$. Furthermore, 
  as the Sz.-Nagy--Foias model lift $V_{\Theta_T}$ of $T_{\Theta_T}$ is minimal, we know that
  $$
     \cK_{\Theta_T} = \bigvee_{n \ge 0} V_{\Theta_T}^n \cH_{\Theta_T}.
  $$
  We conclude that 
 \begin{align*} \bcK_{00} :=  & \bigvee_{n \ge 0} \bV^n \operatorname{Ran} \Pi
   = \bigvee_{n \ge 0} \bV^n \widehat \bPi \cH_{\Theta_T} = \widehat \bPi \big( \bigvee_{n \ge 0} V_{\Theta_T}^n \cH_{\Theta_T} \big)
   = \widehat \bPi \cK_{\Theta_T}  \\
   =  & \begin{bmatrix} H^2(\operatorname{Ran} \Lambda) \\ \overline{  \Delta_{\Theta_T} L^2(\cD_T)} \end{bmatrix}
\end{align*}
  It now follows that  $(\bPi, \bV|_{\bcK_{00}})$ is a lift of $T_{\Theta_T}$ unitarily equivalent to the Sz.-Nagy--Foias model lift 
 $(\iota_{\cH_{\Theta_T} \to \cK_{\Theta_T}}, V_{\Theta_T})$ of $T_{\Theta_T}$.
In particular $\bV|_{\bcK_{00}}$ is unitarily equivalent to $V_{\Theta_T}$ via the unitary identification map $\widehat \bPi \colon \cK_{\Theta_T} \to \bcK_{00}$. 
The next idea is to compress the And\^o lift $(\bV_1, \bV_2)$ augmented by the product operator $\bV = \bV_1 \bV_2$ defined on $\bcK$ to the copy of the embedded 
minimal lift space $\bcK_{00}$ for the single contraction operator $T _{\Theta_T} = T_1 T_2$, which in turn can be represented as operators on the Sz.-Nagy--Foias model space 
$\cK_{\Theta_T}$ 
by again making use of the unitary identifaction $\widehat \bPi \colon \cK_{\Theta_T} \to \bcK_{00}$:
$$
  (\bbW_1, \bbW_2, \bbW) = \widehat \bPi^* ( \bV_1, \bV_2, \bV) \widehat \bPi \text{ acting on } \cK_{\Theta_T}.
$$
Plugging in the formulas \eqref{CNF1} and \eqref{hatbPi} for $\bV_1$, $\bV_2$,   $\widehat \bPi$ then gives us the explicit formulas in terms of the
And\^o tuple $(\cF, \Lambda, P, U)$ associated with the And\^o lift $(\bV_1, \bV_2)$:
$$
(\bbW_1, \bbW_2, \bbW) = 
\left(  \begin{bmatrix} M_{G_1^* + z G_2} & 0 \\ 0 & W_{\sharp 1} \end{bmatrix}, \begin{bmatrix} M_{G_2^* + z G_1} & 0 \\ 0 & W_{\sharp 2} \end{bmatrix},
\begin{bmatrix} M_z^{\cD_{T^*}} & 0 \\ 0 & M_\zeta|_{ \overline{\Delta_{\Theta_T} L^2(\cD_T)}}  \end{bmatrix}  \right)
$$
where we set $ G_1 = \Lambda^*  P^\perp U \Lambda, \quad G_2 = \Lambda^* U^* P \Lambda.$ Such triples have their own abstract characterization:   any such triple is a {\em pseudo-commuting, contractive operator-triple} meaning that $(\bbW_1, \bbW_2, \bbW)$
``almost commute"   in the sense of Definition \ref{D:pcc} below.  In addition $(\bbW_1, \bbW_2, \bbW)$ is a lift of $(T_1, T_2, T_{\Theta_T} = T_1 T_2)$ with $\bbW$ 
being equal to the minimal isometric lift  $V_{\Theta_T}$ of $T_{\Theta_T}$ on $\cK_{\Theta_T}$, and conversely:  {\sl any
pseudo-commuting, contractive lift $(\bbW_1, \bbW_2, \bbW)$ of $(T_1, T_2, T_{\Theta_T})$ with $\bbW$ equal to the minimal isometric lift $V_{\Theta_T}$ of $T_{\Theta_T}$ 
is unique and arises in this way as the
compression of any choice of  minimal And\^o lift of $(T_1, T_2)$} (see Theorem \ref{T:comp=pcc}).

Moreover, there is an independent characterization of the operators $(G_1, G_2)$ which shows that they are independent of the choice of minimal And\^o lift, or equivalently,
of minimal Type I And\^o tuple $(\cF, \Lambda, P, U)$ for $(T_1^*, T_2^*)$.  In fact $(G_1, G_2)$ can be alternatively characterized as the unique solution of a  certain system of
operator equations involving only $(T_1^*, T_2^*)$ and not involving a choice of Type I And\^o tuple $( \cF, \Lambda, P, U)$ for
$(T_1^*, T_2^*)$ (see Theorem \ref{T:FundOps}).  Following the precedent set in \cite{B-P-SR, Tirtha-tetrablock}, we call such $(G_1, G_2)$ arising in this way to be the 
{\em Fundamental-Operator pair} for $(T_1^*, T_2^*)$.   
In summary, we conclude that, unlike the case for minimal And\^o lifts, {\em the compression $(\bbW_1, \bbW_2, \bbW)$ of a minimal And\^o lift $(\bV_1, \bV_2, \bV)$ of 
$(T_1, T_2, T_{\Theta_T} = T_1 T_2)$
to the minimal isometric lift space $\cK_{\Theta_T}$ for $T_{\Theta_T}$, or equivalently, any pseudo-commuting, contractive lift $(\bbW_1, \bbW_2, \bbW)$ of
$(T_1, T_2, T_{\Theta_T})$ with third component $\bbW$ equal to the minimal isometric lift $V_{\Theta_T}$ of $T_{\Theta_T}$, is uniquely determined by the factor contrations
$T_1$ and $T_2$.} 
While $(\bbW_1, \bbW_2, \bbW)$ is no longer commuting, it does have a functional model representation of a much simpler form than that of a general commuting contractive 
pair $(T_1, T_2)$.  This is what leads to a Sz.-Nagy--Foias-like functional model for the commuting contractive pair $(T_1, T_2)$ discussed next.

\smallskip

\noindent
\textbf{3.  The Sz.-Nagy--Foias-like functional model for a contractive pair.}
The idea behind the Sz.-Nagy--Foias functional model for a single contraction operator $T$ is to obtain a relatively simple functional model for the essentially unique 
minimal isometric lift $V$ of $T$, and then compress the action of $V$ to its $*$-invariant subspace to arrive at a functional model for $T$.  A key point is the
uniqueness:  {\em there is a one-to-one correspondence between unitary equivalence classes of contraction operators and unitary equivalence classes of minimal isometric lifts.}
When we consider the pair case and use a minimal And\^o lift for the pair  $(T_1, T_2)$ in place of a minimal isometric lift for the single operator $T$,  this one-to-one correspondence
fails; going to the minimal And\^o lift introduces what one might call {\em noise} (extraneous data which has nothing to do with the original object of study, namely the commuting
contractive pair).  On the other hand, if we use the pseudo-commuting, contractive lift $(\bbW_1, \bbW_2, \bbW)$ for $(T_1, T_2,T)$ in place of the minimal isometric
lift $V$ for $T$, the situation is more parallel to the classical case.  Given a commuting, contractive pair,  we define the collection 
$$
\Xi_{(T_1, T_2)}: = ((G_1, G_2), (W_{\sharp 1}, W_{\sharp 2}), \Theta_T)
$$
 to be the {\em characteristic triple} of $(T_1, T_2)$, where $\Theta_T$ is the characteristic function for $T = T_1 T_2$, $(G_1, G_2)$ is
the Fundamental-Operator pair for $(T_1^*, T_2^*)$, and $(W_{\sharp 1}, W_{\sharp 2})$ is the commuting pair of unitary operators with product equal to 
$ M_\zeta |_{\overline{\Delta_{\Theta_T} L^2(\cD_T)}}$  on the space $\overline{\Delta_{\Theta_T} L^2(\cD_T)}$ canonically and uniquely associated with $(T_1, T_2)$ 
appearing in \eqref{CNF1}.  This characteristic triple turns out to be a {\em complete unitary invariant} for $(T_1, T_2)$ in the following sense:
{\em the commuting contractive pair $(T_1, T_2)$ is unitarily equivalent to the commuting contractive pair $(T_1', T_2')$ if and only if the associated 
characteristic triples $\Xi_{(T_1, T_2)}$ and $\Xi_{(T_1', T_2')}$ coincide in a certain natural sense.}  The reverse procedure in the classical case
relies on the clean characterization of the coincidence envelope of characteristic functions $\Theta_T$ as the set of purely contractive analytic functions
$\Theta$; for the pair case, the characterization of the coincidence envelope of characteristic triples, namely what we call {\em admissible triples}
$((G_1, G_2), (W_1, W_2), \Theta)$ (see Definition \ref{D:AdmissTriple}), is less tractable. Nevertheless this analysis provides some insight into the structure of 
commuting contractive pairs in general and can be tractable in some special cases.

\smallskip

This manuscript is organized as follows:  Following the Introduction, Chapter 2 reviews the unitary-dilation/isometric-lift/operator-model theory for a single contraction operator
from four points of view:  (i) the coordinate-free geometric picture as found in Chapter I of the classic book \cite{Nagy-Foias},  (ii) the Douglas model theory as in
\cite{Doug-Dilation}, the Sch\"affer model theory \cite{Schaffer}, and the Sz.-Nagy--Foias model theory as found in Chapter VI  of \cite{Nagy-Foias}. 

Chapter 3 develops from first principles the Berger-Coburn-Lebow model theory for a commuting contractive pair $(V_1, V_2)$ \cite{B-C-L} with inclusion of many illustrative examples.

In addition to the Sz.-Nagy--Foias model for a minimal And\^o lift of a commuting contractive pair,  Chapter 4 develops from first principles the
Douglas and Sch\"affer models for a minimal And\^o lift $(\bV_1, \bV_2)$ for a commuting contractive operator-pair $(T_1, T_2)$;  in fact the Douglas model is developed
first and then used as a bridge for understanding the Sz.-Nagy--Foias model.  The {\em Fundamental-Operator pair} $(F_1, F_2)$ for the
commuting contractive pair $(T_1^*, T_2^*)$ appears here for the first time in connection with characterizing when $(T_1, T_2)$ has a strongly minimal And\^o lift
(see Theorem \ref{T:StrongFund} below):
{\sl $(T_1, T_2)$ has a strongly minimal And\^o lift if and only if the Fundamental-Operator pair $(G_1, G_2)$ for $(T_1^*, T_2^*)$ satisfies the
additional system of operator equations \eqref{strong-fund} given below};  equivalently, {\sl there is a projection $P$ and a unitary operator $U$ on $\cF$
so that $(G_1, G_2) = (P^\perp U, U^*P)$} (see Lemma \ref{relations-of-E-lem} below) and the operator pair $(M_{F_2^* + F_1 z}, M_{F_1^* + F_2 z})$ on $H^2(\cF)$ 
assumes the BCL2-model form
$(M_{U^*P^\perp + z U^* P z}, M_{ P U + z  P^\perp U})$ for a commuting isometric pair.  In view of the results in Chapter 6 on pseudo-commuting contractive lifts,
this is just the statement that the pseudo-commuting contractive lift of $(T_1, T_2)$ is actually an And\^o lift  which is also exactly the case when
any minimal And\^o lift is unique up to unitary equivalence of lifts.
Also developed in Chapter 4  is the equivalence between existence of a strongly minimal And\^o lift for $(T_1, T_2)$ and the condition that the factorizations
$T = T_1 \cdot T_2$ and $T = T_2 \cdot T_1$ are both {\em regular} in the sense of Sz.-Nagy--Foias (see \cite{Nagy-Foias}).

Chapter 5 lays out the one-to-one correspondence between unitary-equivalence classes of minimal And\^o lifts for a commuting contractive pair $(T_1, T_2)$ on the one hand,
and coincidence-equivalence classes of the corresponding minimal And\^o tuples, both in the Douglas-model setting (where one works with
minimal Type I  And\^o tuples for $(T_1^*, T_2^*)$) and in the Sch\"affer-model setting (where one works with minimal strong Type II And\^o tuples for $(T_1, T_2)$).

Chapter 6 focuses on the pseudo-commuting contractive lift for a given commuting contractive pair $(T_1, T_2)$.  Some preliminary results can be obtained
in the abstract framework but other results (e.g., that {\sl the final component $\bbW$ of a pseudo-commuting contractive lift $(\bbW_1, \bbW_2, \bbW)$ 
of $(T_1, T_2, T = T_1 T_2 = T_2 T_1)$ uniquely determines the other components $\bbW_1, \bbW_2$}) makes use of a functional model
(any of Douglas, Sz.-Nagy--Foias, or Sch\"affer) for $(\bbW_1, \bbW_2, \bbW)$.

Chapter 7 develops the Sz.-Nagy--Foias-like functional model for a given commuting contractive pair $(T_1, T_2)$ while Chapter 8 obtains an analogue of the
Sz.-Nagy--Foias correspondence between invariant subspaces for $T$ and regular factorizations $\Theta(\zeta) = \Theta''(\zeta) \Theta'(\zeta)$
of the characteristic function $\Theta = \Theta_T$ of $T$ (see \cite{Nagy-Foias}).  

%It is well known that dilation and model theory of the type discussed here can be fit into the Arveson framework  \cite{Arv'72}; one can choose any domain $\Omega$
%with compact closure in ${\mathbb C}^d$ and discuss notions of $\Omega$-contractive, $\Omega$-isometric, and $\Omega$-unitary  operator tuples, and ask questions
%of whether a given $\Omega$-contractive commuting operator tuple can be lifted to an $\Omega$-isometric operator tuple, in which case one can always arrange that
%the $\Omega$-isometric operator-tuple lift be also {\em minimal}.  In the final Epilogue Chapter 9, we discuss recent work in this direction for the cases where
%$\Omega$ is the symmetrized bidisk ${\mathbb D}^2_s$, the tetrablock domain ${\mathbb E}$, and the polydisk ${\mathbb D}^d$.  For each of these cases
%there has recently appeared in the literature a notion of
%{\em $\Omega$-pseudo-commuting contractive} (or  more generally {\em algebraic}) {\em lift} $\underline{\bbW}$ of a given $\Omega$-contractive operator-tuple $\underline{T}$ 
%(see \cite{B-P-SR, Tirtha-tetrablock, BS-Douglas}), which fits in between $\underline{T}$ itself and any minimal $\Omega$-isometric lift $\underline{\bV}$ of $\underline{T}$,
%with results similar to but not exactly the same as the results here for the case $\Omega = {\mathbb D}^2$.
%All this suggests directions for future work.
 
Finally, let us note that our companion paper \cite{BS-Douglas} extends some of the framework of this manuscript
to the higher-order tuple setting (commuting $d$-tuples $\underline{T} = (T_1, \dots, T_d)$ of contraction operators
on a Hilbert space with $d > 2$), and that this manuscript essentially subsumes the preliminary report \cite{sauAndo} posted on arXiv.

\numberwithin{section}{chapter}
\numberwithin{equation}{section}
\numberwithin{theorem}{section}

\chapter[Functional models]{Functional models for isometric lifts of a contraction operator}

The Sz.-Nagy dilation theorem asserts that any contraction operator $T$ on the Hilbert space $\cH$  can be dilated to a unitary operator $\cU$ on a space $\widetilde \cK
\supset \cH$, i.e., there is a unitary operator $\cU$ on a Hilbert space $\widetilde \cK$ containing $\cH$ so that 
$T^n = P_\cH \cU^n|_\cH$  for  $n = 0,1,2, \dots$.  By a lemma of Sarason, this is the same as saying that $\widetilde\cK$ has an orthogonal decomposition
$\widetilde\cK = \cK_- \oplus \cH \oplus \cK_+$ and with respect to this decomposition $\cU$ has the block-matrix form $\cU = \sbm{ * & 0 & 0 \\ * & T & 0  \\ * & * & * }$.
The dilation is said to be {\em minimal} if it is the case that $\widetilde \cK$ is the smallest reducing subspace for $\cU$ containing $\cH$. 
An operator $V$ on a Hilbert space $\cK \supset \cH$ is said to be an {\em isometric lift} of $T$ if $V$ is isometric, $\cK \ominus \cH$ is invariant for $V$
and $T^n = P_\cH V^n |_\cH$ for $n=0,1,2,\dots$.     Equivalently, it works out that $V$ on $\cK$ being a lift of $T$ is the same as $\cH \subset \cK$ being an
invariant subspace for $V^*$ and furthermore $V^*|_\cH = T^*$, i.e., with respect to the decomposition $\cK = \cH \oplus (\cK \ominus \cH)$ the operator $V$ has the
block-matrix representation $V = \sbm{ T & 0 \\ * & * }$.
There is a close connection between minimal unitary dilations and minimal isometric lifts, namely:
{\em if $\cU$ is a unitary dilation of $T$  on $\widetilde \cK \supset \cH$ and if we set $\cK = \bigvee_{n\ge 0} \cU^n \cH$ and define $V$ on $\cK$ as
$V = \cU|_\cK$, then $V$ is a minimal isometric lift of $T$}, and  conversely, {\em if $V$ is a minimal isometric lift of $T$, one can always extend $V$ to a unitary operator
$\cU$ on $\widetilde \cK \supset \cK$ so that $\cU$ is a minimal unitary dilation of $V$. }  For our purposes here, it is convenient to work almost exclusively with isometric lifts
rather than unitary dilations.    

To make various constructions to come more canonical, we shall make systematic use of  a more general notion of lift where we do not insist that the space $\cH$ on which $T$
acts is a subspace of the space $\cK$ but rather allow a isometric identification map $\Pi \colon \cH \to \cK$.  Thus we say that, for a given contraction operator
$T$ on $\cH$ the collection of objects $(\Pi, V)$ is a {\em isometric lift} of $T$ if 
\begin{itemize}
\item $\Pi \colon \cH \to \cK$ is an isometric embedding of $\cH$ into $\cK$, and

\item $\operatorname{Ran} \Pi$ is invariant for $V^*$ and furthermore $V^* \Pi = \Pi T^*$.
\end{itemize}

In this chapter we shall discuss three types of functional models (Sch\"affer, Douglas, and Sz.-Nagy--Foias) for a given Hilbert-space contraction operator.

\section{The Sch\"affer functional model for the minimal isometric lift}  \label{S:Schaffer}
For a (coefficient) Hilbert space $\cF$, we shall use the notation $H^2(\cF)$\index{$H^2(\cF)$} for the {\em Hardy space}
$$
H^2(\cF):=\{f:\bD\to\cF: f(z)=\sum_{n=0}^\infty z^nf_n\;\mbox{ and }\; \sum_{n=0}^\infty \|f_n\|_\cF^2<\infty\}.
$$
When $\cF=\mathbb C$, we shall denote $H^2(\cF)$ simply by $H^2$.\index{$H^2$} For a contraction operator $T$ acting on a Hilbert space $\cH$, we shall have use of the \textit{defect operators}\index{defect operator}\index{$D_T$}\index{$D_{T^*}$}
$$
   D_T = (I_\cH- T^*T)^{\frac{1}{2}}, \quad D_{T^*} = (I_\cH - T T^*)^{\frac{1}{2}}
$$
and the \textit{defect spaces}\index{defect space}
$$
 \cD_T = \overline{\operatorname{Ran}}\, D_T, \quad \cD_{T^*} = \overline{\operatorname{Ran}}\, D_{T^*}.
 $$\index{$\cD_T$}\index{$\cD_{T^*}$}
For $\cF$ any coefficient space, we shall use the notation 
\begin{equation}   \label{bev0cF}\index{$\bev_{0, \cF}$}
\bev_{0, \cF} := \bev_0 \otimes I_\cF \colon f \mapsto f(0)
\end{equation}
for the evaluation-at-0 map \index{evaluation at zero} on the vector-valued Hardy space $H^2(\cF)$ with the adjoint
$$
   \bev_{0, \cF}^* \colon \cF \to H^2(\cF)
$$
given by the identification of an element $x \in \cF$ with the constant function $f(z) = x$ considered as an element of $H^2(\cF)$. When $\cF=\mathbb C$, we shall denote $\bev_{0, \cF}$ simply by $\bev_0$.\index{$\bev_0$}

Let us write $\cK_{S}$ for the {\em Sch\"affer isometric lift space}\index{$\cK_S$}
\begin{equation}  \label{KS}
\cK_{S} = \begin{bmatrix} \cH \\ H^2(\cD_T) \end{bmatrix}
\end{equation}
and  $\Pi_S$ for the isometric embedding operator\index{$\Pi_S$}
\begin{equation}  \label{PiS}
\Pi_S = \sbm{ I_\cH \\ 0 } \colon \cH \to \cK_{S}
\end{equation}
and let $V_S$ on $\cK_{S}$ be the operator given by\index{$V_S$}
\begin{equation}   \label{VS}
  V_S \colon \begin{bmatrix} h \\ f \end{bmatrix} \mapsto \begin{bmatrix} T & 0 \\  \bev_{0,\cD_T}^* D_T & M_z^{\cD_T} \end{bmatrix}
   \begin{bmatrix} h \\ f \end{bmatrix} = \begin{bmatrix} T h \\ D_T h + z f(z) \end{bmatrix}.
\end{equation}
%where $\bev_0 \colon H^2(\cD_T) \to \cD_T$ is the ``evaluation at zero" map $\bev_0 \colon g \mapsto g(0)$ with adjoint $\bev_0^* \colon \cD_T \to H^2(\cD_T)$ given by the identification map
%\begin{equation}  \label{bev*}
%\bev_0^*\colon  g_0 \in \cD_T \mapsto g_0 \in H^2(\cD_T) \text{ (constant function).} \index{$\bev_0$}
%\end{equation}
%Thus for $d \in \cD_T$, $\bev_0^* d$ is the constant function $g(z) = d$ considered as an element of $H^2(\cD_T)$.
Then one can check that $V_S$ is isometric on $\cK_{S}$ and that  $\Pi_S T^* = V_S^* \Pi_S$ (due to the 
block lower-triangular form in the matrix representation of $V_S$), and hence $(\Pi_S, V_S)$ is an isometric lift of $T$.
Let us formally give this a name.

\begin{definition}  \label{D:S-isom-lift}\index{isometric lift of a contraction!Sch\"affer-model}
 If  $\cK_S$, $\Pi_S$, and $V_S$ are given as in \eqref{KS}, \eqref{PiS}, \eqref{VS}, then
$(\Pi_S, V_S)$ is an isometric lift of $T$ on $\cK_S$  and we shall say that $(\Pi_S, V_S)$ is the \textit{Sch\"affer-model isometric lift} of $T$.
\end{definition}

It is easy to check that the Sch\"affer-model isometric lift $(\Pi_S, V_S)$ is minimal, i.e., that
$$
  \bigvee_{n \in {\mathbb Z}_+} V_S^n \begin{bmatrix} \cH \\ 0 \end{bmatrix} = \cK_S:= \begin{bmatrix} \cH \\ H^2(\cD_T) \end{bmatrix}.
$$

Note that the Sch\"affer isometric-lift space $\cK_S = \sbm{ \cH \\ H^2(\cD_T) }$ has first component
$\cH$ equal to the original abstract Hilbert space while the second component $H^2(\cD_T)$ is
a functional Hilbert space,  so strictly speaking the Sch\"affer model is only a semi-functional model.
The original Sch\"affer model as presented in \cite[Section  I.5.1]{Nagy-Foias} has a purely matricial form
as the second component $H^2(\cD_T)$ is written in matricial form as $\ell^2_{{\mathbb Z}^+}(\cD_T)$ rather than in the functional form $H^2(\cD_T)$. 

\section{The Douglas functional model for the minimal isometric dilation}   \label{S:Douglas}
For $T$ a contraction operator on $\cH$, we shall have great use of the non-negative definite operator $Q_{T^*}$ given by the strong limit
\begin{align}\label{Q}
Q_{T^*}^2=\operatorname{SOT-}\lim T^nT^{*n}.
\end{align}A fundamental map for the construction of the Douglas model is the map \index{$\cO_{D_{T^*}, T^*}$}
$$
  \cO_{D_{T^*}, T^*}  \colon \cH \to H^2(\cD_{T^*})
$$
given by  
\footnote{
The notation $\cO_{D_{T^*}, T^*}$ is suggested by the fact that the operator $\cO_{D_{T^*}, T^*}$ can be viewed as the 
{\em frequence-domain observability operator} for the discrete-time state/output linear system 
$$  \left\{ \begin{array}{ccl}  x(t+1) & = & T^* x(t) \\
  y(t) & = & D_{T^*} x(t) \end{array} \right., \,  t=0,1,2,\dots
$$
($x(t)$ equal to the {\em state} at time $t$, $y(t)$ equal to the output at time $t$)
since running the system with initial condition $x(0)$ to produce an output string $\{y(t) \}_{n \in {\mathbb Z}_+}$
results in the $Z$-transform $\widehat y(z) := \sum_{n=0}^\infty y(n) \, z^n$ of the output string 
$\{y(n)\}_{n \in {\mathbb Z}^+}$ being given by
$$
    \widehat y(z) = \cO_{D_{T^*}, T^*} x_0.
$$}
\begin{equation}   \label{obsop}
\cO_{D_{T^*}, T^*} \colon h \mapsto \sum_{n=0}^\infty D_{T^*} T^{*n} h \, z^n 
  = D_{T^*} (I - z T^*)^{-1} h.
\end{equation}
The easy computation
\begin{align*}
\| \cO_{D_{T^*}, T^*} x \|^2_{H^2(\cD_{T^*})} & = \sum_{n=0}^\infty \| D_{T^*} T^{*n} x \|^2
= \sum_{n=0}^\infty \langle T^n (I - T T^*) T^{*n} x, x \rangle  \\
& = \| x \|^2 - \| Q_{T^*} x \|^2
\end{align*}
shows that $\cO_{D_{T^*}, T^*}$ is contractive as an operator from $\cH$ into the Hardy space $H^2(\cD_{T^*})$, 
and is an isometry exactly when $Q_{T^*}= 0$.

We now note that $T Q_{T^*}^2 T^* = Q_{T^*}^2$; hence the
formula
\begin{equation}  \label{defX}
X^*Q_{T^*}h = Q_{T^*} T^* h
\end{equation}
defines an isometry $X^*$ on $\operatorname{Ran} Q_{T^*}$ which extends by continuity to an isometry (still denoted as
$X^*$) on $\overline{\operatorname{Ran}} \, Q_{T^*}$.  If not already unitary, this operator  has a
minimal unitary extension on a space $\cQ_{T^*} \supset \overline{\operatorname{Ran}} \, Q_{T^*}$ which we 
denote by $W_D^*$\index{$W_D$}. 
A dense subspace of $\cQ_{T^*} $\index{$\cQ_{T^*}$} is 
\begin{equation}  \label{cQT*}
\bigcup_{n=0}^\infty W_D^{n} \operatorname{Ran} Q_{T^*} \text{ dense in } \cQ_{T^*}
\end{equation}
 and then the extension $W_D^*$ is given densely by
\begin{align} 
 & W_D^* W_D^n Q_{T^*} h = W_D^{n-1} X^*Q_{T^*} h = W_D^{n-1} Q_T^* T^*h \text{ for } n \ge 1,   \notag  \\
 & W_D^* Q_{T^*} h = X^* Q_{T^*} h =  Q_{T^*} T^* h.   \label{IntofQ}
\end{align}

Let us introduce the Hilbert space \index{$\cK_D$} 
\begin{equation}  \label{KD}
\cK_D := \begin{bmatrix} H^2(\cD_{T^*}) \\ \cQ_{T^*} \end{bmatrix}
\end{equation}
and define an  isometric operator
$V_D$ on $\cK_{D}$ by
\begin{align}\label{VD}\index{$V_D$}
  V_D = \begin{bmatrix} M_z & 0 \\ 0 & W_D \end{bmatrix}.
\end{align}
There is a canonical isometric embedding operator $\Pi_D \colon \cH \to \cK_D$ given by\index{$\Pi_D$}
\begin{align}\label{PiD}
  \Pi_D \colon h \mapsto \begin{bmatrix} \cO_{D_{T^*}, T^*} \\ Q_{T^*} \end{bmatrix}h.
\end{align}
Furthermore, we have the intertwining relation
\begin{equation}   \label{Dintertwine}
    \Pi_D T^* = (V_D)^*  \, \Pi_D.
\end{equation}
Let us give all this a formal name.
\begin{definition} \label{D:D-isom-lift}\index{isometric lift of a contraction!Douglas-model}
Define the space $\cK_D$, the operator $\Pi_D$ and the operator $V_D$ as in \eqref{KD}, \eqref{PiD}, and \eqref{VD}.
Then $(\Pi_D, V_D)$ is a lift of $T$ on the space $\cK_D$ which we shall refer to as the {\em Douglas-model isometric lift} of $T$.
\end{definition}

One can also see as a consequence of Lemma 1 in Douglas's paper
\cite{Doug-Dilation} that this Douglas-model lift is a minimal isometric lift.   We include here a simple direct proof making use of the formulation which
we are using here.  

\begin{proposition} \label{P:Douglas-min}  The Douglas-model isometric lift $(\Pi_D, V_D)$ (\eqref{VD} and \eqref{PiD}) of a contraction operator $T$ is 
minimal.
\end{proposition}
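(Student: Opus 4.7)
The plan is to show the closed linear span
$$\cM := \bigvee_{n \ge 0} V_D^n \operatorname{Ran} \Pi_D$$
coincides with $\cK_D = H^2(\cD_{T^*}) \oplus \cQ_{T^*}$. Since $V_D$ is block-diagonal with respect to this decomposition, it is natural to attempt to prove separately that
$H^2(\cD_{T^*}) \oplus \{0\} \subset \cM$ and $\{0\} \oplus \cQ_{T^*} \subset \cM$.

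The first step, and the crux of the argument, is to extract the constant function $\sbm{D_{T^*} h \\ 0}$ from $\cM$. For this I would compute the telescoping difference
$\Pi_D h - V_D \Pi_D T^* h$. Using the power-series expansion $\cO_{D_{T^*}, T^*} h = \sum_{n \ge 0} D_{T^*} T^{*n} h \cdot z^n$, the first component becomes
$\cO_{D_{T^*}, T^*} h - z \cO_{D_{T^*}, T^*} T^* h = D_{T^*} h$.
For the second component, the key identity is that $W_D$ is unitary and $W_D^* Q_{T^*} h = Q_{T^*} T^* h$ by \eqref{IntofQ}, so $W_D Q_{T^*} T^* h = Q_{T^*} h$, making the second component vanish. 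Thus $\sbm{D_{T^*} h \\ 0} \in \cM$ for every $h \in \cH$. Applying $V_D^n$ yields $\sbm{z^n D_{T^*} h \\ 0} \in \cM$, and since $\operatorname{Ran} D_{T^*}$ is dense in $\cD_{T^*}$, these span $H^2(\cD_{T^*}) \oplus \{0\}$.

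Having secured the first summand, the second is immediate: subtracting $\sbm{\cO_{D_{T^*}, T^*} h \\ 0} \in \cM$ from $\Pi_D h \in \cM$ gives $\sbm{0 \\ Q_{T^*} h} \in \cM$. Applying $V_D^n$ produces $\sbm{0 \\ W_D^n Q_{T^*} h}$, and by the density statement \eqref{cQT*}, $\bigcup_{n \ge 0} W_D^n \operatorname{Ran} Q_{T^*}$ is dense in $\cQ_{T^*}$. This yields $\{0\} \oplus \cQ_{T^*} \subset \cM$, completing the proof.

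The main conceptual point — hardly an obstacle once spotted — is the interplay between the two components in the telescoping identity: one must use the explicit formula for $\cO_{D_{T^*}, T^*}$ to cancel the higher-order terms in the first component while simultaneously using the unitarity of $W_D$ and its intertwining relation with $T^*$ to cancel the second component. Once this single identity is in hand, the rest is bookkeeping with $V_D$-invariance and the built-in density \eqref{cQT*} of the $W_D$-orbit of $\operatorname{Ran} Q_{T^*}$.
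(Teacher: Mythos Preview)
Your proof is correct, and it takes a somewhat different route from the paper's. The paper first isolates the second component by a limiting argument: it observes that $V_D^n \Pi_D T^{*n} h = \sbm{M_z^n M_z^{*n} \cO_{D_{T^*},T^*} h \\ Q_{T^*} h}$ and uses the shift property of $M_z$ to conclude that the first entry tends to $0$ as $n \to \infty$, so $\sbm{0 \\ Q_{T^*} h} \in \cM$. Only then does it subtract to obtain $\sbm{\cO_{D_{T^*},T^*} h \\ 0}$ and apply the telescoping identity to reach the constants. You instead apply the telescoping identity $\Pi_D h - V_D \Pi_D T^* h$ directly and observe that the unitarity of $W_D$ together with $W_D^* Q_{T^*} = Q_{T^*} T^*$ makes the second component cancel exactly, yielding $\sbm{D_{T^*} h \\ 0}$ in one finite algebraic step. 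Your approach is cleaner in that it avoids any limiting argument; the paper's approach, on the other hand, makes explicit use of the Wold-type dichotomy between the shift $M_z$ and the unitary $W_D$, which foreshadows how these two pieces behave differently in the subsequent And\^o-lift analysis.
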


\begin{proof}  Verification of the minimality of $(\Pi_D, V_D)$ amounts to showing that
\begin{equation}  \label{Douglas-min}
\cK_{\rm min}: = \bigvee_{n = 0,1,2,\dots} \begin{bmatrix} M_z^n  & 0 \\ 0 & W_D^n \end{bmatrix} \begin{bmatrix} \cO_{D_{T^*}, T^*} \\ Q_{T^*} \end{bmatrix} \cH = 
\begin{bmatrix} H^2(\cD_{T^*}) \\ \cQ_{T^*} \end{bmatrix}.
\end{equation}
To see this, note first that for each $n \in {\mathbb Z}_+$ we have in particular that 
$$
\begin{bmatrix}  M_z^n & 0 \\ 0 & W^n_D \end{bmatrix} \begin{bmatrix} \cO_{D_{T^*}, T^*} \\ Q_{T^*} \end{bmatrix} T^{*n} h \in \cK_{\rm min}
$$
where
\begin{align}
& \begin{bmatrix}  M_z^n & 0 \\ 0 & W^n_D \end{bmatrix} \begin{bmatrix} \cO_{D_{T^*}, T^*} \\ Q_{T^*} \end{bmatrix} T^{*n} h  
=  \begin{bmatrix}  M_z^n & 0 \\ 0 & W^n_D \end{bmatrix} \begin{bmatrix} M_z^{*n} & 0 \\ 0 & W_D^{*n} \end{bmatrix}
 \begin{bmatrix} \cO_{D_{T^*}, T^*} \\ Q_{T^*} \end{bmatrix}  h   \notag  \\
 & \quad = \begin{bmatrix} M_z^n M^{*n}_z & 0 \\ 0 & W_D^n W_D^{*n} \end{bmatrix} \begin{bmatrix} \cO_{D_{T^*}, T^*} \\ Q_{T^*} \end{bmatrix} h
 =  \begin{bmatrix} M_z^n M_z^{*n} \cO_{D_{T^*}, T^*} h \\  Q_{T^*} h \end{bmatrix}
 \label{compute1}
 \end{align}
 where 
 $$
\lim_{n \to \infty} \|   M_z^n M_z^{*n} \cO_{D_{T^*}, T^*} h \| = \lim_{n \to \infty}  \| M_z^{*n} \cO_{D_{T^*}, T^*} h \|  = 0
$$
since $M_z$ is a shift operator.  Combining this observation with \eqref{compute1} leads us to
$$
\lim_{n \to \infty}  \begin{bmatrix}  M_z^n & 0 \\ 0 & W^n_D \end{bmatrix} \begin{bmatrix} \cO_{D_{T^*}, T^*} \\ Q_{T^*} \end{bmatrix} T^{*n} h  
= \begin{bmatrix} 0 \\ Q_{T^*} h \end{bmatrix}   \in \cK_{\rm min}.
$$
Subtracting this element off from $\sbm{ \cO_{D_{T^*}, T^*} \\ Q_{T^*} } h \in \cK_{\rm min}$, we conclude that 
$$
\sbm{ \cO_{D_{T^*}, T^*} h \\ 0} h \in \cK_{\rm min}.
$$
But then we also have, for all $h \in \cH$,
$$
\begin{bmatrix}  (\cO_{D_{T^*}, T^*} - M_z \cO_{D_{T^*}, T^*} T^*) h \\ 0 \end{bmatrix} \in \cK_{\rm min} 
$$
where 
\begin{align}
& \big( ( \cO_{D_{T^*}, T^*} - M_z \cO_{D_{T^*}, T^*} T^*) h \big)(z)  = \big( D_{T^*} (I - z T^*)^{-1} - z D_{T^*} (I - z T^*)^{-1} T^* \big) h  \notag \\
& \quad = \bev_{0, \cD_{T^*}}^* D_{T^*} h.
\label{cool-id}
\end{align}
Thus $\sbm{ \bev_{0, \cD_{T^*}}^* D_{T^*} h \\ 0 } \in \cK_{\rm min}$ and hence also
\begin{align*}
& \bigvee_{n=0,1,2,\dots} \begin{bmatrix} M_z^n & 0 \\ 0 &  W_D^n \end{bmatrix} \begin{bmatrix} \bev_{0, \cD_{T^*}}^*  D_{T^*} \cH \\ 0 \end{bmatrix}
= \begin{bmatrix} \bigvee_{n=0,1,2,\dots} M_z^n \bev_{0, \cD_{T^*}}^* \cD_{T^*} \\ 0 \end{bmatrix} \\
& \quad  = \begin{bmatrix} H^2(\cD_{T^*}) \\ 0 \end{bmatrix}
\subset \cK_{\rm min}.
\end{align*}

Finally let us recall that $\cup_{n \ge 0} W_D^n Q_{T^*} \cH$ is dense in $\cQ_{T^*}$ and hence
$$
\bigvee_{n \ge 0} \begin{bmatrix} M_z^n & 0 \\ 0 & W_D^n \end{bmatrix} \begin{bmatrix} 0 \\ Q_{T^*} \cH \end{bmatrix}
= \begin{bmatrix} 0 \\ \cQ_{T^*} \end{bmatrix} \subset \cK_{\rm min}.
$$
Putting all the pieces together then gives us
$$
\cK_D: = \begin{bmatrix} H^2(\cD_{T^*}) \\ \cQ_{T^*} \end{bmatrix} \subset \cK_{\rm min} \subset \cK_D
$$
and \eqref{Douglas-min} follows as wanted.
\end{proof}

\begin{remark}   \label{R:Douglas-func}
As the Douglas isometric-lift space $\cK_D = \sbm{H^2(\cD_{T^*}) \\ \cQ_{T^*}}$ has first component
$H^2(\cD_{T^*})$ equal to a functional Hilbert space but second component $\cQ_{T^*}$ equal to an abstract 
(non-functional) Hilbert space, so strictly speaking one can think of  the Douglas model is really being only a  semi-functional model.
However the space $\cQ_{T^*}$ comes equipped with a unitary operator $W_D$.  By the direct-integral version of the spectral theorem for 
unitary (more generally normal) operators (see \cite[Theorem I.6.1]{Dix}, one can convert this space to a direct-integral $L^2$-space
$\cQ_{T^*} \cong \bigoplus \int_{\mathbb T} \cH_\zeta {\tt d \nu}(t)$ for a scalar spectral measure $\nu$
and multiplicity function $m(\zeta) = \dim \cH_\zeta$, with $W_D$ then given as the multiplication operator
$W_D = M_\zeta \colon f(\zeta) \mapsto \zeta \cdot f(\zeta)$ (here $\zeta \mapsto f(\zeta) \in \cH_\zeta$ is a measurable
square-integrable cross-section of $\{\cH_\zeta\}_{\zeta \in {\mathbb T}}$).  This becomes more precise when we make connections with the Sz.-Nagy--Foias model in the next section.
\end{remark}

\begin{remark} \label{R:Durszt}
While the Douglas model is mostly explicitly constructed in terms of the operator $T$, one could ask for a more explicit construction of the residual part $\cQ_{T^*}$ where the unitary part $W_D$
of the minimal isometric lift $V_D = M_z \oplus W_D$ is defined.
There is a later construction due to Durszt \cite{Durszt}, which, while arguably more difficult to work with, 
gives such an explicit construction,
with first component exactly the same as in the Douglas construction (after interchanging the contraction
operator $T$ with its adjoint $T^*$), while the second component is a bilateral unitary shift operator
with a more complicated but completely explicit formula for the map from $\cH$ into this 
$\ell^2_{{\mathbb Z}}$-space.
\end{remark}

\section[The Sz.-Nagy--Foias model]{The Sz.-Nagy--Foias functional model for the minimal unitary dilation} \label{S:NFmodel}

As in the approach of Sz.-Nagy and Foias towards a functional model for unitary dilations, we assume that we are given a completely non-unitary (c.n.u.) contraction operator $T$ together with its minimal unitary dilation $\cU$ on $\widetilde \cK$ and minimal isometric lift $V$  on $\cK$ with $\cH \subset \cK \subset \widetilde \cK$ and $V = \cU|_\cK$.  Following \cite{Nagy-Foias}[Sections II.1 and II.2], one can see that the subspaces
$$
 \cL = \overline{( \cU - T) \cH}, \quad \cL_* = \overline{(I - \cU T^*) \cH}
$$
contained in $\cK \subset \widetilde \cK$ are wandering subspaces for $\cU$ in the sense that
$$
 \cU^n \cL \perp  \cU^m \cL, \quad \cU^n \cL_* \perp  \cU^m \cL_* \text{ for } m \ne n \text{ in } {\mathbb Z}
$$
and hence it makes sense to define subspace $M_{\pm}(\cL)$ and $M_{\pm}(\cL_*)$ as well as $M(\cL)$ and $M(\cL_*)$ via the internal direct sums in $\widetilde \cK$:
\begin{align*}
&  M_+(\cL) := \bigoplus_{n \ge 0}  \cU^n \cL, \quad M_-(\cL) := \bigoplus_{n < 0} \cU^n \cL_, \\
&  M_+(\cL_*) := \bigoplus_{n \ge 0}  \cU^n \cL_*, \quad  M_-(\cL_*) := \bigoplus_{n < 0}  \cU^n \cL_*,   \\
&  M(\cL): = M_-(\cL) \oplus M_+(\cL) = \bigoplus_{n \in {\mathbb Z}} 
   \cU^n \cL, \\
& M(\cL_*): = M_-(\cL) \oplus M_+(\cL)  = \bigoplus_{n \in {\mathbb Z}}   \cU^n \cL_*.
\end{align*}
Note that $M_+(\cL)$ and $M_+(\cL_*)$ are invariant for $ \cU$ while $M_-(\cL)$, $M_-(\cL_*)$ are invariant for $ \cU^*$ and
$M(\cL)$ and $M(\cL_*)$ are reducing for $ \cU$.
If we set $\cR = \widetilde \cK \ominus M(\cL_*)$,  then $\cR$ is reducing for $ \cU$ and we have the two orthogonal decompositions of the space $\widetilde \cK$:
$$
 \widetilde \cK = M(\cL_*) \oplus \cR = M_-(\cL_*) \oplus \cH \oplus M_+(\cL),
$$
and the space $\cK$ on which the minimal isometric lift $V$ of $T$ acts has the two orthogonal decompositions
\begin{equation}  \label{cKdecoms}
  \cK = M_+(\cL_*) \oplus \cR = \cH \oplus M_+(\cL).
\end{equation}
In particular we see that $\cH \subset \cK$; from the fact that $\cK$ is invariant for $ \cU$ and by definition $V =  \cU|_\cK$, we see that the wandering subspaces
$\cL$ and $\cL_*$ can equally well be defined as
$$
 \cL = \overline{ (V-T) \cH}, \quad \cL_* = \overline{(I - V T^*) \cH}.
$$
In summary, 
given any minimal isometric lift of the form $(i_{\cH \to \cK}, V)$ (i.e., having $\cH$ equal to a subspace of the dilation space $\cK$ on which $V$ is acting),  the space
$\cK$ then has the two-fold orthogonal decomposition \eqref{cKdecoms};  let us refer to this structure for $\cK$ as the {\em coordinate-free Sz.-Nagy--Foias model}
for a minimal isometric lift $V$ of $T$.

We next use this coordinate-free model to arrive at the Sz.-Nagy--Foias functional model for a minimal isometric lift of $T$ as follows.
It is a straightforward  computation to show that the maps
\begin{equation}  \label{iota/iota*}
 \iota \colon (V-T) h \mapsto D_T h, \quad \iota_* \colon (I - VT^*) h \mapsto D_{T^*} h
\end{equation}
extend to unitary identification maps
\begin{align}\label{ioeta/iota*}
 \iota \colon \cL \to \cD_T, \quad \iota_* \colon  \cL_* \mapsto \cD_{T^*}.
\end{align}
From the two decompositions for $\cK$ in \eqref{cKdecoms} we also see that $M_+(\cL) \perp M_-(\cL_*)$ and hence
\begin{equation}  \label{analyticity}
  P_{M(\cL_*)}  M_+(\cL) \subset M_+(\cL_*)
\end{equation}
It can also be shown that if $T$ is completely nonunitary (as we are assuming), then we recover the so-called {\em residual space} $\cR$ from the wandering subspace $\cL$ via 
the formula
\begin{equation}  \label{cLcR}
  \cR = \overline{( I - P_{M(\cL_*)}) M(\cL)}.
\end{equation}

Define the operator $\bTheta \colon M(\cL) \to M(\cL_*)$ as the restricted projection
$$
    \bTheta = P_{M(\cL_*)}|_{M(\cL)}.
$$
Let $\biota_* \colon M(\cL_*) \to L^2(\cD_{T^*})$ and $\biota \colon M(\cL) \to L^2(\cD_T)$ be the  extensions of the operators $\iota_*$ and $\iota$ \eqref{ioeta/iota*} to Fourier representation operators
\begin{equation}   \label{biota-biota*-2sided}
\biota_* \colon \sum_{n=-\infty}^\infty \cU^n \ell_{*n} \mapsto \sum_{n=-\infty}^\infty (\iota_* \ell_{*n}) \zeta^n, \quad
\biota \colon \sum_{n=-\infty}^\infty \cU^n \ell_{n} \mapsto \sum_{n=-\infty}^\infty (\iota \ell_n) \zeta^n
\end{equation}
where $\zeta$ is the independent variable on the unit circle ${\mathbb T}$.
 Then it is easily checked that
 $$
   \bTheta (\cU|_{M(\cL)}) = (\cU|_{M(\cL_*)}) \bTheta.
 $$
 Let $\widehat \bTheta = \biota_* \bTheta \biota^* \colon L^2(\cD_T) \to L^2(\cD_{T^*})$.  Then the previous intertwining relation becomes
 the function-space intertwining
 $$
    \widehat \bTheta M^{\cD_T}_{\zeta} = M^{\cD_{T^*}}_{\zeta} \widehat \bTheta.
 $$
 By a standard result (see e.g.\ \cite[Lemma V.3.1]{Nagy-Foias}), it follows that $\widehat \bTheta$  is a multiplication operator
 $$
    \widehat \bTheta \colon h(\zeta) \mapsto \Theta(\zeta) \cdot h(\zeta)
 $$
 for a measurable $\cB(\cD_T, \cD_{T^*})$-valued function $\zeta \mapsto \Theta(\zeta)$ on the unit circle ${\mathbb T}$.  
 Here, for two Hilbert spaces $\cE$ and $\cF$, the notation $\cB(\cE,\cF)$ stands for the space of all bounded linear operators from $\cE$ into $\cF$; when $\cE=\cF$ we simply use $\cB(\cE)$.\index{$\cB(\cE,\cF)$} \index{$\cB(\cE)$}  As $\bTheta$ is a restricted projection,
 it follows that $\| \bTheta \| \le 1$, and also $\| M_\Theta \| \le 1$ as an operator from $L^2(\cD_T)$ to $L^2(\cD_{T^*})$, from
 which it follows that $\| \Theta(\zeta) \| \le 1$ for almost all $\zeta$ in the unit circle.  Furthermore, by applying the Fourier transform to the subspace inclusion \eqref{analyticity},
 we see that $M_\Theta$ maps $H^2(\cD_T)$ into $H^2(\cD_{T^*})$; thus in fact $\Theta$ is a contractive
 $H^\infty$-function with values in $\cB(\cD_T, \cD_{T^*})$, known as the {\em Sz.-Nagy--Foias characteristic function}\index{characteristic function} of $T$, 
 given by the explicit formula
 \begin{align}\label{char-func}\index{characteristic function}\index{$\Theta_T$}
 \Theta(z)=\Theta_T(z)=-T+zD_{T^*}(I_{\cH}-zT^*)^{-1}D_{T}|_{\cD_T} \colon \cD_T \to \cD_{T^*}
 \end{align}
 (see \cite[Proposition VI.2.2]{Nagy-Foias}).

 Suppose next that $k \in \cR$ has the form $k = P_\cR \bell$ for some $\bell  = \bigoplus_{n \in {\mathbb Z}_+}  \cU^n \ell_n \in M(\cL)$.  Then
 $$
 \| k \|^2  = \| P_\cR \bell \|^2 = \| \bell \|^2 - \| \bTheta \bell \|^2 =
 \| \biota \bell \|^2 - \| \Theta_T \cdot  \biota \bell \|^2 = \| \Delta_{\Theta_T} \cdot \biota \bell \|^2
 $$
 where we let $\Delta_{\Theta_T}$ be the $\cD_T$-valued operator function on the unit circle ${\mathbb T}$ given by \index{$\Delta_{\Theta_T}$}
 $$
   \Delta_{\Theta_T}(\zeta) := (I - \Theta_T(\zeta)^* \Theta_T(\zeta))^{1/2}
 $$
 (the pointwise defect operator of $\Theta_T$:  $\Delta_{\Theta_T}(\zeta) = D_{\Theta_T(\zeta)}$).
 As we are assuming that $T$ is c.n.u.,  by formula \eqref{cLcR} we get that
 the space $(I - P_{M(\cL_*)}) M(\cL) = P_\cR M(\cL)$ is dense in $\cR$.
Hence we can define a unitary map $\omega_{\rm NF}$ from $\cR$ to $\overline{ \Delta_{\Theta_T}L^2(\cD_T) }$
densely defined on $P_\cR M(\cL)$ by
\begin{equation}   \label{omegaNF}\index{$ \omega_{\rm NF}$}
 \omega_{\rm NF} \colon P_\cR \bell \mapsto \Delta_{\Theta_T} \cdot \biota \bell.
\end{equation}
From this formula, we can read off the validity of the intertwining relation
\begin{equation} \label{omegaNF'}
  \omega_{\rm NF} (V|_\cR) = \big(  M^{\cD_T}_\zeta |_{\overline{ \Delta_{\Theta_T} L^2(\cD_T)}} \big) \, \omega_{\rm NF}.
\end{equation}
Let us  introduce the Sz.-Nagy--Foias functional-model lift space (which depends only on the characteristic function $\Theta_T$)
$\cK_{\Theta_T}$ \index{$\cK_{\Theta_T}$}by
\begin{equation}  \label{KNF}\index{$\cK_{\Theta_T}$}
\cK_{\Theta_T} := \begin{bmatrix} H^2(\cD_{T^*}) \\ \overline{ \Delta_{\Theta_T}L^2(\cD_T) }\end{bmatrix}.
\end{equation}   
We next
 define a unitary identification map $U_{\rm NF}$\index{$U_{\rm NF}$} from $\cK = M_+(\cL_*) \oplus \cR$ to  $\cK_{\Theta_T}$ by
\begin{align}\label{Unf}
  U_{\rm NF} k = \begin{bmatrix} \biota_* P_{M_+(\cL_*)} k \\ \omega_{\rm NF} P_\cR k \end{bmatrix}.
\end{align}
Since $\biota_*$ is unitary from $M_+(\cL_*)$ to $H^2(\cD_{T^*})$, $\omega_{\rm NF}$ is unitary from $\cR$ to
$\overline{ \Delta_{\Theta_T}L^2(\cD_T) }$, and $\cK$ has the internal orthogonal decomposition
$\cK = M_+(\cL_*) \oplus \cR$, we see that $U_{\rm NF}$ so defined is unitary from $\cK$ onto
$\cK_{\Theta_T}$.  Observing the intertwining relation $M^{\cD_{T^*}}_z \biota_*|_{M_+((\cL_*)} = \biota V|_{M_+(\cL_*)}$
and recalling \eqref{intertwine1'}, we arrive at the intertwining relation
\begin{equation}   \label{NFintwin}
  U_{\rm NF} V = V_{\rm NF} U_{\rm NF}
\end{equation}
where we set $V_{\rm NF}$ equal to the isometric operator on $\cK_{\Theta_T}$ given by\index{$V_{\rm NF}$}
\begin{align}\label{Vnf}
  V_{\rm NF} = \begin{bmatrix} M^{\cD_T}_z & 0 \\ 0 & M_{\zeta}|_{\overline{ \Delta_{\Theta_T} L^2(\cD_T)}} \end{bmatrix}.
\end{align}
Define the isometric embedding $\Pi_{\rm NF}$ of $\cH$ into $\cK_{\Theta_T} $ as\index{$\Pi_{\rm NF}$}
\begin{align}\label{Unf&Pinf}
  \Pi_{\rm NF} = U_{\rm NF} |_\cH.
\end{align}
Then we make the formal definition:

\begin{definition}  \label{D:NFfuncmod} Given a c.n.u.~contraction operator $T$ and any choice of minimal isometric lift of the form $(i_{\cH \to \cK}, V)$, we refer to
$( \Pi_{\rm NF}, V_{\rm NF})$ given by \eqref{Unf&Pinf}  and \eqref{Vnf}  as the {\em Sz.-Nagy--Foias functional model} for a minimal isometric lift $T$.
\end{definition}

\begin{remark} \label{R:DougSchaf-vs-NF}
The reader will note that for the case of the Sch\"affer and Douglas models discussed in Sections 2.1 and 2.2 respectively,
 we were able to define the embedding maps $\Pi_S \colon \cH \to \cK_S$ and $\Pi_{D} \colon \cH \to \cK_D$ explicitly in terms of $T$ whereas in Sz.-Nagy--Foias
 case the embedding map $\Pi_{\rm NF}$ is defined more implicitly by first introducing the map $U_{\rm NF}$ identifying the Sz.-Nagy--Foias coordinate-free
 space $\cK$ \eqref{cKdecoms} and then getting $\Pi_{\rm NF}$ as the restriction of $U_{\rm NF}$ to $\cH$.  To repair this lack of symmetry, we shall now do two tasks:
 \begin{itemize}
 \item[(i)] By Theorem I.4.1 in \cite{Nagy-Foias}, any two minimal isometric lift of a contraction are unitarily equivalent. Find explicitly the unitary operators $U_S \colon \cK \to \cK_S$ and $U_D \colon \cK \to \cK_D$ establishing the unitary equivalence of the coordinate-free Sz.-Nagy--Foias  lift $(i_{\cH \to \cK}, V)$ acting on $\cK$ to the Sch\"affer-model lift $(\Pi_S, V_S)$ acting on $\cK_S$ and  the Douglas-model lift $(\Pi_D, V_D)$ acting on $\cK_D$, respectively. 
  \item[(ii)]  Find a more explicit form for the Sz.-Nagy--Foias embedding operator $\Pi_{\rm NF} \colon \cH \to \cK_{\Theta_T}$.
 \end{itemize}
 
 \smallskip
 
 \noindent
 (i)   For the case of the Sch\"affer model, we use the second of the decompositions \eqref{cKdecoms} to see that
 \begin{equation}   \label{US}
   U_S = \begin{bmatrix}   I_\cH&0\\0& \biota P_{M_+(\cL)} \end{bmatrix} \colon \cK =  \begin{bmatrix} \cH \\ M_+(\cL)\end{bmatrix} \to  \cK_S = \begin{bmatrix} \cH \\ H^2(\cD_T) \end{bmatrix}
 \end{equation}
 does the job, i.e.,
 $$
  U_S V = \begin{bmatrix} T & 0 \\ \bev_{0,\cD_T}^*D_T & M_z^{\cD_T} \end{bmatrix} U_S = V_S U_S, \quad \Pi_S := \begin{bmatrix} I_\cH \\ 0 \end{bmatrix} = U_S|_\cH.
 $$
 Thus $U_S \colon \cK \to \cK_S$
 establishes a unitary equivalence between the coordinate-free lift $(i_{\cH \to \cK}, V)$ and the Sch\"affer-model lift $(\Pi_S, V_S)$.
 
 For the case of the Douglas lift, we make use of the first decomposition of $\cK$ in \eqref{cKdecoms}:  $\cK = M_+(\cL_*) \oplus \cR$.  Toward this end we need to make
 use of various connections between the space $\cR$ and the operator $Q_{T^*}$ appearing in the Douglas model.  We first note the connection
 that, for $h \in \cH$ we have (see \cite[Proposition II.3.1]{Nagy-Foias})
 $$
   \| P_\cR h \|^2 = \langle Q_{T^*}^2 h, h \rangle.
 $$
 Hence the map  $\omega_D \colon \cR \to \cQ_{T^*}$ defined densely by
 $$
   \omega_D \colon P_\cR h \mapsto Q_{T^*} h \in \cQ_{T^*}
 $$
 is isometric.   Furthermore, it is known that, in case $V$ is a minimal isometric lift of $T$,  we have that
 $$
    \cD_R:= \bigcap_{n=0}^\infty V^n P_\cR \cH \subset \cR
 $$
 is dense in $\cR$ (see \cite[Proposition II.3.1]{Nagy-Foias}).
 It is a routine matter to extend the map $\omega_D$ to the space $\cD_R$ via the formula
 \begin{equation}   \label{omegaD}
   \omega_D \colon V^n (P_\cR h) \mapsto W_D^n Q_{T^*} h
\end{equation}
with the result that $\omega_D$ is still an isometry.  We can then extend by continuity to a well-defined unitary identification map $\omega_D$ from $\cR$ onto $\cQ_{T^*}$.
We may then make use of the first decomposition $\cK = M_+(\cL_*) \oplus \cR$ in \eqref{cKdecoms} to define the unitary identification $U_D \colon \cK \to \cK_D =
\sbm{ H^2(\cD_{T^*}) \\ \cQ_{T^*} }$ via the formula
\begin{equation}   \label{UD}
 U_D \colon k \mapsto \begin{bmatrix} \iota_* P_{M_+(\cL_*)}k \\ \omega_D P_\cR k \end{bmatrix} \colon \cK \to \cK_D:= \begin{bmatrix} H^2(\cD_{T^*}) \\ \cQ_{T^*} \end{bmatrix}.
 \end{equation}
 It is now a matter of checking that
 $$
 U_D V  = V_D U_D, \quad \Pi_D = U_D|_\cH.
 $$
 We verify here only the first component of the second identity as follows:
 \begin{align*}
 \biota_* P_{M_+(\cL_*)}h & - \sum_{n=0}^\infty z^n \iota_* (I - V V^*) V^{*n} h \text{ (by shift analysis)} \\
  & = \sum_{n=0}^\infty z^n \iota_* (I - V T^*) T^{*n} h \text{ (since $T^* = V^*|_\cH$)} \\
  & = \sum_{n=0}^\infty z^n D_{T^*} T^{*n} h \text{ (by definition of $\iota_*$)} \\
  & = D_{T^*} (I - z T^*)^{-1} h = \cO_{D_{T^*}, T^*} h.
 \end{align*}
 We conclude that $U_D$ implements the unitary equivalence of $(i_{\cH \to \cK}, V_{\rm NF})$ and $( \Pi_D, V_D)$.
 
 \smallskip
 
 \noindent
 (ii)   By definition $\Pi_{\rm NF} = U_{\rm NF} |_\cH$, so for $h \in \cH$ we have
 \begin{align}\label{PiNF}
   \Pi_{\rm NF} \colon h \mapsto \begin{bmatrix} \iota_* P_{M_+(\cL_*)} h \\ \omega_{\rm NF} P_\cR h \end{bmatrix}.
 \end{align}
 We have seen from the final computation done in item (i) above that, for $h \in \cH$, we have
 $$
 \biota_* P_{M_+(\cL_*)} h = \cO_{D_{T^*}, T^*} h.
 $$
 Thus the first component of the Sz.-Nagy--Foias functional model embedding operator $\Pi_{\rm NF}$ agrees with the first component of the Douglas functional-model
 embedding operator $\Pi_D$.  For $h \in \cH$ let us next compute the second component of the Sz.-Nagy-Foias embedding operator $\Pi_{\rm NF}$ applied to $h$:
 \begin{align*}
 \omega_{\rm NF} P_\cR h  & = \omega_{\rm NF} P_\cR h  = \omega_{\rm NF} (\omega_D)^* \cdot \omega_D P_\cR h \\
   & = \omega_{\rm NF,D} Q_{T^*} h
 \end{align*}
 where we set
 \begin{equation}   \label{omegaNFD}
  \omega_{\rm NF,D} : = \omega_{\rm NF} \, (\omega_D)^* 
 \end{equation}
 is a unitary identification map from the second component of the Douglas model space $\cQ_{T^*}$ onto the second component of the the Sz.-Nagy--Foias
 model space $\overline{ \Delta_{\Theta_T} L^2( \cD_T)}$.  One can argue that the map $\omega_{\rm NF, D}$ is again not particularly explicit, but this will be a convenient
 place to hide the lack of explicitness for our purposes here.
\end{remark}

While it is problematical to identify the space $U_{\rm NF} \cH \subset \sbm{ H^2(\cD_{T^*}) \\ \overline{ \Delta_T L^2(\cD_T)} }$ explicitly as detailed in part (ii) of
Remark \ref{R:DougSchaf-vs-NF},    its orthogonal complement
in $\cK_{\Theta_T}$, namely the space $U_{\rm NF} M_+(\cL)$,  can be identified explicitly as follows.
For $\bell \in M_+(\cL)$,
\begin{align*}
 U_{\rm NF} \bell & =  \begin{bmatrix} \biota_* P_{M_+(\cL_*)} \bell \\ \omega_{\rm NF} P_\cF \bell \end{bmatrix} \\
  & = \begin{bmatrix}  \biota_* P_{M_+(\cL_*)} \biota^* \biota \bell \\ \Delta_{\Theta_T} \biota \bell \end{bmatrix} \\
  & = \begin{bmatrix} \Theta_T \\ \Delta_{\Theta_T} \end{bmatrix} \cdot \biota \bell
 \end{align*}
and hence the space $\cH_{\Theta_T} :=\Pi_{\rm NF}\cH = U_{\rm NF} \cH$ (which turns out to depend only on the characteristic function $\Theta_T$) is given by
\begin{align}\label{HNF}\index{$  \cH_{\Theta_T}$}
  \cH_{\Theta_T} = \begin{bmatrix} H^2(\cD_{T^*}) \\ \overline{ \Delta_{\Theta_T}L^2(\cD_T)} \end{bmatrix}
  \ominus \begin{bmatrix} \Theta_T \\ \Delta_{\Theta_T} \end{bmatrix} \cdot H^2(\cD_T).
\end{align}
Note that the subspace $U_{\rm NF} M_+(\cL) = \sbm{ \Theta_T \\ \Delta_{\Theta_T} } \cdot H^2(\cD_T)$ is invariant for
$V_{\rm NF}$ and hence $\cH_{\rm \Theta_T}$ is invariant for $V_{\rm NF}^*$.
Rewrite \eqref{NFintwin} in the form
$$
  U_{\rm NF} V^* = V_{\rm NF}^* U_{\rm NF}
$$
and restrict this identity to $\cH$ to arrive at
$$
   \Pi_{\rm NF} T^* = V_{\rm NF}^* \Pi_{\rm NF}.
$$
This suggests the following formal definition.

\index{isometric lift of a contraction!Sz.-Nagy--Foias functional-model}
\begin{definition}  \label{D:NF-isom-lift} Let the space $\cK_{\Theta_T}$, the operators $\Pi_{\rm NF} \colon \cH \to \cK_{\Theta_T}$ and $V_{\rm NF}$ on $\cK_{\Theta_T}$ 
be given as in \eqref{KNF}, \eqref{Unf&Pinf}, and \eqref{Vnf}.  Then $(\Pi_{\rm NF}, V_{\rm NF})$ is a minimal isometric lift of $T$ on $\cK_{\Theta_T}$ which we shall refer to as the
{\em Sz.-Nagy--Foias functional-model isometric lift of $T$}.
\end{definition}

Let us next note that the operator $U_{\rm NF} \colon \cK \to \cK_{\Theta_T}$ implements a unitary equivalence of the Sz.-Nagy--Foias isometric lift
$(\Pi_{\rm NF}, V_{\rm NF})$ with the coordinate-free minimal isometric lift $( \iota_{\cH \to \cK}, V)$, and hence
the Sz.-Nagy--Foias functional-model isometric lift $(\Pi_{\rm NF}, V_{\rm NF})$ is also minimal.

We note that both components of the Sz.-Nagy--Foias isometric-lift space $\cK_{\Theta_T} = \sbm{ H^2(\cD_{T^*}) \\
\overline{\Delta_{\Theta_T} L^2(\cD_T)} }$ are functional Hilbert spaces (albeit with first component $H^2(\cD_{T^*})$ consisting
of holomorphic functions while with second component consisting of $L^2$-measurable functions).   
For this reason
it makes sense to say that $\cK_{\Theta_T}$ is a {\em functional model space} and that $V_{\rm NF}$
is the {\em Sz.-Nagy--Foias functional-model isometric lift} of $T$.

\begin{remark}  \label{R:NFmodel}
The isometric embedding operator $\Pi_{\rm NF}$ for the Sz.-Nagy--Foias model (the analogue of the 
operator $\Pi_S$  \eqref{PiS} for the Sch\"affer model and of the operator $\Pi_D$ \eqref{PiD} for the
Douglas model) is the embedding operator
$$
  \Pi_{\rm NF} \colon \cH \to \cK_{\Theta_T} = \begin{bmatrix} H^2(\cD_{T^*}) \\
  \overline{ \Delta_{\Theta_T} L^2(\cD_T)} \end{bmatrix}
$$
with range given by
$$
  \operatorname{Ran} \Pi_{\rm NF} = \cH_{\Theta_T}: =
  \begin{bmatrix} H^2(\cD_{T^*}) \\ \overline{ \Delta_{\Theta_T} L^2(\cD_T)} \end{bmatrix} 
  \ominus \begin{bmatrix} \Theta_T \\ \Delta_{\Theta_T} \end{bmatrix} H^2(\cD_T)
$$
and
\begin{equation}   \label{NFlift}
\Pi_{\rm NF} T^* = V_{\rm NF}^* \Pi_{\rm NF}.
\end{equation}
Let us write $\Pi_{\rm NF, 0}$ for the operator $\Pi_{\rm NF}$ but considered to have codomain equal to its range $\cH_{\Theta_T}$ rather than all of $\cK_{\Theta_T}$.
Then $\Pi_{\rm NF,0} \colon \cH \to \cH_{\Theta_T}$ is unitary and from \eqref{NFlift} one can see that
$$
  T =  \Pi_{\rm NF,0}^* ( P_{\cH_{\Theta_T}}  V |_{\cH_{\Theta_T} })  \Pi_{\rm NF, 0},
$$
thereby showing that $T$ is unitarily equivalent to its functional-model operator $T_{\Theta_T}$:
$$
  T \underset{u} \cong T_{\Theta_T} := P_{\cH_{\Theta_T}}  \left. \begin{bmatrix} M^{\cD_{T^*}}_z & 0 \\ 0 & M^{\cD_T}_\zeta|_{ \overline{\Delta_{\Theta_T} L^2(\cD_T)}} \end{bmatrix} \right|_{\cH_{\Theta_T}}.
$$
It turns out that the characteristic operator function $\Theta_T$ \eqref{char-func} is a {\em complete unitary invariant} for a c.n.u. contraction operator $T$ 
in the sense that that two c.n.u.~contraction operators $T$ and $T'$ are unitarily equivalent if and only if $\Theta_T$ and $\Theta_{T'}$
{\em coincide} (i.e., are the same after a unitary change of basis on the input coefficient Hilbert space and on the output coefficient Hilbert space). 

But there is more.  Let us first note that any
\index{$(\cD_T,\cD_{T^*},\Theta_T)$} characteristic function $\Theta_T$ (written as $(\cD_T,\cD_{T^*},\Theta_T)$ when we wish to emphasize that the
values of $\Theta_T$ are operators from $\cD_{T}$ to $\cD_{T^*}$) is a {\em contractive analytic function}  \index{contractive analytic function} in the terminology of \cite{Nagy-Foias}, meaning that 
$\Theta_T$ an analytic function on the unit disk with contractive operator values.
It works out that the {\em coincidence envelope} of the characteristic functions consists exactly of those contractive analytic functions $(\cD, \cD_*, \Theta)$ 
which are also {\em purely contractive}\index{purely contractive} in the sense that  $\| \Theta(0) d \| < \| d \| $ for $0 \ne d \in \cD$\index{contractive analytic function!purely}, and the model-theory point of view can be reversed: given any purely contractive contractive analytic function
$(\cD, \cD_*, \Theta)$ one can form the space $\cH_\Theta$ exactly as in \eqref{HNF} but with $(\cD, \cD_*, \Theta)$ in place of $(\cD_T, \cD_{T^*}, \Theta_T)$: \index{$\cH_\Theta$}
$$
\cH_\Theta := \begin{bmatrix} H^2(\cD_*) \\  \overline{\Delta_\Theta \cdot H^2(\cD)} \end{bmatrix} \ominus \begin{bmatrix} \Theta \\ \Delta_\Theta \end{bmatrix} H^2(\cD),
$$where $\Delta_\Theta(\zeta)=(I_\cD-\Theta(\zeta)^*\Theta(\zeta))^{1/2}$ for $\zeta\in\mathbb T$.\index{$\Delta_\Theta$} Define the {\em model operator} associated with the purely contractive analytic function $(\cD, \cD_*, \Theta)$: \index{$T_\Theta$}
$$
T_\Theta : =\left. P_{\cH_\Theta} \begin{bmatrix} M^\cD_z & 0 \\ 0 & M^{\cD_*}_\zeta|_{ \overline{\Delta_\Theta L^2(\cD)}} \end{bmatrix} \right|_{\cH_\Theta}.
$$
Then it can be shown that the model operator $T_\Theta$ is a c.n.u.~contraction operator on $\cH_\Theta$, and its 
characteristic function $(\cD_{T_\Theta}, \cD_{T^*_\Theta}, \Theta_{T_\Theta})$ coincides with the original purely contractive analytic function $(\cD, \cD_*, \Theta)$.
Consequently, the study of abstract c.n.u.~contraction operators $T$ on a Hilbert space $\cH$ is equivalent to the
study of concrete functional-model operators of the form $T_\Theta$ associated with a purely contractive analytic function $\Theta$.
Furthermore, the restriction that
$T$ be c.n.u.~ is not really a restriction since any contraction operator can be decomposed as $T_{\rm cnu} \oplus T_u$ where $T_{\rm cnu}$ is c.n.u.~ and $T_u$
is unitary where unitaries are essentially well understood via spectral theory.  In Chapter \ref{C:NFmodel} below we 
present an extension of all these ideas to the setting of a commuting contractive pair $(T_1, T_2)$.
\end{remark}

\chapter{Pairs of commuting isometries}  \label{C:isometries}

Berger, Coburn and Lebow in \cite[Theorem 3.1]{B-C-L} gave a concrete model for $d$-tuples of commuting isometries
which played a
basic role in their investigation of the structure of the $C^*$-algebra generated by commuting isometries and Fredholm
theory of its elements.
In this chapter, we review the Berger-Coburn-Lebow model for pairs of commuting isometries for the pair case ($d=2$).

\section{Models for commuting pairs of isometries}  \label{S:Models-isom}

Before discussing the Berger-Coburn-Lebow (BCL) model for commuting pairs of isometries, we need a couple of lemmas.
Let us first recall that the Wold decomposition represents any isometry $V$ as the direct sum $V = S \oplus W$ with $S$
equal to a shift
operator ($S$ is an isometry such that $S^{*n} \to 0$ strongly as $n \to \infty$ or equivalently $\cap_{n=0}^\infty \operatorname{Ran} S^n = \{0\}$),
while $W$ is unitary.  As was mentioned in the previous chapter, we shall use the term shift operator and pure isometry interchangeably.

The first gives a model for commuting partial isometries of a special form which is a key ingredient in the proof of the Berger-Coburn-Lebow model theory for commuting isometries. For completeness, we provide a detailed proof here.

\begin{lemma}\label{relations-of-E-lem}
Let $\mathcal{F}$ be any Hilbert space and $E_1,E_2$ be operators on $\mathcal F$. Then $E_1,E_2$ are partial isometries of the form
\begin{equation}   \label{PIforms}
(E_1,E_2)=(U^* P^\perp, P U)
\end{equation}
for some projection $P$ and unitary $U$ in $\mathcal{B}(\mathcal{F})$ if and only if $E_1$, $E_2$ satisfy
\begin{align}\label{PIconds}
E_1E_2=E_2E_1=0 \text{ and } E_1E_1^*+E_2^*E_2=E_1^*E_1+E_2E_2^*=I_{\mathcal F}.
\end{align}
\end{lemma}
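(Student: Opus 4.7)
The plan is to treat the two directions separately, with the forward direction being a direct substitution check and the backward direction being the constructive content of the lemma.

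For the forward direction $(\Rightarrow)$, assuming $E_1 = U^* P^\perp$ and $E_2 = PU$, I would simply substitute these forms and read off each identity. The products $E_1 E_2 = U^* P^\perp P U$ and $E_2 E_1 = P U U^* P^\perp = P P^\perp$ both vanish because $P P^\perp = 0$. The two sum identities reduce to $U^*(P^\perp + P)U = I$ and $P^\perp + P = I$ using $U^* U = U U^* = I$. The partial-isometry property is visible from $E_1^* E_1 = P^\perp$ and $E_2 E_2^* = P$ being projections.

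For the backward direction $(\Leftarrow)$, the core idea is to guess the right formula for $U$. Motivated by the observation that if $E_1 = U^* P^\perp$ and $E_2 = PU$ then $E_1^* + E_2 = P^\perp U + P U = U$, I would set
\begin{equation*}
U := E_1^* + E_2, \qquad P := E_2 E_2^*.
\end{equation*}
The first step is to verify unitarity of $U$: expanding $U^* U$ and $U U^*$ yields four cross terms each, and $E_1 E_2 = 0$ together with its adjoint $E_2^* E_1^* = 0$ (and similarly for $E_2 E_1 = 0$) kills the off-diagonal pieces, leaving the two given sum identities, both equal to $I$. The second step is to check that $E_1, E_2$ are actually partial isometries under the hypotheses — multiplying $E_1^* E_1 + E_2 E_2^* = I$ on the right by $E_1$ and using $E_2 E_1 = 0$ gives $E_1 E_1^* E_1 = E_1$, and symmetrically for $E_2$; this in particular makes $P = E_2 E_2^*$ an orthogonal projection.

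Finally, I would verify $E_1 = U^* P^\perp$ and $E_2 = P U$ by direct calculation: $PU = E_2 E_2^*(E_1^* + E_2) = 0 + E_2 E_2^* E_2 = E_2$ using $E_2^* E_1^* = 0$ and the partial-isometry identity, and $U^* P^\perp = (E_1 + E_2^*)(I - E_2 E_2^*) = E_1 + E_2^* - E_1 E_2 E_2^* - E_2^* E_2 E_2^* = E_1$ using $E_1 E_2 = 0$ and again the partial-isometry identity for $E_2$. The only real obstacle is identifying the correct candidate for $U$; once that ansatz is in place, the argument is a sequence of short algebraic manipulations driven entirely by the two orthogonality relations and the two sum-to-identity relations.
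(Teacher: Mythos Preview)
Your proposal is correct and arrives at exactly the same definitions as the paper, namely $U := E_1^* + E_2$ and $P := E_2 E_2^*$, followed by algebraic verification. The paper takes a slightly more geometric route to discover this formula: it first reinterprets the conditions \eqref{PIconds} as the orthogonal decompositions $\operatorname{Ran} E_1 \oplus \operatorname{Ran} E_2^* = \mathcal{F} = \operatorname{Ran} E_1^* \oplus \operatorname{Ran} E_2$, then builds $U$ as a direct sum of polar-decomposition unitaries before observing that this yields $U = E_1^* + E_2$. Your approach of guessing the formula from the forward direction and verifying directly is more economical; the paper's approach gives more structural insight into \emph{why} the formula works.

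One minor slip to fix: in your partial-isometry step you write ``multiplying $E_1^* E_1 + E_2 E_2^* = I$ on the right by $E_1$ and using $E_2 E_1 = 0$ gives $E_1 E_1^* E_1 = E_1$,'' but that multiplication produces $E_1^* E_1 E_1 + E_2 E_2^* E_1$, which is not what you want. Either multiply that identity on the \emph{left} by $E_1$ and use $E_1 E_2 = 0$, or multiply the \emph{other} identity $E_1 E_1^* + E_2^* E_2 = I$ on the right by $E_1$ and use $E_2 E_1 = 0$; both give $E_1 E_1^* E_1 = E_1$ as claimed.
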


\begin{proof}
Suppose that $(E_1, E_2)$ has the form \eqref{PIforms}.  By direct substitution and making use of the unitary property of $U$ one sees that
then $(E_1, E_2)$ satisfies \eqref{PIconds}.

Conversely, suppose that $(E_1, E_2)$ satisfies conditions \eqref{PIconds}.  Then one can use conditions \eqref{PIconds} to see that
$$
E_1 E_1^* E_1 = E_1 (E_1^* E_1 + E_2 E_2^*) = E_1, \quad
E_2 E_2^* E_2 = E_2 (E_2^* E_2 + E_1 E_1^*) = E_2.
$$
Hence we have
 \begin{equation}   \label{part-isom}
   E_1 E_1^* E_1 = E_1, \quad E_2 E_2^* E_2 = E_2
\end{equation}
from which it follows that $E_1$ and $E_2$ are partial isometries.  This in turn is equivalent to all of $E_1E_1^*, E_1^*E_1,E_2E_2^*,
 E_2^*E_2$ being projections onto $\operatorname{Ran} E_1$, $\operatorname{Ran} E_1^*$, $\operatorname{Ran}E_2$,
 $\operatorname{Ran} E_2^*$, respectively.   Therefore conditions (\ref{PIconds}) can be reformulated as
\begin{equation}  \label{decoms}
\operatorname{Ran} E_1 \oplus \operatorname{Ran} E_2^*=\mathcal{F}=\operatorname{Ran} E_1^* \oplus \operatorname{Ran}E_2.
\end{equation}
By the polar-decomposition theorem, we have unitaries $U_1:\operatorname{Ran} E_1\to \operatorname{Ran} E_1^*$ and 
$U_2:\operatorname{Ran }E_2^*\to \operatorname{Ran} E_2$ such that 
$$
E_1^*=U_1(E_1E_1^*)^{\frac{1}{2}} \text{ and }E_2=U_2(E_2^*E_2)^{\frac{1}{2}}.
$$
Let us define a unitary operator $U$ on $\cF$ (making use of the decompositions \eqref{decoms} by
$$
U:=U_1\oplus U_2: \operatorname{Ran }E_1 \oplus \operatorname{Ran }E_2^*\to \operatorname{Ran }E_1^* \oplus \operatorname{Ran }E_2.
$$
More explicitly, making use of the fact that $E_1$ and $E_2$ are partial isometries with $E_1$ and $E_2^*$ having complementary ranges, we can
get a formula for the action of $U$:
$$
Ux =  U_1 E_1 E_1^* x + U_2 E_2^* E_2 x   =  (E_1^* +  E_2) x  \text{ for } x \in \cF.
$$
Let us now define a projection operator $P$ and a unitary operator $U$ on $\cF$ by 
\begin{equation}  \label{defPU}
P = E_2 E_2^*, \quad U:= E_1^* + E_2.
\end{equation}
It is now a straightforward exercise to verify that we recover $(E_1, E_2)$ from $(P,U)$ according to the formula \eqref{PIforms}.
\end{proof}

The first part of the next result is well-known (see e.g.\ \cite[page 227]{NFintertwine}) and the second part is an easy corollary
of the first part; we include short proofs of both results for completeness.

\begin{lemma}\label{L:AuxLemma}
\begin{enumerate}
\item
The only bounded linear operator intertwining a unitary operator with a shift operator is the zero operator, i.e.:  if
$\cK$ and $\cK'$ are Hilbert spaces, $U$ is a unitary operator on $\cK$, $S$ is a shift operator on $\cK'$, and $\Gamma \colon \cK \to \cK'$
is a bounded linear operator such that $\Gamma U = S \Gamma$, then $\Gamma = 0$.

\item Suppose that $S$ and $S'$ are shift operators on $\cK_1$ and $\cK_1'$ respectively, $U$ and $U'$ are unitary operators on
$\cK_2$ and $\cK'_2$ respectively, and $\Gamma = \sbm{ \Gamma_{11} & \Gamma_{12} \\ \Gamma_{21} & \Gamma_{22} }$
is a unitary operator from $\cK_1 \oplus \cK_2$ to $\cK'_1 \oplus \cD'_2$ which intertwines $S \oplus U$ with $S' \oplus U'$:
\begin{equation}   \label{block-intertwine}
\begin{bmatrix} \Gamma_{11}  & \Gamma_{12}  \\ \Gamma_{21} & \Gamma_{22} \end{bmatrix} \begin{bmatrix} S & 0 \\ 0 & U \end{bmatrix}
= \begin{bmatrix} S' & 0 \\ 0 & U' \end{bmatrix} \begin{bmatrix} \Gamma_{11}  & \Gamma_{12}  \\ \Gamma_{21} & \Gamma_{22} \end{bmatrix}.
\end{equation}
Then $\Gamma$ is block-diagonal, i.e.:  $\Gamma_{12} = 0$ and $\Gamma_{21} = 0$.
\end{enumerate}
\end{lemma}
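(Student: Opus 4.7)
\emph{Part (1).}  The plan is to iterate the intertwining relation and exploit the defining property of a shift, namely that $\bigcap_{n\ge 0} \operatorname{Ran} S^n = \{0\}$. From $\Gamma U = S\Gamma$ and the unitarity of $U$, rewrite the identity as $\Gamma = S\Gamma U^*$. Iterating this $n$ times yields
\[
\Gamma = S^n\,\Gamma\, U^{*n} \quad\text{for every } n\ge 0.
\]
Hence for every $x\in\cK$, the vector $\Gamma x = S^n\bigl(\Gamma U^{*n} x\bigr)$ lies in $\operatorname{Ran} S^n$. Taking the intersection over all $n$, we obtain $\operatorname{Ran}\Gamma \subseteq \bigcap_{n\ge 0} \operatorname{Ran} S^n = \{0\}$, so $\Gamma = 0$. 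No routine computation beyond this iteration is needed.

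\emph{Part (2).} The strategy is to read off two off-diagonal intertwining relations from \eqref{block-intertwine} and reduce each to Part (1); the subtlety is that only one of the two blocks directly intertwines a unitary with a shift, and handling the other requires exploiting the unitarity of $\Gamma$.

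First, comparing the $(1,2)$ block on both sides of \eqref{block-intertwine} gives
\[
\Gamma_{12}\,U = S'\,\Gamma_{12},
\]
which is exactly the hypothesis of Part (1) (a bounded operator intertwining the unitary $U$ with the shift $S'$), so $\Gamma_{12}=0$.

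For $\Gamma_{21}$ the $(2,1)$ block gives $\Gamma_{21}S = U'\Gamma_{21}$, which intertwines a shift with a unitary — the \emph{wrong} direction to apply Part (1). Here is where unitarity of $\Gamma$ enters. Since $\Gamma$ is unitary, $\Gamma^{-1}=\Gamma^*$ satisfies the reverse intertwining
\[
\Gamma^*\begin{bmatrix} S' & 0 \\ 0 & U' \end{bmatrix} = \begin{bmatrix} S & 0 \\ 0 & U \end{bmatrix}\Gamma^*,
\]
and the $(1,2)$ block of $\Gamma^*$ is $\Gamma_{21}^*$. Reading off the $(1,2)$ entry of this new identity yields
\[
\Gamma_{21}^*\,U' = S\,\Gamma_{21}^*,
\]
which now does intertwine a unitary $U'$ with a shift $S$. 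Applying Part (1) again gives $\Gamma_{21}^*=0$, hence $\Gamma_{21}=0$, as desired.

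\emph{Expected obstacle.}  The only real subtlety is the asymmetry between the two off-diagonal blocks: Part (1) is one-sided (unitary on the right, shift on the left), and a naive adjoint of $\Gamma_{21}S = U'\Gamma_{21}$ merely swaps the problem ($S^*\Gamma_{21}^*=\Gamma_{21}^*U'^*$, where $S^*$ is a coisometry, not a shift). The clean way out is to use the hypothesis that $\Gamma$ is unitary so that $\Gamma^*$ is itself a block intertwiner (with the roles of the two sides swapped), and then Part (1) applies to the $(1,2)$ block of $\Gamma^*$. Once this observation is made, the argument is essentially just a couple of lines.
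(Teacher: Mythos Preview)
Your proof is correct and follows essentially the same approach as the paper: Part (1) via iteration and the shift property $\bigcap_n \operatorname{Ran} S^n = \{0\}$, and Part (2) by reading off the $(1,2)$ block for $\Gamma_{12}$, then passing to $\Gamma^*$ (using unitarity) and reading off its $(1,2)$ block to handle $\Gamma_{21}$. The paper's argument and yours differ only in cosmetic phrasing.
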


\begin{proof}  \textbf{(1):}
The intertwining condition $\Gamma U = S \Gamma$ implies that $\Gamma U^n = S^n \Gamma$ for all $n=0,1,2,\dots$.
As $U$ is unitary, $\operatorname{Ran } U^n$ is the whole space $\cK$ for all $n=0,1,2,\dots$ and we conclude that
$\operatorname{Ran} \Gamma \subset \cap_{n=0}^\infty \operatorname{Ran} S^n$.  As $S$ is a shift,
$\cap_{n=0}^\infty \operatorname{Ran} S^n = \{0\}$, and we are forced to conclude that $\Gamma$ is the zero operator.

\smallskip

\textbf{(2):}  From the (1,2)-entry of \eqref{block-intertwine} we see that $\Gamma_{12} U = S' \Gamma_{12}$.  
From part (1) of the lemma
we conclude that $\Gamma_{12} = 0$.

As $\Gamma$ is unitary, it follows that \eqref{block-intertwine} can be rewritten as
$$
  \begin{bmatrix} S & 0 \\ 0 & U \end{bmatrix} \begin{bmatrix} \Gamma_{11}^* & \Gamma_{21}^* \\ 0 & \Gamma_{22}^* \end{bmatrix}
  = \begin{bmatrix} \Gamma_{11}^* & \Gamma_{21}^* \\ 0 & \Gamma_{22}^* \end{bmatrix} \begin{bmatrix} S' & 0 \\ 0 & U' \end{bmatrix}.
$$
The (1,2)-entry of this equality gives
$$
  S \Gamma_{21}^* = \Gamma_{21}^* U'.
$$
Again by part (1) of the Lemma, we conclude that $\Gamma_{21}^* = 0$, and hence also $\Gamma_{21} = 0$.
\end{proof}

\begin{remark}  \label{R:AuxLemma}
We note that part (1) of Lemma \ref{L:AuxLemma} fails if the hypothesis is changed to: {\em $U$ is unitary on 
$\cK$, $S$ is a shift on $\cK'$
and $X \colon \cK' \to \cK$ is such that $X S = U X$.}  As an example, take $U = M_\zeta$ on $\cK = L^2$, 
$S = M_z$ on $\cK' = H^2$
and $X \colon H^2 \to L^2$ equal to the embedding $X \colon f(z) \mapsto f(\zeta)$ of $H^2$ into $L^2$.
\end{remark}

We now have all the preparations needed to derive the BCL model for a commuting pair of isometries.  We shall actually have use for two
such models, each of which is easily derived from the other.  A somewhat different proof follows from Theorem 2.1 and Lemma 2.2 in
\cite{BDF1}.

For a Hilbert space $\cH$, we shall use $\cB(\cH)$ for the space of all bounded linear operators from $\cH$ to $\cH$. 
\begin{theorem}\label{Thm:BCLmodel}  {\rm (See Berger-Coburn-Lebow \cite{B-C-L}.)}
Let $(V_1,V_2)$ be a pair of commuting isometries on a Hilbert space $\cH$. 

\begin{enumerate}
\item Then there exist  Hilbert spaces $\cF$ and $\cK_{u}$,
a unitary identification map $\tau_{\rm BCL} \colon \cH \to \sbm{ H^2(\cF) \\ \cK_{u}}$,
a projection $P$ in $\cB(\cF)$, a unitary $U$ in $\cB(\cF)$ and commuting unitaries $W_{1}, W_{2}$ in $\cB(\cK_{u})$
such that
\begin{equation}   \label{BCL1model}
\tau_{\rm BCL}  V_1  = \begin{bmatrix} M_{P^\perp U + z PU} & 0 \\ 0 &  W_{1} \end{bmatrix} \tau_{\rm BCL}, \quad
\tau_{\rm BCL} V_2   = \begin{bmatrix} M_{U^*P + z U^*P^\perp} & 0 \\ 0 &  W_{2} \end{bmatrix}   \tau_{\rm BCL}.
\end{equation}
Explicitly one can take
\begin{equation}  \label{tauBCL-def}
  \cF = \cD_{V^*}, \quad \tau_{\rm BCL} = \begin{bmatrix}  \cO_{D_{V^*}, V^*} \\ Q_{V^*} \end{bmatrix}
  \colon \cH \to \begin{bmatrix} H^2(\cD_{V^*}) \\ \cQ_{V^*} \end{bmatrix}.
\end{equation}
where $\cO_{D_{V^*}, V^*}$, $Q_{V^*}$, and $\cQ_{V^*}$ are as in \eqref{obsop}, \eqref{Q}, \eqref{cQT*}
with $V^*$ in place of $T^*$.
We shall say that a pair of operators of the form
$$
\bigg(  \begin{bmatrix} M_{P^\perp U + z PU} & 0 \\ 0 & W_{1}\end{bmatrix}, 
\begin{bmatrix}M_{U^*P + z U^* P^\perp} & 0 \\ 0 &  W_{2} \end{bmatrix} \bigg)
$$
acting on $H^2(\cF) \oplus \cK_u$ (with $P$, $U$ as above) is a {\em BCL1 model}
\index{BCL1!model}
for a pair of commuting isometries.

\item Equivalently,  there exist  Hilbert spaces $\cF$ and $\cK_{u}$,
a unitary identification map $\tau_{\rm BCL}\colon \cH \to \sbm{ H^2(\cF) \\ \cK_{u}}$ ,
a projection $P^\ff$ in $\cB(\cF)$, a unitary $U^\ff$ in $\cB(\cF)$ and commuting unitaries $W_{1}^\ff, W_{2}^\ff$ in 
$\cB(\cK_{u})$
such that
\begin{equation}  \label{BCL2model}
\tau_{\rm BCL}V_1  = \begin{bmatrix}  M_{U^{\ff *}P^{\ff \perp} + z U^{\ff *}P^\ff} & 0 \\ 0 &  W_{1}^\ff \end{bmatrix}
  \tau_{\rm BCL}, \quad
  \tau_{\rm BCL} V_2   = \begin{bmatrix} M_{P^\ff U^\ff + z P^{\ff \perp}U^\ff} & 0 \\ 0 &  W_{2}^\ff
  \end{bmatrix} \tau_{\rm BCL}
\end{equation}
where again one can take $\cF$, $\cK_u$ and $\tau_{\rm BCL}$,as in \eqref{tauBCL-def}.
We shall say that a pair of operators of the form
$$
 \begin{bmatrix} M_{U^{\ff *}P^{\ff \perp} + z U^{\ff *}
P^\ff } & 0 \\ 0 & W_{1}^\ff \end{bmatrix}, \,
\begin{bmatrix}  M_{P^\ff U^\ff + z P^{\ff \perp}U^\ff} & 0 \\ 0 &  W_{2}^\ff  \end{bmatrix}
$$
acting on $\sbm{ H^2(\cF^\ff) \\ \cK^\ff_u}$ (with $P^\ff$, $U^\ff$ as above) is a {\em BCL2 model}
\index{BCL2!model}
for a pair of commuting isometries.
\end{enumerate}
\end{theorem}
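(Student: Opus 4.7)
The natural approach is to use the Wold decomposition of the product $V := V_1 V_2$ to reduce to the pure-isometry (shift) case and then analyze the factor operators as multiplication operators on a Hardy space. First I would show that the Wold decomposition $\cH = \cH_s \oplus \cH_u$ of $V$ is reducing for both $V_1$ and $V_2$. The inclusion $V_j \cH_u \subset \cH_u$ follows from commutativity with $V$ together with the characterization $\cH_u = \bigcap_{n \geq 0} V^n \cH$. Since $V_1 V_2 |_{\cH_u} = V|_{\cH_u}$ is unitary, each $V_j|_{\cH_u}$ is surjective onto $\cH_u$ and therefore unitary; one then takes $W_j := V_j|_{\cH_u}$, and these commute because $V_1, V_2$ do. Isometricity of $V_j$ gives $V_j \cH_s \perp V_j \cH_u = \cH_u$, hence $V_j \cH_s \subset \cH_s$, and reducingness follows.

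Next, since $V$ is an isometry, the Douglas map $\tau_{\rm BCL} = \sbm{ \cO_{D_{V^*}, V^*} \\ Q_{V^*} }$ from Section \ref{S:Douglas} unitarily identifies $\cH_s$ with $H^2(\cD_{V^*})$ (sending $V|_{\cH_s}$ to $M_z$) and $\cH_u$ with $\cQ_{V^*}$ (sending $V|_{\cH_u}$ to the canonical unitary on $\cQ_{V^*}$); this automatically supplies the explicit form of $\tau_{\rm BCL}$ asserted in \eqref{tauBCL-def}. Setting $\cF := \cD_{V^*}$, each $V_j|_{\cH_s}$ commutes with $M_z$ on $H^2(\cF)$ and hence, by the standard analytic-Toeplitz commutant theorem, is a multiplication operator $M_{\varphi_j}$ with $\varphi_j \in H^\infty(\cB(\cF))$; isometricity forces $\varphi_j$ to be inner. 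Moreover $\varphi_1 \varphi_2 = \varphi_2 \varphi_1 = z I$.

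The crux is to show that $\varphi_1$ and $\varphi_2$ are in fact affine polynomials in $z$. Multiplying the boundary identity $\varphi_1(\zeta)\varphi_2(\zeta) = \zeta I$ on the left by $\varphi_1(\zeta)^*$ and using $\varphi_1^*\varphi_1 = I$ a.e.\ gives $\varphi_2(\zeta) = \zeta \, \varphi_1(\zeta)^*$ on $\mathbb{T}$. Expanding $\varphi_1(z) = \sum_{n \geq 0} A_n z^n$, the right-hand side has Fourier series $\sum_{n \geq 0} A_n^* \zeta^{1-n}$; for this to be the boundary value of an $H^\infty$-function (only non-negative powers of $\zeta$), we must have $A_n = 0$ for $n \geq 2$. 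Thus $\varphi_1(z) = A_0 + z A_1$ and $\varphi_2(z) = A_1^* + z A_0^*$, and matching Fourier coefficients in $\varphi_j^*\varphi_j = I$ (for $j=1,2$) yields exactly the four partial-isometry relations appearing in \eqref{PIconds} for the pair $(E_1, E_2) := (A_0, A_1^*)$. Lemma \ref{relations-of-E-lem} then produces a projection $P$ and unitary $U$ on $\cF$ with $A_0 = U^* P^\perp$ and $A_1^* = PU$, whence $\varphi_1(z) = U^*P^\perp + z U^* P$ and $\varphi_2(z) = PU + z P^\perp U$, which is precisely the BCL2 model \eqref{BCL2model}. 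The BCL1 form \eqref{BCL1model} is then obtained by a direct parameter substitution $(P, U) \mapsto (U^* P^\perp U, U^*)$ on the BCL2 symbols, or equivalently by running the same Fourier argument after multiplying $\varphi_1 \varphi_2 = zI$ on the right (instead of on the left) by $\varphi_2^*$.

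The main obstacle is the degree-bound argument in the third paragraph: the slick identity $\varphi_2 = \zeta \varphi_1^*$ on the circle is what instantly forces the affine structure, but one must verify carefully that the boundary manipulation is justified (which it is because both factors are bounded inner $H^\infty$-functions and so have a.e.\ boundary values). Everything else is algebraic bookkeeping that is fully absorbed by the two preparatory lemmas of this section, with Lemma \ref{L:AuxLemma} available in reserve to rule out mixed off-diagonal pieces when one wants an a priori reason that $V_1, V_2$ respect the Wold splitting of $V$ on the level of block decompositions rather than through the direct invariance argument I gave above.
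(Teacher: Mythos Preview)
Your proof is correct and follows essentially the same route as the paper: Wold-decompose $V=V_1V_2$, show the decomposition reduces each $V_j$, transport to $H^2(\cD_{V^*})\oplus \cH_u$ via $\tau_{\rm BCL}$, identify the shift parts as $M_{\varphi_j}$, use $\varphi_1\varphi_2=zI$ to force affine pencils, and finish with Lemma~\ref{relations-of-E-lem}. The only noticeable differences are cosmetic: the paper obtains block-diagonality by writing $\widetilde V_j$ as a $2\times 2$ matrix and killing the off-diagonal entries via Lemma~\ref{L:AuxLemma} plus an isometry argument, whereas you argue reducingness directly (and mention Lemma~\ref{L:AuxLemma} as a backup); and the paper extracts $E_1E_2=E_2E_1=0$ from the product identity $M_{\varphi_1}M_{\varphi_2}=M_z$ rather than from the off-diagonal Fourier coefficients of $\varphi_j^*\varphi_j=I$, but these give the same relations.
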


\begin{proof}
We note that the {\em flip-transformation} acting on BCL-data sets $(\cF, P, U)$ given by
\begin{equation}   \label{flip}
\ff \colon  (\cF, P, U, W_1, W_2) \mapsto (\cF^\ff, P^\ff, U^\ff, W^\ff_1, W^\ff_2): = (\cF, U^* P U, U^*, W_1, W_2)
 \end{equation}
 transforms the BCL1 model \eqref{BCL1model} into the form of the BCL2 model \eqref{BCL2model} and vice versa.
 Alternatively, note that one converts a BCL1 model to a BCL2 model by interchanging the indices (1,2) on $V_1, V_2$ and interchanging $P$ with $P^\perp$,
 and vice-versa.
 Hence it suffices to verify only one of the
 statements (1) and (2).  We shall work out the details for the BCL2 model.  As all the details will be worked out
 only for this setting, we drop the superscript-$\ff$ from the notation.

Let $V $ be the isometry $V= V_1 V_2$,
set $D_{V^*} = (I - V V^*)^{\frac{1}{2}}$ equal to  the {\em defect operator} for $V^*$, and let $\cD_{V^*} = \overline{\operatorname{Ran}}\, D_{V^*}$.
Since $V$ is an isometry, in fact $D_{V^*} = (I - V V^*)$ is just the orthogonal projection onto $(\operatorname{Ran} V)^\perp$
and $\cD_{V^*} = \operatorname{Ran} D_{V^*} = (\operatorname{Ran} V)^\perp$.  By an iterative and limiting procedure, one can show that
any $h \in \cH$ decomposes orthogonally as
$$
  h = \left(\bigoplus_{n=0}^\infty V^n D_{V^*} V^{*n} h \right) \oplus h_u \text{ where } h_u = \lim_{n \to \infty} V^n V^{*n} h \in \cH_u : = \bigcap_{n=0}^\infty
  \operatorname{Ran} V^n.
$$
Hence the space $\cH$ decomposes as
$$
   \cH = \left( \bigoplus_{n=0}^\infty V^n \cD_{V^*} \right) \oplus \cH_u,
$$
amounting to the coordinate-free version of the {\em Wold decomposition} for the isometry $V$
(see \cite{vonN-Wold, Wold}).
To convert this decomposition to a more functional form, we introduce a unitary Fourier representation operator
$$
\tau_{\rm BCL} \colon \cH \to \begin{bmatrix}  H^2(\cD_{V^*}) \\  \cH_u  \end{bmatrix}
$$
given by
\begin{equation}   \label{tauBCL}
  \tau_{\rm BCL} \colon h \mapsto  \begin{bmatrix} \sum_{n=0}^\infty (D_{V^*} V^{*n} h) z^n \\
   \lim_{n \to \infty} V^n V^{*n}  \end{bmatrix} h = :  \begin{bmatrix} \cO_{D_V^*, V^*} \\ \cQ_{V^*} \end{bmatrix}
\end{equation}
Then  one easily checks that $\tau_{\rm BCL}$ has the intertwining property
 $$
 \tau_{\rm BCL} V = (M_z^{\cD_{V^*}} \oplus W) \tau_{\rm BCL}
 $$
where $M_z^{\cD_{V^*}}$ is the forward shift on $H^2(\mathcal{D}_{V^*})$  ($ M_z^{\cD_{V^*}} \colon f(z) \mapsto z f(z)$) and where $W=V|_{\mathcal{H}_u}$ is a unitary.
Let us set
$$
   \widetilde V_1 = \tau_{\rm BCL} V_1 \tau_{\rm BCL}^*,  \quad \widetilde V_2 = \tau_{\rm BCL} V_2 \tau_{\rm BCL}^*.
 $$
 Write out block-matrix representations
 \begin{equation}  \label{tildeV-1}
   \widetilde V_j = \begin{bmatrix} \widetilde V_{j,11} & \widetilde V_{j,12} \\ \widetilde V_{j,21} & \widetilde V_{j, 22} \end{bmatrix}, \quad j = 1,2
 \end{equation}
 for the operators $\widetilde V_j$ with respect to the decomposition $\sbm{ H^2(\cD_{V^*})  \\ \cH_u }$ on which they act.
 The commutativity of each $V_1,V_2$ with $V$ implies the commutativity of each $\widetilde V_1,
 \widetilde V_2$ with $\sbm{ M_z^{\cD_{V^*}} & 0 \\ 0 & W}$.  In particular, we get the corner intertwining conditions
 $$
 \widetilde V_{j, 12} W = M_z^{\cD_{V^*}}   \widetilde V_{j, 12}
 $$
 for $j=1,2$.  As $W$ is unitary and $M_z^{\cD_{V^*}}$ is a shift,  part (1) of Lemma \ref{L:AuxLemma} implies that $\widetilde V_{j,12} = 0$ for $j=1,2$,
 and the representation \eqref{tildeV-1} collapses to
 \begin{equation}   \label{tildeV-2}
 \widetilde V_j = \begin{bmatrix} \widetilde V_{j,11} & 0 \\ \widetilde V_{j,21} & \widetilde V_{j, 22} \end{bmatrix}, \quad j = 1,2.
 \end{equation}
 From the fact that $V_1 V_2 = V_2 V_1 = V$, we know that
 $$
 \widetilde V_1 \widetilde V_2 = \widetilde V_2 \widetilde V_1 = \begin{bmatrix} M_z^{\cD_{V^*}} & 0 \\  0 & W \end{bmatrix}.
 $$
 In particular, we must have
 $$
   \widetilde V_{1, 22} \widetilde V_{2, 22} =  \widetilde V_{2, 22} \widetilde V_{1, 22} = W
 $$
 is unitary.  Furthermore, the fact that each  $\widetilde V_j$ is an isometry implies that each $\widetilde V_{j,22}$ is an isometry.
 Putting the pieces together, we see that each $\widetilde V_{j, 22}$ is a surjective isometry, i.e., each $\widetilde V_{j, 22}$ is unitary.
 As each $\widetilde V_j$ is an isometry, we see that
 $$
 \begin{bmatrix} \widetilde V_{j,11}^* & \widetilde V_{j,21}^* \\ 0 & \widetilde V_{j,22}^* \end{bmatrix}
 \begin{bmatrix} \widetilde V_{j,11} & 0 \\ \widetilde V_{j,21} & \widetilde V_{j,22} \end{bmatrix}  = 
 \begin{bmatrix} I_{H^2(\cD_{V^*})}& 0 \\
 0 & I_{\cH_u} \end{bmatrix}.
 $$
 In particular, equality of the (1,2)-entries gives that $\widetilde V_{j,21}^* \widetilde V_{j,22} = 0$.  As we have 
 already noted that $\widetilde V_{j,22}$
 is surjective, it follows that $V_{j,21}^* = 0$, and hence also $V_{j,21} = 0$.  Thus the representation 
 \eqref{tildeV-2} collapses further to
 \begin{equation}   \label{tildeV-3}
 \widetilde V_j = \begin{bmatrix} \widetilde V_{j,11} & 0 \\ 0 & \widetilde V_{j, 22} \end{bmatrix}, \quad j = 1,2,
 \end{equation}
 i.e., the decomposition $\sbm{H^2(\cD_{V^*}) \\ 0 } \oplus \sbm{ 0 \\ \cH_u}$
   is reducing for each  $\widetilde V_j$.

As each $\widetilde V_j$ commutes with $\sbm{ M_z^{\cD_{V^*}} & 0 \\ 0 & W}$,  it then follows that $\widetilde V_j=
\sbm{ M_{\varphi_j} & 0 \\ 0 & W_j}$ for $j= 1,2$,  where $\varphi_j$ is an $H^\infty(\mathcal{D}_{V^*})$-function and 
$(W_1,W_2)$ is a pair of commuting unitaries such that $W_1W_2=W$. Since $V_1=V_2^*V$,  consideration of 
the power series expansion of $\varphi_1$ and $\varphi_2$ enables one to conclude that $\varphi_1(z)=E_1+zE_2^*$ and $\varphi_2(z)=E_2+zE_1^*$ for some operators $E_1,E_2$ acting on
 $\mathcal{D}_{V^*}$.  Since $M_{\varphi_1}$ is an isometry, we have
\begin{align*}
(I_{H^2}\otimes E_1+M_z\otimes E_2^*)^*(I_{H^2}\otimes E_1+M_z\otimes E_2^*)=I_{H^2}\otimes I_{\cD_{V^*}},
\end{align*}
which implies that
\begin{align}\label{BCLEqn1}
E_1^*E_1+E_2E_2^*=I_{\cF}.
\end{align}
Similarly since $M_{\varphi_2}$ is an isometry, we have
\begin{align}\label{BCLEqn2}
E_2^*E_2+E_1E_1^*=I_{\cD_{V^*}}.
\end{align}
Also, since $V=V_1V_2$, we have $M_z=M_{E_1+zE_2^*}M_{E_2+zE_1^*}=M_{E_2+zE_1^*}M_{E_1+zE_2^*}$, which readily implies that
\begin{align}\label{BCLEqn3}
 E_1E_2=0=E_2E_1.
\end{align}
From equations (\ref{BCLEqn1}), (\ref{BCLEqn2}) and (\ref{BCLEqn3}), we conclude by 
Lemma \ref{relations-of-E-lem} that
there exist a projection $P$ and a unitary $U$ in $\cB(\cD_{V^*})$ such that $E_1,E_2$ are as in (\ref{PIforms}) 
(with $\cF$ taken to be $\cF = \cD_{V^*}$). Consequently,
$$
(M_{\varphi_1},M_{\varphi_2})=(M_{U^*P^\perp +zU^*P}, M_{PU+z P^\perp U}).
$$
and the theorem now follows.
\end{proof}

\begin{definition}\label{D:BCLTuple}
For a pair $(V_1,V_2)$ of commuting isometries, let the Hilbert space $\cF$ and the operators $P$, $U$ in $\cB(\cF)$, $W_1,W_2$ in $\cB(\cH_u)$
be as in Theorem \ref{Thm:BCLmodel}. Then a tuple $(\cF,P,U,W_1,W_2)$ associated with the BCL1 model \eqref{BCL1model} for $(V_1, V_2)$ will be called a
{\em BCL1 tuple}
\index{BCL1!tuple} 
for $(V_1,V_2)$, while a tuple $(\cF^\ff, P^\ff, U^\ff, W^\ff_{1}, W^\ff_{*2})$ associated
with a BCL2 model
\eqref{BCL2model} for $(V_1, V_2)$ will be called a {\em BCL2 tuple}
\index{BCL2!tuple} 
for $(V_1, V_2)$.

If $P$ is a projection and $U$ is a unitary operator on a Hilbert space $\cF$ with no pair of commuting isometries or of
BCL-model-type specified, we shall say simply that the tuple $(\cF, P, U)$ is a {\em BCL tuple}.
\end{definition}

The following uniqueness result was observed in \cite{B-C-L} but not proved there. We outline the proof here.

\begin{theorem}\label{Thm:BCLcoin}
Let $(V_1,V_2)$ on $\cH$ and $(V_1',V_2')$ on $\cH'$ be two pairs of commuting isometries with $(\cF,P,U,W_1,W_2)$ and
$(\cF',P',U',W_1',W_2')$ as respective BCL2 tuples. Then $(V_1,V_2)$ and $(V_1',V_2')$ are
unitarily equivalent if and only if the associated BCL2-tuples are unitarily equivalent in the sense that there exist
unitary operators $\tau:\cF\to\cF'$ and $\tau_u:\cH_u\to\cH_u'$ such that
\begin{equation}  \label{BCLcoin}
(\tau P\tau^*,\tau U\tau^*)=(P',U'), \quad (\tau_u W_1\tau_u^*, \tau_u W_2\tau_u^*)=(W_1',W_2').
\end{equation}

Moreover, a BCL2 model uniquely determines the associated BCL tuple in the following sense:  if $(\cF, P, U, W_1, W_2)$
and $(\cF, P', U', W_1', W_2')$ are
two BCL2 tuples with the same  model coefficient spaces $\cF = \cF'$, $\cH_u = \cH_u'$ such that the associated model commuting isometric pairs are the same
\begin{align*}
& \left( \sbm{ M_{U^*P^\perp + z U^*P} & 0 \\ 0 &  W_1}, \sbm{M_{PU + z P^\perp U} & 0 \\ 0 & W_2}  \right) =
\left( \sbm{M_{U^{\prime *}P^{\prime \perp} + z U^{\prime *} P'} & 0 \\ 0 &  W_1'},
\sbm{ M_{P' U'+ z P^{\prime \perp} U'} & 0 \\ 0 &  W_2' } \right),
\end{align*}
then in fact the BCL2 tuples are identical:
$$
(\cF, P, U, W_1, W_2)   =  (\cF, P', U', W_1', W_2').
$$

Similar statements hold true with BCL1 model and BCL1 tuples in place of BCL2 model and BCL2 tuples.
\end{theorem}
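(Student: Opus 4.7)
The plan is to reduce everything to the uniqueness properties of the Wold decomposition of the product isometry $V = V_1 V_2$, together with the standard description of operators commuting with the shift on $H^2(\cF)$. First, for the ``only if'' direction of the unitary-equivalence statement, suppose $\sigma \colon \cH \to \cH'$ is a unitary with $\sigma V_j = V_j' \sigma$ for $j=1,2$. Transferring via the BCL2 identifications $\tau_{\rm BCL}$ and $\tau_{\rm BCL}'$, we obtain a unitary $\Gamma = \tau_{\rm BCL}' \sigma \tau_{\rm BCL}^* \colon \sbm{H^2(\cF) \\ \cH_u} \to \sbm{H^2(\cF') \\ \cH_u'}$ intertwining the two BCL2-model pairs. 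Taking the product of the two components, $\Gamma$ intertwines $M_z^\cF \oplus W$ with $M_z^{\cF'} \oplus W'$ (where $W = W_1 W_2$, $W' = W_1' W_2'$ are unitary and $M_z^\cF$, $M_z^{\cF'}$ are shifts), so part~(2) of Lemma~\ref{L:AuxLemma} forces $\Gamma = \Gamma_1 \oplus \tau_u$ with $\Gamma_1 \colon H^2(\cF) \to H^2(\cF')$ unitary intertwining the shifts and $\tau_u \colon \cH_u \to \cH_u'$ unitary intertwining $W$ with $W'$.

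A unitary operator $\Gamma_1$ intertwining the two shifts is necessarily of the form $I_{H^2} \otimes \tau$ for some unitary $\tau \colon \cF \to \cF'$; this is the standard fact that operators commuting with the shift are analytic Toeplitz operators, where being unitary forces the symbol to be constant and unitary. With this form in hand, the intertwining of the $V_1$-symbols becomes
$$
 (I_{H^2} \otimes \tau) M_{U^* P^\perp + z U^* P} = M_{U^{\prime *}P^{\prime\perp} + z U^{\prime *} P'} (I_{H^2} \otimes \tau),
$$
and matching constant and linear Taylor coefficients yields $\tau U^* P^\perp = U^{\prime *} P^{\prime \perp} \tau$ and $\tau U^* P = U^{\prime *} P' \tau$. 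Summing these and using $P + P^\perp = I$ gives $\tau U^* = U^{\prime *} \tau$, hence $\tau U = U' \tau$; substituting back into either equation and cancelling $U^{\prime *}$ gives $\tau P = P' \tau$. The intertwining on the $W_j$-blocks gives $\tau_u W_j = W_j' \tau_u$ for $j=1,2$, completing \eqref{BCLcoin}. The ``if'' direction is immediate: given $\tau$ and $\tau_u$ satisfying \eqref{BCLcoin}, set $\sigma = \tau_{\rm BCL}^{\prime *} \bigl( (I_{H^2} \otimes \tau) \oplus \tau_u \bigr) \tau_{\rm BCL}$ and verify by direct substitution into \eqref{BCL2model} that $\sigma V_j = V_j' \sigma$.

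For the second assertion, suppose $\cF = \cF'$, $\cH_u = \cH_u'$, and the two BCL2-model operator pairs agree. Matching the $V_1$-symbols $M_{U^* P^\perp + z U^* P} = M_{U^{\prime *}P^{\prime \perp} + z U^{\prime *} P'}$ gives equality of Taylor coefficients: $U^* P^\perp = U^{\prime *} P^{\prime \perp}$ and $U^* P = U^{\prime *} P'$. Summing yields $U^* = U^{\prime *}$, hence $U = U'$, and then either equation reduces to $P = P'$. Equality of the $W_j$-blocks is immediate. The BCL1 analogue follows either by repeating the argument with the BCL1 symbols (of which the same Taylor-coefficient analysis applies) or, more efficiently, by applying the flip transformation $\ff$ of \eqref{flip} to each BCL1 tuple to convert it to a BCL2 tuple and appealing to the BCL2 case already proved.

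The only nontrivial step is the identification $\Gamma_1 = I_{H^2} \otimes \tau$ for a constant unitary $\tau$; everything else is bookkeeping with constant and $z$-linear Toeplitz symbols. This step itself is the standard Beurling-type fact that a unitary intertwiner between vector-valued shifts is given by a constant unitary on coefficient space, and may be quoted or derived in a line.
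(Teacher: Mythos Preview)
Your proof is correct and follows essentially the same approach as the paper: reduce to the product isometry, apply Lemma~\ref{L:AuxLemma} to get a block-diagonal intertwiner, identify the shift-intertwiner as $I_{H^2}\otimes\tau$, and then match Taylor coefficients of the pencil symbols to extract the relations on $P$ and $U$. The only cosmetic difference is that the paper obtains the uniqueness assertion as the special case where the intertwining unitary is the identity, whereas you argue it directly by coefficient matching; both are equally short.
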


\begin{proof}
Due to the correspondence \eqref{flip} between BCL1 tuples and BCL2 tuples, it suffices to prove the result for BCL2 tuples.
Again we write a BCL2 tuple simply as $(\cF, P, U)$.

  If there exist unitary operators $\tau:\cF\to\cF'$ and $\tau_u:\cH_u\to\cH_u'$ such that (\ref{BCLcoin}) holds, 
  then the pairs
  $$
  \left( \sbm{M_{U^*P^\perp +zU^*P} & 0 \\ 0 & W_1 }, 
 \sbm{M_{PU+zP^\perp U} & 0 \\ 0 &  W_2 } \right), \quad 
  \left( \sbm{M_{U'^*P'^\perp +zU'^*P'} & 0 \\ 0 & W_1'},  \sbm{ M_{P' U+zP'^\perp U'} & 0 \\ 0 &  W_2'} \right)
  $$
   are unitarily equivalent via the unitary similarity
  \begin{align*}
   \sbm{ I_{H^2}\otimes \tau & 0 \\ 0 & \tau_u} \colon \sbm{ H^2(\cF) \\ \cH_u} \to 
   \sbm{ H^2(\cF') \\ \cH_u'}.
  \end{align*}
  Then by Theorem \ref{Thm:BCLmodel} the pairs $(V_1,V_2)$ and $(V_1',V_2')$ are unitarily equivalent.

  Conversely, suppose that the pairs 
  $$
  \left( \sbm{ M_{U^*P^\perp +z U^* P} & 0 \\ 0 &  W_1}, \sbm{M_{PU+zP^\perp U} & 0 \\ 0 &  W_2}\right), \quad 
 \left( \sbm{M_{U'^* P'^\perp +z U'^* P'} & 0 \\  &  W_1'},  
 \sbm{M_{P' U'+zP'^\perp U'} & 0 \\ 0 &  W_2'} \right)
 $$ 
 are unitarily equivalent via the unitary similarity
  $$
  \hat\tau=\begin{bmatrix}
               \tau' & \tau_{12} \\
               \tau_{21} & \tau_u
             \end{bmatrix} \colon \begin{bmatrix} H^2(\cF) \\ \cH_u \end{bmatrix} \to \begin{bmatrix} H^2(\cF') \\ \cH_u' \end{bmatrix}.
 $$
  From the intertwining $\hat{\tau}V_1V_2=V_1'V_2'\hat{\tau}$, we have
  \begin{equation}  \label{intertwine1'}
      \begin{bmatrix} \tau' & \tau_{12} \\ \tau_{21} & \tau_u \end{bmatrix}
    \begin{bmatrix} M_z^\cF & 0 \\ 0 & W \end{bmatrix}
    =\begin{bmatrix} M_z^{\cF'} & 0 \\ 0 & W'  \end{bmatrix}
    \begin{bmatrix} \tau' & \tau_{12} \\  \tau_{21} & \tau_u \end{bmatrix}
  \end{equation}
  where we write $M_z^\cF$ for multiplication by $z$ on $H^2(\cF)$ and $M_z^{\cF'}$ for multiplication by $z$ on $H^2(\cF')$.
  By part (2) of Lemma \ref{L:AuxLemma} we conclude that $\tau_{12} = 0$, $\tau_{21} = 0$
and hence $\hat\tau$ collapses to the diagonal form
 $$
       \hat\tau = \begin{bmatrix} \tau' & 0 \\ 0 & \tau_u \end{bmatrix}.
 $$
  Therefore the unitary $\tau_u$ intertwines $(W_1,W_2)$ with $(W_1',W_2')$ and the unitary $\tau'$ intertwines  $M_z^\cF$ with $M_z^{\cF'}$
 forcing $\tau$ to have the form $\tau'=I_{H^2}\otimes\tau$ for some unitary $\tau:\cF\to\cF'$.
 Since $\tau'$ intertwines $M_{U^*(P^\perp +zP)}$ with $M_{U'^*(P'^\perp +zP')}$,
 we have $\tau U^* P^\perp = U'^* P'^\perp \tau$ and $\tau U^* P=U'^* P' \tau$. Therefore
 $$
    \tau U^*=\tau U^* (P^\perp+P)=U'^* (P'^\perp+P') \tau=U'^* \tau
  $$
from which we see that
$$
U'^* \tau P  = \tau U^* P = U'^* P' \tau \Rightarrow \tau P = P' \tau.
$$
In particular, if $\hat \tau$ is the identity operator, then $\tau$ and $\tau_u$ are identity operators, implying that
$(\cF, P, U, W_1, W_2) = (\cF', P', U', W_1', W_2')$, thereby verifying the last statement.
This completes the proof.
\end{proof}

Theorem \ref{Thm:BCLcoin} suggests that there should be a canonical choice of BCL tuple generating a
BCL1 (or BCL2) model for a given
commuting isometric pair.  This is indeed the case and is the content of the next result (see also Proposition 7.1 in \cite{BDF3}).
Since the unitary part can be handled separately by spectral theory,
we assume that the product isometry $V = V_1 V_2 = V_2 V_1$ is a pure isometry ($V^{*n} \to 0$ strongly as $n \to \infty$).

\begin{theorem}  \label{T:BCLcanonical}  Suppose that $(V_1, V_2)$  is a commuting isometric pair on $\cH$ such that $V = V_1 V_2 = V_2 V_1$ is a
pure isometry. Then:

\smallskip

\noindent
(1)  The operators
$$
D_{V^*}, \, D_{V_1^*}, \, D_{V_2^*}, \, V_1 D_{V_2^*}V_1^*, \, V_2 D_{V_1^*} V_2, \, D_{V_2^*} V_1^* + V_2 D_{V_1^*}, \,
D_{V_2^*} V_1^* + V_2 D_{V_1^*}
$$
all have range and cokernel contained in $\cD_{V^*} = \operatorname{Ran} D_{V^*}$ and therefore, when restricted to $\cD_{V^*}$, can be
viewed as elements of $\cB(\cD_{V^*})$ (bounded linear operators mapping $\cD_{V^*}$ into itself).

\smallskip

\noindent
(2) The  operators  $D_{V^*}$, $D_{V_1^*}$, $D_{V_2^*}$, $V_1 D_{V_2^*} V_1^*$, $V_2 D_{V_1^*} V_2^*$ all map $\cD_{V^*}$ into itself
and thus the restriction of these operators to $\cD_{V^*}$ may be considered as elements of $\cB(\cD_{V^*})$ (bounded linear operators on 
$\cD_{V^*}$).  When this is done, all are orthogonal projections on $\cD_{V^*}$ (with $D_{V^*}|_{\cD_{V^*}} = I_{\cD_{V^*}}$) and we have
the orthogonal decompositions:
\begin{equation}   \label{ortho-decom}
 \cD_{V^*} = \operatorname{Ran}  D_{V_1^*} \oplus \operatorname{Ran} V_1 D_{V_2^*} V_1^*,\quad
 \cD_{V^*} = \operatorname{Ran} V_2 D_{V_1^*} V_2^* \oplus \operatorname{Ran} D_{V_2^*}.
\end{equation}

\smallskip

\noindent
(3)  When considered as operators on $\cD_{V^*}$, the operators
$$
 U: = (V_1 D_{V_2^*} + D_{V_1^*} V_2^*)|_{\cD_{V^*}}, \quad U_*: = (D_{V_2^*} V_1^* + V_2 D_{V_1^*})|_{\cD_{V^*}}
$$
are unitary.

\smallskip

\noindent
(4) A BCL1 tuple for $(V_1, V_2)$ is given directly in terms of $(V_1, V_2)$ by
\begin{equation}   \label{BCL1canonical}
(\cF, P, U) = \left(\cD_{V^*}, D_{V_1^*}|_{\cD_{V^*}}, (V_1 D_{V_2^*} + D_{V_1^*} V_2^*)|_{\cD_{V^*}} \right)
\end{equation}
while a BCL2 tuple for $(V_1, V_2)$ is similarly given by
\begin{equation}  \label{BCL2canonical}
(\cF_*, P_*, U_*) = \left( \cD_{V^*}, V_2 D_{V_1^*}V_2^*|_{\cD_{V^*}},  (D_{V_2^*} V_1^* + V_2 D_{V_1^*}) |_{\cD_{V^*}} \right).
\end{equation}
 \end{theorem}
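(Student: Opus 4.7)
The plan is to establish the four parts in order, with the key computational engine being the following Wold-type identity on $\cH$:
\begin{equation}  \label{Wold-type}
D_{V^*} = I - V_1 V_2 V_2^* V_1^* = (I - V_1 V_1^*) + V_1 (I - V_2 V_2^*) V_1^* = D_{V_1^*} + V_1 D_{V_2^*} V_1^*,
\end{equation}
together with the symmetric version obtained by exchanging the roles of $V_1$ and $V_2$. Throughout, the proof uses only the algebraic facts $V_j^* V_j = I$, $V_j^* D_{V_j^*} = D_{V_j^*} V_j = 0$ (for $j=1,2$), together with the commutativity $V_1 V_2 = V_2 V_1$.

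For part (1), since $\operatorname{Ran} V = V_j \operatorname{Ran} V_{3-j} \subseteq \operatorname{Ran} V_j$, one has $\cD_{V_j^*} \subseteq \cD_{V^*}$, which handles $D_{V^*}, D_{V_1^*}, D_{V_2^*}$ immediately. For the mixed operators, the representative calculation is
$$
V^* \cdot V_1 D_{V_2^*} V_1^* = V_2^* V_1^* V_1 D_{V_2^*} V_1^* = V_2^* D_{V_2^*} V_1^* = 0,
$$
showing $\operatorname{Ran} (V_1 D_{V_2^*} V_1^*) \subseteq \ker V^* = \cD_{V^*}$; the cokernel claim follows by self-adjointness. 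The same template, together with the identity $V^* = V_2^* V_1^* = V_1^* V_2^*$ (an adjoint consequence of commutativity), handles $V_2 D_{V_1^*} V_2^*$ and the two partial-isometry sums $V_1 D_{V_2^*} + D_{V_1^*} V_2^*$ and $D_{V_2^*} V_1^* + V_2 D_{V_1^*}$ (which are adjoints of one another).

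For part (2), the projection identities are routine: $(V_1 D_{V_2^*} V_1^*)^2 = V_1 D_{V_2^*}^2 V_1^* = V_1 D_{V_2^*} V_1^*$ using $V_1^* V_1 = I$, and the self-adjointness is immediate; similarly for $V_2 D_{V_1^*} V_2^*$. The two orthogonal decompositions in \eqref{ortho-decom} are exactly the two versions of \eqref{Wold-type}, with orthogonality coming from $\operatorname{Ran} V_1 D_{V_2^*} V_1^* \subseteq \operatorname{Ran} V_1 \perp \cD_{V_1^*} = \operatorname{Ran} D_{V_1^*}$. With part (1) in hand, all these projection-valued equalities restrict to $\cD_{V^*}$. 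For part (3), direct expansion gives
$$
U^* U = D_{V_2^*} V_1^* V_1 D_{V_2^*} + D_{V_2^*} V_1^* D_{V_1^*} V_2^* + V_2 D_{V_1^*} V_1 D_{V_2^*} + V_2 D_{V_1^*}^2 V_2^*,
$$
where the two cross terms vanish by $D_{V_1^*} V_1 = 0$ and $V_1^* D_{V_1^*} = 0$, leaving $U^* U = D_{V_2^*} + V_2 D_{V_1^*} V_2^* = I_{\cD_{V^*}}$ by part (2); the reverse composition yields $UU^* = D_{V_1^*} + V_1 D_{V_2^*} V_1^* = I_{\cD_{V^*}}$. Hence $U$ is unitary on $\cD_{V^*}$, and $U_* = U^*$ by inspection.

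For part (4), the purity assumption on $V = V_1 V_2$ forces $\cK_u = 0$ in the construction of Theorem \ref{Thm:BCLmodel}, so that $\tau_{\rm BCL} \colon \cH \to H^2(\cD_{V^*})$, $\tau_{\rm BCL}\colon h \mapsto \sum_{n \ge 0} z^n D_{V^*} V^{*n} h$, is unitary and $\widetilde V_1 = \tau_{\rm BCL} V_1 \tau_{\rm BCL}^*$ is $M_{\varphi_1}$ with $\varphi_1(z) = E_1 + z E_2^*$ as established in the proof of Theorem \ref{Thm:BCLmodel}. To identify $E_1, E_2$ explicitly, I compute the action on $x \in \cD_{V^*}$ (which corresponds to the constant $1 \otimes x \in H^2(\cD_{V^*})$): the $n=0$ coefficient gives $E_1 x = D_{V^*} V_1 x = V_1 D_{V_2^*} x$ (by \eqref{Wold-type} and $D_{V_1^*} V_1 = 0$), while the $n=1$ coefficient gives $E_2^* x = D_{V^*} V^* V_1 x = D_{V^*} V_2^* x$, which equals $D_{V_1^*} V_2^* x$ since the other Wold summand contributes $V_1 D_{V_2^*} V_1^* V_2^* x = V_1 D_{V_2^*} V^* x = 0$ on $\cD_{V^*}$. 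Setting $P = D_{V_1^*}|_{\cD_{V^*}}$ and $U = (V_1 D_{V_2^*} + D_{V_1^*} V_2^*)|_{\cD_{V^*}}$, one checks $PU = D_{V_1^*} V_2^*|_{\cD_{V^*}} = E_2^*$ (using $D_{V_1^*} V_1 = 0$) and $P^\perp U = V_1 D_{V_2^*} V_1^* U|_{\cD_{V^*}} = V_1 D_{V_2^*}|_{\cD_{V^*}} = E_1$, matching the BCL1 form. The BCL2 tuple formula follows either by the flip-transformation $\ff$ or by the analogous computation starting from $V_2$, using the second decomposition of part (2). The main obstacle is the bookkeeping in part (4): one must carefully extract $E_1$ and $E_2$ from the implicit construction of Theorem \ref{Thm:BCLmodel} and then recognize the resulting operators as $P^\perp U$ and $PU$ for the claimed $(P, U)$.
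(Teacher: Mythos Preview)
Your proof is correct. Parts (1)--(3) follow the same direct-computation approach as the paper: both use the defect identity \eqref{Wold-type} (which is the paper's \eqref{defid}) and the isometry relations $D_{V_j^*} V_j = 0$, $V_j^* D_{V_j^*} = 0$ to verify the projection and unitary claims. Your observation that $U_* = U^*$ streamlines part (3) slightly.

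For part (4), your route differs from the paper's in a meaningful way. The paper assumes $(V_1,V_2)$ is already presented in BCL2 model form and then computes the expressions $V_1^* - V_2 V^*$ and $V_2^* - V_1 V^*$, which collapse to $\bev_{0,\cF_*}^* P_*^\perp U_*$ and $\bev_{0,\cF_*}^* U_*^* P_*$ respectively; from these it solves for $U_*$ and $P_*$ in terms of $V_1,V_2$, and then obtains the BCL1 tuple via the flip \eqref{flip}. You instead work directly with the Wold-decomposition map $\tau_{\rm BCL}$: applying $\tau_{\rm BCL} V_1$ to a vector $x \in \cD_{V^*}$ and reading off the $z^0$ and $z^1$ Fourier coefficients gives $E_1 = V_1 D_{V_2^*}|_{\cD_{V^*}}$ and $E_2^* = D_{V_1^*} V_2^*|_{\cD_{V^*}}$, which you then recognize as $P^\perp U$ and $PU$ for the claimed $(P,U)$. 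Your method is more direct and yields the BCL1 tuple first without passing through BCL2; the paper's method has the advantage of introducing the quantities $V_j^* - V_{3-j} V^*$, which reappear later as the defining equations for the Fundamental-Operator pair (Theorem \ref{T:FundOps}), so it foreshadows that development.
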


 \begin{proof}  From the fact that $V_1$ and $V_2$ are isometries (so $V_1^* V_1 = V_2^* V_2 = V^* V = I_\cH$), it is easily checked that
 each of the operators $X$ in statement (2) is a projection (i.e.,  $X = X^*$ and $X^2 = X$).   The two orthogonal decompositions in statement (2)
 then follow from the general identities
 \begin{equation}   \label{defid}
   I - V V^* = (I - V_1 V_1^*) + V_1 (I - V_2 V_2^*) V_1^* = I - V_2 V_2^* + V_2 (I - V_1 V_1^*)V_2^*.
 \end{equation}
 These identities also show that all the projection operators have range and cokernel (i.e., also the range for a projection operator) a subspace of $\cD_{V^*}$.

 As for $U_* =  D_{V_2^*} V_1^* + V_2 D_{V_1^*}$,  noting that $D_{V_2^*} V_2 = 0$ since $V_2$ is an isometry, we can compute
 \begin{align*}
& U_*^* U_*   = (V_1 D_{V_2^*} + D_{V_1^*} V_2^*)(D_{V_2^*} V_1^* + V_2 D_{V_1^*})  \\
 & \quad  = V_1 (I - V_2 V_2^*) V_1^* + (I - V_1 V_1^*)  = V_1 V_1^* - V V^* + I - V_1 V_1^* = I - V V^* = D_{V^*}
 \end{align*}
 and
 $$
 U_* U_*^* = (D_{V_2^*} V_1^* +  V_2 D_{V_1^*})
 (V_1 D_{V_2^*} + D_{V_1^*} V_2^*) = D_{V_2^*} + V_2 D_{V_1^*} V_2^* = D_{V^*}.
 $$
 These identities not only verify that $U_*$ has range and cokernel inside $\cD_{V^*}$, but furthermore that $U_*$ is unitary when considered as an
 operator on $\cD_{V^*}$.  Similar computations verify the corresponding properties for $U =  D_{V_2^*} V_1^* + V_2 D_{V_1^*}$.  This completes the
 verification of statements (1), (2), and (3).

Let us now assume that $V_1, V_2$ are presented in the form of a BCL2 model
$$
V_1= M_{U_*^*(P_*^\perp + z P_*)}, \quad V_2 = M_{(P_* + z P_*^\perp) U_*}
$$
acting on the Hardy space $H^2(\cF_*)$.  Our next goal is to understand how to recover the operator pair $(P,U)$ directly in terms of the operator pair $(V_1,
V_2)$.
Note first that then
\begin{align*}
& V_1^* - V_2 V^*  =  \\
&  \left( (I_{H^2} \otimes U_*^* P_*^\perp + M_z \otimes U_*^* P_*)^* - (I_{H^2} \otimes P_* U_* + M_z \otimes P_*^\perp U_*) (M_z \otimes I_{\cD_{V^*}})^* \right)  \\
& = \left( (I_{H^2} \otimes P_*^\perp U_* + M_z^* \otimes P_* U_*) - (M_z^* \otimes P_* U_* + M_z M_z^* \otimes P_*^\perp U_*) \right)  \\
& = \left( (I_{H^2} - M_z M_z^*) \otimes P_*^\perp U_* \right) = \bev_{0, \cF_*}^*  P_*^\perp U_*.   
\end{align*}
If we identify the coefficient space $\cF_*$ with the $V^*$-defect space $\cD_{V^*} = \operatorname{Ran} I  - V V^*$, then it is convenient to view the 
operators $P_*$ and $U_*$
as operators on $\cD_{V^*}$ rather than just on $\cF_*$, and then we have
$$
D_{V^*} (V_1^* - V_2 V^*) = P_*^\perp U_* D_{V^*} \text{ and } D_{V^*} V^{*n} (V_1^* - V_2 V^*) = 0 \text{ for } n=1,2,3,\dots.
$$
It follows that $V_1^* - V_2 V^*$ has range in $\cD_{V^*}$, and hence we actually have
\begin{equation}  \label{U*1}
V_1^* - V_2 V^* = P_*^\perp U_* D_{V^*} = D_{V^*} P_*^\perp U_* D_{V^*}.
\end{equation}
A similar computation gives us
\begin{equation}  \label{U*2}
V_2^* - V_1 V^* = D_{V^*} U_*^* P_* D_{V^*}.
\end{equation}
Next note that
\begin{align*}
D_{V^*} U_*^* D_{V^*} & = D_{V^*} U_*^* (P_*^\perp + P_*) D_{V^*} = (V_1^* - V_2 V^*)^* + (V_2^* - V_1 V^*) \\
& = V_1 - V_1 V_2 V_2^* + V_2 - V_1 V_1^* V_2^* \\
& = V_1 D_{V_2^*} + D_{V_1^*} V_2^*.
\end{align*}
As we have already checked that the operator $V_1 D_{V_2^*} + D_{V_1^*} V_2^*$ has range and cokernel contained in $\cD_{V^*}$, we may cancel the projection
$D_{V^*}$ on the left and on the right of the left-hand side and deduce that necessarily $U_*^* = (V_1 D_{V_2^*} + D_{V_1^*} V_2^*)|_{\cD_{V^*}}$.
Hence $U_* = (D_{V_2^*} V_1^* + V_2 D_{V_1^*})|_{\cD_{V^*}}$ as in \eqref{BCL2canonical}.

To find $P_*$ we see from \eqref{U*2} that
\begin{align*}
P_* & = U_* (V_2^* - V_1 V^*) = (D_{V_2^*} V_1^* + V_2 D_{V_1^*}) (V_2^* - V_1 V^*)  \\
& = D_{V_2^*} V_1^* + V_2 D_{V_1^*}) (V_2^* - V_1 V^*) \\
& = D_{V_2^*} V^* - D_{V_2^*} V^* + V_2 D_{V_1^*} V_2^* - V_2 D_{V_1^*} V_1 V^*  \\
& = V_2 D_{V_1^*} V_2^* \text{ (since $D_{V_1^*} V_1 = 0$)}
\end{align*}
as in \eqref{BCL2canonical}.

Parallel computations can be used to show that \eqref{BCL1canonical} is a BCL1 tuple for $(V_1, V_2)$ in case $(V_1, V_2)$ are in the model BCL1 form.
Alternatively, it suffices to show that the flip map
\eqref{flip} applied to \eqref{BCL2canonical} produces \eqref{BCL1canonical}, i.e., that, in the notation of \eqref{BCL1canonical}--\eqref{BCL2canonical},
\begin{equation}   \label{BCL2can-flip}
(U_*^* P_* U_*, U_*^*)  = (P, U).
\end{equation}
The second relation in \eqref{BCL2can-flip} is clear by inspection.  As for the first, let us compute, using the various isometry identities, e.g.,
 $D_{V_2^*} V_2 = 0$, $V_2^* D_{V_2^*} = 0$, $V_2^* V_2 = I$, $D_{V_1}^2 = D_{V_1}$,  $V_2^* V_2 = I$,
\begin{align*}
& U_*^* P_* U_*  = (V_1 D_{V_2^*} + D_{V_1^*} V_2^*) V_2 D_{V_1^*} V_2^* (D_{V_2^*} V_1^* + V_2 D_{V_1^*}) \\
& \quad  = D_{V_1^*} V_2^* (V_2 D_{V_1^*} V_2^*) V_2 D_{V_1^*} = D_{V_1^*} (V_2^* V_2) D_{V_1^*} (V_2^* V_2) D_{V_1^*} = D_{V_1^*}^3   = D_{V_1^*}  \\
& \quad  = P
\end{align*}
and \eqref{BCL2can-flip} follows.

It remains to argue that the same formulas hold in case the commuting isometric pair $(V_1, V_2)$ is not presented in a BCL1 or BCL2 model form.  However by
Theorem  \ref{Thm:BCLmodel} we know that any commuting isometric pair $(V_1, V_2)$ (with $V = V_1 V_2$ having no unitary part) is unitarily equivalent to a
BCL2-model pair
$$
(V_1', V_2') = (M_{U^*_* P_*^\perp + z U^*_* P_*}, M_{P_*U_* + z P_*^\perp U_*})
$$
on $H^2(\cF')$, with $\cF' = \cD_{V^*}$.  It is now a routine observation that the unitary identification map $\omega \colon \cH \to H^2(\cF')$ washes through
all the formulas in \eqref{BCL2canonical} so as to give a BCL2 tuple $(\cF', U_*', P_*')$ unitarily equivalent to the BCL2 tuple $(\cF, U_*, P_*)$ built directly from the
model commuting isometric pair $(V_1', V_2')$.

\end{proof}

To facilitate computation, it is convenient to replace the space $\cD_{V^*}$ known to have the internal direct-sum decompositions \eqref{ortho-decom}
with the external direct sum $\sbm{ \cD_{V_1^*} \\ \cD_{V_2^*}}$ via either of the identification maps
$$
  \Phi = \begin{bmatrix} D_{V_1^*} & V_1 D_{V_2^*} \end{bmatrix} \colon \begin{bmatrix} \cD_{V_1^*} \\ \cD_{V_2^*} \end{bmatrix} \to \cD_{V^*},
  \quad \Phi_* = \begin{bmatrix} V_2 D_{V_1^*} & D_{V_2^*} \end{bmatrix}   \colon \begin{bmatrix} \cD_{V_1^*} \\ \cD_{V_2^*} \end{bmatrix} \to \cD_{V^*}.
$$
The same idea appears in \cite[Corollary 7.2]{BDF3}.  We then have the following result.

\begin{corollary}  \label{C:BCLdata}  Given a commuting isometric pair $(V_1, V_2)$ such that $V: = V_1 V_2$ is a shift, an alternative explicit BCL2-tuple
for $(V_1, V_2)$ is
\begin{equation}   \label{BCL2tuple'}
(\cF_*, P_*, U_* ) = \left( \begin{bmatrix} \cD_{V_1^*} \\ \cD_{V_2^*} \end{bmatrix},
\begin{bmatrix} I & 0 \\ 0 & 0 \end{bmatrix},  \begin{bmatrix} D_{V_1^*} V_2|_{\cD_{V_1^*} }  & D_{V_1^*} |_{\cD_{V_2^*}} \\
V_1^* V_2 |_{\cD_{V_1^*}}  & V_1^*|_{\cD_{V_2^*}} \end{bmatrix} \right).
\end{equation}
\end{corollary}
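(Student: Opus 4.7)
The plan is to exploit the unitary identification map $\Phi_* = [V_2 D_{V_1^*}, \, D_{V_2^*}] \colon \sbm{\cD_{V_1^*} \\ \cD_{V_2^*}} \to \cD_{V^*}$ introduced immediately before the corollary, and to use it to transport the canonical BCL2 tuple $(\cD_{V^*}, \, V_2 D_{V_1^*} V_2^*|_{\cD_{V^*}}, \, (D_{V_2^*} V_1^* + V_2 D_{V_1^*})|_{\cD_{V^*}})$ from Theorem \ref{T:BCLcanonical}(4) to the external direct-sum space $\sbm{\cD_{V_1^*} \\ \cD_{V_2^*}}$. By the forward direction of Theorem \ref{Thm:BCLcoin}, any unitary change of coordinates on the coefficient space of a BCL2 tuple again produces a BCL2 tuple for the same commuting isometric pair. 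So once $\Phi_*$ is verified to be unitary, it only remains to check that $\Phi_*^* P_* \Phi_*$ and $\Phi_*^* U_* \Phi_*$ agree with the operators displayed in \eqref{BCL2tuple'}.

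For the unitarity of $\Phi_*$, I would use the fact that since each $V_j$ is an isometry, $D_{V_j^*} = I - V_j V_j^*$ is the orthogonal projection onto $\cD_{V_j^*} = \ker V_j^*$ and hence acts as the identity on $\cD_{V_j^*}$; combined with $V_2^* V_2 = I$, this shows that the two columns of $\Phi_*$ are isometries from $\cD_{V_1^*}$ onto $\operatorname{Ran} V_2 D_{V_1^*} V_2^*$ and from $\cD_{V_2^*}$ onto $\operatorname{Ran} D_{V_2^*}$ respectively. The orthogonal decomposition $\cD_{V^*} = \operatorname{Ran} V_2 D_{V_1^*} V_2^* \oplus \operatorname{Ran} D_{V_2^*}$ from Theorem \ref{T:BCLcanonical}(2) then yields unitarity of $\Phi_*$.

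The rest is computational. The projection part is essentially immediate: $V_2^* \Phi_* \sbm{x_1 \\ x_2} = D_{V_1^*} x_1 + V_2^* D_{V_2^*} x_2 = x_1$, using $V_2^* D_{V_2^*} = 0$ and $D_{V_1^*}|_{\cD_{V_1^*}} = I$; applying $\Phi_*^*$ to $V_2 x_1$ then gives $\sbm{x_1 \\ 0}$ since $D_{V_2^*} V_2 = 0$, so $\Phi_*^* P_* \Phi_* = \sbm{I & 0 \\ 0 & 0}$. For $U_*$ I would expand $U_* \Phi_* \sbm{x_1 \\ x_2}$ into four summands and then apply $\Phi_*^*$; the key observation is that, on the relevant subspaces, the factor $D_{V_2^*}$ collapses to the identity. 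Indeed, from the factorization $V^* = V_1^* V_2^*$ one gets $V^* V_2 = V_1^*$, whence $V_2 V_2^* V_1^* V_2 \, x_1 = V_2 V^* V_2 \, x_1 = V_2 V_1^* x_1 = 0$ for $x_1 \in \ker V_1^*$, and dually $V_2 V_2^* V_1^* x_2 = V_2 V^* x_2 = V_2 V_1^* V_2^* x_2 = 0$ for $x_2 \in \ker V_2^*$. These two "dual collapses" reduce the off-diagonal entries of $\Phi_*^* U_* \Phi_*$ to $D_{V_1^*}|_{\cD_{V_2^*}}$ and $V_1^* V_2|_{\cD_{V_1^*}}$ as claimed, while the diagonal entries come out directly. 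The main obstacle is simply the bookkeeping in the four-block expansion; no new idea is needed beyond the isometric identities $V_j^* V_j = I$, $V_j^* D_{V_j^*} = 0$, and the commutativity $V_1 V_2 = V_2 V_1$ already used in the proof of Theorem \ref{T:BCLcanonical}.
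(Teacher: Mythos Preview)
Your proposal is correct and follows essentially the same route as the paper: conjugate the canonical BCL2 tuple $(\cD_{V^*},\,V_2 D_{V_1^*}V_2^*,\,(D_{V_2^*}V_1^*+V_2 D_{V_1^*})|_{\cD_{V^*}})$ by the unitary $\Phi_*$ and reduce the matrix entries using the isometric identities. Your ``dual collapse'' observation $V_2 V_2^* V_1^* V_2|_{\cD_{V_1^*}}=0$ is exactly the side computation the paper isolates to simplify the $(2,1)$-entry, just phrased slightly differently.
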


\begin{proof}   We have already noted that
$$
( \cF_*', P_*', U_*')  = (\cD_{V^*}, V_2 D_{V_1^*} V_2^*, (D_{V_2^*} V_1^* + V_2 D_{V_1^*})|_{\cD_{V^*}})
$$
is a BCL2-tuple for $(V_1, V_2)$.  One can see as a consequence of the  identities \eqref{defid} that $\Phi_* $ acting from $ \sbm{ \cD_{V_1^*} \\ \cD_{V_2^*} }$ to $ \cD_{V^*}$
given by
$$
\Phi_* = \begin{bmatrix} V_2 D_{V_1^*}  & D_{V_2^*} \end{bmatrix} \colon \begin{bmatrix} \cD_{V_1^*} \\ \cD_{V_2^*} \end{bmatrix} \to \cD_{V^*}
$$
is unitary.  It is then a matter of again repeatedly using the identities \eqref{defid} to see that
\begin{align*}
 \Phi_*^* P_*'  \Phi_* & =  \begin{bmatrix} D_{V_1^*} V_2^* \\ D_{V_2^*} \end{bmatrix} V_2 D_{V_1^*} V_2^* \begin{bmatrix} V_2 D_{V_1^*}  & D_{V_2^*} \end{bmatrix}
 = \begin{bmatrix} I_{\cD_{V_2^*}} & 0 \\ 0 & 0 \end{bmatrix}  = P_*, \\
 \Phi_*^* U_*' \Phi_* & = \begin{bmatrix} D_{V_1^*} V_2^* \\ D_{V_2^*} \end{bmatrix}
(D_{V_2^*} V_1^* + V_2 D_{V_1^*} ) \begin{bmatrix} V_2 D_{V_1^*} & D_{V_2^*} \end{bmatrix} \\
& = \begin{bmatrix} D_{V_1^*} V_2 D_{V_1^*}  & D_{V_1^*} V_2^* D_{V_2^*}  \\
 D_{V_2^*} V_1^* V_2 D_{V_1^*} & D_{V_2^*} V_1^* D_{V_2^*} \end{bmatrix}
 \end{align*}
 Let us note on the side that
 \begin{align*}
 D_{V_2^*} V_1^* V_2 D_{V_1^*} & = (I - V_2 V_2^*) V_1^* V_2 D_{V_1^*} = V_1^* V_2 D_{V_1^*} - V_2 V_1^* V_2^* V_2 D_{V_1^*} \\
 & = V_1^* V_2 D_{V_1^*} - V_2 V_1^* D_{V_1^*} = V_1^* V_2 D_{V_1^*}.
 \end{align*}
 Hence the formula for $\Phi_*^* U_*' \Phi_*$ can be completed to
 $$
 \Phi_*^* U_*' \Phi_*  = \begin{bmatrix} D_{V_1^*} V_2 D_{V_1^*}  & D_{V_1^*} V_2^* D_{V_2^*}  \\
V_1^* V_2 D_{V_1^*}  & D_{V_2^*} V_1^* D_{V_2^*} \end{bmatrix} =
 \begin{bmatrix} D_{V_1^*} V_2|_{\cD_{V_1^*} }  & D_{V_1^*} |_{\cD_{V_2^*}} \\
V_1^* V_2 |_{\cD_{V_1^*}}  & V_1^*|_{\cD_{V_2^*}} \end{bmatrix}  = U_*.
$$
As $(\cF_*, P_*, U_*)$ is unitarily equivalent to the known BCL2-tuple $(\cF'_*, P_*', U_*')$ for $(V_1, V_2)$, it follows that
$(\cF_*, P_*, U_*)$ is also a BCL2-tuple for $(V_1, V_2)$.
\end{proof}

Finally the following model-characterization of joint reducing subspaces for a BCL-model pair of commuting
isometries will be useful in the sequel.

\begin{remark}  \label{R:BCLunitary}
We show here how to use the spectral theory for unitary operators to work out a BCL-model for the commuting 
unitary operator-pair $(W_1, W_2)$ as appearing in the second component of the general BCL-model for a pair 
of commuting isometries as follows.   Suppose that $(W_1, W_2)$ is a pair of commuting unitary operators
on a Hilbert space $\cH$.  Then the product $W = W_1 W_2 = W_2 W_1$ is also unitary, and hence, by the direct-integral version
of the spectral theorem for normal operators (see \cite[Theorem II.6.1]{Dix}), $W$  can be represented as a diagonalized operator on
the direct integral space $\bigoplus \int_{\mathbb T}
\cH(\zeta)\, {\tt d}\nu(\zeta)$  with the fiber space $\cH(\zeta)$ having dimension equal to the multiplicity
function $n(\zeta) = \dim \cH(\zeta)$ well-defined $\nu$-a.e.   For $j=1,2$, the unitary operator $W_j$
commutes with $W$ and hence is {\em decomposable} (see \cite[Theorem II.2.1]{Dix},
meaning that there are measurable  operator-valued functions
$\zeta \mapsto \varphi_j(\zeta) \in \cB(\cH(\zeta))$ so that $W_j$ is represented as a multiplication operator
$$
   W_j = M_{\varphi_j} \colon h(\zeta) \mapsto  \varphi_j(\zeta) h(\zeta).
 $$
 As $M_{\varphi_j}$ is unitary, it must be the case that the multiplier value $\varphi_j(\zeta)$ is unitary on $\cH(\zeta)$
 for a.e. $\zeta \in {\mathbb T}$.  As $W_1 W_2 = W_2 W_1 = W$, it then must also be the case that
 \begin{equation}   \label{phi-product}
   \varphi_1(\zeta) \varphi_2(\zeta) = \zeta I_{\cH(\zeta)} \text{ for a.e. } \zeta \in {\mathbb T}.
 \end{equation}
 To parametrize the set of all such pairs $(\varphi_1, \varphi_2)$ simply let $\varphi_1$ be an arbitrary measurable 
 unitary-operator-valued  function $\zeta \mapsto  \varphi_1(\zeta)$.  Then we may solve \eqref{phi-product}
 to see that $\varphi_2(\zeta)$ is unique and is given by
 $$
   \varphi_2(\zeta) = \zeta \cdot \varphi_1(\zeta)^*.
 $$
Thus $(W_1, W_2) = (M_{\varphi_1},  \zeta \cdot M_{\varphi_1^*})$ with $M_{\varphi_1}$ equal to an 
arbitrary unitary decomposable operator
on $\bigoplus \int_{\mathbb T}  \cH(\zeta) {\tt d} \nu(\zeta)$ is the form for an arbitrary pair of unitary operators
on $\bigoplus \int_{\mathbb T} \cH(\zeta) {\tt d} \nu$ having product $W = W_1 W_2$ equal to $M_{\zeta I_{\cH(\zeta)}}$
on $\bigoplus \int_{\mathbb T} \cH(\zeta) {\tt d} \nu(\zeta)$.
\end{remark}

We next seek a characterization of the joint reducing subspaces for the shift part of a  
commuting isometric pair $(V_1, V_2)$ in terms of the associated BCL2 model:
\begin{equation}  \label{BCL-shift-model}
 (V_1, V_2) = (M_{U^*(P^\perp + z P)}, \, M_{(P + z P^\perp) U}) \text{ on } H^2(\cF)
\end{equation}
for a BCL tuple $(\cF, P, U)$.  It is convenient to first introduce a definition.

\begin{definition} Suppose $(\cF, P, U)$ is a BCL-tuple (with commuting unitary operators $W_1, W_2$ assumed
to be trivial).  Suppose that $\cF_0$ is a subspace of $\cF$ such that
\begin{itemize}
\item[(i)] $\cF_0$ is invariant for $P$, and
\item[(ii)] $\cF_0$ is reducing for $U$.
\end{itemize}
Set $P_0  = P|_{\cF_0}$ and $U_0  = U|_{\cF_0}$. Then we say that the And\^o tuple $(\cF_0, P_0, U_0)$
is a reduced sub-And\^o tuple of $(\cF, P, U)$.
\end{definition}

Then we have the following result.

\begin{theorem} \label{T:reduced-sub-tuple}  Suppose that $(V_1, V_2)$ is the BCL2 model
commuting isometric pair \eqref{BCL-shift-model} associated with the And\^o tuple $(\cF, P, U)$.
Then joint reducing subspaces for $(V_1, V_2)$ are in one-to-one correspondence with
reducing sub-And\^o tuples $(\cF_0, P_0, U_0)$ with associated reducing subspace equal to
$H^2(\cF_0)$ viewed as a subspace of $H^2(\cF)$ in the natural way.
\end{theorem}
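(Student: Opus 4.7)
The plan is to establish the bijection by verifying both directions, noting first that the map $(\cF_0,P_0,U_0)\mapsto H^2(\cF_0)$ is automatically injective: $\cF_0$ is recoverable from $H^2(\cF_0)$ as its subspace of constant functions, and $(P_0,U_0)=(P|_{\cF_0},U|_{\cF_0})$ is determined by $\cF_0$.

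For the forward direction, given a reduced sub-And\^o tuple $(\cF_0,P_0,U_0)$, the invariance of $\cF_0$ under $P$ forces invariance under $P^\perp=I-P$, and reducing for $U$ means invariance under both $U$ and $U^*$. Hence each of the coefficient operators $U^*P^\perp,\ U^*P,\ PU,\ P^\perp U$ appearing in the BCL2 multipliers, together with their adjoints, restricts to an operator on $\cF_0$. Reading the resulting action coefficient-by-coefficient on power series shows that $V_1,V_2,V_1^*,V_2^*$ all leave $H^2(\cF_0)$ invariant, so $H^2(\cF_0)$ is joint-reducing.

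For the reverse direction, let $\cM$ be joint-reducing for $(V_1,V_2)$. The key observation is that the product multiplier collapses:
\begin{equation*}
(U^*P^\perp+zU^*P)(PU+zP^\perp U)=zU^*(P^\perp+P)U=zI_\cF,
\end{equation*}
using $PP^\perp=P^\perp P=0$ and $U^*U=I$. Thus $V_1V_2=M_z^\cF$, and $\cM$ is reducing for this pure shift on $H^2(\cF)$. The standard wandering-subspace argument — $\cM\ominus M_z\cM\subseteq\ker(M_z^\cF)^*=\cF$ — then gives $\cM=H^2(\cF_0)$ where $\cF_0:=\cM\cap\cF$ consists of the constant functions in $\cM$.

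It remains to show $\cF_0$ is invariant for $P$ and reducing for $U$. For any constant $f_0\in\cF_0\subset\cM$, compute
\begin{align*}
V_1f_0 &= U^*P^\perp f_0+zU^*Pf_0, & V_2f_0 &= PUf_0+zP^\perp Uf_0,\\
V_1^*f_0 &= P^\perp Uf_0, & V_2^*f_0 &= U^*Pf_0,
\end{align*}
where the $M_z^*$ contributions to $V_j^*$ vanish on constants. Since each output lies in $\cM=H^2(\cF_0)$, each Taylor coefficient lies in $\cF_0$, giving $U^*P^\perp f_0,\,U^*Pf_0,\,PUf_0,\,P^\perp Uf_0\in\cF_0$. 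Summing yields $U^*f_0,\,Uf_0\in\cF_0$, so $\cF_0$ reduces $U$; and then $P\cF_0=(PU)(U^*\cF_0)\subseteq PU\cF_0\subseteq\cF_0$ shows invariance under $P$. I do not expect any genuine obstacle; the whole argument reduces to the identity $V_1V_2=M_z^\cF$ together with the classical description of shift-reducing subspaces, and the remaining coefficient analysis on constants is routine.
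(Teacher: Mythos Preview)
Your proof is correct and follows essentially the same strategy as the paper: first reduce to the structure of $\cM$ as a reducing subspace for the product shift $V_1V_2=M_z^{\cF}$, conclude $\cM=H^2(\cF_0)$, and then read off the invariance properties of $\cF_0$ from the action of $V_j,V_j^*$ on constant functions. One minor difference: to obtain $\cM=H^2(\cF_0)$ the paper invokes the Beurling--Lax theorem for both $\cM$ and $\cM^\perp$ and argues that the concatenated inner function must be a unitary constant, whereas you use the direct wandering-subspace argument $\cF_0=\cM\cap\ker M_z^*$, which is a bit more elementary. Your deduction of $P$-invariance via $P=(PU)U^*$ is also slightly cleaner than the paper's bookkeeping with the four invariance items.
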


\begin{proof}
Suppose that $\cM \subset H^2(\cF)$ is a joint reducing subspace for $(V_1, V_2)$ as in \eqref{BCL-shift-model}.
Then in particular $\cM$ is reducing for $V = V_1 V_2 = M_z^\cF$ on $H^2(\cF)$.
The Beurling-Lax theorem characterizes the invariant subspaces $\cM$ for the shift operator $M_z^\cF$ on a vectorial Hardy space $H^2(\cF)$ as those of the form $\Theta \cdot H^2(\cE)$ for a inner function $\Theta$
(i.e., an $\cB(\cU, \cY)$-valued function $z \mapsto \Theta(z)$ on the unit disk with radial-limit boundary-value function $\zeta \mapsto \Theta(\zeta)$ having isometric values a.e. on ${\mathbb T}$).  
If $\cM$ is reducing for $M_z$ then both $\cM$ and $\cM^\perp$ have Beurling-Lax representations
$$
  \cM = \Theta \cdot H^2(\cE), \quad \cM^\perp = \Psi \cdot H^2(\cE')
$$
for inner functions $\Theta$ and $\Psi$ with values in $\cB(\cE, \cF)$ and $\cB(\cE', \cF)$  for appropriate coefficient Hilbert spaces $\cE$ and $\cE'$ respectively.
Furthermore we have the orthogonal decomposition
$$
   H^2(\cF) = \cM \oplus \cM^\perp = \Theta  H^2(\cE) \oplus \Psi H^2(\cE')
 $$
 implying that $\begin{bmatrix} \Theta & \Psi \end{bmatrix}$ is also inner as a  function with values in $\cB(\sbm{ \cE \\ \cE'}, \cF)$.
 As in general the Beurling-Lax representer for a given shift-invariant subspace $\cM$ is unique up to a unitary-constant right factor, we see from all this that
 $\begin{bmatrix} \Theta & \Psi \end{bmatrix}$ is a unitary constant from $\sbm{ \cE \\ \cE'}$ onto $\cF$.  In particular we see that $\Theta$ must be equal to a constant
 $\Theta(z)  = \Theta(0)$ isometric embedding of $\cE$ onto a subspace $\cF_0$ of $\cF$ and 
 $\cM$ has the form 
$\cM = H^2(\cF_0) \subset H^2(\cF)$ for the subspace $\cF_0 = \Theta(0) \cE$ of the coefficient space $\cF$.

It remains to understand when a subspace of this form is also reducing for $V_1$ and $V_2$.
Let $f(z) = f_0$ where $f_0 \in \cF_0$.  Then $V_1 \colon f(z) \mapsto U^* P^\perp f_0 + z U^* P f_0 \in
H^2(\cF_0)$ forces $U^* P^\perp f_0 \in \cF_0$, $U^* P f_0 \in \cF_0$, i.e., 
\begin{itemize}
\item[(i)] invariance of $H^2(\cF_0)$ under $V_1$ implies invariance of $\cF_0$  under $U^* P^\perp$ and $U^* P$.  
\end{itemize}
Similarly, 
\begin{itemize}
\item[(ii)] invariance of $H^2(\cF_0)$ under $V_1^*$ implies invariance of $\cF_0$ under $P^\perp U$,
\item[(iii)] invariance of $H^2(\cF_0)$ under $V_2$ implies invariance of $\cF_0$ under $PU$ and $P^\perp U$.
\item[(iv)] invariance of $H^2(\cF_0)$ under $V_2^*$ implies invariance of $\cF_0$ under $U^*P$.
\end{itemize}
By summing the two operators in item (i) and in item (ii) respectively, we see that $\cF_0$ is invariant under
$U^*$ and under $U$.   Then from either (ii) or (iv) we see that $\cF_0$ is invariant under $P$  or $P^\perp$
(and hence also under $P^\perp = I - P$ or $P = I - P^\perp$).    Conversely, if $\cF_0$ is invariant under
$U$, $U^*$, and $P$ (and hence also $P^\perp$), it is routine to verify by direct computation that
$H^2(\cF_0)$ is invariant under all of $V_1$, $V_1^*$, $V_2$, $V_2^*$, and hence is jointly reducing
for $(V_1, V_2)$.  Then restriction of $(V_1, V_2)$ to $H^2(\cF_0)$ amounts to the BCL2 model 
corresponding to the reduced sub-BCL tuple $(\cF_0, P_0, U_0)$ as expected.
\end{proof}

\begin{remark}  \label{R:unitary-reducing} A result parallel to Theorem \ref{T:reduced-sub-tuple} can be obtained
for the direct-integral model for a unitary commuting pair $(W_1, W_2)$ as in Remark \ref{R:BCLunitary}.
The result is:  {\em a subspace $\cM$ of $\bigoplus \int_{\mathbb T} \cH(\zeta) \, {\tt d}\nu(\zeta)$ is reducing
for the commuting unitary pair $(W_1, W_2) = (M_{\varphi_1}, \zeta M_{\varphi_1^*})$ on 
$\bigoplus \int_{\mathbb T} \cH(\zeta) {\tt d} \nu(\zeta)$ if and only if  $\cM$ has the form
$$
  \cM = \bigoplus \int_{\mathbb T} P(\zeta) \cH(\zeta)\, {\tt d}\nu(\zeta)
$$
where $\zeta \mapsto P(\zeta)$ is a measurable function with $P(\zeta)$ equal to a orthogonal projection on
$\cH(\zeta)$  which is reducing for $\varphi_1(\zeta)$ (and hence also for $\varphi_2(\zeta) = \zeta \cdot \varphi_1(\zeta)^*$)
for a.e.~$\zeta \in {\mathbb T}$.}  Combining this result with Theorem \ref{T:reduced-sub-tuple} then leads
to a characterization of the reducing subspaces for a general BCL-model as in Theorem \ref{Thm:BCLmodel}.
\end{remark}

\section[Commuting unitary extensions]{Commuting unitary extension of a commuting pair of isometries}

 It is well known (see \cite[Section I.6]{Nagy-Foias}) that an arbitrary family of commuting isometries
 can always be extended to a family of
 commuting unitaries. The following result shows that when the family is finite and one of the isometries
 in the family is the product of the rest of the isometries, then the family can be extended to a family of commuting
 unitaries with additional structure.

\begin{lemma} \label{L:special-ext}
Let $(V_1,V_2)$ be a pair of commuting isometries. Then $(V_1,V_2)$ has a commuting unitary
extension $(Y_1,Y_2)$ such that $Y=Y_1Y_2$ is the minimal unitary extension of $V=V_1V_2$.
\end{lemma}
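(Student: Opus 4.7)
The plan is to exploit the Berger--Coburn--Lebow model to make the extension explicit. Applying Theorem \ref{Thm:BCLmodel} to $(V_1, V_2)$, after a unitary identification we may suppose that $(V_1, V_2)$ acts on $H^2(\cF) \oplus \cK_u$ in the BCL2 form, with multiplier symbols $\psi_1(z) = U^* P^\perp + z U^* P$ and $\psi_2(z) = P U + z P^\perp U$ on the shift summand, and commuting unitaries $W_1, W_2$ on the unitary summand $\cK_u$. In these coordinates $V = V_1 V_2 = M_z^\cF \oplus (W_1 W_2)$.

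My proposal is to enlarge $H^2(\cF)$ to $L^2(\cF)$ and replace $z$ by $\zeta \in {\mathbb T}$ in the symbols, while leaving $(W_1, W_2)$ untouched:
$$ Y_1 = \begin{bmatrix} M_{U^* P^\perp + \zeta U^* P} & 0 \\ 0 & W_1 \end{bmatrix}, \qquad Y_2 = \begin{bmatrix} M_{P U + \zeta P^\perp U} & 0 \\ 0 & W_2 \end{bmatrix} $$
acting on $L^2(\cF) \oplus \cK_u$. Four items must then be checked. (a) Each $Y_j$ is unitary: the pointwise relations $P P^\perp = 0$, $P + P^\perp = I$, $U^* U = U U^* = I$ give, by a one-line computation, $\psi_j(\zeta)^* \psi_j(\zeta) = \psi_j(\zeta) \psi_j(\zeta)^* = I_\cF$ for all $\zeta \in {\mathbb T}$. (b) $Y_1 Y_2 = Y_2 Y_1$ equals $M_\zeta^\cF \oplus (W_1 W_2)$: the same relations yield the pointwise identity $\psi_1(\zeta) \psi_2(\zeta) = \psi_2(\zeta) \psi_1(\zeta) = \zeta \cdot I_\cF$ (the $\zeta^0$ and $\zeta^2$ terms vanish because $P P^\perp = 0$). (c) The subspace $H^2(\cF) \oplus \cK_u$ is $Y_j$-invariant with $Y_j|_{H^2(\cF) \oplus \cK_u} = V_j$: each $\psi_j$ is a polynomial of degree at most one in $\zeta$, hence preserves $H^2(\cF)$, and its restriction is the analytic Toeplitz operator with symbol $\psi_j(z)$. (d) Minimality of the extension: since $\bigvee_{n \in \mathbb Z} M_\zeta^n H^2(\cF) = L^2(\cF)$ and $W_1 W_2$ is already unitary on $\cK_u$, $Y := Y_1 Y_2 = M_\zeta^\cF \oplus (W_1 W_2)$ satisfies $\bigvee_{n \in \mathbb Z} Y^n (H^2(\cF) \oplus \cK_u) = L^2(\cF) \oplus \cK_u$, making it the minimal unitary extension of $V$.

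I do not anticipate a serious obstacle here: the construction is completely explicit and the verifications are elementary pointwise computations using the projection and unitary identities. The only care needed is bookkeeping through the BCL identification and the substitution $z \leftrightarrow \zeta$ between the $H^2$ and $L^2$ pictures.
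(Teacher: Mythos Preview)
Your proposal is correct and follows essentially the same approach as the paper: both apply the BCL model, enlarge $H^2(\cF)$ to $L^2(\cF)$, replace $z$ by $\zeta$ in the symbols, and observe that the resulting operators are commuting unitaries whose product $M_\zeta^\cF \oplus W$ is the minimal unitary extension of $V$. The only cosmetic difference is that you work with the BCL2 form while the paper's proof happens to write out the BCL1 form, and you spell out the four verification items in more detail than the paper, which simply asserts that ``one can check'' them.
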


\begin{proof}
Theorem \ref{Thm:BCLmodel} plays a pivotal role in the proof of this result. We can assume without loss of generality that $\cH=\sbm{H^2(\cF) \\ \cH_u}$ and
$$
(V_1,V_2,V_1V_2)= \left( \sbm{M_{P^\perp U +zP U} & 0 \\ 0 &  W_1},
\sbm{ M_{U^* P+z U^* P^\perp } & 0 \\ 0 &  W_2}, \sbm{M_z & 0 \\ 0 &  W} \right),
$$
where $(\cF,P,U,W_1,W_2)$ is a BCL2 tuple for $(V_1,V_2)$. Now define a pair of  operators $Y_1, Y_2$
on $L^2(\mathcal F)\oplus\mathcal{H}_u$ by
  $$
  (Y_1,Y_2):=\left( \sbm{M_{P^\perp U+ \zeta PU} & 0 \\ 0 & W_1}, \sbm{M_{U^*P+\zeta U^*P^\perp} & 0 \\
  0 &  W_2} \right).
  $$
where $\zeta$ is the coordinate variable on the unit circle ${\mathbb T}$.
  Then one can check that $(Y_1,Y_2)$ is a pair of commuting unitaries and that $(Y_1,Y_2)$ is an extension of
  $(V_1,V_2)$, where $\sbm{H^2(\cF) \\  \cH_u}$ is identified as a subspace of 
  $\sbm{ L^2(\mathcal F) \\ \mathcal{H}_u}$
  via
   \begin{align*}
\begin{bmatrix}  z^n\xi \\ \eta  \end{bmatrix} \mapsto  \begin{bmatrix} \zeta^n \xi \\ \eta  \end{bmatrix}
\text{ for all }\xi\in\mathcal{F} \text{ and }\eta\in\cH_u \text{ for } n = 0,1,2,\dots.
   \end{align*}
   Then
  $Y=Y_1Y_2=\sbm{ M_\zeta & 0 \\ 0 &  W}$ on $\sbm{ L^2(\mathcal F) \\ \mathcal{H}_u}$ is clearly the 
  minimal unitary extension of $V=V_1V_2=\sbm{ M_z & 0 \\ 0 &  W}$ on $\sbm{ H^2(\mathcal F) \\ \mathcal{H}_u}$.
\end{proof}

\section{Doubly commuting pairs of isometries}

Arguably (see \cite{BKPS}), the BCL-model for a commuting pair of isometries has proven to be of limited utility for
understanding the finer geometric structure
of a commuting pair of isometries.  Consequently, there has been some investment in the use of other approaches (beginning with
multivariable analogs of the {\em Wold decomposition}) toward this goal  (see \cite{Burdak, BKS, SY, Slo1980}).  While the most general case
still remains mysterious, a particularly tractable special case is the case of a {\em doubly commuting isometric pair}\index{double commutativity}, i.e., a commuting pair of
isometries $(V_1, V_2)$ such that $V_1^* V_2 = V_2 V_1^*$ (and hence also $V_2^* V_1 = V_1 V_2^*$);
see \cite{SarkarLAA, Mandrekar1988, SSW}.
The next result characterizes the double commutativity property for a commuting isometric pair in terms of an associated
BCL2 tuple $(\cF, P, U, W_1, W_2)$ for $(V_1, V_2)$;.
this characterization was already observed by Berger-Coburn-Lebow \cite{B-C-L} with further elaboration by Ga\c{s}per-Ga\c{s}per
\cite{GG}, Bercovci-Douglas-Foias \cite[Proposition 2.10]{BDF1}, and Bhattacharyya-Rostogi-Kashari \cite[Lemma 3.2]{BRK}. We include yet another proof which fits in with
the ideas here.

\begin{theorem}  \label{T:doublycomisom}
Let $(V_1,V_2)$ be a pair of commuting isometries and let $(\cF,P,U,$ $W_1,W_2)$ be a choice of BCL2 tuple for $(V_1, V_2)$.
Then $(V_1,V_2)$ is doubly commuting if and only if
\begin{equation}   \label{DC-BCLtuple}
P^\perp UP =0, \text{ i.e., } \operatorname{Ran} P \text{ is invariant for } U.
\end{equation}
\end{theorem}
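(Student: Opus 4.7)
The plan is to reduce the verification to a direct computation on the shift component of the BCL2 model. First I would observe that the unitary part $(W_1,W_2)$ is automatically doubly commuting: since $W_1,W_2$ are commuting unitaries, $W_1^*W_2 = W_1^{-1}W_2 = W_2 W_1^{-1} = W_2 W_1^*$. Thus $V_1^*V_2 - V_2 V_1^*$ acts trivially on the $\cH_u$ component, so the question of double commutativity reduces to the shift part, where
\[
 V_1 = I_{H^2}\otimes U^*P^\perp + M_z \otimes U^*P, \qquad V_2 = I_{H^2}\otimes PU + M_z \otimes P^\perp U
\]
on $H^2(\cF)$.

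Next I would compute $V_1^*V_2$ and $V_2 V_1^*$ by expanding the products. Using that $U$ is unitary and that $P,P^\perp$ are complementary projections, one finds, after collecting the four kinds of tensor factors ($I$, $M_z$, $M_z^*$, and $M_z^* M_z$ or $M_z M_z^*$), that most of the terms cancel in the difference $V_1^*V_2 - V_2 V_1^*$. Specifically, the $M_z$ and (after using $M_z^*M_z = I$) the $M_z^*$ contributions cancel, and one is left with
\[
 V_1^*V_2 - V_2 V_1^* = (I_{H^2} - M_z M_z^*) \otimes P^\perp U P U.
\]
Since $I_{H^2} - M_zM_z^* = \bev_0^* \bev_0$ is the rank-one projection onto the constants (in particular nonzero), the commutator vanishes if and only if $P^\perp U P U = 0$, and, since $U$ is unitary, this is equivalent to $P^\perp U P = 0$, i.e., to $\operatorname{Ran} P$ being invariant under $U$.

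The only obstacle is bookkeeping: one has to be careful with the four operator-valued coefficients in each product and keep track of which tensor factor ($I$ vs.\ $M_z$ vs.\ $M_z^*$) is involved, since $M_z^*M_z = I$ but $M_z M_z^* \neq I$. Once the algebra is organized, the result is immediate. (No appeal to Theorem~\ref{Thm:BCLcoin} is needed here, because $P^\perp UP = 0$ is an intrinsic condition on the BCL tuple that is preserved by the coincidence relation \eqref{BCLcoin}, so its truth value does not depend on the particular BCL2 tuple chosen for $(V_1,V_2)$.)
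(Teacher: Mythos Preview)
Your proposal is correct and follows essentially the same approach as the paper: reduce to the shift component (since the unitary part is automatically doubly commuting), expand $V_{1s}^*V_{2s} - V_{2s}V_{1s}^*$ in the tensor formulation, and arrive at $(I_{H^2} - M_z M_z^*)\otimes P^\perp U P U$, from which the conclusion follows since $I_{H^2} - M_z M_z^*$ is the nonzero projection onto the constants. Your closing remark that the condition $P^\perp U P = 0$ is coincidence-invariant (hence independent of the particular BCL2 tuple) is a nice addendum not explicitly stated in the paper.
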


\begin{proof}
By Theorem \ref{Thm:BCLmodel} we assume without loss of generality  that $(V_1, V_2)$ is given by a BCL2 model:
$$
(V_1,V_2)=(M_{U^* P^\perp +z U^* P} \oplus W_1, M_{ PU +zP^\perp U} \oplus W_2)
$$
on $H^2(\cF) \oplus \cH_u$.   As commuting unitaries are automatically doubly commuting, we see that
$V_1$ double commutes with $V_2$ if and only if  $V_{1s}: = M_{U^*(P^\perp +zP)}$ double commutes with
$V_{2s}: = M_{(P+zP^\perp)U}$.

It is convenient to view $H^2(\cF)$ as the tensor product Hilbert space $H^2 \otimes \cF$ and then to write
\begin{align*}
& V_{1s} = M_{U^* P^\perp  + z U^* P} = I_{H^2} \otimes U^* P^\perp   + M_z \otimes U^* P, \\
& V_{2s} = M_{PU+z P^\perp U} = I_{H^2} \otimes P U  +  M_z \otimes  P^\perp U.
\end{align*}
We may then compute
\begin{align*}
& V_{1s}^* V_{2s}  = \left( (I_{H^2} \otimes P^\perp U) + (M_z^* \otimes PU) \right)
\left( (I_{H^2} \otimes PU) + (M_z \otimes P^\perp U ) \right)  \\
& = (I_{H^2} \otimes P^\perp U P U )  + (M_z \otimes P^\perp U P^\perp U )  +
     (M_z^* \otimes PUPU) + (I_{H^2} \otimes P U P^\perp U )
\end{align*}
while
\begin{align*}
& V_{2s} V_{1s}^* = \left( (I_{H^2} \otimes PU) + (M_z \otimes P^\perp U) \right)
\left( (I_{H^2} \otimes P^\perp U) + (M_z^* \otimes PU ) \right)  \\
& =(I_{H^2} \otimes  PUP^\perp U)  + (M_z \otimes P^\perp U P^\perp U )
+(M_z^* \otimes PUPU ) + (M_z M_z^* \otimes P^\perp U P U).
\end{align*}
Thus
\begin{align*}
&V_{1s}^* V_{2s} - V_{2s} V_{1s}^*  =  \\
& \quad I_{H^2} \otimes P^\perp U P U  + M_z \otimes P^\perp U P^\perp U  + M_z^* \otimes PUPU  + I_{H^2} \otimes PUP^\perp U  \\
& \quad - I_{H^2} \otimes PUP^\perp U  - M_z \otimes P^\perp U P^\perp U   -  M_z^* \otimes PUPU  - M_z M_z^* \otimes P^\perp U P U  \\
& = (I - M_z M_z^*) \otimes P^\perp U P U.
\end{align*}
As $ I_{H^2} - M_z M_z^*  = \bev_0^* \bev_0$ (where $\bev_0$ is the evaluation-at-0 map)  and $\bev_0^* \bev_0$ is the projection on the constant functions on  $H^2$ and hence is not zero, we see that $V_{1s}$ double commutes with $V_{2s}$ exactly when
$P^\perp U P U = 0$.  As $U$ is unitary, an equivalent formulation is $P^\perp U P = 0$.   The theorem now follows.
\end{proof}

As an illustration of Theorem \ref{T:doublycomisom} we now compute the BCL2 model for
a standard example of doubly commuting isometries, namely the bidisk shift operators $(M_{z_1}, M_{z_2})$ acting on $H^2_{{\mathbb D}^2}$.

\begin{example}  \label{E:BCLbidisk}  Consider the commuting pair of shift operators $(V_1, V_2) = (M_{z_1}, M_{z_2})$ acting on the Hardy space 
over the bidisk \index{$H^2_{{\mathbb D^2}}$}
$$
H^2_{{\mathbb D^2}}:= \{ f(z_1, z_2) = \sum_{(m,n) \in {\mathbb Z}^2_+} a_{ij} z_1^i z_2^j \colon \sum_{(m,n) \in {\mathbb Z}^2_+} | a_{ij}|^2 < \infty\}.
$$
Note that the operators $M_{z_1}$ and $M_{z_2}$ are shifts on $H^2_{{\mathbb D}^2}$ so the $(W_1, W_2)$-component in a BCL2 tuple for $(M_{z_1}, M_{z_2})$ is trivial.
We shall show: {\em  a BCL2 tuple for  $(M_{z_1}, M_{z_2})$ is
\begin{equation}   \label{BCLtuple-bidisk}
(\cF, P, U)  = (\ell^2_{\mathbb Z}, P_{\ell^2_{[1, \infty)}}, \bS)
\end{equation}
where $\ell^2_{\mathbb Z}$\index{$\ell^2_{\mathbb Z}$} is  the space of absolutely square-summable sequences  indexed
by the integers ${\mathbb Z}$,  $P_{\ell^2_{[1, \infty)}}$ is the orthogonal projection
on $\ell^2_{\mathbb Z}$ with range equal to the subspace of sequences supported on the subset
$\{ n \in {\mathbb Z} \colon 1 \le n \}$, and $\bS$\index{$\bS$} is the bilateral shift operator
$$
  \bS \colon \be_n \mapsto \be_{n+1}
$$
on $\ell^2_{\mathbb Z}$ (where $\{ \be_n \colon n \in {\mathbb Z}\}$ is the standard orthonormal basis for $\ell^2_{\mathbb Z}$).\index{$\be_j$}}

To construct a BCL2 model for $(V_1, V_2)$ according to the construction in the proof of Theorem \ref{Thm:BCLmodel}, we need to compute the wandering subspace for the shift $V_1V_2 = M_{z_1 z_2}$.  Note that $\operatorname{Ran} M_{z_1 z_2}$
consists of functions with Taylor coefficients $a_{ij}$ supported on the set $\{(i,j) \in {\mathbb Z}^2_+ \colon i,j \ge 1\}$.  Hence
$\cF = ( \operatorname{Ran} M_{z_1 z_2})^\perp$ is the subspace
\begin{equation}  \label{Fbd}
 \cF = \{ f(z_1, z_2) =  a_{00} + \sum_{i>0} a_{i0} z_1^i + \sum_{j>0} a_{0j} z_2^j \colon |a_{00}|^2 + \sum_{i> 0} |a_{i0}|^2 + \sum_{j > 0} | a_{0j} |^2 < \infty\}.
 \end{equation}
It is convenient to identify $\cF$ with $\ell^2_{\mathbb Z}$  via the map
$\tau_{\rm bd}$ (the subscript bd suggesting {\em bidisk}) defined on the orthonormal basis of monomials for $\cF$ according to the formula\index{$ \tau_{\rm bd}$}
\begin{equation}   \label{taubd}
 \tau_{\rm bd} \colon z_1^i \mapsto \be_{-i} \text{ for } i \ge 0, \quad \tau_{\rm bd} \colon z_2^j \mapsto \be_j \text{ for } j \ge 0.
 \end{equation}
 We wish to extend $\tau_{\rm bd}$ to a map
 from all of $H^2_{{\mathbb D}^2}$ to $H^2(\ell^2_{\mathbb Z}) : = H^2 \otimes \ell^2_{\mathbb Z}$ so that we have the intertwining
 $\tau_{\rm bd} M_{z_1 z_2} = M_z \tau_{\rm bd}$.  Thus we require that
 $$
   \tau_{\rm bd} \left((z_1 z_2)^k z_1^i\right) = z^k \tau_{\rm bd}( z_1^i)   = z^k \be_{-i}, \quad
   \tau_{\rm bd}\left( (z_1 z_2)^k z_2^j\right) = z^k \tau_{\rm bd}  (z_2^j)  = z^k \be_{j}
 $$for $i \ge 0$ and $j \ge 0$,
 or in a more closed form,
 \begin{equation}  \label{taubd-gen}
   \tau_{\rm bd} \colon z_1^i z_2^j \mapsto \begin{cases} \be_{j-i} z^j &\text{for } i \ge j, \\
             \be_{j-i} z^i &\text{for } i \le j.  \end{cases}
\end{equation}
As $\tau_{\rm bd}$ so defined is a well-defined bijection from an orthonormal basis for $H^2_{{\mathbb D}^2}$ to an
orthonormal basis for $H^2(\ell^2_{\mathbb Z})$,
$\tau_{\rm bd}$ extends to a well-defined unitary map from the scalar-valued Hardy space over the bidisk
$H^2_{{\mathbb D}^2}$ onto the $\ell^2_{\mathbb Z}$-valued
Hardy space over the disk $H^2(\ell^2_{\mathbb Z})$ which satisfies the intertwining property
$$
    \tau_{\rm bd} M_{z_1 z_2} = M_z \tau_{\rm bd}.
$$
By the construction in the proof of Theorem \ref{Thm:BCLmodel} we are guaranteed that there is a projection operator $P$ and a unitary operator $U$ on
$\cF \cong \ell^2_{\mathbb Z}$ so that
$$
 \tau_{\rm bd} M_{z_1} = M_{U^*(P^\perp + z P)} \tau_{\rm bd}, \quad
 \tau_{\rm bd} M_{z_2} = M_{(P + z P^\perp) U} \tau_{\rm bd}.
 $$
 Once one discovers the candidate, it is a matter of direct checking to see that $P = P_{\ell^2_{[1, \infty)}}$, $U = \bS$
 on $\cF = \ell^2_{\mathbb Z}$ does the job.
 Note that $\operatorname{Ran} P = \ell^2_{[1, \infty)}$ is invariant under $U = \bS$, as is to be expected from Theorem \ref{T:doublycomisom} since $(M_{z_1},
 M_{z_2})$ is doubly commuting.
\end{example}

We can use the result of Theorem \ref{T:doublycomisom}  combined with Example \ref{E:BCLbidisk} to obtain the following Wold decomposition
for a doubly commuting pair of isometries due to  S\l oci\'nski \cite[Theorem 3]{Slo1980}.
We present a new proof using the structure of the BCL2 model for doubly commuting isometries given by Theorem \ref{T:doublycomisom}
combined with the classical Wold decompositions for 
$U|_{\operatorname{Ran} P}$ and $U^*|_{\operatorname{Ran} P^\perp}$ and recognition
of the BCL2 model for the bidisk shift-pair given in Example \ref{E:BCLbidisk}.  Similar results have been obtained by Ga\c{s}per-Ga\c{s}per \cite{GG}.

\begin{theorem}  \label{T:doublycomisom'}
Suppose that $(V_1, V_2)$ is a doubly commuting pair of isometries on the Hilbert space $\cH$.  Then $\cH$ has an orthogonal direct sum
decomposition
\begin{equation}   \label{H-decom}
   \cH = \cH_{\rm dcs} \oplus \cH_{\rm su} \oplus \cH_{\rm us} \oplus \cH_{\rm uu}
\end{equation}
such that
\begin{enumerate}
\item[(i)] each of $\cH_{\rm dcs}$, $\cH_{\rm su}$, $\cH_{\rm us}$, $\cH_{\rm uu}$ is reducing for $V_1$ and $V_2$,

\item[(ii)] $(V_1|_{\cH_{\rm dcs}}, V_2|_{\cH_{\rm dcs}})$ is a doubly commuting pair of shift operators,

\item[(iii)] $(V_1|_{\cH_{\rm su}}, V_2|_{\cH_{\rm su}})$ is a commuting pair of operators such that
$V_1|_{\cH_{\rm su}}$ is a shift operator while $V_2|_{\cH_{\rm su}}$ is unitary,

\item[(iv)]  $(V_1|_{\cH_{\rm us}}, V_2|_{\cH_{\rm us}})$ is a commuting pair of operators such that
$V_1|_{\cH_{\rm us}}$ is unitary while $V_2|_{\cH_{\rm us}}$ is a shift operator,        and

\item[(v)] $(V_1|_{\cH_{\rm uu}}, V_2|_{\cH_{\rm uu}})$ is a commuting pair of unitary operators.
\end{enumerate}
Conversely, any pair of operators $(V_1, V_2)$ on $\cH$ with a decomposition \eqref{H-decom} satisfying conditions {\rm (i)--(v)}
is a doubly commuting pair of isometries.
\end{theorem}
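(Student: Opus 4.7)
The plan is to combine the BCL2 model of Theorem~\ref{Thm:BCLmodel}, the characterization of double commutativity from Theorem~\ref{T:doublycomisom}, and the reducing-subspace description from Theorem~\ref{T:reduced-sub-tuple}, together with classical Wold decompositions applied to the isometries $U|_{\operatorname{Ran} P}$ and $U^*|_{\operatorname{Ran} P^\perp}$. First present $(V_1,V_2)$ in the BCL2 form on $\sbm{H^2(\cF) \\ \cH_u}$ with tuple $(\cF, P, U, W_1, W_2)$. The summand $\cH_u$, on which $(V_1,V_2)$ acts as the commuting unitary pair $(W_1,W_2)$, is already the $\cH_{\rm uu}$ piece. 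It remains to decompose $H^2(\cF)$ into the other three pieces.

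By Theorem~\ref{T:doublycomisom}, double commutativity is equivalent to $P^\perp U P = 0$, i.e.\ $\operatorname{Ran} P$ is $U$-invariant; unitarity of $U$ on $\cF$ then forces $\operatorname{Ran} P^\perp$ to be $U^*$-invariant. Wold-decompose $U|_{\operatorname{Ran} P}$ as $\operatorname{Ran} P = \cF_{P,s} \oplus \cF_{P,u}$ (pure shift $\oplus$ unitary) and $U^*|_{\operatorname{Ran} P^\perp}$ as $\operatorname{Ran} P^\perp = \cF_{P^\perp,s} \oplus \cF_{P^\perp,u}$. The unitary part $\cF_{P,u} \subset \operatorname{Ran} P$ is $P$-invariant (with $P$ acting as the identity), and it is $U$-reducing in $\cF$ because $U|_{\cF_{P,u}}$ is a surjective isometry and $U$ is unitary on $\cF$. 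Hence $(\cF_{P,u}, I, U|_{\cF_{P,u}})$ is a reduced sub-And\^o tuple in the sense of Theorem~\ref{T:reduced-sub-tuple}, and the associated reducing subspace $H^2(\cF_{P,u})$ carries the restricted BCL2 pair $(M_{zU^*|_{\cF_{P,u}}},\, M_{U|_{\cF_{P,u}}})$: a shift paired with a unitary. This is the $\cH_{\rm su}$ piece. A symmetric argument using $\cF_{P^\perp,u}$ (where $P$ acts as $0$ and $U$ is unitary) produces the $\cH_{\rm us}$ piece.

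Now set $\cF_0 := \cF_{P,s} \oplus \cF_{P^\perp,s} = \cF \ominus (\cF_{P,u} \oplus \cF_{P^\perp,u})$, which is $P$-invariant (with $P|_{\cF_0}$ projecting onto $\cF_{P,s}$) and $U$-reducing in $\cF$. The main obstacle is to show that the restricted BCL tuple $(\cF_0, P|_{\cF_0}, U|_{\cF_0})$ is, up to a unitary change of basis, the bidisk BCL tuple of Example~\ref{E:BCLbidisk} with a suitable multiplicity. Let $\cD := \cF_{P,s} \ominus U\cF_{P,s}$, the wandering subspace of the shift $U|_{\cF_{P,s}}$, so that $\cF_{P,s} = \bigoplus_{n \ge 0} U^n \cD$ by Wold. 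The key computation is to verify $\cF_{P^\perp,s} = \bigoplus_{n \le -1} U^n \cD$; this reduces to showing that $U^*\cD$ is exactly the wandering subspace for the shift $U^*|_{\operatorname{Ran} P^\perp}$, which combines $\cD \perp U\operatorname{Ran} P$ (using $\cD \perp U\cF_{P,s}$ together with $\cD \subset \cF_{P,s} \perp \cF_{P,u}$ and the invariance $U\cF_{P,u} = \cF_{P,u}$) to place $U^*\cD$ inside $\operatorname{Ran} P^\perp$, with a short orthogonality check against $\cF_{P^\perp,u}$ (using that $\cF_{P^\perp,u}$ is $U$-reducing) to land inside $\cF_{P^\perp,s}$. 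Once this identification is in hand, $\cF_0 = \bigoplus_{n \in \mathbb Z} U^n \cD$, and the map $U^n x \mapsto \be_n \otimes x$ ($x \in \cD$, $n \in \mathbb Z$) identifies $\cF_0$ with $\ell^2_{\mathbb Z}(\cD)$, under which $U|_{\cF_0}$ becomes $\bS \otimes I_\cD$ and $P|_{\cF_0}$ becomes a half-line projection. By Example~\ref{E:BCLbidisk} the corresponding BCL2 model on $H^2(\cF_0)$ is unitarily equivalent to the bidisk shift-pair $(M_{z_1}, M_{z_2})$ on $H^2_{\bD^2} \otimes \cD$, a doubly commuting shift pair, yielding the $\cH_{\rm dcs}$ piece.

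The converse is routine: the reducing hypothesis (i) ensures each $V_j$ restricts to each piece, and (ii)--(v) make each restriction an isometry, so the global $(V_1, V_2)$ is an isometric commuting pair. For double commutativity, one checks it piece by piece: on $\cH_{\rm dcs}$ it is assumed; on $\cH_{\rm su}$, $\cH_{\rm us}$, $\cH_{\rm uu}$ at least one of $V_1|_{\cH_\bullet}$, $V_2|_{\cH_\bullet}$ is unitary, and conjugating the commutation relation $V_1 V_2 = V_2 V_1$ by that unitary converts it to the double-commutation relation. An orthogonal direct sum of doubly commuting pairs of isometries is again a doubly commuting pair of isometries, completing the proof.
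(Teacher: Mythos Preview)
Your proposal is correct and follows essentially the same route as the paper: pass to the BCL2 model, use the criterion $P^\perp UP=0$, Wold-decompose $U|_{\operatorname{Ran} P}$ and $U^*|_{\operatorname{Ran} P^\perp}$ to split off the $\cH_{\rm su}$ and $\cH_{\rm us}$ pieces, and identify the remaining ``pure--pure'' part with the $\ell^2_{\mathbb Z}$-valued bidisk tuple of Example~\ref{E:BCLbidisk}. The only cosmetic differences are that you invoke Theorem~\ref{T:reduced-sub-tuple} explicitly (the paper works directly with block decompositions) and you build the bilateral-shift identification via the tower $\{U^n\cD\}_{n\in\mathbb Z}$, whereas the paper extracts it from a $4\times 4$ block-matrix analysis of $U$ and the partial isometry $Y$ coupling the two pure parts---these are two equivalent ways of packaging the same computation.
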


\begin{proof}  Suppose first that $V_1, V_2$ on $\cH$ with $\cH$ decomposing as in \eqref{H-decom} satisfies (i)--(v).  Then clearly
$(V_1, V_2)$ is a commuting pair of isometries since the restriction to each piece is commuting.  The restriction to $\cH_{\rm dcs}$
is doubly commuting by condition  (ii). In general, if $(S, W)$ is a commuting operator pair with $W$ unitary, then
$$
  S^* W = W W^* S^* W = W S^* W^* W = W S^*
$$
and hence $(S, W)$ is in fact doubly commuting (this in fact holds with $W$ any normal operator by the Putnam-Fuglede
theorem -- see \cite{Rosenblum} for a slick proof).  Hence the restrictions of $(V_1, V_2)$ to $\cH_{\rm su}$, $\cH_{\rm us}$,
$\cH_{\rm uu}$ are all doubly commuting as well, and it follows that the full commuting pair $(V_1, V_2)$ is
doubly commuting.

Conversely, suppose that $(V_1, V_2)$ on $\cH$ is a doubly commuting pair of isometries.
By Theorem \ref{T:doublycomisom} $(V_1, V_2)$ is unitarily equivalent to a BCL2 model $(M_{U^*(P^\perp + zP)} \oplus W_1,
M_{(P + zP^\perp)U} \oplus W_2)$ on
$H^2(\cF) \oplus \cK_u$ for some coefficient Hilbert space $\cF$ and a Hilbert space $\cK_u$, where the BCL2 tuple
$(\cF, P, U, W_1, W_2)$ for $(V_1, V_2)$ has the additional property that $P^\perp U P = 0$.  Let us consider the
Wold decomposition for the isometry $U|_{\operatorname{Ran} P}$:
 {\em $\operatorname{Ran} P = \cP_{s} \oplus \cP_{ u}$ with
$\cP_{s}$ and $\cP_{u}$ invariant for $U$ with $U_{\cP_{s}} : = U|_{\cP_{ s}}$ equal to a shift operator and
$U_{\cP_{u}} : = U|_{\cP_{u}}$ equal to a unitary operator.}   Similarly $U^*|_{\operatorname{Ran} P^\perp}$ has a Wold decomposition:
{\em $\operatorname{Ran} P^\perp = \cP_{\perp s} \oplus \cP_{\perp u}$ with $\cP_{\perp s}$ and $\cP_{\perp u}$ invariant for
$U^*$ with $U^*_{\cP_{_{\perp s}} }: = U^*|_{\cP_{_{\perp s}}}$
equal to a shift operator and $U^*_{\cP_{\perp u} }: = U^*|_{\cP_{\perp u}}$ equal to a unitary operator.}  With respect to the decomposition
\begin{equation}   \label{Fdecom}
    \cF = \operatorname{Ran} P^\perp \oplus \operatorname{Ran} P =
    \cP_{\perp u} \oplus \cP_{\perp s} \oplus \cP_{s} \oplus \cP_{ u},
\end{equation}
$U$ has a $(4 \times 4)$-block matrix decomposition of the form
$$
   U = \begin{bmatrix} U_{\cP_{\perp u}} & 0 & 0 & 0 \\ 0 & U_{\cP_{\perp s}} & 0 & 0  \\
   X & Y & U_{\cP_{s}} & 0 \\ Z & W & 0 & U_{\cP_{u}}  \end{bmatrix}
$$
where $U_{\cP_u}$ is unitary, $U_{\cP_s}$ is  a shift, $U_{\cP_{\perp s}}$ is the adjoint of a shift, and $U_{\cP_{\perp u}}$ is unitary.
The fact that $U$, $U_{\cP_u}$ and $U_{\cP_{\perp u}}$ are all unitary forces $X=0$, $Z= 0$, $W=0$ as well as
$$
Y^* Y = I  -  U_{\cP_{\perp s}}^* U_{\cP_{\perp s}}, \quad Y^* U_{\cP_s} = 0, \quad
Y Y^* = I  - U_{\cP_s} U_{\cP_s}^*, \quad Y U^*_{\cP_{\perp s}} = 0.
$$
Hence $Y$ is a partial isometry with initial space equal to
$$
\cD_{U_{\cP_{\perp s}}} = \operatorname{Ran} \,  (I - U_{\cP_{\perp s}}^* U_{\cP_{\perp s}})  \subset \cP_{\perp s}
$$
and with final space equal to
$$
\cD_{U_{\cP_{ s}}^*} =  \operatorname{Ran} \, (I - U_{\cP_{\perp s}} U_{\cP_{\perp s}}^*)  \subset \cP_{s}.
$$
We then use the operator
$$
W|_{\cD_{U_{\cP_{\perp s}}}} \colon    \cD_{U_{\cP_{ \perp s}}} \to \cD_{U_{ \cP_{s}}^*}
$$
as a unitary identification map to identify $\cD_{U_{\cP_{\perp s}}}$ and $\cD_{U_{\cP_{ s}}^*}$ with a common coefficient space which we
shall call $\cE$.  As $U_{\cP_s}$ is shift with wandering subspace identified with $\cE$ while $U_{\cP_{\perp s}}^*$ is a shift with
wandering subspace also identifiable with $\cE$, we may view $\cP_s$ and $\cP_{\perp s}$ as having the respective forms
$$
   \cP_s = \bigoplus_{n=1}^\infty U_{\cP_s}^{n-1} \cE, \quad \cP_{\perp s} = \bigoplus_{m=0}^\infty U_{\cP_{\perp s}}^{*m} \cE.
 $$
 Let us introduce additional unitary identification maps
$$ \tau_- \colon \cP_{\perp s } \to \ell^2_{(-\infty, 0]}(\cE)\mbox{ and }\tau_+ \colon \cP_{s} \to \ell^2_{[1, \infty)}(\cE)$$ given by
\begin{equation} \label{tau}
\tau_- \colon \bigoplus_{n = 0}^{-\infty} U_{\cP_{\perp s}}^{*-n} e_n \mapsto \{ e_{n} \}_{n \le -1}, \quad
\tau_+ \colon \bigoplus_{m=1}^\infty U_{\cP_{ s}}^{m-1} e_m \mapsto \{ e_m \}_{m \ge 1}.
\end{equation}
Then we see that
\begin{equation}  \label{tau-intertwine}
 \begin{bmatrix} \tau_- & 0 \\ 0 & \tau_+ \end{bmatrix} \begin{bmatrix} U_{\cP_{s \perp}} & 0 \\ Y & U_{\cP_{s}} \end{bmatrix} =
 {\mathbf S} \begin{bmatrix} \tau_- & 0 \\ 0 & \tau_+ \end{bmatrix}
\end{equation}
where ${\mathbf S}$ is the bilateral shift operator on $\ell^2_{\mathbb Z}(\cE)$:
$$
{\mathbf S} \colon \{ \be_n \}_{n \in {\mathbb Z}} \mapsto \{ \be_{n+1} \}_{n \in {\mathbb Z}}.
$$
Note that $\sbm{ U_{\cP_s} & 0 \\ Y & U_{\cP_{\perp s}} }$ amounts to $U|_{\cP_s \oplus \cP_{\perp s}}$; we have thus shown to this point
that {\em $U|_{\cP_s \oplus \cP_{\perp s}}$ is unitarily equivalent to the bilateral shift $\mathbf S$ on $\ell^2_{\mathbb Z}(\cE)$.}

Let us now rewrite the decomposition \eqref{Fdecom} as
\begin{equation}   \label{Fdecom'}
\cF = \cP_{\perp u} \oplus \ell^2_{\mathbb Z}(\cE) \oplus \cP_{u}
\end{equation}
where we use the identification
$$
\cP_{\perp s} \oplus \cP_s \cong \ell^2_{\mathbb Z}(\cE)
$$
implemented by the unitary identification map
$$
 \tau = \begin{bmatrix} \tau_- & 0 \\ 0 & \tau_+ \end{bmatrix} \colon \begin{bmatrix} \cP_{\perp s} \\ \cP_{ s} \end{bmatrix}
 \to \begin{bmatrix} \ell^2_{(-\infty,0]}(\cE) \\ \ell^2_{[1, \infty)}(\cE) \end{bmatrix} \cong \ell^2_{\mathbb Z}(\cE)
 $$
 given by \eqref{tau}.  If we let $U_u$ and $U_{\perp u}$ be the unitary operators
 $$
   U_u = U|_{\cP_u}, \quad U_{\perp u} = U|_{\cP_{\perp u}},
 $$
 then as a consequence of \eqref{tau-intertwine} we see that in these coordinates the first three objects in the BCL2 tuple $(\cF, U, P, W_+, W_-)$
 assume the more detailed form
 \begin{equation}   \label{FPUdecom}
  \cF = \begin{bmatrix} \cP_{\perp u} \\ \ell^2_{\mathbb Z}(\cE) \\ \cP_{u} \end{bmatrix}, \quad
  U = \begin{bmatrix} U_{\perp u} & 0 & 0 \\ 0 & {\mathbf S} & 0 \\ 0 & 0 & U_{u} \end{bmatrix}, \quad
  P = \begin{bmatrix} 0 & 0 & 0 \\ 0 & P_{_+} & 0 \\ 0 & 0 & P_{_{\cP_u}} \end{bmatrix}
 \end{equation}
 where $P_+$ is the orthogonal projection on $\ell^2_{\mathbb Z}(\cE)$ with range equal to $\ell^2_{[1, \infty)}(\cE)$
 (considered as the subspace of $\ell^2_{\mathbb Z}(\cE)$ having all coordinates with indices in $(-\infty, 0]$ equal to zero).

 The BCL2 model for $(V_1, V_2)$ is to take
 $$
 V_1 \cong \begin{bmatrix} M_{U^*(P^\perp  + z P )} & 0 \\ 0 & W_1 \end{bmatrix}, \quad
 V_2 \cong \begin{bmatrix} M_{( P + z P^\perp) U} & 0 \\ 0 & W_2 \end{bmatrix}
 $$
 acting on
 $$
 \cH \cong \begin{bmatrix} H^2(\cF) \\ \cK_u \end{bmatrix}.
 $$
 From the decompositions \eqref{FPUdecom} for $\cF$, $P$, $U$ we see that $H^2(\cF) \oplus \cK_u$
 has the finer decomposition
 $$
  \cH \cong H^2(\cF) \oplus \cK_u \cong H^2(\cP_{\perp u}) \oplus H^2(\ell^2_{\mathbb Z}(\cE)) \oplus H^2(\cP_{u}) \oplus \cK_u
  $$
  which split $V_1$ and $V_2$ as four-fold direct sums
  $$
  V_1 \cong V_{1, \perp u} \oplus V_{1,s} \oplus V_{1, u} \oplus W_1, \quad
  V_2 \cong V_{2, \perp u} \oplus V_{2,s} \oplus V_{2,  u} \oplus W_2
  $$
  where
  \begin{align*}
  & V_{1,\perp u} = I_{H^2} \otimes U_{\cP_{\perp u}}^* , \quad V_{2, \perp u} = M_z \otimes U_{\cP_{\perp u}}  \text{ on } H^2(\cP_u), \\
  & V_{1,s} = M_{{\mathbf S}^* (P_+^\perp + z P_+)}, \quad V_{2,s} = M_{(P_+ + z  P_+^\perp) \bS}
  \text{ on } H^2(\ell^2_{\mathbb Z}(\cE)),  \\
  & V_{1, u} = M_z \otimes U_{\cP_u}^* , \quad V_{2,  u} = I_{H^2} \otimes U_{\cP_u} \text{ on } H^2(\cP_{\perp u}),
  \end{align*}
  and where $(W_1, W_2)$ on $\cK_u$ is the commuting pair of unitary operators coming from the original BCL2 tuple
  $(\cF, U, P, W_1, W_2)$ for $(V_1, V_2)$.  It is easily checked that $V_{1, \perp u}$ is a shift operator commuting with the unitary operator
  $V_{2,\perp u}$ and that $V_{1, u}$ is a shift operator commuting with the unitary operator $V_{2,  u}$.
  Let us set
  $$
  \cH_{\rm dcs} = H^2(\ell^2_{\mathbb Z}(\cE)), \quad
  \cH_{\rm us} = H^2(\cP_{ \perp u}), \quad
  \cH_{\rm su} = H^2(\cP_{u}), \quad
  \cH_{\rm uu}  = \cK_u.
 $$
 Then the above analysis shows that conditions (i), (iii), (iv), (v) in Theorem \ref{T:doublycomisom'} are all verified.  Hence it remains only to
 verify condition (ii), i.e., we must show that {\em the operator pair
 \begin{equation}  \label{model1}
    (V_{1,s} = M_{\bS^* (P_+^\perp  + z P_+)}, V_{2,s} = M_{ (P_+ + z  P_+^\perp) \bS})
    \text{ on } H^2(\ell^2_{\mathbb Z}(\cE))
 \end{equation}
 is a doubly commuting pair of shift operators.}  But we recognize \eqref{model1} as just the BCL2 model for the bidisk shift operators $(M_{z_1}, M_{z_2})$
 on $H^2_{{\mathbb D}^2}$ computed in Example \ref{E:BCLbidisk} tensored with the coefficient Hilbert space $\cE$, i.e., \eqref{model1} is the BCL2 model
 for the doubly commuting shift-operator pair $(M_{z_1}, M_{z_2})$ acting on $H^2_{{\mathbb D}^2}(\cE)$.  Thus the theorem follows.
 \end{proof}

 As a corollary we recover the another result of  S\l oci\'nski \cite[Theorem 1]{Slo1980} characterizing doubly commuting
 shift-pairs.

 \begin{corollary}  \label{C:Slo}  A pair of operators $(V_1, V_2)$ is a doubly commuting pair of shift operators if and only if
 $(V_1, V_2)$ is unitarily equivalent to the concrete pair of shift operators $(M_{z_1}, M_{z_2})$ acting on the
 vector-valued Hardy space over the bidisk $H^2_{{\mathbb D}^2}(\cE)$ for some coefficient Hilbert space $\cE$.
 \end{corollary}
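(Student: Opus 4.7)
The plan is to deduce both directions of the corollary from the two preceding results (Theorem \ref{T:doublycomisom'} and Example \ref{E:BCLbidisk}), doing essentially no new work.

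For the forward direction, assume $(V_1,V_2)$ is a doubly commuting pair of shifts on $\cH$. Applying Theorem \ref{T:doublycomisom'} decomposes $\cH$ as $\cH_{\rm dcs}\oplus\cH_{\rm su}\oplus\cH_{\rm us}\oplus\cH_{\rm uu}$, reducing $(V_1,V_2)$, with the indicated shift/unitary character on each piece. Since every orthogonal direct summand of a shift operator is again a shift (the sufficient condition $\bigcap_{n\ge 0}\operatorname{Ran} V^n=\{0\}$ passes to reducing subspaces), while the restriction $V_i|_{\cH_{\rm su}}$ for $i=2$, $V_i|_{\cH_{\rm us}}$ for $i=1$, and $V_i|_{\cH_{\rm uu}}$ for $i=1,2$ are unitary, each of these restrictions must be simultaneously a shift and a unitary. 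The only operator which is both a shift and a unitary is the operator on the zero space, so $\cH_{\rm su}=\cH_{\rm us}=\cH_{\rm uu}=\{0\}$. Thus $\cH=\cH_{\rm dcs}$, and the doubly commuting shift part identified in the proof of Theorem \ref{T:doublycomisom'} is, by the last paragraph of that proof, unitarily equivalent to the BCL2 model \eqref{model1}, which is just the pair $(M_{z_1},M_{z_2})$ on $H^2_{{\mathbb D}^2}(\cE)$ (obtained by tensoring the scalar bidisk-shift BCL2 tuple from Example \ref{E:BCLbidisk} with a coefficient space $\cE$ identifiable with the wandering subspace for the off-diagonal block $Y$ of $U$).

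For the converse, it suffices to verify that $(M_{z_1},M_{z_2})$ on $H^2_{{\mathbb D}^2}(\cE)$ is itself a doubly commuting pair of shifts. Commutativity is obvious, each $M_{z_i}$ is a shift since $M_{z_i}^{*n}\to 0$ strongly on $H^2_{{\mathbb D}^2}(\cE)$, and the identity $M_{z_1}^*M_{z_2}=M_{z_2}M_{z_1}^*$ is immediate on monomial basis elements. Alternatively, one can invoke Theorem \ref{T:doublycomisom} together with the BCL2 tuple $(\ell^2_{\mathbb Z},P_{\ell^2_{[1,\infty)}},\bS)$ for $(M_{z_1},M_{z_2})$ computed in Example \ref{E:BCLbidisk}: the range of $P_{\ell^2_{[1,\infty)}}$ is clearly invariant for the bilateral shift $\bS$, i.e.\ $P^\perp UP=0$, which by Theorem \ref{T:doublycomisom} is exactly the criterion for double commutativity. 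Tensoring with the coefficient space $\cE$ preserves this property.

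There is no real obstacle here: both directions are bookkeeping on top of Theorem \ref{T:doublycomisom'} and Example \ref{E:BCLbidisk}. The only small point that needs to be stated explicitly is the observation that a direct summand of a shift which is unitary must be trivial, which is what collapses the Wold-type decomposition in Theorem \ref{T:doublycomisom'} down to its $\cH_{\rm dcs}$ piece.
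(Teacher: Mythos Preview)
Your proof is correct and follows essentially the same approach as the paper: apply the Wold-type decomposition of Theorem~\ref{T:doublycomisom'}, observe that the shift hypothesis forces the three pieces with a unitary component to be trivial, and then identify the remaining $\cH_{\rm dcs}$ piece with the bidisk model via the analysis in the proof of Theorem~\ref{T:doublycomisom'}. You are slightly more explicit than the paper in spelling out why the unitary pieces collapse and in verifying the converse direction directly, but the substance is the same.
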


 \begin{proof}
 Suppose that $(V_1, V_2)$ is a doubly commuting pair of shift operators.  Then in particular $(V_1, V_2)$ has a
 Wold decomposition as in
 Theorem \ref{T:doublycomisom'}.  But the only piece of this decomposition which involves a pair of shift operators is the piece
 $(V_1|_{\cH_{\rm dcs}},  V_2|_{\cH_{\rm dcs}})$, and as we have seen in the proof of Theorem \ref{T:doublycomisom'},
 this pair in turn is
 unitarily equivalent to $(M_{z_1}, M_{z_2})$ on $H^2_{{\mathbb D}^2}(\cE)$ for some coefficient Hilbert space $\cE$.
\end{proof}

\section[Not doubly-commuting commuting isometries]{Commuting isometries which are not doubly commuting: examples}   \label{S:non-doublycom}

In this section, we look at some examples of commuting isometric pairs which are not doubly commuting. 

\begin{example}  \label{E:1nondoubly}  \textbf{$(M_{z}, M_\Theta)$ on $H^2(\cY)$.}
For this example we take
$$
 \cH = H^2(\cY), \quad V_1 = M_z, \quad V_2 = M_\Theta
 $$
 where $\Theta$ is an inner function with values in $\cB(\cY)$.  This in fact is the functional model of Bercovici-Douglas-Foias for a commuting isometric
 pair $(V_1, V_2)$ such that $V_1$ is a shift (see \cite{BDF1, BDF3}). To get a BCL2-tuple for this $(V_1, V_2)$, we convert the BCL2-tuple given by Corollary \ref{C:BCLdata} for
 a general commuting isometric pair to a more functional form for this specific $(V_1, V_2)$.  The first step is to look at the Wold decomposition for the shift
 operator $V = V_1 V_2 = M_{z \Theta(z)}$ acting on $H^2(\cY)$.  Note that the wandering subspace $\cD_{V^*}$ is given by
 $$
  \cD_{V^*} = H^2(\cY) \ominus z \cdot \Theta(z) H^2(\cY)
$$
which has the internal direct-sum decomposition
$$
\cD_{V^*} = V_2 \cD_{V_1^*} \oplus \cD_{V_2^*} = \Theta \cdot \cY \oplus {\mathfrak H}(\Theta)
$$where ${\mathfrak H}(\Theta)$ is the Sarason/de Branges-Rovnyak model space associated with the inner function $\Theta$:
$$
{\mathfrak H}(\Theta) = H^2(\cY) \ominus \Theta \cdot H^2(\cY).
$$
Then the Wold decomposition for $V = M_{z \Theta(z)}$ on $H^2(\cY)$ has the concrete form
$$
f(z) = \sum_{n=0}^\infty z^n \Theta(z)^n (\Theta(z) y_n + h_n(z))
$$
for any $f \in H^2(\cY)$ where $y_n \in \cY$ and $h_n \in {\mathfrak H}(\Theta)$ are determined by
$$
  \Theta(z) y_n + h_n(z) = P_{\cD_{V^*}} V^{*n} h.
$$
We then define an identification map $\tau_\Theta \colon H^2(\cY) \to H^2\left( \sbm{ \cY \\ {\mathfrak H}(\Theta)} \right) \cong \sbm{ H^2(\cY) \\ H^2({\mathfrak H}(\Theta))}$ by
\begin{equation}   \label{tau-Theta}
\tau_\Theta \colon \sum_{n=0}^\infty z^n \Theta(z)^n (\Theta(z) y_n + h_n(z)) \mapsto
\sum_{n=0}^\infty z^n \begin{bmatrix} y_n \\ h_n(w) \end{bmatrix} =: \begin{bmatrix} y(z) \\ h(z,w) \end{bmatrix}
\end{equation}
where we set
$$
y(z) = \sum_{n=0}^\infty y_n z^n \in H^2(\cY), \quad  h(z,w) = \sum_{n=0}^\infty z^n h_n(w) \in H^2({\mathfrak H}(\Theta)).
$$
Then it is straightforward to see that $\tau_\Theta$ implements the intertwining identity
$$
  \tau_\Theta M_{z \Theta(z)} = M_z \tau_\Theta.
$$
It is less obvious to identify by inspection the inner operator pencils $\Psi_1(z)$ and $\Psi_2(z)$ so that
\begin{equation}  \label{tau-Theta-intertwine}
\tau_\Theta M_z = M_{\Psi_1(z)} \tau_\Theta, \quad \tau_\Theta M_\Theta = M_{\Psi_2(z)} \tau_\Theta.
\end{equation}
However, from the general formulas obtained in Theorem \ref{T:BCLcanonical} and Corollary \ref{C:BCLdata} we know that
\begin{equation}  \label{BCLmodel}
(M_{\Psi_1}, M_{\Psi_2}) \text{ with } \Psi_1(z) = U^* P^\perp + z U^* P, \quad \Psi_2(z) = P U + z P^\perp U
\end{equation}
where we choose
\begin{equation}   \label{BCLdata-BDFmodel}
 P = \begin{bmatrix} I_\cY & 0 \\ 0 & 0 \end{bmatrix}, \quad
 U =  \begin{bmatrix} \bev_{0,\cY} M_\Theta|_\cY  & \bev_{0,\cY}|_{{\mathfrak H}(\Theta)}  \\ M_w^* M_\Theta|_\cY & M_w^*|_{{\mathfrak H}(\Theta)} \end{bmatrix}
 =: \begin{bmatrix} D & C \\ B & A \end{bmatrix} \colon \begin{bmatrix} \cY \\ {\mathfrak H}(\Theta) \end{bmatrix} \to
 \begin{bmatrix} \cY \\ {\mathfrak H}(\Theta) \end{bmatrix}
\end{equation}
does the job (where we now use $w$ as the independent variable for functions in ${\mathfrak H}(\Theta)$), as is seen by specializing the formulas in \eqref{BCL2tuple'}
to the case where
$$
\cD_{V_1^*} = \cY, \quad \cD_{V_2^*} = {\mathfrak H}(\Theta), \quad V_1 = M_z, \quad V_2 = M_{\Theta(z)}.
$$
In summary, formulas \eqref{BCLmodel}, \eqref{BCLdata-BDFmodel} gives the explicit conversion of the BDF-model  $(M_z, M_\Theta)$ to a 
BCL-model ($M_{\Psi_1}, M_{\Psi_2}$).

Let us note next that  the action of $U$ in \eqref{BCLdata-BDFmodel} can be given a more explicit form
\begin{equation}   \label{U-action}
U \colon \begin{bmatrix} y \\ h(w) \end{bmatrix} \mapsto \begin{bmatrix} \Theta(0) y + h(0) \\
\frac{ \Theta(w) - \Theta(0)}{w} y + \frac{ h(w) - h(0)}{w} \end{bmatrix}.
\end{equation}
If we use the identification $\tau$ from  $\cY \oplus \fH(\Theta)$ to $\fH( w \cdot \Theta)$
(where we set $\fH(\Theta) = H^2(\cY) \ominus \Theta \cdot H^2(\cY)$ and similarly $\fH( w \cdot \Theta) = H^2(\cY) \ominus (w \cdot \Theta) H^2(\cY)$
with $w$ here used as the independent variable for functions in
$\fH(\Theta)$ or $\fH( w \cdot \Theta)$ contained in $H^2(\cY)$) given by
$$
 \tau \colon y \oplus h(w) \mapsto y + w h(w),
$$
then we can  get a possibly more convenient BCL2-tuple $(\widetilde \cF, \widetilde P, \widetilde U)$  for $(M_z, M_\Theta)$ on $H^2(\cY)$, namely:
\begin{align}  
&\widetilde \cF = \fH(w \cdot \Theta), \quad  \widetilde P \colon y + w h(w) \mapsto y, \notag  \\
& \widetilde U \colon y + w h(w) \mapsto \big(\Theta(0) y + h(0) \big) + w \bigg( \frac{ \Theta(w) - \Theta(0)}{w} y + \frac{h(w) - h(0)}{w} \bigg).
\label{tildeBCLTuple}
\end{align}

Curiously, from the point of view of system theory, $U$ is just the system matrix for the canonical functional-model
de Branges-Rovnyak transfer-function realization for $\Theta$:
\begin{equation}  \label{Theta-realize}
\Theta(z) = D + z C (I - z A)^{-1} B
\end{equation}
where $D,C,A,B$ are as in \eqref{BCLdata-BDFmodel} (see Theorem 1.2 in \cite{BB-Montreal} for this point of view).
In terms of the  original presentation of $(V_1, V_2)$ as $V_1 = M_z$ and $V_2 = M_{\Theta(z)}$ on $H^2(\cY)$, it is easy to derive an
alternative representation of $\Theta(z)$ as
\begin{equation} \label{Theta-realize'}
\Theta(z) = P_\cY (I - z V_1^*)^{-1} V_2|_\cY.
\end{equation}
 Indeed, let us use the notation $S$ for the shift operator $M_z$ on $H^2$ and then expand
$\Theta(z)$ in its power series representation
$\Theta(z) = \sum_{n=0}^\infty \Theta_n z^n$.  If we identify $\cY$ with the constant functions in $H^2(\cY)$ (whichever is more convenient for the particular
context), we can then write $V_2|_\cY$ as
$$
V_2|_\cY = M_\Theta |_\cY  = \sum_{n=0}^\infty S^n  \Theta_n \colon \cY \to H^2(\cY).
$$
We then can write, for $z$ in the unit disk ${\mathbb D}$,
\begin{align*}
&  P_\cY (I - z V_1^*)^{-1} V_2|_\cY = P_\cY \bigg( \sum_{k=0}^\infty z^k  S^{*k}\bigg) \bigg( \sum_{n=0}^\infty  S^n  \Theta_n  \bigg)  \\
& \quad = P_\cY  \sum_{k\geq0,n\geq k}^\infty z^k S^{n-k} \Theta_n    = \sum_{n=0}^\infty z^n \Theta_n = \Theta(z).
\end{align*}
thereby verifying \eqref{Theta-realize'}.  If we make use of the intertwining relations  \eqref{tau-Theta-intertwine} and use the map $\tau_\Theta M_\Theta|_\cY \colon
y \mapsto \sbm{ y \\ 0}$ to identify the coefficient input/output space $\cY \subset H^2(\cY)$ with the input/output space $\sbm{ \cY \\ 0} \subset \sbm{ H^2(\cY) \\
{\mathfrak H}(\Theta)}$,
 we see that the realization \eqref{Theta-realize'} leads immediately to an alternative realization involving the operators $M_{\Psi_1}$ and $M_{\Psi_2}$
 on $\sbm{ H^2(\cY) \\ H^2({\mathfrak H}(\Theta)) }$:
 \begin{equation}   \label{Theta-realize''}
 \Theta(z) = \begin{bmatrix} P_{ \cY} & 0 \end{bmatrix}  (I - z M_{\Psi_1}^*)^{-1} M_{\Psi_2} \begin{bmatrix} I_\cY \\ 0 \end{bmatrix}.
 \end{equation}
 \end{example}

\begin{example} \label{E:2nondoubly}  \textbf{ $(M_{z_1}, M_{z_2})$ on $H_\diamond({\mathbb D}^2)$.}

We now consider the subspace $\cH_\diamond: = H^2_{{\mathbb D^2}} \ominus \{ \text{constant functions}\}$ of $H^2_{{\mathbb D}^2}$.\index{$\cH_\diamond$}  It is clear that
$\cH_\diamond$ is invariant under $(M_{z_1}, M_{z_2})$ so we can consider the rank-one perturbation of the Example \ref{E:BCLbidisk}, namely the commuting pair
of shift operators
$$
V_1 = M_{z_1}|_{\cH_\diamond}, \quad V_2 = M_{z_2}|_{\cH_\diamond}.
$$
Let us note that this pair $(V_1,V_2)$ is not doubly commuting:  one way to see this is to observe that
$ V_2^*V_1(z_2)=V_2^*(z_1z_2)=z_1\neq 0=V_1V_2^*(z_2)$.

Note that both $V_1$ and $V_2$ are shifts so the $(W_1, W_2)$-component of a BCL2 tuple for $(V_1, V_2)$ is trivial.  We shall show:
{\em a BCL2 tuple for $(M_{z_1}|_{\cH_\diamond}, M_{z_2}|_{\cH_\diamond})$ is
\begin{equation}  \label{BCLtuple-diamond}
(\cF, P, U ) = (\ell^2_{\mathbb Z}, P_{\ell^2_{\{0\} \cup [2, \infty)}}, \bS)
\end{equation}
where $P_{\ell^2_{\{0\} \cup [2, \infty)}}$ is the orthogonal projection of $\ell^2_{\mathbb Z}$ onto the subspace of absolutely square-summable sequences with support
on the subset $\{0\} \cup [2, \infty) \subset {\mathbb Z}$ and where $\bS \colon \be_n \mapsto \be_{n+1}$ is the forward bilateral shift operator on
$\ell^2_{\mathbb Z}$.} Note that the criterion for double commutativity fails by one-dimension:  while $\operatorname{Ran} P = \ell^2_{\{0\} \cup [2, \infty)}$
is not invariant under $U = \bS$, it does have a codimension-one subspace, namely $\ell^2_{[2, \infty)}$, which is $\bS$-invariant, fitting with the fact that
the $(V_1, V_2)$ in this example is only a rank-one perturbation of the $(V_1, V_2)$ in Example \ref{E:BCLbidisk}.

To verify that \eqref{BCLtuple-diamond} is a BCL2 tuple for $(M_{z_1}|_{\cH_\diamond}, M_{z_2}|_{\cH_\diamond})$, proceed as follows.
 Note that elements $f$ of $\cH_\diamond$ have the form
$$
  f(z_1, z_2) = \sum_{i=1}^\infty a_{i0} z_1^i + \sum_{j=1}^\infty a_{0j} z_2^j + \sum_{i,j = 1}^\infty a_{ij} z_1^i z_2^j.
 $$
 Then
 \begin{align*}
 (Vf)(z_1, z_2) & = \sum_{i=1}^\infty a_{i0} z_1^{i+1} z_2 + \sum_{j=1}^\infty a_{0j} z_1 z_2^{j+1} + \sum_{i,j = 1}^\infty a_{ij} z_1^{i+1} z_2^{j+1} \\
& = \sum_{i=2}^\infty a_{i-1,0} z_1^i z_2 + \sum_{j=2}^\infty a_{0,j-1} z_1 z_2^j + \sum_{i,j=2}^\infty a_{i-1, j-1} z_1^i z_2^j.
\end{align*}
so  $\operatorname{Ran} V$ consists of all functions in $H^2_{{\mathbb D}^2}$ with Taylor coefficients supported on the set
$$
{\mathfrak S} = \{ (i,j) \colon i \ge 2 \text{ and }  j=1, \text{ or }  i=1 \text{ and }  j \ge 2, \text{ or } i \ge 2 \text{ and }  j \ge 2\}.
$$
It is now a counting exercise to see that the complement of this set inside ${\mathbb Z}^2_+ \setminus \{(0,0)\}$ is
$$
{\mathfrak S}' = \{ (i,j)\colon i \ge 1 \text{ and } j = 0, \text{ or }  i = 0 \text{ and } j \ge 1, \text{ or }  (i,j) = (1,1) \}.
$$
Thus the space $\cH_\diamond \ominus (\operatorname{Ran} V)^\perp$ can be described as
$$
\cF = \{ f \in \cH_\diamond \colon f(z_1, z_2) = \sum_{i > 0} a_{i0} z_1^i + \sum_{j> 0} a_{0j} z_2^j + a_{11} z_1 z_2 \}.
$$
from which we see that $\cF$ has the set
$$
 \cS_\cF = \{z_1^i \colon i \ge 1\} \cup \{ z_2^j \colon j \ge 1 \} \cup \{ z_1 z_2 \}
$$
as an orthonormal basis.  Let us introduce the map $\tau_\diamond \colon \cF \to \ell^2_{\mathbb Z}$ by defining it to map
the orthonormal basis $\cS_\cF$ for $\cF$ onto the standard orthonormal basis for $\ell^2_{\mathbb Z}$ by\index{$\tau_\diamond$}
\begin{equation}  \label{taudiamond1}
\tau_\diamond \colon z_1^i z_2^j \mapsto \begin{cases}  \be_{-i} &\text{if } (i,j) = (i,0) \text{ with } i > 0, \\
\be_0 &\text{if } (i,j) = (1,1), \\ \be_{j} &\text{if } (i,j) = (0,j) \text{ with } j > 0. \end{cases}
\end{equation}
As $\cF$ is the wandering subspace for the shift operator $M_{z_1 z_2}$ on $\cH_\diamond$,  it follows that $\cH_\diamond$ has as an orthonormal basis
the set $\{ z_1^k z_2^k z_1^i z_2^j \colon k \ge 0, z^i z^j \in \cS_\cF\}$, i.e., an orthonormal basis for $\cH_\diamond$ is
$$
 \cS_\diamond = \{ z_1^k z_2^k z_1^i \colon k \ge 0,\, i> 0\} \cup \{ z_1^{k+1} z_2^{k+1} \colon k \ge 0\} \cup \{ z_1^k z_2^{k+j} \colon k \ge 0,\, j> 0\},
 $$
 or in a more closed form
 $$
 \cS_\diamond = \{ z_1^i z_2^j \colon i > j \ge 0 \text{ or } i=j \ge 1 \text{ or } 0 \le i < j \}.
 $$
 We wish to extend $\tau_\diamond$ to a unitary map from all of $\cH_\diamond$ onto $H^2(\ell^2_{\mathbb Z})$ so that we have the intertwining
 \begin{equation}   \label{taudiamond-intertwine}
   \tau_\diamond M_{z_1 z_2} = M_z \tau_\diamond.
 \end{equation}
 This requires that
 \begin{align*}
& \tau_\diamond ( z_1^k z_2^k z_1^i) = z^k \tau_\diamond (z_1^i) = z^k \be_{-i} \text{ for } k \ge 0, \, i> 0, \\
& \tau_\diamond (z_1^k z_2^k z_1 z_2) = z^k \tau_\diamond (z_1 z_2) = z^k \be_0, \\
& \tau_\diamond (z_1^k z_2^k z_2^j) = z^k \tau_\diamond ( z_2^j) = z^k \be_j \text{ for } j > 0,
 \end{align*}
 or, in better closed form,
 \begin{equation}  \label{taudiamond-monomials}
 \tau_\diamond \colon z_1^i z_2^j \mapsto \begin{cases} z^j \be_{j-i} &\text{if } i > j \ge 0, \\
     z^{i-1} \be_0 = z^{j-1} \be_0 &\text{if } i=j \ge 1, \\
     z^i \be_{j-i} &\text{if } 0 \le i < j.  \end{cases}
 \end{equation}
 Extending $\tau_\diamond$ by linearity to a map $\tau \colon \cH_\diamond \to H^2(\ell^2_{\mathbb Z})$ gives us a unitary identification from
 $\cH_\diamond$ to $H^2(\ell^2_{\mathbb Z})$  \eqref{taudiamond-monomials} satisfying the intertwining \eqref{taudiamond-intertwine}.

 By Theorem \ref{Thm:BCLmodel} we are guaranteed the existence of a projection operator $P$ and a unitary operator $U$ on $\ell^2_{\mathbb Z}$ so that
 we have the intertwinings
 \begin{equation}   \label{diamond-intertwine}
\tau_\diamond M_{z_1} =  M_{U^* P^\perp + zU^* P} \tau_\diamond, \quad
 \tau_{\diamond} M_{z_2} =  M_{PU+zP^\perp U} \tau_\diamond.
 \end{equation}
 Using the above formulas, one can compute that
 \begin{equation} \label{diamondMz1}
  \tau_\diamond M_{z_1} (z_1^i z_2^j) = \begin{cases}  z^i \be_0 &\text{if } i+1 = j \ge 0, \\
  z^j \be_{j-i-1}  &\text{if } i+1 > j >0, \\
  z^{i+1} \be_{j-i-1} &  \text{if } 0 \le i+1 < j.   \end{cases}
  \end{equation}
   Careful bookkeeping making use of the formulas \eqref{taudiamond-monomials}  shows that the three formulas in \eqref{diamondMz1} force the following respective conditions
  on the operator pair $(P,U)$:
  $$ \begin{cases}
    P^\perp \be_1 = \be_1 \text{ and } U^* \be_1 = \be_0, \\
    P \be_0 = \be_0 \text{ and } U^* \be_0 = \be_{-1} \text{ as well as } P^\perp \be_k = \be_k \text{ and } U^* \be_k = \be_{k-1} \text{ for } k<0, \\
    P \be_k = \be_k \text{ and } U^* \be_k = \be_{k-1} \text{ for } k>1
    \end{cases}
 $$
 for which the only solution is $(P, U)$ as in \eqref{BCLtuple-diamond}.
 Alternatively, once one has discovered this candidate,  it is possible to check directly that it satisfies the first intertwining condition in \eqref{diamond-intertwine}.
 By general principles, the second is then automatic, as can also be checked directly.
 \end{example}

%\section{Notes to Chapter 3}
% 
% \smallskip
% 
% \noindent
% \textbf{1.} The proof of Theorem 2.1 in \cite{BDF1} amounts to a proof of our Theorem \ref{Thm:BCLmodel} which bypasses our Lemma
% \ref{L:AuxLemma}.  We shall see other applications of Lemma \ref{L:AuxLemma} in the sequel.
% 
% \smallskip
% 
%  \noindent
% \textbf{2.}
% The paper of Maji-Sarkar-Sankar \cite{MSS}  offers an alternative proof
% of the BCL model for a commuting pair of isometries $(V_1, V_2)$ (with $V_1V_2$ assumed to be pure), leading to explicit formulas for the model isometric pair
% directly in terms of $(V_1, V_2)$ and the identification of a unitary invariant in a different form but closely related to what is presented here
% (Theorem \ref{Thm:BCLcoin} and Theorem \ref{T:BCLcanonical}).
%
%\smallskip
%
%  \noindent
% \textbf{3.}  The original work of Berger-Coburn-Lebow \cite{B-C-L} actually handles the case of a commuting $n$-tuple of isometries with $n > 2$.
% However auxilary, less tractable conditions must be incorporated in the model to ensure the commutativity of the model isometries.
% Other work in this direction includes the set of three papers of Bercovici-Douglas-Foias \cite{BDF1, BDF2, BDF3}. 
% 
%  
% \smallskip
% 
% \noindent
% \textbf{4.}  Some additional work on commuting isometries includes Ga\c{s}per-Ga\c{s}per \cite{GG}, Suciu \cite{Suciu}, Burdak \cite{Burdak},
% Burdak-Kosiek-Pagacz-Sloci\'nski \cite{BKPS}, Popovici \cite{Popovici}, and Sarkar \cite{SarkarLAA}.

\chapter[Models for And\^o lifts]{Models for And\^o lifts of a commuting contractive pair}  \label{C:Ando}

\section{Preliminaries}  \label{S:prelim}
Let $(T_1, T_2)$ be a commuting pair of contraction operators on $\cH$.  We say that the triple
$(\Pi, V_1, V_2)$ is an {\em And\^o isometric lift} or, simply an {\em And\^o lift}\index{And\^o lift}, of $(T_1, T_2)$ if
(i) there is a Hilbert space $\cK$ such that
$\Pi \colon \cH \to \cK$ is an isometric embedding of $\cH$ into $\cK$,  and (ii) $(V_1, V_2)$ is a commuting
pair of isometries on $\cK$ such that $V_j^* \Pi  = \Pi T_j^*$ for $j=1,2$.  We shall be particularly interested in the case 
where the And\^o lift is {\em minimal},\index{And\^o lift!minimal} i.e., the case where the smallest jointly invariant subspace for $(V_1, V_2)$ containing $\operatorname{Ran} \Pi$
 is the whole space $\cK$:
\begin{equation}  \label{minAndoLift}
\cK = \bigvee_{n_1, n_2 \ge 0} V_1^{n_1} V_2^{n_2} \operatorname{Ran} \Pi.
\end{equation}
We say that two such
And\^o  lifts $(\Pi, V_1, V_2)$ with $\Pi \colon \cH \to \cK$  and $(\Pi',  V_1', V_2')$ with
$\Pi' \colon \cH \to \cK'$ are {\em unitarily equivalent}\index{unitary equivalence of two!And\^o lifts}
 if there is a unitary operator
$\tau \colon \cK \to \cK'$ such that
\begin{equation}  \label{equiv-lifts}
\tau \Pi = \Pi', \quad \tau V_1 = V_1' \tau, \quad \tau V_2 = V_2' \tau.
\end{equation}
In the single-variable case it is known that any two minimal isometric lifts are unitarily equivalent (see \cite[Theorem I.4.1]{Nagy-Foias}).
We shall see that this result fails in the bivariate setting
of And\^o lifts for a commuting, contractive pair $(V_1, V_2)$ (see Chapter  \ref{C:classification} to come); more precisely, there are additional invariants which must
be equivalent in the appropriate sense before two minimal And\^o lifts can be unitarily equivalent. As in the single-variable case, 
given an And\^o  lift $(\Pi, V_1, V_2)$, there is always a unitarily equivalent And\^o  lift
$(\Pi', V_1, V_2)$ so that $\cH$ is equal to a subspace of $\cK'$ and $\Pi' \colon \cH \to \cK'$ is just the inclusion
map.  To see this, simply set $\cK' = \sbm{ \cH \\ (I - \Pi \Pi^*)\cK}$ and observe that the map
$$
\tau = \begin{bmatrix} \Pi^* \\ I - \Pi \Pi^* \end{bmatrix} \colon \cK \to \cK'
$$
is unitary.  If we then set
$$
   \quad \Pi' = \sbm{ I_\cH \\ 0 } \colon \cH \to \cK', \quad
V_i' = \tau V_i \tau^* \text{ for } i=1,2
$$
we see that all of conditions \eqref{equiv-lifts} are satisfied.  Furthermore, identifying $\cH$ with $\sbm{ \cH \\ 0} \subset \cK'$
makes $\cH$ a subspace of $\cK'$ and then $\Pi'$ is just the inclusion map.   When $\Pi \colon \cH \to \cK$ is an inclusion map we write simply $(V_1, V_2)$ rather than 
$(\iota_{\cH \to \cK}, V_1, V_2)$ for the And\^o isometric lift.

In this chapter, given a commuting contractive operator-pair $(T_1,T_2)$, we give two new proofs of the existence of And\^o isometric lifts and exhibit three distinct models for an And\^o isometric lift of $(T_1,T_2)$ associated with the names Douglas, Sz.-Nagy--Foias and Sch\"affer. A basic ingredient in all
three models is the notion of {\em pre-And\^o tuple} defined as follows.

\begin{definition}  \label{D:preAndoTuple}
\index{And\^o tuple!pre}
A collection of objects of the form $(\cF,\Lambda,P,U)$\index{$(\cF,\Lambda,P,U)$} is said to be a  {\em pre-And\^o tuple}\index{pre-And\^o tuple}
for the commuting contractive operator-pair $(T_1, T_2)$ if $\cF$ is a Hilbert space, $\Lambda:\cD_{T_1T_2} \to \cF$
is an isometry,  $P$ is a projection operator on $\cF$,
and  $U$ is a unitary operator on $\cF$.

Two pre-And\^o tuples $(\cF,\Lambda,P,U)$ and $(\cF',\Lambda',P',U')$ for $(T_1, T_2)$ are said to {\em coincide}
if there is a unitary operator $\tau \colon \cF \to \cF'$ such that
$$
\tau \Lambda = \Lambda', \quad \tau P \tau^* = P', \quad \tau U \tau^* = U'.
$$
\end{definition}

Thus a pre-And\^o tuple amounts to a BCL-tuple (see Definition \ref{D:BCLTuple}), but with the added ingredient of
the isometry $\Lambda \colon \cD_{T_1 T_2} \to \cF$, while coincidence of pre-And\^o tuples is a natural 
extension of the notion of unitary equivalence of BCL-tuples in the sense of Theorem \ref{Thm:BCLcoin}
(with the possible commuting pairs of unitary operators $(W_1, W_2)$ and $(W'_1, W'_2)$ ignored).
When a pre-And\^o tuple satisfies some additional natural conditions
to be discussed below, we shall refer to the collection $(\cF,\Lambda,P,U)$ simply as an  And\^o tuple.

We shall have need of two distinct types of And\^o tuples:  {\em And\^o tuples of Type I} (see Section \ref{S:Douglas2}) and {\em And\^o tuples of Type II}  
(see Section \ref{S:S2}).

We define a notion of irreducibility for pre-And\^o tuples.
\begin{definition}  \label{D:irred-Ando}\index{pre-And\^o tuple!irreducible}
The pre-And\^o tuple $(\cF, \Lambda, P, U)$ is said to be {\em irreducible} if the smallest subspace of $\cF$
invariant under $U$, $U^*$, $P$ and containing $\operatorname{Ran} \Lambda$ is the whole space $\cF$.
\end{definition}

In view of Theorem \ref{T:reduced-sub-tuple}, the pre-And\^o tuple $(\cF, \Lambda, P, U)$ is irreducible
if and only if the only reduced sub-BCL tuple $(\cF_0, P_0, U_0)$ of the BCL tuple $(\cF, P, U)$ such that $\cF_0 \supset \operatorname{Ran} \Lambda$ is the whole And\^o tuple $(\cF, P, U)$. Another equivalent statement is:  {\em $(\cF, \Lambda, P, U)$ is an irreducible pre-And\^o tuple if and only if the smallest joint reducing subspace for $(M_{U^*(P^\perp + z P)}, M_{(P + z P^\perp) U})$ 
containing $H^2(\operatorname{Ran} \Lambda)$ is the whole space $H^2(\cF)$.}   We shall need a notion of minimality for a pre-And\^o tuple which we shall call {\em minimal}.

\begin{definition}  \label{D:min-Ando}\index{pre-And\^o tuple!Douglas minimal} The pre-And\^o tuple $(\cF, \Lambda, P, U)$  is said to be 
{\em Douglas minimal} if the smallest joint invariant subspace for the BCL2 model
$(M_{U^*(P^\perp + zP)}, M_{(P + z P^\perp) U)})$ containing the space $H^2(\operatorname{Ran} \Lambda)$
is the whole space $H^2(\cF)$.
\end{definition}

Since any reducing subspace is also invariant, it is at the level of a tautology to see that minimality of an
And\^o tuple implies its irreducibility.  In all the examples which we have checked, the converse also holds,
but to this point, we have not been able to determine if the converse holds in general.

\section[Type I And\^o tuples and Douglas model]{Type I And\^o tuples and Douglas model for an And\^o  lift}  \label{S:Douglas2}

Let us recall that given a contraction operator $T$ on a Hilbert space $\cH$, we
may define a positive semidefinite operator $Q_{T^*}^2$ on $\cH$ as the strong limit
\begin{align}\label{DQ}
  Q_{T^*}^2:=\operatorname{SOT-}\lim_{n\to\infty}T^nT^{*n},
\end{align}
We set $Q_{T^*}$ equal to the positive-semidefinite square root of $Q_{T^*}^2$.
As explained in Section \ref{S:Douglas} (see \eqref{defX}), the identity $T Q_{T^*}^2 T^* = Q_{T^*}^2$ implies that the formula
\begin{equation}   \label{defX'}
X^* Q_{T^*} h = Q_{T^*} T^* h
\end{equation}
 extends by continuity to a well-defined isometry $X^*$ on $ \overline{\operatorname{Ran}}\, Q_{T^*}$
which has a minimal unitary extension, denoted as $W_D^*$, on the Hilbert space $\cQ_{T^*}$ equal to
the closure of $\cup_{n=0}^\infty W_D^n \operatorname{Ran} Q_T$ defined densely as in formula \eqref{IntofQ}:
$$
W_D^* W_D^n Q_{T^*} h = W_D^{n-1} Q_{T^*} h \text{ for } n \ge 1, \quad
W_D^* Q_{T^*} h = X^* Q_{T^*} h = Q_{T^*} T^* h.
$$
Then, as explained in Section  \ref{S:Douglas},  $(\Pi_D,V_D)$ is the Douglas minimal isometric lift for the single
contraction operator $T$, where the isometry $V_D$ on $\sbm{ H^2(\cD_{T^*}) \\ \cQ_{T^*}}$ is defined as
$ V_ D= \sbm{ M_z & 0 \\ 0 &  W_D}$
and $\Pi_D \colon \cH \to \sbm{ H^2(\cD_{T^*}) \\ \cQ_{T^*}}$ is the isometric embedding of $\cH$ defined as
$ \Pi_D  =  \sbm{  D_{T^*}(I - zT^*)^{-1}   \\ Q }$.
The goal of this section is to give a Douglas-type model for an And\^o isometric lift of a given commuting contractive pair
$(T_1,T_2)$.   We first need some preliminaries.

\subsection{Canonical pair of commuting unitaries} \label{S:flats}

As a preliminary for extending the Douglas-model isometric lift to the commuting contractive pair setting, we shall need the following simple but telling result of Douglas.  Here we use the standard notation:
if $X$ and $Y$ are operators on a Hilbert space $\cH$, we write $X \preceq Y$ if
$Y-X$ is {\em positive semidefinite}, i.e., $\langle (Y -X) h, h \rangle_\cH \ge 0$ for all $h \in \cH$.

\begin{lemma}[Douglas Lemma \cite{Douglas}]    \label{DougLem} \index{Douglas Lemma}
Let $A$ and $B$ be two bounded operators on a Hilbert space $\mathcal{H}$. Then there exists a 
contraction $C$ such that
$A=BC$ if and only if $$AA^*\preceq BB^*.$$
\end{lemma}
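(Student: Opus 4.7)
The forward implication is an immediate one-line observation: if $A = BC$ with $C$ a contraction, then $AA^* = B C C^* B^* \preceq B B^*$ since $C C^* \preceq I$. So all the work is in the converse.

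For the reverse direction, the plan is to construct $C$ by first defining its adjoint $D := C^*$ on the range of $B^*$ via the natural formula
$$
D \colon B^* h \mapsto A^* h,
$$
and then extending. The key calculation, which simultaneously gives well-definedness and contractivity of $D$ on $\operatorname{Ran} B^*$, is
$$
\| A^* h \|^2 = \langle A A^* h, h \rangle \leq \langle B B^* h, h \rangle = \| B^* h \|^2,
$$
which is exactly the hypothesis $A A^* \preceq B B^*$ applied to $h$. Well-definedness follows by taking $h = h_1 - h_2$ with $B^* h_1 = B^* h_2$, forcing $A^* h_1 = A^* h_2$.

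Having a contractive $D$ on the dense subspace $\operatorname{Ran} B^* \subset \overline{\operatorname{Ran}}\, B^*$, I extend $D$ by continuity to $\overline{\operatorname{Ran}}\, B^*$, and then declare $D = 0$ on the orthogonal complement $(\overline{\operatorname{Ran}}\, B^*)^\perp = \ker B$. This produces a contraction $D \in \cB(\cH)$ satisfying $D B^* = A^*$ by construction.

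Finally, setting $C := D^*$ yields a contraction with $B C = B D^* = (D B^*)^* = (A^*)^* = A$, as required. The only subtle point is the well-definedness step of the very first formula, and that is precisely where the hypothesis $AA^* \preceq BB^*$ is used; everything else is routine extension by continuity and taking adjoints.
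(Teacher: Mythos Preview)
Your proof is correct and is essentially the standard argument for the Douglas Lemma. The paper itself does not supply a proof of this statement; it simply cites Douglas's original paper \cite{Douglas} and uses the result as a black box, so there is nothing to compare against beyond noting that your argument is the classical one.
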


The Douglas-model minimal isometric lift for the single contraction operator $T$ as summarized in the
introductory part of Section \ref{S:Douglas2}, but now applied to the case where $T = T_1 T_2$ is the product  
operator coming from the
commuting contractive operator-pair $(T_1, T_2)$, leads to the following result which will play
a significant role in what follows. 

\begin{theorem}  \label{T:flats}  Given a commuting contractive operator pair $(T_1, T_2)$ on $\cH$, set
$T = T_1 T_2$ and let $Q_{T^*}$ be given by \eqref{Q} and \eqref{cQT*}.  Then there exist
a pair of unitary operators $(W_{\flat 1}, W_{\flat 2})$ on $\cQ_{T^*}$ so that
\begin{equation}  \label{DefWflats}\index{$W_{\flat 1},W_{\flat 2}$}
W_{\flat1}W_{\flat2}=W_D \text{ and } W_{\flat j}^*Q_{T^*}=Q_{T^*}T_j^*, \text{ for each } j=1,2.
\end{equation}
\end{theorem}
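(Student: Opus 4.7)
The plan is to construct $W_{\flat 1}, W_{\flat 2}$ in three stages: first build each $W_{\flat j}^*$ as an isometry on the dense subspace $\overline{\operatorname{Ran}}\,Q_{T^*}$ of $\cQ_{T^*}$; then use commutation with $X^*$ to extend to an isometry on the whole space $\cQ_{T^*}$; finally upgrade these isometries to unitaries using the product relation $W_{\flat 1}^* W_{\flat 2}^* = W_D^*$ together with the unitarity of $W_D$.

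The engine of the whole argument is the identity $T_j\,Q_{T^*}^2\,T_j^* = Q_{T^*}^2$ for $j=1,2$. The bound $T_j Q_{T^*}^2 T_j^* \preceq Q_{T^*}^2$ is routine from $T_j T = T T_j$ and $T_j T_j^* \preceq I$. For the reverse inequality I would exploit the factorization $T = T_j T_{3-j}$ together with $T_{3-j}T_{3-j}^*\preceq I$, giving
\begin{align*}
T_j T^n T^{*n} T_j^* &= T_j^{n+1} T_{3-j}^n T_{3-j}^{*n} T_j^{*(n+1)} \\
&\succeq T_j^{n+1} T_{3-j}^{n+1} T_{3-j}^{*(n+1)} T_j^{*(n+1)} = T^{n+1} T^{*(n+1)},
\end{align*}
and passing to the strong limit yields $T_j Q_{T^*}^2 T_j^* \succeq Q_{T^*}^2$. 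This identity rearranges to $\|Q_{T^*} T_j^* h\| = \|Q_{T^*} h\|$ for all $h \in \cH$, which lets one define $W_{\flat j}^*\colon Q_{T^*} h \mapsto Q_{T^*} T_j^* h$ as a well-defined isometry on $\operatorname{Ran}\,Q_{T^*}$, extended by continuity to $\overline{\operatorname{Ran}}\,Q_{T^*}$. Quick computations on generators $Q_{T^*} h$ then show $W_{\flat 1}^* W_{\flat 2}^* = W_{\flat 2}^* W_{\flat 1}^* = X^* = W_D^*|_{\overline{\operatorname{Ran}}\,Q_{T^*}}$; in particular each $W_{\flat j}^*$ commutes with $X^*$ on $\overline{\operatorname{Ran}}\,Q_{T^*}$.

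For the extension step, recall from \eqref{cQT*} that $\cQ_{T^*}$ is the closure of the increasing chain $\bigcup_{n\geq 0} W_D^n\,\overline{\operatorname{Ran}}\,Q_{T^*}$ (the chain is increasing because $W_D W_D^* = I$). On a typical element $y = W_D^n x$ with $x\in\overline{\operatorname{Ran}}\,Q_{T^*}$, I define $\widetilde W_{\flat j}^* y := W_D^n\,W_{\flat j}^* x$. Well-definedness (independence of the choice of $n$ in the representation of $y$) reduces to the iterated commutation $W_{\flat j}^* (X^*)^k = (X^*)^k W_{\flat j}^*$ for $k\ge 0$ on $\overline{\operatorname{Ran}}\,Q_{T^*}$, combined with $W_D W_D^* = I$; isometry is immediate since $W_D^n$ is unitary. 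Extending by continuity gives an isometry $W_{\flat j}^*$ on all of $\cQ_{T^*}$, and continuity also propagates the product identity $W_{\flat 1}^* W_{\flat 2}^* = W_{\flat 2}^* W_{\flat 1}^* = W_D^*$ to the whole space.

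Unitarity then drops out almost for free from the product identity. From $W_D\,W_{\flat 1}^* W_{\flat 2}^* = I_{\cQ_{T^*}}$ one reads off that the isometry $W_D W_{\flat 1}^*$ is surjective and hence unitary; therefore $W_{\flat 1}^* = W_D^{-1}(W_D W_{\flat 1}^*)$ is unitary, and symmetrically $W_{\flat 2}^*$ is unitary. Taking adjoints yields the relations $W_{\flat 1}W_{\flat 2} = W_{\flat 2}W_{\flat 1} = W_D$ and the intertwining identity $W_{\flat j}^* Q_{T^*} = Q_{T^*} T_j^*$ of \eqref{DefWflats}. The hard part will be the extension step: the formula $\widetilde W_{\flat j}^*(W_D^n x) = W_D^n W_{\flat j}^* x$ looks innocuous, but its well-definedness on the entire chain $\bigcup_n W_D^n\,\overline{\operatorname{Ran}}\,Q_{T^*}$ rests squarely on commutativity of $T_1$ and $T_2$ through the intermediate commutation with $X^*$; once that is secured, the closing unitarity argument is a formality.
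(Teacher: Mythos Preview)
Your proof is correct but takes a different route from the paper's in two places.

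First, on $\overline{\operatorname{Ran}}\,Q_{T^*}$: the paper proves only the one-sided inequality $T_j Q_{T^*}^2 T_j^* \preceq Q_{T^*}^2$, invokes the Douglas Lemma to produce \emph{contractions} $X_j^*$ satisfying $X_j^* Q_{T^*} = Q_{T^*} T_j^*$, and then upgrades them to isometries by observing that their product $X_1^* X_2^* = X^*$ is an isometry and that a commuting contractive factorization of an isometry must consist of isometries. You instead prove the full equality $T_j Q_{T^*}^2 T_j^* = Q_{T^*}^2$ directly via the chain $T_j T^n T^{*n} T_j^* \succeq T^{n+1}T^{*(n+1)}$, which immediately gives the isometric property without passing through Douglas's Lemma. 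Your route is shorter and more self-contained here.

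Second, for the extension to $\cQ_{T^*}$: the paper outsources this to a separate structural lemma (Lemma~\ref{L:special-ext}), proved using the BCL model for commuting isometries, which says any commuting pair of isometries extends to commuting unitaries whose product is the minimal unitary extension of the product isometry. You instead build the extension by hand on the dense chain $\bigcup_n W_D^n\,\overline{\operatorname{Ran}}\,Q_{T^*}$, checking well-definedness from the commutation $W_{\flat j}^* X^* = X^* W_{\flat j}^*$, and then deduce unitarity from $W_D W_{\flat 1}^* W_{\flat 2}^* = I$. This avoids any appeal to BCL theory and keeps the argument elementary, at the cost of not isolating the extension result as something reusable. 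Both approaches are sound; yours is more direct for this specific theorem, while the paper's packages the extension step into a lemma it uses elsewhere.
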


\begin{proof}
First note that when $Q_{T^*}$ is as in \eqref{Q} and $T=T_1T_2=T_2T_1$, then
\begin{eqnarray*}
\langle T_iQ_{T^*}^2T_i^*h,h\rangle = \lim_{n \to \infty} \langle T^n(T_iT_i^*){T^*}^nh,h \rangle
 \leq \lim_{n \to \infty} \langle T^n{T^*}^nh,h\rangle = \langle Q_{T^*}^2h,h\rangle
\end{eqnarray*}
for all $h \in \cH$ from which we conclude that
\begin{equation} \label{prop1}
T_1Q_{T^*}^2T_1^* \preceq Q_{T^*}^2 \text{ and }T_2Q_{T^*}^2T_2^* \preceq Q_{T^*}^2.
\end{equation}
By Lemma \ref{DougLem}, the inequalities in (\ref{prop1}) imply that there 
exist two contraction operators $X_1^*$ and $X_2^*$ on $\overline{\operatorname{Ran}}\;Q_{T^*}$ such that
\begin{equation}    \label{X1X2}
X_1^*Q_{T^*}=Q_{T^*}T_1^*, \quad X_2^*Q_{T^*}=Q_{T^*}T_2^*.
\end{equation}
From the equalities in (\ref{X1X2}) and (\ref{defX'}) it is clear that $X_1$ and $X_2$ commute and that
\begin{equation}  \label{X*commute}
X^*=X_1^*X_2^*.
\end{equation}
Since $X^*$ is an isometry, both $X_1^*$ and $X_2^*$ are isometries, as a consequence of the general fact that,
whenever $T$ is an isometry with factorization $T = T_1 T_2$ for some commuting contractions $T_1$ and $T_2$,
then in fact $T_1$ and $T_2$ are also isometries;  one way to see this is to look at  the following norm equalities
easily derived by using the commutativity of the contractive pair $(T_1, T_2)$:
$$
\|D_{T_1}T_2h\|^2+\|D_{T_2}h\|^2=\|D_Th\|^2=\|D_{T_1}h\|^2+\|D_{T_2}T_1h\|^2 \text{ for all }h \in \mathcal{H}.
$$
By Lemma \ref{L:special-ext} we get a commuting unitary extension $(W_{\flat1}^*,W_{\flat2}^*)$ of 
the commuting isometric pair $(X_1^*, X_2^*)$ on $\cQ_{T^*} = \overline{\text{span}}\{W_D^{ n}x: x\in \operatorname{Ran} Q_{T^*}   \text{ and } n\geq0\}$.  Thus the product
$W_D^*=W_{\flat1}^*W_{\flat2}^*$ is  the minimal unitary extension of the product $X^*=X_1^*X_2^*$.
\end{proof}

The pair $(W_{\flat1},W_{\flat2})$ will be referred to as the {\em canonical pair of commuting 
unitaries}\index{canonical pair of commuting unitaries} associated with the contractive pair $(T_1,T_2)$. 

\smallskip

We next address uniqueness of a canonical pair of unitaries $(W_{\flat1}, W_{\flat2})$ for a given 
commuting contractive
pair $(T_1, T_2)$.

\begin{lemma}  \label{L:uniqueness-flats}
Let $(T_1,T_2)$ on $\mathcal{H}$ and $(T_1',T_2')$ on $\mathcal{H'}$ be two pairs of commuting contractions and 
$(W_{\flat1},W_{\flat2})$ on $\cQ_{T^*}$ and $(W_{\flat1}',W_{\flat2}')$ on $\cQ_{T'}$ be the respective pairs 
of commuting unitaries obtained from them as above. If $(T_1,T_2)$ is unitarily equivalent to $(T_1',T_2')$ 
via the unitary similarity $\phi \colon \mathcal{H} \to \mathcal{H'}$, then $(W_{\flat1},W_{\flat2})$ and 
$(W_{\flat1}',W_{\flat2}')$ are unitarily equivalent via the induced unitary transformation 
$\tau_\phi \colon \cQ_{T^*}\to \cQ_{T^{\prime *}}$ determined by 
$\tau_\phi \colon W_D^nQ_{T^*}h \to W_D'^n Q_{T^{\prime *}} \phi h$.  In particular, if $(T_1, T_2) = (T_1', T_2')$, then
$(W_{\flat1},W_{\flat2}) = (W_{\flat1}',W_{\flat2}')$.
\end{lemma}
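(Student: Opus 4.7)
\emph{Stage 1 (intertwinings at the $T$- and $Q_{T^*}$-level).} Since $\phi T_j = T_j'\phi$ for $j=1,2$, taking products gives $\phi T = T'\phi$ with $T = T_1T_2$ and $T' = T_1'T_2'$, hence $\phi T^n T^{*n}\phi^* = (T')^n (T')^{*n}$ for all $n\ge 0$. Passing to the SOT-limit in \eqref{DQ} yields $\phi Q_{T^*}^2\phi^* = Q_{T^{\prime *}}^2$, and uniqueness of positive square roots then promotes this to $\phi Q_{T^*} = Q_{T^{\prime *}}\phi$.

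\emph{Stage 2 (construction of $\tau_\phi$).} I would define $\tau_\phi$ by the prescribed formula on the dense subspace $\bigcup_{n\ge 0}W_D^n\operatorname{Ran}Q_{T^*}\subset\cQ_{T^*}$. Well-definedness and isometricity reduce to verifying
\[
\langle W_D^n Q_{T^*}h,\, W_D^m Q_{T^*}g\rangle_{\cQ_{T^*}} = \langle (W_D')^n Q_{T^{\prime *}}\phi h,\, (W_D')^m Q_{T^{\prime *}}\phi g\rangle_{\cQ_{T^{\prime *}}}
\]
for all $h, g\in\cH$ and $n, m\in{\mathbb Z}_+$. Iterating the identity $W_D^{*k}Q_{T^*} = Q_{T^*}T^{*k}$ from \eqref{IntofQ} to swing excess powers of $W_D$ over to the right-hand slot collapses both sides to polynomial expressions in $T$, $T^*$, $Q_{T^*}^2$ applied to $h, g$ (respectively in $T'$, $(T')^*$, $Q_{T^{\prime *}}^2$ applied to $\phi h, \phi g$), and these agree by Stage~1. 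Extension by continuity gives an isometry $\tau_\phi\colon\cQ_{T^*}\to\cQ_{T^{\prime *}}$; surjectivity is automatic since the range already contains the analogous dense spanning set for $\cQ_{T^{\prime *}}$.

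\emph{Stage 3 (intertwining with the $W_{\flat j}$'s and the particular case).} The essential observation is that $W_{\flat j}$ commutes with the product $W_D = W_{\flat 1}W_{\flat 2}$ (and hence so does its adjoint, $W_D$ being unitary), so \eqref{DefWflats} upgrades to the closed formula
\[
W_{\flat j}^*\bigl(W_D^n Q_{T^*}h\bigr) \;=\; W_D^n\bigl(W_{\flat j}^*Q_{T^*}h\bigr) \;=\; W_D^n Q_{T^*}T_j^*h
\]
on the dense subspace, with an analogous formula in the primed setting. A direct check using $\phi T_j^* = (T_j')^*\phi$ then yields $\tau_\phi W_{\flat j}^* = (W_{\flat j}')^*\tau_\phi$ on the dense subspace, hence on all of $\cQ_{T^*}$. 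For the particular case, taking $\phi = I_\cH$ forces $\tau_\phi = I_{\cQ_{T^*}}$, and so $W_{\flat j} = W_{\flat j}'$; alternatively, the displayed formula shows directly that $W_{\flat j}^*$ is entirely determined on a dense subspace by the pair $(T_1,T_2)$, removing any apparent non-uniqueness inherited from Lemma~\ref{L:special-ext}. The only non-routine step is the inner-product bookkeeping in Stage~2; conceptually all the work is already done by Stage~1.
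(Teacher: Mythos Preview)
Your proposal is correct and follows essentially the same approach as the paper's proof. The only notable difference is in Stage~3: the paper verifies the intertwining $\tau_\phi W_{\flat 1} = W_{\flat 1}'\tau_\phi$ directly via the identity $W_{\flat 1} = W_D W_{\flat 2}^*$, whereas you work with the adjoints $W_{\flat j}^*$ and use commutativity of $W_{\flat j}$ with $W_D$; both routes are equivalent and equally short, and your Stage~2 is in fact more careful about well-definedness than the paper's one-line ``trivially unitary'' assertion.
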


\begin{proof}
Let $\phi:\mathcal{H}\to\mathcal{H'}$ be a unitary that intertwines $(T_1,T_2)$ and $(T_1',T_2')$. Let us
denote $T=T_1T_2$ and $T'=T_1'T_2'$. Let $Q_{T^*}$ and $Q_{T^{\prime *}}$ be the limits of $T^nT^{* n}$ and
$T'^nT^{\prime * n}$, respectively, in the strong operator topology. Clearly, $\phi$ intertwines $Q_{T^*}$ and $Q_{T'^*}$. Therefore
$\phi$ takes $\overline{\operatorname{Ran}}\,Q_{T^*}$ onto
$\overline{\operatorname{Ran}}\,Q_{T'^*}$. We denote the restriction of $\phi$ to
$\overline{\operatorname{Ran}}\,Q_T$ by $\phi$ itself. Let $(X_1,X_2)$ on 
$\overline{\operatorname{Ran}}\,Q_{T^*}$ and
$(X_1',X_2')$ on $\overline{\operatorname{Ran}}\,Q_{T'^*}$ be the pairs of commuting co-isometries corresponding to
the pairs $(T_1,T_2)$ and $(T_1',T_2')$ as in (\ref{X1X2}), respectively. It is easy to see from the definition that
$$
\phi(X_1,X_2)=(X_1',X_2')\phi.
$$Let $(W_{\flat1},W_{\flat2})$ on $\cQ_{T^*}$ and $(W_{\flat1}',W_{\flat2}')$ on $\cQ_{T'^*}$
be the pairs of commuting unitaries corresponding to $(T_1,T_2)$ and $(T_1',T_2')$, respectively. 
Remembering the formula (\ref{cQT*}) for the spaces $\cQ_{T^*}$ and $\cQ_{T'^*}$, we 
can densely define $\tau_\phi: \cQ_{T^*} \to  \cQ_{T'^*}$ by
\begin{align}\label{tauphi}
\tau_\phi: W_D^{ n}x\mapsto W_D'^{ n}\phi x, \text{ for every } x\in \mathcal{Q}_{T^*} \text{ and } n\geq 0
\end{align}
and extend linearly and continuously. Trivially, $\tau_\phi$ is unitary and intertwines $W_D$ and $W_D'$.
For a non-negative integer $n$ and $x$ in $\cQ_{T^*}$, we have using (\ref{DefWflats})
\begin{align*}
\tau_\phi W_{\flat1}(W_D^{n}x) & =\tau_\phi W_D^{n+1}(W_{\flat2}^*x)= W_D'^{ n+1}\phi(X_2^*x)\\
&=W_D'^{n+1}W_{\flat2}'^*\phi x=W_{\flat1}'W_D'^{n}\phi x=W_{\flat1}'\tau_\phi(W_D^{ n}x).
\end{align*}
A similar computation shows that $\tau_\phi$ intertwines $W_{\flat2}$ and $W_{\flat2}'$ too.
\end{proof}
Note that in the above lemma, when $(T_1,T_2)=(T_1',T_2')$, then
\begin{align*}
\cQ_{T^*}=  \cQ_{T'^*}, \quad  \phi=I_{\cH} \quad \text{and}  \quad \tau_\phi=I_{\cQ_{T^*}}.
\end{align*} 
Therefore the following is a straightforward consequence of the above lemma.

\begin{corollary}\label{C:CanComUni}
Let $(T_1,T_2)$ be a commuting contractive operator-pair and $Q_{T^*}$ be as in (\ref{DQ}) where $T=T_1T_2$. Let 
$(W_1,W_2)$ be some pair of commuting unitaries on $\cQ_{T^*}$ such that
\begin{align}\label{DefiningWs}
W_D=W_1W_2 \text{ and }W_j^*Q_{T^*}=Q_{T^*}T_j^* \text{ for }j=1,2.
\end{align}Then $(W_1,W_2)=(W_{\flat1},W_{\flat2})$.
\end{corollary}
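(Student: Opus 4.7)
The plan is to reduce the corollary to a direct density argument on the space $\cQ_{T^*}$, using the defining properties of a canonical pair together with the explicit dense spanning set \eqref{cQT*}.

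First I would record the two basic commutation facts. Since $(W_1,W_2)$ is a commuting pair with product $W_D = W_1 W_2$, each $W_j$ commutes with $W_D$ (and hence so does $W_j^*$); the same holds for $(W_{\flat 1}, W_{\flat 2})$ by the construction in Theorem \ref{T:flats}. Consequently, for every $h \in \cH$ and every $n \ge 0$,
\begin{align*}
W_j^* (W_D^n Q_{T^*} h) &= W_D^n (W_j^* Q_{T^*} h) = W_D^n Q_{T^*} T_j^* h, \\
W_{\flat j}^* (W_D^n Q_{T^*} h) &= W_D^n (W_{\flat j}^* Q_{T^*} h) = W_D^n Q_{T^*} T_j^* h,
\end{align*}
where the second equality in each line uses the hypothesis $W_j^* Q_{T^*} = Q_{T^*} T_j^*$ (respectively the analogous identity \eqref{DefWflats} for $W_{\flat j}$).

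Next I would invoke the fact, recalled in \eqref{cQT*}, that the linear span of $\bigcup_{n \ge 0} W_D^n \operatorname{Ran} Q_{T^*}$ is dense in $\cQ_{T^*}$. The two displays above show that $W_j^*$ and $W_{\flat j}^*$ agree on this dense subset, so by continuity $W_j^* = W_{\flat j}^*$ on all of $\cQ_{T^*}$, and therefore $W_j = W_{\flat j}$ for $j = 1,2$.

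There is really no obstacle here beyond unwinding the definitions; the one point worth double-checking is that each $W_j$ genuinely commutes with $W_D$, which is immediate from $W_1 W_2 = W_2 W_1$ and $W_D = W_1 W_2$. (Alternatively, one could appeal to Lemma \ref{L:uniqueness-flats} with $(T_1',T_2') = (T_1, T_2)$ and $\phi = I_\cH$, recognizing $(W_1, W_2)$ as a commuting unitary extension arising from the construction of Theorem \ref{T:flats}, but the direct density argument is shorter and more transparent.)
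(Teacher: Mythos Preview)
Your proof is correct and essentially matches the paper's argument: the paper deduces the corollary by specializing Lemma \ref{L:uniqueness-flats} to $(T_1',T_2') = (T_1,T_2)$, $\phi = I_\cH$, $\tau_\phi = I_{\cQ_{T^*}}$, which is exactly the alternative you mention at the end. Your direct unwinding of the density argument is the same computation as in the proof of that lemma, just written out explicitly rather than cited.
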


\subsection{Douglas-type structure of a general And\^o  lift} \label{S:DougArbMod}
\index{And\^o lift!Douglas-type model}

We shall see that minimal And\^o lifts $(V_1, V_2)$ of a given commuting contractive-pair $(T_1, T_2)$
are in one-to-one correspondence with And\^o tuples associated with $(T_1^*, T_2^*)$ which satisfy some
additional conditions as follows.

\begin{definition}  \label{D:AndoTupleI}  Given a commuting contractive operator-pair $(T_1, T_2)$ and 
a pre-And\^o tuple
$(\cF_*, \Lambda_*, P_*, U_*)$  for $(T_1^*, T_2^*)$, we say that
$(\cF_*, \Lambda_*, P_*, U_*)$ is a
{\em Type I And\^o tuple}
\index{And\^o tuple!type I} 
for $(T_1^*, T_2^*)$  if the following additional conditions are satisfied:
\begin{align}   
&  P_*^\perp U_*  \Lambda_*  D_{T^*} +   P_* U_* \Lambda_* D_{T^*} T^*  = \Lambda_* D_{T^*} T_1^*,
\label{AndoTuple1} \\
&  U_*^* P_* \Lambda_* D_{T^*}          + U_*^* P_*^\perp \Lambda_* D_{T^*} T^* = \Lambda_* D_{T^*} T_2^*.
 \label{AndoTuple2}
 \end{align}
 If the And\^o tuple for $(T_1^*, T_2^*)$ is Douglas-minimal when considered as a pre-And\^o tuple for $(T_1^*, T_2^*)$
(see Definition \ref{D:min-Ando}), we say that $(\cF_*, \Lambda_*, P_*, U_*)$ is a
{\em Douglas-minimal  Type * And\^o tuple} for $(T_1^*, T_2^*)$.  

Let us note that the notion of Type I And\^o tuple for a given commuting contractive pair  $(T_1, T_2)$ is 
{\em coordinate-free}
in the following sense:  {\em if $(\cF_*, \Lambda_*, P_*, U_*)$ is a Type I And\^o tuple for $(T_1, T_2)$ and if
$(\cF'_*, \Lambda'_*, P'_*, U'_*)$ is a pre-And\^o tuple for $(T_1, T_2)$ which coincides with
$(\cF_*, \Lambda_*, P_*, U_*)$ (in the sense of Definition \ref{D:preAndoTuple}), then $(\cF'_*, \Lambda'_*, P'_*, U'_*)$
is also a Type I And\^o tuple for $(T_1, T_2)$.}
\end{definition}

The next result provides a functional model for minimal And\^o lifts $(V_1, V_2)$ of a given commuting contractive
operator-pair $(T_1, T_2)$ in terms of a Douglas-minimal Type I And\^o tuples for $(T_1^*, T_2^*)$.

\begin{theorem}  \label{T:Dmodel}  Let $(T_1, T_2)$ be a pair of commuting contractions and let
$(\cF_*, \Lambda_*, P_*, U_*)$ be a Douglas-minimal Type I And\^o tuple for $(T_1^*, T_2^*)$.  Set $T = T_1 T_2$, let
the unitary operator $W_D$ on the Hilbert space $\cQ_{T^*}$  be as in the Douglas model for the minimal 
isometric lift of $T$ (see Section \ref{S:Douglas}),
and let $W_{\flat 1}$, $W_{\flat 2}$ be the canonical  pair of commuting unitaries associated with $(T_1, T_2)$
as in Theorem \ref{T:flats}.  Define operators and spaces by
\begin{align}\index{$\bcK_D$}\index{$\bPi_D$} \index{$\bV_{D,1}, \bV_{D,2}$}
 & \bcK_D = \begin{bmatrix} H^2(\cF_*) \\ \cQ_{T^*} \end{bmatrix}, \notag  \\
 & (\bV_{D,1}, \bV_{D,2}) =  \left( \begin{bmatrix} M_{U_*^*(P_*^\perp + z P_*)} & 0 \\ 0 & W_{\flat 1} \end{bmatrix},
 \begin{bmatrix} M_{(P_* + z P_*^\perp) U_*} & 0 \\ 0 & W_{\flat 2} \end{bmatrix} \right) \text{ acting on } \bcK_D,  \notag \\
 &   \bPi_D = \begin{bmatrix} (I_{H^2} \otimes \Lambda_*)  \cO_{D_{T^*}, T^*} \\ Q_{T^*} \end{bmatrix} \colon \cH \to \bcK_D.
 \label{DougAndoMod}
 \end{align}
 Then $(\bPi_D, \bV_{D,1}, \bV_{D,2})$ is a minimal  And\^o  lift for $(T_1, T_2)$.

 Conversely, given any minimal And\^o lift $(\bPi, \bV_1, \bV_2)$ for $(T_1, T_2)$, there is a Douglas-minimal 
 Type I And\^o tuple $(\cF_*, \Lambda_*, P_*, U_*)$ for $(T_1^*, T_2^*)$ so that the lift $(\bPi, \bV_1, \bV_2)$ is unitarily equivalent 
 to the Douglas-model And\^o lift \eqref{DougAndoMod} associated with $(\cF_*, \Lambda_*,$ $ P_*, U_*)$ as in  \eqref{DougAndoMod}.
\end{theorem}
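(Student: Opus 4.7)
The plan is to verify the two directions in turn.

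\textbf{Forward direction.} Given a Douglas-minimal Type I And\^o tuple $(\cF_*, \Lambda_*, P_*, U_*)$ for $(T_1^*, T_2^*)$, I would first check that $(\bV_{D,1}, \bV_{D,2})$ is a commuting isometric pair: the shift components are the BCL2-model pair from Lemma \ref{relations-of-E-lem} applied to $(E_1, E_2) = (U_*^*P_*^\perp, P_*U_*)$, and the unitary components commute by Theorem \ref{T:flats}. Next, I would verify $\bPi_D^*\bPi_D = \cO_{D_{T^*},T^*}^*(I_{H^2}\otimes \Lambda_*^*\Lambda_*)\cO_{D_{T^*},T^*} + Q_{T^*}^2 = (I - Q_{T^*}^2) + Q_{T^*}^2 = I$, using $\Lambda_*^*\Lambda_* = I$ and the identity $\cO_{D_{T^*},T^*}^*\cO_{D_{T^*},T^*} = I - Q_{T^*}^2$ from Section \ref{S:Douglas}.

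The lift property $\bV_{D,j}^*\bPi_D = \bPi_D T_j^*$ splits according to the block decomposition of $\bcK_D$. The unitary block reduces to the canonical-pair identity $W_{\flat j}^* Q_{T^*} = Q_{T^*} T_j^*$ from \eqref{DefWflats}. For the shift block, I would expand both sides in $z$-power series: the $n$-th Taylor coefficient of $M_{U_*^*(P_*^\perp + zP_*)}^*(I_{H^2}\otimes\Lambda_*)\cO_{D_{T^*},T^*}h$ works out to $P_*^\perp U_*\Lambda_* D_{T^*}T^{*n}h + P_*U_*\Lambda_* D_{T^*}T^{*(n+1)}h$, while the $n$-th coefficient of $(I_{H^2}\otimes\Lambda_*)\cO_{D_{T^*},T^*}T_1^*h$ is $\Lambda_*D_{T^*}T_1^*T^{*n}h$ (using $T_1^* T^* = T^* T_1^*$); equality is exactly \eqref{AndoTuple1} evaluated at $T^{*n}h$, and similarly \eqref{AndoTuple2} handles $j=2$. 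For minimality, I would invoke Proposition \ref{P:Douglas-min} applied to the product $\bV_{D,1}\bV_{D,2} = V_D$ to get $\bigvee_{n\ge 0}(\bV_{D,1}\bV_{D,2})^n\operatorname{Ran}\bPi_D = \sbm{H^2(\operatorname{Ran}\Lambda_*) \\ \cQ_{T^*}}$; the Douglas-minimality hypothesis on $(\cF_*,\Lambda_*,P_*,U_*)$ (Definition \ref{D:min-Ando}) then promotes this to all of $\bcK_D$ once the remaining powers of $\bV_{D,1}$ and $\bV_{D,2}$ are applied.

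\textbf{Converse direction.} Given a minimal And\^o lift $(\bPi,\bV_1,\bV_2)$ with $\bV = \bV_1\bV_2$, I would apply Theorem \ref{Thm:BCLmodel} to put $(\bV_1,\bV_2)$ in BCL2 form via the canonical identification $\tau_{\rm BCL} : \bcK \to \sbm{H^2(\cD_{\bV^*}) \\ \cQ_{\bV^*}}$, extracting BCL2 data $(\cD_{\bV^*}, P_*, U_*, W_1, W_2)$, and declare $\cF_* := \cD_{\bV^*}$. The defect identity $\|D_{\bV^*}\bPi h\|^2 = \|\bPi h\|^2 - \|\bV^*\bPi h\|^2 = \|h\|^2 - \|T^*h\|^2 = \|D_{T^*}h\|^2$ shows the assignment $D_{T^*}h \mapsto D_{\bV^*}\bPi h$ extends to an isometry $\Lambda_* : \cD_{T^*} \to \cF_*$. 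Power-series expansion of $\cO_{D_{\bV^*},\bV^*}\bPi h$ using $\bV^*\bPi = \bPi T^*$ then shows that the first block of $\tau_{\rm BCL}\bPi$ is $(I_{H^2}\otimes\Lambda_*)\cO_{D_{T^*},T^*}$, matching \eqref{DougAndoMod}.

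The heart of the argument is to match the unitary block. I would define $\omega$ densely on $\{W_D^n Q_{T^*}h : n\ge 0,\, h\in\cH\} \subset \cQ_{T^*}$ by $\omega(W_D^n Q_{T^*}h) := W_{\bV^*}^n Q_{\bV^*}\bPi h$, where $W_{\bV^*} = W_1 W_2$ is the restriction of $\bV$ to $\cQ_{\bV^*}$. Parallel inner-product computations on each side, both reducing to expressions of the form $\langle Q_{T^*}^2 h, T^{*(n-m)}g\rangle$, would show $\omega$ is well-defined and isometric. Using $W_{\flat 1} = W_D W_{\flat 2}^*$ and $W_{\flat j}^* Q_{T^*} = Q_{T^*} T_j^*$, I can check directly that $\omega W_{\flat j} = W_j\omega$ on the dense set, hence everywhere; from this, $\operatorname{Ran}\omega$ is $(W_1,W_2)$-reducing in $\cQ_{\bV^*}$ and contains $Q_{\bV^*}\bPi\cH$, so minimality of the And\^o lift (projected onto the unitary block via $\tau_{\rm BCL}$) forces $\operatorname{Ran}\omega = \cQ_{\bV^*}$, making $\omega$ unitary. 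The transported pair $(\omega^*W_1\omega, \omega^*W_2\omega)$ on $\cQ_{T^*}$ then satisfies the defining relations \eqref{DefiningWs}, so by Corollary \ref{C:CanComUni} it must coincide with $(W_{\flat 1}, W_{\flat 2})$. The Type I And\^o tuple conditions \eqref{AndoTuple1}--\eqref{AndoTuple2} follow by reversing the Taylor-coefficient calculation from the forward direction using $\bV_j^*\bPi = \bPi T_j^*$; Douglas-minimality of the tuple is inherited from And\^o minimality by projection onto the first block; and $\sbm{I & 0 \\ 0 & \omega^*}\tau_{\rm BCL} : \bcK \to \bcK_D$ implements the desired unitary equivalence. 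I expect the main obstacle to be constructing $\omega$ and proving its surjectivity, where both the And\^o minimality and the uniqueness of the canonical pair must be invoked carefully.
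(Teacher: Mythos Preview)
Your proposal is correct and follows essentially the same approach as the paper: the forward direction matches step for step (BCL form, isometry of $\bPi_D$, power-series reduction of the shift-block intertwining to \eqref{AndoTuple1}--\eqref{AndoTuple2}, Theorem~\ref{T:flats} for the unitary block, and Proposition~\ref{P:Douglas-min} plus Douglas-minimality for minimality). In the converse the paper first invokes uniqueness of the minimal isometric lift of $T$ to obtain an isometry $\widehat\Gamma:\cK_D\to\cK$ and then deduces its block-diagonal form $\widehat\Gamma=\operatorname{diag}(I_{H^2}\otimes\Lambda_*,\Gamma_{22})$ via Lemma~\ref{L:AuxLemma}, whereas you construct the two blocks $\Lambda_*$ and $\omega$ (the paper's $\Gamma_{22}$) directly; this is a minor reorganization rather than a different idea, and your appeal to Corollary~\ref{C:CanComUni} is redundant once you have $\omega W_{\flat j}=W_j\omega$ but does no harm.
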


\begin{proof}
Suppose that $(\cF_*, \Lambda_*,  P_*, U_*)$ is a minimal Type I And\^o tuple for the commuting 
pair of operators
$(T_1^*, T_2^*)$ acting on $\cH$ and let  $(\bPi_D, \bV_{D,1}, \bV_{D,2})$ be as in \eqref{DougAndoMod}.  
Note that $(\bV_{D,1}, \bV_{D,2})$ on $\bcK_D$ is a pair of commuting isometries since this pair has the form of the BCL model
for commuting isometric operator-pairs.  Note that $\bPi_D$ can be factored 
$$
 \bPi_D = \begin{bmatrix} I_{H^2} \otimes \Lambda_* & 0 \\ 0 & I_{\cQ_{T^*}} \end{bmatrix}  
  \begin{bmatrix} \cO_{D_{T^*}, T^*} \\  Q_{T^*} \end{bmatrix} 
$$ 
as the product of isometries, and hence is itself  an isometric embedding $\bPi_D \colon \cH \to \bcK_D$.  
To show that $(\bPi, \bV_{D,1}, \bV_{D,2})$ is an And\^o lift of $(T_1, T_2)$ it remains only to verify the intertwining relations
$ \bV_{D,j}^* \Pi = \bPi T_j^*$  for  $j=1,2$,  i.e.,
\begin{align*}  
& \begin{bmatrix} \big( M_{U^*_*P_*^\perp + z U_*^* P_*)}\big)^*& 0 
\\ 0 & W_{\flat 1}^* \end{bmatrix} \begin{bmatrix} (I_{H^2}  \otimes \Lambda_* ) \cO_{D_{T^*}, T^*} \\ Q_{T^*} \end{bmatrix}
= \begin{bmatrix} (I_{H^2}  \otimes \Lambda_* ) \cO_{D_{T^*}, T^*}  \\ Q_{T^*}  \end{bmatrix} T_1^*,  \\
 &  \begin{bmatrix} \big( M_{P_* U_* + z P_*^\perp U_* }\big)^*& 0 
\\ 0 & W_{\flat 2}^* \end{bmatrix} \begin{bmatrix} (I_{H^2}  \otimes \Lambda_* ) \cO_{D_{T^*}, T^*} \\ Q_{T^*} \end{bmatrix}
= \begin{bmatrix} (I_{H^2}  \otimes \Lambda_* ) \cO_{D_{T^*}, T^*}  \\ Q_{T^*}  \end{bmatrix} T_2^*,
 \end{align*}
 or equivalently, the system of equations
 \begin{align}
 & ( I_{H^2} \otimes P_*^\perp U_* + M_z^* \otimes P_* U_*)(I_{H^2} \otimes \Lambda_* )\cO_{D_{T^*}, T^*}
 = (I_{H^2} \otimes \Lambda_*) \cO_{D_{T^*}, T^*} T_1^*,   \label{intertwine1}  \\
 &  W_{\flat 1}^* Q_{T^*} = Q_{T^*} T_1^*,  \label{intertwine2}  \\
 & (I_{H^2} \otimes U_*^* P_* + M_z^* \otimes U_*^* P_*^\perp) (I_{H^2} \otimes \Lambda_*) 
 \cO_{D_{T^*}, T^*}  = (I_{H^2} \otimes \Lambda_*) \cO_{D_{T^*}, T^*} T_2^*, \label{intertwine3}  \\
 & W_{\flat 2}^* Q_{T^*} = Q_{T^*} T_2^*.  \label{intertwine4}
\end{align}
Note that equations \eqref{intertwine2} and \eqref{intertwine4} follow immediately from Theorem \ref{T:flats}
(see the second identity in   \eqref{DefWflats}).  Applying  \eqref{intertwine1} to a generic
vector $h \in \cH$ gives
$$ 
\sum_{n=0}^\infty \big(P_*^\perp U_* \Lambda_* D_{T^*} T^{*n} h + P_* U_* \Lambda_* D_{T^*} T^{*(n+1)}h \big) \, z^n
= \sum_{n=0}^\infty \big( \Lambda_* D_{T^*} T^{*n} T_1^* h ) \, z^n.
$$
This identity between power series is the same as the matching of power series coefficients of the two sides
holding for all $h \in \cH$:
\begin{equation}  \label{reduced-sys}
P_*^\perp U_* \Lambda_* D_{T^*} T^{*n} + P_* U_* \Lambda_* D_{T^*} T^{*(n+1)} = 
 \Lambda_* D_{T^*} T^{*n} T_1^* \text{ for all } n = 0,1,2,\dots.
\end{equation}
Since $T_1^*$ and $T^*$ commute, we see that the identity for the case of a general $n$ follows from the special case
of $n=0$ by multiplying on the right by $T^{*n}$;   hence the system of equations \eqref{reduced-sys}
is equivalent to the single equation
$$
P_*^\perp U_* \Lambda_* D_{T^*} + P_* U_* \Lambda_* D_{T^*} T^* = 
 \Lambda_* D_{T^*} T_1^*
 $$
 which amounts to the operator equation \eqref{AndoTuple1}.  In a similar way condition \eqref{intertwine3}
 reduces to \eqref{AndoTuple2}.   

 With notation as in  \eqref{DougAndoMod} let us set\index{$\bcK_{D,0}$}
 $$
  \bcK_{D, 0} = \bigvee_{n_1, n_2 \ge 0}  \bV_{D,1}^{n_1} \bV_{D,2}^{n_2} \operatorname{Ran} \bPi_D 
$$
To show that $(\bPi_D, \bV_{D,1}, \bV_{D,2})$ is a minimal And\^o lift for $(T_1, T_2)$, it remains only to show that
$\bcK_D = \bcK_{D, 0}$.   

As an intermediate step, we shall first identify the smaller space
\begin{equation}  \label{Kcircmin}\index{$\bcK_{D,00}$}
\bcK_{D,00} : =  \bigvee_{n \ge 0} \bV_{D,1}^n \bV_{D,2}^n \operatorname{Ran} \bPi_D   = \bigvee_{n \ge 0}  \bV_D^n  \operatorname{Ran} \bPi_D  \subset \bcK_{D,0}
\end{equation}
where $\bV_D = \bV_{D,1} \bV_{D,2} = \sbm{ M_z^{\cF_*} & 0 \\ 0 & W_D}$ on $\sbm{ H^2(\cF_*) \\ \cQ_{T^*} }$.    
Once $\cK_{D,00}$ is identified, we can find
$\bcK_{D, 0}$ via the formula
\begin{equation} \label{Kmin-Kcircmin}
\bcK_{D, 0} = \bigvee_{n_1, n_2 = 0,1,2,\dots} \bV_{D,1}^{n_1} \bV_{D,2}^{n_2} \bcK_{D,00}.
\end{equation}

Note that $\bPi_D$ given by  \eqref{DougAndoMod} factors as
$$
  \bPi_D = \begin{bmatrix} I_{H^2} \otimes \Lambda_* & 0 \\ 0 & I_{\cQ_{T^*}} \end{bmatrix} \Pi_D
$$
where
$$
  \Pi_D = \begin{bmatrix} \cO_{D_{T^*}, T^*} \\ \cQ_{T^*} \end{bmatrix} \colon \cH \to \cK_D = \begin{bmatrix} H^2(\cD_{T{^*}}) \\ \cQ_{T^*} \end{bmatrix}
$$
is the embedding operator for the Douglas-model isometric lift of $T$ as in \eqref{PiD}. 
Let us also note the intertwining
$$
\bV_D^n \begin{bmatrix} I_{H^2} \otimes \Lambda_* & 0 \\ 0 & I_{\cQ_{T^*}} \end{bmatrix}
= \begin{bmatrix} I_{H^2} \otimes \Lambda_* & 0 \\ 0 & I_{\cD_{T^*}} \end{bmatrix} V_D^n
$$
where $V_D = \sbm{ M_z^{\cD_{T^*}} & 0 \\ 0 & W_D}$ is the isometric lift of $T$ in the Douglas model as in \eqref{VD}.  
Hence we see that, for $h \in \cH$,
\begin{align*}
\bV_D^n \bPi_D h & = \bV_D^n \begin{bmatrix} I_{H^2} \otimes \Lambda_* & 0 \\ 0 & I_{\cQ_{T^*}} \end{bmatrix} \Pi_D h  = 
\begin{bmatrix} I_{H^2} \otimes \Lambda_* & 0 \\ 0 & I_{\cQ_{T^*}} \end{bmatrix} V_D^n \Pi_D h
\end{align*}
where
$$
\bigvee_{n=0,1,2,\dots, \, h \in \cH}  V_D^n \Pi_D h = \cK_D : = \begin{bmatrix} H^2(\cD_{T^*}) \\ \cQ_{T^*} \end{bmatrix}
$$
since the Douglas-model lift of $T$ is minimal, as seen in Proposition \ref{P:Douglas-min}.   We have now identified the space $\bcK_{D,00}$ as being exactly equal to
\begin{align} 
\bcK_{D,00} & = \begin{bmatrix} I_{H^2} \otimes \Lambda_* & 0 \\ 0 & I_{\cQ_{T^*}} \end{bmatrix} \begin{bmatrix} H^2(\cD_{T^*}) \\ \cQ_{T^*} \end{bmatrix} \\
& =  \begin{bmatrix} H^2( \operatorname{Ran} \Lambda_*) \\ \cQ_{T^*} \end{bmatrix}.
 \label{kcircmin-id}
\end{align}

We now use the formula \eqref{Kmin-Kcircmin} to see that
$$
 \bcK_{D, 0} = \bigvee_{n_1, \, n_2 \ge 0}  \bV_{D,1}^{n_1} \bV_{D,2}^{n_2} \begin{bmatrix}  H^2(\operatorname{Ran} \Lambda_*) \\ \cQ_{T^*} \end{bmatrix} 
 = \begin{bmatrix}  \bigvee_{n_1, \, n_2 \ge 0} M_1^{n_1} M_2^{n_2} H^2(\operatorname{Ran} \Lambda_*) \\ \cQ_{T^*} \end{bmatrix},
 $$
 where we use the short-hand notation
\begin{equation}   \label{shorthand}
  M_1: = M_{U_*^* (P_*^\perp + z P_*)}, \quad M_2:= M_{(P_* + z P_*^\perp) U_*}.
\end{equation}
 This space is equal to the whole space $\bcK_D = \sbm{ H^2(\cF_*) \\ \cQ_{T^*} }$ exactly when
 $$
  \bigvee_{n_1, \, n_2 \ge 0} M_1^{n_1} M_2^{n_2} H^2(\operatorname{Ran} \Lambda_*) = H^2(\cF_*),
  $$
i.e., when the original And\^o tuple $(\cF_*, P_*, U_*, \Lambda_*)$ is minimal as an And\^o tuple.

\smallskip
 
 Conversely, suppose that $(\bPi, \bV_1, \bV_2)$ is any And\^o lift for the commuting, contractive operator-pair
 $(T_1, T_2)$ on $\cH$.  Thus  $(\bV_1, \bV_2)$ is a commuting  isometric operator-pair on some Hilbert  space 
 $\bcK$ and 
 $\bPi$ is an isometric embedding of $\cH$ into $\bcK$ so that
 $$
   \bV_j^* \bPi = \bPi T_j^* \text{ for } j = 1,2.
 $$
 By Theorem \ref{Thm:BCLmodel} we know that the commuting pair of isometries $(\bV_1, \bV_2)$ is unitarily
equivalent to its BCL2 model as in \eqref{BCL2model} acting on the space
$\cK:=\sbm{ H^2(\cD_{\bV^*} ) \\ \cQ_{\bV^*}}$  via the unitary identification map
$\tau_{\rm BCL} \colon \bcK  \to \cK_0$   given by
$$
\tau_{\rm BCL} = \begin{bmatrix} \cO_{D_{\bV^*}, \bV^*}  \\ Q_{\bV^*} \end{bmatrix}
  \colon \bk \mapsto \begin{bmatrix} \cO_{D_{\bV^*}, \bV^*}  \\ Q_{\bV^*} \end{bmatrix} \bk.
 $$
 Here $\bV := \bV_1 \bV_2$ satisfies the intertwining relation
 $\cO_{D_{\bV^*}, \bV^*} \bV = M_z^{\cD_{\bV^*}} \cO_{D_{\bV^*}, \bV^*}$.
 For the case here where $\bV$ is an isometry, the operator $\cO_{D_{\bV^*}, \bV^*}$ is a partial isometry
 onto the model space $H^2(\cD_{\bV^*})$  for the shift-part of $\bV$ in its Wold decomposition, while
 $Q_{\bV^*} = \operatorname{SOT-}\lim_{n \to \infty} \bV^n \bV^{*n}$ is the projection of $\bcK$ onto the unitary subspace $\cH_u
 = \bigcap_{n \ge 0} \operatorname{Ran} \bV^n$ in the Wold decomposition of $\bV$.
 Moreover we have the intertwining relations involving the operators $(\bV_1, \bV_2)$ in the factorization of $\bV$:
$$
\cO_{D_{\bV^*}, \bV^*} \bV_1 = M_1 \cO_{D_{\bV^*}, \bV^*}, \quad
\cO_{D_{\bV^*}, \bV^*} \bV_2 = M_2 \cO_{D_{\bV^*}, \bV^*}
$$
where $M_1,M_2$ are as in \eqref{shorthand} and the projection operator $P_*$ and the unitary operator $U_*$ on $\cD_{\bV^*}$ form a Type II BCL-tuple  $(\cD_{\bV^*}, P_*, U_*)$ as in Theorem \ref{Thm:BCLmodel}.
 Let us also introduce operators $W_1$, $W_2$, $W$ on $\cQ_{\bV^*} = \cH_u$ according to
 $$
  W_1 = \bV_1|_{\cQ_{\bV^*}}, \quad W_2 = \bV_2|_{\cQ_{\bV^*}}, \quad W = \bV|_{\cQ_{\bV^*}}
 $$
 and define an isometric embedding map $\Pi \colon \cH \to \cK = \sbm{ H^2(\cD_{\bV^*}) \\ \cQ_{\bV^*}}$
 according to
 \begin{equation}  \label{modelPi}
   \Pi = \tau_{\rm BCL} \bPi = \begin{bmatrix} \cO_{D_{\bV^*}, \bV^*} \\ Q_{\bV^*} \end{bmatrix} \bPi.
 \end{equation}
 Then it is easily checked that 
 \begin{equation}  \label{AndoLift2}
  \bigg( \Pi,  \begin{bmatrix} M_1 & 0 \\ 0 & W_1 \end{bmatrix},
  \begin{bmatrix} M_2 & 0 \\ 0 & W_2 \end{bmatrix} \bigg)
  \end{equation}
  is also an And\^o lift of $(T_1, T_2)$ which is unitarily equivalent (as an And\^o lift) to our original more abstract And\^o lift $(\bPi, \bV_1, \bV_2)$.
 
 Embedded in the lift \eqref{AndoLift2} is an isometric lift $\left( \Pi,   \sbm{M_z^{\cD_{\bV^*}} & 0 \\ 0 & W } \right)$ 
 for the product contraction operator
 $T = T_1 T_2$ but this lift is not necessarily minimal as a lift of $T$.  However, we can always restrict the
 lift space $\cK$ appropriately to arrive at a minimal lift for $T$.  Namely, if we set 
 \begin{align}
&  \cK_{00} = \bigvee_{n \ge 0} \begin{bmatrix} M_z^{\cD_{\bV^*}} & 0 \\ 0 & W 
   \end{bmatrix}^n \operatorname{Ran} \Pi_0, 
   \notag \\
 & \Pi_{00}  \colon  \cH \to \cK_{00}  \text{ given by } \Pi_{00} h = \Pi  h \in \operatorname{Ran} \Pi  \subset \cK_{00}, 
 \label{Kmin-circ}
\end{align}
then
  $$
  \bigg( \Pi_{00},  \begin{bmatrix} M_z^{\cD_{\bV^*}}  & 0 \\ 0 & W \end{bmatrix} \bigg|_{\cK_{00}} \bigg)
  $$
  is a {\em minimal} isometric lift for the  product contraction operator $T = T_1 T_2$.

On the other hand, the Douglas model for the minimal isometric lift of $T$ (see Section 2.2.2) has the form 
$(\Pi_D, V_D)$ where $\Pi_D$ is the isometric embedding of $\cH$ into $\cK_D:=\sbm{ H^2(\cD_{T^*}) \\ \cQ_{T^*}}$ given by
$$
  \Pi_D \colon h \mapsto \begin{bmatrix} \cO_{D_{T^*}, T^*} \\ Q_{T^*} \end{bmatrix} h.
$$
and where
$$
V_D = \begin{bmatrix} M_z^{\cD_{T^*}} & 0 \\ 0 & W_D \end{bmatrix} \colon  
\begin{bmatrix}  H^2(\cD_{T^*}) \\ \cQ_{T^*} \end{bmatrix}  \to \begin{bmatrix}  H^2(\cD_{T^*}) \\ \cQ_{T^*} \end{bmatrix}.
$$
By the uniqueness of minimal isometric lifts,  there exists a unique unitary operator $\widehat \Gamma_0 \colon \cK_D \to \cK_{00} \subset \cK$ so that
\begin{align} 
& \widehat \Gamma_0 \begin{bmatrix} M_z^{\cD_{T^*}} & 0 \\ 0 & W_D \end{bmatrix}= 
\bigg( \begin{bmatrix} M_z^{\cD_{\bV^*}} & 0 \\ 0 & W \end{bmatrix} \bigg |_{\cK_{00}}  \bigg) 
\widehat \Gamma_0,    \label{uniqueness1} \\
& \widehat \Gamma_0 \begin{bmatrix} \cO_{D_{T^*}, T^*} \\ Q_{T^*} \end{bmatrix}  =  
 \Pi_{00} \colon \cH \to \cK_{00} \subset \cK.
\label{uniqueness2'}
\end{align}

To gain added flexibility let us view the unitary operator $\widehat\Gamma_0$ from $\sbm{ H^2(\cD_{T^*}) \\ \cQ_{T^*} }$
to $\cK_{00}$ as actually an isometry, now denoted more simply as $\widehat \Gamma$,
mapping into $\cK = \sbm{ H^2(\cD_{\bV^*}) \\ \cQ_{\bV^*}}$.   
 Let us write out $\widehat \Gamma$ as a $2 \times 2$-block operator matrix 
 \begin{equation} \label{Gammahat-block}
 \widehat \Gamma = \begin{bmatrix} \Gamma & \Gamma_{12} \\ \Gamma_{21} & \Gamma_{22} \end{bmatrix}
  \colon \cK_D  = \begin{bmatrix} H^2(\cD_{T^*}) \\ \cQ_{T^*} \end{bmatrix}
 \to  \cK= \begin{bmatrix} H^2(\cD_{\bV^*}) \\ \cQ_{\bV^*} \end{bmatrix}.
 \end{equation}
 Recalling the formula  \eqref{Kmin-circ} for $\Pi_{\rm min}$ and the formula for $\Pi$ in \eqref{modelPi} we may rewrite
\eqref{uniqueness2'} as
 \begin{equation}   \label{uniqueness2}
 \widehat \Gamma \begin{bmatrix} \cO_{D_{T^*}, T^*} \\ Q_{T^*} \end{bmatrix} = \begin{bmatrix} \cO_{D_{\bV^*}, \bV^*} \\ Q_{\bV^*} \end{bmatrix} \bPi.
 \end{equation}
 From  \eqref{uniqueness1} we see that
 \begin{align}
& \Gamma M_z^{\cD_{T^*}} = M_z^{\cD_{\bV^*}} \Gamma, \quad 
 \Gamma_{12} W_D = M_z^{\cD_{\bV^*}} \Gamma_{12},  \notag  \\
 & \Gamma_{21} M_z^{\cD_{T^*}} = W \Gamma_{21},  \quad 
 \Gamma_{22} W_D = W \Gamma_{22}   \label{matrixintertwine1}
 \end{align}
 As $W_D$ is unitary and $M_z^{\cD_{\bV^*}}$ is a shift, we see from item (1) in Lemma \ref{L:AuxLemma} that
 the second intertwining condition in \eqref{matrixintertwine1} implies that $\Gamma_{12} = 0$ and hence
 $\widehat \Gamma$ collapses to
 \begin{equation}  \label{Gamma12=0}
 \widehat \Gamma = \begin{bmatrix} \Gamma & 0 \\ \Gamma_{21} & \Gamma_{22} \end{bmatrix}.
 \end{equation}
 From  \eqref{uniqueness2} we then pick up the additional relations
 \begin{equation}  \label{matrixintertwine2}
 \Gamma \cO_{D_{T^*}, T^*}  = \cO_{D_{\bV^*}, \bV^*} \bPi, \quad 
 \Gamma_{21} \cO_{D_{T^*}, T^*} + \Gamma_{22} Q_{T^*}
= Q_{\bV^*} \bPi.
 \end{equation}
  
 Consider next the following variants of the identity \eqref{cool-id}:
 \begin{align}
 & \cO_{D_{T^*}, T^*} - M_z^{\cD_{T^*}} \cO_{D_{T^*}, T^*} T^* = \bev_{0,\cD_{T^*}}^* D_{T^*}, \notag \\
 & \cO_{D_{\bV^*}, \bV^*} - M_z^{\cD_{\bV^*}} \cO_{D_{\bV^*}, \bV^*} \bV^* = \bev_{0,\cD_{V^*}}^* D_{\bV^*}.
 \label{variants}
 \end{align}
 Use the first relation in \eqref{matrixintertwine2} together with the fact that $(\bPi, \bV)$ is a lift of $T$ to get
 \begin{align}
 M_z^{\cD_{\bV^*}} (\Gamma \cO_{D_{T^*}, T^*}) T^* & = M_z^{\cD_{\bV^*}} (\cO_{D_\bV^*, \bV^*} \bPi) T^*  = M_z^{\cD_{\bV^*} }\cO_{D_{\bV^*}, \bV^*} \bV^* \bPi.
 \label{double-intertwine}
 \end{align}
 Using the second relation in \eqref{variants}  together with again \eqref{matrixintertwine2} then enables us to compute
 \begin{align}
\notag \Gamma \bev_{0,\cD_{T^*}}^* D_{T^*} & = \Gamma \bigg( \cO_{D_{T^*}, T^*} - M_z^{\cD_{T^*}} \cO_{D_{T^*}, T^*} T^* \bigg) 
 \text{ (by \eqref{variants})}  \notag \\
 & = \bigg( \cO_{D_{\bV^*}, \bV^*} - M_z^{\cD_{\bV^*}} \cO_{D_{\bV^*}, \bV^*} \bV^* \bigg) \bPi   = \bev_{0,\cD_{\bV^*}}^* D_{\bV^*} \bPi ,  \label{Lambda*}
 \end{align}
 i.e., the operator $\Gamma \colon H^2(\cD_{T^*}) \to H^2(\cD_{\bV^*})$ maps constant functions in $H^2(\cD_{T^*})$ to constant functions
 in $H^2(\cD_{\bV^*})$.
Since $\Gamma \colon H^2(\cD_{T^*}) \to H^2(\cD_{\bV^*})$ has the shift intertwining property
 (the first relation in \eqref{matrixintertwine1},  we conclude that there must be an operator 
 $\Lambda_* \colon \cD_{T^*} \to \cD_{\bV^*}$ so that $\Gamma$ is ``multiplication by a constant":
 $\Gamma = I_{H^2} \otimes \Lambda_*$.  From the implicit formula \eqref{Lambda*} we see that
 $\Lambda_*$ must be given by
 \begin{equation}  \label{Lambda*explicit}
   \Lambda_* \colon D_{T^*} h \mapsto D_{\bV^*} \bPi h.
 \end{equation}
 Let us note next that
 \begin{align*}
 \| D_{T^*} h \|^2 & = \| h \|^2 - \| T^* h \|^2  = \| \bPi h \|^2 - \| \bPi T^* h \|^2  = \| \bPi h \|^2 - \| \bV^* \bPi h \|^2  \\
 &  = \langle (I - \bV \bV^*) \bPi h, \bPi h \rangle  =  \| D_{\bV^*}  \bPi h \|^2 = \| \Lambda_* D_{T^*} h \|^2
 \end{align*}
 so $\Lambda_*$ is an isometry.  Combining the representation \eqref{Gamma12=0} with the fact that
 $\Gamma = I_{H^2} \otimes \Lambda_*$ is an isometry since $\Lambda_*$ is isometric and that fact that
 $\widehat \Gamma$ is also an isometry, we are now finally able to conclude that $\Gamma_{21} = 0$ as well. We thus now have reduced
 $\widehat \Gamma$ to the diagonal form
 \begin{equation}   \label{Gamma21=0}
\widehat   \Gamma = \begin{bmatrix} I_{H^2} \otimes \Lambda_* & 0 \\ 0 & \Gamma_{22} \end{bmatrix} \colon
\begin{bmatrix} H^2(\cD_{T^*}) \\ \cQ_{T^*} \end{bmatrix} \to 
\begin{bmatrix} H^2(\cD_{\bV^*}) \\ \cQ_{\bV^*} \end{bmatrix}
 \end{equation}
 where $\Gamma_{22}$ is an isometry from $\cQ_{T^*}$ to $\cQ_{\bV^*}$. We note also that the four 
 intertwining conditions \eqref{matrixintertwine1} now collapse to the two intertwining conditions
 \begin{equation}   \label{intertwine2'}
 (I_{H^2} \otimes \Lambda_*)  M_z^{\cD_{T^*}} = M_z^{\cD_{\bV^*}} (I_{H^2} \otimes \Lambda_*), 
 \quad \Gamma_{22} W_D = W \Gamma_{22},
 \end{equation}
 and the condition \eqref{uniqueness2} splits into the two operator equations
 \begin{equation}   \label{22intertwine'}
 \cO_{D_{\bV^*}, \bV^*} \bPi = (I_{H^2} \otimes \Lambda_*)  \cO_{D_{T^*}, T^*}, \quad
 Q_{\bV^*} \bPi = \Gamma_{22} Q_{T^*}.
 \end{equation}
 We conclude that the transcription \eqref{uniqueness2} of the lifting-embedding map for the And\^o lift 
 \eqref{AndoLift2} can be rewritten as
 \begin{equation}  \label{lifting-embeddings2}
 \begin{bmatrix} \cO_{D_{\bV^*}, \bV^*} \\ Q_{\bV^*} \end{bmatrix} \bPi =
 \begin{bmatrix} (I_{H^2} \otimes \Lambda_*) \cO_{D_{T^*}, T^*} \\ \Gamma_{22} Q_{T^*} \end{bmatrix}.
 \end{equation}
 and the lifting property for the collection \eqref{AndoLift2} can be rewritten as
 \begin{align}
 & \begin{bmatrix} M_j^* & 0 \\ 0 & W_j^* \end{bmatrix} 
  \begin{bmatrix} (I_{H^2} \otimes \Lambda_*) \cO_{D_{T^*}, T^*} \\ \Gamma_{22} Q_{T^*} \end{bmatrix}
  =  \begin{bmatrix} (I_{H^2} \otimes \Lambda_*) \cO_{D_{T^*}, T^*} \\ \Gamma_{22} Q_{T^*} \end{bmatrix} T_j^*
\text{ for } j=1,2 \notag \\
&  \begin{bmatrix} (M_z^{\cD_{\bV^*}})^* & 0 \\ 0 & W^* \end{bmatrix}
 \begin{bmatrix} (I_{H^2} \otimes \Lambda_*) \cO_{D_{T^*}, T^*} \\ \Gamma_{22} Q_{T^*} \end{bmatrix}
 = \begin{bmatrix} (I_{H^2} \otimes \Lambda_*) \cO_{D_{T^*}, T^*} \\ \Gamma_{22} Q_{T^*} \end{bmatrix} T^*.\label{LiftPropRewrite}
 \end{align}
 Finally let us note that the fact that we have now identified $\Lambda_* \colon \cD_{T^*} \to \cD_{\bV^*}$ as an isometry means that the tuple $(\cD_{\bV^*}, P_*, U_*, \Lambda_*)$
  is a pre-And\^o tuple for $(T_1^*,T_2^*)$. As  in the forward direction (the analysis following equations \eqref{intertwine1}-\eqref{intertwine4}), a comparison of the coefficients of
 $$
 M_j^*(I_{H^2} \otimes \Lambda_*) \cO_{D_{T^*}, T^*} =(I_{H^2} \otimes \Lambda_*) \cO_{D_{T^*}, T^*} T_j^* \text{ for } j=1,2 
 $$yields that $(\cD_{\bV^*}, P_*, U_*, \Lambda_*)$ is actually a Type I And\^o tuple for $(T_1^*,T_2^*)$. 
 
By construction the ambient space $\cK_{00}$ \eqref{Kmin-circ} for the minimal lift 
 $\sbm{ M_z^{\cD_{\bV^*}} & 0 \\ 0 & W} \bigg |_{\cK_{00}}$
 of the product contraction operator $T = T_1 T_2$ embedded inside $\cK_0$ is given by
 \begin{equation}   \label{Kmin'}
   \cK_{00} = \widehat \Gamma \begin{bmatrix} H^2(\cD_{T^*}) \\ \cQ_{T^*} \end{bmatrix}
    = \begin{bmatrix} (I_{H^2} \otimes \Lambda_*)  H^2(\cD_{T^*}) \\ \Gamma_{22}  \cQ_{T^*} \end{bmatrix},
\end{equation}
i.e., we finally see that the subspace $\cK_{00}$ initially defined as in \eqref{Kmin-circ} splits with respect to the
orthogonal block-decomposition  appearing in the ambient space $\cK = \sbm{ H^2(\cD_{\bV^*}) \\ \cQ_{\bV^*}}$.

We would like next to analyze the copy of the minimal And\^o isometric lift  for the commuting, contractive pair
$(T_1, T_2)$ obtained by restricting
$\sbm{ M_1 & 0 \\ 0 & W_1}, \sbm{ M_2 & 0 \\ 0 & W_2}$ (($M_1$, $M_2)$ as in \eqref{shorthand}) to the 
subspace $\cK_0$ given by
\begin{align*}
\cK_{0} & = \bigvee_{n_1, n_2 \ge 0} \bigg\{ \begin{bmatrix} M_1 & 0 \\ 0 & W_1 \end{bmatrix}^{n_1}
\begin{bmatrix} M_2 & 0 \\ 0 & W_2 \end{bmatrix}^{n_2} 
\begin{bmatrix} \cO_{D_{\bV^*}, \bV^*}\\ Q_{\bV^*}  \end{bmatrix} 
\operatorname{Ran} \bPi \bigg\}  \\
& = \bigvee_{n_1, n_2 \ge 0} \bigg\{ \begin{bmatrix} M_1 & 0 \\ 0 & W_1 \end{bmatrix}^{n_1}
\begin{bmatrix} M_2 & 0 \\ 0 & W_2 \end{bmatrix}^{n_2}  \begin{bmatrix} (I_{H^2} \otimes \Lambda_*) 
\cO_{D_{T^*}, T^*} \\ \Gamma_{22} Q_{T^*} \end{bmatrix} \cH.
\end{align*}
As $\cK_0$ a priori is given by the same formula but with  the non-negative integers $n_1, n_2$ constrained to satisfy
$n_1 = n_2$,  it is clear that  we have the nesting of subspaces
$$
\cK_{00} \subset \cK_0 \subset \cK.
$$
Due to the splitting appearing in the 
formula \eqref{Kmin'} for $\cK_{00}$, we see that $\cK_0$ can be rewritten as
\begin{align}
 \cK_0 & =  \bigvee_{n_1, n_2 \ge 0} \bigg\{ \begin{bmatrix} M_1 & 0 \\ 0 & W_1 \end{bmatrix}^{n_1}
\begin{bmatrix} M_2 & 0 \\ 0 & W_2 \end{bmatrix}^{n_2}   \begin{bmatrix} (I_{H^2} \otimes \Lambda_*) H^2(\cD_{T^*}) \\ \Gamma_{22} \cQ_{T^*} \end{bmatrix}   \notag \\
& = \begin{bmatrix}  \bigvee_{n_1, n_2 \ge 0} M_1^{n_1} M_2^{n_2} \Gamma  \, H^2(\cD_{T^*}) \\
  \bigvee_{n_1, n_2 \ge 0} W_1^{n_1} W_2^{n_2} \Gamma_{22}  \, \cQ_{T^*} \end{bmatrix} 
   \label{Kmin}
\end{align}

We first analyze the bottom component
$$
 \bigvee_{n_1, n_2 \ge 0} W_1^{n_1} W_2^{n_2} \Gamma_{22}  \, \cQ_{T^*}.
$$
 From the second intertwining relation in \eqref{intertwine2'} we see that $\operatorname{Ran} \Gamma_{22}$
 is invariant under $W$.   On the other hand, we see from \eqref{LiftPropRewrite} and  the intertwining
 properties \eqref{IntofQ} that
 $$
 W^* \Gamma_{22} Q_{T^*} = \Gamma_{22} Q_{T^*} T^* = \Gamma_{22} W_D^* Q_{T^*}.
 $$
 More generally, for $k=1,2,\dots$ we have
 \begin{align*}
 W^* \Gamma_{22} (W_D^k Q_{T^*}) &  = W^* W^k \Gamma_{22} Q_{T^*} = W^{k-1} \Gamma_{22} Q_{T^*}
 = \Gamma_{22} W_D^{k-1} Q_{T^*}  \notag \\
 & = \Gamma_{22} W_D^* (W_D^k Q_{T^*}).
 \end{align*}
 As $\cup_{k=0}^\infty W_D^k \operatorname{Ran} Q_{T^*}$ is dense in $\cQ_{T^*}$ by definition (see \eqref{cQT*}), 
 we conclude
 that $\operatorname{Ran} \Gamma_{22}$ is also invariant under $W^*$ and furthermore we have the intertwining
 $W^* \Gamma_{22} = \Gamma_{22} W_D^*$. Thus we conclude that  $\Gamma_{22} \colon \cQ_{T^*}
 \to \Gamma_{22} \cQ_{T^*}$ is unitary and implements a unitary equivalence between the unitary operator
 $W|_{\operatorname{Ran} \Gamma_{22}}$ on $\operatorname{Ran} \Gamma_{22}$ and
 the unitary operator $W_D$ on $\cQ_{T^*}$.
 
 We next argue that $\operatorname{Ran} \Gamma_{22}$ is reducing for $W_j$ ($j=1,2$) as well 
 and that the unitary operator $\Gamma_{22} \colon \cQ_{T^*} \to \operatorname{Ran} \Gamma_{22}$
 implements a unitary equivalence between the operator $W_{\flat j}$ on $\cQ_{T^*}$ (given by  Theorem
 \ref{T:flats}) and $W_j|_{\operatorname{Ran}
 \Gamma_{22}}$ on $\operatorname{Ran} \Gamma_{22}$.  Indeed, from \eqref{LiftPropRewrite} together
 with the intertwining given in Theorem \ref{T:flats},  we see that, for $j=1,2$,
 $$ 
 W_j^* \Gamma_{22} Q_{T^*} = \Gamma_{22} Q_{T^*} T_j^* = \Gamma_{22} W_{\flat j}^* Q_{T^*}
 $$
 and more generally, for $k=0, 1,2, \dots$, 
 \begin{align}
 W_j^* \Gamma_{22} (W_D^k Q_{T^*}) &  = W^k W_j^* \Gamma_{22} Q_{T^*}
 = W^k \Gamma_{22} W_{\flat j}^* Q_{T^*} = \Gamma_{22} W_D^k W_{\flat j}^* Q_{T^*}  \notag \\
 & = \Gamma_{22} W_{\flat j}^* (W_D^k Q_{T^*}).
 \label{need1}
\end{align}
 As $\cup_{k=0}^\infty \operatorname{Ran} W_D^k Q_{T^*}$ is dense in $\cQ_{T^*}$, these computations show 
 not only that $\operatorname{Ran} \Gamma_{22}$ is invariant under $W_j^*$ but also the intertwining:
 $W_j^* \Gamma_{22} = \Gamma_{22} W_{\flat j}^*$ for $j=1,2$. Also, by taking adjoint of these intertwining relations and using the fact that the operators $W_{\flat j}$ and $W_j$ (for $j=1,2$) all are unitary operators, we see that $\Gamma_{22}W_{\flat j}=W_j\Gamma_{22}$ for $j=1,2$. This implies that $\operatorname{Ran} \Gamma_{22}$  is invariant under $W_1$ and $W_2$ as well.
 
%We show next that $\operatorname{Ran} \Gamma_{22}$  is invariant under $W_1$ as follows. By using  \eqref{need1} with $j=2$ in the second step, we see that we have (for $k=0,1,2,\dots$) 
%\begin{align*}
%W_1 \Gamma_{22} (W_D^k Q_{T^*})& = W W_2^* \Gamma_{22} W_D^k Q_{T^*}  
%= W \Gamma_{22} W_{\flat 2}^* W_D^k  Q_{T^*} \\
%& =  W \Gamma_{22} W_D^k W_{\flat 2}^*   Q_{T^*}  = \Gamma_{22} W_D W_{\flat 2}^* W_D^k Q_{T^*}  \\
%& = \Gamma_{22} W_{\flat 1} ( W_D^k Q_{T^*})
%\end{align*}
%which also shows the intertwining $W_1 \Gamma_{22} = \Gamma_{22} W_{\flat 1}$. The same argument but with the indices $(1,2)$ interchanged shows that $\operatorname{Ran} \Gamma_{22}$ is invariant under $W_2$ along with the intertwining relation $W_2 \Gamma_{22} = \Gamma_{22} W_{\flat 2}$.
 
 Let us now recall the expression \eqref{Kmin} for $\cK_0$.  From the preceding analysis, we see that
 $$
 \cK_0 = \begin{bmatrix} \bigvee_{n_1, n_2 \ge 0} M_1^{n_1} M_2^{n_2} H^2(\cD_{T^*} ) \\
 \operatorname{Ran} \Gamma_{22} \end{bmatrix}.
 $$
Note that  minimality of the And\^o lift \eqref{AndoLift2} just means that 
$$
\cK_{0}  =   \cK = \begin{bmatrix} H^2(\cD_{\bV^*}) \\ \cD_{\bV^*} \end{bmatrix}
$$
which is equivalent to the following two conditions holding simultaneously:
\begin{align} 
&  H^2(\cD_{\bV^*}) = \bigvee_{n_1, n_2 \ge 0}  M_1^{n_1} M_2^{n_2} ( I_{H^2} \otimes \Lambda_*) H^2(\cD_{T^*}) 
  \label{min1}   \\
&  \cQ_{\bV^*} = \Gamma_{22} \cQ_{T^*}.   \label{min2}
\end{align}
By definition, condition \eqref{min1} holds exactly when the And\^o tuple 
$$
(\cD_{\bV^*}, \Lambda_*, P_*, U_*)
$$
is {\em minimal}, while the second condition \eqref{min2} holds exactly when $\Gamma_{22}$ is
{\em unitary} (i.e., a surjective isometry).  When this is the case, the preceding analysis shows that we can 
use $\Gamma_{22}$ to
identify $\cQ_{T^*}$ with $\cQ_{\bV^*}$ and that under this unitary identification the operators $W$, $W_1$, $W_2$
on $\cQ_{\bV^*}$ become the operators $W_D$, $W_{\flat,1}$, $W_{\flat,2}$ on $\cQ_{T^*}$ and the
collection \eqref{AndoLift2} has the form of the functional-model And\^o lift \eqref{DougAndoMod} based on
the Type I And\^o tuple $(\cD_{\bV^*}, \Lambda_*, P_*, U_*)$ for $(T_1^*, T_2^*)$.

Finally, let us recall that the And\^o lift \eqref{AndoLift2} is constructed so as to be unitarily equivalent to the 
original abstract And\^o lift 
$(\bPi, \bV_1, \bV_2)$.  Hence minimality of $(\bPi, \bV_1, \bV_2)$ is equivalent to minimality of
the collection \eqref{AndoLift2} as an And\^o lift of $(T_1, T_2)$.  Thus, if we start with a minimal
abstract And\^o lift, we have shown that it is unitarily equivalent to a functional model And\^o lift
\eqref{DougAndoMod} constructed from a minimal Type I And\^o tuple for $(T_1^*,T_2^*)$, in this case
$(\cD_{\bV^*}, \Lambda_*,  P_*, U_*)$.
 \end{proof}
 
 We now illustrate the Douglas model for And\^o lifts with a couple of special cases.
 
 \begin{example} \label{E:Dex}
 \textbf{Illustrative special cases for Douglas-model And\^o lifts.}
 
 \smallskip
 
\textbf{1. $(T_1, T_2)$ = BCL2-model commuting isometric operator-pair:}  Let us consider the special case
where $(T_1, T_2) = (V_1, V_2)$ is a BCL2-model for a commuting pair of isometries with product $V= V_1 V_2$ equal to a shift
\begin{align*}
&  (T_1, T_2, T:=T_1T_2 = T_2 T_1)   = (M_{U^*P^\perp + z U^*P}, M_{PU + z P^\perp U}, M_z^\cF)  \\
  & \quad  = ((I_{H^2} \otimes U^* P^\perp) + (M^\cF_z \otimes U^* P),  (I_{H^2} \otimes PU) + (M_z^\cF\otimes P^\perp U), M_z^\cF)
\end{align*}
on $H^2(\cF)$ (where the latter tensor-product formulation is often more convenient for computations),
where $(\cF, P, U)$ is a BCL-tuple ($\cF$ = a coefficient Hilbert space while $P$ is a projection and $U$ is a unitary operator on $\cF$).
In this case $(T_1, T_2)$ is already a commutative isometric pair, so $(V_1, V_2) = (T_1, T_2)$ is a minimal isometric lift of itself $(T_1, T_2)$.  
When we go through the construction in the proof of Theorem \ref{T:Dmodel}, we see that the Douglas-minimal Type II And\^o tuple which we are led to
consists of the original BCL-tuple $(\cF, P, U)$ augmented by the isometric embedding map $\Lambda \colon \cD_{T^*} \to \cF$ given by
$\Lambda \colon D_{T^*} h = D_{V^*} h \mapsto h(0) \in \cF$ for $h \in H^2(\cF)$.  It is easy to check directly that $(\cF, \Lambda, P, U)$ so defined is a
Douglas-minimal Type I And\^o tuple for $(T_1^*, T_2^*) = (V_1^*, V_2^*)$ (with trivial commuting unitary-operator piece $(W_1, W_2)$), 
as expected from the general computations underlying the proof of Theorem \ref{T:Dmodel}.
This makes precise how a BCL2-model embeds into a Douglas-model And\^o lift.

More abstractly, using the BCL2-model for any commuting isometric operator-pair $(V_1, V_2)$ such that $V_1 V_2$ is a shift operator, we can assert:
{\em  if $(T_1, T_2)$ is a commuting isometric pair such that $T = T_1 T_2$ is a shift operator, then any Douglas-minimal Type I And\^o tuple $(\cF, \Lambda, P, U)$
for $(T_1^*, T_2^*)$ has the property that $\Lambda \colon \cD_{T^*} \to \cF$ is unitary and $(\cF, P, U)$ is a BCL2 tuple for the commuting isometric pair $(T_1, T_2)$.}

\smallskip

\textbf{2. $(T_1, T_2)$ = the adjoint of a BCL2-model commuting isometric operator pair:}  We now consider the case where $(T_1, T_2) = (V_1^*, V_2^*)$
is a commuting co-isometric operator-pair, where $(V_1, V_2)$ is as in the previous example (1).  Thus
\begin{align*}
& (T_1, T_2, T) = (V_1^*, V_2^*, V^*) = \\
& \quad ((I_{H^2} \otimes P^\perp U) + ((M_z^\cF)^* \otimes PU),  (I_{H^2} \otimes U^*P) + ( (M_z^\cF)^* \otimes U^* P^\perp ), (M_z^\cF)^*).
\end{align*}
The And\^o lifting problem comes down to the commuting-unitary extension problem treated in Lemma \ref{L:uniqueness-flats}.  The result is that the minimal isometric
lift consists of the commuting unitary operators $(M_{\varphi_1^*}, M_{\varphi_2^*})$ acting on $L^2(\cF)$, where
$$
 \varphi_1^*(\zeta) = P^\perp U + \overline{\zeta} PU, \quad \varphi_2^*(\zeta) = U^* P + \overline{\zeta} U^* P^\perp.
 $$
 The BCL2-tuple has trivial shift part $(\cF, \Lambda, P, U)$ and non-trivial commuting unitary part $(M_{\varphi_1^*}, M_{\varphi_2^*})$ acting on $L^2(\cF)$.
  \end{example}

\section{Special And\^o tuples}\label{S:SpcATuple}

In this section, starting with a commuting contractive operator-pair $(T_1,T_2)$, we construct a class of
pre-And\^o tuples for $(T_1,T_2)$ which we refer to as {\em special And\^o tuples} for $(T_1, T_2)$.
Special And\^o tuples, when put in canonical form, are defined in a constructive manner, so there is no issue
with their existence as is the case for Type I And\^o tuples defined in terms of the existence of solutions of some operator
equations.  We shall see that any special And\^o tuple for $(T_1^*, T_2^*)$ is also
a Type I And\^o tuple for $(T_1^*, T_2^*)$, thereby confirming the existence of Type I And\^o tuples.
 Conversely, for each computable simple example that we have been able
to work out the converse holds:  any Type I And\^o tuple for $(T_1^*, T_2^*)$ turns out to be special.  However,
the validity of the converse statement in full generality remains an open problem.
We think it instructive to push the formalism as far as possible without a commutativity
assumption, and then see the further consequences arising from a commutativity assumption.

Suppose that $T \colon \cH_2 \to \cH_0$ is a (possibly non-square) Hilbert-space contraction operator having a factorization
$$
   T = T' T''
$$
where $T' \colon \cH_1 \to \cH_0$ and $T'' \colon \cH_2 \to \cH_1$ are also contraction operators.  Then the identity
\begin{align}
  D_T^2 & = I - T^*T = I - T^{\prime \prime *} T^{\prime *} T' T'' = T^{\prime \prime *}( I - T^{\prime *} T' ) T''+ I -  T^{\prime \prime *} T''  \notag \\
  & = T^{\prime \prime *} D_{T'}^2 T''  +  D_{T''}^2
  \label{id1-nc}
\end{align}
implies that the map $Z \colon \cD_T \to \cD_{T'} \oplus \cD_{T''}$ defined densely by
\begin{equation} \label{Z}\index{$Z$}
Z \colon D_T h \mapsto D_{T'}T'' h \oplus D_{T''} h \text{ for } h \in \cH
\end{equation}
extends to an isometry.  Such isometries come up in the characterization of invariant subspaces for a contraction operator in terms of its
Sz.-Nagy--Foias functional model (see \cite[Chapter VII]{Nagy-Foias} and Chapter \ref{C:invsub} below).  An important special case is the
case where $T = T' \cdot T''$ is a {\em regular factorizaton} as defined next.

\begin{definition} \label{D:RegFact} 
\index{regular factorization for!contractions} 
Given a contraction operator $T \colon \cH_0 \to \cH$
with factorization $T = T'  \cdot T''$ for contraction operators $T'  \colon \cH_1 \to \cH$ and $T'' \colon \cH_0 \to \cH_ 2$, we say that
the  factorization
$T = T' \cdot T''$ is {\em regular} if it is the case that the operator
$Z \colon \cD_T \to \cD_{T'} \oplus \cD_{T''}$ in \eqref{Z} is surjective (so $Z \colon \cD_T \to \cD_{T'} \oplus \cD_{T''}$ is unitary).
\end{definition}

Now let us suppose that $(T_1, T_2)$ is a commuting contractive pair on $\cH$ and that we set $T = T_1 \cdot T_2 = T_2 \cdot T_1$.
Then we have two versions of \eqref{id1-nc} corresponding to setting $T' = T_1$, $T''= T_2$ or the reverse $T' = T_2$, $T'' = T_1$:
\begin{equation}   \label{id1}
D_T^2 = T_2^* D_{T_1}^2 T_2  +  D_{T_2}^2 , \quad D_T^2 = D_{T_1}^2 + T_1^* D_{T_2}^2 T_1
\end{equation}
from which we conclude in particular that
\begin{equation}   \label{id2}
T_2^*D_{T_1}^2T_2 + D_{T_1}^2 = D_{T_1}^2 + T_1^* D_{T_2}^2 T_1.
\end{equation}
As a consequence of the first identity in \eqref{id1}, we see  that the operator $\Lambda_\dag \colon \cD_T \to \cD_{T_1} \oplus \cD_{T_2}$ defined densely by
\begin{equation}   \label{defLambda}
  \Lambda_\dag \colon D_T h \mapsto D_{T_1} T_2 h \oplus D_{T_2} h \text{ for all } h \in \cH
\end{equation}
is an isometry from $\cD_T$ into $\cD_{T_1} \oplus \cD_{T_2}$.
Let us introduce the notation\index{$\cD_{U_0}$}
\index{$\cR_{U_0}$}
\begin{align}
& \cD_{U_0} : =\operatorname{clos.}  \{ D_{T_1} T_2 h \oplus D_{T_2} h \colon h \in \cH\} \subset \cD_{T_1} \oplus \cD_{T_2}, \notag \\
& \cR_{U_0}:=\operatorname{clos.}  \{ D_{T_1} h \oplus D_{T_2} T_1 h \colon h \in \cH\} \subset \cD_{T_1} \oplus \cD_{T_2}.
\label{U0dom-codom}
\end{align}
As a consequence of \eqref{id2}, we see that the operator $U_0 \colon \cD_{U_0} \to \cR_{U_0}$
defined densely by
\begin{equation}  \label{U0}
U_0 \colon D_{T_1} T_2 h \oplus D_{T_2} h \mapsto  D_{T_1} h \oplus D_{T_2} T_1 h \text{ for all } h \in \cH
\end{equation}
is unitary.   If $\cD_{U_0}^\perp$ and $\cR_{U_0}^\perp$ have the same dimension, we can find a unitary identification map
$U_{00} \colon \cD_{U_0}^\perp \to \cR_{U_0}^\perp$ and then define a unitary operator 
$$
U_\dag \colon \cD_{T_1} \oplus \cD_{T_2} \to \cD_{T_1} \oplus \cD_{T_2} 
$$
by setting 
\begin{equation}   \label{Udagger}
U_\dag|_{\cD_{U_0}} = U_0, \quad U_\dag|_{ \cD_{U_0}^\perp} = U_{00}
\end{equation}
and then extending by linearity.  Even if it is the case that $\cD_{U_0}^\perp$ and $\cR_{U_0}^\perp$ have different dimensions, we may introduce 
an infinite-dimensional Hilbert space $\cF_{\dag 0}$ so that $\cD_{U_0}^\perp \oplus \cF_{\dag 0}$ and $\cR_{U_0}^\perp \oplus \cF_{\dag 0}$
have the same dimension (here we are assuming that all Hilbert spaces are separable), introduce a unitary identification map 
$U_{00} \colon  \cD_{U_0}^\perp \oplus \cF_{\dag 0} \to \cR_{U_0}^\perp \oplus \cF_{\dag 0}$,
and then obtain a unitary operator $U_\dag$ on the larger space $\cF_\dag := \cD_{U_0} \oplus \cR_{U_0} \oplus \cF_{\dag 0}$ by setting
$$
  U_\dag|_{\cD_{U_0}}= U_0, \quad U_\dag|_{\cD_{U_0}^\perp \oplus \cF_{\dag 0} }= U_{00}
$$
and extending by linearity.

In addition we let $P_\dagger$ be any projection operator on $\cF_{\dagger}$ which is an extension of the projection operator
$$
  P_0 \colon  d \oplus r \mapsto d \oplus 0
$$
defined on $\cD_{T_1} \oplus \cD_{T_2}$, i.e., $P_\dagger$ is any projection operator on $\cF_\dagger$ of the form
\begin{equation}  \label{Pdagger}
P_\dagger \colon d \oplus r \oplus f_0 \mapsto d \oplus 0 \oplus P_{\dagger 0} f_0 \text{ for } d \oplus r \oplus f_0 \in \cD_{T_1} \oplus \cD_{T_2} \oplus \cF_{\dagger 0}
= \cF_\dagger
\end{equation}
where $P_{\dagger 0}$ is any choice of orthogonal projection on $\cF_{\dagger 0}$.
For future reference we now introduce the formal definition of {\em special And\^o tuple}. \index{And\^o tuple!special}

\begin{definition} \label{D:SpcATuple}
Given a commuting contractive operator-pair $(T_1,T_2)$ on a Hilbert space
$\cH$,\index{$(\cF_\dag, \Lambda_\dag, P_\dag , U_\dag)$}\index{$(\cF_{\dag*}, \Lambda_{\dag*}, P_{\dag*}, U_{\dag*})$}
any collection of spaces and operators $(\cF_\dag, \Lambda_\dag, P_\dag , U_\dag)$ constructed as in
\eqref{defLambda}, \eqref{Udagger}, \eqref{Pdagger}
will be called a {\em canonical special And\^o tuple} for the pair $(T_1, T_2)$.   We shall
say that  pre-And\^o tuple
$(\cF'_\dag, \Lambda'_\dag, P'_\dag , U'_\dag)$ coinciding (in the sense of Definition \ref{D:preAndoTuple})
with a canonical special And\^o tuple 
$(\cF_\dag, \Lambda_\dag, P_\dag , U_\dag)$ for $(T_1, T_2)$ is simply a {\em special And\^o tuple} for $(T_1, T_2)$
(not necessarily in canonical form).
Canonical special And\^o tuples for the adjoint pair $(T_1^*,T_2^*)$
will often be denoted by $(\cF_{\dag*}, \Lambda_{\dag*}, P_{\dag*}, U_{\dag*})$.
We shall say that $(\cF_\dag, \Lambda_\dag, P_\dag , U_\dag)$ is an {\em irreducible special And\^o tuple}
if in addition it is the case that the smallest subspace of $\cF_\dagger$ containing $\operatorname{Ran} \Lambda_\dag$ which is invariant for
$U_\dagger$, $U_\dagger^*$, and $P_\dagger$  is the whole space $\cF_\dagger$.  If
$(\cF_{\dag}, \Lambda_{\dag}, P_{\dag}, U_{\dag})$ is a canonical special And\^o tuple,  irreducibility means that the
smallest reducing subspace for $U_\dag$ containing $\cD_{T_1} \oplus \cD_{T_2}$ is the whole space $\cF_\dag$.
\end{definition}

\index{And\^o tuple!canonical special}Note that the point of the distinction between {\em canonical special And\^o tuple} and {\em special And\^o tuple}
for $(T_1, T_2)$ is that the notion of {\em canonical special And\^o tuple} is not coordinate-free.  The enlarged class
of {\em special And\^o tuples} as in Definition \ref{D:SpcATuple} then is the coincidence envelope of the
{\em canonical special And\^o tuples} (the smallest collection of And\^o tuples containing the canonical special And\^o tuples which is invariant
under the relation of coincidence).

\begin{notation}  \label{N:flat}
For a given commuting contractive operator-pair $(T_1,T_2)$, the Douglas model for And\^o isometric lifts of $(T_1,T_2)$
corresponding to some canonical special And\^o tuple  for $(T_1^*,T_2^*)$ will be denoted by $(\Pi_\flat, V_{\flat1},V_{\flat2})$ and
$V_\flat=V_{\flat1}V_{\flat2}$.
\end{notation}

\begin{remark}  \label{R:specATuple}
We note that canonical special And\^o tuples  are easily constructed.   There are various scenarios possible
which we discuss in turn.

\smallskip

\noindent
\textbf{Scenario 1:   $(T_1, T_2)$ such that both $T = T_1 \cdot T_2$ and $T = T_2 \cdot T_1$ are regular factorizations.}

\smallskip

\noindent
 We have seen that the first identity in \eqref{id1}
implies that $\Lambda_\dagger$ given by \eqref{defLambda} is an isometry.  Let us now note also that the second identity in \eqref{id1}
implies that the map $\Lambda'_\dagger \colon \cD_T \to \cD_{T_1} \oplus \cD_{T_2}$ given by
$$
\Lambda'_\dagger \colon D_T h \mapsto D_{T_1} h \oplus D_{T_2} T_1 h \text{ for } h \in \cH
$$
is an isometry.  Note also from the definitions that
$$
   \operatorname{Ran} \Lambda_\dagger = \cD_{U_0}, \quad \operatorname{Ran} \Lambda_\dagger' = \cR_{U_0}.
$$
By definition  $T = T_1 \cdot T_2$ is a regular factorization exactly when $\overline{\operatorname{Ran}}\, \Lambda_\dagger$ is the whole space
$\cD_{T_1} \oplus \cD_{T_2}$, i.e.,
$$
  \cD_{U_0} = \cD_{T_1} \oplus \cD_{T_2}.
$$
Similarly, $T = T_2 \cdot T_1$ is a regular factorization exactly when $\operatorname{Ran} \Lambda'_\dagger = \cD_{T_1} \oplus \cD_{T_2}$,
i.e., when
$$
  \cR_{U_0} = \cD_{T_1} \oplus \cD_{T_2}.
$$
Thus, when it is the case that both $T = T_1 \cdot T_2$ and $T = T_2 \cdot T_1$ are regular factorizations, $U_0$ given by \eqref{U0}
actually already defines a unitary operator on $\cF = \cD_{T_1} \oplus \cD_{T_2}$ and conversely:  if $\cD_{U_0} = \cR_{U_0}
= \cD_{T_1} \oplus \cD_{T_2}$ so $U_0$ already defines a unitary operator on $\cD_{T_1} \oplus \cD_{T_2}$, then both
$T = T_1 \cdot T_2$ and $T = T_2 \cdot T_1$ are regular factorizations.
If this is the case, then any other unitary extension $\widetilde U$ must agree with $U_0$ on $\cD_{U_0} = \cD_{T_1} \oplus \cD_{T_2}$
and hence $\cD_{T_1} \oplus \cD_{T_2}$ is already reducing for $\widetilde U$ forcing us to the conclusion that $\widetilde U = U_0$.
We conclude: {\em  in case both $T_1 \cdot T_2$ and $T_2 \cdot T_1$ are regular factorizations, then the commuting, contractive pair
$(T_1, T_2)$ has a unique  irreducible special And\^o tuple in canonical form.}

\smallskip

\noindent
 \textbf{Scenario 2: $\dim \cD_{U_0}^\perp = \dim \cD_{R_0}^\perp > 0$}
 (here the orthogonal complements are taken with respect to the ambient space $\cD_{T_1} \oplus \cD_{T_2}$.) In this case, we can extend
$U_0$ to a unitary operator  $U$ on all of $ \cF:= \cD_{T_1} \oplus \cD_{T_2}$ by choosing any unitary identification map
$U'_0 \colon \cD_{U_0}^\perp \to \cR_{U_0}^\perp$, then defining
$$
  U|_{\cD_{U_0}} = U_0 \colon \cD_{U_0} \to \cR_{U_0}, U|_{\cD_{U_0}^\perp} = U'_0 \colon  \cD_{U_0}^\perp \to \cR_{U_0}^\perp
$$
and then extending $U$ to a unitary operator  $U$ on all of
$$
\cD_{T_1} \oplus \cD_{T_2} = \cD_{U_0} \oplus \cD_{U_0}^\perp = \cR_{U_0} \oplus \cR_{U_0}^\perp
$$
by linearity.  Then any such $U$ induces a irreducible And\^o tuple $(\cF = \cD_{T_1} \oplus \cD_{T_2}, P, U)$
since the subspace $\cD_{T_1} \oplus \cD_{T_2}$ is already reducing for $U$.  However, we see that there is freedom
in the choice of the operator $U'_0 \colon \cD_{U_0}^\perp \to \cR_{U_0}^\perp$.  We conclude that
in this case {\em irreducible special And\^o tuples exist but do not have a unique representative in canonical form.}
 In fact, as we shall see
in examples to come, there are canonical minimal special And\^o tuples with the ambient Hilbert space $\cF$ properly
containing $\cD_{T_1} \oplus \cD_{T_2}$.

 \smallskip

 \noindent
 \textbf{Scenario 3: $\dim \cD_{U_0}^\perp \ne \dim \cD_{R_0}^\perp$.}
In this case, we may enlarge the
ambient space $\cD_{T_1} \oplus \cD_{T_2}$
to the space $\cF: = \cD_{T_1} \oplus \cD_{T_2} \oplus \ell^2$ (here $\ell^2$ can be taken to be any separable
infinite-dimensional Hilbert space).  Then we are in the Scenario 2 setting
$$
\dim \cD_{U_0}^\perp = \dim \cD_{R_0}^\perp = \infty
$$
but where we now take the ambient space to be $\cF = \cD_{T_1} \oplus \cD_{T_2} \oplus \ell^2$ rather than just
$\cD_{T_1} \oplus \cD_{T_2}$.  We may then proceed as in Scenario 2 to construct a unitary extension of
$U_0$.  We can always arrange for the resulting canonical-form special And\^o tuple $(\cF_\dagger, \Lambda_\dagger, U_\dagger, P_\dagger )$
to be irreducible by cutting $\cF_\dagger$ down to the smallest reducing subspace for $U_\dagger$ containing
$\cD_{T_1} \oplus \cD_{T_2}$.  Due to the freedom in the choice of unitary extension of the partially defined $U_0$,
it is clear that irreducible special And\^o tuples for $(T_1, T_2)$ are not unique.
\end{remark}

As a corollary of the extended discussion in Remark \ref{R:specATuple} we get:

\begin{corollary}  \label{C:UniqueMinAndoTuple}
(1)  A commuting contractive pair $(T_1, T_2)$ always has a
special And\^o tuple.
\smallskip

\noindent
(2)  $(T_1, T_2)$ has a unique (up to coincidence)
irreducible special And\^o tuple if and only if $T1 \cdot T2$ and $T_2 \cdot T_1$ are regular factorizations.
\end{corollary}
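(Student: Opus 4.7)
Both claims are derived by formalizing the scenario analysis of Remark \ref{R:specATuple}.

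For part (1), I would simply package the explicit construction of Definition \ref{D:SpcATuple}: the first identity in \eqref{id1} makes $\Lambda_\dag$ of \eqref{defLambda} an isometry, while identity \eqref{id2} makes $U_0$ of \eqref{U0} a well-defined partial unitary from $\cD_{U_0}$ onto $\cR_{U_0}$. When $\dim \cD_{U_0}^\perp = \dim \cR_{U_0}^\perp$ (with complements taken inside $\cD_{T_1} \oplus \cD_{T_2}$), extend $U_0$ to a unitary $U_\dag$ on $\cD_{T_1} \oplus \cD_{T_2}$ via any unitary identification of those complements; otherwise, adjoin an infinite-dimensional auxiliary Hilbert space $\cF_{\dag 0}$ so that the two complements (inside $\cD_{T_1} \oplus \cD_{T_2} \oplus \cF_{\dag 0}$) have equal dimension, and extend. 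Finally define $P_\dag$ via \eqref{Pdagger}. This yields a canonical special And\^o tuple in every case.

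For the ``if'' direction of (2), assume both $T = T_1 \cdot T_2$ and $T = T_2 \cdot T_1$ are regular. Definition \ref{D:RegFact} then forces $\cD_{U_0} = \cR_{U_0} = \cD_{T_1} \oplus \cD_{T_2}$, so $U_0$ is already unitary on $\cD_{T_1} \oplus \cD_{T_2}$. In any canonical special And\^o tuple $(\cF_\dag, \Lambda_\dag, P_\dag, U_\dag)$, the subspace $\cD_{T_1} \oplus \cD_{T_2} \subset \cF_\dag$ is thus reducing for $U_\dag$ (as $U_\dag$ extends the unitary $U_0$) and contains $\operatorname{Ran}\Lambda_\dag = \cD_{U_0}$. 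Irreducibility then forces $\cF_\dag = \cD_{T_1} \oplus \cD_{T_2}$, whence $U_\dag = U_0$ and the prescription \eqref{Pdagger} pins down $P_\dag = P_0$. So the canonical irreducible special And\^o tuple is unique, and by definition every irreducible special And\^o tuple coincides with such a canonical one.

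For the ``only if'' direction, I would argue the contrapositive. Suppose, say, $T = T_1 \cdot T_2$ is not regular, so $\cD_{U_0} \subsetneq \cD_{T_1} \oplus \cD_{T_2}$. The goal is to exhibit two non-coincident canonical special And\^o tuples by varying either the unitary identification $U_{00}$ (in Scenario 2) or the dimension/structure of the projection $P_{\dag 0}$ on the enlargement summand (in Scenario 3). A purported coincidence $\tau \colon \cF_\dag \to \cF'_\dag$ is forced by $\tau \Lambda_\dag = \Lambda'_\dag$ to be the identity on $\operatorname{Ran}\Lambda_\dag = \cD_{U_0}$; combined with $\tau U_\dag = U'_\dag \tau$ and $U_\dag|_{\cD_{U_0}} = U_0 = U'_\dag|_{\cD_{U_0}}$, this forces $\tau$ to fix $\cR_{U_0}$ pointwise as well. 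On the orthogonal complement of the closed linear span $\cD_{U_0} \vee \cR_{U_0}$ inside $\cF_\dag$, the remaining data of $U_\dag$ and $P_\dag$ must be intertwined by $\tau$; choosing $U_{00}$ (or $P_{\dag 0}$) so that no such intertwining exists produces the desired pair of non-coincident tuples. The main obstacle here is verifying that genuine freedom survives after the rigidity imposed on $\cD_{U_0} \vee \cR_{U_0}$, which I would handle by a dimension-count argument (or by falling back on the concrete examples promised later in the paper, where two distinct irreducible special And\^o tuples are explicitly exhibited).
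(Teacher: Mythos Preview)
Your proposal is correct and follows essentially the same route as the paper: the corollary is stated in the paper with no separate proof, simply as a consequence of the scenario analysis in Remark \ref{R:specATuple}, and your plan is precisely to unpack that remark. Your treatment of part (1) and of the ``if'' direction in (2) matches Scenario 1 of the remark exactly. For the ``only if'' direction you are more explicit than the paper, which merely asserts that ``due to the freedom in the choice of unitary extension of the partially defined $U_0$, it is clear that irreducible special And\^o tuples for $(T_1,T_2)$ are not unique'' without verifying that this freedom survives the coincidence relation; your honest flagging of this as an obstacle is appropriate, since the paper does not fill that gap either.
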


The question arises as to whether it suffices to assume that only one of the factorizations $T_1 \cdot T_2$
and $T_2 \cdot T_1$ is regular.  This issue is resolved
by the following result.

\begin{theorem}  \label{T:regfact}
   Let $(T_1, T_2)$ be a commuting, contractive pair of contraction operators
on a Hilbert space $\cH$.
\begin{enumerate}
\item Assume that all defect spaces $\cD_{T}$, $\cD_{T_1}$,  $\cD_{T_2}$ are finite-dimensional. Then
$T_1 \cdot T_2$ is a regular factorization if and only if $T_2 \cdot T_1$ is a regular factorization.

\item In the infinite-dimensional setting, it is possible for one of the factorizations $T_1 \cdot T_2$ (respectively $T_2 \cdot T_1$) to be regular
while the other $T_2 \cdot T_1$ (respectively $T_1 \cdot T_2$) is not regular.

\item Suppose that $(T_1, T_2)$ is a commuting pair of isometries (so that trivially both $T_1 \cdot T_2$ and $T_2 \cdot T_1$ are regular factorizations).
Then $(T_1^*, T_2^*)$ has the {\em double regular-factorization property,}  namely:  both $T_1^* \cdot T_2^*$ and $T_2^* \cdot T_1^*$ are regular factorizations. 
\end{enumerate}
\end{theorem}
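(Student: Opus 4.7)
For part (1), the natural starting point is the observation made in Remark \ref{R:specATuple} that the operator $U_0\colon \cD_{U_0}\to \cR_{U_0}$ of \eqref{U0} is unitary, so $\dim \cD_{U_0} = \dim \cR_{U_0}$. Regarding $\cD_{U_0}$ and $\cR_{U_0}$ as subspaces of the common ambient space $\cD_{T_1}\oplus \cD_{T_2}$, which is finite-dimensional by hypothesis, subtracting from $\dim(\cD_{T_1}\oplus \cD_{T_2})$ gives $\dim \cD_{U_0}^\perp = \dim \cR_{U_0}^\perp$. Since by definition $T_1\cdot T_2$ is regular iff $\cD_{U_0}^\perp = \{0\}$ and $T_2\cdot T_1$ is regular iff $\cR_{U_0}^\perp = \{0\}$, the two are equivalent in finite dimensions.

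For part (2), my plan is to exhibit an explicit example in which the dimension-matching argument above breaks because one of $\cD_{U_0}^\perp,\cR_{U_0}^\perp$ is trivial while the other is nonzero. A workable candidate is $\cH = \ell^2(\mathbb{Z}_{\ge 0})$ with orthonormal basis $\{e_n\}_{n\ge 0}$, $T_1 = S^*$ (backward shift) and $T_2 = \alpha I$ with $0<\alpha<1$; these commute because $T_2$ is scalar, and $T = T_1 T_2 = \alpha S^*$. Direct computation gives $\cD_{T_1} = \mathbb{C}\, e_0$, $\cD_{T_2} = \cH$, $\cD_T = \cH$, and the map $\Lambda_\dag\colon D_T h \mapsto D_{T_1}T_2 h\oplus D_{T_2} h$ sends $h$ to $\alpha\langle h,e_0\rangle e_0 \oplus \sqrt{1-\alpha^2}\, h$; the element $(e_0,0)$ then cannot lie in the range (it would require $\sqrt{1-\alpha^2}=0$). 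By contrast the map for $T_2\cdot T_1$ sends $h$ to $\sqrt{1-\alpha^2}\,S^*h \oplus \langle h,e_0\rangle e_0$, which hits $(e_m,0)$ for all $m\ge 0$ via $h = e_{m+1}$ and hits $(0,e_0)$ via $h = e_0$, so it is surjective. Hence $T_2\cdot T_1$ is regular while $T_1\cdot T_2$ is not.

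For part (3), let $(V_1,V_2)$ be a commuting isometric pair with product $V$. The crucial simplifications are that since each $V_j$ is an isometry, $D_{V_j^*}^2 = I - V_j V_j^*$ is itself an orthogonal projection with range $\ker V_j^* = \cD_{V_j^*}$, so $D_{V_j^*}$ coincides with that projection and $D_{V_j^*} V_j = 0$. To verify regularity of $V_1^*\cdot V_2^*$ I would show that the isometry $\Lambda_\dag\colon D_{V^*} h \mapsto D_{V_1^*}V_2^* h \oplus D_{V_2^*} h$ has dense range in $\cD_{V_1^*}\oplus \cD_{V_2^*}$; since an isometry has closed range, this forces surjectivity. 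Inserting $h = V_2 k$ for arbitrary $k\in\cH$ and using $V_2^*V_2 = I$ and $D_{V_2^*}V_2 = 0$ produces $(D_{V_1^*} k,\,0)$, which is dense in $\cD_{V_1^*}\oplus\{0\}$ as $k$ varies; inserting $h = h_2\in \cD_{V_2^*} = \ker V_2^*$ produces $(0,h_2)$, covering $\{0\}\oplus \cD_{V_2^*}$. Linearity then gives density in the full codomain, and regularity of $V_2^*\cdot V_1^*$ follows by interchanging the indices $1$ and $2$.

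The main conceptual hurdle, as I see it, is locating the right example for part (2): one needs a commuting pair for which the two isometries $\Lambda_\dag$ and $\Lambda'_\dag$ have ranges of different (co)dimensions, and the simplest way to build this in is to use a commuting pair with one factor scalar (to preserve commutativity effortlessly) and one genuine co-isometric factor (to create a one-dimensional defect that is ``seen'' by one order of the factorization but not the other). Parts (1) and (3) are essentially bookkeeping from the definitions and identities already assembled in Section \ref{S:SpcATuple}.
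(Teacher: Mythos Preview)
Your argument for part~(1) is correct and matches the paper's: both use that $U_0$ is unitary so $\dim\cD_{U_0}=\dim\cR_{U_0}$, and in finite dimensions this forces the two orthogonal complements in $\cD_{T_1}\oplus\cD_{T_2}$ to vanish simultaneously.

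For part~(2) you give a genuinely different example. The paper takes $(T_1,T_2)=\left(\sbm{0&0\\I&0},\sbm{T_z&0\\0&T_z}\right)$ on $H^2\oplus H^2$, where $T_2$ is an isometry so $\cD_{T_2}=\{0\}$ and the codomain is one-dimensional; the asymmetry comes from the nilpotent $T_1$. Your example $(S^*,\alpha I)$ is arguably simpler since commutativity is automatic with a scalar factor, and your verification is correct (the map for $T_2\cdot T_1$ is in fact literally onto: for $(f,ce_0)$ take $h=ce_0+(1-\alpha^2)^{-1/2}Sf$). One minor point worth making explicit in your write-up is that surjectivity follows from density because the map is an isometry and hence has closed range.

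For part~(3) you give a direct elementary proof, while the paper simply invokes a result from Sz.-Nagy--Foias (the factorization $T_1\cdot T_2$ is regular whenever $T_1$ or $T_2^*$ is isometric, equivalently $\cD_{T_1}\cap\cD_{T_2^*}=\{0\}$) and refers to \cite{sauAndo} for a direct argument. Your proof is essentially that direct argument: the key tricks $h=V_2k$ (to hit the first summand via $V_2^*V_2=I$ and $D_{V_2^*}V_2=0$) and $h\in\ker V_2^*$ (to hit the second) are exactly what is needed, and the closed-range observation completes it cleanly.
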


\begin{proof}[\sf{Proof of (1):}]
Let us write $\Lambda_r$ for the counterpart of $\Lambda$ when the roles of $T_1$ and $T_2$ are reversed:
$$
\Lambda_r \colon D_T h \mapsto D_{T_1} T_2 h \oplus D_{T_2} h.
$$
Then as a consequence of the second identity in \eqref{id1} we see that
$\Lambda_r$ is an isometry from $\cD_T$ onto $\cR(U_0)$.  Moreover, if $T_1 \cdot T_2$ is a regular factorization, then
\begin{align*}
 \dim \cD_T & =  \dim \cD(U_0) \text{ (since $\Lambda$ is an isometry from $\cD_T$ onto $\cD_{U_0}$)} \\
 & = \dim (\cD_{T_1} \oplus \cD_{T_2}) \text{ (by regularity of $T_1 \cdot T_2$)} \\
 & \ge \dim \cR_{U_0} \text{ (since $\cR_{U_0} \subset \cD_{T_1} \oplus \cD_{T_2}$) }\\
 & = \dim \cD_T \text{ (since $\Lambda_r$ is an isometry from $\cD_T$ onto $\cR_{U_0}$).}
  \end{align*}
 Thus we see that necessarily the inequality in line 3 must be equality.  If we are in the finite-dimensional setting
 ($\cD_T$, $\cD_{T_1}$, $\cD_{T_2}$ all finite-dimensional), we necessarily then have
 $\cR_{U_0} = \cD_{T_1} \oplus \cD_{T_2}$ which is the statement that the factorization $T_2 \cdot T_1$
 is also regular.

 \smallskip

 \noindent
 {\sf Proof of (2):}
Let $(T_1, T_2)$ be the following pair of contractions on
$\cH: = H^2 \oplus H^2$:
$$
  (T_1, T_2) =\left( \begin{bmatrix} 0 & 0 \\ I & 0 \end{bmatrix}, \begin{bmatrix}T_z & 0 \\ 0 & T_z \end{bmatrix} \right)
$$
where $T_z$ is the Toeplitz operator $f(z) \mapsto z f(z)$ on $H^2$.  Note that
$$
 T_1 T_2 = \begin{bmatrix} 0 & 0 \\ T_z & 0 \end{bmatrix} = T_2 T_1 =: T.
$$
Then
\begin{align*}
& D_T^2 = \begin{bmatrix} I_{H^2} & 0 \\ 0 & I_{H^2} \end{bmatrix}  -
\begin{bmatrix} 0 & T_z^* \\ 0 & 0 \end{bmatrix} \begin{bmatrix} 0 & 0 \\ T_z & 0 \end{bmatrix}
 = \begin{bmatrix}  0 & 0 \\ 0 & I_{H^2} \end{bmatrix} = D_T, \quad \cD_T = \begin{bmatrix} 0 \\ H^2 \end{bmatrix} \\
& D_{T_1}^2 = \begin{bmatrix} I & 0 \\ 0 & I \end{bmatrix} - \begin{bmatrix} 0 & I \\ 0 & 0 \end{bmatrix}
\begin{bmatrix} 0 & 0 \\ I & 0 \end{bmatrix} = \begin{bmatrix} 0 & 0 \\ 0 & I \end{bmatrix} = D_{T_1}, \quad
\cD_{T_1} = \begin{bmatrix}  0 \\ H^2 \end{bmatrix}, \\
& D_{T_2}^2 = \begin{bmatrix} I & 0 \\ 0 & I \end{bmatrix} - \begin{bmatrix} T_z^* & 0 \\ 0 & T_z^* \end{bmatrix}
\begin{bmatrix} T_z & 0 \\ 0 & T_z \end{bmatrix} = \begin{bmatrix} 0 & 0 \\ 0 & 0 \end{bmatrix}, \quad
\cD_{T_2} = \{0 \},
\end{align*}
and  $\cD_{T_1} \oplus \cD_{T_2} = \sbm{ 0 \\ H^2}$.

We let $\Lambda$ be the map associated with the factorization $T_1 \cdot T_2$ given by \eqref{defLambda}
while $\Lambda_r$ is the same map associated with the factorization
$T_2 \cdot T_1$ (i.e., \eqref{defLambda} but with the indices and then the components interchanged).
For $h = \sbm{h_1 \\ h_2 } \in \cH = \sbm{ H^2 \\ H^2}$
we compute
$$
\Lambda \colon D_T h = \sbm{ 0 \\ h_2 } \mapsto
D_{T_1} T_2 h \oplus D_{T_2} h = \sbm{ 0 \\ T_z h}
$$
and we conclude that
$$
  \operatorname{Ran} \Lambda = \sbm{ 0 \\ zH^2} \underset{\ne}\subset \sbm{ 0 \\ H^2 } = \cD_{T_1} \oplus
  \cD_{T_2}
$$
implying that $T_1 \cdot T_2$ is not a regular factorization.  On the other hand,
$$
\Lambda_r \colon D_T h = \sbm{ 0 \\ h_2 } \mapsto
D_{T_1} h \oplus D_{T_2} T_1 h = \sbm{ 0 \\ h_2} \oplus 0
$$
from which we see that
$$
\operatorname{Ran} \Lambda_r  =  \sbm{ 0 \\ H^2} = \cD_{T_1} \oplus \cD_{T_2}
$$
implying that the factorization $T_2 \cdot T_1$ is regular.

{\sf Proof of (3):}   This is an immediate corollary  of Proposition VII.3.2 (b) from \cite{Nagy-Foias} which asserts:  for a contractive pair $(T_1, T_2)$ on a Hilbert space 
$\cH$, {\em the factorization $T = T_1 T_2$
is regular whenever $T_1$ or $T_2^*$ is isometric.}  However this fact in turn can be seen as an immediate corollary of the 
following alternative characterization of regular factorization discovered somewhat later  (see \cite{SzNF-RegularFactor}):
{\em  $T_1 \cdot T_2$ is a regular factorization if and only if $\cD_{T_1} \cap \cD_{T_2^*}
= \{0\}$.} Let us note that this criterion also comes up in the characterization of triviality of overlapping spaces in the
de Branges-Rovnyak model theory (see \cite{Ball-Memoir}).
For a direct proof of item (3) in Theorem \ref{T:regfact} , see \cite[Lemma 27]{sauAndo}.
\end{proof}

\begin{remark}  \label{R:ArovGrossman}
The general issue arising here is the following:  {\em given subspaces  $\cD_{U_0}$ and $\cR_{U_0}$ of some Hilbert space
$\cH_0$ and a unitary map $U_0\colon \cD_{U_0} \to \cR_{U_0}$, find a unitary extensions $U \colon \cH \to \cH$ on a
possibly larger Hilbert space
$\cH \supset \cH_0$ so that $U|_{\cD_{U_0}} = U_0 \colon \cD_{U_0} \to \cR_{R_0}$ and $U$ is {\em minimal} in the sense that
the smallest reducing subspace for $U$ containing $\cH_0$ is all of $\cH$.}  This problem is the core of the
{\em lurking isometry technique} in interpolation theory (see e.g.\ \cite{Ball-Winnepeg}) but has a much earlier history as well (see e.g.\ \cite{ArovGrossman} 
for a thorough treatment).
\end{remark}

We note that the definitions of Type I And\^o tuples is existential:  we have not verified that one can solve the equations \eqref{AndoTuple1} and\eqref{AndoTuple2} for 
$\Lambda_*$, $P_*$, $U_*$, and at this stage we have not ruled out the
possibility that the set of Type I And\^o tuples is in fact empty.  On the other hand, the
preceding discussion shows in particular that it is always possible to construct special And\^o tuples (for $(T_1, T_2)$ or for $(T_1^*, T_2^*)$)
and in general, there are many such choices:   the possibilities are parametrized
by a choice of unitary extension $U$ for the partially defined isometry $U_0$.  The next result, therefore, has crucial significance
as it demonstrates that these constructions lead to a new proof of And\^o's theorem on the existence of And\^o  lifts for a given
commuting contractive operator pair.

\begin{theorem} \label{Thm:special}  Let $(T_1, T_2)$ be any commuting contractive operator-pair on $\cH$. Then
any special And\^o tuple for $(T_1^*, T_2^*)$ is also a Type I And\^o tuple for $(T_1^*, T_2^*)$.  In particular, these latter classes are not empty and And\^o  lifts of 
$(T_1, T_2)$ exist.
\end{theorem}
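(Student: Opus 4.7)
\textbf{Proof plan for Theorem \ref{Thm:special}.}
The plan is to reduce, by the coordinate-freeness of the Type I And\^o conditions under coincidence, to verifying \eqref{AndoTuple1}--\eqref{AndoTuple2} for a \emph{canonical} special And\^o tuple $(\cF_{\dag*}, \Lambda_{\dag*}, P_{\dag*}, U_{\dag*})$ for $(T_1^*, T_2^*)$. In that canonical form, $\Lambda_{\dag*}$ sends $D_{T^*} h$ to $D_{T_1^*} T_2^* h \oplus D_{T_2^*} h \in \cD_{T_1^*} \oplus \cD_{T_2^*} \subset \cF_{\dag*}$; on the subspace $\cD_{U_{0*}} := \overline{\operatorname{Ran}}\,\Lambda_{\dag*}$ the partially-defined unitary $U_{0*}$ acts as $D_{T_1^*} T_2^* h \oplus D_{T_2^*} h \mapsto D_{T_1^*} h \oplus D_{T_2^*} T_1^* h$; and $P_{0*}$ is the projection onto the $\cD_{T_1^*}$-summand. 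The chosen $U_{\dag*}$ and $P_{\dag*}$ are extensions of $U_{0*}$ and $P_{0*}$, but in the computations below I will only ever apply them to vectors lying in $\cD_{U_{0*}}$ or in $\cD_{T_1^*} \oplus \cD_{T_2^*}$, where $U_{\dag*}$ and $P_{\dag*}$ coincide with the canonical operators, so the freedom in the extensions never enters.

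To verify \eqref{AndoTuple1}, I plan to evaluate the two summands on the left directly. Applying $U_{0*}$ to $\Lambda_{\dag*} D_{T^*} h$ and then $P_{\dag*}^\perp$ gives $0 \oplus D_{T_2^*} T_1^* h$; applying $U_{0*}$ to $\Lambda_{\dag*} D_{T^*} T^* h$ (noting that with $k = T^* h$ this vector still has the canonical form $D_{T_1^*} T_2^* k \oplus D_{T_2^*} k$ and so lies in $\cD_{U_{0*}}$) and then $P_{\dag*}$ gives $D_{T_1^*} T^* h \oplus 0$. Summing and invoking the commutativity $T^* = T_1^* T_2^* = T_2^* T_1^*$ yields $D_{T_1^*} T_1^* T_2^* h \oplus D_{T_2^*} T_1^* h$, which is precisely $\Lambda_{\dag*} D_{T^*} T_1^* h$.

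For \eqref{AndoTuple2}, the mild obstacle is that $U_{\dag*}^*$ would need to be evaluated on vectors not obviously lying in $\cR_{U_{0*}}$, where its action is ambiguous. The workaround I plan to use is to apply the unitary $U_{\dag*}$ to both sides (injectivity of $U_{\dag*}$ then gives equivalence) and verify instead
$$
P_{\dag*} \Lambda_{\dag*} D_{T^*} h + P_{\dag*}^\perp \Lambda_{\dag*} D_{T^*} T^* h \;=\; U_{\dag*} \Lambda_{\dag*} D_{T^*} T_2^* h.
$$
The left-hand side evaluates directly to $D_{T_1^*} T_2^* h \oplus D_{T_2^*} T_1^* T_2^* h$; for the right-hand side, $\Lambda_{\dag*} D_{T^*} T_2^* h$ again has the canonical form of a vector in $\cD_{U_{0*}}$ with $k = T_2^* h$, so $U_{\dag*}$ acts on it as $U_{0*}$, yielding $D_{T_1^*} T_2^* h \oplus D_{T_2^*} T_1^* T_2^* h$. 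The two sides match.

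Having shown every canonical special And\^o tuple satisfies the Type I conditions, the same conclusion extends to every special And\^o tuple by coincidence. Since canonical special And\^o tuples always exist (Remark \ref{R:specATuple} and Corollary \ref{C:UniqueMinAndoTuple}(1)), the class of Type I And\^o tuples for $(T_1^*, T_2^*)$ is non-empty, and Theorem \ref{T:Dmodel} then produces the Douglas-model minimal And\^o lift \eqref{DougAndoMod} of $(T_1, T_2)$, which recovers And\^o's theorem. The main obstacle throughout is not conceptual but purely bookkeeping: tracking which vectors lie in the canonical domain $\cD_{U_{0*}}$ (so that extension ambiguities do not intrude) and correctly deploying the commutativity of $T_1^*$ and $T_2^*$ to match sides.
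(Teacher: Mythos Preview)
Your proposal is correct and follows essentially the same route as the paper's proof: reduce by coordinate-freeness to the canonical form, then verify \eqref{AndoTuple1}--\eqref{AndoTuple2} by direct computation using the defining action of $U_{0*}$ and $P_{0*}$ on vectors of the form $\Lambda_{\dag*} D_{T^*} h$. The only minor difference is that for \eqref{AndoTuple2} the paper simply declares the verification ``completely analogous'' (which it is, once one groups the two terms on the left as $U_{\dag*}^*\bigl(P_{\dag*}\Lambda_{\dag*}D_{T^*}h + P_{\dag*}^\perp\Lambda_{\dag*}D_{T^*}T^*h\bigr)$ and observes the argument lies in $\cR_{U_{0*}}$ with $k=T_2^*h$), whereas you sidestep the domain question by applying $U_{\dag*}$ to both sides---an equivalent and arguably cleaner bookkeeping device.
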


\begin{proof}
As the notion of Type I And\^o tuple for $(T_1^*, T_2^*)$ is coordinate-free, we may without loss of generality suppose
that the special And\^o tuple $(\cF_{\dag *}, \Lambda_{\dag *}, P_{\dag *}, U_{\dag *})$
for $(T^*_1, T_2^*)$ is given in canonical form.
Clearly this is a pre-And\^o tuple for $(T_1^*, T_2^*)$.
To show that this collection is a Type I And\^o tuple, we need only verify conditions \eqref{AndoTuple1} and \eqref{AndoTuple2}. We deal in detail only 
with \eqref{AndoTuple1} as verification of \eqref{AndoTuple2} is completely analogous. Making use of the defining properties of a canonical special And\^o tuple gives, 
for each $h \in\cH$,
\begin{align*}
&P_{\dag*}^\perp U_{\dag*}\Lambda_{\dag*} D_{T^*}h+P_{\dag*}U_{\dag*}\Lambda_{\dag*} D_{T^*}T^*h\\
&=P_{\dag*}^\perp U_{\dag*}(D_{T_1^*}T_2^*h\oplus D_{T_2^*}h)+P_{\dag*}U_{\dag*}(D_{T_1^*}T_2^*T^*h\oplus D_{T_2^*}T^*h)\\
&=P_{\dag*}^\perp (D_{T_1^*}h\oplus D_{T_2^*}T_1^*h)+P_{\dag*}(D_{T_1^*}T^*h\oplus D_{T_2^*}T_1^*T^*h)\\
&=(0\oplus D_{T_2^*}T_1^*h)+(D_{T_1^*}T_2^*T_1^*h\oplus 0)=\Lambda_{\dag*} D_{T^*}T_1^*h.
\end{align*}
As $h \in \cH$ is arbitrary, this verifies  \eqref{AndoTuple1} as wanted.

As we have seen in the extended Remark \ref{R:specATuple}, there always exist special And\^o tuples for any commuting contractive pair $(T_1^*, T_2^*)$. 
As the class of special And\^o tuples for $(T_1^*, T_2^*)$ forms a subclass of the class of  Type I And\^o tuples by the first part of the theorem, it follows that these 
latter classes are all non-empty. Then the constructions in Theorem \ref{T:Dmodel} based on a special And\^o tuple as a starting point leads to an explicit And\^o lift for 
$(T_1, T_2)$.
\end{proof}

\begin{remark} \label{R:reasonsD}  The reader may wonder why we use a BCL2 model for the And\^o lift of a given contractive pair $(T_1, T_2)$ rather than a BCL1 
model in our Douglas model for an And\^o lift.  Had we used a BCL1 model instead, we would have arrived at a notion of what we here call a Type I$'$ 
And\^o tuple $(\cF', \Lambda', P', Y')$ arising as follows. The form of the model \eqref{DougAndoMod} would have the adjusted form
\begin{align}
 & \cK = \sbm{ H^2(\cF') \\ \cQ_{T^*} }, \notag  \\
 & (V_1, V_2) =  \left( \sbm{ M_{(P'^\perp + z P')U'} & 0 \\ 0 & W_{\flat 1}},
  \sbm{ M_{U'^*(P' + z P'^\perp) U_*} & 0 \\ 0 & W_{\flat 2} } \right) \text{ acting on } \cK,  \notag \\
 &   \Pi = \sbm{ (I_{H^2} \otimes \Lambda')  \cO_{D_{T^*}, T^*} \\ Q_{T^*} } \colon \cH \to \cK.
 \label{RigidDiagMod'}
 \end{align}
 and the operator equations characterizing when such a collection of operators and spaces would actually yield an And\^o lift of $(T_1, T_2)$, i.e., 
 the analogue of \eqref{AndoTuple1} and \eqref{AndoTuple2} would be
 \begin{align}
 & U'^* P' \Lambda' D_{T^*} T^* + U'^* P'^\perp  \Lambda' D_{T^*} = \Lambda' D_{T^*} T_1^*  \label{AndoTuple1'} \\
 &  P'^\perp U' \Lambda' D_{T^*} T^* + P' U' \Lambda' D_{T^*} = \Lambda' D_{T^*} T_2^*. \label{AndoTuple2'}
 \end{align}
 These equations are obtained by replacing the part of the data $(P', U')$ with its flipped version (see \eqref{flip})
 $$
  (P'^\ff, U'^\ff)  =  (U'^* P' U',  U'^*)
 $$
 and then plugging this transformed data set into equations \eqref{AndoTuple1} and \eqref{AndoTuple2}.
 The drawback of this approach is that then the analogue of Theorem \ref{Thm:special} fails, i.e., it need not be the case
 that a special And\^o tuple is a Type I$'$ And\^o tuple.  An explicit example is given  in the Appendix (Section \ref{S:Appendix}) for the interested reader.
 \end{remark}
 
\section{The Sz.-Nagy--Foias model for an And\^o isometric lift}   \label{S:NFmodel2}
\index{And\^o lift!Sz.-Nagy--Foias-type functional model}
In this section, we convert the preceding analysis to a functional-model form to give a functional model for And\^o
lifts.

Let $\omega_D$ and $\omega_{\rm NF}$ be the unitaries as defined in (\ref{omegaD}) and (\ref{omegaNF}), respectively. We observed in 
part (ii) of Remark \ref{R:DougSchaf-vs-NF} that the unitary 
$$
\omega_{\text{NF,D}}:= \omega_{\rm NF}\omega_{\rm D}^*:\cQ_{T^*}\to\overline{\Delta_{\Theta_T} L^2(\cD_{\Theta_T})}
$$
intertwines $W_D$ with $M^{\cD_T}_\zeta |_{\overline{\Delta_{\Theta_T} L^2(\cD_T)}}$. 
Let us adopt the  notation
\index{$W_{\sharp 1},W_{\sharp 2}$}
\begin{align}\label{WandVNFs}
\notag&(W_{\sharp1},W_{\sharp 2},M^{\cD_T}_\zeta |_{\overline{\Delta_{\Theta_T}L^2(\cD_T)}})\\
&:=(\omega_{\text{NF,D}}W_{\flat1}\omega_{\text{NF,D}}^*,
\omega_{\text{NF,D}}W_{\flat2}\omega_{\text{NF,D}}^*, \omega_{\text{NF,D}}W_D\omega_{\text{NF,D}}^*),
\end{align}
where $(W_{\flat1},W_{\flat2})$ is the canonical pair of commuting unitaries for $(T_1,T_2)$ as in \eqref{S:flats}. Let $(\cF_{*},\Lambda_{*},P_{*},U_{*})$ be a Type I And\^o tuple for 
$(T_1^*,T_2^*)$ and $(\bPi_D, \bV_{D,1}, \bV_{D,2})$ be the  Douglas-model And\^o  lift of $(T_1,T_2)$ corresponding to
$(\cF_{*},\Lambda_{*},P_{*},U_{*})$
of $(T_1^*, T_2^*)$. By Theorem \ref{T:Dmodel} 
such a lift exists and is given as  $(\bPi_D, \bV_{D,1} \bV_{D,2})$ on $\bcK_D:= \sbm{ H^2(\cF_*) \\ \cQ_{T^*} }$ in \eqref{DougAndoMod}. Consider the unitary operator
\begin{align}\label{DNFintwin}
\bU_{\rm NF,D}:=  \begin{bmatrix} I_{H^2(\cF_*)} & 0 \\ 0 &  \omega_{\text{NF,D}}   \end{bmatrix} \colon
\begin{bmatrix}  H^2(\cF_{*}) \\ \cQ_{T^*}\end{bmatrix} \to \begin{bmatrix} H^2(\cF_{*}) \\ \overline{\Delta_{\Theta_T}L^2(\cD_T)} \end{bmatrix}
\end{align}
and define
\begin{equation}   \label{NF-AndoGenForm1}
(\bPi_{\rm NF}, \bV_{{\rm NF},1}, \bV_{{\rm NF},2}):=(\bU_{\rm NF,D} \bPi_D, \bU_{\rm NF,D}  \bV_{D,1} \bU_{\rm NF,D}^*,  \bU_{\rm NF,D} \bV_{D_2} \bU_{\rm NF,D}^*).
\end{equation}
Then we note that
\begin{align} \label{NF-AndoGenForm}\index{$\bV_{{\rm NF},1}, \bV_{{\rm NF},2}$}
&  (\bV_{{\rm NF},1}, \bV_{{\rm NF},2},  \bV_{{\rm NF},1} \bV_{{\rm NF},2}) \nonumber\\
=&  \left(  \begin{bmatrix} M_{U_{*}^* (P_{*}^\perp + z P_{*})} & 0 \\ 0 &  W_{\sharp1} \end{bmatrix},
\begin{bmatrix} M_{(P_{*} + z P^\perp_{*}) U_{*}} & 0 \\ 0 &  W_{\sharp2} \end{bmatrix},
\begin{bmatrix} M_z^{\cF_*} & 0 \\ 0 &  M_{\zeta}|_{\overline{\Delta_{\Theta_T} L^2(\cD_T)}} \end{bmatrix} \right)
\end{align}
and that the isometry 
\index{$\bPi_{\rm NF}$}
\index{$\bcK_{\rm{NF}}$}
$\bPi_{\rm NF}:\cH \to  \bcK_{\rm{NF}}:= \sbm{ H^2(\cF_{*}) \\    \overline{\Delta_{\Theta_T}L^2(\cD_T)}}$ is given by
\begin{equation}      \label{NF-iso}
 \bPi_{\rm NF}   = \bU_{\rm NF,D}  \bPi_D   = \begin{bmatrix} \Lambda_*  \cO_{D_{T^*}, T^*}  \\ \omega_{\text{NF,D}} Q_{T^*} \end{bmatrix}    
= \begin{bmatrix} I_{H^2} \otimes\Lambda_* & 0 \\ 0 &  I_{\overline{\Delta_{\Theta_T} L^2(\cD_T)}} \end{bmatrix} \Pi_{\rm NF} h
\end{equation}
where $\Pi_{\rm NF}$ is the embedding of $\cH$ into $\sbm{ H^2(\cD_{T^*}) \\ \overline{\Delta_{\Theta_T} L^2(\cD_T)} }$
given by \eqref{Unf&Pinf}(also by \eqref{PiNF}).
Consequently, by Theorem \ref{T:Dmodel} we have proved the following theorem, which gives a Sz.-Nagy--Foias functional model for an And\^o  
lift of commuting contractive operator-pair constructed canonically from a Type I And\^o tuple $(\cF_*, \Lambda_*, P_*,U_*)$ for $(T_1^*, T_2^*)$.

\begin{theorem}\label{Thm:NFmodel}
Let $(T_1,T_2)$ be a commuting pair of contractions on a Hilbert space $\cH$ and $(\cF_*,\Lambda_*, P_*,U_*)$ be a Douglas-minimal Type I And\^o tuple for 
$(T_1^*,T_2^*)$. Define a pair $(\bV_{{\rm NF},1},\bV_{{\rm NF},2})$ of commuting isometries on $\bcK_{\rm NF} := \sbm{ H^2(\cF_{*}) \\\overline{\Delta_{\Theta_T} L^2(\cD_T)}}$ 
as in (\ref{NF-AndoGenForm}) and an embedding
$\bPi_{\rm NF} \colon \cH \to \bcK_{\rm NF}$ as in \eqref{NF-iso}. Then $(\bPi_{\rm NF}, \bV_{{\rm NF},1},$ $\bV_{\rm{NF}, 2})$ is a minimal And\^o lift of $(T_1,T_2)$.

Conversely, any minimal And\^o  lift $(\bPi, \bV_1, \bV_2)$ of $(T_1, T_2)$ is unitarily equivalent (as an isometric lift) to an And\^o lift of the form (\ref{NF-AndoGenForm}) coming from 
a Douglas-minimal Type I And\^o tuple for $(T_1^*,T_2^*)$.
\end{theorem}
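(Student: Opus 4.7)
The plan is to deduce this theorem directly from Theorem \ref{T:Dmodel} (the Douglas-model version) by transporting everything through the unitary operator $\bU_{\rm NF,D}$ defined in \eqref{DNFintwin}. The key observation is that $\bU_{\rm NF,D}$ acts as the identity on the first (functional Hardy-space) component and as $\omega_{\rm NF,D}$ on the second (residual) component, and that by construction $\omega_{\rm NF,D}$ is a unitary from $\cQ_{T^*}$ onto $\overline{\Delta_{\Theta_T} L^2(\cD_T)}$ which intertwines $W_D$ with $M_\zeta|_{\overline{\Delta_{\Theta_T} L^2(\cD_T)}}$ and, by the very definition \eqref{WandVNFs}, intertwines $(W_{\flat 1}, W_{\flat 2})$ with $(W_{\sharp 1}, W_{\sharp 2})$.

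For the forward direction, start with the Douglas-model And\^o lift $(\bPi_D, \bV_{D,1}, \bV_{D,2})$ on $\bcK_D = \sbm{H^2(\cF_*) \\ \cQ_{T^*}}$ attached to the given Douglas-minimal Type I And\^o tuple $(\cF_*, \Lambda_*, P_*, U_*)$ for $(T_1^*, T_2^*)$; by Theorem \ref{T:Dmodel} this is a minimal And\^o lift of $(T_1, T_2)$. Then conjugate by $\bU_{\rm NF,D}$: since this operator is block-diagonal with identity in the first component, it commutes with each of the block-diagonal multiplication operators $M_{U_*^*(P_*^\perp + zP_*)}$ and $M_{(P_* + zP_*^\perp)U_*}$ on $H^2(\cF_*)$, while conjugation of the second-component unitaries $W_{\flat 1}, W_{\flat 2}$ produces $W_{\sharp 1}, W_{\sharp 2}$ by \eqref{WandVNFs}. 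This yields exactly \eqref{NF-AndoGenForm}. Similarly $\bPi_{\rm NF} = \bU_{\rm NF,D} \bPi_D$ gives \eqref{NF-iso} after rewriting $\omega_{\rm NF,D} Q_{T^*}$ via the formula \eqref{PiNF} for the Sz.-Nagy--Foias embedding of a single contraction. Since $\bU_{\rm NF,D}$ is unitary, the resulting lift is unitarily equivalent to $(\bPi_D, \bV_{D,1}, \bV_{D,2})$, and hence inherits both the lifting property and minimality.

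For the converse, given any minimal And\^o lift $(\bPi, \bV_1, \bV_2)$ of $(T_1, T_2)$, apply the converse half of Theorem \ref{T:Dmodel} to obtain a Douglas-minimal Type I And\^o tuple $(\cF_*, \Lambda_*, P_*, U_*)$ for $(T_1^*, T_2^*)$ and a unitary $\tau_D$ identifying $(\bPi, \bV_1, \bV_2)$ with the Douglas-model lift $(\bPi_D, \bV_{D,1}, \bV_{D,2})$ on $\bcK_D$ associated with this tuple. Compose $\tau_D$ with $\bU_{\rm NF,D}$ to get a unitary $\bcK \to \bcK_{\rm NF}$ implementing unitary equivalence of $(\bPi, \bV_1, \bV_2)$ with the Sz.-Nagy--Foias-model lift \eqref{NF-AndoGenForm}--\eqref{NF-iso} associated with the same And\^o tuple.

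The routine obstacle is simply the bookkeeping to check that the multiplication-operator blocks in $\bV_{D,j}$ are unchanged under conjugation by $\bU_{\rm NF,D}$ (they act on $H^2(\cF_*)$ which is untouched) and that the embedding formula collapses correctly to \eqref{NF-iso}; the genuinely substantive content (existence, minimality, intertwining of the canonical unitaries with $T_j^*$) has already been placed in Theorem \ref{T:flats}, Theorem \ref{T:Dmodel}, and the identification $\omega_{\rm NF,D} = \omega_{\rm NF}\omega_D^*$ from part (ii) of Remark \ref{R:DougSchaf-vs-NF}, so no new obstacle is expected.
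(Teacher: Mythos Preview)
Your proposal is correct and matches the paper's approach exactly: the paper derives this theorem immediately from Theorem \ref{T:Dmodel} by conjugating with the block-diagonal unitary $\bU_{\rm NF,D}$ of \eqref{DNFintwin}, noting that the Hardy-space blocks are untouched while the residual blocks transform via \eqref{WandVNFs}, and that $\bPi_{\rm NF}=\bU_{\rm NF,D}\bPi_D$ reduces to \eqref{NF-iso}. The paper in fact presents no separate proof beyond the sentence ``Consequently, by Theorem \ref{T:Dmodel} we have proved the following theorem,'' so your write-up is if anything more detailed than what appears there.
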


\section[Sch\"affer model]{Type II And\^o tuples and Sch\"affer models for an And\^o isometric lift}
\label{S:S2}
\index{And\^o lift!Sch\"affer-type model}

Let $(T_1,T_2)$ on $\cH$ be a pair of commuting contractions on the Hilbert space $\cH$ and let
$(\Pi, V_1,V_2)$ with $\Pi \colon \cH \to \cK$ and $V_1, V_2$ on $\cK$ be an And\^o  lift
of $(T_1,T_2)$.  Up to unitary equivalence we may arrange that $\cH \subset \cK$ and $\Pi \colon \cH \to \cK$ is
the inclusion map.  Hence with respect to the decomposition $\cK=\cH\oplus(\cK\ominus\cH)$ we have
$$
(V_1,V_2)=\left(\begin{bmatrix}   T_1 & 0 \\ C_1 & D_1\end{bmatrix},
\begin{bmatrix}  T_2 & 0 \\  C_2 & D_2 \end{bmatrix}  \right)
$$
for some operators
\begin{equation}  \label{CjDj}
 C_j \colon \cH \to  \cK \ominus \cH, \quad D_j \colon  \cK \ominus \cH \to \cK \ominus \cH
\end{equation}
for $j = 1,2$.  As $V_1$ and $V_2$ commute,  we must also have that
$$
\begin{bmatrix} T_1 & 0 \\ C_1 & D_1 \end{bmatrix} \begin{bmatrix} T_2 & 0 \\ C_2 &  D_2 \end{bmatrix}
 = \begin{bmatrix} T_2 & 0 \\ C_2 & D_2 \end{bmatrix} \begin{bmatrix} T_1 & 0 \\ C_1 & D_1 \end{bmatrix}
$$
leading to the matrix identity
\begin{equation}  \label{V1V2=V2V1}
V := V_1 V_2 = \begin{bmatrix} T_1 T_2 & 0 \\ C_1 T_2 + D_1 C_2 & D_1 D_2 \end{bmatrix}  =
  \ \begin{bmatrix} T_2 T_1 & 0 \\ C_2 T_1 + D_2 C_1 & D_2 D_1 \end{bmatrix} = V_2 V_1
\end{equation}
which gives us the operator identities (in addition to the assumed commutativity of $(T_1, T_2)$)
\begin{equation}   \label{identities1}
C_1 T_2 + D_1 C_2 = C_2 T_1 + D_2 C_1, \quad D_1 D_2 = D_2 D_1.
\end{equation}
From the fact that each $V_j$ ($j=1,2$) is an isometry, we have
$$
\begin{bmatrix} T_j^* & C_j^* \\ 0 & D_j^* \end{bmatrix} \begin{bmatrix} T_j & 0 \\ C_j & D_j \end{bmatrix} = \begin{bmatrix} I_\cH & 0 \\ 0 & I_{\cK \ominus \cH} \end{bmatrix} 
$$
giving us the identities
\begin{equation}   \label{identities2}
C_j^* C_j = I - T_j^* T_j,   \quad C_j^* D_j = 0, \quad D_j^* D_j = I_{\cK \ominus \cH} \text{ for } j=1,2.
\end{equation}

In particular, from the last identity in \eqref{identities1} and in \eqref{identities2} we see that the pair $(D_1, D_2)$ is a commuting isometric pair on
$\cK \ominus \cH$.
 In constructing the Sch\"affer-type model for And\^o isometric lift, for reasons to become clear later, we choose to work with
 a BCL1 model \eqref{BCL1model} for the commuting isometric pair $(D_1, D_2)$ rather than with a BCL2 model \eqref{BCL2model} for $(V_1, V_2)$ 
 as we did for the Douglas model.
Hence by Theorem \ref{Thm:BCLmodel} there exist a Hilbert space $\cF$,
a projection $P$ and unitary $U$ acting on $\cF$ and a pair $(Y_1,Y_2)$ of commuting unitaries acting on some Hilbert space
$\cY$ and a unitary identification map $\tau_{\rm BCL} \colon \cK \ominus \cH \to \bcK_S:=  \sbm{ H^2(\cF) \\ \cY}$ so that we have
$$
\tau_{\rm BCL}  \, (D_1,D_2)=  \left( \begin{bmatrix} M_{(P^\perp + z P)U} & 0 \\ 0 &  Y_1 \end{bmatrix},
\begin{bmatrix} M_{U^* (P + z P^\perp) } & 0 \\ 0 &  Y_2 \end{bmatrix} \right)
 \, \tau_{\rm BCL}
$$
and with the product operator $D = V_1 V_2$ satisfying
$$
 \tau_{\rm BCL}  D := \begin{bmatrix} M_z & 0 \\ 0 & Y \end{bmatrix} \text{ where } Y = Y_1 Y_2 = Y_2 Y_1.
$$
For $j=1,2$ let us define operators $\sbm{ C_{s1} \\ C_{u2}}, \sbm{ C_{s2} \\ C_{u2} }, \sbm{ C_s \\ C_u}$  from $\cH$ to $\sbm{ H^2(\cF) \\ \cY}$ by
$$
\begin{bmatrix} C_{s1} \\ C_{u1}\end{bmatrix} = \tau_{\rm BCL} C_1,
\quad  \begin{bmatrix} C_{s2} \\ C_{u2}\end{bmatrix} = \tau_{\rm BCL} C_2, \quad  \begin{bmatrix} C_s \\ C_u \end{bmatrix} = \tau_{\rm BCL} C
$$
where the operators $C_1$ and $C_2$ are as in \eqref{CjDj} and where $C$ is given by
$$
C = [V_1V_2]_{21} = C_1T_2 + D_1C_2 \text{ or } C = [V_2V_1]_{21} = C_2T_1 + D_2 C_1 \text{ as in \eqref{V1V2=V2V1}.}
$$
where the subscripts $s$ and $u$ indicate the {\em shift} component $H^2(\cF)$ and the {\em unitary}  component $\cY$ respectively.
Let us denote $V=V_1V_2$ and $Y=Y_1Y_2$. Then we have
$$
\begin{bmatrix} I_\cH & 0 \\ 0 & \tau_{\rm BCL} \end{bmatrix} (V_1, V_2, V) = (\bV_{S,1}, \bV_{S,2}, \bV_S) \begin{bmatrix} I_\cH & 0 \\ 0 & \tau_{\rm BCL} \end{bmatrix}
$$
where
\begin{align}
&  (\bV_{S,1}, \bV_{S,2} , \bV_S)=   \notag \\
  & \left(\begin{bmatrix}  T_1 & 0  & 0 \\  C_{s1}  & M_{(P^\perp + z P)U} & 0 \\ C_{u1} & 0 & Y_1 \end{bmatrix},
\begin{bmatrix}   T_2 & 0 & 0 \\   C_{s2} & M_{U^*(P + z P^\perp)} & 0 \\  C_{u2} & 0 &  Y_2  \end{bmatrix},
 \begin{bmatrix} T & 0 & 0 \\  C_s  & M_z & 0 \\ C_u & 0 &  Y \end{bmatrix}\right).
    \label{liftS}
\end{align}
The identities \eqref{identities1} lead to the equalities
\begin{align}    
 \begin{bmatrix} C_s \\ C_u \end{bmatrix}  & = 
 \begin{bmatrix} C_{s1} \\ C_{u1} \end{bmatrix} T_2+
\begin{bmatrix} M_{P^\perp U  + z P U} & 0 \\ 0 & Y_1 \end{bmatrix} \begin{bmatrix} C_{s2}  \\ C_{u2}  \end{bmatrix}  \notag \\
&  =
\begin{bmatrix} C_{s2} \\ C_{u2} \end{bmatrix} T_1+ \begin{bmatrix} M_{U^* P + z U^* P^\perp} & 0 \\ 0 & Y_2 \end{bmatrix} 
\begin{bmatrix} C_{s1} \\ C_{u1} \end{bmatrix}.
\label{C}
\end{align}
Similarly, \eqref{identities2} leads to
\begin{align}
 &  I_\cH = T_1^*T_1+C_{s1}^*C_{s1} + C_{u1}^* C_{u1} =   T_2^*T_2+ C_{s2}^*C_{s2} + C_{u2}^* C_{u2}   = T^*T+C_{s}^*C_s + C_u^* C_u,   \notag \\
& 0 = C_{s1}^* M_{(P^\perp+ zP)U} = C_{s2}^* M_{U^*(P + z P^\perp)} = C_s^* M_z = 0   \notag \\
& C_{u1}^* Y_1 =  C_{u2}^* Y_2 = C_u^* Y = 0.
\label{Facts-Iso}
\end{align}
Multiply the first equation in \eqref{C} on the left by $\sbm{ M_{(P^\perp + z P)U} & 0 \\ 0 &  Y_1}^*$ to get
\begin{equation}   \label{C2}
\begin{bmatrix} C_{s2} \\ C_{u2} \end{bmatrix}  = \begin{bmatrix} M_{P^\perp U + z P U} & 0 \\ 0 & Y_1 \end{bmatrix}^* \begin{bmatrix} C_s \\ C_u \end{bmatrix}
- \begin{bmatrix} (M_{P^\perp U+ z P U}  & 0 \\ 0 & Y_1\end{bmatrix}^*  \begin{bmatrix} C_{s1}  \\ C_{u1} \end{bmatrix} T_2.
\end{equation}
However, by taking adjoints in the second and third lines of \eqref{Facts-Iso} we see in particular that
$$
    \left( \begin{bmatrix} M_{ P^\perp U  + z P U} & 0 \\ 0 &  Y_1 \end{bmatrix} \right)^*  \begin{bmatrix} C_{s1}  \\ C_{u1} \end{bmatrix}  = 0.
$$
Thus equation \eqref{C2} simplifies to
$$
  \begin{bmatrix} C_{s2} \\ C_{u2}  \end{bmatrix} = \left( \begin{bmatrix} M_{ P^\perp U  + z P U} & 0 \\ 0 & Y_1 \end{bmatrix} \right)^* \begin{bmatrix} C_s \\ C_u \end{bmatrix}.
$$
or more simply
\begin{equation}   \label{C2'}
C_{s2} = \left( M_{ P^\perp U  + z P U} \right)^* C_s, \quad C_{u2} = Y_1^* C_u.
\end{equation}
A similar analysis starting with the second equation in \eqref{C} leads to
$$  
\begin{bmatrix} C_{s1} \\ C_{u1}  \end{bmatrix} = \left( \begin{bmatrix} M_{U^* P + z U^* P^\perp} & 0 \\ 0 & Y_2 \end{bmatrix} \right)^* \begin{bmatrix} C_s \\ C_u \end{bmatrix}
$$
or more simply
\begin{equation}  \label{C1'}
C_{s1} = \left(M_{U^*(P + zP^\perp)}\right)^* C_s, \quad C_{u1} = Y_2^* C_u.
\end{equation}
From the second and third lines in \eqref{Facts-Iso} we see in particular that
\begin{equation}  \label{eq*}
 M_z^* C_s = 0, \quad Y^* C_u = 0.
\end{equation}
The first item in \eqref{eq*} forces $C_s$ to have the form
\begin{equation}  \label{Cs0}
  C_s = \bev_{0, \cF}^* C_{s0} \text{ where } C_{s0}\colon \cF \to \cF.
\end{equation}
Since $Y$ is unitary, the second item in \eqref{eq*} forces 
\begin{equation}  \label{Cu}
C_u = 0.
\end{equation}
From the second equation in \eqref{C1'} and \eqref{C2'} we get the further collapsing
\begin{equation}   \label{Cj''}
C_{u1} = 0, \quad C_{u2} = 0.
\end{equation}
From the first line in \eqref{Facts-Iso} we see in particular that $I_\cH - T^*T = C_s^* C_s + C_u^* C_u$.
Since we now have established that $C_u = 0$, this simplifies to
$$
   I_\cH - T^* T = C_s^* C_s = C_{s0}^* C_{s0}.
$$
This identity in turn tells us that we can factor $C_{s0}$ as
\begin{equation}  \label{Cs0fact}
  C_{s0} = \Lambda D_T
\end{equation}
where $\Lambda \colon \cD_T \to \cF$ is an isometry.

Finally, by combining the first equations in \eqref{C2'}, \eqref{C1'} together with \eqref{Cs0}, \eqref{Cs0fact} and \eqref{Cj''}, we see that
\begin{align}
& \begin{bmatrix} C_{s1} \\ C_{u1} \end{bmatrix} = \begin{bmatrix} (M_{U^*(P + z P^\perp) })^* \bev_{0, \cF}^* \Lambda D_T \\ 0 \end{bmatrix}
     = \begin{bmatrix} \bev_{0,\cF}^*   P U \Lambda D_T \\ 0 \end{bmatrix},   \notag \\
& \begin{bmatrix} C_{s2} \\ C_{u2} \end{bmatrix}  = \begin{bmatrix} (M_{(P^\perp + zP)U})^* \bev_{0, \cF}^* \Lambda D_T \\ 0 \end{bmatrix}
  = \begin{bmatrix} \bev_{0,\cF}^*  U^* P^\perp  \Lambda D_T \\ 0 \end{bmatrix},  \notag  \\
&  \begin{bmatrix} C_s \\ C_u \end{bmatrix}  = \begin{bmatrix}  \bev_{0,\cF}^* \Lambda D_T \\ 0   \end{bmatrix}.
 \label{FindalCs}
\end{align}

Let us summarize:  given a commuting contractive operator-pair $(T_1, T_2)$ on $\cH$ having an And\^o isometric
lift $(\Pi, V_1, V_2)$,
we have now come upon a collection $(\cF, \Lambda, U, P)$ where $\cF$ is another Hilbert space, $\Lambda \colon \cD_T \to
\cF$ is an isometry (where $T = T_1 T_2$), $P$ and $U$ are operators on $\cF$ with $P$ equal to a projection and $U$ equal to a unitary operator, respectively, 
i.e., in the terminology defined in Definition \ref{D:preAndoTuple}, the collection $(\cF, \Lambda, P, U)$ is a
{\em pre-And\^o tuple for $(T_1, T_2)$}.  All this leads to a Sch\"affer-type functional model for the And\^o lift as follows.

\begin{theorem} \label{T:summary}
Given any And\^o  lift $(\Pi, V_1,V_2)$ of a given commuting contractive pair $(T_1,T_2)$, there is a pre-And\^o tuple $(\cF, \Lambda, P, U)$
for $(T_1, T_2)$, another Hilbert space $\cY$ along with a commuting pair $(Y_1, Y_2)$ of unitary operators on $\cY$,
together with a unitary operator $\btau$ from $\cK$ onto the Sch\"affer model space $\bcK_S$ defined below, so that
$$
(\btau \Pi, \btau V_1 \btau^*, \btau V_2 \btau^*, \btau V \btau^* ) = ( \bPi_S, \bV_{S,1}, \bV_{S,2}, \bV_S)
$$
where 
\begin{align}\index{$\bcK_S$} \index{$\bPi_S$}\index{$\bV_{S,1}, \bV_{S,2}$}
&\bcK_S = \sbm{ \cH \\ H^2(\cF)  \\  \cY },  \quad \bPi_S = \sbm{ I_\cH \\ 0  \\ 0 } \colon \cH \to \bcK_S,  \notag \\
 & (\bV_{S,1}, \bV_{S,2}) =  
  \left( \sbm{T_1 & 0  & 0 \\  \bev_{0,\cF}^* P U \Lambda D_T & M_{P^\perp U  + z P U} & 0 \\ 0 & 0 & Y_1},
  \sbm{
   T_2 & 0 & 0 \\ \bev_{0,\cF}^* U^* P^\perp  \Lambda D_T & M_{U^* P + z U^* P^\perp } & 0  \\ 0 & 0 &Y_2 } \right).
    \label{AndoGenForm}
\end{align}
 Furthermore one can choose the isometry $\Lambda$ so that
 \begin{equation}   \label{canonicalV}\index{$\bV_S$}
 \bV_S:= \bV_{S,1} \bV_{S,2} = \bV_{S,2} \bV_{S,1} = \sbm{T & 0 & 0  \\   \bev_{0,\cF}^* \Lambda D_T & M^\cF_z  & 0  \\ 0 & 0 &  Y }.
 \end{equation}
 where we set $T : = T_1 T_2 = T_2 T_1$, $Y: = Y_1 Y_2 = Y_2 Y_1$.
\end{theorem}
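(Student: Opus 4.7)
The plan is to organize the chain of reductions already sketched in the excerpt into a coherent proof, and then verify the final product formula \eqref{canonicalV}. By the general observation in Section \ref{S:prelim}, I would first replace $(\Pi, V_1, V_2)$ by a unitarily equivalent And\^o lift in which $\cH \subset \cK$ and $\Pi$ is the inclusion map. Then each $V_j$ is block lower-triangular $\sbm{ T_j & 0 \\ C_j & D_j }$ with respect to $\cK = \cH \oplus (\cK \ominus \cH)$. The commutativity $V_1 V_2 = V_2 V_1$ together with the isometry relations for $V_1$ and $V_2$ yield \eqref{identities1}--\eqref{identities2}; in particular $(D_1, D_2)$ is a commuting isometric pair on $\cK \ominus \cH$.

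Next I would invoke Theorem \ref{Thm:BCLmodel}, this time in the BCL1 form (to match the $(PU, U^*P)$ presentation appearing in \eqref{AndoGenForm}), applied to $(D_1, D_2)$. This yields a coefficient space $\cF$, a projection $P$ and a unitary $U$ on $\cF$, a Hilbert space $\cY$ together with a commuting unitary pair $(Y_1, Y_2)$ on $\cY$, and a unitary identification $\tau_{\rm BCL} \colon \cK \ominus \cH \to \sbm{H^2(\cF) \\ \cY}$ putting $(D_1, D_2)$ in the model form. Setting $\btau = \mathrm{diag}(I_\cH, \tau_{\rm BCL})$ with respect to $\cK = \cH \oplus (\cK \ominus \cH)$, the conjugated lift has the shape \eqref{liftS}, and the off-diagonal maps $C_{s1}, C_{u1}, C_{s2}, C_{u2}, C_s, C_u$ are determined by the relations \eqref{C} and \eqref{Facts-Iso}.

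Then I would carry out the column-by-column simplifications. The identity $Y^* C_u = 0$ together with unitarity of $Y$ forces $C_u = 0$, after which \eqref{C2'}--\eqref{C1'} yield $C_{u1} = C_{u2} = 0$. Similarly $M_z^* C_s = 0$ forces $C_s = \bev_{0, \cF}^* C_{s0}$ for some $C_{s0} \colon \cH \to \cF$, and the defect identity $I_\cH - T^*T = C_s^* C_s + C_u^* C_u$ collapses to $C_{s0}^* C_{s0} = D_T^2$. Polar decomposition then produces a unique isometry $\Lambda \colon \cD_T \to \cF$ with $C_{s0} = \Lambda D_T$; this gives the pre-And\^o tuple $(\cF, \Lambda, P, U)$ for $(T_1, T_2)$. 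Substituting $C_s = \bev_{0,\cF}^* \Lambda D_T$ back into \eqref{C2'}--\eqref{C1'} and using $M_{(P^\perp + zP)U}^* \bev_{0,\cF}^* = \bev_{0,\cF}^* U^* P^\perp$ and $M_{U^*(P + zP^\perp)}^* \bev_{0,\cF}^* = \bev_{0,\cF}^* P U$ (both obtained by expanding the multiplier adjoints against the constant function) produces exactly the entries displayed in \eqref{AndoGenForm}.

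Finally, I would verify \eqref{canonicalV} by direct block multiplication. The diagonal entries are immediate: $(1,1) = T_1 T_2 = T$, $(3,3) = Y_1 Y_2 = Y$, and $(2,2) = M_{(P^\perp U + zPU)(U^*P + zU^*P^\perp)} = M_z^\cF$ since this is just the BCL1 product identity. The off-diagonal $(2,1)$ entry equals
\[
  \bev_{0,\cF}^* PU\Lambda D_T \, T_2 \;+\; M_{P^\perp U + zPU}\,\bev_{0,\cF}^* U^* P^\perp \Lambda D_T,
\]
and applying the multiplier to the constant function $U^* P^\perp \Lambda D_T h$ yields the constant $P^\perp \Lambda D_T h$, so this reduces to $\bev_{0,\cF}^*\bigl(PU\Lambda D_T T_2 + P^\perp \Lambda D_T\bigr)$; matching the claim $\bev_{0,\cF}^* \Lambda D_T$ amounts to the identity $P U \Lambda D_T T_2 = P \Lambda D_T$, which is precisely the constraint forced on $(\cF, \Lambda, P, U)$ by the commutativity relation \eqref{C} (the constant Fourier coefficient of $C_s = C_{s1} T_2 + D_1 C_{s2}$). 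All remaining off-diagonal blocks involving $\cY$ vanish because $C_{u1} = C_{u2} = C_u = 0$. The main technical step is the Douglas-type factorization $C_{s0} = \Lambda D_T$ with $\Lambda$ isometric; once that is in hand, everything else is a matching of coefficients already contained in the preceding derivation.
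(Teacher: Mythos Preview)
Your proposal is correct and follows essentially the same route as the paper: the paper's proof is precisely the discussion preceding the theorem statement (reduce to $\cH\subset\cK$, get the lower-triangular blocks, apply the BCL1 model to $(D_1,D_2)$, and then run the simplifications \eqref{C}--\eqref{Cs0fact} to identify $C_s=\bev_{0,\cF}^*\Lambda D_T$ and the corner entries $C_{s1},C_{s2}$). Your explicit block-multiplication check of \eqref{canonicalV} is a mild redundancy---in the paper's setup $C_s$ is by definition the $(2,1)$-entry of $\bV_S$, so \eqref{canonicalV} is immediate once $C_s=\bev_{0,\cF}^*\Lambda D_T$ and $C_u=0$ are established---but your computation is correct (the $z$-term in $M_{P^\perp U+zPU}\bev_{0,\cF}^*U^*P^\perp\Lambda D_T$ vanishes since $PU\cdot U^*P^\perp=PP^\perp=0$), and the identity $PU\Lambda D_T T_2=P\Lambda D_T$ you isolate is indeed the constant coefficient of the first relation in \eqref{C}.
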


We now investigate the converse direction, i.e., given a pair of commuting contractions $(T_1,T_2)$,  a pre-And\^o
tuple for $(T_1, T_2)$, and a Hilbert space $\cY$ equipped with a commuting pair of unitary operators $(Y_1, Y_2)$,
when is the pair $(\bV_{S,1}, \bV_{S,2})$ as defined
in (\ref{AndoGenForm}) (clearly a lift of $(T_1, T_2)$ due to the triangular form in \eqref{AndoGenForm}) a commuting pair of isometries? 
It turns out that the answer to this question is negative in general: the pre-And\^o tuple must
satisfy some additional conditions which we now explore.
  This discussion motivates the following definition.

\begin{definition}\label{AndoTuple}
\index{And\^o tuple!type II}
Let $(T_1,T_2)$ be a pair of commuting contractions on a Hilbert space $\cH$. A pre-And\^o tuple $(\cF,\Lambda,P,U)$ for $(T_1,T_2)$ is called a {\em Type II And\^o tuple   } if
\begin{enumerate}
 \item[(i)] \textbf{Commutativity:}
 $$PU\Lambda D_TT_2+P^\perp \Lambda D_T=U^*P^\perp\Lambda D_TT_1+U^*PU\Lambda D_T;$$
  \text{ and}
\item[(ii)] \textbf{Isometry:}
$$
D_T\Lambda^*U^*PU\Lambda D_T=D_{T_1}^2, \quad D_T\Lambda^* P^\perp\Lambda D_T=D_{T_2}^2.$$
\end{enumerate}

We say that the Type II And\^o tuple $(\cF, \Lambda, P, U)$ is  a {\em strong Type II And\^o tuple}
\index{And\^o tuple!strong type II}
if item (i) is true in the strengthened form
\begin{enumerate}
\item[(i$^\prime$)]
 $PU\Lambda D_TT_2+P^\perp \Lambda D_T=U^*P^\perp\Lambda D_TT_1+U^*PU\Lambda D_T = \Lambda D_T$.
\end{enumerate}

Finally let us say that the And\^o tuple $(\cF, \Lambda, P, U)$ is {\em Sch\"affer-minimal} if it is the case that
\begin{align*}
& \bigvee_{n_1, n_2 \ge 0}   \begin{bmatrix} T_1 & 0 \\ \bev_{0, \cF}^* P U \Lambda D_T & M_{ P^\perp U + z PU} \end{bmatrix}^{n_1} 
\begin{bmatrix} T_2 & 0 \\ \bev_{0, \cF}^* U^* P^\perp \Lambda D_T & M_{U^* P + z U^* P^\perp} \end{bmatrix}^{n_2} \begin{bmatrix}  \cH \\ 0 \end{bmatrix}  \\
& \quad = \begin{bmatrix} \cH \\ H^2(\cF) \end{bmatrix}.
\end{align*}
\end{definition}

Let us note that both notions, {\em Type II And\^o tuple} and {\em strong Type II And\^o tuple}, are {\em coordinate-free}
in the following sense:  {\em if $(\cF, \Lambda, P, U)$ is a  Type II (respectively strong Type II) And\^o tuple
for the commuting contractive pair $(T_1,T_2)$ and $(\cF', \Lambda', P', U')$  is a pre-And\^o tuple for $(T_1, T_2)$
which coincides with  $(\cF, \Lambda, P, U)$ in the sense of Definition \ref{D:preAndoTuple}, then
$(\cF', \Lambda', P', U')$ is also a Type II (respectively strong Type II) And\^o tuple for $(T_1, T_2)$}.
We shall see in Section \ref{S:TypeIIvsStrongTypeII} below that the class of strong Type II And\^o tuples is strictly smaller than that of Type II And\^o tuples.

Then we have the following result.

\begin{theorem}\label{Thm:LiftFromTuple}
Let $(T_1,T_2)$ be a commuting contractive operator-pair on a Hilbert space $\cH$ and $(\cF,\Lambda,P,U)$ be a
Type II And\^o tuple for $(T_1,T_2)$. Let $(Y_1,Y_2)$ be any pair of commuting unitaries on some Hilbert space $\cY$. Let $\bcK_S$ be the space and $\bV_{S,1},
\bV_{S,2}$ on $\bcK_S$ be the operators as in \eqref{AndoGenForm} above. 
%operators $V_1, V_2$ on $H^2(\cF) \oplus \cY$ and $\Pi \colon \cH \to H^2(\cF) \oplus \cY$ by
%\begin{align}
%& (V_1,V_2)=   \notag \\
%& \left(\begin{bmatrix}
%            T_1 & 0 \\
%           \sbm{\bev_0^* \\ 0} PU\Lambda D_T & M_{(P^\perp+zP)U}\oplus Y_1
%          \end{bmatrix},
%  \begin{bmatrix}
%   T_2 & 0 \\
%  \sbm{ \bev_0^* \\ 0 } U^*P^\perp \Lambda D_T & M_{U^*(P+zP^\perp)}\oplus Y_2
%   \end{bmatrix}\right),
%   \notag  \\
%   & \Pi = \begin{bmatrix} I_\cH \\ 0 \end{bmatrix}.
%   \label{Conv-AndoGenForm}
%  \end{align}
Then $(\bV_{S,1},\bV_{S,2})$ is a commuting pair of isometries on $\bcK_S$ and  is an And\^o lift of $(T_1,T_2)$. 
In case $(\cF, \Lambda, P, U)$ is a strong Type II And\^o tuple, then in addition
we have
\begin{equation}  \label{Vcanonical}
\bV_S:= \bV_{S,1} \bV_{S,2} = \bV_{S,2} \bV_{S,1} = \begin{bmatrix} T & 0 & 0  \\  \bev_{0, \cF}^*  \Lambda D_T & M^\cF_z  & 0 \\ 0 & 0  &  Y_1 Y_2 \end{bmatrix}.
\end{equation}

If  $(\bV_{S,1}, \bV_{S,2})$ is a minimal And\^o lift of $(T_1, T_2)$, then  $\cY = \{0\}$, the And\^o tuple $(\cF, \Lambda, P, U)$ is Sch\"affer-minimal, and \eqref{AndoGenForm}
simplifies to
\begin{align}
& \bcK_S = \begin{bmatrix} \cH \\  H^2(\cF) \end{bmatrix}, \quad   \bPi_S = \begin{bmatrix}  I_\cH \\  0 \end{bmatrix} \colon \cH \to \bcK_S,   \notag \\
& (\bV_{S,1},\bV_{S,2})=
 \left(\begin{bmatrix}
            T_1 & 0 \\ \bev_{0, \cF}^* PU\Lambda D_T & M_{P^\perp U+zP U} \end{bmatrix},
  \begin{bmatrix}
   T_2 & 0 \\ \bev_{0, \cF}^*  U^*P^\perp \Lambda D_T & M_{U^*P+z U^* P^\perp} \end{bmatrix}\right)
\label{Conv-AndoGenForm'}
\end{align}
and, in case $(\cF, \Lambda, P, U)$ is a strong Type II And\^o tuple, then the formula \eqref{Vcanonical} for $\bV_S:=\bV_{S,1} \bV_{S,2}$
simplifies to
\begin{equation}  \label{Vcanonical'}
\bV_S:= \bV_{S,1} \bV_{S,2} = \bV_{S,2} \bV_{S,1} = \begin{bmatrix} T & 0 \\  \bev_{0,\cF}^* \Lambda D_T & M_z^\cF \end{bmatrix}
\end{equation}

Conversely any minimal And\^o  lift $(\Pi, V_1, V_2)$ of $(T_1, T_2)$ is unitarily
equivalent (as a lift of $(T_1, T_2)$) to an And\^o   lift of the form \eqref{Conv-AndoGenForm'}--\eqref{Vcanonical'}  coming from a
Sch\"affer-minimal, strong Type II And\^o tuple for $(T_1, T_2)$.
\end{theorem}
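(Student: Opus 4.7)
The plan is to verify the theorem by directly checking the defining algebraic identities, using the block structure of $\bV_{S,1}$ and $\bV_{S,2}$ and the conditions (i), (ii) from the definition of Type II And\^o tuple, and then to harvest the converse from the discussion preceding the statement.

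First, I would verify that each $\bV_{S,j}$ is an isometry. Forming $\bV_{S,j}^*\bV_{S,j}$ block by block, one gets a $3\times 3$ block matrix whose $(3,3)$ entry is $Y_j^*Y_j = I_\cY$, whose $(2,2)$ entry is $M_{\varphi_j}^*M_{\varphi_j}$ for the BCL1 symbol $\varphi_j$ (which is an isometry by Lemma \ref{relations-of-E-lem}), and whose $(1,2)$ block vanishes via the identity $\bev_{0,\cF}M_{\varphi_j} = \varphi_j(0)\bev_{0,\cF}$ followed by $P P^\perp = 0$. The crucial $(1,1)$ entries give $T_j^*T_j + D_T\Lambda^*U^*PU\Lambda D_T$ for $j=1$ and $T_j^*T_j + D_T\Lambda^*P^\perp\Lambda D_T$ for $j=2$; these equal $I_\cH$ precisely by the two equations in condition (ii).

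Next I would check commutativity of $(\bV_{S,1},\bV_{S,2})$. The $(1,1)$, $(3,3)$ and $(2,2)$ block entries of $\bV_{S,1}\bV_{S,2}$ and $\bV_{S,2}\bV_{S,1}$ agree by the commutativity of $(T_1,T_2)$, $(Y_1,Y_2)$, and by the BCL identity $\varphi_1(z)\varphi_2(z)=\varphi_2(z)\varphi_1(z) = zI_\cF$. The essential identity is the $(2,1)$ block: using $M_{a+zb}\,\bev_{0,\cF}^* x = \bev_{0,\cF}^* ax + \bev_{1,\cF}^* bx$, one finds that the $(2,1)$ entry of $\bV_{S,1}\bV_{S,2}$ equals $\bev_{0,\cF}^*(PU\Lambda D_TT_2 + P^\perp\Lambda D_T)$ (the $\bev_{1,\cF}^*$ contribution vanishes by $PP^\perp = 0$), whereas that of $\bV_{S,2}\bV_{S,1}$ equals $\bev_{0,\cF}^*(U^*PU\Lambda D_T + U^*P^\perp\Lambda D_TT_1)$. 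Equality of these two is exactly condition (i). The lift property $\bV_{S,j}^*\bPi_S = \bPi_S T_j^*$ is automatic from the block-lower-triangular form of $\bV_{S,j}$ and the definition $\bPi_S = \begin{bmatrix} I_\cH & 0 & 0\end{bmatrix}^\top$. When condition (i$^\prime$) holds, the $(2,1)$ entry of $\bV_S = \bV_{S,1}\bV_{S,2}$ collapses further to $\bev_{0,\cF}^*\Lambda D_T$, yielding \eqref{Vcanonical}.

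Now for the minimality claim, I would observe that the subspace $\begin{bmatrix} 0 & 0 & \cY \end{bmatrix}^\top$ is jointly reducing for $(\bV_{S,1},\bV_{S,2})$ and orthogonal to $\operatorname{Ran}\bPi_S$; hence the joint orbit $\bigvee_{n_1,n_2\ge 0}\bV_{S,1}^{n_1}\bV_{S,2}^{n_2}\operatorname{Ran}\bPi_S$ is contained in $\begin{bmatrix} \cH \\ H^2(\cF) \\ 0 \end{bmatrix}$, so minimality forces $\cY = \{0\}$. Once that is established, the same orbit statement becomes precisely the Sch\"affer-minimality of $(\cF,\Lambda,P,U)$ (Definition \ref{AndoTuple}).

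Finally, for the converse, all the work is already in the paragraphs preceding Theorem \ref{T:summary}: starting from any minimal And\^o lift $(\Pi,V_1,V_2)$, the BCL1 normalization of $(D_1,D_2) = (V_1,V_2)|_{\cK\ominus\cH}$ together with the isometry identities \eqref{identities2} and commutativity identities \eqref{identities1} forces $C_u = 0$, $C_{uj}=0$ and $C_s = \bev_{0,\cF}^*\Lambda D_T$ with $\Lambda$ isometric, which brings the lift into the form \eqref{Conv-AndoGenForm'} with $\cY$-component trivial by the minimality argument above. The resulting $(\cF,\Lambda,P,U)$ is automatically a pre-And\^o tuple; that it is strong Type II follows because the derivation produces $\bV_S$ already in the canonical form \eqref{Vcanonical'}, which reads off to give condition (i$^\prime$), while the two isometry identities in (ii) are the $(1,1)$ entries of $V_j^*V_j = I$ translated into the And\^o-tuple notation. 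Sch\"affer-minimality is just a restatement of the assumed minimality of the lift in these coordinates.

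The main obstacle I anticipate is purely notational bookkeeping: one must be careful with the BCL1 (rather than BCL2) convention used for $(D_1,D_2)$ and with the interaction $\bev_{0,\cF}M_{a+zb}=a\,\bev_{0,\cF}$ versus $\bev_{1,\cF}M_{a+zb}=b\,\bev_{0,\cF}+a\,\bev_{1,\cF}$, since a sign or index error here would make condition (i) look asymmetric between $T_1$ and $T_2$. Everything else is a direct verification powered by the defining identities of a (strong) Type II And\^o tuple.
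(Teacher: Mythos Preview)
Your proposal is correct and follows essentially the same approach as the paper's own proof: block-by-block verification of the isometry and commutativity identities using conditions (i) and (ii), the lower-triangular form for the lift property and for trapping the orbit inside $\sbm{\cH \\ H^2(\cF) \\ 0}$, and then harvesting the converse from the structural analysis preceding Theorem~\ref{T:summary}. The only cosmetic difference is that the paper computes the $(2,1)$ entries of $\bV_{S,1}\bV_{S,2}$ and $\bV_{S,2}\bV_{S,1}$ directly via $M_{P^\perp U+zPU}\,\bev_{0,\cF}^*$ rather than introducing the $\bev_{1,\cF}^*$ notation, and it does not cite Lemma~\ref{relations-of-E-lem} for the fact that $M_{\varphi_j}$ is isometric (this is part of the BCL model, Theorem~\ref{Thm:BCLmodel}, or checked in one line); but these are packaging choices, not substantive differences.
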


\begin{proof}
A matrix computation from the formulas \eqref{AndoGenForm}  shows that
\begin{equation} \label{V1V2}
\bV_{S,1} \bV_{S,2} = \begin{bmatrix} T_1 T_2   & 0 & 0  \\ [\bV_{S,1}\bV_{S,2}]_{21}  & M_z^\cF  & 0 \\ 0 & 0 &  Y_1 Y_2 \end{bmatrix}
\end{equation}
 where
 \begin{align}
 [\bV_{S,1} \bV_{S,2}]_{21}  & =
 \bev_{0,\cF}^* PU\Lambda D_TT_2 + M_{P^\perp U +zP U}  \, \bev_{0,\cF}^* U^*P^\perp \Lambda D_T  \notag  \\
& = \bev_{0,\cF}^*  PU \Lambda D_T T_2+  \bev_{0,\cF}^*  P^\perp \Lambda D_T \notag \\
 &  =\bev_{0,\cF}^* \left(  PU\Lambda D_T T_2 +   P^\perp \Lambda D_T \right).
 \label{[V1V2]21}
\end{align}
Similarly one can compute that
\begin{equation}   \label{V2V1}
\bV_{S,2} \bV_{S,1} = \begin{bmatrix} T_2 T_1 & 0 & 0 \\ [\bV_{S,2} \bV_{S,1}]_{21} & M_z & 0 \\ 0 & 0 &  Y_2 Y_1 \end{bmatrix}
\end{equation}
where
\begin{align}
[\bV_{S,2} \bV_{S,1}]_{21} & =  \bev_{0,\cF}^* U^*P^\perp \Lambda D_T T_1+ (M_{U^* P+z U^* P^\perp}\oplus \bev_{0,\cF}^*  P U \Lambda D_T \notag \\
& = \bev_{0,\cF}^* (U^* P^\perp \Lambda D_T T_1 +  U^* P U \Lambda D_T).
\label{[V2V1]21}
\end{align}
We conclude that $\bV_{S,1} \bV_{S,2} = \bV_{S,2} \bV_{S,1}$ exactly when the following system of equations hold:
\begin{align}
T_1 T_2 & = T_2 T_1, \notag  \\ Y_1 Y_2 & =  Y_2 Y_1,  \notag \\
  PU\Lambda D_T T_2 +   P^\perp \Lambda D_T  & =  U^* P^\perp \Lambda D_T T_1 + U^* P U  \Lambda D_T.
  \label{V1V2com}
\end{align}
The first two equations are valid due to our assumptions that $(T_1, T_2)$ and $(Y_1, Y_2)$ are commuting pairs.
Note that the last equation is just condition (i) in \eqref{AndoTuple} (the commutativity condition).

In case $(\cF, \Lambda, P, U)$ is a strong And\^o tuple, then
$$
[\bV_{S,1} \bV_{S,2}]_{21} = [ \bV_{S,2} \bV_{S,1} ]_{21} = \bev_{0,\cF}^*  \Lambda D_T.
$$
From \eqref{V1V2} and \eqref{V2V1} we read off that
$$
  \bV_S: = \bV_{S,1} \bV_{S,2} = \bV_{S,2} \bV_{S,1} = \begin{bmatrix} T & 0  & 0 \\ \bev_{0,\cF}^* \Lambda D_T  & M^\cF_z & 0  \\ 0 & 0  & Y \end{bmatrix}
$$
where we set $T = T_1 T_2 = T_2 T_1$ and $Y = Y_1 Y_2 = Y_2 Y_1$.

We next argue that condition (ii) in Definition \ref{AndoTuple} implies that $\bV_{S,1}$ and $\bV_{S,2}$ are isometries.
Since $\sbm{ M_{(P^\perp + z P) U} & 0 \\ 0 & Y_1}$ is an isometry,
to verify that $\bV_{S,1}$ is an isometry it suffices to check that the $(1,1)$-entry of $\bV_{S,1}^*\bV_{S,1}$ equals $I_\cH$
and that the $(1,2)$-entry of $\bV_{S,1}^* \bV_{S,1}$ is equal to $0$, i.e., we need to show:
$$
  T_1^*T_1+D_T\Lambda^*U^*P\bev_{0,\cF} \bev_{0, \cF}^*PU\Lambda D_T=I_{\mathcal H}, \quad
  D_T\Lambda^*U^*P   \bev_{0,\cF}  M_{(P^\perp +zP)U} =0.
$$
The first identity is an immediate consequence of the first equality in condition (ii) (the isometry condition) in Definition \ref{AndoTuple}.
As for the second note that
\begin{align*}
&  D_T\Lambda^*U^*P   \bev_{0,\cF} M_{(P^\perp +zP)U} 
 = D_T \Lambda^* U^* P \bev_{0,\cF} M_{P^\perp U} = D_T \Lambda^* U^* P P^\perp U  \bev_{0, \cF}  = 0
\end{align*}
as wanted. Hence $\bV_{S,1}$ is an isometry. Similarly one can show that $\bV_{S,2}$ is an isometry by making use
of the second part of condition (ii) in Definition \ref{AndoTuple}.
This completes the proof of the direct side of Theorem \ref{Thm:LiftFromTuple}.

If $(\bV_{S,1}, \bV_{S,2})$ as in \eqref{AndoGenForm} is a minimal lift of $(T_1, T_2)$, then
$$
\begin{bmatrix} \cH  \\ H^2(\cF) \\ \cY \end{bmatrix} =
\bigvee_{n_1, n_2 \ge 0} \bV_{S,1}^{n_1} \bV_{S,2}^{n_2}  \begin{bmatrix} \cH \\ 0 \\ 0 \end{bmatrix}.
$$
From the triangular form of $V_1$ and $V_2$, we see that the right-hand side
of this last display is contained in $\sbm{ \cH \\ H^2(\cF) \\ \{0\}}$.  It thus follows that the space $\cY$ is trivial,
the formulas in \eqref{AndoGenForm} and \eqref{canonicalV} collapse to those in \eqref{Conv-AndoGenForm'} and \eqref{Vcanonical'}, and
the minimality condition can now be expressed as
$$
\begin{bmatrix} \cH \\ H^2(\cF) \end{bmatrix} = \bigvee_{n_1, n_2 \ge 0} \bV_{S,1}^{n_1} \bV_{S,2}^{n_2}  \begin{bmatrix} \cH \\ 0  \end{bmatrix}  
= \begin{bmatrix} \cH \\ H^2(\cF) \end{bmatrix} =: \bcK_S
$$
which is exactly the condition that $(\cF, \Lambda, P, U)$ be Sch\"affer-minimal.

\smallskip

Conversely, suppose that $( \Pi, V_1, V_2)$ is a minimal And\^o  lift of the commuting contractive pair
$(T_1, T_2)$.  The discussion preceding the theorem tells us that the unitary operator
$$
\btau:= \begin{bmatrix} I_\cH & 0 \\ 0 & \tau_{\rm BCL} \end{bmatrix} \colon \cK \cong \begin{bmatrix} \cH \\ \cK \ominus \cH \end{bmatrix} \to \bcK_S:= 
\begin{bmatrix}  \cH \\ H^2(\cF) \end{bmatrix}
$$
transforms the commuting isometric pair $(V_1, V_2)$ on $\cK$ to the pair $(\bV_{S,1}, \bV_{S,2})$ on $\bcK_S$
as given by \eqref{AndoGenForm} with $\bV_S = \bV_{S,1} \bV_{S,2} = \bV_{S,2} \bV_{S,1}$ 
as in \eqref{canonicalV},
where the tuple $(\cF, \Lambda, P, U)$  is as in condition (i) in Definition \ref{AndoTuple}.
By reversing the analysis given in the direct analysis, the fact that $\bV_{S,1} \bV_{S,2} = \bV_{S,2} \bV_{S,1}$ gives us the system of
equations \eqref{V1V2com} which then forces
 the commutativity condition (i) in Definition \ref{AndoTuple}) to hold, and the fact that $\bV_{S,1}$ and $\bV_{S,2}$
 are isometries forces the isometry condition  ((ii) in Definition \ref{AndoTuple}) to hold.  Furthermore, the fact
 that by construction $\bV_S = \bV_{S,1}\bV_{S,2} = \bV_{S,2}\bV_{S,1}$ has the form \eqref{Vcanonical} forces the strong
 commutativity condition (i$^{'}$) in Definition \ref{AndoTuple}) to hold, so in fact $(\cF, \Lambda, P, U)$
 is a strong Type II And\^o tuple. Furthermore, the assumption that $(\bPi_S, \bV_{S,1}, \bV_{S,2})$ is minimal forces the space $\cY$
 to be trivial and the tuple $(\cF, \Lambda, P, U)$ to be Sch\"affer-minimal (see Definition \ref{AndoTuple}).
\end{proof}

In parallel with Example \ref{E:Dex} for the Douglas-model And\^o lift, we next introduce some special cases of And\^o lifts for illustrative purposes.

\begin{example}  \label{E:Sex}
\textbf{Illustrative special cases for Sch\"affer-model And\^o lifts}

\smallskip

\textbf{1. $(T_1, T_2)$ = BCL1-model commuting isometric operator-pair:}  Suppose that $(T_1, T_2)  = (V_1, V_2)$ is a commutative isometric pair,
so $D_T = 0$.  Then we get a Type II (strong or otherwise) And\^o tuple for $(T_1, T_2)$ simply by taking the coefficient space $\cF$ to be the zero space
and hence all operators $\Lambda \colon \cD_T \to \cF$, projection $P$ on $\cF$ and unitary operator $U$ on $\cF$ are all zero.  Then the Sch\"affer model
for the lift becomes $(\bV_{S,1}, \bV_{S,2}) = (T_1, T_2)$, i.e., $(T_1, T_2)$ is the minimal And\^o lift of itself.  The reader should find the next special case to be more interesting.

\textbf{2. $(T_1, T_2)$ = adjoint of adjusted BCL2-model commuting isometric pair:} 
We have seen that any commuting isometric operator-pair $(V_1, V_2)$ with product $V = V_1 V_2$ equal to a shift can be modelled in the BCL2-model form:
\begin{equation}   \label{BCL2modelV}
(V_1, V_2, V) =
(M_{U^* P^\perp + z U^* P}, M_{P U + z P^\perp U}, M_z^\cF) \text{ on } H^2(\cF)
\end{equation}
where $(\cF, P, U)$ is a choice of BCL tuple ($\cF$ is a coefficient Hilbert space, $P$ is a projection and $U$ is a unitary operator on $\cF$).
 Let us introduce the space $L^2(\cF)$
consisting of functions of the form $\zeta \mapsto f\zeta)$ (with $\zeta$ equal to the independent variable on the unit circle ${\mathbb T}$)
such that $f(\zeta) \sim \sum_{n= -\infty} ^\infty \widehat f_n \zeta^n$ with $\sum_{n \in {\mathbb Z}} \| f \|^2_\cF < \infty$. Then we can view
$H^2(\cF)$ as the subspace of $L^2(\cF)$ consisting of such $L^2(\cF)$-functions  $f$ having negatively-indexed Fourier coefficients equal to zero:
$\widehat f_n = 0$ for $n<0$.  Its Hilbert-space orthogonal complement in $L^2(\cF)$ consists of $L^2(\cF)$ functions with nonnegatively-indexed Fourier coefficients equal to zero:
$ f_n= 0$ for $n \ge 0$.
Let us observe that the reflection operator 
$$
{\mathfrak r}  \colon f(\zeta) \mapsto \zeta^{-1} f(\zeta^{-1})
$$
is a unitary involution operator on $L^2(\cF)$ (so ${\mathfrak r} ={\mathfrak r}^{-1} = {\mathfrak r}^*$) which maps $H^2(\cF)$ unitarily onto $H^2(\cF)^\perp$ and
$H^2(\cF)^\perp$ unitarily onto $H^2(\cF)$.  We can view elements of $H^2(\cF)^\perp$ as analytic functions $f(z^{-1}) = \sum_{n=1}^\infty \widehat f_{-n} z^{-n}$
in the variable $z^{-1}$ with zero constant term representing analytic functions on the exterior of the unit disk ${\mathbb E} = \{ 1/z \colon z \in {\mathbb D}\setminus \{0\}\} \cup \{ \infty\}$
with value at $\infty$ equal to $0$.  Let us use the transformation
$$
{\mathfrak r}_+:={\mathfrak r}|_{H^2(\cF)} \colon H^2(\cF) \to H^2(\cF)^\perp
$$
to transform the model operators \eqref{BCL2modelV} acting on $H^2(\cF)$ to a unitarily equivalent version but acting on the space $H^2(\cF)^\perp$:
\begin{align}
& (\widetilde V_1, \widetilde V_2, \widetilde V)  =  {\mathfrak r}_+  (V_1, V_2, V) {\mathfrak r}_+^{-1}  \notag \\
& \quad = (M_{U^*P^\perp + z^{-1} U^*P}, M_{PU+ z^{-1} P^\perp U}, M^\cF_{z^{-1}}) \text{ on } H^2(\cF)^\perp.  \label{BCL2'}
\end{align}
Let us summarize the analysis to this point:  {\em given any commuting isometric operator pair $(\bV_1, \bV_2)$ with product $\bV = \bV_1 \bV_2$ equal to a shift operator,
there is a BCL tuple $(\cF, P, U)$ so that $(\bV_1, \bV_2)$ is unitarily equivalent to the  commuting isometric operator-pair $(\widetilde V_1, \widetilde V_2)$
given by \eqref{BCL2'}, and conversely any such pair $(\widetilde V_1, \widetilde V_2)$ is a commuting isometric operator-pair with product operator $\widetilde V = \widetilde V_1 
\widetilde V_2$ equal to the shift operator $ \widetilde V = M^\cF_{z^{-1}}$ on $H^2(\cF)^\perp$.}

Suppose now that $(T_1, T_2)= (V_1^*, V_2^*)$ is the commuting pair of coisometries with product $T = T_1 T_2 = V_1^* V_2^*$ equal to the adjoint of a shift operator.
Then by the preceding discussion we may assume that $(V_1, V_2) = (\widetilde V_1, \widetilde V_2)$ is in the model form \eqref{BCL2'}.  Let us compute
\begin{align}
(T_1, T_2) & = (\widetilde V_1^*, \widetilde V_2^*)  \label{SPair} \\
& \quad = P_{H^2(\cF)^\perp} ( M_{P^\perp U + \zeta PU}, M_{PU+ \zeta P^\perp U})|_{H^2(\cF)^\perp},  \notag \\
 & T := T_1 T_2 = P_{H^2(\cF)^\perp} M_\zeta^\cF|_{H^2(\cF)^\perp}   \label{T1T2T}
\end{align}
By inspection we see that 
$$
(\cU_1, \cU_2) : = (M_{P^\perp U + \zeta PU}, M_{PU^\perp + \zeta P^\perp U}) \text{ on } L^2(\cF)
$$
is a minimal commuting isometric (in fact commuting unitary) lift of $(T_1, T_2)$ with product $\cU_1 \cU_2 = M_\zeta$ on $L^2(\cF)$.

Our next goal is to fit this construction into the Sch\"affer model.   For these computations we view $H^2(\cF)^\perp$ as the subspace of $L^2(\cF)$ consisting of functions
$f$ of the form $f(\zeta) = \sum_{n = -1}^{-\infty} \widehat f_n \zeta^n$.  Then compute:
$$
D_T = D_T^2 = (I - T^* T) \colon f(\zeta) = \sum_{n=-1}^{-\infty} \widehat f_n \zeta^n \mapsto \widehat f_{-1} \zeta^{-1}.
$$
We define $\Lambda \colon \cD_T \to \cF$ by, for $f(\zeta) = \sum_{n=1}^{-\infty} \widehat f_n \zeta^n \in H^2(\cF)^\perp$,
\begin{align}\label{SLambda}
 \Lambda \colon D_T f = \widehat f_{-1} \zeta^{-1}  \mapsto f_{-1} \in \cF.
\end{align}
Then $\Lambda$ is a unitary operator from $\cD_T$ onto $\cF$.  We then define the model space $\bcK_S$ and the model operators
$(\bV_{S,1}, \bV_{S,2})$ as in \eqref{Conv-AndoGenForm'}.  Here the space $\cH$ on which $(T_1, T_2)$ is defined on $H^2(\cF)^\perp$,
so $\bcK_S = \sbm{ H^2(\cF)^\perp \\ H^2(\cF}$ has an easy identification with the space $L^2(\cF)$.  It is now straightforward to see that
the block matrix decompositions for $(\bV_{S,1}, \bV_{S,2})$ in \eqref{Conv-AndoGenForm'} amounts to the block matrix decompositions for the
operators $\cU_1, \cU_2)$ with respect to the decomposition of $L^2(\cF)$ as $H^2(\cF)^\perp \oplus H^2(\cF)$ and similarly the matrix decomposition
of $\bV_S = \bV_{S,1} \bV_{S,2}$ in \eqref{Vcanonical'} acting on $H^2(\cF)^\perp \oplus H^2(\cF)$ is the same as the matrix decomposition for the 
operator $M_\zeta$ on $L^2(\cF)$ with respect to the orthogonal decomposition $L^2(\cF) = H^2(\cF)^\perp \oplus H^2(\cF)$.  By the necessity side of
Theorem \ref{Thm:LiftFromTuple}, it follows that $(\cF, \Lambda, P, U)$ is a strong Type II And\^o tuple for $(T_1, T_2)$, as can also be checked directly.
This example gives the next
simplest illustration (after \#1 above) of the Sch\"affer-like model for And\^o lifts.
\end{example}

Given any commuting contractive operator-pair $(T_1, T_2)$ on $\cH$,
a special class of pre-And\^o tuples, called {\em special And\^o tuples}, associated with the pair $(T_1, T_2)$ as well as
with the adjoint pair $(T_1^*, T_2^*)$ was introduced in Section \ref{S:SpcATuple} (see Definition \ref{D:SpcATuple}).
There we saw that special And\^o tuples for $(T_1^*, T_2^*)$  are also Type I And\^o tuples (see Theorem \ref{Thm:special}) and hence led to an explicit construction 
of And\^o isometric lifts for $(T_1, T_2)$ via the Douglas-model lift construction (see Theorem \ref{T:Dmodel}).  We now show that, remarkably, special And\^o tuples 
for $(T_1, T_2)$ are automatically also strong Type II And\^o-tuples, as explained next.

\begin{theorem}  \label{Thm:special-canonical}
Suppose that $(T_1, T_2)$ is a commuting contractive operator-pair on $\cH$ and that
$(\cF_{\dag},\Lambda_\dag,P_\dag,U_\dag)$ is a special And\^o tuple for $(T_1, T_2)$ constructed as in
Definition \ref{D:SpcATuple}.  Then $(\cF_{\dag},\Lambda_\dag,P_\dag,U_\dag)$ is in fact a strong Type II
And\^o tuple for $(T_1, T_2)$, i.e., conditions {\rm (i$^\prime$)} and \rm{(ii)} in Definition \ref{AndoTuple} are satisfied.
\end{theorem}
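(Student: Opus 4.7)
The plan is a direct verification at a generic vector $D_T h$, $h \in \cH$, using only the defining identities from Definition \ref{D:SpcATuple}, namely
\begin{align*}
\Lambda_\dag D_T h &= D_{T_1}T_2 h \oplus D_{T_2} h, \\
U_\dag\,\Lambda_\dag D_T h &= U_0\bigl(D_{T_1}T_2 h \oplus D_{T_2} h\bigr) = D_{T_1} h \oplus D_{T_2} T_1 h,
\end{align*}
together with the fact that $P_\dag$ restricted to the distinguished copy of $\cD_{T_1}\oplus\cD_{T_2}$ inside $\cF_\dag$ acts as projection onto the first coordinate, so that $P_\dag^\perp\Lambda_\dag D_T h = 0\oplus D_{T_2}h$ and $P_\dag U_\dag\Lambda_\dag D_T h = D_{T_1}h\oplus 0$.

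First I would check the left-hand equality in (i$'$). Replacing $h$ by $T_2 h$ in the formula for $\Lambda_\dag$, we have $\Lambda_\dag D_T T_2 h = D_{T_1}T_2^2 h\oplus D_{T_2}T_2 h$, which still lies in $\cD_{U_0}$ (it is the image of $T_2 h$ under $h\mapsto D_{T_1}T_2 h\oplus D_{T_2} h$). Applying $U_\dag = U_0$ on $\cD_{U_0}$ with $k:=T_2h$ gives $U_\dag\Lambda_\dag D_T T_2 h = D_{T_1}T_2 h\oplus D_{T_2}T_1T_2 h$, so that $P_\dag U_\dag\Lambda_\dag D_T T_2 h = D_{T_1}T_2 h\oplus 0$. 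Adding $P_\dag^\perp\Lambda_\dag D_T h = 0\oplus D_{T_2} h$ yields exactly $\Lambda_\dag D_T h$.

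Next, for the right-hand equality in (i$'$), I would not try to apply $U_\dag^*$ to each summand separately, because $P_\dag^\perp\Lambda_\dag D_T T_1 h = 0\oplus D_{T_2}T_1 h$ and $P_\dag U_\dag\Lambda_\dag D_T h = D_{T_1}h\oplus 0$ are not individually guaranteed to lie in $\cR_{U_0}$. The key observation is that their \emph{sum} equals $D_{T_1} h\oplus D_{T_2} T_1 h$, which by definition of $\cR_{U_0}$ does lie in $\cR_{U_0}$. On $\cR_{U_0}$, the operator $U_\dag^*$ agrees with $U_0^{-1}$, whose defining formula sends $D_{T_1}h\oplus D_{T_2}T_1 h$ back to $D_{T_1}T_2 h\oplus D_{T_2}h = \Lambda_\dag D_T h$. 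By linearity of $U_\dag^*$ this yields (i$'$). This is the only subtle point in the argument.

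Finally, to verify (ii), I would compute the associated quadratic forms. Setting $A := U_\dag\Lambda_\dag$, we have $AD_T h = D_{T_1}h\oplus D_{T_2}T_1 h \in \cR_{U_0}$, and $P_\dag AD_T h = D_{T_1}h\oplus 0$. Then, for any $h,k\in\cH$,
\begin{align*}
\langle D_T\Lambda_\dag^* U_\dag^* P_\dag U_\dag\Lambda_\dag D_T h, k\rangle
 &= \langle P_\dag A D_T h,\, A D_T k\rangle \\
 &= \langle D_{T_1}h\oplus 0,\, D_{T_1}k\oplus D_{T_2}T_1 k\rangle \\
 &= \langle D_{T_1}^2 h, k\rangle,
\end{align*}
giving the first identity of (ii). Similarly, pairing $P_\dag^\perp\Lambda_\dag D_T h = 0\oplus D_{T_2}h$ with $\Lambda_\dag D_T k = D_{T_1}T_2 k\oplus D_{T_2}k$ yields $\langle D_{T_2}^2 h,k\rangle$, confirming the second identity. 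Since $h,k$ are arbitrary, this completes the verification that $(\cF_\dag,\Lambda_\dag,P_\dag,U_\dag)$ is a strong Type II And\^o tuple.
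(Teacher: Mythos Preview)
Your proof is correct and follows essentially the same direct-verification route as the paper. In fact, you are slightly more explicit than the paper about the one genuine subtlety: for the right-hand equality in (i$'$) the individual summands $P_\dag^\perp\Lambda_\dag D_T T_1 h$ and $P_\dag U_\dag\Lambda_\dag D_T h$ need not lie in $\cR_{U_0}$, but their sum $D_{T_1}h\oplus D_{T_2}T_1 h$ does, so that $U_\dag^*$ acts on it as $U_0^{-1}$; the paper performs the same summation-then-inversion step without pausing to justify it.
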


\begin{proof}
let $(\cF_{\dag},\Lambda_\dag,P_\dag,U_\dag)$ be the special  And\^o tuple for $(T_1,T_2)$ constructed via the algorithm
explained in Definition \ref{D:SpcATuple}.  We must verify that conditions (i$^\prime$) and (ii) in Definition \ref{AndoTuple} are satisfied.
Let $h$ be an arbitrary element of $h$.
The computation
\begin{align*}
&P_\dag U_\dag \Lambda_\dag D_TT_2h+P_\dag^\perp\Lambda_\dag D_Th\\
&=P_\dag U_\dag(D_{T_1}T_2^2h\oplus D_{T_2}T_2h)+P_\dag^\perp(D_{T_1}T_2h\oplus D_{T_2}h)\\
&=P_\dag(D_{T_1}T_2h\oplus D_{T_2}T_1T_2h)+(0 \oplus D_{T_2}h)
=D_{T_1}T_2h \oplus D_{T_2}h=\Lambda_\dag D_Th
\end{align*}
together with the computation
\begin{align*}
&U_\dag^*(P_\dag^\perp\Lambda_\dag D_TT_1+P_\dag U_\dag\Lambda_\dag D_T)h\\
&=U_\dag^*(P_\dag^\perp (D_{T_1}T_2T_1h\oplus D_{T_2}T_1h)+P_\dag U_\dag(D_{T_1}T_2h\oplus D_{T_2}h))\\
&=U_\dag^*((0\oplus D_{T_2}T_1 h)+P_\dag(D_{T_1}h \oplus D_{T_2}T_1h)) \\
&=U_\dag^*((0\oplus D_{T_2}T_1 h )+  (D_{T_1} h \oplus 0))=U_\dag^*(D_{T_1} h \oplus D_{T_2}T_1 h )=\Lambda_\dag D_T h
\end{align*}
verifies condition (i$^\prime$).
 The following easy inner product computations for arbitrary $h, h' \in\cH$
\begin{align*}
 \langle D_T \Lambda_\dag^* U_\dag^* P_\dag U_\dag \Lambda_\dag D_T h, h'\rangle  & =
  \langle P_\dag U_\dag\Lambda_\dag D_Th,P_\dag U_\dag\Lambda_\dag D_Th' \rangle \\
  & =\langle D_{T_1}h\oplus 0,D_{T_1}h'\oplus 0\rangle =\langle D_{T_1}^2h,h' \rangle
\end{align*}
and
\begin{align*}
\langle D_T \Lambda_\dag^* P_\dag^\perp \Lambda_\dag D_T h, h' \rangle
&  = \langle P_\dag^\perp\Lambda_\dag D_T h,P^\perp\Lambda_\dag D_Th' \rangle  \\
 & =\langle 0\oplus D_{T_2}h,0\oplus D_{T_2}h'\rangle =\langle D_{T_2}^2h,h' \rangle.
\end{align*}
establish condition (ii).  
\end{proof}

\begin{notation}  \label{N:natural}\index{$V_{\natural 1}, V_{\natural 2}$}
For a given commuting contractive operator-pair $(T_1,T_2)$,  the And\^o  lift corresponding to the special And\^o tuple of
$(T_1,T_2)$ given as in (\ref{Conv-AndoGenForm'}) will be denoted by $(V_{\natural1},V_{\natural2})$ and $V_{\natural}=V_{\natural1}V_{\natural2}$. Note that because any special And\^o tuple of $(T_1,T_2)$ is a
strong Type II And\^o tuple, $V_{\natural} : = V_{\natural 1} V_{\natural 2}$ is given by
\begin{align}\label{Vnatural}
\begin{bmatrix}
  T_1T_2 & 0 \\
  \bev_{0,\cF}^*\Lambda_\dag D_T & M_z
\end{bmatrix}.
\end{align}
\end{notation}

\begin{remark}  \label{R:Schaffer-Ando}
We have already noted that the definition of special And\^o tuple (whether for $(T_1, T_2)$ or for $(T_1^*, T_2^*)$) is
constructive and hence in principal special And\^o tuples are easy to write down.  In particular we are assured that the class of
special And\^o tuples for $(T_1, T_2)$ is not empty.  Furthermore,
Theorem \ref{Thm:special-canonical} combined with Theorem \ref{Thm:LiftFromTuple} shows us how to use  special
And\^o tuples for $(T_1, T_2)$ to construct a class of And\^o isometric lifts for a given commuting contractive operator
pair $(T_1, T_2)$.  In this way we arrive at a second new proof of And\^o's theorem \cite{ando}, via the Sch\"affer-model
construction rather than by the Douglas-model construction as in Theorem \ref{Thm:special}.
 A priori it is conceivable that the unitary equivalence classes of And\^o isometric lifts arising via the Douglas-model
construction from special And\^o tuples for $(T_1^*, T_2^*)$ are distinct from those arising via the Sch\"affer-model
construction from special And\^o tuples for $(T_1, T_2)$.
\end{remark}

\begin{remark} \label{R:reasonsS}
In parallel with the discussion in Remark \ref{R:reasonsD} concerning the choice of  BCL model in the construction of
the Douglas model for an And\^o lift of $(T_1, T_2)$,  here we discuss the choice of BCL model in the construction of the Sch\"affer model for an And\^o lift 
of $(T_1, T_2)$.   In the representation for an And\^o lift
$(V_1, V_2)$ on $\cK$ for a commuting contractive pair $(T_1, T_2)$ on $\cH \subset \cK$, we could have used the BCL2 model (rather than the BCL1 model) for 
$(V_1, V_2)|_{\cK \ominus \cH}$ to arrive at the unitarily equivalent  And\^o lift having the form $(\bPi'_S, \bV'_{S,1}, \bV'_{S,2})$ where
\begin{align}
&  (\bV'_{S,1},\bV'_{S,2},\bV'_S)=   \notag \\
  & \left(\begin{bmatrix}
            T_1 & 0  & 0 \\
            C_{s1} & M_{U^* P^\perp + z U^* P} & 0 \\ C_{u1} & 0 & Y_1   \end{bmatrix},
  \begin{bmatrix}
   T_2 & 0 & 0  \\
   C_{s2} & M_{PU + z P^\perp U} & 0 \\ C_{u2}  & 0 & Y_2
   \end{bmatrix},
  \begin{bmatrix}
      T & 0 & 0  \\
      C_s  & M^\cF_z & 0 \\ C_u & 0 &  Y
    \end{bmatrix}\right).
    \label{liftS'}
\end{align}
in place of \eqref{liftS}.  Then the same analysis leading from \eqref{liftS} to \eqref{AndoGenForm} and \eqref{canonicalV}
would lead us instead to
\begin{align}
& (\bV'_{S,1},\bV'_{S,2}) =  \label{AndoGenForm''}  \\
& \left(\begin{bmatrix} T_1 & 0 & 0  \\  \bev_{0,\cF}^* U^* P  \Lambda D_T & M_{U^*P^\perp + zU^* P} & 0 \\ 0  & 0 &  Y_1 \end{bmatrix},
  \begin{bmatrix}
   T_2 & 0 & 0 \\
  \bev_{0,\cF}^* P^\perp  U \Lambda D_T & M_{ P U + z P^\perp U } & 0 \\ 0 & 0 &  Y_2 \end{bmatrix}\right),
\notag\\
& \bV'_S:= \bV'_{S,1} \bV'_{S,2} = \bV'_{S,2} \bV'_{S,1} = \begin{bmatrix} T & 0  & 0 \\   \bev_{0,\cF}^* \Lambda D_T & M^\cF_z & 0 \\  0 & 0 & Y\ \end{bmatrix}.
  \label{canonicalV''}
 \end{align}
 where we set $T : = T_1 T_2 = T_2 T_1$, $Y: = Y_1 Y_2 = Y_2 Y_1$ and where $\Lambda \colon \cD_T \to \cF$
 is an isometry; let us note that a short-cut way to see this is to make use of the flip transformation 
$\ff$ given by \eqref{flip}.

 Conversely, for a collection of spaces and operators of the form \eqref{AndoGenForm''}, \eqref{canonicalV''} to be an And\^o lift of $(T_1, T_2)$ 
 requires the additional compatibility conditions
 \begin{enumerate}
 \item[(a)] \textbf{Commutativity condition:}
 $$
 U^* P \Lambda D_T T_2 + U^* P^\perp U \Lambda D_T = P^\perp U \Lambda D_T T_1 + P \Lambda D_T ;
 $$
 \item[(b)]  \textbf{Isometry condition:}
 $$
  D_T \Lambda^* P \Lambda D_T = D_{T_1}^2, \quad
    D_T \Lambda^* U^* P^\perp U \Lambda D_T = D_{T_2}^2;
$$
\item[(a$'$)]   \textbf{Strengthened commutativity condition:}
$$
U^* P \Lambda D_T T_2 + U^* P^\perp U \Lambda D_T = P^\perp U \Lambda D_T T_1 + P \Lambda D_T
= \Lambda D_T.
$$
\end{enumerate}
We note that these equations are obtained simply by replacing $(P, U)$ by $(U^* P U, U^*)$ (i.e., by applying the flip transformation $\ff$ \eqref{flip}) 
in the conditions
(i), (ii), (i$'$) in the definition of Type II And\^o tuple (see Definition \ref{AndoTuple}).  Let us say that a pre-And\^o tuple for
$(T_1, T_2)$ is a {\em Type} II$'$   {\em And\^o tuple} for $(T_1, T_2)$ if the modified compatibility conditions (a), (b) hold,
and is a {\em strongType} II$'$ {\em And\^o tuple for} $(T_1, T_2)$ if the strengthened form of these conditions
(a$'$) (b) holds.

Just as in the Douglas-model setting, the drawback of this alternative approach is that a
special And\^o tuple for $(T_1, T_2)$ need not be a strong Type II$^\prime$ And\^o tuple for $(T_1, T_2)$, i.e., the analogue
of Theorem  \ref{Thm:special-canonical} with {\em strong Type} II$'$ {\em And\^o tuple} inserted in place of
{\em strong Type} II {\em And\^o tuple} fails in general.  For an explicit example we refer to the Appendix (Section \ref{S:Appendix} below).
\end{remark}

\section{Strongly minimal And\^o lifts via strongly minimal And\^o tuples}  \label{S:StrongMinimality}

We here introduce the notions of {\em strongly minimal And\^o tuple} and {\em strongly minimal And\^o lift} for a commutative contractive operator-pair $(T_1, T_2)$.

\begin{definition}  \label{D:StrongMinTuple}\index{pre-And\^o tuple!strongly minimal}
A pre-And\^o tuple $(\cF,\Lambda,P,U)$ for $(T_1,T_2)$ is said to be {\em strongly minimal} if the isometry $\Lambda:\cD_{T_1T_2}\to\cF$ is surjective. 
In case the pre-And\^o tuple $(\cF, \Lambda, P, U)$ is a Type I or strong Type II And\^o tuple and is strongly minimal as an And\^o pre-tuple, we say that
$(\cF, \Lambda, P, U)$ is a strongly minimal Type I (respectively strongly minimal strong Type II)  And\^o tuple. 
\end{definition}

The companion notion {\em strongly minimal And\^o lift} defined as follows.

\begin{definition}  \label{D:StrpmgMinLift}\index{And\^o lift!strongly minimal}
An And\^o lift $(\Pi, V_1,V_2)$ of a commuting contractive pair $(T_1,T_2)$ is said to be {\em strongly minimal} if it acts on the space $\cK_{00}$ given by
\begin{align*}
\cK_{00} =\bigvee_{n\geq 0} V_1^nV_2^n\operatorname{Ran}\Pi.
\end{align*}
\end{definition}

Note that, since $\operatorname{Ran}\Lambda =\cF$ for a strongly minimal pre-And\^o tuple $(\cF,\Lambda,P,U)$,  
it is obvious that a strongly minimal pre-And\^o tuple is  Dougls-minimal (see Definition \ref{D:min-Ando}). 

The next result makes precise the strong correlation between these two notions of strongly minimal.

\begin{theorem}  \label{T:sminLift=sminTuple}
Let $(T_1,T_2)$ be a commuting contractive operator-pair acting on $\cH$. Then:
\begin{enumerate}
\item $(T_1,T_2)$ has a strongly minimal And\^o lift $(\Pi, V_1,V_2)$ if and only if there is a strongly minimal Type I And\^o tuple for $(T_1^*,T_2^*)$
In more detail, Theorem \ref{T:Dmodel} continues to hold with the substitutions:
\begin{itemize}
\item {\em Douglas-minimal Type I And\^o tuple of $(T_1^*, T_2^*)$}  $\rightarrow$ {\em strongly minimal Type I And\^o tuple of $(T_1, T_2)$,}

\item {\em minimal And\^o lift of $(T_1, T_2)$}  $\rightarrow$ {\em strongly minimal And\^o lift of $(T_1, T_2)$}.
\end{itemize}

\smallskip

\item $(T_1,T_2)$ has a strongly minimal And\^o lift $(\Pi, V_1,V_2)$ if and only if there is a strongly minimal strong Type II And\^o tuple for $(T_1,T_2)$.
In more detail, Theorem \ref{Thm:LiftFromTuple} continues to hold with the substitutions
\begin{itemize}

\item  {\em Sch\"affer-minimal strong Type II And\^o tuple for $(T_1, T_2)$} $\rightarrow$ {\em strongly minimal strong Type II And\^o tuple for $(T_1, T_2)$,}

\item  minimal And\^o lift of $(T_1, T_2)$ $\rightarrow$  {\em strongly minimal And\^o lift of $(T_1, T_2)$.}
\end{itemize}
\end{enumerate}

Thus existence of a strongly minimal  Type I And\^o tuple for $(T_1^*, T_2^*)$ is equivalent to existence of a
strongly minimal strong Type II And\^o tuple for $(T_1, T_2)$.
\end{theorem}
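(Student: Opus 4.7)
The plan is to adapt the constructive proofs of Theorem \ref{T:Dmodel} and Theorem \ref{Thm:LiftFromTuple} to the strongly-minimal setting, where the bookkeeping simplifies because the ambient lift space is generated by iterating only the product isometry $\bV = \bV_1 \bV_2$ on the range of the embedding, rather than arbitrary words in $\bV_1, \bV_2$.

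For the forward direction of item (1), I would start from a strongly minimal Type I And\^o tuple $(\cF_*, \Lambda_*, P_*, U_*)$ for $(T_1^*, T_2^*)$, so $\Lambda_* \colon \cD_{T^*} \to \cF_*$ is unitary, and form the Douglas-model And\^o lift $(\bPi_D, \bV_{D,1}, \bV_{D,2})$ on $\bcK_D = \sbm{H^2(\cF_*) \\ \cQ_{T^*}}$ as in \eqref{DougAndoMod}. By Theorem \ref{T:Dmodel} this is an And\^o lift, and the intermediate identification \eqref{kcircmin-id} now reads
$$
\bcK_{D,00} = \bigvee_{n \ge 0} \bV_D^n \operatorname{Ran} \bPi_D = \begin{bmatrix} H^2(\operatorname{Ran} \Lambda_*) \\ \cQ_{T^*} \end{bmatrix} = \bcK_D,
$$
so the lift is strongly minimal. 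For the converse direction of (1), given a strongly minimal And\^o lift $(\bPi, \bV_1, \bV_2)$ of $(T_1, T_2)$ on $\bcK$, the converse portion of Theorem \ref{T:Dmodel} (which does not use full minimality until the very last step) still produces a Type I And\^o tuple $(\cD_{\bV^*}, \Lambda_*, P_*, U_*)$ for $(T_1^*, T_2^*)$ and an isometry $\Gamma_{22} \colon \cQ_{T^*} \to \cQ_{\bV^*}$ such that the space $\bcK_{00}$ admits the explicit description \eqref{Kmin'} inside the ambient BCL-model space. The hypothesis $\bcK = \bcK_{00}$ then forces $\operatorname{Ran} \Lambda_* = \cD_{\bV^*}$ and $\Gamma_{22} \cQ_{T^*} = \cQ_{\bV^*}$ simultaneously; the first of these is precisely strong minimality of the And\^o tuple, while the second identifies the unitary part of the lift with the canonical pair $(W_{\flat 1}, W_{\flat 2})$, putting the lift into the form \eqref{DougAndoMod}.

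For item (2), the forward direction begins with a strongly minimal strong Type II And\^o tuple $(\cF, \Lambda, P, U)$ for $(T_1, T_2)$ and forms the Sch\"affer-model lift on $\bcK_S = \sbm{\cH \\ H^2(\cF)}$ with product operator $\bV_S$ as in \eqref{Vcanonical'}. A direct induction on $n$ yields
$$
\bV_S^n \begin{bmatrix} h \\ 0 \end{bmatrix} = \begin{bmatrix} T^n h \\ \sum_{k=0}^{n-1} z^k \, \Lambda D_T T^{n-1-k} h \end{bmatrix},
$$
and inductively subtracting off the lower-order contributions shows that $\bigvee_{n \ge 0} \bV_S^n \operatorname{Ran} \bPi_S = \sbm{\cH \\ H^2(\operatorname{Ran} \Lambda)}$, which equals $\bcK_S$ exactly when $\Lambda$ is surjective. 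The converse of (2) invokes the analysis preceding Theorem \ref{Thm:LiftFromTuple}: starting from any strongly minimal And\^o lift, the BCL1-decomposition of $(\bV_1, \bV_2)|_{\bcK \ominus \cH}$ yields a strong Type II And\^o tuple $(\cF, \Lambda, P, U)$ and a commuting unitary pair $(Y_1, Y_2)$ on an auxiliary space $\cY$, realizing the lift in the form \eqref{AndoGenForm}--\eqref{canonicalV}. Since $\bV_S$ decouples the $\cY$-summand from the $\cH$-summand, $\bV_S^n \sbm{h \\ 0 \\ 0}$ always has zero $\cY$-component; strong minimality therefore forces $\cY = \{0\}$, and the same forward calculation then forces $\operatorname{Ran} \Lambda = \cF$.

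The final equivalence at the end of the theorem is then immediate: both existence conditions are equivalent to the existence of a strongly minimal And\^o lift of $(T_1, T_2)$, hence to each other. The main technical subtlety I anticipate lies in the converse directions, where one must verify that strong minimality pins down \emph{every} piece of the tuple data exactly (triviality of the auxiliary unitary summand, unitarity of $\Lambda$ or $\Lambda_*$, and unitarity of the intertwiner $\Gamma_{22}$) rather than merely imposing sufficient conditions; fortunately all the relevant identifications are already packaged inside the proofs of Theorems \ref{T:Dmodel} and \ref{Thm:LiftFromTuple}, so only the final matching step is genuinely new.
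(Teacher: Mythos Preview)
Your proposal is correct and follows essentially the same route as the paper. The only organizational difference is that the paper, in both (1) and (2), invokes uniqueness of the minimal isometric lift of the single contraction $T=T_1T_2$ as a black box---comparing the product isometry in the already-modeled And\^o lift against the canonical Douglas/Sch\"affer minimal lift and reading off that the intertwining unitary forces $\Lambda_*$ (resp.\ $\Lambda$) to be unitary---whereas you reach into the proofs of Theorems~\ref{T:Dmodel} and~\ref{Thm:LiftFromTuple} to use the explicit identifications $\cK_{00}=\sbm{H^2(\operatorname{Ran}\Lambda_*)\\ \Gamma_{22}\cQ_{T^*}}$ and $\bcK_{S,00}=\sbm{\cH\\ H^2(\operatorname{Ran}\Lambda)}$ directly; these are two packagings of the same computation.
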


\begin{proof}[Proof of (1):]
If $(\Pi, V_1,V_2)$ is a strongly minimal And\^o lift of $(T_1,T_2)$, then by definition $V=V_1V_2$ is a minimal isometric lift of $T=T_1T_2$. By Theorem \ref{T:Dmodel}, 
the And\^o lift $(\Pi, V_1,V_2)$ can be modeled as $(\bPi_D, \bV_{D,1}, \bV_{D,2})$ in \eqref{DougAndoMod} for some Douglas-minimal Type I And\^o tuple $(\cF_*, \Lambda_*,
P_*, U_*)$ for $(T_1^*, T_2^*)$.  Consider the minimal isometric lift $(\Pi_D,V_D)$ of 
$T$ acting on 
$\cK_D=\sbm{H^2(\cD_{T^*})\\ \cQ_{T^*}}$ as in \S \ref{S:Douglas}. By the unitary equivalence of any two Sz.-Nagy--Foias isometric lifts of a given contraction contractiion operator
and of the uniqueness of the implementing unitary transformation (see \cite[Theorem I.4.1]{Nagy-Foias}), there is a unique unitary 
\begin{align*}
\tau=\begin{bmatrix}
\tau'&\tau_{12}\\ \tau_{21}&\tau''
\end{bmatrix}:\begin{bmatrix}
H^2(\cD_{T^*})\\ \cQ_{T^*}
\end{bmatrix}\to\begin{bmatrix}
H^2(\cF_*)\\ \cQ_{T^*}
\end{bmatrix} 
\end{align*}such that
\begin{align}
\begin{bmatrix} \tau'&\tau_{12}\\ \tau_{21}&\tau'' \end{bmatrix}
\begin{bmatrix} M_z^{\cD_{T^*}}&0\\0&W_D \end{bmatrix} & =
\begin{bmatrix}  M_z^{\cF_*}&0\\0&W_D \end{bmatrix}  \begin{bmatrix}\tau'&\tau_{12}\\ \tau_{21}&\tau''\end{bmatrix}, \label{Inter1}\\  
\quad \begin{bmatrix}
\tau'&\tau_{12}\\ \tau_{21}&\tau'' \end{bmatrix}\begin{bmatrix} \cO_{\cD_{T^*},T^*}\\Q_{T^*}\end{bmatrix} &=
\begin{bmatrix} (I_{H^2}\otimes \Lambda_*)\cO_{\cD_{T^*},T^*}\\Q_{T^*} \end{bmatrix}.   \label{Inter2}
\end{align}
Applying Part (2) of Lemma \ref{L:AuxLemma} to \eqref{Inter1} we conclude that 
$$
\tau=\begin{bmatrix}
I_{H^2}\otimes \Lambda'&0\\ 0&\tau''
\end{bmatrix}
$$
for some unitary operators $\Lambda':\cD_{T^*}\to\cF_*$ and $\tau'':\cQ_{T^*}\to\cQ_{T^*}$. Equation \eqref{Inter2} therefore yields
\begin{align}
&(I_{H^2}\otimes\Lambda')\cO_{D_{T^*},T^*}=(I_{H^2}\otimes\Lambda_*)\cO_{D_{T^*},T^*},  \label{Eqn1} \\ 
&\tau'' Q_{T^*}=Q_{T^*} \quad \tau'' W_D=W_D\tau''.   \label{Eqn2}
\end{align}
Equating the constant coefficients in the series forms of \eqref{Eqn1}, we get $\Lambda'=\Lambda_*$. Consequently, the Type I And\^o tuple 
$(\cF_*,\Lambda_*,P_*,U_*)$ is strongly minimal.

Conversely, suppose $(\cF_*,\Lambda_*,P_*,U_*)$ is a Type I And\^o tuple of $(T_1^*,T_2^*)$ such that $\Lambda_*:\cD_{T^*}\to\cF_*$ is a unitary. 
By the forward direction of Theorem \ref{T:Dmodel}, the isometric operators $\bPi_D ,\bV_{D,1}, \bV_{D,2}$ given by
\begin{align*}
&   \bPi_D = \begin{bmatrix} (I_{H^2} \otimes \Lambda_*)  \cO_{D_{T^*}, T^*} \\ Q_{T^*} \end{bmatrix} \colon \cH \to 
\begin{bmatrix} H^2(\cF_*)\\ \cQ_{T^*} \end{bmatrix}, \\
 & (\bV_{D,1}, \bV_{D,2}) =  \left( \begin{bmatrix} M_{U_*^*(P_*^\perp + z P_*)} & 0 \\ 0 & W_{\flat 1} \end{bmatrix},
  \begin{bmatrix} M_{(P_* + z P_*^\perp) U_*} & 0 \\ 0 & W_{\flat 2} \end{bmatrix} \right) \text{ on } 
\begin{bmatrix} H^2(\cF_*)\\ \cQ_{T^*} \end{bmatrix}  
 \end{align*}
 constitute an And\^o lift of $(T_1,T_2)$. Note that the operator
 $$
\tau:=\begin{bmatrix}
I_{H^2}\otimes \Lambda_*&0\\0&I_{\cQ_{T^*}}
\end{bmatrix}:\begin{bmatrix}
H^2(\cD_{T^*})\\ \cQ_{T^*}
\end{bmatrix}\to\begin{bmatrix}
H^2(\cF_*)\\ \cQ_{T^*}
\end{bmatrix}
 $$ 
 is unitary from  $\sbm{ H^2(\cD_{T^*} \\ \cQ_{T^*} }$ onto $\sbm{ H^2(\cF_*) \\ \cQ_{T^*}}$ since $\Lambda_*$ is unitary from $\cD_{T^*}$ onto $\cF_*$, and
 has the properties
\begin{align*}
&\tau\Pi_D=
\begin{bmatrix}
I_{H^2}\otimes \Lambda_*&0\\0&I_{\cQ_{T^*}}
\end{bmatrix}\begin{bmatrix} \cO_{D_{T^*}, T^*} \\ Q_{T^*} \end{bmatrix}=
\begin{bmatrix} (I_{H^2} \otimes \Lambda_*)  \cO_{D_{T^*}, T^*} \\ Q_{T^*} \end{bmatrix}=\Pi\quad\mbox{and}\\
&\tau V_D=\begin{bmatrix}
I_{H^2}\otimes \Lambda_*&0\\0&I_{\cQ_{T^*}}
\end{bmatrix}\begin{bmatrix}
M_z^{\cD_{T^*}}&0\\0&W_D
\end{bmatrix}=\begin{bmatrix}
M_z^{\cF_*}&0\\0&W_D
\end{bmatrix}\begin{bmatrix}
I_{H^2}\otimes \Lambda_*&0\\0&I_{\cQ_{T^*}} \end{bmatrix}=V\tau,
\end{align*}
where $V=V_1V_2$ and $(\Pi_D,V_D)$ is the Douglas-model minimal isometric lift of $T=T_1T_2$ as in \S \ref{S:Douglas}. Consequently, the And\^o lift 
$(\Pi, V_1,V_2)$ of $(T_1,T_2)$ is strongly minimal. This completes the proof of (1).

{\sf{Proof of (2):}}  For this proof we use the Sch\"affer model rather than the Douglas model. Suppose that 
 $(\Pi, V_1,V_2)$ is a strongly minimal And\^o lift of $(T_1,T_2)$. Since a strongly minimal And\^o lift is obviously minimal, by the converse part of 
 Theorem \ref{Thm:LiftFromTuple} we know that
 there is a strong Type II And\^o tuple $(\cF,\Lambda,P,U)$ of $(T_1,T_2)$ so that the lift $(\Pi, V_1,V_2)$ is unitarily equivalent to the Sch\"affer-model lift
 $(\bPi_S, \bV_{S,1}, \bV_{S,2})$ \eqref{Conv-AndoGenForm'} determined by $(\cF, \Lambda, P, U)$
 \begin{align*}
 & \bPi_S = \begin{bmatrix} I_\cH \\ 0 \end{bmatrix} \colon \cH \to \bcK_S = \begin{bmatrix} \cH \\ H^2(\cF) \end{bmatrix}, \\
& (\bV_{S,1}, \bV_{S,2}) =
 \left(\begin{bmatrix} T_1 & 0 \\ \bev_{0,\cF}^* PU\Lambda D_T & M_{(P^\perp+zP)U} \end{bmatrix},
 \begin{bmatrix} T_2 & 0 \\ \bev_{0,\cF}^*  U^*P^\perp \Lambda D_T & M_{U^*(P+zP^\perp)} \end{bmatrix}\right)
\end{align*}
and also by \eqref{Vcanonical'}
\begin{align}\label{CanoIso}
\bV_S= \bV_{S,1}  \bV_{S,2}  = \begin{bmatrix} T & 0 \\  \bev_{0,\cF}^* \Lambda D_T & M_z^\cF \end{bmatrix} \colon \begin{bmatrix} \cH \\ H^2(\cF) \end{bmatrix}\
\to \begin{bmatrix} \cH \\ H^2(\cF) \end{bmatrix}.
\end{align}
By hypothesis, $V$ is a minimal lift of $T=T_1T_2$. On the other hand, consider the Sch\"affer-model minimal isometric lift  $(\Pi_S, V_S)$ of $T=T_1T_2$, given by
\begin{align}\label{SchafferIso}
& \Pi_S = \begin{bmatrix} I_\cH \\ 0 \end{bmatrix} \colon \cH \to \cK_S:= \begin{bmatrix} \cH \\ H^2(\cD_T) \end{bmatrix} , \quad
& V_S=\begin{bmatrix}
     T&0\\ \bev_{0,\cD_T}^*D_T & M_z^{\cD_T} \end{bmatrix}  \text{ on } \begin{bmatrix}  \cH \\ H^2(\cD_T) \end{bmatrix}
 \end{align}
as discussed in \S \ref{S:Schaffer}. By the uniqueness of minimal Sz.-Nagy--Foias lifts of single contraction operators \cite[Theorem I.4.1]{Nagy-Foias}, there exists a unitary
$$
\tau = \begin{bmatrix} \tau_{11} & \tau_{12} \\ \tau_{21} & \tau_{22} \end{bmatrix} \colon 
\begin{bmatrix} \cH \\ H^2(\cD_T) \end{bmatrix} \to  \begin{bmatrix}  \cH \\ H^2(\cF) \end{bmatrix}
$$
intertwining the isometries in \eqref{SchafferIso} and \eqref{CanoIso} and such that $\tau|_\cH = I_\cH$. 
Hence $\tau$ is unitary with $\tau_{11} = I_\cH$ forcing also $\tau_{12} = 0$, $\tau_{21} = 0$, so now 
$\tau = \sbm{ I_\cH & 0 \\ 0 & \tau_{22}}$ with $\tau_{22} \colon H^2(\cD_T) \to H^2(\cF)$ unitary.
The intertwining condition thus becomes
\begin{align}\label{Inter}
\begin{bmatrix} I_\cH & 0 \\ 0 & \tau_{22} \end{bmatrix}
\begin{bmatrix}  T&0\\ \bev_{0, \cD_T}^*D_T & M_z^{\cD_T}  \end{bmatrix}
=\begin{bmatrix} T&0\\ \bev_{0,\cF}^* \tau_{22} D_T & M_z^\cF \end{bmatrix}
\begin{bmatrix} I_\cH & 0 \\ 0 & \Lambda' \end{bmatrix}.
\end{align}
Comparing the (2,1) and (2,2) entries in the above matrices then gives
$$
\tau_{22} \bev_{0, \cD_T}^* D_T = \bev_{0, \cF}^* \Lambda D_T, \quad \tau_{22} M_z^{\cD_T} = M_z^\cF \tau_{22}.
$$
From the first equation we see that $\tau_{22}$ takes constants into constants.  From the second equation we see that
$\tau_{22}$ is a multiplication operator.  Putting these two conditions together says that $\tau_{22}$ is multiplication by a constant, i.e.,
$\tau_{22}$ has the form $\tau_{22} = I_{H^2}\otimes\Lambda'$ for some $\Lambda' \colon \cD_{T} \to \cF$.
Further inspection of the first equation tells us that the constant is $\Lambda$:  $\Lambda' = \Lambda$.   Moreover, as observed earlier, $\tau_{22}$
is unitary, i.e., the operator $I_{H^2}\otimes\Lambda$ is unitary.  This then forces $\Lambda$ to be unitary as an operator from $\cD_T$ to $\cF$.
This in turn means that the tuple $(\cF,\Lambda,P,U)$ is also strongly minimal as a Type II And\^o tuple.

Conversely, let $(\cF,\Lambda,P,U)$ be a strongly minimal strong Type II And\^o tuple of the pair $(T_1,T_2)$. By the forward direction of Theorem \ref{Thm:LiftFromTuple}, 
for any Type II And\^o tuple $(\cF,\Lambda,P,U)$ of $(T_1,T_2)$, $(\bPi_S, \bV_{S,1}, \bV_{S,2})$  given by \eqref{Conv-AndoGenForm'} 
is an And\^o lift of $(T_1, T_2)$ with $\bV_S:= \bV_{S,1} \bV_{S,2}$ as in \eqref{Vcanonical'}.
The strongly minimal property of the And\^o tuple $(\cF, \Lambda, P, U)$ means that the operator $\Lambda \colon \cD_T \to \cF$ is unitary, hence also
the operator
$$
 \tau:= \begin{bmatrix} I_\cH & 0 \\ 0 & I_{H^2} \otimes \Lambda \end{bmatrix} \colon \cK_S:= \begin{bmatrix} \cH \\ H^2(\cD_T) \end{bmatrix} \to 
 \begin{bmatrix} \cH \\ H^2(\cF) \end{bmatrix}  =: \bcK_S
$$
is unitary.  By definition we have
$$
 \tau \Pi_S =  \begin{bmatrix} I_\cH & 0 \\ 0 & I_{H^2} \otimes \Lambda \end{bmatrix}  \begin{bmatrix} I_\cH \\ 0 \end{bmatrix} 
 = \begin{bmatrix} I_\cH \\ 0 \end{bmatrix} = \bPi_S
 $$
 and 
 \begin{align*}
 \tau V_S & = \begin{bmatrix} I_\cH & 0 \\ 0 & I_{H^2} \otimes \Lambda \end{bmatrix} \begin{bmatrix}  T & 0 \\ \bev_{0, \cD_T}^* D_T & M_z^{\cD_T} \end{bmatrix}
=   \begin{bmatrix}  T & 0 \\ \bev_{0, \cF}^* \Lambda D_T & M_z^\cF \end{bmatrix} \begin{bmatrix} I_{H^2} & 0 \\ 0 & I_{H^2} \otimes \Lambda \end{bmatrix}  \\
& = \bV_S   \tau 
\end{align*}
 i.e., the Sz.-Nagy--Foias lift $(\bPi_S, \bV_S)$ of $T$ is unitarily equivalent (via $\tau$) to the Sch\"affer-model minimal Sz.-Nagy--Foias lift $(\Pi_S,  V_S)$ of $T$, and hence
 the lift $(\bPi_S, \bV_S)$ must itself also be minimal, meaning that the original And\^o lift $(\Pi, V_1, V_2)$ of $(T_1, T_2)$ is strongly minimal.
 This completes the proof of the theorem.
 \end{proof} 
 
 We have noted in Examples \ref{E:Dex} (Douglas version) and \ref{E:Sex} (Sch\"affer version) that commuting isometric operator-pairs with product operator equal to a shift
and commuting coisometric operator-pairs with product operator equal to the adjoint of a shift have strongly minimal Type I And\^o tuples (respectively,
strongly minimal strong Type II And\^o tuples).  The restriction that the product be a shift or adjoint of a shift is not essential:  the result still holds without this restriction.
On the other hand we have seen in item (3) of Theorem \ref{T:regfact}  that both $T_1 \cdot T_2$ and $T_2 \cdot T_1$ are regular factorizations if
$(T_1, T_2)$ is a commutative isometric operator-pair or a commutative co-isometric operator pair.  The next result shows that this confluence of observations is no accident.

\begin{theorem}\label{T:SMinReg}
A commuting contractive pair $(T_1, T_2)$ has a strongly minimal And\^o lift if and only if both factorizations $T_1 \cdot T_2$ and $T_2 \cdot T_1$ are regular. 
In this case, there is a unique strongly minimal canonical-form special And\^o tuple of $(T_1,T_2)$.
\end{theorem}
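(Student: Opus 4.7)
The plan is to reduce the theorem to an equivalent statement about strongly minimal strong Type II And\^o tuples using Theorem \ref{T:sminLift=sminTuple}(2), and then relate those to the regularity of the two factorizations via the canonical construction of special And\^o tuples in Section \ref{S:SpcATuple}. The forward implication, which I expect to be the main obstacle, requires extracting regularity out of the abstract And\^o-tuple axioms on $(\cF, \Lambda, P, U)$ when $\Lambda$ is forced to be surjective.

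For the easy direction $(\Leftarrow)$, assume both $T_1\cdot T_2$ and $T_2\cdot T_1$ are regular factorizations. By Scenario 1 of Remark \ref{R:specATuple} the operator $U_0$ of \eqref{U0} is already unitary on $\cD_{T_1}\oplus\cD_{T_2}$, and the canonical special And\^o tuple
\[
(\cF_\dag,\Lambda_\dag,P_\dag,U_\dag)=\left(\cD_{T_1}\oplus\cD_{T_2},\ \Lambda_\dag,\ \sbm{I_{\cD_{T_1}}&0\\0&0},\ U_0\right)
\]
has $\Lambda_\dag\colon\cD_T\to\cD_{T_1}\oplus\cD_{T_2}$ unitary onto $\cF_\dag$, hence is strongly minimal. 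By Theorem \ref{Thm:special-canonical} it is in fact a strong Type II And\^o tuple for $(T_1,T_2)$, so Theorem \ref{T:sminLift=sminTuple}(2) yields a strongly minimal And\^o lift.

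For the harder direction $(\Rightarrow)$, suppose $(T_1,T_2)$ has a strongly minimal And\^o lift. Theorem \ref{T:sminLift=sminTuple}(2) produces a strong Type II And\^o tuple $(\cF,\Lambda,P,U)$ with $\Lambda\colon\cD_T\to\cF$ unitary. The isometry conditions (ii) of Definition \ref{AndoTuple} say $\|PU\Lambda D_T h\|=\|D_{T_1}h\|$ and $\|P^\perp \Lambda D_T h\|=\|D_{T_2}h\|$ for every $h\in\cH$; so the closures of $D_{T_j}h\mapsto PU\Lambda D_T h$ and $D_{T_2}h\mapsto P^\perp\Lambda D_T h$ yield isometries $a_1\colon\cD_{T_1}\to P\cF$ and $a_2\colon\cD_{T_2}\to P^\perp\cF$. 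Since $P\cF\perp P^\perp\cF$, the map $\tilde a\colon\cD_{T_1}\oplus\cD_{T_2}\to\cF$ defined by $\tilde a(d_1\oplus d_2)=a_1d_1+a_2d_2$ is isometric. The strengthened commutativity (i$^\prime$) rewritten as $PU\Lambda D_T T_2+P^\perp\Lambda D_T=\Lambda D_T$ and $P^\perp\Lambda D_T T_1+PU\Lambda D_T=U\Lambda D_T$ (the latter obtained by multiplying the second half of (i$^\prime$) by $U$) translate respectively into
\[
\tilde a\,\Lambda_\dag=\Lambda,\qquad \tilde a\,\Lambda_\dag'=U\Lambda,
\]
where $\Lambda_\dag D_T h=D_{T_1}T_2h\oplus D_{T_2}h$ and $\Lambda_\dag'D_T h=D_{T_1}h\oplus D_{T_2}T_1h$ are the canonical isometries of \eqref{defLambda} attached to the two orderings. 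Because $\Lambda$ and $U\Lambda$ are unitary onto $\cF$, each of these factorizations forces $\tilde a$ to be surjective and hence unitary, and then forces $\Lambda_\dag$ and $\Lambda_\dag'$ to be unitary onto $\cD_{T_1}\oplus\cD_{T_2}$. This is precisely the regularity of the factorizations $T_1\cdot T_2$ and $T_2\cdot T_1$.

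For the uniqueness claim, suppose both factorizations are regular and $(\cF_\dag',\Lambda_\dag',P_\dag',U_\dag')$ is any strongly minimal canonical-form special And\^o tuple for $(T_1,T_2)$. By the canonical-form recipe (Definition \ref{D:SpcATuple}) we have $\cF_\dag'=\cD_{T_1}\oplus\cD_{T_2}\oplus\cF_{\dag 0}'$ with $\Lambda_\dag'$ the fixed map \eqref{defLambda} and $\operatorname{Ran}\Lambda_\dag'\subseteq\cD_{T_1}\oplus\cD_{T_2}$; strong minimality ($\Lambda_\dag'$ surjective) together with regularity ($\operatorname{Ran}\Lambda_\dag'=\cD_{T_1}\oplus\cD_{T_2}$) forces $\cF_{\dag 0}'=\{0\}$, hence $\cF_\dag'=\cD_{T_1}\oplus\cD_{T_2}$. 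The projection $P_\dag'$ is then the canonical $\sbm{I&0\\0&0}$, and by Scenario 1 of Remark \ref{R:specATuple} the partial isometry $U_0$ of \eqref{U0} is already unitary on $\cD_{T_1}\oplus\cD_{T_2}$, leaving $U_\dag'=U_0$ as the unique permissible choice. Thus the strongly minimal canonical-form special And\^o tuple is unique, completing the proof.
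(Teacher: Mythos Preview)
Your proof is correct. The backward direction and the uniqueness statement match the paper's approach (the paper simply invokes Corollary \ref{C:UniqueMinAndoTuple}(2) for uniqueness, while you spell out the same reasoning directly).

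The forward direction, however, is genuinely different. The paper works with the strongly minimal And\^o lift $(\Pi,V_1,V_2)$ itself: it observes that $D_{V_j^*}\cK=D_{V_j^*}\cH$ (using $D_{V_j^*}V_1^nV_2^n=0$ for $n\ge1$), builds unitaries $\omega_j\colon\cD_{T_j^*}\to\cD_{V_j^*}$, and then invokes Theorem \ref{T:regfact}(3) (regularity for coisometric pairs) applied to $(V_1^*,V_2^*)$ to produce a unitary $\sigma$, finally identifying $\Lambda_{\dag*}$ as a composite of these unitaries to conclude regularity of $T_1^*\cdot T_2^*$; the other factorization is handled by reapplying the argument to $(T_2,T_1)$. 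By contrast, you never touch the lift itself after invoking Theorem \ref{T:sminLift=sminTuple}(2): you extract everything from the axioms of a strong Type II And\^o tuple with $\Lambda$ unitary. The isometry conditions (ii) give you the block-diagonal isometry $\tilde a$, and the two halves of the strengthened commutativity (i$^\prime$) identify $\tilde a\Lambda_\dag=\Lambda$ and $\tilde a\Lambda_\dag'=U\Lambda$ simultaneously, so both regularities fall out at once. Your argument is more self-contained (it avoids the external appeal to Theorem \ref{T:regfact}(3)) and arguably cleaner; the paper's argument is more geometric and makes transparent the role of the lift. A minor typo: in defining $a_1$ you wrote ``$D_{T_j}h\mapsto PU\Lambda D_T h$'' where $D_{T_1}h$ is intended.
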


\begin{proof}
We note that the last statement in the theorem is simply a direct application of item (2) in Corollary \ref{C:UniqueMinAndoTuple}.

Suppose that $(T_1, T_2)$ is a commuting contractive pair such that both $T_1 \cdot T_2$ and $T_2 \cdot T_1$ are regular factorizations.
This means that both spaces $\cD_{U_0}$ and $\cR_{U_0}$ given by \eqref{U0dom-codom} are equal to the whole space $\cD_{T_1} \oplus \cD_{T_2}$.
Thus both the operator $\Lambda_\dagger$ and the operator $\Lambda_\dagger^\ff$ given on a dense set by
$$
  \Lambda_\dagger^\ff \colon D_T h \mapsto D_{T_1} h \oplus D_{T_2} T_1 h \text{ for } h \in \cH
 $$
define unitary operators from $\cD_T$ onto $\cD_{T_1} \oplus \cD_{T_2}$, and furthermore the operator $U_0$ defined densely by \eqref{U0}
extends to define a unitary operator from $\cF_\dagger:= \cD_{T_1} \oplus \cD_{T_2}$ onto itself. Define a projection $P_\dagger$
on $\cD_{T_1} \oplus \cD_{T_2}$ by
$$
   P_\dagger \colon f_1 \oplus f_2 = f_1 \oplus 0 \text{ for } f_1 \in \cD_{T_1}, \, f_2 \in \cD_{T_2}.
$$
Then the collection
$$
(\cF_\dagger = \cD_{T_1} \oplus \cD_{T_2}, \Lambda_\dagger, P_\dagger, U_\dagger = U_0)
$$
is a canonical-form special And\^o tuple for $(T_1, T_2)$  which is also strongly minimal as an And\^o tuple. By Theorem \ref{Thm:special-canonical}, 
this is a strong Type II And\^o tuple for $(T_1,T_2)$.  By Part (2) of Theorem \ref{T:sminLift=sminTuple}, $(T_1,T_2)$ has a strongly minimal And\^o lift.

Conversely, suppose that $(T_1,T_2)$ acting on $\cH$ has a strongly minimal And\^o lift which we take to be of Sch\"affer type $(\Pi = \iota_\cH,  V_1,V_2)$ with
$\iota_\cH \colon \cH \to \cK$ the inclusion map and with 
$(V_1, V_2)$ acting on the ambient space for the minimal isometric lift of $T = T_1 T_2$:
$$
\cK=\bigvee_{n\geq0}  V_1^nV_2^n \cH.
$$ 
We first note that with $V_0:=V=V_1V_2$,
\begin{equation}   \label{note1}
D_{V_j^*}\cK=D_{V_j^*}\cH \quad\mbox{for}\quad j=0,1,2.
\end{equation}
Indeed, if $n\geq1$, then since $D_{V_j^*}=(I_\cK - V_jV_j^*)$ we have
\begin{align*}
D_{V_1^*}V_1^nV_2^n=0=D_{V_2^*}V_2^nV_1^n  \text{ and }  D_{V^*}V^n=0.
\end{align*}
Furthermore note that the map
$\omega_j \colon \cD_{T_j^*} \to \cD_{V_j^*}$ given by
$$
 \omega_j \colon D_{T_j^*} h \mapsto D_{V_j^*} h \text{ for } h \in \cH
$$
for $j=0,1,2$ (here we set $T_0 = T_1 T_2$)
is isometric:  indeed, simply note that
\begin{align*}
  \| D_{V_j^*} h \|^2 & = \langle (I - V_j V_j^* ) h, h \rangle = \| h \|^2 - \| V_j^* h \|^2 \\
  &  = \| h \|^2 - \| T_j^* h \|^2 \text{ (by the lifting property) }  \\
 & =  \langle (I - T_j T_j^*) h, h \rangle = \| D_{T_j^*}^2 h\|^2.
 \end{align*}
Combining this with the observation \eqref{note1}, we see that each of the maps $\omega_j$ is unitary from $\cD_{T_j^*}$ onto $\cD_{V_j^*}$.
Focusing now just on the case $j=1,2$ and defining the map
$ \tau \colon \cD_{T_1^*} \oplus  \cD_{T_2^*} \to  \cD_{V_1^*} \oplus  \cD_{V_2^*}$ defined densely by
$$
 \tau \colon D_{T_1^*}h \oplus  D_{T_2^*}k \mapsto D_{V_1^*}h \oplus \cD_{V_2^*}k,
$$
we see that $\tau$ so defined is also unitary. 

On the other hand, since $(V_1^*, V_2^*)$ is a commuting coisometric pair, we know by item (3) in Theorem \ref{T:regfact} that $V_1^*\cdot V_2^*$ is a regular factorization. 
Hence the operator
$ \sigma: \cD_{V^*}\to  \cD_{V_1^*} \oplus  \cD_{V_2^*}$  defined densely by 
$$
 \sigma \colon D_{V^*}h \mapsto  D_{V_1^*}V_2^* h \oplus D_{V_2^*}  h
$$
is unitary as well. Now it just remains to read off from the definitions of the unitary operators $\omega_0, \tau$ and $\sigma$ that the isometry
$$
\Lambda_{\dag*}: D_{T^*}h  \mapsto D_{T_1^*}T_2^*  h \oplus  D_{T_2^*} h
$$
coincides with $\tau^* \circ \sigma \circ \omega_0$ on a dense set, viz., $\{D_{T^*}h: h\in\cH\}$.
Therefore 
$\Lambda_{\dag*}$ must be  unitary and hence by definition, the factorization $T_1^*\cdot T_2^*$ is regular, or equivalently $T_2 \cdot T_1$ is so. Now note that if 
$(T_1,T_2)$ has a strongly minimal And\^o lift, then so does the pair $(T_2,T_1)$. Thus proceeding as above for the pair $(T_2,T_1)$, one can conclude that 
the factorization $T_1\cdot T_2$ is regular. This completes the proof.
\end{proof}

It is not clear wheather the existence of a strongly minimal And\^o lift for the commutative contractive pair $(T_1, T_2)$ is eqivalent to the existence of a strongly minimal
And\^o lift for the adjoint pair $(T_1^*, T_2^*)$.  For the companion notion of existence  of a strongly minimal And\^o lift, this invariance-under-adjoint property
can be worked out via a systematic calculation as follows.

\begin{theorem}\label{T:AdjointT}
Let $(T_1,T_2)$ be a commuting contractive pair acting on $\cH$. Then $(T_1,T_2)$ has a strongly minimal And\^o lift if and only if $(T_1^*,T_2^*)$ has a 
strongly minimal And\^o lift.
\end{theorem}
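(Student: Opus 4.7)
The plan is to reduce the statement to Theorem~\ref{T:SMinReg} and then exploit the Sz.-Nagy--Foias characterization of regular factorizations recalled in the proof of item~(3) of Theorem~\ref{T:regfact}. First I would invoke Theorem~\ref{T:SMinReg} twice: it says that $(T_1, T_2)$ has a strongly minimal And\^o lift exactly when both factorizations $T = T_1 \cdot T_2$ and $T = T_2 \cdot T_1$ are regular, and likewise that $(T_1^*, T_2^*)$ has a strongly minimal And\^o lift exactly when both $T^* = T_1^* \cdot T_2^*$ and $T^* = T_2^* \cdot T_1^*$ are regular. So the whole theorem reduces to showing that these two pairs of regularity hypotheses coincide.

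Next I would apply the Sz.-Nagy--Foias criterion \cite{SzNF-RegularFactor} that a product factorization $AB = A \cdot B$ of contractions is regular if and only if $\cD_A \cap \cD_{B^*} = \{0\}$. Running this through each of the four factorizations of interest gives
\begin{align*}
T_1 \cdot T_2 \text{ regular} &\iff \cD_{T_1} \cap \cD_{T_2^*} = \{0\}, \\
T_2 \cdot T_1 \text{ regular} &\iff \cD_{T_2} \cap \cD_{T_1^*} = \{0\}, \\
T_1^* \cdot T_2^* \text{ regular} &\iff \cD_{T_1^*} \cap \cD_{T_2} = \{0\}, \\
T_2^* \cdot T_1^* \text{ regular} &\iff \cD_{T_2^*} \cap \cD_{T_1} = \{0\}.
\end{align*}
By the symmetry of set intersection, the first and fourth lines record the same condition on $\cH$, and so do the second and third. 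Hence both regularity hypotheses hold for $(T_1, T_2)$ if and only if both hold for $(T_1^*, T_2^*)$, which is what is required.

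The argument has essentially no obstacle beyond accepting its two inputs: Theorem~\ref{T:SMinReg} and the Sz.-Nagy--Foias criterion; the entire substance of the theorem is the manifest symmetry of intersection in the latter. If one wished to avoid citing \cite{SzNF-RegularFactor} as a black box, the only substantive task would be to verify directly from the definition that surjectivity of the isometry $Z \colon \cD_T \to \cD_{T_1} \oplus \cD_{T_2}$ of \eqref{Z} attached to $T = T_1 \cdot T_2$ is equivalent to surjectivity of its counterpart attached to $T^* = T_2^* \cdot T_1^*$; this in turn comes down to identifying $(\operatorname{Ran} Z)^\perp \subset \cD_{T_1} \oplus \cD_{T_2}$ with the subspace $\cD_{T_1} \cap \cD_{T_2^*}$ of $\cH$, and similarly for the adjoint factorization. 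Combining the present theorem with part~(2) of Theorem~\ref{T:sminLift=sminTuple} then yields, as an immediate corollary, that existence of a strongly minimal strong Type~II And\^o tuple for $(T_1, T_2)$ is equivalent to the existence of one for $(T_1^*, T_2^*)$.
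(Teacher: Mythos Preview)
Your proof is correct and takes a genuinely different route from the paper. The paper gives a direct geometric construction: starting from a strongly minimal And\^o lift $(\Pi,V_1,V_2)$ of $(T_1,T_2)$, it extends $(V_1,V_2)$ to a commuting unitary pair $(W_1,W_2)$ on a larger space via Lemma~\ref{L:special-ext}, then shows that the space $\cK' = \bigvee_{n\ge 0} W^{*n}\operatorname{Ran}\Pi$ is $(W_1^*,W_2^*)$-invariant using the triangular structure of the minimal unitary dilation, so that $(W_1^*,W_2^*)|_{\cK'}$ furnishes a strongly minimal And\^o lift of $(T_1^*,T_2^*)$. Your argument instead reduces everything to Theorem~\ref{T:SMinReg} and the Sz.-Nagy--Foias criterion $\cD_A \cap \cD_{B^*} = \{0\}$ for regularity of $A\cdot B$, after which the result is just the symmetry of intersection.

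Your approach is considerably shorter and arguably more transparent, since all the work has already been done in Theorem~\ref{T:SMinReg} and in the cited criterion (which the paper itself invokes in the proof of Theorem~\ref{T:regfact}(3)). The paper's approach, by contrast, is constructive: it actually exhibits the lift for the adjoint pair, and it does not depend on the external result from \cite{SzNF-RegularFactor}. So your proof buys brevity at the cost of one additional black box; the paper's buys self-containment and an explicit model at the cost of a longer geometric argument.
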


\begin{proof}
Note that the two-way implications follow by symmetry once we establish one of them. We suppose that $(T_1,T_2)$ has a strongly minimal And\^o lift and prove that 
then so does $(T_1^*,T_2^*)$.  Let $(\Pi, V_1,V_2)$ be a strongly minimal And\^o lift of $(T_1,T_2)$, i.e., $V_1,V_2$ act on 
$\cK=\bigvee_{n\geq 0}\{V_1^nV_2^n\operatorname{Ran}\Pi \}$.  By Lemma \ref{L:special-ext}, $(V_1,V_2)$ has a unitary extension $
(W_1,W_2)$ acting on the space
$$
\widetilde\cK=\bigvee_{n\in\mathbb Z}\{W_1^nW_2^n\operatorname{Ran}\Pi\}.
$$ 
Let us set
\begin{align}\label{setup}
\cK':=\bigvee_{n\geq0} W_1^{*n}W_2^{*n}\operatorname{Ran}\Pi=\bigvee_{n\geq0} W^{*n}\operatorname{Ran}\Pi,
\end{align}
where we use the notation $W=W_1W_2$. We argue that $\cK'$ is a $(W_1^*,W_2^*)$-invariant subspace; the geometry of the minimal dilation space 
as discussed in Section 2.3 will be used here. 

Let us consider the spaces
\begin{align*}
\cK_+=\cK\ominus\operatorname{Ran}\Pi \quad\mbox{and}\quad \cK_-=\widetilde\cK \ominus \cK.
\end{align*}
This induces a three-fold orthogonal decomposition of the space $\cK'$:
\begin{equation}   \label{3decom}
\widetilde \cK  =  \cK_- \oplus \operatorname{Ran} \Pi \oplus \cK_+.
\end{equation}
Since $(W_1, W_2, W)$ is an extension of $(V_1, V_2, V = V_1 V_2)$ and $(V_1, V_2)$ is an isometric lift of $(T_1, T_2)$ we see that
$\cK =  \operatorname{Ran} \Pi \oplus  \cK_+$ is invariant for $(W_1, W_2, W)$.  From the facts that $(V_1, V_2)$ is the restriction of
$(W_1, W_2)$ to $\cK$ and that $(V_1, V_2)$ is a lift for $(T_1, T_2)$ on $\cK$ (with embedding operator $\Pi$),   we see that $\cK_+$
is also invariant for $(W_1, W_2, W)$ and furthermore, that, with respect to the
three-fold orthogonal decomposition \eqref{3decom} of the space $\cK'$, $(W_1, W_2, W)$ have block lower-triangular  $3 \times 3$ matrix decompositions of the form
\begin{equation}   \label{matrixWs}
W = \begin{bmatrix} * & 0 & 0 \\ * & \Pi T \Pi^* & 0 \\ * & * & * \end{bmatrix}, \quad
W_j =  \begin{bmatrix} * & 0 & 0 \\ * & \Pi T_j \Pi^* & 0 \\ * & * & * \end{bmatrix} \text{ for } j = 1,2.
\end{equation}
From these lower-triangular decompositions, we see that the space
$\cK_- \oplus \operatorname{Ran} \Pi$ is a $(W_1^*, W_2^*, W^*)$-invariant subspace.

We claim next that 
\begin{equation}   \label{claim}
\cK_- \oplus \operatorname{Ran}\Pi  =  \bigvee_{n\geq 0}W^{*n}\operatorname{Ran}\Pi  =: \cK'
\end{equation}
The containment
$$
\cK_- \oplus \operatorname{Ran}\Pi  \supset\bigvee_{n\geq 0}W^{*n}\operatorname{Ran}\Pi.
$$ 
is clear since we have seen that $\cK_- \oplus \operatorname{Ran} \Pi$ is invariant for $W^*$.
But from the definitions we have 
$$
\widetilde\cK=\bigvee_{n\in\mathbb Z}W^n\operatorname{Ran}\Pi,   \quad 
\operatorname{Ran} \Pi \oplus \cK_+ =\bigvee_{n\geq 0}W^n\operatorname{Ran}\Pi.
$$
Combining these forces us to the conclusion that in fact we recover $\cK_-  \oplus \operatorname{Ran} \Pi \subset \widetilde \cK$ as
$$
 \cK_-  \oplus \operatorname{Ran} \Pi = \bigvee_{n \le 0} W^n \operatorname{Ran} \Pi = \bigvee_{n \ge 0} W^{*n}  \operatorname{Ran} \Pi
$$
and the claim \eqref{claim} follows.    Therefore 
$$
(V_1',V_2'):=(W_1^*,W_2^*)|_{\cK'}
$$ 
is a pair of commuting isometries.  By taking adjoints of the block matrices in \eqref{matrixWs} we see that $(\Pi,  V_1',V_2')$ is an And\^o lift of 
$(T_1^*,T_2^*)$ acting on the space $\cK_- \oplus \operatorname{Ran} \Pi =  \cK'$ equal to the space for the minimal isometric lift for $T^*$,
i.e., $(\Pi, V_1',, V_2')$ is a strongly minimal And\^o lift of $(T_1^*, T_2^*)$ as wanted.
\end{proof}

Let us recall that the first part of Theorem \ref{T:sminLift=sminTuple} uses the Douglas model to obtain a criterion for $(T_1, T_2)$ to have a strongly minimal And\^o lift 
in terms of the existence of a particular kind of  And\^o tuple for  $(T_1^*, T_2^*)$,  while the second part uses the Sch\"affer model to obtain a criterion for such a 
strongly minimal And\^o lift,  but now in terms of the existence of a particular type of And\^o tuple for $(T_1, T_2)$. But by Theorem \ref{T:AdjointT} we know that the 
existence of a strongly minimal And\^o lift for $(T_1, T_2)$ is equivalent to the existence of such a lift
for $(T_1^*, T_2^*)$.  Hence we may interchange $(T_1, T_2)$ with $(T_1^*, T_2^*)$ in either part of Theorem \ref{T:sminLift=sminTuple} and still have a valid statement.
If we enhance Theorem \ref{T:sminLift=sminTuple} and succeeding theorems with these observations, we arrive at the following summary of all these results.

\begin{corollary}\label{C:s-minTuple/Lift} Let $(T_1,T_2)$ be a commuting contractive pair.
\begin{enumerate}
\item  Then the following are equivalent.
\begin{enumerate}
\item[(i)] $(T_1,T_2)$ has a strongly minimal And\^o lift.

\item[(ii)] There is a strongly minimal Type I And\^o tuple of $(T_1^*,T_2^*)$.

\item[(iii)] There is a strongly minimal strong Type II And\^o tuple of $(T_1,T_2)$.

\item[(iv)] Both  factorizations $T = T_1 \cdot T_2$ and $T = T_2 \cdot T_1$ are regular. In this case, there is a unique strongly minimal canonical special And\^o tuple of 
$(T_1,T_2)$.
\end{enumerate}

\smallskip

\item If $(T_1, T_2)$ is replaced by $(T_1^*, T_2^*)$ in the above statements,  then the corresponding statements are all mutually equivalent with 
each other and with any of the statements in part \rm{(1)} above.

\end{enumerate}
\end{corollary}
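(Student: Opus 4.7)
The plan is to assemble this corollary as a bookkeeping consequence of the three preceding theorems, each of which already does the real work. Part (1) collects the equivalences (i)$\Leftrightarrow$(ii), (i)$\Leftrightarrow$(iii), and (i)$\Leftrightarrow$(iv), while part (2) adds the symmetry between $(T_1,T_2)$ and $(T_1^*,T_2^*)$. I would organize the proof as a cycle of implications passing through (i) as the hub, together with one separate appeal to the adjoint-invariance result.

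For part (1), the first step is to cite Theorem \ref{T:sminLift=sminTuple}(1), which directly gives (i)$\Leftrightarrow$(ii): existence of a strongly minimal And\^o lift is characterized by the existence of a strongly minimal Type I And\^o tuple for $(T_1^*,T_2^*)$. Next I would invoke Theorem \ref{T:sminLift=sminTuple}(2) for (i)$\Leftrightarrow$(iii): the Sch\"affer-model analysis shows that a strongly minimal And\^o lift of $(T_1,T_2)$ exists if and only if there is a strongly minimal strong Type II And\^o tuple for $(T_1,T_2)$. Finally, Theorem \ref{T:SMinReg} supplies (i)$\Leftrightarrow$(iv), namely the regular-factorization criterion, together with the uniqueness of the associated canonical special And\^o tuple (which in turn relies on Corollary \ref{C:UniqueMinAndoTuple}(2)). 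These three equivalences through the common pivot (i) immediately give the full mutual equivalence claimed.

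For part (2), I would simply point to Theorem \ref{T:AdjointT}: $(T_1,T_2)$ has a strongly minimal And\^o lift if and only if $(T_1^*,T_2^*)$ has one. Combining this with part (1) applied to the adjoint pair, each of the four statements in (1) phrased for $(T_1^*,T_2^*)$ is equivalent to the $(T_1^*,T_2^*)$-version of (i), which by Theorem \ref{T:AdjointT} is equivalent to the original (i), and hence to all four statements in (1). In particular, the equivalence (iv) for $(T_1,T_2)$ $\Leftrightarrow$ (iv) for $(T_1^*,T_2^*)$ reproves that regularity of both factorizations $T_1\cdot T_2$ and $T_2\cdot T_1$ is equivalent to regularity of both $T_2^*\cdot T_1^*$ and $T_1^*\cdot T_2^*$, consistent with item (3) of Theorem \ref{T:regfact}.

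There is essentially no obstacle: all substantive work is already done. The only point requiring a small amount of care is making sure that in part (2) the four conditions listed are indeed the literal $(T_1^*,T_2^*)$-analogues of the four conditions in part (1) (in particular that ``strongly minimal Type I And\^o tuple of $(T_1,T_2)$'' and ``strongly minimal strong Type II And\^o tuple of $(T_1^*,T_2^*)$'' are the correct substituted versions), so that Theorem \ref{T:sminLift=sminTuple} applies verbatim after the adjoint swap. No new computations or new constructions are needed.
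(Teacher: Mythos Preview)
Your proposal is correct and matches the paper's approach exactly: the corollary is presented there as a summary obtained by combining Theorem~\ref{T:sminLift=sminTuple} (parts (1) and (2)) for (i)$\Leftrightarrow$(ii) and (i)$\Leftrightarrow$(iii), Theorem~\ref{T:SMinReg} for (i)$\Leftrightarrow$(iv), and Theorem~\ref{T:AdjointT} for the adjoint symmetry in part (2).
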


In the next Section we give yet another statement equivalent to any of the statements in Corollary \ref{C:s-minTuple/Lift} in terms of the Fundamental-Operator pair
for $(T_1, T_2)$ or $(T_1^*, T_2^*)$.

 \section[Fundamental-Operator pairs]{Strongly minimal And\^o lifts and Fundamental-Operator pairs}

Here we introduce the notion of {\em Fundamental-Operator pair} $(F_1, F_2)$ for a commuting contractive operator-pair $(T_1, T_2)$; sucj a notion 
has already been introduced and had an impact in the related theory of symmetrized-bidisk contractions \cite{B-P-SR} and tetrablock contractions \cite{Tirtha-tetrablock}.
We first need a preliminary lemma.

\begin{lemma} \label{L:FundOps}
Let $(T_1,T_2)$ be a commuting contractive pair acting on $\cH$ and $T=T_1T_2$. 
Then  a pair of operators $F_1, F_2$ on the defect space $\cD_{T}$ satisfies the pair of equations
\begin{equation}  \label{FundOps}
T_1-T_2^*T=D_T F_1D_T, \quad   T_2-T_1^*T=D_TF_2D_T.
\end{equation}
if and only if $F_1, F_2$ satisfies the pair of equations 
\begin{equation}\label{FundEqns}
D_T T_1=F_1 D_T+ F_2^* D_T T,  \quad  D_T T_2 = F_2 D_T + F_1^* D_T T.
\end{equation}
Furthermore, the solution of either pair \eqref{FundOps} or \eqref{FundEqns} is unique.
\end{lemma}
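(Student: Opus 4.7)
The plan is to prove three things in sequence: the forward implication \eqref{FundOps}$\Rightarrow$\eqref{FundEqns}, the converse, and uniqueness of solutions to either system. For the first implication I would multiply the first relation in \eqref{FundOps} on the left by $D_T$ and use the adjoint of the second relation to substitute $D_T F_2^* D_T = T_2^* - T^* T_1$. Invoking the commutativity $T T_1 = T_1 T$ (which gives $T^*T_1 T = T^* T T_1$) together with $T^*T = I - D_T^2$ should collapse the computation to
\begin{align*}
D_T(F_1 D_T + F_2^* D_T T) &= (T_1 - T_2^* T) + (T_2^* - T^* T_1) T \\
&= T_1 - T^* T T_1 = D_T^2 T_1 = D_T(D_T T_1).
\end{align*}
Since both $F_1 D_T + F_2^* D_T T$ and $D_T T_1$ map $\cH$ into $\cD_T$, and $D_T$ is injective on $\cD_T = (\ker D_T)^\perp$, cancelling the outer $D_T$ yields the first relation in \eqref{FundEqns}; the second follows by interchanging the roles of the indices. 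Uniqueness for \eqref{FundOps} is immediate from the same injectivity: if $D_T(F_1 - F_1') D_T = 0$ with $F_1, F_1' \in \cB(\cD_T)$, then $(F_1 - F_1') D_T x \in \cD_T \cap \ker D_T = \{0\}$ for every $x \in \cH$, so $F_1 = F_1'$ on the dense subspace $\operatorname{Ran} D_T$ of $\cD_T$, hence everywhere.

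The main obstacle is the reverse implication \eqref{FundEqns}$\Rightarrow$\eqref{FundOps}. Here I would set $G := D_T F_1 D_T - (T_1 - T_2^* T)$ and first derive a Lyapunov-type equation for $G$. Multiplying the first relation of \eqref{FundEqns} on the left by $D_T$ and substituting $D_T F_2^* D_T = T_2^* D_T^2 - T^* D_T F_1 D_T$ (obtained by taking adjoints in the second relation of \eqref{FundEqns} and multiplying by $D_T$ on the right) should produce
\[
D_T F_1 D_T - T^* (D_T F_1 D_T) T = D_T^2 T_1 - T_2^* D_T^2 T.
\]
A parallel computation shows that $T_1 - T_2^*T$ satisfies the same equation: using $TT_j = T_j T$, $T^* T_2^* = T_2^* T^*$, and $T^*T = I - D_T^2$, one obtains $(T_1 - T_2^*T) - T^*(T_1 - T_2^*T) T = D_T^2 T_1 - T_2^* D_T^2 T$ as well. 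Subtracting gives $G = T^* G T$, which iterates to $G = T^{*n} G T^n$ for every $n \ge 0$.

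The closing step, which I expect to be the real technical point, is to exploit the specific structure of the two summands in $G$ to convert this iterated identity into $G = 0$. The factorization $T_1 - T_2^* T = D_{T_2}^2 T_1$, combined with the bound $D_{T_2}^2 \le D_T^2$ (from $D_T^2 = D_{T_2}^2 + T_2^* D_{T_1}^2 T_2$) and the commutativity $T_1 T = T T_1$, shows that both $\|D_T F_1 D_T \cdot T^n x\|$ and $\|D_{T_2}^2 T_1 \cdot T^n x\|$ are controlled by $\|D_T T^n x\|$. The telescoping identity $\|D_T T^n x\|^2 = \|T^n x\|^2 - \|T^{n+1} x\|^2$ together with the monotonicity of $\{\|T^n x\|\}$ forces $\|D_T T^n x\| \to 0$, hence $\|G T^n x\| \to 0$. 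Thus $\langle G x, y \rangle = \langle G T^n x, T^n y \rangle \to 0$ for all $x, y \in \cH$ (using $\|T^n y\| \le \|y\|$), so $G = 0$ and the first relation of \eqref{FundOps} is established; the second follows by the symmetric argument. A naive fixed-point approach to $G = T^* G T$ fails in general (take $T$ unitary), so the essential ingredient is this asymptotic decay, which is forced by the specific way $D_T$ appears in both summands of $G$.
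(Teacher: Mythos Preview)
Your proof is correct and follows essentially the same strategy as the paper. The forward implication and uniqueness arguments are identical. For the converse, the paper also derives the Lyapunov relation $D_T F_1 D_T - T^*(D_T F_1 D_T)T = D_T^2 T_1 - T_2^* D_T^2 T$ and uses the same telescoping fact $\|D_T T^n x\| \to 0$; the one organizational difference is that the paper iterates to a finite sum $\Sigma_1 = T^{*N+1}\Sigma_1 T^{N+1} + \sum_{n=0}^N T^{*n} Y T^n$, passes to the limit, and then evaluates the infinite series $\sum T^{*n} Y T^n$ explicitly as $T_1 - T_2^* T$ using the identities $T_j^* Q_T^2 T_j = Q_T^2$ for the asymptotic limit $Q_T^2 = \lim T^{*n} T^n$. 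Your differencing approach---checking directly that $T_1 - T_2^*T$ also solves the inhomogeneous equation, so that the difference $G$ satisfies the homogeneous one, and then killing $G$ via the same decay---bypasses the $Q_T$ machinery and is a bit more economical.
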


\begin{proof}
Let us suppose that the operator pair $(F_1, F_2)$ solves \eqref{FundOps}. We wish to prove that the same $(F_1, F_2)$ also
solves  \eqref{FundEqns},  Let us consider only the first equation in \eqref{FundEqns} for the moment.    Since $D_T$ is an injective, bounded operator on
$\cD_T$, the solution set of the first equation in \eqref{FundEqns} is unaffected if we multiply the equations \eqref{FundEqns} on the left by $D_T$.
In particular, multiplying  the first equation in \eqref{FundEqns} on the left by $D_T$ results in
$$
  (I - T^* T) T_1 = D_T F_1 D_T + D_T F_2^* D_T T.
$$
Now use that $(F_1, F_2)$ solves \eqref{FundOps} to eliminate $F_1$ and $F_2$ and rewrite this as
\begin{align*}
(I - T^* T) T_1 & =  (T_1 - T_2^* T) + (T_2^* - T^* T_1)T \\
& = T_1 - T^* T_1 T = (I - T^* T) T_1
\end{align*}
which is just an identity in $T_1$, $T_2$, and $T = T_1 T_2$, and  we conclude that the first equation in \eqref{FundEqns} holds.  A similar computation 
(which amounts to switching the roles of the indices $(1,2)$) verifies that second equation in \eqref{FundEqns} holds.
 We conclude that any solution of \eqref{FundOps} is also a solution of \eqref{FundEqns}.
 
 Conversely, suppose that $F_1, F_2 \in \cB(\cD_T)$ solves the system \eqref{FundEqns} and we wish to show that the same $F_1, F_2$ solves 
 the system \eqref{FundOps}.  
  Multiply both equations in \eqref{FundEqns} on the left by $D_T$ to get the pair of equations
 \begin{equation}   \label{FundEqns'}
 (I - T^* T) T_1 = D_T F_1 D_T + D_T F_2^* D_T T, \quad (I - T^* T) T_2 = D_T F_2 D_T + D_T F_1^* D_T T.
 \end{equation}
 Our goal is to solve this system for $D_T F_1 D_T$ and $D_T F_2 D_T$ and then arrive at the equations \eqref{FundOps} written in reverse order:
 \begin{equation}  \label{FundOps'}
  D_T F_1 D_T =  T_1 - T_2^* T, \quad  D_T F_2 D_T = T_2 - T_1^* T.
  \end{equation}
  We shall give the details only for the first equation as the verification of the second is completely similar.
  
  Let us take the adjoint of the second equation in \eqref{FundEqns'} and solve for $D_T F_2^* D_T$ to get
  $$
  D_T F_2^* D_T = T_2^* (I - T^* T) - T^* D_T F_1 D_T.
  $$
  Plugging this back into the first equation in  \eqref{FundEqns'} then gives us
  \begin{equation}   \label{FundEqns'1}
  (I - T^* T) T_1 = D_T F_1 D_T + T_2^* (I - T^* T) T - T^* D_T F_1 D_T T.
  \end{equation}
  We now have $D_T F_2 D_T$ eliminated and this becomes an equation for the single unknown $D_T F_1 D_T$:
 if we set 
 \begin{equation}  \label{substitutions}
 \Sigma_1 = D_T F_1 D_T,  \quad Y =   (I - T^* T) T_1 - T_2^* (I - T^* T) T,
 \end{equation}
 with $\Sigma_1$ now the unknown, then \eqref{FundEqns'1} has the form
 \begin{equation}  \label{FundEqn1}
 \Sigma_1 - T^* \Sigma_1 T = Y.
 \end{equation}
  Rewrite this as $\Sigma_1 = T^* \Sigma_1 T + Y$, plug in this expression for $\Sigma_1$ back into the right-hand side, and iterate to get 
   \begin{equation}   \label{Sigma1}
  \Sigma_1 =    T^{*N+1} \Sigma_1 T^{N+1} + \sum_{n=0}^N T^{*n} Y T^n \text{ for all } N = 0,1,2,\dots.
 \end{equation}
  If it is the case that $T^{N+1} \Sigma_1 T^{N+1} $ tends to $0$ (say in the weak operator topology), then  we can take limits on both sides of \eqref{Sigma1}
 to arrive at a formula for $\Sigma_1$:
 $$
   \Sigma_1 = \sum_{N=0}^\infty T^{*n} Y T^n.
 $$
 
 To analyze this further, let us recall the precise formulas \eqref{substitutions} for what $\Sigma_1$ and $Y$ are for our case here.
 Thus \eqref{Sigma1} specializes to
\begin{equation}   \label{specialize}
  D_T   F_1 D_T = T^{* N+1} D_T F_1 D_T T^{N+1} + \sum_{n=0}^N T^{*n} \big( (I - T^* T) T_1 - T_2^* (I - T^* T) T \big) T^n. 
 \end{equation}
 Let us note next that, for each $h \in \cH$, the following series is telescoping:
  \begin{align*}
&  \sum_{n=0}^N  \|D_T T^n h \|^2   = \sum_{n=0}^N \langle T^{*n} (I - T^* T) T^n h, h \rangle
 = \sum_{n=0}^N \big(  \| T^n h \|^2  - \| T^{n+1} h \|^2 \big) \\
 &  = \| h \|^2 - \| T^{N+1} h \|^2.
 \end{align*}
 As $T$ is a contraction,  $\| T^{N+1} h \|^2$ is decreasing and bounded below by zero and hence convergent.   
In particular we see that the series $\sum_{n=0}^N  \|D_T T^n h \|^2$ is convergent for each $h \in \cH$.  By the $n$-th term test it follows that
 $\lim_{n \to \infty} \| D_T T^n h \|^2   = 0$. It follows that the operator sequence  $T^{* N+1} D_T F_1 D_T T^{N+1}$ certainly converges to $0$
 in the weak operator topology. 
 We conclude from \eqref{specialize} that we have solved for $D_T F_1 D_T$:
 \begin{equation}   \label{F-sol}
 D_T F_1 D_T = \sum_{n=0}^\infty T^{*n}  \big( (I - T^* T) T_1 - T_2^* (I - T^* T) T \big) T^n
 \end{equation}
 with the infinite series converging in the weak operator topology.
 
 It remains only to show that the sum of this series is actually equal to $T_1 - T_2^* T$, i.e., that $F_1$ satisfies the first of equations \eqref{FundOps}.
 Let us recall from Section \ref{S:flats} that $\lim_{n \to \infty}T^{*n} T^n $ exists in the strong  operator topology with limit denoted as $Q_T^2$
 (with $Q_T$ then set equal to the unique positive semi-definite square root of $Q_T^2$) and moreover, given that $T = T_1 \cdot T_2$ is a commuting, contractive
 factorization of $T$, we then also have the three identities
  \begin{equation} \label{QT-Isoms}
   Q_T^2 = T^* Q_T^2 T, \quad Q_T^2 = T_1^* Q_T^2 T_1, \quad  Q_T^2 = T_2^* Q_T^2 T_2.
   \end{equation}
   Moreover, due to the telescoping property of the sequence of partial sums, we see that
   $$
       \sum_{n=0}^N T^{*n} (I - T^* T ) T^n = I - T^{*N+1} T^{N+1}
   $$
   and hence also 
   $$ 
   \sum_{n=0}^\infty T^{*n} (I - T^* T) T^n = I - Q_T^2
   $$
   with convergence in the strong operator topology.
  Let us rewrite \eqref{F-sol} as 
  \begin{equation} \label{specialize'}
   D_T   F_1 D_T = \sum_{n=0}^\infty T^{*n}  (I - T^* T) T_1T^n -  \sum_{n=0}^\infty T^{*n} T_2^* (I - T^* T) T  T^n.  
   \end{equation}
   The first term on the right-hand side of \eqref{specialize'}  is
   $$
    \sum_{n=0}^\infty T^{*n} (I - T^* T) T_1 T^n = \left( \sum_{n=0}^\infty T^{*n} (I - T^* T) T^n \right) T_1 = (I - Q_T^2)T_1
    $$
    while the second term (without the minus sign) on the right-hand side is
   \begin{align*}
    \sum_{n=0}^\infty T^{*n}T_2^* (I - T^* T) T  T^n & = 
    T_2^* \left( \sum_{n=0}^\infty T^{*n} (I - T^* T) T^n \right) T  \\
    & =  T_2^* \left( \sum_{n=0}^\infty T^{*n} ( I - T^* T) T^n \right) T  =  T_2^* (I - Q_T^2 ) T .
  \end{align*}
  Collecting terms and recalling \eqref{QT-Isoms} then gives
  \begin{align*}
 D_T F_1 D_T & = (I   - Q_T^2) T_1 - T_2^* (I - Q_T^2)T = T_1 - Q_T ^2T_1 - T_2^* T  + T_2^* Q_T^2T_2 T_1 \\
  & =  T_1 - Q_T^2 T_1 - T_2^* T + Q_T^2 T_1 = T_1 - T_2^* T
 \end{align*}
 as required.

It remains to show that solutions of \eqref{FundOps} (or equivalently of \eqref{FundEqns}) are unique whenever they exist.  
As for \eqref{FundEqns}, uniqueness is immediate from the fact that $F_1$ and $F_2$ are taken to be operators on $\cD_T$ and
the fact that $D_T|_{\cD_T}$ is an injective bounded operator on $\cD_T$ with dense range.  It is possible to give a direct proof of 
the uniqueness of solutions of \eqref{FundEqns} by showing that the only solution of the homogeneous equation is the zero solution; however,  this proof
is rather elaborate (much like the proof of the existence of a solution for \eqref{FundEqns}).  A much shorter proof is to note that this uniqueness
follows immediately from the equivalence between solutions of
\eqref{FundOps} and \eqref{FundEqns} together with the uniqueness of solutions of \eqref{FundOps} already observed.
\end{proof}

Solutions of \eqref{FundOps} are fundamental for later developments, so we now formally give them a name.  Existence of such solutions
for any commuting contractive pair will be shown shortly. 

\begin{definition}  \label{D:FundOps}\index{fundamental operator}
Given a commuting contractive operator-pair $(T_1,T_2)$,  the unique solution pair $(F_1,F_2)$ in $\cB(\cD_{T})$ (where $T = T_1 T_2$) of the pair of operator equations 
\eqref{FundOps} is called the {\em Fundamental-Operator pair} of $(T_1,T_2)$.
\end{definition}

We are now ready to prove the existence of a Fundamental-Operator pair for a commuting contractive operator-pair $(T_1, T_2)$.
 We note that this theorem is already proved in \cite[Theorem 3.2]{BS-Douglas}, where it was shown that the result is actually true for 
a tuple of any finite number of commuting contraction operators.  However, the proof there appeals to the parallel result for the case of $\Gamma$-contractions
(a commuting operator pair $(T_1, T_2)$ having the symmetrized bidisk as a spectral set) whereas here we give three direct proofs for the setting of
a commuting contractive pair.
 We take full advantage of the 2-dimensional dilation theory (i.e., the existence of an And\^o lift for a commuting contractive pair) to arrive at the new proofs.

\begin{theorem}\label{T:FundOps}
Let $(T_1,T_2)$ be a commuting contractive pair acting on $\cH$ and $T=T_1T_2$.   Then a Fundamental-Operator pair of $(T_1, T_2)$ in the sense of
Definition \ref{D:FundOps} exists, i.e., 
 there exists a unique pair of contraction operators $F_1,F_2\in\cB(\cD_T)$ satisfying \eqref{FundOps} (or equivalently, by Lemma \ref{L:FundOps}, \eqref{FundEqns}).
\end{theorem}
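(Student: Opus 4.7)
The plan is to invoke the already-established existence of a Type I And\^o tuple to write down the Fundamental-Operator pair in closed form, and then verify the defining equations by direct algebra. Essentially all of the analytic content has been packaged inside Theorem \ref{Thm:special}; the present theorem is obtained by a routine algebraic contraction of the Type I And\^o tuple equations along the isometry $\Lambda$.

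\textbf{Step 1 (invoke existence of a Type I And\^o tuple for $(T_1,T_2)$).} Apply Theorem \ref{Thm:special} with the adjoint pair $(T_1^*,T_2^*)$ playing the role of the commuting contractive pair appearing in the statement of that theorem. The conclusion yields a Type I And\^o tuple $(\cF,\Lambda,P,U)$ for the adjoint-of-adjoint pair, namely $(T_1,T_2)$ itself. Thus $\cF$ is a Hilbert space, $\Lambda\colon\cD_T\to\cF$ is isometric, $P\in\cB(\cF)$ is an orthogonal projection, $U\in\cB(\cF)$ is unitary, and the analogues of \eqref{AndoTuple1}--\eqref{AndoTuple2} without the $*$-subscripts hold:
\begin{align*}
  P^\perp U\Lambda D_T + PU\Lambda D_T\,T &= \Lambda D_T T_1,\\
  U^*P\Lambda D_T + U^*P^\perp\Lambda D_T\,T &= \Lambda D_T T_2.
\end{align*}

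\textbf{Step 2 (define the candidate Fundamental-Operator pair).} Set
\[
  F_1 := \Lambda^*P^\perp U\Lambda, \qquad F_2 := \Lambda^*U^*P\Lambda
\]
as operators on $\cD_T$. Both are contractions: $\|\Lambda\|=\|\Lambda^*\|=1$ since $\Lambda$ is isometric, and $P^\perp U$ and $U^*P$ are partial isometries on $\cF$ (hence contractions).

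\textbf{Step 3 (verify \eqref{FundEqns} by conjugation).} Multiply each of the two Type I And\^o tuple equations from Step 1 on the left by $\Lambda^*$ and use $\Lambda^*\Lambda=I_{\cD_T}$. Observing that $\Lambda^*PU\Lambda=(\Lambda^*U^*P\Lambda)^*=F_2^*$ and, similarly, $\Lambda^*U^*P^\perp\Lambda=F_1^*$, the two displayed equations become
\[
  D_T T_1 = F_1 D_T + F_2^* D_T T, \qquad D_T T_2 = F_2 D_T + F_1^* D_T T,
\]
which is exactly the system \eqref{FundEqns}. By the equivalence in Lemma \ref{L:FundOps}, the same pair $(F_1,F_2)$ solves \eqref{FundOps}. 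Uniqueness was established in that same lemma.

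\textbf{Main obstacle and alternative routes.} The substantive content is contained entirely upstream in Theorem \ref{Thm:special}, whose proof rested on the explicit constructive existence of special And\^o tuples from Section \ref{S:SpcATuple}; once a Type I And\^o tuple is in hand, conjugation by $\Lambda$ is purely formal. Two parallel routes give the same $(F_1,F_2)$: starting from a strong Type II And\^o tuple for $(T_1,T_2)$ (supplied by Theorem \ref{Thm:special-canonical}) and reading off the Sch\"affer-type BCL1 data; or, working directly with any And\^o lift $(\Pi,V_1,V_2)$ of $(T_1,T_2)$ and using the isometric identity $V_2^*V=V_1$ (valid because $V_1V_2=V_2V_1$ and $V_2^*V_2=I$) to write
\[
  T_1 - T_2^* T \;=\; \Pi^*V_1\Pi - \Pi^*V_2^*\Pi\Pi^*V\Pi \;=\; \Pi^*V_2^*\bigl(I-\Pi\Pi^*\bigr)V\Pi,
\]
and then expanding the right-hand side in Sch\"affer blocks to recognize it as $D_T F_1 D_T$ with $F_1 = \Lambda^*P^\perp U\Lambda$ (and analogously for $F_2$). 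All three routes deliver the same operator pair by the uniqueness clause of Lemma \ref{L:FundOps}.
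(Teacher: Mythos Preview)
Your argument is correct and is essentially the paper's Second Proof: obtain a Type I And\^o tuple $(\cF,\Lambda,P,U)$ for $(T_1,T_2)$ via Theorem~\ref{Thm:special}, set $(F_1,F_2)=(\Lambda^*P^\perp U\Lambda,\Lambda^*U^*P\Lambda)$, and multiply the Type~I equations \eqref{TypeIAndo} on the left by $\Lambda^*$ to recover \eqref{FundEqns}. The paper in fact gives three proofs; the two alternative routes you sketch (via a strong Type~II And\^o tuple, and via compressing an And\^o lift of $(T_1^*,T_2^*)$) correspond in spirit to the paper's Third and First proofs, respectively.
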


\begin{proof} We shall give three proofs of this result.

\smallskip

\noindent
{\sf First Proof via And\^o's Theorem:}
We first consider two illustrative special cases, and then use the second special case to prove the result for the general case.

\smallskip

\noindent
\textbf{Case 1. $(T_1, T_2) = (V_1, V_2)$ is a commuting isometric pair.}
Note that in this case \eqref{FundOps} is obviously true since both sides of \eqref{FundOps} are actually zero. 

\smallskip

\noindent
\textbf{Case 2. $(T_1, T_2) = (V_1^*, V_2^*)$ is a commuting co-isometric pair.}
To handle this case,  one can apply the Berger-Coburn-Lebow model for the isometric pair $(V_1, V_2)$ and then directly compute.  It suffices to
assume that the product $V = V_1 V_2$ is a shift, since the unitary part washes out when computing the operators $V_1 - V_2^*V$, $V_2 - V_1 V^*$
as well as the defect operators $D_V$, $D_{V_1}$, $D_{V_2}$ as seen from Case 1.   Recall the notation $\bev_{0, \cF}$ \eqref{bev0cF} for the operator
of evaluation-at-0 from $H^2(\cF)$ to $\cF$.

If we use the BCL1 model 
$$
(V_1, V_2) = (I_{H^2} \otimes P^\perp U + M_z \otimes P U, \, I_{H^2} \otimes U^* P  +  M_z \otimes U^* P^\perp) \text{ on } H^2(\cF),
$$
one sees that
$$
 V_1^* - V_2 V^* = (I_{H^2} - M_z M_z^*) \otimes U^* P^\perp    = D_{V^*}  \bev_{0, \cF}^* U^* P^\perp  \bev_{0, \cF} D_{V^*}.
$$
while 
$$
V_2^* - V_1 V^* =  (I_{H^2} - M_z M_z^*)\otimes PU  = D_{V^*}  \bev_{0,\cF}^*PU  \bev_{0, \cF}  D_{V^*}
$$
leading to the operators
\begin{equation}  \label{fund1}
   F_1 =  \bev_{0, \cF}^*U^* P^\perp  \bev_{0, \cF}|_{\cD_{V^*}},  \quad F_2 =  \bev_{0, \cF}^* PU \bev_{0, \cF}|_{\cD_{V^*}} \in \cB(\cD_{V*})
 \end{equation}
 being the unique solutions of the Fundamental-Operator equations \eqref{FundOps}.
  However, if we use the BCL2 model
  $$
  (V_1, V_2) = (I_{H^2}\otimes U^* P^\perp + M_z\otimes U^* P , \, I_{H^2}\otimes PU+ M_z\otimes P^\perp U) \text{ on } H^2(\cF),
 $$ 
 one gets
 $$
   V_1^* - V_2 V^* =  (I_{H^2} - M_z M_z^*) \otimes P^\perp U  = D_{V^*} \bev_{0,\cF}^* P^\perp U \bev_{0, \cF} D_{V^*}  
 $$
 while
 $$
   V_2^* - V_1 V^* = (I_{H^2} - M_z M_z^*)\otimes U^* P = D_{V^*}  \bev_{0, \cF}^* U^* P \bev_{0, \cF} D_{V^*}
   $$
 leading to unique solutions $F_1$, $F_2$ of the fundamental equations \eqref{FundOps} for this case being given by
 \begin{equation}  \label{fund2}
 F_1 =  \bev_{0, \cF}^* P^\perp U \bev_{0, \cF} |_{\cD_{V^*}}, \quad F_2 =  \bev_{0, \cF}^* U^*P \bev_{0, \cF} |_{\cD_{V^*}} \in \cB(\cD_{V^*}).
 \end{equation}
 Note that one can go from \eqref{fund1} to \eqref{fund2} by replacing $(P,U)$ in \eqref{fund1} by its flipped version $(P^\ff, U^\ff)
 = (U^*P U, U^*)$ to arrive at the version \eqref{fund2} for the fundamental operators.
 
 To avoid this phenomenon of the formula for the Fundamental-Operator pair $(F_1, F_2)$ depending on the choice of representation of the
 coisometric pair $(V_1^*, V_2^*)$,  we can apply the canonical version Theorem \ref{T:BCLcanonical} of the BCL model by expressing
 the fundamental operator pair $(F_1, F_2)$ directly in terms of $(V_1, V_2)$ as follows.  By part (4) of Theorem \ref{T:BCLcanonical}, a BCL1 tuple
 for $(V_1, V_2)$ is $(\cD_{V^*}, P, U)$ with
 \begin{equation}   \label{BCL-1PU}
    P = D_{V_1^*}|_{\cD_{V^*}}, \quad U =  (V_1 D_{V_2^*} + D_{V_1^*} V_2^*)|_{\cD_{V^*}}.
\end{equation}
Let us also observe here  that when we take $\cF = \cD_{V^*}$, then the operator $\bev_{0, \cD_{V^*}}$ acting from $H^2(\cD_{V^*})$ to $\cD_{V^*}$ 
when restricted to $\cD_{V^*}$ amounts to the identity operator:
$$
\bev_{0, \cD_{V^*}}|_{\cD_{V^*}} = I_{\cD_{V^*}}.
$$
Plugging the values for $(P, U)$ given by  \eqref{BCL-1PU} into the the expressions \eqref{fund1} for the corresponding Fundamental Operators and noting that 
$P^\perp = V_1 D_{V_2^*} V_1^*|_{\cD_{V^*}}$ then gives us expressions for $F_1$ and $F_2$ directly in terms of $(V_1, V_2)$:
$$
F_1 = \bev_{0, \cD_{V^*}}^* (D_{V_2^*} V_1^* + V_2 D_{V_1^*}) (V_1 D_{V_2^*} V_1^*) \bev_{0, \cD_{V^*}}|_{\cD_{V^*}}
=  D_{V_2^*} V_1^* |_{\cD_{V^*}}
$$
 while
 $$
 F_2 = \bev_{0, \cD_{V^*}}^* D_{V_1^*} (V_1 D_{V_2^*} + D_{V_1^*} V_2^*)  \bev_{0, \cF} |_{\cD_{V^*}} = D_{V_1^*} V_2^* |_{\cD_{V^*}}
 $$
 arriving at the formulas
 \begin{equation}  \label{fund1'}
 F_1 =  D_{V_2^*} V_1^* |_{\cD_{V^*}}, \quad F_2 = D_{V_1^*} V_2^*|_{\cD_{V^*}}
 \end{equation}
 We leave it to the reader to verify that,  if we instead use the BCL2 model for $(V_1, V_2)$ to get the expressions  \eqref{fund2}
 for the Fundamental Operators, and then plug into these expressions the canonical values $(P, U)$ in\eqref{BCL2canonical}  for the BCL2 model for
 the commuting isometric pair $(V_1, V_2)$
 \begin{equation}   \label{BCL-2PU}
   P =      V_2 D_{V_1^*} V_2|_{\cD_{V^*}},      \quad U = (D_{V_2^*} V_1^* + V_2 D_{V_1^*} V_2^*)|_{\cD_{V^*}},
 \end{equation}
 then the resulting expression for $(F_1, F_2)$ expressed directly in terms of $(V_1, V_2)$ turns out to be exactly the same as in \eqref{fund1'}.
 Alternatively, once one  identifies the candidate \eqref{fund1'}, one can compute directly that it works:
 \begin{equation}  \label{ToShow}
 V_1^*  - V_2 V^* = D_{V^*} D_{V_2^*} V_1^* D_{V^*}, \quad V_2^* - V_1 V^* = D_{V^*} D_{V_1^*} V_2^* D_{V^*}.
  \end{equation}
It suffices to verify the first equation as then the second follows by interchanging the roles of the indices $1,2$.
Note first that
$$
  V_1^* - V_2 V^* = V_1^* - V_2 V_2^* V_1^* = D_{V_2^*} V_1^*.
$$
Furthermore 
$$
V^* D_{V_2^*} V_1^* =  V_1^* V_2^* (I - V_2 V_2^*) V_1^* = 0
$$
and we conclude that 
$$
D_{V^*} D_{V_2^*} V_1^* = D_{V_2^*} V_1^*.
$$
Similarly  
$$
  D_{V_2^*} V_1^* V V^* = D_{V_2^*} V_1^* V_1 V_2V^* = (I - V_2 V_2^*) V_2 V^* = 0
$$
and hence we also have
$$
 D_{V_2^*} V_1^* D_V = D_{V_2^*} V_1^* (I - V^* V) = D_{V_2^*} V_1^*.
$$
Putting all the pieces together we get the first of equations  \eqref{ToShow} as expected.
\smallskip

\noindent
\textbf{Case 3:  The general case.} 
Now let $(T_1,T_2)$ be any commuting contractive pair acting on a Hilbert space $\cH$. By And\^o's theorem, we know that 
$(T_1^*, T_2^*)$ has an And\^o lift $(\Pi, V_1,V_2)$ with  $\Pi \colon \cH \to \cK$ and $V_1$, $V_2$ acting on $\cK$. For notational convenience, we may 
assume $\Pi=\sbm{I_\cH \\ 0}$, i.e. $\cH \subset \cK$.   Recall that the lifting property can be reformulated as 
\begin{equation}   \label{Id-0}
V_j^*|_\cH = T_j \text{ for } j=1,2,  \text{ and hence also } V^*|_\cH = T.
\end{equation}  
 where we set $V = V_1 V_2 = V_2 V_1$ and $T = T_1 T_2 = T_2 T_2$.   
 
 For $h \in \cH$ we then have
 $$
\|D_{V^*}h\|^2=\langle h,h \rangle - \langle V^* h, V^*h \rangle = \langle h,h \rangle - \langle T h,Th \rangle = \|D_Th\|^2.
$$
We conclude that the map $\Lambda \colon \cD_T \to \cD_{V^*}$ defined densely by
\begin{align}\label{Lambda}
  \Lambda \colon D_T h \mapsto D_{V^*} h
\end{align}
is an isometry.  From this definition, we can also write 
\begin{equation}  \label{Id-1}
D_{V^*}|_\cH = \Lambda D_T.
\end{equation}
Taking adjoints then gives
\begin{equation}  \label{Id-2}
P_\cH D_{V^*} = D_T \Lambda^*.
\end{equation}
From Case 2 above we know that there are fundamental operators  $(F^\bV_1$, $F^\bV_2)$ for the commuting coisometric pair $(V_1^*, V_2^*)$ for the
record by \eqref{fund1'} given by
$$  
F^\bV_1 = D_{V_2^*} V_1^*|_{\cD_{V^*}}, \quad F^\bV_2 = D_{V_1^*} V_2^*|_{\cD_{V^*}}
$$
which by definition satisfies the equations
$$
V_1^* - V_2 V^* = D_{V^*} F^\bV_1 D_{V^*}, \quad V_2^* - V_1 V^*  = D_{V^*} F^\bV_2 D_{V^*}.
$$
Let us compress both sides of each of these equations to the subspace $\cH \subset \cK$ to get
\begin{equation}   \label{id-prelim}
P_\cH (V_1^* - V_2 V^*) |_\cH = P_\cH D_{V^*} F^\bV_1 D_{V^*}|_\cH, \quad
   P_\cH( V_2^* - V_1 V^*)|_\cH   = P_\cH D_{V^*} F^\bV_2 D_{V*}|_\cH.
\end{equation}
Making use of the identities \eqref{Id-0} then leads us to 
$$
P_\cH (V_1^* - V_2 V^*) |_\cH = T_1 - T_2^* T, \quad  P_\cH( V_2^* - V_1 V^*)|_\cH  = T_2 - T_1^* T.
$$
On the other hand, the identities \eqref{Id-1} and \eqref{Id-2} gives us
$$
P_\cH D_{V^*} F^\bV_1 D_{V^*}|_\cH = D_T \Lambda^* F_1^\bV \Lambda D_T, \quad 
P_\cH D_{V^*} F^\bV_2 D_{V^*}|_\cH = D_T \Lambda^* F_2^\bV \Lambda D_T.
$$
Plugging these last two collections of identities back into \eqref{id-prelim} then gives us
$$
  T_1 - T_2^* T = D_T \Lambda^* F_1^\bV \Lambda D_T, \quad
  T_2 - T_1^* T = D_T \Lambda^* F_2^\bV \Lambda D_T
$$
and we conclude that the operator pair
$$
  F_1 = \Lambda^* F_1^\bV \Lambda, \quad F_2 = \Lambda^* F_2^\bV \Lambda
 $$
 serves as the fundamental-operator pair for the commuting, contractive pair $(T_1, T_2)$ as desired.  
 
\smallskip

\noindent
{\sf Second Proof via Type I And\^o Tuples:} Here we prove:
\begin{itemize}
\item {\em If $(\cF,\Lambda,P,U)$ is a Type I And\^o tuple for $(T_1,T_2)$, then the fundamental operator pair  $(F_1,F_2)$ for $(T_1, T_2)$ can be given by}
\begin{equation}  \label{choices}
F_1 =\Lambda^* P^\perp U\Lambda,  \quad F_2 =  \Lambda^* U^* P \Lambda.
\end{equation}
\end{itemize}  By Lemma \ref{L:FundOps},  to find the fundamental operator pair, it suffices to find a pair of operators
$F_1, F_2 \in \cB(\cD_T)$ so that $(F_1, F_2)$ solves \eqref{FundEqns} rather than \eqref{FundOps}. To see this, note that by Definition \ref{D:AndoTupleI}, $(\cF,\Lambda,P,U)$  being a Type I And\^o tuple for $(T_1,T_2)$ means that
\begin{equation} \label{TypeIAndo}
P^\perp U  \Lambda  D_{T} +   P U \Lambda D_{T} T  = \Lambda D_{T} T_1, \quad U^* P \Lambda D_{T} + U^* P^\perp \Lambda D_{T} T = \Lambda D_{T} T_2.
\end{equation}
Multiply each of these equations on the left by $\Lambda^*$  to get the equations \eqref{FundEqns} with $F_1,F_2$ as claimed.

\smallskip

\noindent
{\sf Third Proof via Type II And\^o Tuples:} As in the Second Proof, we actually prove the following assertion:
\begin{itemize}
\item {\em If $(\cF,\Lambda,P,U)$ is a strong Type II And\^o tuple for a commuting contractive pair $(T_1,T_2)$, then the fundamental operator pair  $(F_1,F_2)$ for $(T_1, T_2)$ is given as in \eqref{choices}.}
\end{itemize}
In the first step of the following computation we make use of the two expressions for $\Lambda D_T$ given by condition
(i$'$) in Definition \ref{AndoTuple}:
\begin{align*}
& D_T \Lambda^* P^\perp U \Lambda D_T= (\Lambda D_T)^* P^\perp U (\Lambda D_T)  \\
& \quad =
(PU\Lambda D_TT_2+P^\perp \Lambda D_T)^*P^\perp U(U^*P^\perp\Lambda D_TT_1+U^*PU\Lambda D_T)\\
& \quad =(T_2^*D_T\Lambda^*U^*P+D_T\Lambda^*P^\perp)P^\perp (P^\perp\Lambda D_TT_1+PU\Lambda D_T)\\
& \quad = D_T\Lambda^*P^\perp \Lambda D_TT_1\\
&\quad =D_{T_2}^2T_1 \text{(by the second equation in Definition \ref{AndoTuple} (ii))}\\
&\quad =T_1-T_2^*T.
\end{align*}
Similarly, in the first step of the next computation, we use the two expressions for $\Lambda D_T$  given by
condition (i$'$) in Definition \ref{AndoTuple} but in reverse order:
\begin{align*}
& D_T \Lambda^* U^* P \Lambda D_T = (\Lambda D_T)^* U^* P (\Lambda D_T) \\
& \quad =(U^*P^\perp\Lambda D_TT_1+U^*PU\Lambda D_T)^*U^*P(PU\Lambda D_TT_2+P^\perp \Lambda D_T)\\
& \quad =(T_1^*D_T\Lambda^*P^\perp+D_T\Lambda^* U^*P)P(PU\Lambda D_TT_2+P^\perp \Lambda D_T)\\
& \quad =D_T\Lambda^* U^*PU\Lambda D_TT_2\\
& \quad =D_{T_1}^2T_2 \text{ (by the first equation in Definition \ref{AndoTuple} (ii) )}\\
&\quad =T_2-T_1^*T.
\end{align*}
This establishes the claim and completes the proof of Theorem \ref{T:FundOps}.
\end{proof}

The following theorem gives a characterization of existence of a strongly minimal And\^o lift for a commuting contractive pair $(T_1, T_2)$ in terms of the
fundamental-operator pair $(F_1, F_2)$ for $(T_1, T_2)$.  The condition \eqref{strong-fund} on the Fundamental-Operator pair $(F_1, F_2)$ for $(T_1, T_2)$
is yet another equivalent condition that one can add to the list of equivalent conditions in Corollary \ref{C:s-minTuple/Lift}.

\begin{theorem}\label{T:StrongFund}
Let $(T_1,T_2)$ be a commuting contractive pair acting on $\cH$. Then $(T_1,T_2)$ has a strongly minimal And\^o lift if and only if its Fundamental-Operator pair
$(F_1,F_2)$ satisfies the system of equations
\begin{align}\label{strong-fund}
F_1 F_2=0=F_2 F_1,\quad F_1^*F_1+F_2F_2^*=I_{\cD_T} =F_1 F_1^*+ F_2^* F_2.
\end{align}\end{theorem}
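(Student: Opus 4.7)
The plan is to pass through Corollary~\ref{C:s-minTuple/Lift}, reducing the theorem to the statement that the Fundamental-Operator pair $(F_1,F_2)$ satisfies \eqref{strong-fund} if and only if $(T_1,T_2)$ admits a strongly minimal strong Type~II And\^o tuple. The bridge between the algebraic identities in \eqref{strong-fund} and And\^o tuples is Lemma~\ref{relations-of-E-lem}: those identities are exactly the BCL partial-isometry relations, and after the flip substitution $\tilde U = U^*$, $\tilde P = U^* P U$ applied to the form $(U^* P^\perp, P U)$ produced by that lemma, they become $(F_1, F_2) = (\tilde P^\perp \tilde U, \tilde U^* \tilde P)$ on $\cD_T$, which is precisely the form of the Fundamental-Operator pair arising from a strong Type~II And\^o tuple via the Third Proof of Theorem~\ref{T:FundOps}.

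For the forward direction I would begin with a strongly minimal strong Type~II And\^o tuple $(\cF, \Lambda, P, U)$ for $(T_1, T_2)$ furnished by Corollary~\ref{C:s-minTuple/Lift}. Strong minimality forces $\Lambda \colon \cD_T \to \cF$ to be unitary, so $\tilde P := \Lambda^* P \Lambda$ and $\tilde U := \Lambda^* U \Lambda$ are respectively a projection and a unitary on $\cD_T$. The Third Proof of Theorem~\ref{T:FundOps} identifies $F_1 = \Lambda^* P^\perp U \Lambda = \tilde P^\perp \tilde U$ and $F_2 = \Lambda^* U^* P \Lambda = \tilde U^* \tilde P$, and a direct expansion using $\tilde P \tilde P^\perp = 0$ and unitarity of $\tilde U$ verifies all four relations in \eqref{strong-fund}.

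For the converse I would assume \eqref{strong-fund} and invoke Lemma~\ref{relations-of-E-lem} together with the flip above to obtain a projection $\tilde P$ and a unitary $\tilde U$ on $\cD_T$ with $F_1 = \tilde P^\perp \tilde U$ and $F_2 = \tilde U^* \tilde P$; my candidate strongly minimal strong Type~II And\^o tuple is $(\cD_T, I_{\cD_T}, \tilde P, \tilde U)$. Only conditions~(i$'$) and~(ii) of Definition~\ref{AndoTuple} need checking. For~(i$'$), multiplying the two equations of \eqref{FundEqns} on the left by $F_2^*$ and $F_1^*$ respectively uses $F_1^* F_2^* = (F_2 F_1)^* = 0$ and $F_2^* F_1^* = (F_1 F_2)^* = 0$ to collapse the $D_T T$ terms, yielding $F_j^* F_j D_T = F_j^* D_T T_j$ for $j = 1,2$; combined with the identifications $\tilde P \tilde U = F_2^*$, $\tilde P^\perp = F_1 F_1^*$, $\tilde U^* \tilde P^\perp = F_1^*$, $\tilde U^* \tilde P \tilde U = F_2 F_2^*$ and the two completeness identities in \eqref{strong-fund}, both expressions in~(i$'$) simplify to $D_T$. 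The main obstacle is condition~(ii): the key recognition is that the operators $\tilde U^* \tilde P \tilde U$ and $\tilde P^\perp$ appearing there equal $F_2 F_2^*$ and $F_1 F_1^*$ respectively, whereupon the computation $D_T F_1^* F_1 D_T = D_T F_1^* D_T\, T_1 = (T_1^* - T^* T_2) T_1 = T_1^* T_1 - T^* T$ (using the adjoint of \eqref{FundOps}) combined with $F_2 F_2^* = I - F_1^* F_1$ yields $D_T \tilde U^* \tilde P \tilde U D_T = D_{T_1}^2$; a symmetric calculation produces $D_T \tilde P^\perp D_T = D_{T_2}^2$, finishing~(ii) and hence, via Corollary~\ref{C:s-minTuple/Lift}, producing the desired strongly minimal And\^o lift.
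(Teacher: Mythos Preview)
Your proof is correct and follows essentially the same route as the paper's: both directions pass through the equivalence with a strongly minimal strong Type~II And\^o tuple (the paper cites Part~(2) of Theorem~\ref{T:sminLift=sminTuple} directly, you via Corollary~\ref{C:s-minTuple/Lift}), and in the converse both apply Lemma~\ref{relations-of-E-lem} to produce the candidate tuple $(\cD_T, I_{\cD_T}, P, U)$ and then verify conditions~(i$'$) and~(ii) of Definition~\ref{AndoTuple} by unfolding the Fundamental-Operator equations \eqref{FundOps}--\eqref{FundEqns}. Your bookkeeping differs slightly (you work with the identities $F_j^*F_jD_T = F_j^*D_TT_j$ obtained by left-multiplying \eqref{FundEqns} by $F_j^*$, then use the completeness relations $F_1^*F_1 + F_2F_2^* = I = F_1F_1^* + F_2^*F_2$, whereas the paper multiplies \eqref{FundEqns} by $D_TPU$ and $D_TU^*P^\perp$ and substitutes from \eqref{FundOps}), but the computations are equivalent and lead to the same conclusions.
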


\begin{proof} Suppose that $(T_1,T_2)$ has a strongly minimal And\^o lift. By Part (2) of Theorem \ref{T:sminLift=sminTuple}, 
$(T_1,T_2)$ has a strongly minimal strong Type II And\^o tuple, say $(\cF,\Lambda,P,U)$. By the Third Proof of Theorem \ref{T:FundOps},
$(F_1,F_2)=(\Lambda^*P^\perp U\Lambda,\Lambda^*U^*P\Lambda)$ is the fundamental operator pair for $(T_1,T_2)$. Since $\Lambda$ here is a unitary, it is now a 
matter of easy computation to check that $F_1,F_2$ satisfy equations \eqref{strong-fund}.

Conversely, suppose $F_1,F_2$ satisfy equations \eqref{strong-fund}. Apply Lemma \ref{relations-of-E-lem} to $(F_1^*,F_2^*)$ to get a projection $P$ 
and a unitary $U$ on $\cD_T$ such that $(F_1,F_2)=(P^\perp U, U^*P)$. Since $(F_1,F_2)$ is the fundamental operator pair for $(T_1,T_2)$, we have
\begin{align}
\label{Put2gether1}
T_1-T_2^*T=D_TP^\perp U D_T, &\quad T_2-T_1^*T=D_TU^*PD_T,  \\ 
 \label{Put2gether2} D_TT_1=P^\perp UD_T+PUD_TT, &\quad D_TT_2= U^*PD_T+U^*P^\perp D_TT.
\end{align}
We now unfold these equations to show that $(\cD_T, I_{\cD_T}, P, U)$ is a (strongly minimal) strong Type II And\^o tuple for $(T_1,T_2)$. Then by Part (2) of 
Theorem \ref{T:sminLift=sminTuple} we will be done. Multiply both sides of the second equation in \eqref{Put2gether2} on the left by $D_T PU$ to get
$$
D_T P U D_T T_2 = D_T P D_T.
$$
Combining this with the adjoint of the second equation in \eqref{Put2gether1} then gives us
\begin{align}
\notag
D_T P^\perp D_T & = D_T^2 -D_TPD_T = D_T^2 - D_T PU D_T T_2 \\
&= (I-T^*T)-(T_2^*-T^*T_1)T_2= D_{T_2}^2. \label{IsoCond1}
\end{align}
This is the second isometry condition in Definition \ref{AndoTuple} (here $\Lambda=I_{\cD_T}$).

 For the other isometry condition, we multiply both sides of the first equation in \eqref{Put2gether2} on the left by 
$D_TU^*P^\perp$  to get
$$
 D_T U^* P^\perp D_T T_1 = D_T U^* P^\perp U D_T.
 $$
 Combining this with the adjoint of the second equation in \eqref{Put2gether1} then gives us
 \begin{align}
\notag
D_TU^*PUD_T & = D_T^2-D_TU^*P^\perp UD_T =D_T^2 - D_TU^*P^\perp D_TT_1\\
&= I - T^* T - (T_1^*-T^*T_2)T_1=D_{T_1}^2.\label{IsoCond2}
\end{align}
By Definition \ref{AndoTuple}, it just remains to show that
$$
D_T=PU D_TT_2+P^\perp D_T=U^*P^\perp D_TT_1+U^*PU D_T.
$$
Since this is an operator equation form $\cD_T$ into $\cD_T$ and $D_T$ is injective on $\cD_T$, the above will hold if and only if
$$
D_T^2=D_TPU D_TT_2+D_TP^\perp D_T=D_TU^*P^\perp D_TT_1+D_TU^*PU D_T
$$holds. In view of equations \eqref{Put2gether1}, \eqref{IsoCond1} and \eqref{IsoCond2}, the above equations boil down to the operator equations
$$
D_T^2=T_2^*(I-T_1^*T_1)T_2+ (I-T_2^*T_2)=T_1^*(I-T_2^*T_2)T_1+ (I-T_1^*T_1),
$$
which is true as observed before in \eqref{id1}. Consequently, $(\cD_T, I_{\cD_T}, P, U)$ is a strongly minimal strong Type II And\^o tuple for $(T_1,T_2)$ and therefore by Part (2) of Theorem \ref{T:sminLift=sminTuple}, $(T_1,T_2)$ has a strongly minimal And\^o lift. This completes the proof.
\end{proof}

\begin{remark}  \label{R:s-min}  We have noted in Examples \ref{E:Dex} and \ref{E:Sex} that a commuting isometric pair $(V_1, V_2)$ as well as a commuting co-isometric
pair $(V_1^*, V_2^*)$ has a strongly minimal And\^o lift.  As a further exercise, we now verify how this can also be seen as an application of Theorem 
\ref{T:StrongFund}.

We have seen in the course of the First Proof of Theorem \ref{T:FundOps} that a commuting pair of isometries $(V_1, V_2)$
has a trivial fundamental operator pair $(F_1, F_2) = (0,0)$ acting on the zero space $\cD_{V^*}$ while a commuting pair of coisometries $(V_1^*, V_2^*)$
has fundamental-operator pair $(F_1, F_2)$ given explicitly by
\begin{equation}   \label{coisomFO}
  F_1 = D_{V_2^*} V_{1^*} |_{\cD_{V^*},} \quad F_2  = D_{V_2^*} V_1^*|_{\cD_{V^*}}.
\end{equation}

For the case of a commuting isometric pair $(V_1, V_2)$ we conclude that condition \eqref{strong-fund} holds trivially and hence (by Theorem \ref{T:StrongFund})
$(V_1, V_2)$ has a strongly minimal isometric lift (namely, itself).  

For the case of a commuting coisometric pair $(V_1^*, V_2^*)$, one can check directly that
$(F_1, F_2)$  given by \eqref{coisomFO} satisfies \eqref{strong-fund}: e.g.
\begin{align*}
F_1 F_2 & = D_{V_2^*} V_1^* D_{V_1^*} V_2^*|_{\cD_{V^*}}  = (I - V_2 V_2^*) V_1^* (I - V_1 V_1^*) V_2^*|_{\cD_{V^*}}  \\
& =  (I- V_2 V_2^*) V_1^* (I - V_1 V_1^*) V_2^*|_{\cD_{V^*}} = 0 \text{ since } V_1^* (I - V_1 V_1^*) = 0
\end{align*}
and similarly
$$
  F_2 F_1 = 0.
$$
Furthermore
\begin{align*}
& F_1^* F_1 + F_2 F_2^*  = 
\big( V_1 (I - V_2 V_2^*) V_1^* + (I - V_1 V_1^*) V_2^* V_2 (I - V_1 V_1^*) \big)|_{\cD_{V^*}} \\
&  \quad = \big( (V_1 V_1^* - V V^*) + (I - V_1 V_1^*) \big)|_{\cD_{V^*}} = (I - V V^*)|_{\cD_{V^*}} = I_{\cD_{V^*}}
\end{align*}
and similarly
$$
 F_1 F_1^* + F_2^* F_2 = I_{\cD_{V^*}}.
 $$
\end{remark}

As of this writing, we do not have an example of a Type I And\^o tuple which is not special or of a strong Type II And\^o tuple which is not special.  In the next
chapter, we produce an example of a Type II And\^o tuple which is not strong Type II (and hence not special):  see Proposition \ref{P:NonCanTuple}.
There we shall also see that whenever $(T_1, T_2)$ has at least one strongly minimal Type I or strongly minimal strong Type II And\^o tuple,
then any minimal Type I or  minimal strong Type II And\^o tuple is actually strongly minimal and coincides with the unique strongly minimal canonical-form 
special And\^o tuple (see Theorem \ref{T:strongly-minimal} and Corollary \ref{C:strongly-minimal}.
The next result shows that  within the category of strongly minimal And\^o tuples,  any Type I or strong Type II And\^o tuple is in fact special. 
We include this result here as it illustrates how use of the Fundamental-Operator pair leads to explicit formulas.

\begin{theorem} \label{T:TypeI/TypeII-special}
Every strongly minimal Type I or  strongly minimal strong Type II And\^o tuple of a commuting contractive pair coincides with a special And\^o tuple.
\end{theorem}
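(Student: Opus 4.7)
My plan is to handle the strong Type II case directly and then deduce the Type I case by applying the same reasoning to the adjoint pair. Let $(\cF, \Lambda, P, U)$ be a strongly minimal strong Type II And\^o tuple for $(T_1, T_2)$, so that $\Lambda \colon \cD_T \to \cF$ is unitary. The existence of such a tuple forces $(T_1, T_2)$ to have a strongly minimal And\^o lift by Part (2) of Theorem \ref{T:sminLift=sminTuple}, and then by Theorem \ref{T:SMinReg} both factorizations $T = T_1 \cdot T_2$ and $T = T_2 \cdot T_1$ are regular. In this regime the construction in Scenario~1 of Remark \ref{R:specATuple} yields a unique canonical special And\^o tuple $(\cF_\dag, \Lambda_\dag, P_\dag, U_\dag)$ with $\cF_\dag = \cD_{T_1} \oplus \cD_{T_2}$ and $\Lambda_\dag \colon \cD_T \to \cF_\dag$ also unitary, and by Theorem \ref{Thm:special-canonical} this canonical special tuple is itself a strong Type II And\^o tuple for $(T_1, T_2)$.

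The key observation will be that both tuples compute the fundamental operator pair of $(T_1, T_2)$ via the same formula: by the Third Proof of Theorem \ref{T:FundOps}, any strong Type II And\^o tuple produces $F_1 = \Lambda^* P^\perp U \Lambda$ and $F_2 = \Lambda^* U^* P \Lambda$. Applying this to the given tuple and to the canonical one, and invoking the uniqueness asserted in Lemma \ref{L:FundOps}, yields
\begin{align*}
\Lambda^* P^\perp U \Lambda &= \Lambda_\dag^* P_\dag^\perp U_\dag \Lambda_\dag, &
\Lambda^* U^* P \Lambda &= \Lambda_\dag^* U_\dag^* P_\dag \Lambda_\dag.
\end{align*}

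I would then set $\sigma := \Lambda_\dag \Lambda^* \colon \cF \to \cF_\dag$; this is unitary as a composition of two unitaries, and by construction $\sigma \Lambda = \Lambda_\dag$. Conjugating the two displayed identities by $\sigma$ and using $\Lambda \Lambda^* = I_\cF$ together with $\Lambda_\dag \Lambda_\dag^* = I_{\cF_\dag}$ (this is exactly where strong minimality is used) yields $\sigma (P^\perp U) \sigma^* = P_\dag^\perp U_\dag$ and, after taking the adjoint of the second conjugated identity, $\sigma (PU) \sigma^* = P_\dag U_\dag$. Summing these two gives $\sigma U \sigma^* = U_\dag$; factoring $\sigma (PU) \sigma^* = \sigma P \sigma^* \cdot \sigma U \sigma^* = \sigma P \sigma^* \cdot U_\dag$ and right-multiplying by $U_\dag^*$ then gives $\sigma P \sigma^* = P_\dag$. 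Together with $\sigma \Lambda = \Lambda_\dag$, these identities are precisely the coincidence of $(\cF, \Lambda, P, U)$ with the canonical special And\^o tuple $(\cF_\dag, \Lambda_\dag, P_\dag, U_\dag)$ in the sense of Definition \ref{D:preAndoTuple}, completing the strong Type II case.

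For the Type I case, I would run the same skeleton for the adjoint pair: Theorem \ref{T:AdjointT} guarantees that $(T_1^*, T_2^*)$ also admits a strongly minimal And\^o lift, so by Theorem \ref{T:SMinReg} the canonical special And\^o tuple $(\cD_{T_1^*} \oplus \cD_{T_2^*}, \Lambda_{\dag *}, P_{\dag *}, U_{\dag *})$ of $(T_1^*, T_2^*)$ exists with $\Lambda_{\dag *}$ unitary, and it is a strong Type II tuple for $(T_1^*, T_2^*)$ by Theorem \ref{Thm:special-canonical}. The Second Proof of Theorem \ref{T:FundOps}, applied to $(T_1^*, T_2^*)$ in place of $(T_1, T_2)$, shows that any Type I And\^o tuple $(\cF_*, \Lambda_*, P_*, U_*)$ for $(T_1^*, T_2^*)$ yields the fundamental operator pair of $(T_1^*, T_2^*)$ via $G_1 = \Lambda_*^* P_*^\perp U_* \Lambda_*$ and $G_2 = \Lambda_*^* U_*^* P_* \Lambda_*$, matching the formula produced by the canonical special tuple, and the identical conjugation argument then exhibits the coincidence. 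The only delicate step, which is really the point of the strong-minimality hypothesis rather than a genuine obstacle, is the identity $\Lambda \Lambda^* = I_\cF$ (resp.\ $\Lambda_* \Lambda_*^* = I_{\cF_*}$): without surjectivity one can only recover the sandwich equalities $\Lambda^* P^\perp U \Lambda = \Lambda_\dag^* P_\dag^\perp U_\dag \Lambda_\dag$ and cannot strip off the outer factors to recover $(P, U)$ up to a unitary change of basis.
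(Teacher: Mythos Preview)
Your proof is correct and follows essentially the same approach as the paper's: both arguments use the existence of a strongly minimal And\^o lift (via Theorem \ref{T:sminLift=sminTuple} and Theorem \ref{T:SMinReg}) to produce a canonical special And\^o tuple with unitary $\Lambda_\dag$, invoke the Second/Third Proof of Theorem \ref{T:FundOps} together with uniqueness of the Fundamental-Operator pair to match the sandwiched expressions, and then define the intertwining unitary as the composition $\Lambda_\dag \Lambda^*$ (or its inverse) to extract $P$ and $U$ from those identities. The only cosmetic differences are that the paper treats the Type I case first and writes the unitary as $\tau = \Lambda\Lambda_\dag^* \colon \cF_\dag \to \cF$ (your $\sigma^{-1}$), and derives $U$ by adding $\Lambda^* P^\perp U \Lambda$ to the adjoint of $\Lambda^* U^* P \Lambda$ before conjugating rather than after; one minor citation point is that for the special tuple of $(T_1^*, T_2^*)$ to be Type I you want Theorem \ref{Thm:special} rather than Theorem \ref{Thm:special-canonical}.
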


\begin{proof}
Let $(T_1,T_2)$ be a commuting contractive pair and $(\cF,\Lambda, P,U)$ be a strongly minimal Type I And\^o tuple of $(T_1,T_2)$. Then by Part (1) of 
Theorem \ref{T:sminLift=sminTuple} and Theorem \ref{T:AdjointT}, $(T_1,T_2)$ has a strongly minimal And\^o lift. By Theorem \ref{T:SMinReg}, there is a strongly minimal special 
And\^o tuple of $(T_1,T_2)$, call it $(\cF_\dag,\Lambda_\dag,P_\dag,U_\dag)$; note that $\cF_\dag$ here is just the space $\cD_{T_1} \oplus  \cD_{T_2}$. By 
Definition   \ref{D:preAndoTuple}, we will be done if we can find a unitary $\tau:\cF_\dag\to\cF$ such that 
\begin{align}\label{ToShow1}
\tau\cdot \Lambda_\dag = \Lambda \quad\mbox{and}\quad
(P,U)=(\tau P_\dag\tau^*,\tau U_\dag\tau^*).
\end{align} Set $\tau:=\Lambda\Lambda_\dag^*:\cF_\dag\to\cF$. This is a unitary because both $\Lambda_\dag$ and $\Lambda$ are unitary. First, note that $\Lambda = \Lambda\Lambda_\dag^* \cdot \Lambda_\dag$. Therefore the first equation in \eqref{ToShow1} is achieved. Second, since a special And\^o tuple is of Type I (see Theorem \ref{Thm:special}), by the Second Proof of Theorem \ref{T:FundOps}, both the pairs
\begin{align*}
(\Lambda^*P^\perp U\Lambda, \Lambda^* U^*P \Lambda)\quad \mbox{and}\quad (\Lambda_\dag^*P_\dag^\perp U_\dag\Lambda_\dag, \Lambda_\dag^* U_\dag^*P_\dag \Lambda_\dag)
\end{align*}are the fundamental operator pair for $(T_1,T_2)$. Since the fundamental operators are unique, we must have
\begin{align}\label{SameFundma}
\Lambda^*P^\perp U\Lambda=\Lambda_\dag^*P_\dag^\perp U_\dag\Lambda_\dag \quad \mbox{and}\quad \Lambda^* U^*P \Lambda=\Lambda_\dag^* U_\dag^*P_\dag \Lambda_\dag.
\end{align}
Adding the first of these two equations with the adjoint of the other, we get
\begin{align*}
\Lambda^* U \Lambda =\Lambda_\dag^* U_\dag \Lambda_\dag \quad\mbox{or, equivalently,}\quad U = \Lambda\Lambda_\dag^* \cdot U_\dag \cdot\Lambda_\dag\Lambda^*.
\end{align*}
Using this expression of $U$ in the second equation of \eqref{SameFundma} and simplifying we get
\begin{align*}
P = \Lambda\Lambda_\dag^*\cdot P_\dag\cdot \Lambda_\dag \Lambda^*.
\end{align*}
Therefore the second set of equations in \eqref{ToShow1} is also established. This shows that the strongly minimal Type I And\^o tuple of $(T_1,T_2)$ coincides with the special strongly minimal And\^o tuple $(\cF_\dag,\Lambda_\dag,P_\dag,U_\dag)$ via the unitary $\Lambda\Lambda_\dag^*: \cF_\dag\to \cF$. 

Via a similar analysis using Theorem \ref{Thm:special-canonical} and the Third Proof of Theorem \ref{T:FundOps}, one can show that every strongly minimal strong Type II And\^o tuple coincides with a special strongly minimal And\^o tuple.
\end{proof}

The following result shows how one can recover And\^o tuples (up to coincidence) for a commuting contractive pair $(T_1, T_2)$ from the Fundamental-Operator pair 
$(F_1, F_2)$ for $(T_1, T_2)$, at least for the case where $(T_1, T_2)$ has a strongly minimal And\^o dilation.

\begin{proposition}   \label{P:FO-tuple}
Let $(T_1,T_2)$ be a commuting contractive pair and $(F_1,F_2)$ be its fundamental-operator pair. Then every strongly minimal strong Type II And\^o tuple of 
$(T_1,T_2)$ coincides with $(\cD_T,I_{\cD_T}, F_2^*F_2, F_2^*+F_1)$. 
The same assertion holds for a strongly minimal Type I And\^o tuple of $(T_1^*,T_2^*)$ as well.
\end{proposition}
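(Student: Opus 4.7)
The approach is to first verify that the candidate tuple $(\cD_T,I_{\cD_T}, F_2^*F_2, F_2^*+F_1)$ is a bona fide pre-And\^o tuple for $(T_1,T_2)$, and then to exhibit the coincidence unitary. The existence of a strongly minimal strong Type II And\^o tuple, together with Part (2) of Theorem \ref{T:sminLift=sminTuple}, implies that $(T_1,T_2)$ has a strongly minimal And\^o lift. Hence by Theorem \ref{T:StrongFund} the Fundamental-Operator pair $(F_1,F_2)$ satisfies the system \eqref{strong-fund}. Applying Lemma \ref{relations-of-E-lem} to the pair $(E_1,E_2):=(F_1^*,F_2^*)$ on $\cD_T$ (exactly as in the converse part of the proof of Theorem \ref{T:StrongFund}) yields that $F_2^*F_2$ is an orthogonal projection on $\cD_T$ and that $F_2^*+F_1$ is a unitary on $\cD_T$. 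Thus $(\cD_T,I_{\cD_T}, F_2^*F_2, F_2^*+F_1)$ is indeed a pre-And\^o tuple for $(T_1,T_2)$.

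Now let $(\cF,\Lambda,P,U)$ be an arbitrary strongly minimal strong Type II And\^o tuple of $(T_1,T_2)$, so that $\Lambda\colon\cD_T\to\cF$ is a \emph{unitary}. By the Third Proof of Theorem \ref{T:FundOps}, the Fundamental Operators of $(T_1,T_2)$ are represented as
\begin{equation*}
F_1 = \Lambda^* P^\perp U\Lambda, \qquad F_2 = \Lambda^* U^* P\Lambda.
\end{equation*}
Since $\Lambda$ is unitary these identities can be inverted to give $\Lambda F_1\Lambda^* = P^\perp U$ and $\Lambda F_2\Lambda^* = U^*P$, and taking adjoints, $\Lambda F_2^*\Lambda^* = PU$.

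The natural candidate for the unitary implementing the coincidence is $\tau := \Lambda \colon \cD_T\to\cF$. Then $\tau\cdot I_{\cD_T} = \Lambda$, which is the first coincidence requirement of Definition \ref{D:preAndoTuple}. Next, using $P^2=P$ and $UU^*=I_\cF$, we compute
\begin{equation*}
\tau(F_2^*F_2)\tau^* = (\Lambda F_2^*\Lambda^*)(\Lambda F_2\Lambda^*) = (PU)(U^*P) = P,
\end{equation*}
and
\begin{equation*}
\tau(F_2^*+F_1)\tau^* = \Lambda F_2^*\Lambda^* + \Lambda F_1\Lambda^* = PU + P^\perp U = U.
\end{equation*}
Hence $(\cF,\Lambda,P,U)$ coincides with $(\cD_T,I_{\cD_T}, F_2^*F_2, F_2^*+F_1)$ via the unitary $\tau=\Lambda$, which proves the main assertion.

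For the Type I statement, the argument is identical after substituting $(T_1^*,T_2^*)$ for $(T_1,T_2)$ throughout: one uses Theorem \ref{T:AdjointT} together with Theorem \ref{T:StrongFund} (applied to $(T_1^*,T_2^*)$) to verify that the corresponding candidate tuple built from the Fundamental-Operator pair of $(T_1^*,T_2^*)$ on $\cD_{T^*}$ is a pre-And\^o tuple, and one invokes the Second Proof of Theorem \ref{T:FundOps} in place of the Third Proof to obtain the representation of the Fundamental Operators in terms of $(\Lambda,P,U)$. The routine computation then goes through verbatim. There is no substantive obstacle here; the whole proof amounts to recognizing that the data of a strongly minimal And\^o tuple is already encoded intrinsically by the Fundamental-Operator pair and its polar-type decomposition coming from Lemma \ref{relations-of-E-lem}.
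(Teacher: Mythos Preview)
Your proof is correct and follows essentially the same approach as the paper: both arguments use the Third Proof of Theorem \ref{T:FundOps} to express $(F_1,F_2)$ in terms of the strongly minimal tuple data $(\cF,\Lambda,P,U)$, and then invert via the unitary $\Lambda$ to recover $P=F_2^*F_2$ and $U=F_2^*+F_1$. The only cosmetic difference is that the paper first replaces $(\cF,\Lambda,P,U)$ by the coincident tuple $(\cD_T,I_{\cD_T},\Lambda^*P\Lambda,\Lambda^*U\Lambda)$ and computes there, whereas you keep the original tuple and exhibit the coincidence unitary $\tau=\Lambda$ directly; your additional preliminary verification that the candidate is a genuine pre-And\^o tuple is a welcome clarification.
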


\begin{proof}
Let $(\cF,\Lambda,P,U)$ be a strongly minimal strong Type II And\^o tuple of $(T_1,T_2)$. Since $\Lambda$ is a unitary and we are interested in the 
coincidence envelope of strongly minimal And\^o tuples, without loss of generality we can suppose that the And\^o tuple is 
$(\Lambda^*\cF, \, \Lambda^*\Lambda,\, \Lambda^* P\Lambda, \, \Lambda^* U\Lambda)=(\cD_T, \, I_{\cD_T}, \, P, \, U)$. By the Third Proof of Theorem \ref{T:FundOps}, the fundamental operators for $(T_1,T_2)$ then are $(F_1,F_2)=(P^\perp U, U^*P)$. This readily implies that $F_1+F_2^*=U$ and hence $P=(F_1+F_2^*)F_2=F_2^*F_2$. Similarly, using the Second Proof of Theorem \ref{T:FundOps}, one can prove the assertion for a strongly minimal Type I And\^o tuple of $(T_1^*,T_2^*)$.
%$$
%D_{T_2}^2T_1=T_1-T_2^*T=D_TX_1D_T,\quad D_{T_1}^2T_2=T_2-T_1^*T=D_TX_2D_T.
%$$ are then satisfied by both 
%$$(X_1,X_2)=(P^\perp U,U^*P) \quad\mbox{and}\quad (X_1,X_2)=(\Lambda_\dag^*P_\dag^\perp U_\dag\Lambda_\dag, \Lambda_\dag^*U_\dag^*P_\dag\Lambda_\dag).$$ Since solutions of these two operator equations are by Lemma \ref{L:FundOps} unique, we have
%$$
%(P^\perp U,U^*P)=(\Lambda_\dag^*P_\dag^\perp U_\dag\Lambda_\dag, \Lambda_\dag^*U_\dag^*P_\dag\Lambda_\dag)
%$$
%By adding the first component with the adjoint of the second we get
%$$
%U=\Lambda_\dag^*U_\dag\Lambda_\dag 
%$$and thus from the second component we get
%$$P=\Lambda_\dag^*U_\dag\Lambda_\dag\Lambda_\dag^*U_\dag^*P_\dag\Lambda_\dag.$$
%For $h,h'\in\cH$, we now compute remembering the formula \eqref{Udagger} for $U_\dag$
%\begin{align*}
%\langle U D_Th, D_Th'\rangle=\langle U_\dag\Lambda_\dag D_Th, \Lambda_\dag D_Th'\rangle&=\left\langle \begin{bmatrix}
%D_{T_1}\\ D_{T_2}T_1
%\end{bmatrix}h, \begin{bmatrix}
%D_{T_1}T_2\\ D_{T_2}
%\end{bmatrix}h' \right\rangle\\
%&=\langle T_2^*D_{T_1}^2h,h'\rangle+\langle D_{T_2}^2T_1h,h'\rangle
%\end{align*}
\end{proof}

\section{Appendix: examples}  \label{S:Appendix}

\smallskip

\noindent
\textbf{1. Example of a special And\^o tuple which is not a Type I$'$ And\^o tuple.}

\smallskip
Recall from Remark \ref{R:reasonsD} that a pre-And\^o tuple $(\cF, \Lambda, P, U)$ for the commuting contractive pair $(T_1^*, T_2^*)$ is said to be a Type I$^\prime$ Ando tuple for
$(T_1^*, T_2^*)$ if the system of equations \eqref{AndoTuple1'} - \eqref{AndoTuple2'} holds:
\begin{align}
 U^* P \Lambda D_{T^*} T^* + U^* P^\perp \Lambda D_{T^*} & = \Lambda D_{T^*} T_1^*,  \notag \\
 P^\perp U \Lambda D_{T^*} T^* + PU \Lambda D_{T^*} & = \Lambda D_{T^*} T_2^*.  \label{CheckNotTrue}
 \end{align}
We now complete the discussion in Remark \ref{R:reasonsD} by showing that it can happen that there is a special And\^o tuple which is not a Type I$^\prime$ And\^o tuple.

 To construct an example, proceed as follows.  Let $(T_1, T_2)$ be the BCL2-model commuting isometric pair associated with the BCL-tuple $(\cF, P, U)$:
  \begin{equation}  \label{T1T2}
 T_1 =( I_{H^2} \otimes U^* P^\perp) + (M_z \otimes U^*P), \quad T_2 = (I_{H^2} \otimes PU) + (M_z \otimes P^\perp U).
 \end{equation}
 Then the product isometry $T = T_1T_2 = T_2 T_1$ is $T = M_z^\cF$and the defect operator $D_{T^*}$ is the projection to the constant functions in $H^2(\cF)$:
 $D_{T^*} =  \bev_{0, \cF}^* \bev_{0, \cF}$. 
 
Let the map $\Lambda$ be given by
 $$
    \Lambda = \bev_{0, \cF}|_{\cD_{T^*}}\colon \cD_{T^*} \to \cF.
 $$
As was discussed in item (1) of Remark \ref{E:Dex}, 
 the collection $(\cF, \Lambda, P, U)$ is a Type I And\^o tuple for $(T_1^*, T_2^*)$ having the additional property that $\Lambda$ is unitary (rather than only isometry).
 For the ensuing discussion 
 \begin{equation}  \label{T1T2ATuple}
 \Xi_{T_1, T_2}= (\cF, \Lambda, P, U).
 \end{equation}
 refers to this specific choice of And\^o tuple constructed as above from $(T_1, T_2) = (V_1, V_2)$.

It is possible to find a unitary transformation $\tau \colon \cF \to \cD_{T_1^*} \oplus \cD_{T_2^*}$ which implements a coincidence between the Type I And\^o tuple
$(\cF, \Lambda, P, U)$ and a canonical-form special And\^o tuple $(\cF', \Lambda', P', U')$, but instead we present a higher-brow argument which uses some
general principles which are developed later in this exposition.  As we have already observed above,
the embedding operator $\Lambda \colon \cD_T \to \cF$ is actually unitary (i.e., a surjective isometry) which means in the terminology of Definition \ref{D:StrongMinTuple}  that $(\cF, \Lambda, P, U)$ is a {\em strongly minimal}
And\^o tuple for $(T_1^*, T_2^*)$ and that the Douglas-model And\^o lift  $(\bPi_D, \bV_{D,1}, \bV_{D,2})$ induced by the Type I And\^o tuple
$(\cF, \Lambda, P, U)$  for $(T_1^*, T_2^*)$ is actually {\em strongly minimal},  meaning that the pair $(\bPi_D, \bV = \bV_{D,1} \bV_{D,2})$ is a 
minimal Sz.-Nagy--Foias lift for the product contraction $T = T_1 T_2$ in Douglas-model form (in this case where $T$ is isometric, actually $V_D = T$ is the isometric lift of itself).
By the general result Theorem \ref{T:strongly-minimal} to come, it follows that all And\^o lifts of $(T_1, T_2)$ are unitarily equivalent, which in turn means (by Theorem \ref{Thm:DougCoin} to come) that all associated 
Type I And\^o tuples for $(T_1^*, T_2^*)$ coincide.
By combining Remark \ref{R:specATuple} and Theorem \ref{Thm:special} we see that special And\^o tuples for $(T_1^*, T_2^*)$ exist and each such And\^o tuple
is in fact a   Type I And\^o tuple for $(T_1^*, T_2^*)$.  Thus any Type I And\^o tuple for $(T_1^*, T_2^*)$  in fact coincides with a canonical-form special And\^o tuple, and
hence  (according to our terminology) is itself special.
In particular the specific And\^o tuple   $\Xi_{T_1, T_2}$ identified in  \eqref{T1T2ATuple} is a special And\^o tuple for $(T_1^*, T_2^*)$.

We next wish to check that $(\cF, \Lambda, P, U)$ is not a Type I$^\prime$ And\^o lift for $(T_1^*, T_2^*)$, i.e.~we wish
 to check the lack of general validity of the system of equations  \eqref{CheckNotTrue}.
 Applying the first equation to a general element $h \in H^2(\cF)$, we
 see that the first equation holds if and only if for all $h \in H^2(\cF)$ we have
 $$
 U^* P h'(0) + U^* P^\perp h(0) = (T_1^*h)(0) := P^\perp U h(0) + P U  h'(0).
 $$
 For this to hold, it must be the case that coefficients of $h(0)$ and of $h'(0)$ match:
 \begin{equation}  \label{UPcond}
U^*   P^\perp  = P^\perp U, \quad U^* P = P U.
\end{equation}
One can easily construct  counterexamples, even with $\cF = {\mathbb C}$, e.g.
 $$
  \cF = {\mathbb C}, \quad U^2 \ne 1, \quad P = 1.
 $$
 Thus the analogue of Theorem \ref{Thm:special} with Type I$'$ And\^o tuple in place of Type I And\^o tuple
 fails in general.
 A similar analysis holds for the second equation: applying the second equation to a general element $h \in H^2(\cF)$ leads to
 $$
 P^\perp U h'(0) + PU h(0) = (T_2^* h)(0):= U^*P h(0) + U^* P^\perp h'(0)
 $$
 which then leads to the same system of equations \eqref{UPcond}.
This completes the verification that the example is as desired. 
 \smallskip
 
 \noindent
 \textbf{2.  Example  of a special And\^o tuple which is not a strong Type II$^\prime$ And\^o tuple. }

 \smallskip
 
Recall from Remark \ref{R:reasonsS} that for a pre-And\^o tuple $(\cF, \Lambda, P, U)$ of $(T_1, T_2)$ to be a strong Type II$^\prime$ And\^o tuple, it must satisfy \begin{enumerate}
 \item[(a)] \textbf{Commutativity condition:}
 $$
 U^* P \Lambda D_T T_2 + U^* P^\perp U \Lambda D_T = P^\perp U \Lambda D_T T_1 + P \Lambda D_T.
 $$
% \item[(b)]  \textbf{Isometry condition:}
% $$
%  D_T \Lambda^* P \Lambda D_T = D_{T_1}^2, \quad
%    D_T \Lambda^* U^* P^\perp U \Lambda D_T = D_{T_2}^2;
%$$
%\item[(a$'$)]   \textbf{Strengthened commutativity condition:}
%$$
%U^* P \Lambda D_T T_2 + U^* P^\perp U \Lambda D_T = P^\perp U \Lambda D_T T_1 + P \Lambda D_T
%= \Lambda D_T.
%$$
\end{enumerate}
We show that there can be a special And\^o tuple of $(T_1,T_2)$ which fails to satisfy condition (a) above and hence is not a strong Type II$^\prime$ And\^o tuple.

Let $\cF$ be any coefficient Hilbert space, $P$ be any projection and $U$ be any unitary operator on $\cF$. We let $(T_1,T_2)$ be the commuting coisometric pair on $H^2(\cF)^\perp$ as in \eqref{SPair}:
\begin{align*}
& (T_1,T_2)=(\widetilde V_1^*, \widetilde V_2^*)\quad\mbox{where}\\
& (\widetilde V_1, \widetilde V_2) = (M_{U^*P^\perp + z^{-1} U^*P}, M_{PU+ z^{-1} P^\perp U}) \text{ on } H^2(\cF)^\perp.
\end{align*}
Let $\Lambda:\cD_T\to\cF$ be as in \eqref{SLambda}. We concluded in Example \ref{E:Sex} that $(\cF,\Lambda,P,U)$ is a strong Type II And\^o tuple for $(T_1,T_2)$ with $\Lambda$ actually a unitary. As in the discussion of part (1) above, the fact that $\Lambda$ is unitary implies that this $(\cF, \Lambda, P, U)$ is also special.  Thus it remains only to argue that
it can happen that this And\^o tuple $(\cF, \Lambda, P, U)$ is not a strong Type II$^\prime$ And\^o tuple for $(T_1, T_2)$.

Toward this goal, let us first compute, for $f(\zeta) = \sum_{n=-\infty}^{-1} f_n \zeta^{n} \in H^2(\cF)^\perp$,
\begin{align*}
& \Lambda D_T \colon f \mapsto f_{-1}, \\
& \Lambda D_T T_1 \colon f \mapsto \left[ M_{ P^\perp U  + \zeta P U} f \right]_{-1} = P^\perp U f_{-1} + PU f_{-2}, \\
& \Lambda D_T T_2 \colon f \mapsto \left[ M_{U^*P + \zeta U^* P^\perp} f \right]_{-1} =
U^* P f_{-1} + U^* P^\perp f_{-2},
\end{align*}
and hence we have
\begin{align*}
& U^* P \Lambda D_T T_2 + U^* P^\perp U \Lambda D_T \colon f \mapsto
(U^* P U^* P + U^* P^\perp U) f_{-1} + U^* P U^* P^\perp f_{-2},  \\
& P^\perp U \Lambda D_T T_1 + P \Lambda D_T \colon f \mapsto
(P^\perp U P^\perp U + P ) f_{-1} + P^\perp U P U f_{-2}, \\
&  P \Lambda D_T \colon f \mapsto P f_{-1}.
\end{align*}
Hence condition (a) requires that
\begin{equation}  \label{require-a}
U^* P U^* P + U^* P^\perp U = P^\perp U P^\perp U + P, \quad
U^* P U^* P^\perp = P^\perp U P U
\end{equation}
while condition (a$'$) requires in addition that the common value of the first expression is $I$ and the common value
of the second expression is $0$.
To get a counterexample to condition (a) (and hence also to (a$'$)),  it again suffices to take $\cF = {\mathbb C}$,
$P = 1$, $U^2 \ne 1$.

It turns out that the isometry condition (b) also fails to hold in general.  Indeed one can verify that
$$
D_{T_1}  \colon f \mapsto (U^* P U f_{-1}) \zeta^{-1}, \quad
D_{T_2}  \colon f \mapsto (P^\perp f_{-1}) \zeta^{-1}, \quad
D_T \colon f \mapsto f_{-1} \zeta^{-1}.
$$
and condition (b) requires
\begin{equation}  \label{require-b}
P = U^* P U, \quad U^* P^\perp U = P^\perp
\end{equation}
which is violated as soon as $P$ and $U$ do not commute, requiring $\dim  \cF \ge 2$.  One can verify that
$\cF = {\mathbb C}^2$, $U = \sbm{ 0 & 1 \\ 1 & 0}$, $P = \sbm{ 1 & 0 \\ 0 & 0}$ violates both \eqref{require-a} and
\eqref{require-b}.
Thus Theorem \ref{Thm:special-canonical} fails in general when {\em strong Type} II {\em And\^o tuple} is replaced with
{\em strong Type} II $^\prime$ {\em And\^o tuple}.

%\section{Notes to Chapter \ref{C:Ando}}
%
%
%\noindent
%\textbf{1.}  The paper Bercovici-Dougls-Foias \cite{BDF2} makes precise the connections of the And\^o lifting theorem with the Commutant Lifting Theorem of Sz.-Nagy and Foias
%(see \cite[Theorem II.2.3]{Nagy-Foias}), as was also observed by Parrott \cite{Parrott}.  For a comprehensive treatment of the Commutant Lifting Theorem and its applications to
%system theory and control, we refer to \cite{FF-OT44}.
%
%\smallskip
%
%\noindent
%\textbf{2.}  We note that Theorem \ref{T:FundOps} is already proved in 
%\cite[Theorem 3.2]{BS-Douglas}, where it was shown that the result is actually true for 
%a tuple of any finite number of commuting contraction operators.  However, the proof there appeals to the parallel result for the case of $\Gamma$-contractions
%(a commuting operator pair $(T_1, T_2)$ having the symmetrized bidisk as a spectral set) whereas here we give three direct proofs for the setting of
%a commuting contractive pair.   We take full advantage of the 2-dimensional dilation theory (i.e., the existence of an And\^o lift for a commuting contractive pair) to arrive 
%at the new proofs.

\chapter[Classification of And\^o  lifts]{Classification of Douglas/Sch\"affer-model lifts of commuting contractive operator-pairs}
\label{C:classification}

Recall that an And\^o lift $(\Pi, V_1,V_2)$ of a commuting contractive pair $(T_1,T_2)$ on $\cH$ is minimal, if the lift pair acts on the minimal joint-invariant subspace
for $(V_1, V_2)$  containing $\operatorname{Ran} \Pi$ (see \eqref{minAndoLift}). Unlike as in the classical case, a commuting contractive pair $(T_1, T_2)$ can have two minimal And\^o lifts that are not unitarily equivalent. For example, let 
$(T_1,T_2)=(0,0)$ on $\mathbb C$. 
Then both $(M_z,M_z)$ on $H^2$ and $(M_{z_1},M_{z_2})$ on $H^2_{{\mathbb D^2}}$ are minimal And\^o lifts of $(T_1,T_2)$ (note that the first is commuting but not doubly commuting
while the second is doubly commuting) but there is no unitary that intertwines these two pairs. 
In the previous chapter, we constructed And\^o lifts out of Type I and Type II And\^o tuples.  In this chapter, we show that both the Douglas and Sch\"affer model of an 
And\^o lift are uniquely associated to the Type I and Type II And\^o tuples from which they are constructed. We first deal with the Douglas model.

\section[Classification of model lifts]{Classification of Douglas/Sz.-Nagy--Foias models for And\^o lifts}\label{S:DougClass}
Suppose $(T_1,T_2)$ is a commuting contractive pair on a Hilbert space $\cH$ and $(\cF_*,\Lambda_*,P_*,U_*)$ is an And\^o tuple of Type I for $(T_1^*,T_2^*)$. 
Let us recall from \S \ref{S:DougArbMod} that the Douglas model of an And\^o  lift of $(T_1,T_2)$ is given by $(\Pi, V_1,V_2))$ on $\cK$, where the Hilbert space $\cK$, 
the pair of commuting isometries $(V_1,V_2)$ and
the embedding $\Pi$ are as given in \eqref{DougAndoMod}.

\begin{theorem}\label{Thm:DougCoin}
Let $(T_1,T_2)$  be a pair of commuting contractions acting on a Hilbert space $\cH$ and $(\cF_*,\Lambda_*,P_*,U_*)$, $(\cF_*',\Lambda_*',P_*',U_*')$ be  Type I 
And\^o tuples  for $(T_1^*,T_2^*)$. Let $(\bPi_D, \bV_{D,1}, \bV_{D,2})$, $(\bPi'_D, \bV'_{D,1},\bV'_{D,2} )$ be the Douglas-model And\^o  lifts of $(T_1,T_2)$ corresponding to $(\cF_*,\Lambda_*,P_*,U_*)$ and $(\cF_*',\Lambda_*',P_*',U_*')$, respectively as in Theorem \ref{T:Dmodel}. Then $(\bPi_D, \bV_{D,1}, \bV_{D,2})$ and $(\bPi_D', \bV_{D,1}, \bV_{D,2} )$ are unitarily equivalent if and only if $(\cF_*,\Lambda_*,P_*,U_*)$ and $(\cF_*',\Lambda_*',P_*',U_*')$ coincide.
\end{theorem}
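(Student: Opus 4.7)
The plan is to exploit the key structural fact that the second components of the two Douglas-model lifts (namely the space $\cQ_{T^*}$ together with the operators $W_D$, $W_{\flat 1}$, $W_{\flat 2}$, and the embedding data $Q_{T^*}$) are canonically constructed from $(T_1,T_2)$ alone and therefore are identical for the two lifts; only the shift components on $H^2(\cF_*)$ and $H^2(\cF'_*)$ carry the information from the choice of Type I And\^o tuple. Thus the coincidence question reduces to analyzing the action of the implementing unitary on the shift components.

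For the forward direction, if $\omega \colon \cF_* \to \cF'_*$ is a unitary with $\omega \Lambda_* = \Lambda'_*$, $\omega P_* = P'_* \omega$, and $\omega U_* = U'_* \omega$, then the operator
\[
\tau := \begin{bmatrix} I_{H^2} \otimes \omega & 0 \\ 0 & I_{\cQ_{T^*}} \end{bmatrix}
\colon \bcK_D \to \bcK'_D
\]
is unitary and, by direct substitution into the formulas \eqref{DougAndoMod}, intertwines $\bV_{D,j}$ with $\bV'_{D,j}$ for $j=1,2$ and satisfies $\tau \bPi_D = \bPi'_D$.

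For the converse, suppose $\tau \colon \bcK_D \to \bcK'_D$ is a unitary with $\tau \bV_{D,j} = \bV'_{D,j} \tau$ and $\tau \bPi_D = \bPi'_D$. Taking the product $\bV_D = \bV_{D,1} \bV_{D,2}$ and similarly for the primed quantities gives the intertwining of $\sbm{M_z^{\cF_*} & 0 \\ 0 & W_D}$ with $\sbm{M_z^{\cF'_*} & 0 \\ 0 & W_D}$. Since the diagonal blocks are a shift and a unitary respectively, part (2) of Lemma \ref{L:AuxLemma} forces $\tau$ to be block-diagonal: $\tau = \sbm{\tau' & 0 \\ 0 & \tau''}$ with $\tau' \colon H^2(\cF_*) \to H^2(\cF'_*)$ intertwining the two Hardy-space shifts and $\tau''$ on $\cQ_{T^*}$ intertwining $W_D$ with itself. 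The shift-intertwining property forces $\tau' = I_{H^2} \otimes \omega$ for a unitary $\omega \colon \cF_* \to \cF'_*$. Plugging $\tau = \sbm{I_{H^2}\otimes\omega & 0 \\ 0 & \tau''}$ into the intertwinings $\tau \bV_{D,j} = \bV'_{D,j} \tau$ and comparing the multiplier symbols exactly as in the uniqueness portion of Theorem \ref{Thm:BCLcoin} yields $\omega P_* = P'_* \omega$ and $\omega U_* = U'_* \omega$.

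Finally, the lifting condition $\tau \bPi_D = \bPi'_D$ on the first component reads
\[
(I_{H^2} \otimes \omega \Lambda_*) \cO_{D_{T^*},T^*} = (I_{H^2} \otimes \Lambda'_*) \cO_{D_{T^*},T^*}.
\]
Applying $\bev_{0,\cF'_*}$ on the left (using $\bev_{0,\cF} \cO_{D_{T^*},T^*} = D_{T^*}$) gives $\omega \Lambda_* D_{T^*} = \Lambda'_* D_{T^*}$ on $\cH$, and density of $D_{T^*}\cH$ in $\cD_{T^*}$ delivers $\omega \Lambda_* = \Lambda'_*$. This completes the coincidence data. The second component of the lifting condition, $\tau'' Q_{T^*} = Q_{T^*}$, together with the $W_D$-intertwining and the density of $\bigcup_{n\ge 0} W_D^n \operatorname{Ran} Q_{T^*}$ in $\cQ_{T^*}$, forces $\tau'' = I_{\cQ_{T^*}}$, which is automatically consistent with the canonical nature of $(\cQ_{T^*}, W_D, W_{\flat 1}, W_{\flat 2})$ and imposes no extra constraint. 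I expect the main technical point to be this clean separation of the shift and unitary parts of $\tau$ via Lemma \ref{L:AuxLemma}; once that is in hand, the remainder is a transcription into the coincidence conditions of Definition \ref{D:preAndoTuple}.
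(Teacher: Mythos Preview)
Your proposal is correct and follows essentially the same approach as the paper's proof: both directions use the same block-diagonal ansatz $\sbm{I_{H^2}\otimes\omega & 0 \\ 0 & I_{\cQ_{T^*}}}$ for the implementing unitary, invoke Lemma~\ref{L:AuxLemma}(2) on the product intertwining to force block-diagonality, read off $\tau'=I_{H^2}\otimes\omega$ from the shift-intertwining, extract $\omega P_*=P'_*\omega$ and $\omega U_*=U'_*\omega$ from the multiplier symbols exactly as in Theorem~\ref{Thm:BCLcoin}, and obtain $\omega\Lambda_*=\Lambda'_*$ and $\tau''=I_{\cQ_{T^*}}$ from the embedding condition together with the density of $\bigcup_{n\ge 0}W_D^n\operatorname{Ran}Q_{T^*}$.
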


\begin{proof}
For the `if' part, suppose two And\^o tuples $(\cF_*,\Lambda_*,P_*,U_*)$ and $(\cF_*',\Lambda_*',P_*',U_*')$ of Type I of $(T_1^*,T_2^*)$ coincide, i.e. by Definition \ref{D:preAndoTuple} there exists a unitary $u_*:\cF_*\to\cF_*'$ such that
\begin{align}\label{DAndoCoin}
 u_*\Lambda_*=\Lambda_*'\quad \text{ and } \quad u_*(P_*,U_*)=(P_*',U_*')u_*.
\end{align}
Define the unitary 
$$
\tilde{u}_*:\begin{bmatrix}
(I_{H^2}\otimes u_*)&0\\0& I_{\cQ_{T^*}}
\end{bmatrix}:\begin{bmatrix}
H^2(\cF_*)\\ \cQ_{T^*}
\end{bmatrix} \to \begin{bmatrix}
H^2(\cF'_*)\\ \cQ_{T^*}
\end{bmatrix} .
$$
Then it follows from (\ref{DAndoCoin}) that
\begin{align*}
\tilde{u}_*\bPi_D=\begin{bmatrix}
                    I_{H^2}\otimes u_* & 0 \\
                    0 & I_{\cQ_{T^*}}
                  \end{bmatrix}\begin{bmatrix} (I_{H^2}\otimes\Lambda_*) \cO_{D_{T^*}, T^*} \\ Q_{T^*} \end{bmatrix}
&=\begin{bmatrix} (I_{H^2}\otimes u_*\Lambda_*) \cO_{D_{T^*}, T^*} \\ Q_{T^*} \end{bmatrix}\\
&=\begin{bmatrix} (I_{H^2}\otimes\Lambda_*') \cO_{D_{T^*}, T^*} \\ Q_{T^*} \end{bmatrix}=\bPi'_D
\end{align*}
and
\begin{align*}
\tilde{u}_* \bV_{D,1} &=\begin{bmatrix} I_{H^2} \otimes u_* & 0 \\  0 & I_{\cQ_{T^*}} \end{bmatrix}  
\begin{bmatrix} M_{U_*^*P_*^\perp+z U_*^* P_*} & 0 \\ 0 &  W_{\flat1} \end{bmatrix}    \\
 &=
\begin{bmatrix} M_{u_*(U_*^* P_*^\perp+zU_*^* P_*)} & 0 \\ 0 &  W_{\flat1} \end{bmatrix}
=\begin{bmatrix} M_{(U'^* P_*'^\perp+z U'^* P_*')u_*} & 0 \\ 0 &  W_{\flat1}\end{bmatrix}=\bV'_{D,1} \tilde{u}_*.
\end{align*}  
 The intertwining $\tilde{u}_*\bV_{D,2}=\bV'_{D,2} \tilde{u}_*$     follows similarly.  This establishes the equivalence of $(\bPi_D, \bV_{D,1},\bV_{D,2})$
and $(\bPi'_D, \bV'_{D,1}, \bV'_{D,2})$.

Conversely, suppose that the two And\^o lifts $(\bPi_D, \bV_{D,1}, \bV_{D,2})$ and $(\bPi'_D, \bV'_{D,1},$ 
$\bV'_{D,2})$ of $(T_1,T_2)$ are unitarily equivalent (as lifts of $(T_1, T_2)$) and 
are in the model form coming from two Type I And\^o tuples
$(\cF_*,\Lambda_*,P_*,U_*)$ and $(\cF_*', \Lambda_*',P_*',U_*')$ for $(T_1, T_2)$, respectively.
This means that there exists a unitary $\tau_* \colon \bcK_D \to \bcK'_D$ such that
\begin{equation}   \label{DuniDIl}
\tau_* \bPi_D = \bPi'_D,  \quad    \tau_* (\bV_{D,1}, \bV_{D,2}) = (\bV'_{D,1}, \bV'_{D,2}) \tau_*.
\end{equation}
For more detailed calculations let us introduce that $2 \times 2$ matrix representation for the unitary $\tau_*$ and the column representations for the spaces
$\bcK_D $ and $\bcK'_D$:
$$
\tau_* = \begin{bmatrix} \tau' & \tau_{12} \\ \tau_{21} & \tau'' \end{bmatrix} \colon  \cK:=\begin{bmatrix} H^2(\cF_*) \\ \cQ_{T^*} \end{bmatrix}
\to \cK':= \begin{bmatrix} H^2(\cF_*') \\ \cQ_{T^*} \end{bmatrix}.
$$
From the second equality in (\ref{DuniDIl}) we see that  $\tau_* \bV_{D,1} \bV_{D,2}=\bV'_{D,1} \bV'_{D,2} \tau_*$ which in detail becomes
$$
\begin{bmatrix} \tau' & \tau_{12} \\ \tau_{21} & \tau'' \end{bmatrix}
\begin{bmatrix} M_z^{\cF_*} & 0 \\ 0 & W_D \end{bmatrix}
=\begin{bmatrix} M_z^{\cF_*'} & 0 \\ 0 & W_D \end{bmatrix}
\begin{bmatrix} \tau' & \tau_{12} \\ \tau_{21} & \tau''  \end{bmatrix}.
$$
As a consequence of part (2) of Lemma \ref{L:AuxLemma} we see that
$$
  \tau_{12}=0, \quad \tau_{21} = 0.
$$
and $\tau_*$ has the diagonal form
$$
   \tau_* = \begin{bmatrix} \tau' & 0 \\ 0 & \tau'' \end{bmatrix}.
$$
with $\tau'$ and $\tau''$ also unitary and satisfying the intertwinings
$$
  \tau' M_z^{\cF_*}  = M_z^{\cF_*'} \tau', \quad \tau'' W_D = W_D \tau''.
$$
The first equality forces $\tau'$ to have the form
$$
\tau' = I_{H^2} \otimes u_* \text{ for some unitary } u_* \colon \cF_* \to \cF'_*.
$$
Then
\begin{align*}
\tau_* \Pi & = \begin{bmatrix} I_{H^2} \otimes u_* & 0 \\ 0 & \tau'' \end{bmatrix} \begin{bmatrix} (I_{H^2}\otimes\Lambda_*) \cO_{D_{T^*}, T^*} \\ Q_{T^*} \end{bmatrix}
= \begin{bmatrix}  (I_{H^2} \otimes u_*\Lambda_*) \cO_{D_{T^*}, T^*} \\ \tau'' Q_{T^*} \end{bmatrix}
\end{align*}
while on the other hand
$$
\bPi'_D =  \begin{bmatrix} (I_{H^2}\otimes\Lambda'_*) \cO_{D_{T^*}, T^*} \\ Q_{T^*} \end{bmatrix}
$$
Thus the first equality in \eqref{DuniDIl} implies that
\begin{equation}  \label{DCoin1}
(I_{H^2}\otimes u_*) \Lambda_* =\Lambda_*', \quad \tau''Q_{T^*}=Q_{T^*}.
\end{equation}
The second equality in \eqref{DCoin2} together with the intertwining $\tau''W_D=W_D\tau''$ implies that $\tau''$ is equal to the identity on vectors of the form $W_D^nQ_{T^*}h$ 
with $h\in \cH$.  As $\cup_{n=0}^\infty W_D^nQ_{T^*} \cH$ is dense in $\cQ_{T^*}$, we conclude that
 $\tau''=I_{\cQ_{T^*}}$. 
 Since $\tau_*= \sbm{ I_{H^2}\otimes u_* & 0 \\ 0 &  I_{\cQ_{T^*}}}$ intertwines  $\bV_{D,1}$ with $\bV'_{D,1}$ 
 where
 $$
    \bV_{D,1}  = \begin{bmatrix} M_{U_*^* P_*^\perp+z U_*^* P_*} & 0 \\ 0 &  W_{\flat1} \end{bmatrix} , \quad
\bV'_{D,1} =  \begin{bmatrix} M_{U_*^{\prime *} P_*^{\prime \perp}+z U_*^{\prime *}P'_*} & 0 \\ 0 &  W_{\flat1} \end{bmatrix},
$$
we see that
\begin{align*}
  u_*(U_*^* P_*^\perp+z U_*^* P_*)= (U_*'^* P_*'^\perp+z U_*'^* P_*')u_*,
\end{align*}
or equivalently,
\begin{equation}    \label{DCoin2}
u_*U_*^*P_*^\perp=U_*'^*P_*'^\perp u_*, \quad u_*U_*^*P_*=U_*'^*P_*'u_*.
\end{equation}
These two equations together imply
\begin{equation}    \label{Dcoin3}
u_* U_*^*=u_*U_*^*(P_*^\perp+P_*)=U_*'^*(P_*'^\perp+P_*')u_*=U_*'^*u_*.
\end{equation}
This and the last equality in (\ref{DCoin2}) together establish the  intertwining
$$
 u_*P_*=P_*'u_*.
$$
The coincidence of the two And\^o tuples $(\cF_*,\Lambda_*,P_*,U_*)$ and $(\cF_*',\Lambda_*',P_*',U_*')$ now follows.
\end{proof}

We have seen in section \ref{S:NFmodel2} that the connection between the Douglas-model And\^o lift $(\bPi_D, \bV_{D,1}, \bV_{D,2})$ and the Sz.-Nagy--Foias model And\^o lift $(\bPi_{\rm NF}, \bV_{\rm NF,1}, \bV_{\rm NF,2})$ is rather straightforward, namely:
\begin{align*}
& \bPi_{\rm NF} = \begin{bmatrix} I_{H^2(\cF_*)} & 0 \\ 0 & \omega_{\rm NF, D} \end{bmatrix} \bPi_D, \\
& \bV_{\rm NF,j} = \begin{bmatrix} I_{H^2(\cF_*)} & 0 \\ 0 & \omega_{\rm NF,D} \end{bmatrix} V_{D,j} \begin{bmatrix} I_{H^2(\cF_*)} & 0 \\ 0 & \omega_{\rm NF,D}^* \end{bmatrix} 
\text{ for } j=1,2.
\end{align*}
Using this correspondence combined with the result of Theorem \ref{Thm:DougCoin} gives us the following immediate corollary.

\begin{corollary}  \label{Cor:NFCoin}
Let $(T_1,T_2)$  be a commuting contractive operator-pair on a Hilbert space $\cH$ and $(\cF_*,\Lambda_*,P_*,U_*)$, $(\cF_*',\Lambda_*',P_*',U_*')$ be two Type I 
And\^o tuples  for $(T_1^*,T_2^*)$. Let $(\bPi_{\rm NF}, \bV_{\rm NF,1}, \bV_{\rm NF,2})$ and $(\bPi'_{\rm NF}, \bV'_{\rm NF,1},\bV'_{\rm NF,2} )$ be the Sz.-Nagy--Foias model
And\^o  lifts of $(T_1,T_2)$ corresponding to $(\cF_*,\Lambda_*,P_*,U_*)$ and $(\cF_*',\Lambda_*',P_*',U_*')$ as in \eqref{NF-AndoGenForm}. Then $(\bPi, \bV_{\rm NF,1}, 
\bV_{\rm NF,2})$  and $(\bPi', \bV_{\rm NF,1}, \bV_{\rm NF,2} )$ are unitarily equivalent (as lifts of $(T_1, T_2)$) if and only if $(\cF_*,\Lambda_*,P_*,U_*)$ and $(\cF_*',
\Lambda_*',P_*',U_*')$ coincide (as pre-And\^o tuples).
\end{corollary}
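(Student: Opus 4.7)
The plan is to leverage the explicit canonical bridge between the Douglas model and the Sz.-Nagy--Foias model already set up in \eqref{DNFintwin}--\eqref{NF-iso} to reduce the corollary directly to Theorem \ref{Thm:DougCoin}. The key point is that the unitary identification map
\[
\bU_{\rm NF,D} = \begin{bmatrix} I_{H^2(\cF_*)} & 0 \\ 0 & \omega_{\rm NF,D} \end{bmatrix} \colon \begin{bmatrix} H^2(\cF_*) \\ \cQ_{T^*} \end{bmatrix} \to \begin{bmatrix} H^2(\cF_*) \\ \overline{\Delta_{\Theta_T} L^2(\cD_T)} \end{bmatrix}
\]
depends only on the product contraction $T = T_1 T_2$ (through the canonical intertwiner $\omega_{\rm NF,D} = \omega_{\rm NF}\, \omega_D^*$ between the two coordinate-free realizations of the unitary residual part) and \emph{not} on the choice of Type I And\^o tuple. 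Thus, for any Type I And\^o tuple $(\cF_*,\Lambda_*,P_*,U_*)$ for $(T_1^*,T_2^*)$, the associated Douglas-model lift $(\bPi_D,\bV_{D,1},\bV_{D,2})$ and Sz.-Nagy--Foias-model lift $(\bPi_{\rm NF},\bV_{{\rm NF},1},\bV_{{\rm NF},2})$ are unitarily equivalent as lifts via exactly this map $\bU_{\rm NF,D}$.

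First I would make this precise: define analogously $\bU'_{\rm NF,D}$ from the Douglas-model space of $(\cF_*',\Lambda_*',P_*',U_*')$ to the Sz.-Nagy--Foias-model space, and note that it has the same block-diagonal form with the identity in the first slot and the same $\omega_{\rm NF,D}$ in the second slot (since that operator depends only on $T$). Then if $\tau \colon \bcK_D \to \bcK'_D$ implements a unitary equivalence between the two Douglas-model lifts, the composition $\bU'_{\rm NF,D}\, \tau\, \bU_{\rm NF,D}^*$ implements a unitary equivalence between the two Sz.-Nagy--Foias-model lifts, and conversely, any unitary equivalence $\sigma$ between the two Sz.-Nagy--Foias-model lifts pulls back to the unitary equivalence $\bU_{\rm NF,D}'^*\, \sigma\, \bU_{\rm NF,D}$ between the two Douglas-model lifts. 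Hence unitary equivalence of the Sz.-Nagy--Foias-model lifts is equivalent to unitary equivalence of the Douglas-model lifts.

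Applying Theorem \ref{Thm:DougCoin} then finishes the argument: the Douglas-model lifts are unitarily equivalent as lifts of $(T_1,T_2)$ if and only if $(\cF_*,\Lambda_*,P_*,U_*)$ and $(\cF_*',\Lambda_*',P_*',U_*')$ coincide as pre-And\^o tuples.

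The only substantive point that needs verification, and the main (mild) obstacle, is confirming that $\omega_{\rm NF,D}$ really is canonical in the sense claimed --- that is, that the two spaces $\bcK_D$ and $\bcK_D'$ share a common second factor $\cQ_{T^*}$, and similarly for $\bcK_{\rm NF}$ and $\bcK'_{\rm NF}$, with the same $\omega_{\rm NF,D}$ used in both bridges. This is immediate from the construction in Section \ref{S:Douglas} and \S\ref{S:flats}: both $\cQ_{T^*}$ and $W_D$ (and hence $\omega_D$) depend only on $T = T_1 T_2$, and the space $\overline{\Delta_{\Theta_T} L^2(\cD_T)}$ together with $\omega_{\rm NF}$ depend only on the characteristic function $\Theta_T$, i.e.\ only on $T$. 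Once this is recorded, the entire argument is a single transfer of equivalences along a fixed unitary bridge.
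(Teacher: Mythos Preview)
Your proposal is correct and takes essentially the same approach as the paper: the paper notes that the Sz.-Nagy--Foias model lift is obtained from the Douglas model lift via the fixed unitary $\bU_{\rm NF,D} = \sbm{I_{H^2(\cF_*)} & 0 \\ 0 & \omega_{\rm NF,D}}$ (depending only on $T = T_1 T_2$), and deduces the corollary immediately from Theorem~\ref{Thm:DougCoin}. Your write-up spells out the bridge in both directions and verifies the canonicity of $\omega_{\rm NF,D}$ more explicitly than the paper bothers to, but the argument is the same.
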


To illustrate the ideas we here set down some And\^o lifts for a simple commuting pair of contractions $(T_1, T_2)$ and compute some
associated minimal And\^o tuples.  In particular the examples illustrates that a given commuting contractive pair can have many
minimal And\^o lifts which are not unitarily equivalent as lifts.

\begin{example}  \label{E:joint-evs}
Let $(T_1, T_2)$ be a commuting pair of contraction operators on a finite-dimensional Hilbert space $\cH$ of say dimension $N$.  For simplicity we
assume that $(T_1, T_2)$ has a basis of joint eigenvectors.  For convenience we work in detail with a basis of joint eigenvectors for the adjoint pair
$(T_1^*, T_2^*)$.  Let us denote by $\{v_1, \dots, v_N\}$ the basis of joint eigenvectors for $(T_1^*, T_2^*)$ with joint eigenvalues
$$
 ( \overline{\lambda}_1, \dots, \overline{\lambda}_N) =
  \bigg( (\overline{\lambda}_{1,1}, \overline{\lambda}_{1,2}), \dots,(\overline{\lambda}_{N,1}, \overline{\lambda}_{N,2})\bigg).
$$
Thus we have for $r=1,2$ and $j=1, \dots, N$ that
\begin{equation}   \label{evs}
  T_r^* v_j = \overline{\lambda}_{j,r} v_j.
\end{equation}
In particular we have
$$
  \langle (I - T_1 T_1^*) v_j, v_i \rangle_\cH = (1 - \lambda_{i,1} \overline{\lambda}_{j,1}) \langle v_j, v_i \rangle_\cH.
$$
As $T_1$ is a contraction, the matrix on the left (with rows indexed by $i$ and columns by $j$) is positive semidefinite, say of rank $d$.  Hence there are vectors
$y_1, \dots, y_N$ is a $d$-dimensional Hilbert space $\cY$ so that
\begin{equation}   \label{y-gram}
  \langle (I  - T_1 T_1^*) v_j, v_i \rangle_\cH = \langle y_j, y_i \rangle_\cY.
\end{equation}
Combining the last two displayed identities and using the assumption that each $\lambda_{j,1}$ is in the open unit disk, we can solve for $\langle v_j, v_i \rangle$
to get
\begin{equation}   \label{id1'}
  \langle v_j, v_i \rangle = \frac{ \langle y_j, y_i \rangle_\cY }{1 - \lambda_{i,1} \overline{\lambda_{j,1}}}.
\end{equation}

Let us now introduce the vectorial Hardy space $H^2(\cY)$ and the vectorial kernel functions $k_\lambda y$ (for $\lambda \in {\mathbb D}$ and $y \in \cY$) given by
$$
 ( k_\lambda y)(z) = \frac{1}{1 - z \overline{\lambda}} y
$$
having the reproducing kernel property
$$
  \langle f, k_\lambda y \rangle_{H^2(\cY)}    = \langle f(\lambda), y \rangle_\cY.
$$
The identity \eqref{id1'} implies that the map
\begin{equation}  \label{Lambda0}
\bPi \colon  v_j \mapsto k_{\lambda_{j,1}} y_j \text{ for } j = 1, \dots, N
\end{equation}
extends by linearity to a unitary map from $\cH$ onto the Hilbert space
\begin{equation}   \label{tildeH}
\widetilde \cH = \bigvee \{ k_{\lambda_{j,1}} y_j \colon j= 1, \dots, N\} \subset H^2(\cY)
\end{equation}
equal to the  span of the kernel
functions $k_{\lambda_{j,1}} y_j$ in the Hardy space $H^2(\cY)$.  
Furthermore the operators $T_1^*$ and $T_2^*$ are transformed via the unitary identification
$\bPi \colon \cH \to \widetilde \cH$ to the operators
\begin{align*}
&  \widetilde T_1^* \colon k_{\lambda_{j,1}} y_j  \mapsto \overline{\lambda}_{j,1} k_{\lambda_{j,1}} y_j,  \\
& \widetilde T_2^* \colon k_{\lambda_{j,1}} y_j \mapsto \overline{\lambda}_{j,2} k_{\lambda_{j,1}} y_j.
\end{align*}
Note next that the  contractivity of the operator $\widetilde T_2^*$  implies that
$$
 \bigg\| \sum_{j=1}^N c_j k_{\lambda_{j,1}} y_j \bigg\|^2 - \bigg \| \sum_{j=1}^N c_j \overline{\lambda}_{j,2} k_{\lambda_{j,1}} y_j \bigg\|^2 \ge 0
  \text{ for all } c_1, \dots, c_N \in {\mathbb C}.
$$
Spelling out this condition gives us the positive-semidefiniteness condition
$$
  \bigg[  \frac{ 1 - \lambda_{i,2} \overline{ \lambda}_{j,2}}{1 - \lambda_{i,1} \overline{\lambda}_{j,1}} \langle y_j, y_i \rangle_\cY \bigg]  \succeq 0.
$$
By the standard theory of matrix-valued Nevanlinna-Pick interpolation (see e.g.~\cite{BGR}), there is an inner function $\Theta$ with values in $\cB(\cY)$
so that
\begin{equation}  \label{NPint}
\Theta(\lambda_{j,1})^* y_j = \overline{\lambda}_{j,2} y_j \text{ for } j=1, \dots, N.
\end{equation}
Let us now view $\bPi$ as an isometric embedding operator of $\cH$ into $H^2(\cY)$.  The previous computations show that
$$
  (M_z^*, M_\Theta^*) \bPi v_j = \bPi (T_1^*, T_2^*) v_j \text{ for } j=1,\dots, N,
$$
Since $\cH$ is the span of $v_1, \dots, v_N$, we can rewrite this last identity in operator form
$$
 ( M_z^*, M_\Theta^*) \bPi = \bPi (T_1^*, T_2^*),
 $$
i.e., $( \bPi, \bV_1, \bV_2) := (\bPi, M_z, M_\Theta)$ is an And\^o lift for the commuting contractive pair $(T_1, T_2)$.

We argue next that the And\^o lift $(\bPi, \bV_1, \bV_2) = (\bPi, M_z, M_\Theta)$ is minimal.   Indeed, we shall prove the stronger statement
\begin{equation}   \label{1min}
\bigvee_{j=0}^\infty \bV_1^j \operatorname{Ran} \bPi = H^2(\cY).
\end{equation}
To see this observe that
$$
\big( (I - \overline{\lambda}_j \bV_1) k_{\lambda_{j,1}} y_j \big) (z)  = (1 - \overline{\lambda}_{j,1} z) \cdot \frac{ y_j}{1 - \overline{\lambda}_{j,1} z} = y_j
$$
and hence  
$$
\bigvee_{j=0}^\infty  \bV_1^j \operatorname{Ran} \bPi \supset \bigvee_{j=1}^N \{y_j  \colon j=1, \dots, N\}.
$$
Note that the Gramian matrix $\big[\langle y_i, y_j \rangle \big]_{i,j=1, \dots, N}$ of the vectors $y_1, \dots, y_N$ has rank equal to the rank $d$  of the defect operator
$D_{T_1^*}$ as a consequence of the identity \eqref{y-gram}.  But on the other hand we have chosen the space $\cY$ to have dimension equal to $d$ so we can conclude
that $\bigvee_{j=1}^d \{y_j \colon j = 1, \dots, N\} = \cY$ and the last displayed identity can be rewritten as
$$
\bigvee_{j=0}^\infty  \bV_1^j \operatorname{Ran} \bPi \supset \bigvee_{j=1}^N \{y_j  \colon j=1, \dots, N\} = \cY
$$
(where here we identify $\cY$ with the subspace of constant functions in $H^2(\cY)$).   It then follows that
$$
\bigvee_{j=0}^\infty  \bV_1^j \operatorname{Ran} \bPi \supset \bigvee_{j=1}^N M_z^j  \cY  = H^2(\cY)
$$
and \eqref{1min} follows, i.e., $(\bPi, \bV_1, \bV_2)$ in particular is a minimal And\^o  lift.

We are now at the starting point for the proof of the converse direction in Theorem \ref{T:Dmodel}.  The BCL model for the isometric pair $(\bV_1, \bV_2):=(M_z, M_\Theta)$
on $H^2(\cY)$ is computed in Example \ref{E:1nondoubly};  we see there that the coefficient space $\cF$ should be taken to be
$\cF:= \cY \oplus \fH(\Theta)$ with associated BCL tuple $(\cF, P, U)$ including projection $P$ and  unitary $U$ on $\cF$ given by \eqref{BCLdata-BDFmodel}, and with
implementing unitary identification map $\tau_{\rm BCL}$ here taking the form of $\tau_\Theta$ given by \eqref{tau-Theta}.  The next step is to observe that
$\Pi:= \tau_\Theta \bPi$ is an isometric embedding of $\cH$ into $H^2(\cY \oplus \fH(\Theta))$ and the collection
$$
(\Pi, M_1, M_2) := (\tau_\Theta \bPi, M_{U^*P^\perp + z U^* P}, M_{PU + z P^\perp U})
$$
is again a lift of $(T_1, T_2)$ which is unitarily equivalent (via $\tau_\Theta$) to the previously discussed lift $(\bPi, M_z, M_\Theta)$ on $H^2(\cY)$,
having the additional property that the commuting isometric pair $(M_1, M_2)$ giving the And\^o lift is in the BCL2-model form on $H^2(\cY \oplus \fH(\Theta))$.
Note also that here we are in the somewhat simpler case where the product isometry $M_1 \cdot M_2 = M_z$ on $H^2(\cY \oplus \fH(\Theta))$ is a shift, and hence
our model space involves only the top component of the block $2 \times 1$ column matrices appearing for the general case.
Specializing the explanation given in the proof of Theorem \ref{T:Dmodel} to the situation here, we see that there is an isometric embedding
$$
\Gamma \colon H^2(\cD_{T^*}) \to H^2( \cY \oplus \fH(\Theta))
$$
such that
\begin{align*}
& \Gamma M_z^{\cD_{T^*}} = M_z^{\cY \oplus \fH(\Theta)} \Gamma, \quad \Gamma \cO_{D_{T^*}, T^*} = \Pi \text{ (here $T = T_1 T_2$)}, \\
& \operatorname{Ran} \Gamma = \bigvee_{n \ge 0} (M_z^{\cY \oplus \fH(\Theta)})^n \operatorname{Ran} \Pi =: \cK_{\rm min} \subset H^2(\cY \oplus \fH(\Theta))
\end{align*}
and such that $(\Pi, M_z|_{\operatorname{Ran} \Gamma})$ is a version of the essentially unique minimal isometric lift for the single contraction operator $T$.
Furthermore $\Gamma$ has the form of a multiplication by a constant $\Gamma = I_{H^2} \otimes \Lambda$ for an isometry from $\Lambda \colon \cD_{T^*} \to
\cY \oplus \fH(\Theta)$.  The explicit formula \eqref{Lambda*explicit} for $\Lambda$ here can be given the form
\begin{equation}   \label{Lambda-jointevs}
  \Lambda \colon  D_{T^*} v_j \mapsto  \big( \tau_\Theta k_{\lambda_{j,1}} y_j \big)(0) \text{ for } j=1,2,\dots, N.
\end{equation}
It is then this $\Lambda$ which serves as the embedding operator for the Type I And\^o tuple 
\begin{equation}
\label{joint-evs-AndoTuple}
(\cY \oplus \fH(\Theta), \Lambda, P, U) \text{ with $U$ and $P$ as in \eqref{BCLdata-BDFmodel}}
\end{equation}
which is the parameter-set to build the Douglas-model And\^o lift for the original commuting contractive pair $(T_1, T_2)$ specified in terms of joint eigenvectors for
$(T_1^*, T_2^*)$ \eqref{evs}.  Since, as observed in the previous paragraph, the And\^o lift $(\Pi, \bV_1, \bV_2)$ is minimal, it follows that this
Type I And\^o tuple for $(T_1^*, T_2^*)$ is  minimal as an And\^o tuple as well.
\end{example}

\begin{example} \label{E:jointevs-bidisk}
We again let $(T_1, T_2)$ be a commuting contractive pair on a Hilbert space $\cH$ of finite dimension $N$ with a basis of joint eigenvectors \eqref{evs}.
With some additional hypotheses in place we shall construct a lift $(\Pi, M^\cY_{z_1}, M^\cY_{z_2})$ with the commuting isometric pair $(M^\cY_{z_1}, M^\cY_{z_2})$ equal to
the coordinate-function shift operators on the Hardy space over the bidisk $H^2_{{\mathbb D}^2}(\cY)$ for an appropriate coefficient Hilbert space $\cY$.
We shall then find a Type I And\^o tuple $(\cF, \Lambda, P, U)$ for $(T_1^*, T_2^*)$ which provides the set of parameters to build a  Douglas-model And\^o lift 
unitarily equivalent to bidisk And\^o lift $(\Pi, M^\cY_{z_1}, M^\cY_{z_2})$. 

We first introduce the required added hypotheses.  It is known that there is a couple of extra conditions required for a given commuting contractive pair $(T_1, T_2)$ to have
a lift $(\Pi, M^\cY_{z_1}, M^\cY_{z_2})$ to the bidisk shift tuple $(M^\cY_{z_1}, M^\cY_{z_2})$ acting on $H^2_{{\mathbb D}^2}(\cY)$, namely (see Theorem 3.16 in \cite{CV} for $n=2$
and with $T_j$ there replaced by $T_j^*$): 
\begin{enumerate}
\item The bidisk squared-defect operator 
\begin{equation}  \label{square-defect}
D^2_{T_1^*, T_2^*} : = I - T_1 T_1^* - T_2 T_2^* + T_1 T_2 T_2^* T_1^*
\end{equation}
 should be positive semi-definite:  
$$
D_{T_1^*, T_2^*} \succeq 0.
$$

\item Both $T_1$ and $T_2$ should be {\em pure} in the sense that\index{contraction!pure}
$$
   \lim_{N \to \infty}  \| T_j^{*n} h \|^2 = 0 \text{ for all } h \in \cH \text{ for } j=1,2.
 $$
\end{enumerate}

\begin{remark}
Let us note that the bidisk defect operator $D_{T_1^*, T_2^*}$ can be viewed as an application of a version of the Agler hereditary functional calculus
$$
f(\blam, \bmu) := \sum_{\boldn,\boldm \in {\mathbb Z}_+^2} a_{\boldn,\boldm} \blam^\boldn \overline{\bmu}^\boldm  \mapsto \sum_{\boldn,\boldm \in
{\mathbb Z}_+^2} a_{\boldn, \boldm} \bT^\boldn \bT^{* \boldm}
$$
(here $\boldn = (n_1, n_2)$, $\boldm = (m_1, m_2)$, $\blam = (\lambda_1, \lambda_2)$, $\bmu = (\mu_1, \mu_2)$,
$\blam^\boldn = \lambda_1^{n_1} \lambda_2^{n_2}$, $\overline{\bmu}^\boldn = \overline{\mu}^{n_1}  \overline{\mu}^{n_2}$
with similarly conventions for operators:   $\bT = (T_1, T_2)$, $\bT^* = (T_1^*, T_2^*$, $\bT^\boldn = T_1^{n_1} T_2^{n_2}$
and similarly for $\bT^* = (T_1^*, T_2^*)$ where here $(T_1, T_2)$ is a commuting operator pair) applied to the function
\begin{equation}  \label{sample-f}
f(\blam, \bmu) = (1 - \lambda_1 \overline{\mu}_1) (1 - \lambda_2 \overline{\mu}_2)  = 1 - \lambda_1 \overline{\mu}_1
  - \lambda_2 \overline{\mu}_2 + \lambda_1 \lambda_2 \overline{\mu}_1 \overline{\mu}_2.
\end{equation}
Here $T_1, T_2$ and $T_1^*, T_2^*$ commute but $T_j$ does not necessarily commute with $T_k^*$ for any pair of indices $j,k \in \{1, 2\}$.
The hereditary functional calculus gives an ad hoc rule (in this case adjoint powers of $T_j$ on the right) 
for plugging in non-commuting operator arguments into a function have commuting scalar arguments.  The operator calculus of Ambrozie-Engli\v s-M\"uller
\cite{AEM} gets around this by defining a functional calculus on operators:   define $L_{T_j}, R_{T_j} \in \cB(\cB(\cH))$ for $j=1,2$  by
$$
L_{T_j} \colon X \mapsto T_j X, \quad R_{T_j^*} \colon X \mapsto X T_j^* \text{ for } X \in \cB(\cH).
$$
Then the set of operators $L_{T_1}, L_{T_2}, R_{T^*_1}, R_{T^*_2}$ is a commuting set of operators in $\cB(\cB(\cH))$ (given that $(T_1, T_2)$ is a commuting
operator pair), and the function $f(L_{T_1}, L_{T_2}, R_{T_1}, R_{T_2})$ is well-defined (here we use the substitution $\overline{\mu}_j \mapsto L_{T_j^*}$).  Then the
desired operator $D^2_{T_1^*, T_2^*}$ resulting from the hereditary functional calculus using the function \eqref{sample-f} can be seen as applying the
function \eqref{sample-f} in the standard well-defined way to the commuting operator-tuple $(L_{T_1}, L_{T_2}, R_{T_1^*}, R_{T_2^*})$ and then evaluating the result
on the identity operator $I_\cH$:  
$$
 f(\blam, \bmu) \mapsto f(L_\bT, R_{\bT^*})(I_\cH) = D^2_{T_1^*, T_2^*} \text{ if } f \text{ is given by \eqref{sample-f}.}
$$
\end{remark}

\begin{remark}  It turns out that the same condition $D_{T_1^*, T_2^*} \succeq 0$ is necessary and sufficient for the commuting contractive pair $(T_1, T_2)$ to have
a regular unitary dilation, as originally discussed by Brehmer (see \cite{Nagy-Foias}).  This connection between existence of polydisk shift dilation and a regular
unitary dilation is also discussed in Curto-Vasilescu \cite{CV} and Timotin \cite{Timotin}.
 \end{remark}
 
 We now proceed as in Example \ref{E:joint-evs} but with an adaptation to get a bi-disk shift lift $(M^\cY_{z_1}, M^\cY_{z_2})$ on $H^2_{{\mathbb D}^2}(\cY)$
 rather than a Bercovici-Douglas-Foias model lift $(M^\cY_z, M_\Theta)$ on $H^2(\cY)$.  We are given $(T_1, T_2)$ on a finite-dimensional space with a basis
 $\{v_1, \dots, v_n\}$ of joint eigenvectors for $(T_1^*, T_2^*)$ with associated joint eigenvalues $(\overline{\lambda}_{j,1}, \overline{\lambda}_{j,2})$ for 
 $1 \le j \le N$ as in \eqref{evs}.    Then we see that
 \begin{align*}
  \langle   D^2_{T_1^*, T_2^*} v_j, v_i \rangle_\cH  & = 
 \langle (1 - \lambda_{i,1} \overline{\lambda}_{j,1}  - \lambda_{i,2} \overline{\lambda}_{j,2} + \lambda_{i,1} \lambda_{i,2} \overline{\lambda}_{j,1} \overline{\lambda}_ {j,2})
 v_j, v_i \rangle_\cH  \\
 & = (1 - \lambda_{i,1} \overline{\lambda}_{j,1}) (1 - \lambda_{i,2} \overline{\lambda}_{j,2}) \langle v_j, v_i \rangle.
 \end{align*}
 Let us set $d = \operatorname{rank} D^2_{T_1^*, T_2^*}$.  
 If we assume that $D_{T_1^*, T_2^*}^2 \succeq 0$ (as we know must be the case if $(T_1, T_2)$ is to have a lift to the bi-disk shift pair $(M_{z_1}, M_{z_2})$
 on $H^2_{{\mathbb D}^2}(\cY)$ for some coefficient Hilbert space $\cY$),  we see that the matrix on the left (rows indexed by $i$, columns indexed by $j$) is positive
 semi-definite.  Hence there are vectors $y_1, \dots, y_N$ in a $d$-dimensional Hilbert space $\cY$ so that
 $$
   \langle D_{T_1^*, T_2^*}^2 v_j, v_i \rangle_\cH = \langle y_j, y_i \rangle_\cY.
 $$
By combining various of the preceding displayed identities and using the assumption that each $\lambda_{j,1}$ and $\lambda_{j,2}$ is in the open unit disk, we see that
\begin{equation} \label{gramian=}
\langle v_j, v_i \rangle_\cH =  \frac{ \langle y_j, y_i \rangle_\cY}{ (1 - \lambda_{i,1} \overline{\lambda}_{j,1}) (1 - \lambda_{i,2} \overline{\lambda}_{j,2})}
\end{equation}

Let us now introduce the vectorial Hardy space over the bi-disk $H^2_{{\mathbb D}^2}(\cY)$ consisting of functions $f(z_1, z_2) = \sum_{n,m \ge 0}
\widehat f_{n,m} z_1^n z_2^m$ with Fourier coefficients $\widehat f_{n,m} \in \cY$  subject to $\| f \|^2 := \sum_{n,m \ge 0} \| \widehat f_{n,m} \|^2_\cY < \infty$
This is a reproducing kernel Hilbert space with vectorial kernel functions $k_{\blam}    y$ 
(for $\blam = (\lambda_1, \lambda_2) \in {\mathbb D}^2$ and $y \in \cY$)
given by 
$$
  ( k_\blam y) (\boldz) = \frac{y}{(1 - z_1 \overline{\lambda}_1) (1 - z_2 \overline{\lambda}_2)}\text{ where we set } \boldz = (z_1, z_2)
$$
having the reproducing kernel property:
$$
\langle f, k_\blam y  \rangle_{H^2_{{\mathbb D}^2}(\cY)} = \langle f(\blam), y \rangle_\cY.
$$
The identity \eqref{gramian=} shows that the map 
\begin{equation}  \label{bidisk-Pi}
\bPi \colon v_j \mapsto k_{\blam_j} y_j
\end{equation}
extends by linearity to a unitary map from $\cH$ to the Hilbert space
\begin{equation}   \label{bidisk-tildeH}
\widetilde \cH = \bigvee \{ k_{\blam_j} y_j \colon j = 1, \dots, N\}  \subset H^2_{{\mathbb D}^2}(\cY)
\end{equation}
Furthermore, the operators $T_1^*$ and $T_2^*$ are transformed via the unitary identification map $\bPi$ to the operators
$$
  \widetilde T_1^* \colon k_{\blam_j} y_j  \mapsto\overline{\lambda}_{j,1} k_{\blam_j} y_j,  
  \quad \widetilde T_2^* \colon k_{\blam_j}  y_j \mapsto \overline{\lambda}_{j,2} k_{\blam_j} y_j \text{ where } \blam_j = (\lambda_{j,1}, \lambda_{j,2}).
$$
But the operators $M^{\cY *}_{z_1}$ and $M^{\cY *}_{z_2}$ on $H^2_{{\mathbb D}^2}(\cY)$ have exactly the same action on kernel functions, and we conclude that
$$
  M^{\cY *}_{z_1}|_{\widetilde \cH} = \widetilde T_1^*, \quad M^{\cY *}_{z_2}|_{\widetilde \cH} = \widetilde T_2^*.
$$
We conclude that
$$
     (\bPi, \bV_1, \bV_2) := (\bPi, M^\cY_{z_1}, M^\cY_{z_2})
$$
(where $M^\cY_{z_1}, M^\cY_{z_2}$ are the coordinate-function shift operators on $H^2_{{\mathbb D}^2}(\cY)$)
is an And\^o lift for the commuting, contractive pair $(T_1, T_2)$. 

 Furthermore we can see that this $(\bPi, \bV_1, \bV_2)$  is a minimal lift for $T_1, T_2$ as follows. 
Note that 
$$ 
 (I - \overline{\lambda}_{j,1}  \bV_1) (I - \overline{\lambda}_{j,2} \bV_2) k_{\blam_j} y_j = y_j \in \bigvee_{n_1, n_2 \in {\mathbb Z}_+} \bV_1^{n_1} \bV_2^{n_2} \operatorname{Ran} \bPi
 =: \cK_{0}
$$
(where here we view each $y_j$ as a constant function in $H^2_{{\mathbb D}^2}(\cY)$, the ambient subspace for the minimal lift contained inside $(\bPi, \bV_1, \bV_2)$).
We conclude that 
\begin{equation} \label{contain1}
  \bigvee \{ y_j \colon 1 \le j \le N\} \subset \cK_{0}.
\end{equation}
From \eqref{gramian=} and the two displayed formulas preceding it, we see that 
$$
  \langle y_i, y_j \rangle_\cY = \langle D_{T_1^*, T_2^*}^2 v_i, v_j \rangle
$$
implying that the Gramian matrix for $y_1, \dots, y_d$ has the same rank as $\operatorname{rank} D_{T_1^*, T_2^*}^2 = d$.  As we chose $\cH$ to have
$\dim \cH = d$,  we see that the rank of the Gramian matrix  $  \langle y_i, y_j \rangle_\cY$ is the same as the dimension of the whole space $\cY$, 
implying in turn that the span of the vectors $y_1, \dots, y_N$ is equal to the whole space $\cY$.  Combining with \eqref{contain1} then gives us
$$
  \cY \subset  \cK_{0}.
$$
But then  also 
$$
H^2_{{\mathbb D}^2}(\cY) = \bigvee_{n_1, n_2} \bV_1^{n_1}   \bV_2^{n_2}  \cY \subset \cK_{0} \subset H^2_{{\mathbb D}^2}(\cY)
$$
forcing the equality 
 $$
    \cK_{0} = H^2_{{\mathbb D}^2}(\cY),
 $$
 i.e., the lift $(\bPi, V_1, V_2) = (\bPi, M^\cY_{z_1}, M^\cY_{z_2})$ is minimal as a lift of $(T_1, T_2)$.
  
  We are now at the starting point of the proof of the converse direction in Theorem \ref{T:Dmodel} to find the Douglas model for the commuting-isometric lift
  $(\bPi, \bV_1, \bV_2)$ of $(T_1, T_2)$.  As we saw in Example \ref{E:BCLbidisk}, a BCL2-tuple for $(M_{z_1}, M_{z_2})$ on $H^2_{{\mathbb D}^2}(\cY)$
  can be taken to be 
  $$
  (\cF, P, U) = (\ell^2_{\mathbb Z}(\cY), P_{\ell^2_{(1, \infty)}(\cY)}, {\mathbf S}^\cY)
  $$
  where ${\mathbf S}^\cY$ is the bilateral shift acting on  $\ell^2_{\mathbb Z}(\cY)$, with implementation operator 
  $$
  \tau_{{\rm bd}, \cY} \colon H^2_{{\mathbb D}^2}(\cY) \to H^2(\cF) = H^2(\ell^2_{\mathbb Z}(\cY))
  $$
  suggested by \eqref{taubd}:
  \begin{equation}  \label{taubdY}
  \tau_{{\rm bd}, \cY} \colon z_1^i z_2^j  \, y \mapsto 
  \begin{cases} \be_{j-i} y \,  z^j  &\text{ for } i \ge j, \\  \be_{j-i} y \,  z^i  &\text{ for } i \le j.  \end{cases}
  \end{equation}
   for $y \in \cY$.  It remains to identify the isometric embedding operator $\Lambda \colon \cD_{T^*} \to \cF = \ell^2_{\mathbb Z}(\cY)$ so that the resulting
   Type I And\^o tuple $(\cF, \Lambda, P, U)$ is the parameter set generating a Douglas-model lift \eqref{DougAndoMod} unitarily equivalent to our original
   lift $(\bPi, \bV_1, \bV_2) = (\bPi, M_{z_1}, M_{z_2})$ with the commuting, isometric pair $M_{z_1}, M_{z_2}$ acting on $H^2_{{\mathbb D}^2}(\cY)$.  
   A careful interpretation of formula \eqref{Lambda*explicit} gives us
   $$
   \Lambda  \colon D_{T^*} v_j \mapsto \big( \tau_{bd, \cY} k_{\blam_j} y_j \big) (0).
   $$
   \end{example}

\section{Classification of Sch\"affer-model  And\^o  lifts}\label{S:SchaffClass}
 To classify the unitary equivalence of two Sch\"affer models of an And\^o  lift in terms of tuple coincidence of the associated Type II And\^o
 tuples, it turns out to be essential to work only
 with strong Type II And\^o tuples, as in the following result.

\begin{theorem}\label{Thm:TupleLiftEquiv}
Let $(\cF,\Lambda,P,U)$ and $(\cF',\Lambda',P',U')$ be two strong Type II And\^o tuples  of a given commuting contractive pair $(T_1,T_2)$
on a Hilbert space $\cH$. Let $(\bV_{S,1},\bV_{S,2})$ and $(\bV'_{S,1},\bV'_{S,2})$ be the minimal
 And\^o  lifts of $(T_1,T_2)$ corresponding to the strong Type II And\^o tuples $(\cF,\Lambda,P,U)$ and
 $(\cF',\Lambda',P',U')$, respectively, as
 in \eqref{Conv-AndoGenForm'}--\eqref{Vcanonical'}. Then $(\bV_{S,1},\bV_{S,2})$ and $(\bV'_{S,1}, \bV'_{S,2})$ are unitarily equivalent if and only if
 $(\cF,\Lambda,P,U)$ and $(\cF',\Lambda',P',U')$ coincide.
\end{theorem}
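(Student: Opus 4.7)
The ``if'' direction is essentially a direct verification. Given a unitary $\tau \colon \cF \to \cF'$ implementing the coincidence
$(\tau \Lambda, \tau P \tau^*, \tau U \tau^*) = (\Lambda', P', U')$, I will form the unitary
$$
\widehat{\tau} = \begin{bmatrix} I_\cH & 0 \\ 0 & I_{H^2} \otimes \tau \end{bmatrix} \colon \begin{bmatrix} \cH \\ H^2(\cF) \end{bmatrix} \to \begin{bmatrix} \cH \\ H^2(\cF') \end{bmatrix}
$$
and check by a block computation that $\widehat{\tau}\bPi_S = \bPi'_S$ and $\widehat{\tau}\bV_{S,j} = \bV'_{S,j}\widehat{\tau}$ for $j=1,2$, using the symbol identities $\tau(P^\perp U + zPU)\tau^* = P'^{\perp} U' + z P' U'$, $\tau(U^*P + zU^*P^\perp)\tau^* = U'^*P' + zU'^*P'^\perp$, together with $\tau \Lambda = \Lambda'$ to match the $(2,1)$-entries.

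For the ``only if'' direction, suppose a unitary $\widehat{\tau} \colon \bcK_S \to \bcK_S'$ implements the equivalence of lifts: $\widehat{\tau}\bPi_S = \bPi_S'$ and $\widehat{\tau}\bV_{S,j} = \bV'_{S,j}\widehat{\tau}$. Writing $\widehat{\tau} = \sbm{\tau_{11} & \tau_{12} \\ \tau_{21} & \tau_{22}}$ with respect to the decompositions of $\bcK_S$ and $\bcK'_S$, the identity $\widehat{\tau}\sbm{I_\cH \\ 0} = \sbm{I_\cH \\ 0}$ forces $\tau_{11} = I_\cH$ and $\tau_{21} = 0$. Unitarity of $\widehat{\tau}$ then forces $\tau_{12} = 0$ and $\tau_{22} \colon H^2(\cF) \to H^2(\cF')$ unitary, so $\widehat{\tau} = \sbm{I_\cH & 0 \\ 0 & \tau_{22}}$.

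Next I will exploit that the And\^o tuples are of \emph{strong} Type II, so that by \eqref{Vcanonical'} the product operators assume the clean forms $\bV_S = \sbm{T & 0 \\ \bev_{0,\cF}^*\Lambda D_T & M_z^\cF}$ and $\bV'_S = \sbm{T & 0 \\ \bev_{0,\cF'}^*\Lambda' D_T & M_z^{\cF'}}$. The intertwining $\widehat{\tau}\bV_S = \bV'_S\widehat{\tau}$ gives $\tau_{22} M_z^\cF = M_z^{\cF'}\tau_{22}$, forcing $\tau_{22} = I_{H^2}\otimes \tau$ for a unitary $\tau \colon \cF \to \cF'$; comparing the $(2,1)$-entries then yields $\tau \bev_{0,\cF}^*\Lambda D_T = \bev_{0,\cF'}^*\Lambda' D_T$, i.e., $\tau \Lambda = \Lambda'$ (here is precisely where condition (i$'$) is used, since without it the $(2,1)$-entries would involve additional $T_1,T_2$ contributions and the identification of $\tau$ from $\Lambda$ alone would fail).

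Finally I will read off from $\widehat{\tau}\bV_{S,1} = \bV'_{S,1}\widehat{\tau}$ and $\widehat{\tau}\bV_{S,2} = \bV'_{S,2}\widehat{\tau}$, comparing $(2,2)$-entries and matching coefficients of $1$ and $z$, the four intertwinings
$$
\tau P^\perp U = P'^\perp U'\tau, \quad \tau PU = P'U'\tau, \quad \tau U^*P^\perp = U'^*P'^\perp\tau, \quad \tau U^*P = U'^*P'\tau.
$$
Summing the first two gives $\tau U = U'\tau$; substituting this back into $\tau PU = P'U'\tau = P'\tau U$ and cancelling the unitary $U$ on the right yields $\tau P = P'\tau$. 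Thus $(\tau P\tau^*, \tau U\tau^*) = (P', U')$, completing the coincidence. The main obstacle, and the reason the theorem needs \emph{strong} Type II And\^o tuples rather than just Type II ones, is exactly the step that identifies $\tau\Lambda = \Lambda'$ from the $(2,1)$-entry of the product isometry: condition (i$'$) is what collapses that entry to the canonical form $\bev_{0,\cF}^*\Lambda D_T$, uncoupling $\Lambda$ from $P,U,T_1,T_2$ and allowing $\tau$ to be pinned down.
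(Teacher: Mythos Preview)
Your proof is correct and follows essentially the same approach as the paper's: the ``if'' direction via the block-diagonal unitary $\widehat\tau=\sbm{I_\cH & 0 \\ 0 & I_{H^2}\otimes\tau}$, and the ``only if'' direction by first reducing $\widehat\tau$ to block-diagonal form from the embedding condition and unitarity, then using the clean product formula \eqref{Vcanonical'} (available precisely because the tuples are \emph{strong} Type II) to extract $\tau_{22}=I_{H^2}\otimes\tau$ and $\tau\Lambda=\Lambda'$, and finally matching coefficients in the $(2,2)$-blocks of the individual intertwinings to obtain $\tau U=U'\tau$ and $\tau P=P'\tau$. The paper additionally remarks that the ``if'' direction goes through for arbitrary Type II tuples, not just strong ones.
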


\begin{proof}
We first prove the sufficiency (or ``if'')  direction. Suppose $u:\cF\to\cF'$ is a unitary such that
\begin{align}\label{AndoCoin}
u\Lambda=\Lambda'\quad \text{ and } \quad u(P,U)=(P',U')u.
\end{align}
Define the unitary
\begin{align}\label{uTilde}
 \tilde{u}:=
 \begin{bmatrix}
 I_{\cH}&0\\ 0& I_{H^2}\otimes u
\end{bmatrix}:\begin{bmatrix}
 \cH\\ H^2(\cF)
\end{bmatrix}\to
\begin{bmatrix}
\cH\\ H^2(\cF')
\end{bmatrix}.
\end{align}
Then keeping the equations in (\ref{AndoCoin}) in mind, we conclude from the computations
\begin{align*}
  \tilde{u} \bV_{S,1}&=\begin{bmatrix} I_{\cH} & 0 \\ 0 & I_{H^2}\otimes u\end{bmatrix}
 \begin{bmatrix} T_1 & 0 \\  \bev_{0,\cF}^*PU\Lambda D_T & M_{P^\perp U +zP U} \end{bmatrix}\\
 & = \begin{bmatrix} T_1 & 0 \\  \bev_{0,\cF'}^*uPU\Lambda D_T & M_{u(P^\perp U +zP U)} \end{bmatrix}
\end{align*}
and
\begin{align*}
\bV'_{S,1} \tilde{u} & = \begin{bmatrix}
            T_1 & 0 \\
           \bev_{0,\cF'}^* P'U'\Lambda' D_T & M_{(P'^\perp+zP')U'}
          \end{bmatrix} 
          \begin{bmatrix}
                 I_{\cH} & 0 \\
                 0 & I_{H^2}\otimes u
               \end{bmatrix}  \\
 &  =  \begin{bmatrix}
            T_1 & 0 \\
           \bev_{0,\cF'}^*P'U'\Lambda' D_T & M_{(P'^\perp U' +zP'U') u}
          \end{bmatrix}
\end{align*}
that $\tilde{u} \bV_{S,1}=\bV'_{S,1} \tilde{u}$. Similarly one can prove that $\tilde{u}\bV_{S,2}=\bV'_{S,2} \tilde{u}$.
Note that the proof of this direction works for any Type II And\^o tuples not necessarily strong.

Conversely, suppose two And\^o isometric lifts $(\bV_{S,1}, \bV_{S,2})$ and $(\bV'_{S,1}, \bV'_{S,2})$ of $(T_1,T_2)$ corresponding to
two strong Type II And\^o tuples $(\cF,\Lambda,P,U)$ and $(\cF',\Lambda',P',U')$, respectively, are unitarily equivalent.
This means that there exists a unitary
$$
\tau = \begin{bmatrix} \tau_{11}  & \tau_{12} \\ \tau_{21} & \tau_{22} \end{bmatrix} \colon \begin{bmatrix} \cH \\  H^2(\cF) \end{bmatrix}
\to \begin{bmatrix} \cH \\  H^2(\cF') \end{bmatrix}
$$
 such that
\begin{equation}\label{UniDil}
\tau(\bV_{S,1}, \bV_{S,2})=(\bV'_{S,1}, \bV'_{S,2})\tau, \quad \tau \begin{bmatrix} I_{\cH} \\ 0 \end{bmatrix} = \begin{bmatrix}  I_{\cH} \\ 0 \end{bmatrix}.
\end{equation}
The second equality in (\ref{UniDil}) implies that $\tau$ has the form
$$
   \tau = \begin{bmatrix} I_\cH   & \tau_{12}  \\ 0 &   \tau_{22} \end{bmatrix}.
 $$
 As $\tau$ is unitary, this in turn forces $\tau_{12} = 0$ and $\tau_{22} \colon H^2(\cF) \to H^2(\cF')$ to be unitary.
The first equality in  \eqref{UniDil}  implies in particular that
\begin{equation}   \label{first}
\tau \bV_{S,1} \bV_{S,2}=\bV'_{S,1} \bV'_{S,2} \tau
\end{equation}
where $\bV_{S,1} \bV_{S,2} = \bV_{S,2} \bV_{S,1} = \sbm{ T & 0 \\ \bev_{0,\cF}^* \Lambda D_T & M_z }$ and similarly for $\bV'_{S,1} \bV'_{S,2} =
\bV'_{S,2} \bV'_{S,1} = \sbm{ T& 0 \\ \bev_{0,\cF'}^* \Lambda' D_T &  M_z}$ by the assumption that $(\cF, \Lambda, P, U)$
and $(\cF', \Lambda', P', U')$ are both strong Type II And\^o tuples.
Hence
$$
\tau \bV_{S,1} \bV_{S,2}=\begin{bmatrix}  I_{\cH} & 0 \\ 0 & \tau_{22} \end{bmatrix}
\begin{bmatrix}  T & 0 \\  \bev_{0,\cF}^* \Lambda D_T & M^\cF_z \end{bmatrix} =
\begin{bmatrix} T & 0 \\  \tau_{22}  \bev_{0,\cF}^* \Lambda D_T & \tau_{22} M^\cF_z  \end{bmatrix}
$$
while
$$
\bV'_{S,1} \bV'_{S,2} \tau  = \begin{bmatrix}  T & 0 \\  \bev_{0,\cF'}^* \Lambda' D_T & M^{\cF'}_z \end{bmatrix}
 \begin{bmatrix} I_{\cH} & 0 \\ 0 & \tau_{22} \end{bmatrix}
=  \begin{bmatrix} T & 0 \\   \bev_{0,\cF'}^* \Lambda' D_T & M^{\cF'}_z \tau_{22} \end{bmatrix}.
$$
As a consequence of \eqref{first} we are led to the identity
\begin{equation}  \label{consequence}
\begin{bmatrix} T & 0 \\  \tau_{22}  \bev_{0,\cF}^* \Lambda D_T & \tau_{22} M_z  \end{bmatrix} =
\begin{bmatrix} T & 0 \\  \bev_{0,\cF'}^* \Lambda' D_T & M_z \tau_{22} \end{bmatrix}.
\end{equation}
Equality of the $(2,2)$-entries in \eqref{consequence}  combined with the fact that $\tau_{22}$ is unitary implies that $\tau_{22}$
has the form   $\tau_{22}=I_{H^2}\otimes u$ for some unitary $u \colon \cF\to\cF'$, from which it then follows that
$\tau_{22}  \bev_{0,\cF}^* = \bev_{0,\cF'}^* u$.
Comparison of the $(2,1)$-entries  in \eqref{consequence} then gives $u\Lambda=\Lambda'$.
A similar matrix computation and a comparison of the $(2,2)$-entries of $\tau(\bV_{S,1},\bV_{S,2})=(\bV'_{S,1}, \bV'_{S,2}) \tau$ implies
\begin{align*}
(M_{u(P^\perp U +zP  U) },M_{u( U^*P+z U^* P^\perp)})=(M_{(P'^\perp U' +zP' U' )u},M_{(U'^* P'+zU'^* P'^\perp)u}),
\end{align*}
which implies that
\begin{equation}   \label{intertwinings'}
  uP^\perp U=P'^\perp U'u,\;uPU=P'U'u, \quad uU^*P=U'^*P'u, \quad uU^*P^\perp=U'^*P'^\perp u.
\end{equation}
Adding the first two identities in \eqref{intertwinings'} gives $uU = U' u$. Use this identity
in the first equation in \eqref{intertwinings'} to get $uP^\perp = P'^\perp u$.  Apply a similar argument
starting with the second identity in \eqref{intertwinings'} instead, or alternatively plug in $P^\perp = I - P$,
$P'^\perp = I - P'$ into $u P^\perp = P'^\perp u$, to arrive at $u P = P u$ as well. We conclude that indeed
$(\cF, \Lambda, P, U)$ and $(\cF', \Lambda', P', U')$  coincide as strong Type II And\^o tuples.
\end{proof}

\section{Type II And\^o tuples versus strong Type II And\^o tuples}\label{S:TypeIIvsStrongTypeII}
The class of strong Type II And\^o tuples is strictly smaller than the class of Type II And\^o tuples as the following result demonstrates. 

\begin{proposition}\label{P:NonCanTuple}
Let $T_1$ be a contraction on a Hilbert space $ \cH$, $T=T_1^2$ and $\tau_1,\tau_2:\cD_{T_1}\to\cG$ be two isometries. Let $\Lambda_\dag$ be the isometry as in Definition \ref{D:SpcATuple}, i.e.,
\begin{align*}
 \Lambda_\dag:\cD_T\to \begin{bmatrix} \cD_{T_1}\\ \cD_{T_1}\end{bmatrix}\quad\mbox{and}\quad   \Lambda_\dag:D_T\mapsto \begin{bmatrix}
         D_{T_1}T_1 \\ D_{T_1}
    \end{bmatrix}.
\end{align*}Then the pre-And\^o tuple
\begin{align}\label{NonCanTuple}
    \left( \begin{bmatrix}
         \cG \\ \cG
    \end{bmatrix}, \begin{bmatrix}
         \tau_1 &0\\ 0 & \tau_2
    \end{bmatrix}\Lambda_\dag, \begin{bmatrix}
         I_\cG & 0\\ 0& 0
    \end{bmatrix}, \begin{bmatrix}
         0& I_\cG\\ I_\cG & 0
    \end{bmatrix} \right)
\end{align}is a Type II And\^o tuple for $(T_1,T_1)$. Furthermore:
\begin{enumerate}
    \item The tuple \eqref{NonCanTuple} is a strong Type II And\^o tuple if and only if 
$$
(\tau_1-\tau_2)D_{T_1}T_1 = 0.
$$
    
\item If $\tau_1=\tau_2$ is unitary, then \eqref{NonCanTuple} is a special Type II And\^o tuple.
\end{enumerate}
\end{proposition}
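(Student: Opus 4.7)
The plan is to break the verification into three steps matching the three claims. First I would establish that the tuple \eqref{NonCanTuple} is a Type II And\^o tuple, which reduces to checking conditions (i) and (ii) of Definition \ref{AndoTuple} for the pair $(T_1,T_1)$ (so $T_2=T_1$, $T=T_1^2$). The key observation that makes the computation short is that with $P=\sbm{I_\cG & 0 \\ 0 & 0}$ and the swap $U=\sbm{0 & I_\cG \\ I_\cG & 0}$ (so $U^*=U$), one has
\begin{equation*}
PU=\sbm{0 & I_\cG \\ 0 & 0}=U^*P^\perp,\qquad U^*P=\sbm{0 & 0 \\ I_\cG & 0}=P^\perp U,\qquad U^*PU=P^\perp.
\end{equation*}
Applying these to $\Lambda D_T=\sbm{\tau_1 D_{T_1}T_1 \\ \tau_2 D_{T_1}}$, I will compute that both sides of the commutativity identity in Definition \ref{AndoTuple}(i) equal $\sbm{\tau_2 D_{T_1}T_1 \\ \tau_2 D_{T_1}}$, and that the isometry conditions in Definition \ref{AndoTuple}(ii) both reduce (via $U^*PU=P^\perp$) to the single identity $D_T\Lambda^*P^\perp\Lambda D_T=D_{T_1}^2$, which holds because $\tau_2$ is isometric.

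For part (1), the common value of the two sides of the commutativity condition is $\sbm{\tau_2 D_{T_1}T_1 \\ \tau_2 D_{T_1}}$ while $\Lambda D_T=\sbm{\tau_1 D_{T_1}T_1 \\ \tau_2 D_{T_1}}$, so the strong commutativity condition (i$^\prime$) of Definition \ref{AndoTuple} is equivalent to $(\tau_1-\tau_2)D_{T_1}T_1=0$, exactly as claimed.

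For part (2), with $\tau_1=\tau_2=\tau$ unitary, I would identify the canonical special And\^o tuple $(\cF_\dag,\Lambda_\dag,P_\dag,U_\dag)$ for $(T_1,T_1)$ as follows. With $T_2=T_1$, the densely defined isometry $U_0$ of \eqref{U0} sends $\sbm{D_{T_1}T_1 h \\ D_{T_1}h}\mapsto \sbm{D_{T_1}h \\ D_{T_1}T_1 h}$, which is the restriction of the swap $\sbm{0 & I \\ I & 0}$ on $\cD_{T_1}\oplus\cD_{T_1}$ to $\cD_{U_0}$. Hence the swap provides a canonical unitary extension $U_\dag=\sbm{0 & I \\ I & 0}$ with $P_\dag=\sbm{I & 0 \\ 0 & 0}$, giving a canonical-form special And\^o tuple on $\cF_\dag=\cD_{T_1}\oplus\cD_{T_1}$. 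To show coincidence with \eqref{NonCanTuple} in the sense of Definition \ref{D:preAndoTuple}, I will exhibit the unitary $\omega=\sbm{\tau & 0 \\ 0 & \tau}\colon \cD_{T_1}\oplus\cD_{T_1}\to\cG\oplus\cG$ and verify directly $\omega\Lambda_\dag=\Lambda$, $\omega P_\dag\omega^*=P$, $\omega U_\dag\omega^*=U$, where the last two computations use $\tau\tau^*=I_\cG$ (this is the one place unitarity of $\tau$, not just its being isometric, is used).

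The only subtle point is part (2), where I must make the right choice of canonical unitary extension of $U_0$; picking the swap (rather than an arbitrary unitary extension) makes the coincidence transparent. The rest is routine block-matrix arithmetic driven by the identities $PU=U^*P^\perp$ and $U^*PU=P^\perp$.
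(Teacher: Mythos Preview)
Your proposal is correct and follows essentially the same approach as the paper: you exploit the identities $PU=U^*P^\perp$ and $U^*PU=P^\perp$ to verify the Type II conditions and to compute the common value $\sbm{\tau_2 D_{T_1}T_1 \\ \tau_2 D_{T_1}}$ needed for part~(1), and for part~(2) you exhibit the unitary $\sbm{\tau & 0 \\ 0 & \tau}$ implementing coincidence with the canonical-form special And\^o tuple having $U_\dag$ equal to the swap, exactly as the paper does.
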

\begin{proof}
By simple matrix computation we have
\begin{align}\label{MatrixComp}
    PU=\begin{bmatrix}
         0& I \\ 0&0
    \end{bmatrix}= U^*P^\perp \quad \mbox{and}\quad U^*PU=\begin{bmatrix}
         0&0\\0&I
    \end{bmatrix}=P^\perp.
\end{align}This implies that the Commutativity condition for Type II And\^o tuples (condition (i) in Definition \ref{AndoTuple}), i.e.,
\begin{align*}
    PU\Lambda D_TT_2+P^\perp \Lambda D_T=  U^* P^\perp \Lambda D_T T_1 + U^* P U \Lambda D_T
\end{align*}is readily satisfied by the tuple in \eqref{NonCanTuple} because in this case $T_1=T_2$. Condition (ii) of Definition \ref{AndoTuple} is that
\begin{align*}
    D_T\Lambda^*U^*PU\Lambda D_T=D_{T_1}^2 \quad \mbox{and}\quad  D_T\Lambda^* P^\perp\Lambda D_T=D_{T_2}^2
\end{align*}
which, in view of \eqref{MatrixComp}, boils down to just
\begin{align}\label{CondiiHere}
    D_T\Lambda^*P^\perp \Lambda D_T= D_{T_1}^2.
\end{align}Since
$$
P^\perp \Lambda D_T=P^\perp \begin{bmatrix}
         \tau_1 &0\\ 0 & \tau_2
    \end{bmatrix}\Lambda_\dag D_T =\begin{bmatrix}
     0&0\\0&I
\end{bmatrix}\begin{bmatrix}
     \tau_1 D_{T_1}T_1 \\ \tau_2 D_{T_1}
\end{bmatrix}=\begin{bmatrix}
     0 \\ \tau_2 D_{T_1}
\end{bmatrix},
$$and $\tau_2$ is an isometry, we see that
\begin{align*}
    D_T\Lambda^*P^\perp \Lambda D_T= (P^\perp \Lambda D_T)^*(P^\perp \Lambda D_T)= \begin{bmatrix}
     0 & D_{T_1}\tau_2^*\end{bmatrix} \begin{bmatrix}
     0 \\ \tau_2 D_{T_1}\end{bmatrix}= D_{T_1}^2
\end{align*}and therefore \eqref{CondiiHere} holds. Consequently \eqref{NonCanTuple} is always a Type II And\^o tuple for $(T_1,T_1)$.

{\sf Proof of (1):} Note that for the tuple \eqref{NonCanTuple} to be a strong Type II And\^o tuple, it must, in addition, satisfy
$$
   PU\Lambda D_TT_1+P^\perp \Lambda D_T = \Lambda D_T \;(= U^* P^\perp \Lambda D_T T_1 + U^* P U \Lambda D_T).
$$So we compute
\begin{align*}
     PU\Lambda D_TT_1+P^\perp \Lambda D_T& = \begin{bmatrix}
          0& I\\ 0 &0
     \end{bmatrix}\begin{bmatrix}
          \tau_1 D_{T_1}T_1^2\\ \tau_2 D_{T_1}T_1
     \end{bmatrix}+ \begin{bmatrix}
          0&0\\0&I
     \end{bmatrix}\begin{bmatrix}
          \tau_1 D_{T_1}T_1 \\ \tau_2 D_{T_1}
     \end{bmatrix}\\
     &=\begin{bmatrix}
          \tau_2 D_{T_1}T_1 \\  \tau_2 D_{T_1}
     \end{bmatrix}.
\end{align*}Thus the tuple \eqref{NonCanTuple} will be strong if and only if 
\begin{align*}
\begin{bmatrix}
          \tau_2 D_{T_1}T_1 \\  \tau_2 D_{T_1}
     \end{bmatrix}=   PU\Lambda D_TT_1+P^\perp \Lambda D_T = \Lambda D_T= \begin{bmatrix}
          \tau_1 D_{T_1}T_1\\ \tau_2D_{T_1}
     \end{bmatrix}
\end{align*}which is true if and only if $(\tau_1-\tau_2)D_{T_1}T_1 \equiv 0$. This proves (1).

{\sf Proof of (2):} Let us denote $\tau_1=\tau_2=:\tau$ and the unitary
$$
\widehat\tau:=\begin{bmatrix}
     \tau&0\\0&\tau
\end{bmatrix}:\begin{bmatrix}
     \cD_{T_1} \\ \cD_{T_1}
\end{bmatrix}\to\begin{bmatrix}
     \cG \\ \cG
\end{bmatrix}.
$$Our goal is to show that the tuple \eqref{NonCanTuple} coincides (in the sense of Definition \ref{D:preAndoTuple}) with a special And\^o tuple of $(T_1,T_1)$ in its canonical form and therefore is special (see Definition \ref{D:SpcATuple}). Since $\tau:\cD_{T_1}\to\cG$ is a unitary, we make the following simple observations:
\begin{align}
&\notag \widehat\tau^* \begin{bmatrix}
         \cG \\ \cG
    \end{bmatrix}=\begin{bmatrix}
         \cD_{T_1} \\ \cD_{T_1}
    \end{bmatrix}=\cF_\dag,\quad \widehat\tau^*\Lambda = \Lambda_\dag,\quad
\widehat\tau^*\begin{bmatrix}
     I_\cG &0\\ 0&0
\end{bmatrix}\widehat\tau =\begin{bmatrix}
     I_{\cD_{T_1}}& 0\\ 0& 0
\end{bmatrix}=P_\dag\\ \label{Observe}
&\mbox{and lastly}\quad \widehat\tau^*U\widehat\tau=\widehat\tau^*\begin{bmatrix}
    0&I_\cG\\ I_\cG &0
\end{bmatrix}\widehat\tau = \begin{bmatrix}
     0& I_{\cD_{T_1}}\\ I_{\cD_{T_1}} &0
\end{bmatrix}.
\end{align}Note that the unitary $\sbm{0& I_{\cD_{T_1}}\\ I_{\cD_{T_1}} &0}$ satisfies 
$$
\begin{bmatrix}
     0& I_{\cD_{T_1}}\\ I_{\cD_{T_1}} &0
\end{bmatrix}:\begin{bmatrix}
     D_{T_1}T_1 \\ D_{T_1}
\end{bmatrix}\mapsto \begin{bmatrix}
     D_{T_1} \\ D_{T_1}T_1
\end{bmatrix}
$$and consequently the tuple
$$
\left( \begin{bmatrix}
         \cD_{T_1} \\ \cD_{T_1}
    \end{bmatrix}, \Lambda_\dag, \begin{bmatrix}
         I_{\cD_{T_1}} &0\\ 0& 0
    \end{bmatrix},  \begin{bmatrix}
         0&I_{\cD_{T_1}} \\ I_{\cD_{T_1}} &0
    \end{bmatrix} \right)
$$is a special And\^o tuple in its canonical form and it coincides with  \eqref{NonCanTuple} by the observations \eqref{Observe}.
\end{proof}

The next result shows how close general Type II And\^o tuples are to being strong Type II And\^o tuples.

\begin{proposition} \label{P:almost-canonical}
Suppose that $(\cF, \Lambda, P, U)$ is a Type II And\^o tuple for the commuting contractive operator pair $(T_1, T_2)$.
Then there is an isometry $\widetilde u$  from $\operatorname{Ran} \Lambda$ into $\cF$ so that
condition {\rm (i$^\prime$)} in Definition \ref{AndoTuple} holds in the somewhat weaker form
\begin{enumerate}
\item[(i$^{\prime \prime}$)]
 $PU\Lambda D_TT_2+P^\perp \Lambda D_T=  U^* P^\perp \Lambda D_T T_1 + U^* P U \Lambda D_T = \widetilde  u D_T$.
\end{enumerate}
 \end{proposition}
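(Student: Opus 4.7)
My plan is to establish the weaker factorization by a direct norm computation. Denote by $Y \colon \cH \to \cF$ the common value
\[
Y := PU\Lambda D_T T_2 + P^\perp \Lambda D_T = U^*P^\perp \Lambda D_T T_1 + U^*PU\Lambda D_T,
\]
where equality of the two expressions is condition (i) of Definition \ref{AndoTuple}. The goal is to verify $\|Yh\|_\cF = \|D_T h\|_\cH$ for every $h \in \cH$; once this is in hand, the map $D_T h \mapsto Y h$ extends by continuity to a well-defined isometry $\widetilde Y \colon \cD_T \to \cF$ with $Y = \widetilde Y D_T$, and then $\widetilde u \colon \operatorname{Ran}\Lambda \to \cF$ defined by $\widetilde u (\Lambda d) := \widetilde Y d$ is isometric (since $\Lambda$ is an isometry) and satisfies $Y = \widetilde u \Lambda D_T$, which is condition (i$''$).

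To compute $\|Yh\|^2$, I would use the first of the two expressions for $Y$ together with the orthogonality of the ranges of $P$ and $P^\perp$:
\[
\|Yh\|^2 = \|PU\Lambda D_T T_2 h\|^2 + \|P^\perp \Lambda D_T h\|^2.
\]
The isometry condition (ii) of Definition \ref{AndoTuple} immediately converts each term:
\[
\|P^\perp \Lambda D_T h\|^2 = \langle D_T\Lambda^*P^\perp \Lambda D_T h,h\rangle = \langle D_{T_2}^2 h,h\rangle = \|D_{T_2}h\|^2,
\]
and analogously
\[
\|PU\Lambda D_T T_2 h\|^2 = \langle T_2^* D_T \Lambda^* U^* P U \Lambda D_T T_2 h, h\rangle = \|D_{T_1} T_2 h\|^2.
\]
Adding these two equalities and invoking the first identity in \eqref{id1}, namely $D_T^2 = T_2^* D_{T_1}^2 T_2 + D_{T_2}^2$, yields $\|Yh\|^2 = \|D_T h\|^2$ as required. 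A parallel computation starting from the second expression for $Y$, combined with the second identity in \eqref{id1}, gives the same norm equality and thus serves as a consistency check.

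The step where real content is used is the passage from the two halves of the isometry condition to the norm identity $\|Yh\|^2 = \|D_T h\|^2$; everything else is linear algebra (factoring a bounded operator whose norm is bounded by $\|D_T h\|$ through $D_T$, then through $\Lambda$). Consequently I do not expect any serious obstacle. The only mild subtlety worth emphasizing is that the factorization happens on $\cD_T$ via $\Lambda$ rather than directly on all of $\cF$, which is precisely why $\widetilde u$ must be regarded as an isometry defined only on the subspace $\operatorname{Ran}\Lambda \subset \cF$, rather than on all of $\cF$; this is exactly the gap between condition (i$''$) and the strong condition (i$'$), which would demand $\widetilde u = I_{\operatorname{Ran}\Lambda}$.
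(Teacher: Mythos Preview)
Your proof is correct and follows essentially the same route as the paper: compute $Y^*Y$ (you phrase it as $\|Yh\|^2$) using the orthogonality of $P$ and $P^\perp$, invoke the two halves of the isometry condition (ii) to obtain $T_2^*D_{T_1}^2T_2 + D_{T_2}^2$, and then use the defect identity \eqref{id1} to recognize this as $D_T^2 = D_T\Lambda^*\Lambda D_T$. Your explicit two-step factorization through $\widetilde Y$ and then $\widetilde u$ is a slightly more detailed unpacking of the same conclusion the paper draws, and you correctly note that the factorization should read $\widetilde u\,\Lambda D_T$ (the statement as printed omits the $\Lambda$).
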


 \begin{proof}  We compute
 \begin{align*}
&  \left(PU\Lambda D_T T_2+P^\perp \Lambda D_T \right)^* (PU\Lambda D_T T_2+P^\perp \Lambda D_T) \\
& \quad \quad =  T_2^* D_T \Lambda^* U^* P U \Lambda D_T T_2 + D_T \Lambda^* P^\perp \Lambda D_T \\
& \quad \quad = T_2^* D_{T_1}^2 T_2 + D_{T_2}^2 \text{ (by condition (ii) in Definition \ref{AndoTuple})} \\
& \quad \quad = T_2^* (I - T_1^* T_1) T_2 + (I - T_2^* T_2) = I - T_2^* T_1^* T_1 T_2 = I - T^* T =
D_T \Lambda^* \Lambda D_T.
\end{align*}
From this it follows that there is an isometry $\widetilde u \colon \operatorname{Ran} \Lambda \to \cF$
so that $PU\Lambda D_T T_2+P^\perp \Lambda D_T = \widetilde u \Lambda D_T$ giving us equality of
the first and third term in (i$^{\prime \prime}$).
Equality of the first two terms in (i$^{\prime \prime}$) is a consequence of condition (i) (the Commutativity Condition)
in the definition of Type II And\^o tuple (Definition \ref{AndoTuple}) and
(i$^{\prime \prime}$) follows.
  \end{proof}

\chapter[Pseudo-commuting contractive lifts]{Pseudo-commuting contractive lifts of commuting contractive operator-pairs}
\label{S:pcc}

\section{Compressed And\^o lifts versus pseudo-commuting contractive lifts}

Given a commuting contractive operator-pair $(T_1, T_2)$ on $\cH$ and a And\^o lift $(\bPi, \bV_1, \bV_2)$ of $(T_1, T_2)$ on $\bcK$,  as we saw in the proof of
Theorem \ref{T:Dmodel},    it is always possible to restrict to the subspace
$$
\bcK_0 : = \bigvee_{n_1, n_2 \ge 0} \bV^{n_1} \bV^{n_2} \operatorname{Ran} \bPi  \subset \bcK
$$
to get a minimal And\^o lift $(\bPi_0, \bV_{0,1}, \bV_{0,2})$ of $(T_1, T_2)$, where we define $\bPi_0 \colon \cH \to \bcK_0$ and $\bV_{0,1}, \bV_{0,2}$ on
$\bcK_0$ via
\begin{equation}   \label{lift0}
\bPi_0 h = \bPi h \in \operatorname{Ran} \bPi \subset \bcK_0 \text{ for } h \in \cH, \quad
\bV_{0,1} = \bV_1|_{\bcK_0}, \quad \bV_{0,2} = \bV_2|_{\bcK_0}.
\end{equation}
If we are interested only in the product contraction  $T:= T_1 T_2$, by introducing the in principle even smaller subspace
\begin{equation}  \label{bcK00}
\bcK_{00} := \bigvee_{n \ge 0}  \bV_1^n \bV_2^n \operatorname{Ran} \bPi \subset \bcK_0 \subset \bcK,
\end{equation}
we can find a minimal Sz.-Nagy--Foias lift $(\bPi_{00}, \bV_{00})$ for the product contraction operator $T$ by setting $\bPi_{00} \colon \cH \to \bcK_{00}$
and $\bV_{00}$  on $\bcK_{00}$ equal to
\begin{equation}  \label{lift00}
\bPi_{00} h = \bPi h \in  \operatorname{Ran} \bPi \subset \bcK_{00} \text{ for } h \in \cH, \quad
\bV_{00} = \bV_1 \bV_2 |_{\bcK_{00}}.
\end{equation}
 Note that it is always the case that $\bcK_0$ is jointly invariant for $(\bV_1, \bV_2)$ and that  $\bcK_{00}$ is invariant for the product $\bV_1 \bV_2$.  However
the case where $\bcK_{00}$ is invariant for $\bV_1$ and $\bV_2$ individually is the special situation studied in Section \ref{S:StrongMinimality} where the minimal And\^o lift of $(T_1, T_2)$
given by \eqref{lift0} is actually {\em strongly minimal}  and $\bcK_0 = \bcK_{00}$.
Nevertheless we show here that in the general situation it is still of interest to consider the compressions
$\boldW_1:= P_{\bcK_{00} }\bV_1 |_{\bcK_{00}}, \boldW_2:= P_{\bcK_{00}} \bV_2 |_{\bcK_{00}}$ of $\bV_1, \bV_2$ to $\bcK_{00}$ even though when this is done
the compressed pair $(\boldW_1, \boldW_2)$ on $\bcK_{00}$ may not inherit the commuting and isometric properties of the original pair $(\bV_1, \bV_2)$ on $\bcK$.  
Before continuing this analysis, it is useful to have the
following more flexible definition of the {\em compression of an And\^o lift of $(T_1, T_2)$ to an embedded Sz.-Nagy--Foias lift for the product contraction $T = T_1 T_2$},  
which we shall refer to as simply a {\em minimal Sz.-Nagy--Foias compression of an And\^o lift of $(T_1, T_2)$} for short.

\begin{definition}  \label{D:embedded-lift}
Suppose that  $(\bPi, \bV_1, \bV_2)$ is an And\^o lift
of the commuting contractive operator-pair $(T_1, T_2)$ on $\bcK$ with embedded minimal isometric lift  $(\bPi_{00}, \bV_{00})$ of the product contraction operator $T = T_1 T_2$  
given by \eqref{lift00}.
Suppose that $\Pi \colon \cH \to \cK$ is an isometric embedding and $V$ is an isometry on another Hilbert space $\cK$ such that $(\Pi, V)$ is a minimal isometric lift   
of the product contraction  $T = T_1 T_2$.   
 By uniqueness of  Sz.-Nagy--Foias minimal isometric lift, there is a unitary operator $\tau \colon \cK \to \bcK_{00}$  so that
$$
\tau \Pi = \bPi_{00}, \quad \tau V = \bV_{00} \tau.
$$
Let us also view $\tau$ as an isometry from $\cK$ into $\bcK$ with final space equal to $\bcK_{00}$ depending on the context.
Define operators $\Pi \colon \cH \to \cK$ and $\bbW_1, \bbW_2\, \bW$ on $\cK$  by
$$
\Pi = \tau^* \bPi, \quad
\bbW_1 = \tau^* \bV_1 \tau, \quad \bbW_2 = \tau^* \bV_2 \tau, \quad \bbW =  \tau^* \bV_1 \bV_2 \tau =  \tau^* \bV_{00} \tau = V.
$$
Then we say that the collection {\em $(\Pi, \bbW_1, \bbW_2, V)$ is the compression of the And\^o lift $(\bPi, \bV_1, \bV_2, \bV_1 \bV_2)$ of $(T_1, T_2,  T = T_1 T_2)$ 
to the minimal Sz.-Nagy--Foias lift $(\Pi,  V)$ of $T$.}
\end{definition}

It  turns out that such compressed And\^o lifts to immersed minimal Sz.-Nagy--Foias lifts have an intrinsic characterization independent of any reference to having
a dilation to some And\^o lift. 
For further discussion, the following formal definitions will be useful.

\begin{definition}\label{D:pcc}   

\noindent
\textbf{1.}   Suppose that $(\bbW_1, \bbW_2, \bbW)$ is a triple of operators on the Hilbert space $\cK$.  We say that $(\bbW_1, \bbW_2, \bbW)$ is a 
{\em pseudo-commuting contractive operator-triple} \index{pseudo-commuting contractive operator-triple} if:
\begin{enumerate}
\item[(i)] $\bbW_1$, $\bbW_2$ are contractions while $\bbW$ is an isometry.
\item[(ii)] Both $\bbW_1$ and $\bbW_2$ commute with $\bbW$
(but not necessarily with each other),
\item[(iii)] $\bbW_1=\bbW_2^* \bbW$.
\end{enumerate}
We shall say that $(\bbW_1, \bbW_2, \bbW)$ is  a {\em pseudo-commuting algebraic triple} if $(\bbW_1, \bbW_2, \bbW)$ satisfies conditions (ii) and (iii) as above, but condition
$(i)$ is weakened to
\begin{enumerate} 
\item[(i$^\prime$)]  $\bbW$ is an isometry,
\end{enumerate}
i.e., if  $\bbW_1$ and $\bbW_2$ are now only required to be bounded operators on $\cK$ rather than contractions.

\smallskip

\noindent
\textbf{2.}  Suppose that $(T_1, T_2)$ is a commuting contractive operator-pair on a Hilbert space $\cH$, $\Pi \colon \cH \to \cK$ is an isometric embedding of $\cH$ into $\cK$
and that $(\bbW_1, \bbW_2, \bbW)$ is a pseudo-commuting contractive operator-triple on $\cK$.  
We shall say that $(\Pi, \bbW_1, \bbW_2, \bbW)$ is a {\em pseudo-commuting contractive lift} \index{pseudo-commuting contractive lift}
of $(T_1, T_2)$ if $(W_1, W_2, W)$ is a pseudo-commuting contractive operator-triple on
$\cK$ and in addition:
\begin{enumerate}
\item[(iv)] $(\Pi, \bbW_1, \bbW_2, \bbW)$ is a lift of $(T_1, T_2, T:= T_1 T_2)$ in the sense that 
$$
(\bbW_1^*,\bbW_2^*,\bbW^*)\Pi=\Pi(T_1^*,T_2^*,T_1^*T_2^*)
$$
(in particular, $(\Pi, \bbW)$ is an isometric lift of $T = T_1 T_2$), and in addition
\item[(v)] $(\Pi, \bbW)$ is a minimal isometric lift for $T = T_1 T_2$, i.e.
$$
  \cK= \bigvee_{n \ge 0} \bbW^n \operatorname{Ran} \Pi.
$$
\end{enumerate}
\end{definition}

\begin{remark}  \label{R:iii'}
Let us observe that, whenever $(\bbW_1, \bbW_2, \bbW)$ is a pseudo-commuting contractive operator-triple, 
in addition to condition (ii) in the definition one also has
\begin{enumerate}
\item[(iii$^\prime$)] $\bbW_2 = \bbW_1^* \bbW$.
\end{enumerate}
Indeed, from the identity
$\bbW_1 = \bbW_2^* \bbW$ we get
$$
  \bbW^* \bbW_1 = \bbW^* (\bbW_2^* \bbW) = (\bbW^* \bbW_2^*) \bbW = (\bbW_2^* \bbW^*) \bbW = \bbW_2^* (\bbW^* \bbW) = \bbW_2^*.
$$
\end{remark}

The next result gives  the promised intrinsic characterization of And\^o lifts of $(T_1, T_2)$ compressed to a minimal Sz.-Nagy--Foias lift of $T = T_1 T_2$, namely, they are the same as pseudo-commuting contractive
lifts  of $(T_1, T_2)$ defined as above with no reference to any And\^o lift of  $(T_1, T_2)$.

\begin{theorem} \label{T:comp=pcc} Suppose that $\Pi \colon \cH \to \cK$ is an isometry, $(\bbW_1, \bbW_2, \bbW)$ is a triple of operators on $\cK$, and
$(T_1, T_2)$ is a commuting contractive pair on $\cH$.
Then $(\Pi, \bbW_1, \bbW_2, \bW)$ is the compression of an And\^o lift of $(T_1, T_2)$ to an embedded minimal Sz.-Nagy--Foias lift of $T$ as in Definition \ref{D:embedded-lift}
 if and only if
$(\Pi, W_1, W_2, W)$ is a pseudo-commuting contractive lift of $(T_1, T_2)$ as in Definition \ref{D:pcc}.
\end{theorem}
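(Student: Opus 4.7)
The plan is to prove the two implications separately. For the forward direction I reduce to the canonical setup of Definition \ref{D:embedded-lift} and verify conditions (i)--(v) of Definition \ref{D:pcc} directly. For the converse I combine the forward direction (applied to any And\^o lift, whose existence is guaranteed by Theorem \ref{Thm:special}) with a uniqueness argument that pins down $(\bbW_1,\bbW_2)$ in terms of the minimal isometric lift $(\Pi,\bbW)$ of $T=T_1T_2$.

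For the forward direction, after conjugating by the unitary identification $\tau$ of Definition \ref{D:embedded-lift}, we may assume $(\Pi,\bbW)=(\bPi_{00},\bV_{00})$ on $\bcK_{00}=\bigvee_{n\ge 0}\bV^n\operatorname{Ran}\bPi$ and $\bbW_j=P_{\bcK_{00}}\bV_j|_{\bcK_{00}}$ for $j=1,2$, where $\bV=\bV_1\bV_2$. The key geometric observation is that $\bcK_{00}$ is in fact \emph{reducing} for $\bV$, not merely invariant: the lifting relation $\bV^*\bPi=\bPi T^*$ gives $\bV^*\operatorname{Ran}\bPi\subseteq\operatorname{Ran}\bPi$, and inductively $\bV^*\bV^n\operatorname{Ran}\bPi\subseteq\bV^{n-1}\operatorname{Ran}\bPi\subseteq\bcK_{00}$ for $n\ge 1$, so $\bcK_{00}^\perp$ is $\bV$-invariant as well. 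Conditions (i), (iv), (v) are then immediate from the isometric And\^o lift structure and the minimality of $(\bPi_{00},\bV_{00})$. Condition (iii) follows from $\bV$-invariance of $\bcK_{00}$ (to drop an inner projection), the commutation $\bV_1\bV_2=\bV_2\bV_1$, and $\bV_2^*\bV_2=I$:
\[
\bbW_2^*\bbW = P_{\bcK_{00}}\bV_2^*\bV_1\bV_2|_{\bcK_{00}} = P_{\bcK_{00}}\bV_1\bV_2^*\bV_2|_{\bcK_{00}} = P_{\bcK_{00}}\bV_1|_{\bcK_{00}} = \bbW_1.
\]
For condition (ii), a short calculation for $k\in\bcK_{00}$ using $\bV_j\bV=\bV\bV_j$ and the $\bV$-invariance of $\bcK_{00}$ gives
\[
\bbW_j\bbW k - \bbW\bbW_j k \,=\, P_{\bcK_{00}}\bV(I-P_{\bcK_{00}})\bV_j k,
\]
which vanishes because $\bV$ carries $\bcK_{00}^\perp$ into $\bcK_{00}^\perp$ by the reducing property.

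For the converse, let $(\bPi,\bV_1,\bV_2)$ be any And\^o lift of $(T_1,T_2)$, given by Theorem \ref{Thm:special}. The forward direction yields a pseudo-commuting contractive lift $(\bPi_{00},\bbW_1',\bbW_2',\bV_{00})$ on $\bcK_{00}$. Since $(\Pi,\bbW)$ and $(\bPi_{00},\bV_{00})$ are both minimal isometric lifts of $T$, by \cite[Theorem I.4.1]{Nagy-Foias} there is a unique unitary $\tau:\cK\to\bcK_{00}$ with $\tau\Pi=\bPi_{00}$ and $\tau\bbW=\bV_{00}\tau$. Setting $\widetilde\bbW_j:=\tau^*\bbW_j'\tau$ produces a second pseudo-commuting contractive lift $(\Pi,\widetilde\bbW_1,\widetilde\bbW_2,\bbW)$ of $(T_1,T_2)$ on $\cK$ that is, by construction, the compression of $(\bPi,\bV_1,\bV_2)$ to $(\Pi,\bbW)$ in the sense of Definition \ref{D:embedded-lift}. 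It therefore suffices to show $\widetilde\bbW_j=\bbW_j$, which will follow from the uniqueness claim below.

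The heart of the proof is the uniqueness claim: given a minimal isometric lift $(\Pi,\bbW)$ of $T$, there is at most one operator $\bbW_1$ on $\cK$ satisfying $\bbW_1^*\Pi=\Pi T_1^*$ and $\bbW_1\bbW=\bbW\bbW_1$. Once this is established, $\bbW_2$ is then forced by $\bbW_2=\bbW_1^*\bbW$ (Remark \ref{R:iii'}), and the converse follows. To prove the claim, let $\bbW_1,\bbW_1'$ be two such operators and $\Delta:=\bbW_1-\bbW_1'$. Then $\Delta^*\Pi=0$, so $\operatorname{Ran}\Delta\perp\operatorname{Ran}\Pi$, and $\Delta\bbW=\bbW\Delta$. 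For $h,h'\in\cH$ and $k,l\ge 0$, when $k\ge l$ one commutes $\Delta$ past $\bbW^k$ on the left and uses $\bbW^{*l}\bbW^k=\bbW^{k-l}$ together with $\Pi^*\bbW=T\Pi^*$ to compute
\[
\langle\Delta\bbW^k\Pi h,\bbW^l\Pi h'\rangle \,=\, \langle\Delta\Pi h,\Pi T^{*(k-l)}h'\rangle \,=\, 0;
\]
when $l>k$ one instead commutes $\Delta^*$ past $\bbW^{l-k}$ on the right and invokes $\Delta^*\Pi=0$ to obtain the same conclusion. Minimality of $(\Pi,\bbW)$ makes the vectors $\bbW^l\Pi h'$ dense in $\cK$, forcing $\Delta\bbW^k\Pi h=0$ for all $k,h$, and a second density argument yields $\Delta=0$. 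The main technical obstacle is this uniqueness lemma, in particular the case split $k\ge l$ versus $k<l$ and the consistent use of the commutation and lift identities on both sides; once it is in hand, everything else reduces to unpacking Definitions \ref{D:embedded-lift} and \ref{D:pcc}.
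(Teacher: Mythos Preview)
Your forward direction is correct, and the observation that $\bcK_{00}$ is actually \emph{reducing} for $\bV$ (not merely invariant) is both valid and genuinely needed for condition (ii); the paper glosses over this point.

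The converse, however, has a real gap. Your uniqueness claim is stated using only two hypotheses on $\bbW_1$: the lift relation $\bbW_1^*\Pi=\Pi T_1^*$ and the commutation $\bbW_1\bbW=\bbW\bbW_1$. That is precisely the commutant-lifting setup, and such liftings are \emph{not} unique in general. Concretely, take $\cH=\mathbb C$, $T_1=T_2=T=0$; the minimal isometric lift of $T$ is $\bbW=M_z$ on $H^2$ with $\Pi\colon c\mapsto c$. Then any $\bbW_1=M_\varphi$ with $\varphi\in H^\infty$ and $\varphi(0)=0$ satisfies both of your hypotheses, so $\bbW_1=M_z$ and $\bbW_1=M_{z^2}$ are distinct solutions. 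The specific failure in your proof is the step ``commute $\Delta^*$ past $\bbW^{l-k}$'': from $\Delta\bbW=\bbW\Delta$ you only get $\Delta^*\bbW^*=\bbW^*\Delta^*$, not $\Delta^*\bbW=\bbW\Delta^*$, so the $l>k$ case does not go through.

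What is missing is condition (iii), $\bbW_1=\bbW_2^*\bbW$, which is exactly what singles out the unique pseudo-commuting lift among all commutant liftings. The paper handles this by passing to the Douglas model (Theorem \ref{Thm:DPseudo}): there condition (iii) forces $\bbW_1,\bbW_2$ to be multiplication by coupled linear pencils $G_1^*+zG_2$, $G_2^*+zG_1$, and then the lift relation pins down $(G_1,G_2)$ as the Fundamental-Operator pair of $(T_1^*,T_2^*)$. Your argument can be repaired along more elementary lines if you use (iii) explicitly: writing $\Delta=(\bbW_2'-\bbW_2)^*\bbW$ lets you convert the problematic $\Delta^*\bbW^{l-k}$ into $-\bbW^{l-k-1}(\bbW_2-\bbW_2')$, after which the $l>k$ case reduces to a $k\ge l$-type computation for $\bbW_2-\bbW_2'$. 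But as written, the uniqueness lemma is both unproved and false.
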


\begin{proof}   We suppose first that  $(\Pi, \bbW_1, \bbW_2, \bbW)$ is the compression  of the And\^o lift of  $(T_1, T_2)$ to some minimal Sz.-Nagy--Foias lift $(\Pi, V)$
of $T$.  In detail, 
this means that  there is a And\^o lift $(\bPi, \bV_1, \bV_2)$ (say $\bPi \colon \cH \to \bcK$ and $\bV_1, \bV_2$ are commuting isometries on $\bcK$)
and a minimal isometric lift $(\Pi, V)$ of the product contraction $T = T_1 T_2$ (say $\Pi \colon \cH \to \cK$ and $V$ is an isometry on $\cK$)
and an isometry $\tau \colon \cK \to \bcK$ with range equal to $\bcK_{00}$ as in \eqref{bcK00} so that 
$$
\Pi = \tau^* \bPi, \bbW_1 = \tau^* \bV_1 \tau, \quad \bbW_2 = \tau^* \bV_2 \tau, \quad \bbW = \tau^* \bV_1 \bV_2 \tau = V.
$$
Since $\bV_1, \bV_2, \bV= \bV_1 \bV_2$ are all isometries  and furthermore $\bcK_{00}$ is invariant for $\bV:= \bV_1 \bV_2$, (i) follows.

Condition (ii) follows from the fact that $\bV_1$ and $\bV_2$ commute with $\bV$ and again $\bcK_{00}$ is invariant for $\bV$.

Since $\bV = \bV_1 \bV_2 = \bV_2 \bV_1$ and $\bV_1$ is isometric, we see that we can solve for $\bV_1$ as $\bV_1 = \bV_2^* \bV$.  The formulas
for $\bbW, \bbW_1, \bbW_2$ combined with the fact that $\bcK_{00}$ is invariant for $\bV$ then leads us to condition (iii).

Since $(\bPi, \bV_1, \bV_2)$ is a lift of $(T_1, T_2)$, we know that
$$ 
(\bV_1^*, \bV_2^*, \bV^*) \bPi = \bPi (T_1^*, T_2^*, T^*). 
$$
which is actually the same as
$$
(\bV_1^*, \bV_2^*, \bV^*) \bPi_{00} = \bPi_{00} (T_1^*, T_2^*, T^*).
$$
Recalling now that $\bPi_{00} = \tau \Pi$ and that $\tau \colon \cK \to \bcK_{00}$ is unitary, this last expression becomes
$$
 \tau^* (\bV_1^*, \bV_2^*, \bV^*) \tau \Pi = \Pi (T_1^*, T_2^*, T^*)
 $$
 and (iv) follows.
 
Finally,  by construction $\bbW = \tau^* \bV_{00} \tau = V$ where by definition of compressed And\^o tuple $(\Pi, V)$ is a minimal lift of $T$, from
which we see that (v) holds. 
  This completes the proof of {\em compressed And\^o lift $\Rightarrow$ pseudo-commuting contractive lift}.

We postpone the proof of the converse ({\em pseudo-commuting contractive lift $\Rightarrow$ compressed And\^o lift}) until after
we develop the Douglas-model for compressed And\^o lifts in the next section (see Corollary \ref{C:FundOp} below).
\end{proof}

\section[Douglas-model pseudo-commuting contractive lifts]{Douglas-model pseudo-commuting contractive lifts}

We know by Theorem \ref{T:Dmodel} that minimal And\^o lifts for a commuting contractive operator pair $(T_1, T_2)$ can be given up to unitary equivalence in the 
Douglas-model form \eqref{DougAndoMod} specified by a Type I And\^o tuple $(\cF_*, \Lambda_*, P_*,  U_*)$ for $(T_1^*, T_2^*)$.  Identifying the embedded minimal
Sz.-Nagy--Foias lift space $\bcK_{D,00}$ for the product contraction inside $\bcK_D$ and then identifying this with the Douglas-model isometric lift $(\Pi_D, V_D)$
for $T$ then leads to a Douglas model for a compressed And\^o lift of $(T_1, T_2)$ as follows.

\begin{theorem}\label{Thm:DPseudo}
Given a commuting contractive operator-pair $(T_1, T_2)$ on $\cH$,  let $\cK_D = \sbm{ H^2(\cD_{T^*}) \\  \cQ_{T^*} }$ be the Douglas isometric-lift model space for $T$,
let $\Pi_D \colon \cH \to \cK_D$ be the Douglas isometric embedding operator $\Pi_D = \sbm{ \cO_{D_{T^*}, T^*} \\ Q_{T^*}}$, let $(G_1, G_2)$ be the 
Fundamental-Operator pair for $(T_1^*, T_2^*)$, and define operators $W_{\flat 1}, W_{\flat 2}, W_D$ on $\cQ_{T^*}$ as in Theorem \ref{T:flats}.
Finally define operators $W_{D,1}, W_{D,2}, V_D$ on $\cK_D$  according to the formulas
\begin{equation}   \label{Dmodel-PCC}
(\bbW_{D,1}, \bbW_{D,2}, V_D) = \left( \begin{bmatrix} M_{G_1^* + z G_2} & 0 \\ 0 & W_{\flat 1} \end{bmatrix}, 
\begin{bmatrix} M_{G_2^* + z G_1} & 0 \\ 0 & W_{\flat 2} \end{bmatrix},   \begin{bmatrix} M^{\cD_{T^*}}_z & 0 \\ 0 & W_D \end{bmatrix} \right).
\end{equation}
Then $( \Pi_D, \bbW_{D,1}, \bbW_{D,2}, V_D)$ is the compression of the Douglas-model And\^o lift of $(T_1, T_2)$ to the embedded Douglas-model Sz.-Nagy--Foias 
lift $(\Pi, V_D)$ and hence also is a pseudo-commuting contractive lift of $(T_1, T_2)$.

Conversely,  suppose that  $(\Pi_D, \bbW_1, \bbW_2, V_D)$ a pseudo-commuting contractive lift of $(T_1, T_2)$ such that 
$$
(\Pi_D, V_D) = \left( \begin{bmatrix} \cO_{D_{T^*}, T^*} \\ \cQ_{T^*} \end{bmatrix} \colon \cH \to \cK_D, \begin{bmatrix}  M^{\cD_{T^*}}_z & 0 \\ 0 & W_D \end{bmatrix}  
\text{ on } \cK_D \right)
$$
is the Douglas-model minimal isometric lift of $T$ on $\cK_D = \sbm{ H^2(\cD_{T^*}) \\ \cQ_{T^*}}$.
Then necessarily $(\bbW_1, \bbW_2) = (\bbW_{D,1}, \bbW_{D,2})$ is given as  in formula \eqref{Dmodel-PCC}.  
\end{theorem}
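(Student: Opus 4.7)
My plan for the direct statement is to realize some Douglas-model And\^o lift $(\bPi_D, \bV_{D,1}, \bV_{D,2})$ of $(T_1,T_2)$ corresponding to a Type I And\^o tuple $(\cF_*, \Lambda_*, P_*, U_*)$ for $(T_1^*, T_2^*)$ as in Theorem \ref{T:Dmodel}, identify the embedded minimal Sz.-Nagy--Foias lift space for $T$ inside $\bcK_D = \sbm{H^2(\cF_*) \\ \cQ_{T^*}}$ as $\bcK_{D,00} = \sbm{H^2(\operatorname{Ran}\Lambda_*) \\ \cQ_{T^*}}$ via \eqref{kcircmin-id}, and then carry out the compression prescribed in Definition \ref{D:embedded-lift} using the unitary $\tau = \sbm{I_{H^2}\otimes \Lambda_* & 0 \\ 0 & I_{\cQ_{T^*}}}$ from $\cK_D := \sbm{H^2(\cD_{T^*}) \\ \cQ_{T^*}}$ onto $\bcK_{D,00}$. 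Since compression of a multiplication operator on $H^2(\cF_*)$ to a subspace of the form $H^2(\cF_0)$ acts pointwise, one computes
\[
\tau^* \bV_{D,1} \tau = \begin{bmatrix} M_{\Lambda_*^* U_*^* P_*^\perp \Lambda_* + z\,\Lambda_*^* U_*^* P_* \Lambda_*} & 0 \\ 0 & W_{\flat 1} \end{bmatrix}
\]
and analogously for $\tau^* \bV_{D,2} \tau$. The Second Proof of Theorem \ref{T:FundOps}, applied to the adjoint pair, identifies the Fundamental-Operator pair for $(T_1^*, T_2^*)$ as $(G_1, G_2) = (\Lambda_*^* P_*^\perp U_* \Lambda_*,\ \Lambda_*^* U_*^* P_* \Lambda_*)$, and the two symbols above collapse to $G_1^* + zG_2$ and $G_2^* + zG_1$, giving precisely \eqref{Dmodel-PCC}. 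The ``hence also" clause is then immediate from the already-established forward direction of Theorem \ref{T:comp=pcc}.

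For the converse (uniqueness) assertion, suppose $(\Pi_D, \bbW_1, \bbW_2, V_D)$ is any pseudo-commuting contractive lift with $(\Pi_D, V_D)$ the Douglas model of $T = T_1 T_2$. First I would derive the block-diagonal form of each $\bbW_j$ with respect to $\cK_D = H^2(\cD_{T^*}) \oplus \cQ_{T^*}$. The commutation $\bbW_j V_D = V_D \bbW_j$ forces the $(1,2)$-block to intertwine $W_D$ (unitary) with $M_z^{\cD_{T^*}}$ (shift) and so vanishes by part (1) of Lemma \ref{L:AuxLemma}; then the relation $\bbW_1 = \bbW_2^* V_D$ written in blocks forces the $(2,1)$-block of $\bbW_1$ to equal $B_2^* W_D$ with $B_2$ the already-vanishing $(1,2)$-block of $\bbW_2$, and by symmetry the $(2,1)$-block of $\bbW_2$ also vanishes. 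Thus $\bbW_j = \sbm{M_{\varphi_j} & 0 \\ 0 & D_j}$, with $\varphi_j \in H^\infty(\cB(\cD_{T^*}))$ arising since the $(1,1)$-block commutes with $M_z^{\cD_{T^*}}$, and $D_j$ commuting with $W_D$.

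Next, the relation $\bbW_1 = \bbW_2^* V_D$ restricted to the top block reads $M_{\varphi_1} = M_{\varphi_2}^* M_z$; matching Fourier coefficients forces $\varphi_1(z) = A_0 + zA_1$ and $\varphi_2(z) = A_1^* + zA_0^*$ for some $A_0, A_1 \in \cB(\cD_{T^*})$. I would then apply the lifting condition $\bbW_1^* \Pi_D = \Pi_D T_1^*$ to the top block, giving $M_{\varphi_1}^* \cO_{D_{T^*}, T^*} = \cO_{D_{T^*}, T^*} T_1^*$; expanding both sides as power series in $z$ and matching the $z^0$-coefficient (the rest following from $T^*$-commutativity) yields
\[
A_0^* D_{T^*} + A_1^* D_{T^*} T^* = D_{T^*} T_1^*,
\]
which is precisely the first Fundamental-Operator equation \eqref{FundEqns} for the pair $(T_1^*, T_2^*)$. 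By uniqueness of its solution (Lemma \ref{L:FundOps} together with the existence in Theorem \ref{T:FundOps}), $A_0^* = G_1$ and $A_1^* = G_2^*$, i.e.\ $\varphi_1 = G_1^* + zG_2$; the analogous argument applied to $\bbW_2$ gives $\varphi_2 = G_2^* + zG_1$.

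For the unitary block, the lifting condition reads $D_j^* Q_{T^*} h = Q_{T^*} T_j^* h$ for $h \in \cH$. Since $D_j^*$ commutes with $W_D$ (adjoint of $D_j W_D = W_D D_j$), and since $W_{\flat j}$ also commutes with $W_D = W_{\flat 1} W_{\flat 2}$ (because $W_{\flat 1}, W_{\flat 2}$ commute) and satisfies the same lifting identity by Theorem \ref{T:flats}, both $D_j^*$ and $W_{\flat j}^*$ agree on every vector $W_D^n Q_{T^*} h$; density of $\bigcup_{n\ge 0} W_D^n \operatorname{Ran} Q_{T^*}$ in $\cQ_{T^*}$ (built into the definition of the Douglas model, cf.\ \eqref{cQT*}) then gives $D_j = W_{\flat j}$. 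The main obstacle is the shift-block step: the coupling between $\varphi_1$ and $\varphi_2$ forced by the pseudo-commutation $\bbW_1 = \bbW_2^* V_D$ must be reconciled with the lifting condition so that the coefficients satisfy exactly the Fundamental-Operator equations, after which uniqueness of the Fundamental-Operator pair (via Lemma \ref{L:FundOps}) closes the argument.
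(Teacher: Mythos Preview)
Your proposal is correct and follows essentially the same route as the paper: for the direct part you compress the Douglas-model And\^o lift via $\tau = \sbm{I_{H^2}\otimes\Lambda_* & 0 \\ 0 & I_{\cQ_{T^*}}}$ and invoke the Second Proof of Theorem~\ref{T:FundOps} to identify $(G_1,G_2)$; for the converse you derive the block-diagonal form from commutation plus the pseudo-commutation relation, extract the pencil form, and pin down the coefficients via the Fundamental-Operator equations and density in $\cQ_{T^*}$. One small slip: in your block computation the $(2,1)$-block of $\bbW_2^* V_D$ is $B_2^* M_z$, not $B_2^* W_D$, but since $B_2=0$ this does not affect the argument.
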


\begin{proof}
Let $(\cF_*,\Lambda_*,P_*,U_*)$ be a Type I And\^o tuple for $(T_1^*,T_2^*)$.  Then the Douglas-model And\^o lift $(\bPi_D, \bV_{D,1}, \bV_{D,2})$ on $\bcK_D$
 associated with this And\^o tuple is defined as in \eqref{DougAndoMod}:
\begin{align*}
 & \bcK_D = \begin{bmatrix} H^2(\cF_*) \\ \cQ_{T^*} \end{bmatrix}, \notag  \\
 & (\bV_{D,1}, \bV_{D,2}) =  \left( \begin{bmatrix} M_{U_*^*P_*^\perp + z U_*^*P_*} & 0 \\ 0 & W_{\flat 1} \end{bmatrix},
 \begin{bmatrix} M_{P_* U_* + z P_*^\perp U_*} & 0 \\ 0 & W_{\flat 2} \end{bmatrix} \right) \text{ acting on } \bcK_D,  \notag \\
 &   \bPi_D= \begin{bmatrix} (I_{H^2} \otimes \Lambda_*)  \cO_{D_{T^*}, T^*} \\ Q_{T^*} \end{bmatrix} \colon \cH \to \bcK_D.
 \end{align*}
 Then, as seen in the proof of Theorem \ref{T:Dmodel}, the Sz.-Nagy--Foias isometric lift of $T$ embedded in the And\^o lift $(\bPi, \bV_1, \bV_2)$ of $(T_1, T_2)$
 is $(\bPi_{D,00}, \bV_{D,00})$ where $\bPi_{D,00}$ is the same as $\bPi$ but with codomain taken to be $\bcK_{D,00} = \sbm{ H^2(\operatorname{Ran} \Lambda_*)
 \\ \cQ_{T^*} }$ and where $\bV_{D,00} = \bV_{D1} \bV_{D2}|_{\bcK_{D,00}}$, and furthermore, the unique unitary operator $\tau \colon \cK_D \to \bcK_{D, 00}$ implementing the unitary equivalence
 between the two minimal isometric lifts $(\Pi_D, V_D)$ and $(\bPi_{D,00}, \bV_{D,00})$  of $T =T_1 T_2$ is
 $$
  \tau = \begin{bmatrix} I_{H^2} \otimes \Lambda_* & 0 \\ 0 & I_{\cQ_{T^*}} \end{bmatrix} \colon \cK_D \to \bcK_{D,00}.
 $$
 Hence the associated Douglas-model compressed And\^o lift (obtained by using the Douglas model for the the And\^o lift $(T_1, T_2)$
 as well as Douglas model for the Sz.-Nagy--Foias lift of the product contraction $T = T_1 T_2$)  is given by
  \begin{align}
& (\bbW_{D,1},\bbW_{D,2},V_D):=\tau^*(V_1,V_2,V_1V_2) \tau:=  \notag \\
& \left( \begin{bmatrix} M_{\Lambda_*^*(U_*^*P_*^\perp + z U_*^*P_*)\Lambda_*} & 0 \\ 0 & W_{\flat 1} \end{bmatrix},
 \begin{bmatrix} M_{\Lambda_*^*(P_* U_*^* + z P_*^\perp U_* ) \Lambda_*} & 0 \\ 0 & W_{\flat 2} \end{bmatrix},\begin{bmatrix} M_z & 0 \\ 0 & W_D \end{bmatrix} \right) \notag \\ 
 &= \left( \begin{bmatrix} M_{G_1^* + z G_2} & 0 \\ 0 & W_{\flat 1} \end{bmatrix},
 \begin{bmatrix} M_{G_2^*+ z G_1} & 0 \\ 0 & W_{\flat 2} \end{bmatrix},\begin{bmatrix} M_z & 0 \\ 0 & W_D \end{bmatrix} \right),
 \label{Dpcc}
 \end{align}
 where here we make use of the connection between the Fundamental-Operator pair of $(T_1^*, T_2^*)$ and a Type I And\^o tuple $(\cF_*, \Lambda_*, P_*, U_*)$ for 
 $(T_1^*, T_2^*)$ (see \eqref{choices}) coming out of the Second Proof of Theorem \ref{T:FundOps}:
 $$
 (G_1,G_2)=(\Lambda_*^*P_*^\perp U_*\Lambda_*,\Lambda_*^*U_*^*P_*\Lambda_*)
 $$
 Then by definition the model triple \eqref{Dmodel-PCC} is a compressed And\^o lift, and hence also, by the part of Theorem \ref{T:comp=pcc} already proved,
 is also a pseudo-commuting contractive lift of $(T_1, T_2)$. 
 
Conversely, suppose that  the operator triple $\left(\Pi_D, \bW_1, \bW_2, \sbm{M_z^{\cD_{T^*}} & 0 \\ 0 & W_D} \right)$ is a pseudo-commuting contractive lift of $(T_1, T_2)$.
 We break the proof into two steps:
 
 \smallskip
 
 \noindent
 \textbf{Step 1. Show:} {\sl If $\left(\bbW_1, \bbW_2, \sbm{ M_z^{\cD_{T^*}} & 0 \\ 0 & W_D}\right)$ is a pseudo-commuting contractive triple, then
 \begin{equation} \label{Step1Ws}
 (\bbW_1, \bbW_2) = \left( \begin{bmatrix}M_{G_1^* + z G_2} & 0 \\ 0 & \widetilde W_2^*W_D  \end{bmatrix}, 
 \begin{bmatrix} M_{G_2^* + z G_1} & 0 \\ 0 & \widetilde W_2 \end{bmatrix} \right)
 \end{equation}
 for some operators $G_1, G_2 \in \cB(\cD_{T^*})$ such that $\varphi_1(z) := G_1^* + z G_2$ and $\varphi_2(z): = G_2^* + z G_1$ are contractive analytic functions on ${\mathbb D}$
 and $\widetilde W_2$ is some contraction operator on $\cQ_{T^*}$ commuting with $W_D$.}

\smallskip

\noindent
\textbf{Proof of Step 1.}  Assume that $\left(\bbW_1, \bbW_2, \sbm{ M_z & 0 \\ 0 & W_D}\right)$ is a pseudo-commuting contractive triple on 
$\cK_D:= \sbm{ H^2(\cD_{T^*}) \\ \cQ_{T^*} }$. As a first step we write out $\bbW_1, \bbW_2$ as block $2 \times 2$ matrices with respect to the decomposition of $\cK_D$ as 
$\sbm{ H^2(\cD_{T^*}) \\ \cQ_{T^*}}$:
$$
   \bbW_j = \begin{bmatrix} \bbW_{j,11} & \bbW_{j,12}  \\ \bbW_{j, 21} & \bbW_{j, 22} \end{bmatrix} \text{ for } j = 1,2.
$$
By Axiom (ii) in Definition \ref{D:pcc} combined with Lemma \ref{L:AuxLemma}, we see immediately that $\bbW_{j,12} = 0$ for $j=1,2$ and the commutativity of $\bbW_j$ with
$\sbm{ M_z^{\cD_{T^*}} & 0 \\ 0 & W_D}$ comes down to
\begin{equation}  \label{WjWcom}
 \begin{bmatrix} \bbW_{j,11} M_z^{\cD_{T^*}} & 0 \\ \bbW_{j,21} M_z^{\cD_{T^*}} & \bbW_{j,22} W_D \end{bmatrix} = 
 \begin{bmatrix} M_z^{\cD_{T^*}}  \bbW_{j,11} & 0 \\ W_D \bbW_{j,21} & W_D \bbW_{j,22} \end{bmatrix}.
 \end{equation}
 By Axiom (iii) in Definition \ref{D:pcc} we know that $\bbW_1 = \bbW_2^* \sbm{ M_z^{\cD_{T^*}} & 0 \\ 0 & W_D}$;  writing this out in detail gives
 \begin{equation}  \label{W1=W2*W}
 \begin{bmatrix} \bbW_{1,11} & 0 \\ \bbW_{1,21} & \bbW_{1,22} \end{bmatrix} = 
 \begin{bmatrix} \bbW_{2,11}^* M_z^{\cD_{T^*}} & \bbW_{2, 21}^* W_D \\ 0 & \bbW_{2,22}^* W_D \end{bmatrix}.
 \end{equation}
 From the $(2,1)$ entry we see that $\bbW_{1,21} = 0$ and from the $(1,2)$ entry we see that $\bbW_{2,21} = 0$ since $W_D$ is unitary.   Thus  both $\bbW_1$ an $\bbW_2$ are diagonal
 $$
   \bbW_j = \begin{bmatrix} \bbW_{j,11} & 0 \\ 0 &  \bbW_{j,22} \end{bmatrix} \text{ for } j = 1,2.
 $$
 and we see from \eqref{WjWcom} that
 \begin{equation}  \label{diag-com}
  \bbW_{j,11} M_z^{\cD_{T^*}} = M_z^{\cD_{T^*}} \bbW_{j,11}, \quad \bbW_{j,22} W_D = W_D \bbW_{j,22} \text{ for } j = 1,2.
 \end{equation}
 From the first relation in \eqref{diag-com}, by standard Hardy-space theory we conclude that $W_{j,11}$ must be a multiplication operator $M_{\varphi_j} \colon h(z) \mapsto
 \varphi_j(z) h(z)$ for a contractive analytic function
 $\varphi_j(z) = \sum_{k = 0}^\infty \varphi_{j,k}   z^\ell$ holomorphic on the unit disk ${\mathbb D}$,
  where the Taylor coefficients $\varphi_{j,k}$ ($j=1,2$, $k = 0,1,2,\dots$) are operators on $\cD_{T^*}$.  From \eqref{W1=W2*W}
 we see that $M_{\varphi_1}  = M_{\varphi_2}^* M_z^{\cD_{T^*}}$.  A Taylor-series argument then shows that the pair $(\varphi_1(z), \varphi_2(z))$ must have the coupled pencil form
 $$
 \varphi_1(z) = G_1^* + z G_2, \quad \varphi_2(z) = G_2^* + z G_1.
 $$
 for some operators $G_1, G_2$ on $\cD_{T^*}$.
 
 Finally, from \eqref{WjWcom} we see that both $\widetilde W_1 := W_{1,22}$ and $\widetilde W_2:= W_{2,22}$ commute with $W_D$.  By the Fuglede-Putnam theorem,
 it follows that each of $\widetilde W_1$ and $\widetilde W_2$ also commutes with $W_D^*$.   If we let $\widetilde W_2$ be any contractive operator commuting with the
 unitary operator $W_D$ and then set $\widetilde W_1 =  \widetilde W_2^* W_D$, then $\widetilde W_1$ automatically commutes with $W_D$ and this is the general form
 for a pseudo-commuting contractive triple $(W_1, W_2, W_D)$ with last component equal to the unitary operator $W_D$.
This completes the proof of Step 1.
\smallskip

\noindent
\textbf{Step 2.  Show:}  {\sl If $G_1, G_2, \widetilde W_1 = W_D^* \widetilde W_2, \widetilde W_2$ are as in Step 1 and 
$$
\left(\Pi_D, \begin{bmatrix}  M_{G_1^* + z G_2} & 0 \\ 0 & \widetilde W_1 \end{bmatrix},  \begin{bmatrix} M_{G_2^* + z G_1} & 0 \\ 0 & \widetilde W_2 \end{bmatrix}  \right)
$$
is a lift of $(T_1, T_2)$, then 
  $(G_1, G_2)$ is the Fundamental-Operator pair for $(T_1, T_2)$ and $(\widetilde W_1, \widetilde W_2) = (W_{\flat 1}, W_{\flat 2})$ is the canonical pair of
  unitaries on the space $\cQ_{T^*}$ associated with the contractive operator pair $(T_1, T_2)$ as in Theorem \ref{T:flats}.}
  
  \smallskip

\noindent
\textbf{Proof of Step 2.}    The hypothesis that $\left(\Pi_D, \sbm{ M_{G_1^* + z G_2} & 0 \\ 0 & \widetilde W_1}, \sbm{M_{G_2^* + z G_1} & 0 \\ 0 & \widetilde W_2} \right)$
is a lift of $(T_1, T_2)$ means that
$$
\left( \sbm{ M_{G_1^* + z G_2} & 0 \\ 0 & \widetilde W_1}^*, \sbm{M_{G_2^* + z G_1} & 0 \\ 0 & \widetilde W_2}^*\right) \sbm{ \cO_{D_{T^*}, T^*} \\ Q_{T^*} }
 =   \sbm{ \cO_{D_{T^*}, T^*} \\ Q_{T^*} }\left( T_1^*, T_2^* \right)
$$
which breaks apart into the set of conditions
\begin{align}
&M_{G_1^* + z G_2}^* \cO_{D_{T^*}, T^*} = \cO_{D_{T^*}, T^*} T_1^*, \quad M_{G_2^* + z G_1}^* \cO_{D_{T^*}, T^*} \cO_{D_{T^*}, T^*} = \cO_{D_{T^*}, T^*} T_2^*  \label{Lift1} \\
&\widetilde W_1^* Q_{T^*} = Q_{T^*} T_1^*, \quad \widetilde W_2^* Q_{T^*} = Q_{T^*} T_2^*.  \label{Lift2}
\end{align}
By Theorem \ref{T:flats} it is immediate from \eqref{Lift2} that 
$$
   (\widetilde W_1, \widetilde W_2) = (W_{\flat 1}, W_{2, \flat})
$$
as claimed.  As for the first equation in \eqref{Lift1}, note that each side is an operator from $\cH$ into $H^2(\cD_{T^*})$.  Applying each side to a fixed vector in $\cH$ gives
$$
\sum_{k=0}^\infty \left( G_1^* D_{T^*} T^{*k}  + G_2 D_{T^*} T^{* k+1} \right) h z^k = \sum_{k=0}^\infty D_{T^*} T^{*k} T_1^* h z^k.
$$
Equating Taylor coefficients and cancelling off the vector $h$ gives us the system of operator equations
$$
G_1^* D_{T^*} T^{*k}  + G_2 D_{T^*} T^{* k+1} = D_{T^*} T^{*k} T_1^*. \text{ for all } k=0,1,2,\dots.
$$
As $T_1^*$ commutes with $T^*$, we can rewrite this with a common right factor of $T^{*k}$:
$$
(G_1^* D_{T^*}    + G_2 D_{T^*} T^*)  T^{* k} = (D_{T^*} T_1^*) T^{*k} \text{ for } k=0,1,2,\dots.
$$
For this to hold for all $k=0,1,2,\dots$, it is now clear that it suffices that it hold for $k=0$:
$$
G_1^* D_{T^*}    + G_2 D_{T^*} T^* =  D_{T^*} T_1^*,
$$
i.e., $(G_1, G_2)$ is a solution of the first of equations \eqref{FundEqns} (with $(G_1, G_2)$ in place of $(F_1, F_2)$ and with $(T_1^*, T_2^*)$ in place of $(T_1, T_2)$.
A similar analysis starting with the second of equations \eqref{Lift1} leads to the equation
$$
G_2^* D_{T^*}  + G_1 D_{T^*} T^* =  D_{T^*} T_2^*,
$$
i.e., $(G_1, G_2)$ also solves the second equation in \eqref{FundEqns} (again with $(G_1, G_2)$ in place of $(F_1, F_2)$ and $(T_1^*, T_2^*)$ in place of $(T_1, T_2)$).
By Definition \ref{D:FundOps} and the uniqueness result Theorem \ref{T:FundOps}, it follows that $(G_1, G_2)$ turns out to be the Fundamental-Operator pair for
$(T_1^*, T_2^*)$ as claimed.
\end{proof}

\begin{remark} \label{R:Ddirect} The proof of Theorem \ref{Thm:DPseudo} made reference to the construction of the triple $(\bbW_{D,1}, \bbW_{D,2}, V_D)$ 
as the compression (in the sense of Definition 
\ref{D:embedded-lift} of a Douglas-model And\^o lift of $(T_1, T_2)$ to the embedding of a Douglas-model minimal isometric lift of the product contraction $T = T_1 T_2$
 to conclude that the Douglas-model triple $(\bbW_{D,1}, \bbW_{D,2}, V_D)$ is a pseudo-commuting contractive lift.  However it is also of interest to see
if it is possible to check this directly from the formula \eqref{Dmodel-PCC} which a priori has no reference to the existence of an And\^o lift of $(T_1, T_2)$.  
To get the lifting property, the proof of the direct statement uses the connection of the fundamental operator pair $(G_1, G_2)$ for $(T_1^*, T_2^*)$
with the existence of a Type I And\^o tuple $(\cF_*, \Lambda_*, P_*, U_*)$ defining a Douglas-model And\^o lift of $(T_1, T_2)$.  However in the proof of the  converse
it is shown how to get a more direct statement:  the lifting property for $(\Pi_D, \bbW_{D,1}, \bbW_{D,2}, V_D)$ is associated with the Fundamental-Operator system of equations
\eqref{FundEqns} (with $(T_1^*, T_2^*)$ in place of $(T_1, T_2)$ and with $(G_1, G_2)$ in place of $(F_1, F_2)$).  Conditions (ii), (iii) in Definition \ref{D:pcc} follow by a direct check
from the formulas \eqref{Dmodel-PCC} and we conclude that one can show directly that $(\Pi_D, \bbW_{D,1}, \bbW_{D,2}, V_D)$ is at least a {\em pseudo-commuting algebraic lift}
(as defined in Definition \ref{D:pcc}) of $(T_1, T_2, T_1T_2)$.  To show that $\bbW_{D,1}$ and $\bbW_{D,2}$ are contraction operators;  
to our knowledge the only argument for showing this
goes through the fact that  $\bbW_{D,1}$ and $\bbW_{D,2}$ are compressions of the commuting isometries $(\bV_1, \bV_2)$ in a Douglas-model And\^o lift of $(T_1, T_2)$:
\begin{align*}
& \bbW_{D,1}:= M_{G_1^* + z G_2} =( I_{H^2} \otimes \Lambda_*^*) M_{U_*^* P_*^\perp + z U^*_* P_*}(I_{H^2} \otimes \Lambda_*), \\
& \bbW_{D,2}:= M_{G_2^* + z G_1} = (I_{H^2} \otimes \Lambda_*^*) M_{P_* U_* + z P_*^\perp U_*} (I_{H^2} \otimes \Lambda_*).
\end{align*}
\end{remark}

We are now ready to complete the proof of  \ref{T:comp=pcc}.

\begin{corollary}  \label{C:FundOp}  Suppose that $(T_1, T_2)$ is a commuting contractive pair on $\cH$ and suppose that
$\Pi \colon \cH \to \cK$ is an embedding of $\cH$ into $\cK$ and that $(\bbW_1, \bbW_2, \bbW)$ is a triple of operators on $\cK$. Then
the following are equivalent:
\begin{enumerate}
\item $(\Pi, \bbW_1, \bbW_2, \bbW)$ is the compression of an And\^o lift of $(T_1, T_2, T_1 T_2)$ to an immersed minimal Sz.-Nagy--Foias lift of $T$, 
i.e., $(\Pi, \bbW)$ is a minimal Sz.-Nagy--Foias (isometric) lift of $T = T_1 T_2$ on $\cK$ and there is an And\^o lift
$(\bPi, \bV_1, \bV_2)$ of $(T_1, T_2)$  on $\bcK$ together with a unitary embedding 
$$
\tau \colon \cK \to \bcK_{00}:= \bigvee_{n \ge 0} \bV_1^n \bV_2^n \operatorname{Ran} \bPi \subset \bcK
$$
so that
$$
\tau \Pi = \bPi, \quad \tau^* (\bV_1, \bV_2, \bV_1 \bV_2) \tau = (\bbW_1, \bbW_2, \bbW).
$$

\item $(\Pi, \bbW_1, \bbW_2, \bbW)$ is a pseudo-commuting contractive lift of $(T_1, T_2, T_1T_2)$. 

\item $(\Pi, \bbW_1, \bbW_2, \bbW)$ is unitarily equivalent to the Douglas-model pseudo-commuta\-tive contractive lift  $\left(\Pi_D, \sbm{M_{G_1^* + z G_2} & 0 \\ 0 & W_{\flat 1}},
\sbm{M_{G_2^* + z G_1} & 0 \\ 0 & W_{\flat 2}}, \sbm{M_z & 0 \\ 0 & W_D}\right)$ given by Theorem \ref{Thm:DPseudo}.
\end{enumerate}

Furthermore, if $(\Pi, \bbW_1, \bbW_2, \bbW)$ are $(\Pi', \bbW_1', \bbW_2', \bbW')$ are two pseudo-commuting contractive lifts of $(T_1, T_2, T_1T_2)$ on $\cK$ and $\cK'$
respectively such that there is a unitary operator $\tau' \colon \cK \to \cK'$ such that
\begin{equation}  \label{partial-intertwine}
  \Pi' = \tau' \Pi, \quad \bbW' \tau' = \tau' \bbW,
\end{equation}
then it follows that also
\begin{equation}  \label{claim1}
 \bbW_1' \tau' = \tau' \bbW_1, \quad \bbW_2' \tau'= \tau' \bbW_2.
 \end{equation}
 \end{corollary}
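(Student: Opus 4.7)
The plan is to exploit the equivalence (2)$\Leftrightarrow$(3) from the main part of the corollary: both pseudo-commuting contractive lifts $(\Pi,\bbW_1,\bbW_2,\bbW)$ and $(\Pi',\bbW_1',\bbW_2',\bbW')$ are unitarily equivalent, \emph{as quadruples}, to the same canonical Douglas-model lift $(\Pi_D,\bbW_{D,1},\bbW_{D,2},V_D)$ of Theorem \ref{Thm:DPseudo}, which depends only on $(T_1,T_2)$. Since that canonical form is shared by both lifts, any $\tau'$ intertwining just the isometric-lift pieces $(\Pi,\bbW)$ and $(\Pi',\bbW')$ will be forced by the uniqueness of the minimal Sz.-Nagy--Foias lift to agree with the canonical unitary identification, and will therefore automatically intertwine the $\bbW_j$ pieces as well.

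In detail, I would first choose unitary operators $\sigma\colon\cK\to\cK_D$ and $\sigma'\colon\cK'\to\cK_D$ implementing the equivalences (2)$\Rightarrow$(3) for each lift, so that
\begin{align*}
&\sigma\Pi=\Pi_D,\quad \sigma\bbW=V_D\sigma,\quad \sigma\bbW_j=\bbW_{D,j}\sigma\quad(j=1,2),\\
&\sigma'\Pi'=\Pi_D,\quad \sigma'\bbW'=V_D\sigma',\quad \sigma'\bbW_j'=\bbW_{D,j}\sigma'\quad(j=1,2).
\end{align*}
Then I would form the unitary $u:=\sigma'\tau'\sigma^*\colon\cK_D\to\cK_D$ and check, using only the hypotheses \eqref{partial-intertwine}, that
$$u\Pi_D=\sigma'\tau'\sigma^*\Pi_D=\sigma'\tau'\Pi=\sigma'\Pi'=\Pi_D,\qquad uV_D=V_Du,$$
the second identity coming from $\sigma^*V_D=\bbW\sigma^*$ followed by $\tau'\bbW=\bbW'\tau'$ and $\sigma'\bbW'=V_D\sigma'$.

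The heart of the argument is to conclude that $u=I_{\cK_D}$. This is forced by the minimality of the Sz.-Nagy--Foias lift $(\Pi_D,V_D)$: the set $\{V_D^n\Pi_D h:n\geq 0,\,h\in\cH\}$ spans $\cK_D$ and is pointwise fixed by $u$, since
$$uV_D^n\Pi_D h=V_D^n u\Pi_D h=V_D^n\Pi_D h.$$
Equivalently, this is the uniqueness clause of Theorem I.4.1 in Nagy-Foias applied to two copies of the Douglas-model minimal isometric lift of $T=T_1T_2$. From $u=I_{\cK_D}$ we read off $\tau'=\sigma'^{*}\sigma$, and then for $j=1,2$
$$\bbW_j'\tau'=(\sigma'^{*}\bbW_{D,j}\sigma')(\sigma'^{*}\sigma)=\sigma'^{*}\bbW_{D,j}\sigma=\sigma'^{*}\sigma\bbW_j=\tau'\bbW_j,$$
which is precisely \eqref{claim1}. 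The only substantive step is the minimality argument yielding $u=I_{\cK_D}$; everything else is bookkeeping, tracking intertwining relations through the change of coordinates to the shared Douglas model.
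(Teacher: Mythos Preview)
Your proposal is correct and follows essentially the same approach as the paper: both arguments pass to the Douglas model via unitary identifications and then invoke uniqueness. The only cosmetic difference is that the paper uses a single unitary $\tau_D\colon\cK\to\cK_D$ bringing the first lift to Douglas form, sets $\tau=\tau_D\tau'^{*}\colon\cK'\to\cK_D$, and then appeals directly to the converse statement of Theorem~\ref{Thm:DPseudo} (which says any pseudo-commuting contractive lift with third component $(\Pi_D,V_D)$ must have $\bbW_j=\bbW_{D,j}$) rather than explicitly arguing $u=I_{\cK_D}$ via minimality as you do; but since that converse statement is itself proved by the same minimality argument, the two proofs are really the same.
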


\begin{proof}  We first show that (1) $\Rightarrow$ (2) $\Rightarrow$ (3) $\Rightarrow$ (1).

\smallskip

\noindent
\textbf{(1) $\Rightarrow$ (2):}  This follows from the part of Theorem \ref{T:comp=pcc} already proved above.

\smallskip

\noindent
\textbf{(2) $\Rightarrow$ (3):}  Assume (2). Then $(\Pi, \bbW)$ is a minimal isometric lift of $T = T_1T_2$ on $\cK$. By uniqueness of the minimal isometric lift for $T$,  there is a unitary $\tau \colon \cK \to \cK_D$ which brings the Sz.-Nagy--Foias lift $(\Pi, V)$ to the Douglas-model Sz.-Nagy--Foias lift:
$$
\tau \Pi = \Pi_D, \quad \tau \bbW = V_D \tau.
$$
Since $(\bbW_1, \bbW_2, \bbW)$ is a pseudo-commuting contractive triple and $\tau$ is unitary, it is easily checked that 
$$
 \tau ( \bbW_1, \bbW_2, \bbW) \tau^* = ( \tau \bbW_1 \tau^*, \tau \bbW_2 \tau^*, V_D)
$$
is also a pseudo-commuting contractive triple.   Similarly, since $(\Pi, \bbW_1, \bbW_2, \bbW)$ is a lift of $(T_1, T_2, T_1 T_2)$, it follows that
$(\tau \Pi = \Pi_D, \tau \bbW_1 \tau^*, \tau \bbW_2 \tau^*, V_D)$ is also a lift of $(T_1, T_2, T_1 T_2)$.
But as a consequence of Theorem \ref{Thm:DPseudo} we see that this then forces
$$  \tau \bbW_1 \tau^* = \sbm{M_{G_1^* + z G_2}  & 0 \\ 0 & W_{\flat 1}}, \quad
\tau \bbW_2 \tau^* = \sbm{M_{G_2^* + z G_1} & 0 \\ 0 & W_{\flat 2} },
$$
and hence $\tau$ implements a unitary equivalence of the pseudo-commuting contractive lift $(\Pi, \bbW_1, \bbW_2, \bbW)$ with $(\Pi_D, W_1^D, W_2^D, V_D)$ (notation as in \eqref{Dmodel-PCC}), and (3) follows.

\smallskip

\noindent
\textbf{(3) $\Rightarrow$ (1):}  This is part of the content of the first part of Theorem \ref{Thm:DPseudo}.

\smallskip
We verify the last part of the corollary as follows.  Suppose that $(\Pi, \bbW_1, \bbW_2, \bbW)$ are $(\Pi', \bbW_1', \bbW_2', \bbW')$ are two pseudo-commuting contractive lifts of 
$(T_1, T_2, T_1T_2)$ on $\cK$ and $\cK'$ such that \eqref{partial-intertwine} holds.  Let $\tau_D \colon \cK \to \cK_D$ be a unitary identification map bringing
the pseudo-commuting contractive lift $(\Pi, \bbW_1, \bbW_2, \bbW)$ to the Douglas model form:
\begin{align*}
& \tau_D \Pi = \Pi_D, \quad \tau_D \bbW_1 = \bbW_{D,1}  \tau_D, \quad \tau_D \bbW_2 = \bbW_{D,2} \tau_D, \quad \tau_D \bbW_D = V_D \tau_D.
\end{align*}
Set $\tau = \tau_D \tau^{\prime *}  \colon \cK' \to \cK_D$.  Then note that
\begin{align*}
& \tau \Pi' = \tau_D \Pi = \Pi_D, \\
& \tau \bbW' = \tau_D \tau^{\prime *} \bbW' = \tau_D \bbW \tau^{\prime *} = V_D \tau_D \tau^{\prime *} = V_D \tau.
\end{align*}
Thus 
$$
( \tau \Pi', \tau W_1' \tau^*, \tau W'_2 \tau^*, \tau W' \tau^*) =
( \Pi_D, \tau \bbW'_1 \tau^*, \tau \bbW'_2 \tau^*, V_D)
$$
is a pseudo commuting contractive lift of $(T_1, T_2, T_1 T_2)$.  By the converse statement in Theorem \ref{Thm:DPseudo}, we see that we must have
$$
   \tau \bbW'_1 \tau^* = \bbW_{D,1}, \quad \tau \bbW'_2 \tau^* = \bbW_{D,2}.
 $$
Thus from the definitions we have
$$
\bbW'_1 =  \tau^* \bbW_{D,1} \tau = \tau'  \tau_D^* \bbW_{D,1} \tau_D \tau^{\prime *} = \tau' \bbW_1 \tau^{\prime *}
$$
and similarly
$$
\bbW'_2 = \tau' \bbW_2 \tau^{\prime *}
$$
and \eqref{claim1} follows as wanted.
\end{proof}

\section{Sz.-Nagy--Foias-model pseudo-commuting contractive lifts}  \label{S:pccNF}
We have defined the Sz.-Nagy--Foias model minimal lift of a contraction operator $T$, as well as the Sz.-Nagy--Foias model And\^o lift of a commuting contractive
operator-pair $(T_1, T_2)$, as a simple transformation, using the unitary operator $U_{\rm NF, D} \colon \cK_D \to \cK_{\Theta_T}$ or $\bU_{\rm  NF, D} \colon \bcK_{\rm D} \to
\bcK_{\rm NF}$, of the corresponding Douglas model.  The analogous procedure applies also to the construction of  a Sz.-Nagy--Foias model pseudo commuting contractive lift of a
commuting contractive operator-pair $(T_1, T_2)$ as follows.

\begin{theorem}  \label{Thm:NFPseudo}
Let $(T_1,T_2)$ be a commuting contractive operator-pair on a Hilbert space $\cH$ and set  $T=T_1T_2$. Let 
$$
U_{\rm NF,D} =\begin{bmatrix} I_{H^2(\cD_{T^*})} & 0 \\ 0 &  \omega_{\rm NF,D} \end{bmatrix} \colon
\begin{bmatrix} H^2(\cD_{T^*}) \\ \cQ_{T^*} \end{bmatrix} \to \begin{bmatrix} H^2(\cD_{T^*}) \\ \overline{\Delta_{\Theta_T}(L^2(\cD_T))} \end{bmatrix}
$$ 
be the unitary as in \eqref{DNFintwin}. Let us define operators
\begin{align}   
& \underline{\bbW}_{\rm NF}  = (\bbW_{{\rm NF}, 1}, \bbW_{{\rm NF},2}, V_{\rm NF}) := U_{{\rm NF},D}(\bbW_{D,1},\bbW_{D,2},V_D)U_{{\rm NF}, D}^* \notag \\
&= \left( \begin{bmatrix} M_{G_1^* + z G_2} & 0 \\ 0 & W_{\sharp 1} \end{bmatrix},
 \begin{bmatrix} M_{G_2^*+ z G_1} & 0 \\ 0 & W_{\sharp 2} \end{bmatrix},
 \begin{bmatrix} M_z^{\cD_{T^*}} & 0 \\ 0 & M_\zeta|_{\overline{\Delta_{\Theta_T} L^2(\cD_T)}} \end{bmatrix} \right) \text{ on } \cK_{\Theta_T}, \notag  \\
 &   \Pi_{\rm NF} =  U_{\rm NF, D} \Pi_D:= \begin{bmatrix} \cO_{D_{T^*}, T^*} \\ \omega_{\rm NF,D} Q_{T^*} \end{bmatrix} \colon \cH \to \cK_{\Theta_T}
 \label{pccNF}
\end{align}
where $\cK_{\Theta_T}=  \sbm{ H^2(\cD_{T^*}) \\ \overline{\Delta_{\Theta_T}(L^2(\cD_T))}}$ and  
where $(\bbW_{D,1},\bbW_{D,2},V_D)$ is the Douglas-model pseudo-commuting
contractive triple as defined in (\ref{Dpcc})
(so $(G_1,$ $G_2)$ is the Fundamental-Operator pair for $(T_1^*, T_2^*)$) and the pair 
$(W_{\sharp1},W_{\sharp2})$ is the commuting unitary operator-pair as in (\ref{WandVNFs}).   Then $(\Pi_{\rm NF},  W_{\rm NF,1}, W_{\rm NF2}, V_{\rm NF})$
is a pseudo-commuting contractive lift of $(T_1, T_2, T)$, called the {\em Sz.-Nagy--Foias model pseudo-commuting contractive lift}.
\end{theorem}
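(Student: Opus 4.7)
The plan is to exploit the fact that the construction in \eqref{pccNF} is nothing more than a unitary conjugation (via $U_{\rm NF,D}$) of the Douglas-model pseudo-commuting contractive lift $(\Pi_D, \bbW_{D,1}, \bbW_{D,2}, V_D)$ of $(T_1, T_2, T)$ already known to have all the required properties from Theorem \ref{Thm:DPseudo}. So the heart of the argument is to observe that every axiom in Definition \ref{D:pcc} is manifestly preserved under unitary conjugation, and then to check that carrying out this conjugation concretely yields exactly the block-matrix form asserted in \eqref{pccNF}.

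First I would record that $U_{\rm NF,D}$ is unitary from $\cK_D = \sbm{H^2(\cD_{T^*}) \\ \cQ_{T^*}}$ onto $\cK_{\Theta_T} = \sbm{H^2(\cD_{T^*}) \\ \overline{\Delta_{\Theta_T}L^2(\cD_T)}}$, so $\Pi_{\rm NF} = U_{\rm NF,D} \Pi_D$ is automatically an isometric embedding. Next, conjugation by a unitary preserves (i) contractivity of $\bbW_{{\rm NF},1}, \bbW_{{\rm NF},2}$ and isometry of $V_{\rm NF}$; (ii) commutation of each $\bbW_{{\rm NF},j}$ with $V_{\rm NF}$; and (iii) the algebraic identity $\bbW_{{\rm NF},1} = \bbW_{{\rm NF},2}^* V_{\rm NF}$, all inherited directly from the corresponding Douglas-side facts. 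For the lifting property (iv), one computes
\[
\bbW_{{\rm NF},j}^*\Pi_{\rm NF} = U_{\rm NF,D}\bbW_{D,j}^* U_{\rm NF,D}^* \, U_{\rm NF,D}\Pi_D = U_{\rm NF,D}\bbW_{D,j}^* \Pi_D = U_{\rm NF,D}\Pi_D T_j^* = \Pi_{\rm NF} T_j^*,
\]
and similarly $V_{\rm NF}^*\Pi_{\rm NF} = \Pi_{\rm NF} T^*$. For minimality (v), since $(\Pi_D, V_D)$ is minimal as an isometric lift of $T$ by Proposition \ref{P:Douglas-min} and $U_{\rm NF,D}$ is unitary, the image of $\bigvee_{n\ge 0} V_D^n \operatorname{Ran}\Pi_D = \cK_D$ under $U_{\rm NF,D}$ is all of $\cK_{\Theta_T}$, giving $\bigvee_{n\ge 0} V_{\rm NF}^n \operatorname{Ran}\Pi_{\rm NF} = \cK_{\Theta_T}$.

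It then remains to verify that the concrete block form of $\underline{\bbW}_{\rm NF}$ is indeed as stated. Since $U_{\rm NF,D} = \sbm{I_{H^2(\cD_{T^*})} & 0 \\ 0 & \omega_{\rm NF,D}}$ is block-diagonal, and each of $\bbW_{D,1}, \bbW_{D,2}, V_D$ from \eqref{Dmodel-PCC} is also block-diagonal with respect to the same decomposition, the conjugation acts diagonally: the top-left blocks $M_{G_1^* + zG_2}$, $M_{G_2^* + zG_1}$, $M_z^{\cD_{T^*}}$ are unchanged, while the bottom-right blocks $W_{\flat 1}, W_{\flat 2}, W_D$ are conjugated into $\omega_{\rm NF,D} W_{\flat 1} \omega_{\rm NF,D}^* = W_{\sharp 1}$, $\omega_{\rm NF,D} W_{\flat 2} \omega_{\rm NF,D}^* = W_{\sharp 2}$, and $\omega_{\rm NF,D} W_D \omega_{\rm NF,D}^* = M_\zeta|_{\overline{\Delta_{\Theta_T}L^2(\cD_T)}}$ by the very definition \eqref{WandVNFs}. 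Similarly, $\Pi_{\rm NF} = U_{\rm NF,D} \Pi_D$ unpacks to the stated column form.

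The only point requiring any genuine care — and what I would regard as the main obstacle — is making sure the bookkeeping of the conjugation is done with the right side of $\omega_{\rm NF,D}$ facing $W_{\flat j}$ (as opposed to $W_{\flat j}^*$), so that the resulting operators match the canonical pair $(W_{\sharp 1}, W_{\sharp 2})$ intrinsically defined in \eqref{WandVNFs} rather than their adjoints; this is a straightforward but notationally delicate matter of tracing through the definitions. Once that is in hand, the statement follows with no further computation.
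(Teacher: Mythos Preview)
Your proposal is correct and follows essentially the same approach as the paper: both argue that $(\Pi_{\rm NF}, \underline{\bbW}_{\rm NF})$ is obtained from the Douglas-model pseudo-commuting contractive lift $(\Pi_D, \underline{\bbW}_D)$ of Theorem \ref{Thm:DPseudo} by the unitary conjugation $U_{\rm NF,D}$, and then observe that all the axioms of Definition \ref{D:pcc} (the pseudo-commuting contractive triple conditions, the lifting intertwining, and minimality) transfer under unitary equivalence. Your version is somewhat more explicit in checking each axiom and the block-diagonal bookkeeping separately, but the substance is the same.
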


\begin{proof}
We showed in the proof of Theorem \ref{Thm:DPseudo} that 
$(\Pi_D, \bbW_{D,1},\bbW_{D,2},V_D )$ is a pseudo commuting contractive lift of $(T_1,T_2)$.
Since the isometry $\Pi_{\rm NF}:\cH\to\cK_{\Theta_T}$ is given by $\Pi_{\rm NF}=U_{{\rm NF, D}}\Pi_D$, 
it then follows  that the collection
$(\Pi_{\rm NF}, \underline{\bbW}_{\rm NF})$ is unitarily equivalent to  the lift $(\Pi_{\rm D}, \underline{\bbW}_D)$ (where we set
$\underline{\bbW}_D = (\bbW_{D,1}, \bbW_{D,2}, V_D)$), and hence itself must also be a lift, of $(T_1, T_2, T_1T_2)$; in detail we have
\begin{align*}
\underline{\bbW}_{\rm NF}^* \Pi_{\rm NF} & = U_{\rm NF,D}\,  \underline{\bbW}_D^* \, U_{\rm NF,D}^* \cdot U_{\rm NF,D} \Pi_D \\
& = U_{\rm NFD} \,  \underline{\bbW}_D^* \,  \Pi_D = U_{\rm NF,D} \, \Pi_D \,  \underline{T}^*  = \Pi_{\rm NF} \,  \underline{T}^*
\end{align*}
(where here we set $\underline{T} = (T_1, T_2, T = T_1 T_2)$) thereby verifying that $( \Pi_{\rm NF}, \underline{\bbW}_{\rm NF})$ is a lift of
$\underline{T}$.  Since the pseudo-commuting contractive property is invariant under unitary equivalence, it also follows that
$\underline{\bbW}_D$ being a pseudo-commuting contractive triple implies that $\underline{\bbW}_{\rm NF}$ is also a pseudo-commuting contractive triple.
\end{proof}

\begin{remark} \label{R:DtoNF}
Let us observe that, since $(\Pi_{\rm NF}, \underline{\bbW}_{\rm NF})$  is related to $(\Pi_{\rm D}, \underline{\bbW}_D)$ via the innocuous change of coordinates in the
second coordinate $U_{\rm NF,D}$,  it is a routine exercise to see that all the results concerning $(\Pi_D, \underline{\bbW}_D)$ in Theorem \ref{Thm:DPseudo},
Remark \ref{R:Ddirect}, Corollary \ref{C:FundOp} hold equally well for $(\Pi_{\rm NF}, \underline{\bbW}_{\rm NF})$.   In particular, the Sz.-Nagy--Foias model 
pseudo-commuting contractive lift $(\Pi_{\rm NF}, \underline{\bbW}_{\rm NF})$ can also be viewed as the compression of a Sz.-Nagy--Foias model
And\^o lift to an embedded Sz.-Nagy--Foias model for a minimal Sz.-Nagy--Foias lift
$(\Pi_{\rm NF}, V_{\rm NF})$ of $T = T_1 T_2$,
thereby proving that $(\Pi_{\rm NF}, \underline{\bbW}_{\rm NF})$ is a pseudo-commuting contractive lift of
$\underline{T}$.
\end{remark}

\section[Sch\"affer-model pseudo-commuting contractive lifts]{Sch\"affer-model pseudo-commuting contractive lifts}  \label{S:pccSchaffer}

Unlike the case for the Douglas model, the Sch\"affer model for a general And\^o  lift arising from a Type II And\^o tuple does not appear to be sufficiently tractable for the identification of a pseudo-commuting contractive lift.
We therefore restrict ourselves to strong Type II And\^o tuples (see Definition \ref{AndoTuple}). 

%\textcolor{red}{(A1), (A2), (A3), (A4) come up in showing the characterization of the Fundamental-Operator pair in terms of a strong Type I And\^o lift. As we are now working
%with Fundamental-Operator pairs, these are hidden and not needed for the developments to come.
%The results obtained in Section \ref{S:StrongMinimality} will be used here. In particular, note that a strong Type II And\^o tuple $(\cF, \Lambda, P, U)$ for a commuting contractive pair $(T_1, T_2)$ enjoys the following properties:
%\begin{enumerate}
%\item[(A1)]  $\Lambda^* (PU \Lambda D_T T + P^\perp U \Lambda D_T) = D_T T_1$,
%\item[(A2)] $\Lambda^* ( U^* P^\perp \Lambda D_T T + U^* P \Lambda D_T) = D_T T_2$,
%\item[(A3)]   $T_1 - T_2^* T = D_T \Lambda^* P^\perp U \Lambda D_T$,
%\item[(A4)] $T_2 - T_1^* T = D_T \Lambda^* U^* P \Lambda D_T$.
%\end{enumerate}
%Properties (A3) and (A4) follow from the Third Proof of Theorem \ref{T:FundOps}, and thus properties (A1) and (A3) follow from Lemma \ref{L:FundOps}.}

We have seen in Theorem \ref{Thm:LiftFromTuple} that any minimal And\^o lift for a commuting contractive pair $(T_1, T_2)$ is unitarily equivalent to the
Sch\"affer-model And\^o lift associated with some strong Type II And\^o tuple $(\cF, \Lambda, P, U)$ for $(T_1, T_2)$ and conversely, the Sch\"affer-model
And\^o lift associated with a strong Type II And\^o tuple for $(T_1, T_2)$ is an And\^o lift for $(T_1, T_2)$.  By Theorem \ref{T:comp=pcc} the compression
of such a Sch\"affer-model And\^o lift in the sense of Definition \ref{D:embedded-lift} yields a pseudo-commuting contractive lift $(\bbW_1, \bbW_2, \bbW)$
for $(T_1, T_2, T_1 T_2)$.  The next result computes such a Sch\"affer-model compressed And\^o lift for a given commuting contractive pair $(T_1, T_2)$
using also the Sch\"affer model for the minimal isometric lift of the product contraction operator $T = T_1T_2$.

\begin{theorem}\label{Thm:Pseudo}  Given a commuting contractive operator-pair $(T_1, T_2)$ on $\cH$, let $\cK_S = \sbm{ \cH \\ H^2(\cD_T)}$ be the
Sch\"affer-model isometric-lift space for  the product contraction $T:= T_1 T_2$, let $\Pi_S \colon \cH \to \cK_S$ be the Sch\"affer-model embedding operator 
$\Pi_S = \sbm{ I_\cH \\ 0 }$, let $(F_1, F_2)$ be the Fundamental-Operator pair for $(T_1, T_2)$ (see Theorem \ref{T:FundOps}), and define operators 
$\bbW_{S,1}, \bbW_{S,2}, \bbW_S$ on $\cK_S$ according to the formula
\begin{align}    
\bbW_{S,1} & =  \begin{bmatrix} T_1 & 0 \\ \bev_{0, \cD_T}^* F_2^*D_T & M_{F_1 + z F_2^*} \end{bmatrix},
\quad \bbW_{S,2} =  \begin{bmatrix} T_2 & 0 \\ \bev_{0, \cD_T}^* F_1^*D_T & M_{F_2 + z F_1^*} \end{bmatrix}  \notag \\
\bbW_S & = V_S = \begin{bmatrix} T & 0 \\  \bev_{0, \cD_T}^* D_T & M^{\cD_T}_z \end{bmatrix} \text{ on } \cK_S:= \begin{bmatrix} \cH \\ H^2(\cD_T) \end{bmatrix}.
\label{Smodel-PCC}
\end{align}
Then $(\Pi_S, \bbW_{S,1}, \bbW_{S,2}, V_S)$ is a the compression of an And\^o lift of $(T_1, T_2, T_1T_2))$
to an embedded minimal Sz.-Nagy--Foias lift of $T= T_1 T_2$ and hence also a pseudo-commuting contractive lift of $(T_1, T_2, T_1T_2)$.

Conversely, if $(\Pi_S, \bbW_1, \bbW_2, V_S)$ is any pseudo-commuting contractive lift of $(T_1, T_2)$ such that
\begin{equation}  \label{Schaffer1}
 (\Pi_S, V_S) = \left( \begin{bmatrix} I_\cH \\ 0 \end{bmatrix}  \colon \cH \to \cK_S,  \begin{bmatrix} T & 0 \\ \bev_{0, \cD_T}^*  D_T  & M_z^{\cD_T} \end{bmatrix} \text{ on } \cK_S \right)
 \end{equation}
 is the Sch\"affer-model minimal isometric lift of $T$ on $\cK_S  = \sbm{ \cH \\ H^2(\cD_T)}$, then necessarily also $(\bbW_1, \bbW_2) = (\bbW_{S,1}, \bbW_{S,2})$
 are given as in formula \eqref{Smodel-PCC}.
\end{theorem}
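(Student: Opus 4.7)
The argument runs parallel to that of Theorem \ref{Thm:DPseudo} but uses the Sch\"affer-type And\^o lift coming from a strong Type II And\^o tuple in place of the Douglas-type one. By Theorem \ref{Thm:special-canonical} every canonical-form special And\^o tuple for $(T_1,T_2)$ is a strong Type II And\^o tuple and such tuples always exist (Remark \ref{R:specATuple}); fix one such tuple $(\cF,\Lambda,P,U)$ and form the corresponding minimal Sch\"affer-model And\^o lift $(\bPi_S,\bV_{S,1},\bV_{S,2})$ on $\bcK_S=\sbm{\cH\\ H^2(\cF)}$ as in \eqref{Conv-AndoGenForm'}, whose product $\bV_S=\bV_{S,1}\bV_{S,2}$ is given by \eqref{Vcanonical'}.

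\textbf{Forward direction.} First I would identify the embedded minimal Sz.-Nagy--Foias lift $(\bPi_{S,00},\bV_{S,00})$ for $T=T_1T_2$ sitting inside this And\^o lift. A direct induction on $n$ shows
\[
\bV_S^n\sbm{h\\0}=\sbm{T^nh\\ \sum_{k=0}^{n-1}z^k\,\Lambda D_T\,T^{n-1-k}h},
\]
so $\bcK_{S,00}=\bigvee_{n\ge 0}\bV_S^n\operatorname{Ran}\bPi_S=\sbm{\cH\\ H^2(\operatorname{Ran}\Lambda)}$. Next I would verify that the unitary
\[
\tau:=\begin{bmatrix}I_\cH & 0\\0 & I_{H^2}\otimes\Lambda\end{bmatrix}\colon \cK_S\to\bcK_{S,00}
\]
satisfies $\tau\Pi_S=\bPi_{S,00}$ and $\tau V_S=\bV_{S,00}\tau$, so it is \emph{the} unitary equivalence of the Sch\"affer-model minimal lift $(\Pi_S,V_S)$ of $T$ with its embedded copy. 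The compressed lift in the sense of Definition \ref{D:embedded-lift} is then $\bbW_{S,j}:=\tau^*\bV_{S,j}\tau$. Computing this block matrix explicitly gives
\[
\bbW_{S,1}=\begin{bmatrix}T_1 & 0\\ \bev_{0,\cD_T}^*\Lambda^*PU\Lambda\,D_T & M_{\Lambda^*(P^\perp+zP)U\Lambda}\end{bmatrix},
\]
and similarly for $\bbW_{S,2}$. Using the formula $(F_1,F_2)=(\Lambda^*P^\perp U\Lambda,\,\Lambda^*U^*P\Lambda)$ for the Fundamental-Operator pair furnished by the Third Proof of Theorem \ref{T:FundOps}, together with $F_2^*=\Lambda^*PU\Lambda$ and $F_1^*=\Lambda^*U^*P^\perp\Lambda$, these matrices reduce exactly to the expressions in \eqref{Smodel-PCC}. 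By Theorem \ref{T:comp=pcc} (already proved in one direction), the compression of an And\^o lift to an embedded minimal Sz.-Nagy--Foias lift of $T$ is automatically a pseudo-commuting contractive lift, which completes the first assertion.

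\textbf{Converse direction.} Here I would invoke the uniqueness statement in the last paragraph of Corollary \ref{C:FundOp}. Suppose $(\Pi_S,\bbW_1,\bbW_2,V_S)$ is any pseudo-commuting contractive lift of $(T_1,T_2,T_1T_2)$ with first and last components given by \eqref{Schaffer1}. Then $(\Pi_S,\bbW_{S,1},\bbW_{S,2},V_S)$ constructed in the forward direction is another such pseudo-commuting contractive lift on the same space $\cK_S$. Taking $\tau'=I_{\cK_S}$ in Corollary \ref{C:FundOp} (which trivially intertwines the common first and last components) forces $\bbW_1=\bbW_{S,1}$ and $\bbW_2=\bbW_{S,2}$, yielding the claim.

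\textbf{Main obstacle.} There is no serious conceptual obstacle once the forward direction is in place—the heavy lifting (identifying the unique form of the shift part of a pseudo-commuting contractive triple over a shift, and tying it to the Fundamental Operators) has already been done in Theorem \ref{Thm:DPseudo} and Corollary \ref{C:FundOp}. The only step that requires some care is the explicit identification of the embedded minimal Sz.-Nagy--Foias lift $\bcK_{S,00}$ inside $\bcK_S$ and the bookkeeping that converts the BCL-type coefficients $\Lambda^*(P^\perp+zP)U\Lambda$ and $\Lambda^*U^*(P+zP^\perp)\Lambda$ into the Fundamental-Operator pencils $F_1+zF_2^*$ and $F_2+zF_1^*$. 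One small verification worth noting is that the adjoint identity $\bbW_2=\bbW_1^*V_S$ (Remark \ref{R:iii'}) together with the commutativity of each $\bbW_j$ with $V_S$ forces the Toeplitz symbols to be polynomials of degree one in $z$; this is the same analytic input used in the proof of Theorem \ref{Thm:DPseudo} and should be invoked verbatim if one prefers a direct (Corollary-free) argument for the converse.
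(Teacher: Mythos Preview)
Your proposal is correct and follows essentially the same approach as the paper: for the forward direction you compress a Sch\"affer-model And\^o lift arising from a strong Type II And\^o tuple via the isometry $\tau=\sbm{I_\cH & 0\\0 & I_{H^2}\otimes\Lambda}$ and identify the resulting matrix entries with the Fundamental Operators through the Third Proof of Theorem~\ref{T:FundOps}, and for the converse you invoke the ``furthermore'' clause of Corollary~\ref{C:FundOp} with $\tau'=I_{\cK_S}$. The paper also notes (as you do in your final paragraph) that a direct proof of the converse paralleling that of Theorem~\ref{Thm:DPseudo} is possible.
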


\begin{proof}
Let $(\cF, \Lambda, P, U)$ be a strong Type II And\^o tuple for the commuting contractive operator-pair $(T_1, T_2)$.  Then the associated Sch\"affer-model And\^o lift of $(T_1, T_2)$ 
is given by $(\bPi_S, \bV_{S,1}, \bV_{S,2})$ where the isometric embedding operator $\bPi_S \colon \cH \to \bcK_S$ and the isometries $\bV_{S,1}, \bV_{S,2}$ on 
$\bcK_S$ are given as in Theorem \ref{Thm:LiftFromTuple}: 
\begin{align*}
&\bcK_S= \sbm{ \cH \\ H^2(\cF) }, \quad \bPi_S = \sbm{ I_\cH \\ 0  } \colon \cH \to \bcK_S, \\
 & (\bV_{S,1},\bV_{S,2}) =   \left( \begin{bmatrix} T_1 & 0   \\  \bev_{0,\cF}^* P U \Lambda D_T & M_{P^\perp U + z PU}   \end{bmatrix},
  \begin{bmatrix} T_2 & 0  \\ \bev_{0,\cF}^* U^* P^\perp  \Lambda D_T & M_{U^*P + z U^* P^\perp }  \end{bmatrix} \right),  \\
& \bV_S= \bV_{S,1} \bV_{S,2} = \bV_{S,2} \bV_{S,1} = \begin{bmatrix} T & 0   \\   \bev_{0,\cF}^* \Lambda D_T & M_z   \end{bmatrix}
 \end{align*}
 where  $T : = T_1 T_2 = T_2 T_1$ is the product contraction operator on $\cH$.
Let us next compute the space
$$
  \bcK_{S, 00}: = \bigvee_{n \ge 0} \bV_S^n \operatorname{Ran} \bPi_S.
$$
By an induction argument one can see that
$$ \bV_S^n \bPi_S = \begin{bmatrix} T^n \\ \sum_{j=0}^{n-1} M_z^j \bev_{0, \cF}^* \Lambda D_T T^{n-1-j} \end{bmatrix}
$$
where the bottom entry should be interpreted to be $0$ for the case $n=0$.  By taking $n=0$ we see that $\sbm{ \cH \\ 0 } \subset \bcK_{S, 00}$.
By next taking $n=1$ we see that 
$$
  \bigvee_{n=0,1}  \operatorname{Ran} \bV_S^n \bPi_S =  \begin{bmatrix} \cH \\   \bev_{0, \cF}^* \Lambda \cD_T   \end{bmatrix} \subset \bcK_{S, 00}.
$$
Inductively assume that
$$
 \bigvee_{n=0,1, \dots, K}  \operatorname{Ran} \bV_S^n \bPi_S = \begin{bmatrix} \cH \\  \oplus_{j=0}^{K-1} (M_z^{\cD_T})^j \bev_{0,\cF}^*  \Lambda \cD_T \end{bmatrix}.
$$
It then follows that
\begin{align*}
 \bigvee_{n=0,1,\dots, K, K+1}  \operatorname{Ran} \bV_S^n \bPi_S & = \begin{bmatrix} \cH \\ ( \oplus_{j=0}^{K-1}  (M_z^{\cD_T})^j \bev_{0 \cF}^* \Lambda \cD_T)
\oplus (M_z^{D_T})^K \bev_{0, \cF}^* \Lambda \cD_T \end{bmatrix}  \\
&  =  \begin{bmatrix} \cH \\  \oplus_{j=0}^K  (M_z^{\cD_T})^j \bev_{0 \cF}^* \Lambda \cD_T \end{bmatrix}.
\end{align*}
Hence 
\begin{equation}  \label{cKSmin}
\bcK_{S, 00} =  \operatorname{closure} \left( \bigcup_{K\ge 0} \begin{bmatrix} \cH \\ ( \oplus_{j=0}^{K-1}  (M_z^{\cD_T})^j \bev_{0 \cF}^* \Lambda \cD_T \end{bmatrix} \right)
= \begin{bmatrix} \cH \\ H^2( \operatorname{Ran} \Lambda)  \end{bmatrix}.
\end{equation}

Let us now introduce the Sch\"affer model  $(\Pi_S, V_S)$ for the minimal isometric lift of the product contraction operator $T:= T_1 T_2$.  By uniqueness of 
minimal isometric lifts for a single contraction operator, there exists an isometry $\tau$ from $\cK_S$ onto $\bcK_{S,00}$ so that
$$
  \tau \Pi_S = \bPi_{S, 00}, \quad \tau V_S  = \bV_{S, 00} \tau 
$$
where we set $\bV_{S,00} := \bV_S |_{\bcK_{00}}$.
It is easy to check that
\begin{equation}  \label{Ch6tau}
  \tau:= \begin{bmatrix} I_\cH & 0 \\ 0 & I_{H^2} \otimes \Lambda \end{bmatrix}
\end{equation}
does the job.

Putting all the pieces together, it follows by definition that the compressed And\^o lift of $(T_1, T_2)$ associated with 
\begin{enumerate}
\item[(i)] the Sch\"affer-model  lift $(\bPi_S, \bV_{S,1}, \bV_{S,2})$ determined by $(\cF, \Lambda, P, U)$ for $(T_1, T_2)$ and 
\item[(ii)]  the Sch\"affer-model minimal Sz.-Nagy--Foias lift $(\Pi_S, V_S)$ for $T = T_1 T_2$
\end{enumerate}
 is given by
\begin{equation}  \label{Schaffer-comp-lift}
  (\bbW_{S,1}, \bbW_{S,2}, \bbW_S) = (\tau^* \bV_{S,1} \tau, \, \tau^* \bV_{S,2} \tau, \, V_S) \text{ on } \cK_S.
 \end{equation}
 Thus
 \begin{align}  
 \bbW_{S,1} & =   \begin{bmatrix} I_\cH & 0 \\ 0 & I_{H^2} \otimes \Lambda^* \end{bmatrix}  
 \begin{bmatrix} T_1 & 0 \\ \bev^*_{0, \cF} PU \Lambda D_T & M_{P^\perp U + z P U}  \end{bmatrix}  
 \begin{bmatrix} I_\cH & 0 \\ 0 & I_{H^2} \otimes \Lambda \end{bmatrix} \notag \\
 & =  \begin{bmatrix} T_1 & 0 \\ \bev_{0, \cD_T}^*  \Lambda^* PU\Lambda D_T & M_{\Lambda^* P^\perp U \Lambda + z \Lambda^* P U \Lambda} \end{bmatrix} \notag \\
 & = \begin{bmatrix} T_1 & 0 \\ \bev_{0, \cD_T}^* F_2^*D_T & M_{F_1 + z F_2^*} \end{bmatrix}  \label{SchafferW1}
  \end{align}
  where  $(F_1, F_2)$ is the Fundamental-Operator pair for the commuting contractive pair $(T_1, T_2)$.  Here in the last step we used the characterization \eqref{choices}
 of the Fundamental-Operator pair in terms of a strong Type II And\^o tuple for $(T_1, T_2)$ 
 $$
   F_1 = \Lambda^* P^\perp U \Lambda, \quad F_2 = \Lambda^* U^* P \Lambda
 $$
 coming out of the Third Proof of Theorem \ref{T:FundOps}.
 
 A similar computation  gives
 \begin{align*}
 \bbW_{S,2} & = \begin{bmatrix} I_\cH & 0 \\ 0 & I_{H^2} \otimes \Lambda^* \end{bmatrix} 
 \begin{bmatrix} T_2 & 0 \\ 0 & \bev_{0, \cF}^* U^* P^\perp \Lambda D_T & M_{U^*P + z U^* P^\perp} \end{bmatrix}  
 \begin{bmatrix}  I_\cH & 0 \\ 0 & I_{H^2} \otimes \Lambda \end{bmatrix}  \notag \\
  & = \begin{bmatrix} T_2 & 0 \\ \bev_{0, \cD_T}^* \Lambda^* U^* P^\perp \Lambda D_T & M_{\Lambda^*U^*P\Lambda + z \Lambda^*U^* P^\perp\Lambda} \end{bmatrix}  \notag \\
 & = \begin{bmatrix} T_2 & 0 \\ \bev_{0, \cD_T}^* F_1^* D_T  & M_{F_2 + z F_1^*} \end{bmatrix}.
 \end{align*}
We have now verified that the formula \eqref{Smodel-PCC} gives a compressed And\^o lift for the commuting contractive pair $(T_1, T_2)$.
  
 The fact that then $(\Pi_S, W_{S,1}, W_{S,2}, W_S = V_S)$ is also a pseudo-commuting contractive lift of $(T_1, T_2, T=T_1T_2)$ follows as a consequence of
 the general principle {\em compressed And\^o lift $\Rightarrow$ pseudo-commuting contractive lift} of $(T_1, T_2)$ verified in Theorem \ref{T:comp=pcc}.
   
 The converse follows from the model-independent result in Corollary \ref{C:FundOp}. More precisely, apply the ``furthermore" part of Corollary \ref{C:FundOp} to the two pseudo-commuting contractive lifts $(\bbW_{S1},\bbW_{S2},V_S)$ as in \eqref{Smodel-PCC} and $(\bbW_1,\bbW_2,V_S)$ as in the converse part of Theorem \ref{Thm:Pseudo}. Observe that if $\tau':\cK_S\to\cK_S$ is a unitary such that $\tau'V_S=V_S\tau'$ and $\tau'|_\cH=I_\cH$, then by minimality of the lift $V_S$, we must have $\tau'=I_{\cK_S}$. Thus the unitary $\tau'$ as in \eqref{partial-intertwine} must be the identity operator, and consequently, the converse here follows. Alternatively, one can prove it directly by following the steps in the proof of the converse part of Theorem \ref{Thm:DPseudo} but with substitution of Sch\"affer models for the Douglas models.
 
\end{proof}

Recall that two minimal isometric lifts of a commuting contractive pair need not be unitary equivalent. What if minimality is replaced by strong minimality? We end this chapter with 
the following result that shows that the existence of one strongly minimal And\^o lift is a sufficiently strong condition to force uniqueness of any two minimal And\^o lifts.

\begin{theorem} \label{T:strongly-minimal}
Let $(T_1,T_2)$ be a commuting contractive pair such that it has a strongly minimal And\^o lift. Then any minimal isometric And\^o lift of $(T_1,T_2)$ is strongly minimal and consequently, any two minimal isometric lifts of $(T_1,T_2)$ are unitarily equivalent.
\end{theorem}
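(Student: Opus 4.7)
My plan is to leverage the essential uniqueness of the pseudo-commuting contractive lift of $(T_1, T_2)$ (Corollary \ref{C:FundOp}) together with the observation that every strongly minimal And\^o lift is itself a pseudo-commuting contractive lift of $(T_1, T_2, T)$ with $T := T_1 T_2$. Indeed, if $(\bPi, \bV_1, \bV_2)$ on $\bcK$ is the hypothesized strongly minimal And\^o lift, then by Definition \ref{D:StrpmgMinLift} the pair $(\bPi, \bV := \bV_1 \bV_2)$ is automatically a minimal isometric lift of $T$, and since $(\bV_1, \bV_2)$ is a commuting isometric pair satisfying $\bV_1 = \bV_2^* \bV$, the quadruple $(\bPi, \bV_1, \bV_2, \bV)$ satisfies every axiom of Definition \ref{D:pcc}. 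Hence the (essentially unique) pseudo-commuting contractive lift of $(T_1, T_2)$ has its $\bbW_1, \bbW_2$ components equal to commuting isometries.

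Next I would take an arbitrary minimal And\^o lift $(\Pi, V_1, V_2)$ on $\cK$ and set $\cK_{00} := \bigvee_{n \ge 0} V^n \operatorname{Ran} \Pi$ with $V := V_1 V_2$. By Theorem \ref{T:comp=pcc} the compression $(\Pi, P_{\cK_{00}} V_1|_{\cK_{00}}, P_{\cK_{00}} V_2|_{\cK_{00}}, V|_{\cK_{00}})$ is a pseudo-commuting contractive lift of $(T_1, T_2, T)$, which by Corollary \ref{C:FundOp} must be unitarily equivalent to the pseudo-commuting contractive lift coming from the strongly minimal lift above; the first paragraph therefore forces $P_{\cK_{00}} V_j|_{\cK_{00}}$ to be an isometry for $j=1,2$. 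Combining this with $V_j^* V_j = I_\cK$ yields $(I - P_{\cK_{00}}) V_j|_{\cK_{00}} = 0$, so $\cK_{00}$ is jointly invariant for $V_1$ and $V_2$; since it also contains $\operatorname{Ran} \Pi$, minimality of the And\^o lift gives $\cK_{00} = \cK$, that is, $(\Pi, V_1, V_2)$ is strongly minimal.

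Finally, given two such lifts $(\Pi, V_1, V_2)$ and $(\Pi', V_1', V_2')$, the essential uniqueness of the minimal isometric lift of $T$ (\cite[Theorem I.4.1]{Nagy-Foias}) produces a unique unitary $\tau$ intertwining $(\Pi, V)$ with $(\Pi', V')$; since both And\^o lifts are themselves pseudo-commuting contractive lifts by the first paragraph, the ``furthermore'' clause of Corollary \ref{C:FundOp} automatically upgrades this to $\tau V_j = V_j' \tau$ for $j = 1, 2$ at no extra cost. The main delicate step I anticipate is extracting joint $V_1, V_2$-invariance of $\cK_{00}$ from isometricity of the compressed operators; the rest is a clean chain of already-established uniqueness statements.
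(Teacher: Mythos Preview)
Your proposal is correct and follows essentially the same approach as the paper's proof: both recognize that the strongly minimal And\^o lift is itself a pseudo-commuting contractive lift, compress an arbitrary minimal And\^o lift to $\cK_{00}$, invoke Corollary~\ref{C:FundOp} to identify this compression with the strongly minimal one, and then use minimality to force $\cK_{00}=\cK$. Your argument is in fact slightly more explicit at the key point: where the paper simply asserts that ``$(V_{1,00},V_{2,00})$ must also be an And\^o lift of $(T_1,T_2)$'' and then invokes minimality, you spell out the mechanism---isometricity of the compression together with $V_j^*V_j=I$ forces $(I-P_{\cK_{00}})V_j|_{\cK_{00}}=0$, hence joint $V_1,V_2$-invariance of $\cK_{00}$---which is exactly what is needed.
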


\begin{proof}
Let $(V_1,V_2)$ acting on $\cK$ be a minimal isometric lift of $(T_1,T_2)$ via the isometric embedding $\Pi:\cH\to\cK$. Consider the space 
$\cK_{00}=\bigvee_{n\geq0}V_1^nV_2^n\Pi\cH$. Let $\iota:\cK_{00}\to\cK$ be the embedding of $\cK_{00}$ into $\cK$. Consider the compression of the And\^o lift $(V_1,V_2)$ 
to $\cK_{00}$:
$$
(V_{1,00},V_{2,00},V_{00})=\iota^*(V_1,V_2,V_1V_2)\iota.
$$
By Theorem \ref{T:comp=pcc}, $(V_{1,00},V_{2,00},V_{00})$ is a pseudo-commuting contractive lift of $(T_1,T_2)$. 
Let $(W_1,W_2)$ be a strongly minimal And\^o lift of $(T_1,T_2)$ acting on the space $\cW=$ $\bigvee_{n\geq0}W_1^nW_2^n\cH$. For simplicity, we take the embedding 
to be the inclusion map. It is routine to see that the triple $(W_1,W_2,W_1W_2)$ satisfies all the conditions for a pseudo-commuting contractive lift. By Corollary \ref{C:FundOp}, 
the compression $(V_{1,00},V_{2,00})$ is unitarily equivalent to the And\^o lift $(W_1,W_2)$, which in turn implies that $(V_{1,00},V_{2,00})$ must also be And\^o lift of $(T_1,T_2)$. 
By minimality of the And\^o lift $(V_1,V_2)$, we must have $\cK=\cK_{00}$ showing that $(V_1,V_2)$ is strongly minimal. Since strongly minimal And\^o lifts are all 
pseudo-commuting contractive lifts, and since the latter class are all unique up to unitary equivalence, any two minimal isometric lifts of $(T_1,T_2)$ must be unitarily equivalent.
\end{proof}

This result leads to the following immediate corollary.

\begin{corollary} \label{C:strongly-minimal}  Suppose that $(T_1, T_2)$ is a commuitng pair of isometries such that both $T = T_1 \cdot T_2$ an $T = T_2 \cdot T_1$
are regular factorizations  (see Definition \ref{D:RegFact}).  The $(T_1, T_2)$ has a strongly minimal And\^o lift $(V_1, V_2)$ and all minimal And\^o lifts are stronlgy minimal and mutually unitarily equiavalent as lifts.
\end{corollary}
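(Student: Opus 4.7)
The plan is to derive Corollary \ref{C:strongly-minimal} as an immediate consequence of two earlier theorems already established in the excerpt, namely Theorem \ref{T:SMinReg} and Theorem \ref{T:strongly-minimal}. There is essentially no new content to verify; the work consists in correctly chaining the two implications.

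First I would invoke Theorem \ref{T:SMinReg}, which asserts that a commuting contractive pair $(T_1,T_2)$ admits a strongly minimal And\^o lift if and only if both factorizations $T = T_1\cdot T_2$ and $T = T_2 \cdot T_1$ are regular in the sense of Definition \ref{D:RegFact}. The hypothesis of the corollary supplies precisely this regularity, so the existence of a strongly minimal And\^o lift $(V_1,V_2)$ of $(T_1,T_2)$ is immediate. (In the literal reading where $(T_1,T_2)$ is a commuting pair of isometries, the hypothesis on regularity is automatic by part (3) of Theorem \ref{T:regfact}, and the pair is already its own strongly minimal And\^o lift; but the substantive case is the contractive one.)

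Second, armed with at least one strongly minimal And\^o lift, I would apply Theorem \ref{T:strongly-minimal}: once $(T_1,T_2)$ possesses a strongly minimal And\^o lift, every minimal And\^o lift of $(T_1,T_2)$ is automatically strongly minimal, and furthermore any two minimal And\^o lifts of $(T_1,T_2)$ are unitarily equivalent as lifts. Combining these two statements with the existence supplied in the previous paragraph yields all three conclusions of the corollary.

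I do not anticipate any obstacle: the argument is a direct composition of black boxes already proved. The only minor subtlety to flag in writing it up is to state cleanly that ``unitary equivalence as lifts'' is the notion of equivalence appearing in \eqref{equiv-lifts}, so that the output of Theorem \ref{T:strongly-minimal} is the appropriate notion for the corollary.
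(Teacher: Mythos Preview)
Your proposal is correct and is exactly what the paper intends: the corollary is stated there as an immediate consequence of Theorem \ref{T:strongly-minimal}, with existence of a strongly minimal And\^o lift supplied by Theorem \ref{T:SMinReg} from the regular-factorization hypothesis. Your side remark about the literal ``isometries'' reading (where the hypothesis is automatic by Theorem \ref{T:regfact}(3)) is also apt.
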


\chapter[Models and invariants for commuting contractive pairs]{Characteristic/admissible triples and functional model for a commuting pair of contractions}
\label{C:NFmodel}

As seen in the preceding chapters that there is a lack of uniqueness (up to unitary equivalence of lifts) in general for And\^o lifts of a 
given contractive commuting operator-pair $(T_1, T_2)$ but there is uniqueness for a pseudo-commuting contractive lift $(\Pi, \bbW_1, \bbW_2, \bbW)$ of
$(T_1, T_2, T = T_1T_2)$ which has embedded in it a minimal Sz.-Nagy--Foias (isometric) lift $(\Pi, V = \bbW)$ of the single contraction operator $T$.
When we use the Sz.-Nagy--Foias model $(\Pi_{\rm NF}, V_{\rm NF})$  of the minimal isometric lift for $T$ (expressed in terms of the characteristic function $\Theta_T$) for 
$T$, together with  the 
Sz.-Nagy--Foias model $(\Pi_{\rm NF}, \bbW_{\rm NF,1}, \bbW_{\rm NF,2}, V_{\rm NF})$ for the pseudo-commuting contractive lift of $(T_1, T_2, T)$
(expressed in terms of a larger characteristic triple $\Xi_T = ((G_1, G_2),  (W_{\sharp 1}, W_{\sharp 2}), \Theta_T)$  for $(T_1, T_2, T)$
(where $(G_1, G_2)$ and $(W_{\sharp 1}, W_{\sharp 2})$ are as in \eqref{pccNF}),  we arrive at a functional model
$$
(T_1, T_2, T) \underset{u}\cong  P_{\cH_{\Theta_T}}( \bbW_{\rm NF,1}, \bbW_{\rm NF,2}, V_{\rm NF})|_{\cH_{\Theta_T}}
$$
for the commuting triple $(T_1, T_2, T)$ itself.
Conversely, there is a notion of {\em admissible triple} $\Xi$ for the case where no commuting contractive pair  $(T_1, T_2)$ is initially specified, 
from which one can build a commuting contractive operator-pair $(T_{\Xi,1}, T_{\Xi,2})$ on a functional model space  $\cH_{\Theta}$ which in turn has a
characteristic triple $\Xi_{T_{\Xi,1}, T_{\Xi,2}}$ which can be shown to coincide with the original admissible triple, giving a complete parallel with the
Sz.-Nagy--Foias theory  outlined in Remark \ref{R:NFmodel} (where one now has {\em characteristic triple} in place of {\em characteristic function} and {\em admissible triple} in place of
{\em purely contractive analytic function}.  Perhaps as is to be expected, however, there are some compatibility conditions in the definition of admissible triple
which may be difficult to check in practice. In the succeeding sections we spell out the details.  The first order of business is to understand precisely the notion of
{\em completely nonunitary} for the commuting contractive operator-pair case.

\section{Characteristic triples and functional models}

  The following definition makes precise the notion of {\em characteristic triple} for a commuting contractive operator-pair.
  
\begin{definition} \label{D:char-triple} \index{characteristic triple}
Let $(T_1, T_2)$ be a commuting contractive operator-pair on $\cH$.  Let us introduce the following objects:
\begin{enumerate}
\item[(i)] $(G_1, G_2) =$ the {\em Fundamental-Operator pair} for $(T_1^*, T_2^*)$ as in Definition \ref{D:FundOps}.

\item[(ii)]   $(W_{\sharp 1}, W_{\sharp 2})  = $  the commuting unitary operator-pair canonically associated with $(T_1, T_2)$ as in \eqref{WandVNFs}.

\item[(iii)] $\Theta_T = $  {\em  the characteristic operator function}  \eqref{char-func} for the product contraction operator $T = T_1 T_2$.
Here we set $\underline{T}$ to the operator triple $\underline{T} = (T_1, T_2, T = T_1 T_2)$.
\end{enumerate}
Then the triple $\Xi_{\underline{T}} := ( (G_1, G_2), (W_{\sharp 1}, W_{\sharp 2}), \Theta_T)$ is called 
the {\em characteristic triple} for $(T_1,T_2)$.
\end{definition}

Note that the components $((G_1, G_2), (W_{\sharp 1}, W_{\sharp 2}), \Theta_T)$ is all that is needed to write down $(\Pi_{\rm NF}, W_{\rm NF,1}, W_{\rm NF,2},  V_{\rm NF})$, the Sz.-Nagy--Foias model pseudo-commuting contractive lift of $(T_1, T_2, T = T_1 T_2)$ acting on the space $\cK_{\Theta_T}$ as in Theorem \ref{Thm:NFPseudo}.

We now present the bivariate analogue of the result discussed in Remark  \ref{R:NFmodel}  and reviewed in the preceding paragraphs.

\begin{theorem}\label{Thm:SNFmodelPair}
Let $(T_1,T_2)$ be a commuting contractive operator-pair and let its characteristic triple be 
$$
\Xi_{\underline{T}} = ((G_1,G_2),(W_{\sharp1},W_{\sharp2}),\Theta_T).
$$ 
Then the Sz.-Nagy--Foias model space
\begin{equation}  \label{HNF'}   
\cH_{\Theta_T}= \begin{bmatrix} H^2(\cD_{T^*})   \\  \overline{\Delta_{\Theta_T} \cdot L^2(\cD_T)} \end{bmatrix} \ominus 
\begin{bmatrix} \Theta_T \\ \Delta_{\Theta_T} \end{bmatrix}  \cdot H^2(\cD_T)
\end{equation}
is coinvariant under
$$
\left( \begin{bmatrix} M_{G_1^*+zG_2} & 0 \\ 0 &  W_{\sharp1} \end{bmatrix},
\begin{bmatrix} M_{G_2^*+zG_1} & 0 \\ 0 &  W_{\sharp2} \end{bmatrix},
\begin{bmatrix} M_z & 0 \\ 0 &  \left. M_{\zeta} \right|_{\overline{\Delta_{\Theta_T}(L^2(\cD_T))}} \end{bmatrix} \right)
$$
and $\underline{T} = (T_1,T_2,T_1T_2)$ is unitarily equivalent to
\begin{align}\label{NFmodelPair}
P_{\cH_{\Theta_T}} \left.  \left(  \begin{bmatrix} M_{G_1^*+zG_2} & 0 \\ 0 &  W_{\sharp1} \end{bmatrix},
\begin{bmatrix} M_{G_2^*+zG_1} & 0 \\ 0 &  W_{\sharp2}  \end{bmatrix},
\begin{bmatrix}  M_z & 0 \\ 0 &  M_{\zeta}|_{\overline{\Delta_{\Theta_T}(L^2(\cD_T))}} \end{bmatrix} \right) \right|_{\cH_{\Theta_T}}
\end{align}
via the unitary operator $ \Pi_{\rm NF, 0} \colon \cH \to \cH_{\Theta_T}$ given by
\begin{equation}  \label{PiNF0}
  \Pi_{\rm NF,0} \colon h \mapsto \Pi_{\rm NF} h \in \cH_{\Theta_T} \text{ for } h \in \cH.
\end{equation}
\end{theorem}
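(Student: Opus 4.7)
The plan is to derive Theorem \ref{Thm:SNFmodelPair} directly from Theorem \ref{Thm:NFPseudo} once the range of $\Pi_{\rm NF}$ is identified with $\cH_{\Theta_T}$ via the classical Sz.-Nagy--Foias theory applied to the single contraction $T=T_1T_2$.

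First I would recall that by Theorem \ref{Thm:NFPseudo} the quadruple $(\Pi_{\rm NF}, \bbW_{{\rm NF},1}, \bbW_{{\rm NF},2}, V_{\rm NF})$ is a pseudo-commuting contractive lift of $(T_1,T_2,T)$, so condition (iv) of Definition \ref{D:pcc} gives the lifting identity
$$
(\bbW_{{\rm NF},1}^*,\bbW_{{\rm NF},2}^*,V_{\rm NF}^*)\,\Pi_{\rm NF}=\Pi_{\rm NF}\,(T_1^*,T_2^*,T^*).
$$
In particular $\operatorname{Ran}\Pi_{\rm NF}$ is invariant under each of $\bbW_{{\rm NF},1}^*$, $\bbW_{{\rm NF},2}^*$, $V_{\rm NF}^*$.

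The second step is to identify $\operatorname{Ran}\Pi_{\rm NF}$ with $\cH_{\Theta_T}$. By construction in \eqref{pccNF}, $(\Pi_{\rm NF},V_{\rm NF})$ is exactly the Sz.-Nagy--Foias model minimal isometric lift of the single contraction $T=T_1T_2$ as described in Definition \ref{D:NF-isom-lift} and Remark \ref{R:NFmodel}; hence the formula \eqref{HNF} applies and yields $\operatorname{Ran}\Pi_{\rm NF}=\cH_{\Theta_T}$. Consequently the map $\Pi_{\rm NF,0}$ of \eqref{PiNF0} is a surjective isometry from $\cH$ onto $\cH_{\Theta_T}$, i.e., unitary.

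Combining these two observations, $\cH_{\Theta_T}$ is coinvariant under $\bbW_{{\rm NF},1}$, $\bbW_{{\rm NF},2}$, $V_{\rm NF}$, which is the first claim. Taking adjoints of the lifting identity and using that $\Pi_{\rm NF,0}\colon\cH\to\cH_{\Theta_T}$ is unitary with $\Pi_{\rm NF,0}^*\Pi_{\rm NF,0}=I_\cH$, I would then obtain for $j=1,2$ the intertwining
$$
T_j=\Pi_{\rm NF,0}^*\,\bigl(P_{\cH_{\Theta_T}}\bbW_{{\rm NF},j}\big|_{\cH_{\Theta_T}}\bigr)\,\Pi_{\rm NF,0},
$$
and similarly for $T=T_1T_2$ with $V_{\rm NF}$ in place of $\bbW_{{\rm NF},j}$, establishing the unitary equivalence \eqref{NFmodelPair}. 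The only potentially substantive point is the range identification $\operatorname{Ran}\Pi_{\rm NF}=\cH_{\Theta_T}$, but this is already carried out in the derivation leading to \eqref{HNF} in Section \ref{S:NFmodel} and requires nothing beyond the classical single-variable theory; the rest is formal manipulation of the lifting identity.
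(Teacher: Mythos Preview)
Your proof is correct and follows essentially the same approach as the paper's: invoke Theorem \ref{Thm:NFPseudo} to obtain the lifting identity $(\bbW_{{\rm NF},1}^*,\bbW_{{\rm NF},2}^*,V_{\rm NF}^*)\Pi_{\rm NF}=\Pi_{\rm NF}(T_1^*,T_2^*,T^*)$, identify $\operatorname{Ran}\Pi_{\rm NF}=\cH_{\Theta_T}$ via the single-variable theory in \eqref{HNF}, and then read off coinvariance and the unitary equivalence by applying $\Pi_{\rm NF}^*$ and taking adjoints. The paper's proof is virtually identical in structure and content.
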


\begin{proof}
Let us denote by $\underline{\bbW}_{\rm NF}$ the operator triple
\begin{align}
\underline{\bbW}_{\rm NF}:=&( \bbW_{{\rm NF},1}, \bbW_{{\rm NF},2}, V_{\rm NF})  \notag  \\ 
 := & \left(\sbm{ M_{G_{\sharp1}^*+zG_{\sharp2}} & 0 \\ 0 &  W_{\sharp1}}, \sbm{ M_{G_{\sharp2}^*+zG_{\sharp1}} & 0 \\ 0 &  W_{\sharp2}}, 
  \sbm{M_z & 0 \\ 0 &   \left. M_{\zeta} \right|_{\overline{\Delta_{\Theta_T}(L^2(\cD_T))}}} \right)  \label{Vs}
\end{align}
acting on $\cK_{\Theta_T}:= \sbm{ H^2(\cD_{T^*}) \\ \overline{ \Delta_{\Theta_T} L^2(\cD_{T})} }$ and set $\Pi_{\rm NF} = \sbm{ \cO_{D_{T^*}, T^*} \\ \omega_{\rm NF,D} Q_{T^*}}$
equal to the Sz.-Nagy--Foias embedding operator from $\cH$ into $\cK_{\Theta_T}$ with range equal to $\cH_{\Theta_T}$.  By Theorem \ref{Thm:NFPseudo} we know that
$(\Pi_{\rm NF}, \bbW_{\rm NF,1},\bbW_{\rm NF,2}, V_{\rm NF})$ is a lift of $(T_1, T_2, T = T_1 T_2)$, i.e.,
\begin{equation}  \label{WNFlift-intertwine}
(\bbW_{\rm NF,1}^*, \bbW_{\rm NF,2}^*, V_{\rm NF}^*) \Pi_{\rm NF}  = \Pi_{\rm NF} (T_1^*, T_2^*, T^* = T_1^* T_2^*).
\end{equation}
where (as noted in Remark \ref{R:NFmodel}) $\operatorname{Ran} \Pi_{\rm NF} = \cH_{\Theta_T}$.  This shows immediately that $\cH_{\Theta_T}$ is coinvariant under
$\underline{\bbW}_{\rm NF}$.   Apply $\Pi_{\rm NF}^*$ on the left to both sides of \eqref{WNFlift-intertwine}, use that $\Pi_{\rm NF}$ is an isometry
($\Pi_{\rm NF}^* \Pi_{\rm NF} = I_\cH$), and then take adjoints to arrive at
$$
\Pi_{\rm NF}^* (W_{\rm NF,1}, W_{\rm NF,2}, V_{\rm NF}) \Pi_{\rm NF} = (T_1, T_2, T_1T_2).
$$
Then use the connection  \eqref{PiNF0} between $\Pi_{\rm NF}$ and $\Pi_{\rm NF,0}$ to reinterpret this last identity as
$$
\Pi_{\rm NF,0}^*  \left(   P_{\cH_{\Theta_T}}  (W_{\rm NF,1}, W_{\rm NF,2}, V_{\rm NF}) |_{\cH_{\Theta_T}} \right) \Pi_{\rm NF,0} = (T_1, T_2, T_1T_2).
$$
This last equality is the statement that the model operator-triple \eqref{NFmodelPair}
$$
P_{\cH_{\Theta_T}} (\bbW_{\rm NF,1}, \bbW_{\rm NF,2}, V_{\rm NF})|_{\cH_{\Theta_T}}
$$
is unitarily equivalent via $\Pi_{\rm NF, 0}$ to the original contractive operator-triple $(T_1, T_2, T_1 T_2)$ as claimed, and the theorem follows.
\end{proof}

\section[Canonical decomposition]{Canonical decomposition for pairs of commuting contractions}
\label{S:can-decom}

Our eventual goal is to prove that characteristic triples form a complete unitary invariant for commuting contractive pairs $(T_1,T_2)$
with the condition that $T=T_1T_2$ is a completely nonunitary (c.n.u.) contraction.  The goal of this section is to argue that this c.n.u.~ assumption on $T = T_1T_2$
can be discarded without any substantive loss of generality due to the existence of a
{\em canonical decomposition} for any commuting contractive pair $(T_1, T_2)$.  This is analogous to the single-variable phenomenon that every single contraction 
has a decomposition as the direct sum of a unitary operator and a c.n.u.~contraction operator.  This result actually follows as a special case of the canonical
decomposition for tetrablock contractions recently obtained by Pal \cite{PalJMAA}.  Here we present a more
elementary direct proof for the special setting of commuting pairs of contractions. We shall need the following lemma.

\begin{lemma}\label{L:ZeroOp}
Let $A$ be a bounded operator on a Hilbert space such that $\omega A$ has negative semidefinite real part for all $\omega$ on the unit circle:
$$
 \operatorname{Re} (\omega A) := \omega A + (\omega A)^* \preceq 0 \text{ for all }\omega \in {\mathbb T}.
$$
Then $A=0$.
\end{lemma}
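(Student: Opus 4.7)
The plan is to test the hypothesis at just four points on the unit circle, namely $\omega = \pm 1, \pm i$, and combine the resulting inequalities to force $A = 0$. This is a very short argument, so most of the work will be in choosing the right substitutions and keeping track of the adjoints carefully.

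First I would set $\omega = 1$ to obtain $A + A^* \preceq 0$, and then $\omega = -1$ to obtain $-(A + A^*) \preceq 0$. Taken together these two inequalities give $A + A^* = 0$, i.e., $A$ is skew-adjoint: $A^* = -A$. With this identity in hand, the next step is to substitute $\omega = i$, which yields
\[
iA + (iA)^* \;=\; iA - iA^* \;=\; iA - i(-A) \;=\; 2iA \;\preceq\; 0.
\]
Note that $2iA$ is automatically selfadjoint here, since $(2iA)^* = -2iA^* = 2iA$, so the inequality $2iA \preceq 0$ makes sense. Similarly, taking $\omega = -i$ gives $-2iA \preceq 0$. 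Combining these forces $2iA = 0$ and hence $A = 0$.

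I do not anticipate any real obstacle: the key observation is simply that the hypothesis evaluated at the two pairs $\{1,-1\}$ and $\{i,-i\}$ respectively pins down $\operatorname{Re} A = 0$ and then $iA = 0$, and one could even just argue that $\omega \mapsto \operatorname{Re}(\omega A) + \operatorname{Re}(-\omega A) = 0$ forces $\operatorname{Re}(\omega A) = 0$ for every $\omega \in \mathbb{T}$, which is the numerical-range statement that $\langle \omega A h,h\rangle \in i\mathbb{R}$ for all $h$ and all $\omega \in \mathbb{T}$; varying $\omega$ then shows $\langle Ah,h\rangle = 0$ for all $h$, whence $A = 0$ by polarization. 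Either route gives the result in essentially one line once the sign manipulations are done.
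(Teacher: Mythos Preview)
Your proof is correct and essentially the same as the paper's: the paper observes that $R(\omega):=\omega A+\overline{\omega}A^*$ satisfies $R(-\omega)=-R(\omega)$, so $R(\omega)=0$ for all $\omega\in\mathbb T$, and then recovers $A=\tfrac12\bigl(R(1)-iR(i)\bigr)=0$. Your four-point substitution at $\omega=\pm1,\pm i$ is exactly this argument evaluated at the points that matter, and the alternative you sketch at the end is precisely the paper's version.
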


\begin{proof}
The hypothesis means that the operator-valued function $R(\omega):= \omega A+\overline{\omega} A^*$ satisfies  $R(\omega) \preceq 0$ for every $\omega \in {\mathbb T}$. 
Note that $R(-\omega)=-R(\omega)$ for every $\omega \in{\mathbb T}$ and hence
$$
R(\omega):= \omega A+\overline{\omega} A^*= 0 \text{ for all } \omega\in\mathbb{T},
$$
which readily implies that $A=\frac{1}{2}(R(1)- i R(i))=0$.
\end{proof}

\begin{theorem}\label{Thm:CanDecPair}
For every pair $(T_1,T_2)$ of commuting contractions on a Hilbert space $\cH$ there corresponds a decomposition of $\cH$ into the orthogonal sum of two subspaces reducing for both $T_1$ and $T_2$, say
$\cH=\cH_{u}\oplus\cH_{c}$, such that, with notation
\begin{align}\label{CanDecPair}
(T_{1u},T_{2u})=(T_1,T_2)|_{\cH_u} \text{ and }(T_{1c},T_{2c})=(T_1,T_2)|_{\cH_c},
\end{align}
we have that $T_u=T_{1u}T_{2u}$ is a unitary and $T_{c}=T_{1c}T_{2c}$ is a c.n.u.~contraction.
Moreover, then $T_u \oplus T_c$ with respect to $\cH=\cH_{u}\oplus\cH_{c}$ is the Sz.-Nagy--Foias canonical decomposition for the contraction operator $T=T_1 T_2$.
\end{theorem}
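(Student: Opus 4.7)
The natural approach is to set $\cH = \cH_u \oplus \cH_c$ equal to the Sz.-Nagy--Foias canonical decomposition of the single contraction $T = T_1T_2$; this automatically makes the resulting splitting coincide with the Sz.-Nagy--Foias decomposition of $T$, verifying the ``moreover'' clause by construction. What requires work is showing that $\cH_u$ is reducing for both factors $T_1$ and $T_2$ individually (and then $\cH_c = \cH \ominus \cH_u$ reduces them as well).

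The first step is to extract defect-vanishing identities on $\cH_u$. Since $T|_{\cH_u}$ is unitary, $\|Th\|^2 = \|h\|^2$ for each $h \in \cH_u$, and expanding via the standard identity \eqref{id1} yields
\begin{equation*}
  0 \;=\; \|h\|^2 - \|T_1T_2 h\|^2 \;=\; \|D_{T_2} h\|^2 + \|D_{T_1} T_2 h\|^2,
\end{equation*}
forcing $\|T_2 h\| = \|h\|$; by the symmetric factorization $T = T_2 T_1$ also $\|T_1 h\| = \|h\|$. Applying the same argument to $T^*|_{\cH_u}$ (also unitary) gives $\|T_1^* h\| = \|T_2^* h\| = \|h\|$ for every $h \in \cH_u$.

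The decisive second step uses the commuting unitary dilation supplied by And\^o's theorem: there is a pair $(\cU_1, \cU_2)$ of commuting unitaries on some $\widetilde\cK \supset \cH$ with $T_j = P_\cH \cU_j|_\cH$ for $j=1,2$, and then $\cU := \cU_1\cU_2$ is a unitary dilation of $T$. A standard consequence of the Sz.-Nagy--Foias theory is the characterization
\begin{equation*}
  \cH_u = \bigl\{\,h \in \cH \colon \cU^n h \in \cH \text{ for every } n \in \mathbb{Z}\,\bigr\}.
\end{equation*}
Now fix $h \in \cH_u$. The Pythagorean splitting $\|\cU_1 h\|^2 = \|P_\cH \cU_1 h\|^2 + \|(I-P_\cH)\cU_1 h\|^2$, together with $\|\cU_1 h\| = \|h\|$ (unitarity) and $\|P_\cH \cU_1 h\| = \|T_1 h\| = \|h\|$ (from Step~1), forces $\cU_1 h \in \cH$, whence $\cU_1 h = T_1 h$. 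To upgrade this to $T_1 h \in \cH_u$, I need $\cU^n(T_1 h) \in \cH$ for every $n \in \mathbb{Z}$; since $\cU_1$ and $\cU$ commute and $\cU^n h \in \cH_u$ (as $\cH_u$ reduces $T$), the identical Pythagorean argument applied to $\cU^n h$ in place of $h$ delivers $\cU_1(\cU^n h) \in \cH$, i.e.\ $\cU^n\cU_1 h \in \cH$. The parallel argument with $\cU_1^*$ in place of $\cU_1$ (invoking $\|T_1^* h\| = \|h\|$) produces $T_1^* h \in \cH_u$; symmetrically $T_2 h, T_2^* h \in \cH_u$. Hence $\cH_u$ reduces both $T_1$ and $T_2$.

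With invariance secured, $T_1|_{\cH_u}$ and $T_2|_{\cH_u}$ are commuting contractions on $\cH_u$ whose product is the unitary $T|_{\cH_u}$; by the elementary fact (used already in the proof of Theorem~\ref{T:flats}) that in any contractive commuting factorization of an isometry both factors are isometric, together with the corresponding statement for the coisometric factorization of $T^*|_{\cH_u}$, each of $T_1|_{\cH_u},T_2|_{\cH_u}$ is simultaneously isometric and coisometric, hence unitary. The main obstacle is the invariance step: $\cU_1$ need not preserve $\cH$ in general, and the proof depends crucially on the ``norm-capturing'' identity $\|P_\cH\cU_1 h\| = \|\cU_1 h\|$, which is available only once the defect-vanishing conclusions of Step~1 are in hand.
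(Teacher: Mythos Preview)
Your proof is correct and takes a genuinely different route from the paper's. The paper first develops the Fundamental-Operator pair $(F_1,F_2)$ for $(T_1,T_2)$, derives from the defining equations the pointwise positivity
\[
2D_T^2 - \operatorname{Re}\bigl(\omega(T_1-T_2^*T)\bigr) - \operatorname{Re}\bigl(\zeta(T_2-T_1^*T)\bigr) \succeq 0 \quad (\omega,\zeta\in\mathbb{T}),
\]
writes each $T_j$ as a $2\times 2$ block matrix with respect to $\cH_u\oplus\cH_c$, and then uses Lemma~\ref{L:ZeroOp} together with an intertwining argument (showing that $T_c$ would have to be unitary on certain ranges) to kill the off-diagonal blocks. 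Your argument bypasses the Fundamental-Operator machinery entirely: you extract the defect-vanishing $\|T_j h\|=\|T_j^*h\|=\|h\|$ on $\cH_u$ from the factorization identity \eqref{id1}, then invoke the And\^o unitary dilation $(\cU_1,\cU_2)$ and use the description $\cH_u=\{h\in\cH:\cU^n h\in\cH\text{ for all }n\in\mathbb{Z}\}$ together with a Pythagorean norm-capture to force $\cU_j h\in\cH$ (hence $T_j h\in\cH_u$) for $h\in\cH_u$. Your approach is shorter and more geometric, and does not require the Fundamental-Operator theory developed in Chapter~\ref{C:Ando}; on the other hand, the paper's proof stays entirely inside $\cH$ and illustrates how the Fundamental-Operator pair encodes the factorization structure, which is thematically in line with the rest of the manuscript.
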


\begin{proof}
Let $(T_1,T_2)$ be a pair of commuting contractive operator-pair on a Hilbert space $\cH$ such that  $(F_1, F_2)$ is  the Fundamental-Operator pair for $(T_1^*, T_2^*)$. 
By Definition  \ref{D:FundOps} (combined with Lemma \ref{L:FundOps}), on the one hand  $(F_1, F_2)$  is characterized as the unique solution 
of the pair of operator equations
\begin{equation}   \label{CanFundEqn}
T_i - T_j^* T = D_{T} F_{\sharp i } D_{T} \text{ where } (i,j) = (1,2) \text{ or } (2,1)
\end{equation}
but on the other hand, as a consequence of the Second Proof of Theorem \ref{T:FundOps}, can also be expressed directly in terms of a  Type I And\^o tuple 
$(\cF,\Lambda,P,U)$  for $(T_1^*,T^*_2)$ as
\begin{equation}\label{FundAndoRel}
 (F_1,F_2)=\Lambda^*(P^\perp U,U^*P)\Lambda
\end{equation}
Since both of $F_1$ and $F_2$ are contractions, we have for every $\omega$ and $\zeta$ in $\mathbb{T}$
\begin{equation}     \label{CrucIneq1}
I_{\cD_T}-\operatorname{Re}(\omega F_1) \succeq 0 \text{ and }I_{\cD_T}-\operatorname{Re}(\zeta F_2) \succeq 0.
\end{equation}
Adding these two inequalities then gives
\begin{align}\label{CrucIneq}
2I_{\cD_T}-\operatorname{Re}(\omega F_1+\zeta F_2) \succeq 0, \text{ for all }\omega,\zeta\in\mathbb{T}.
\end{align}
Note that inequality (\ref{CrucIneq}) is equivalent to
$$
2D_T^2-\operatorname{Re}(\omega D_TF_1D_T+\zeta D_TF_2D_T) \succeq 0 \text{ for all }\omega,\zeta \in\mathbb{T}.
$$
By (\ref{CanFundEqn}) this is same as
\begin{equation}   \label{FinalIneq}
2D_T^2-\operatorname{Re}(\omega (T_1-T_2^*T))-\operatorname{Re}(\zeta (T_2-T_1^*T)) \succeq 0, \text{ for all }\omega,\zeta \in\mathbb{T}.
\end{equation}
Let
\begin{align}\label{CanT}
  T=\begin{bmatrix}
      T_u & 0 \\
      0 & T_{c}
    \end{bmatrix}:\cH_u\oplus\cH_c\to\cH_u\oplus\cH_{c}
\end{align}
be the canonical decomposition of $T$ into unitary piece $T_u$ and completely nonunitary piece $T_{c}$.   
It remains to show that $T_1$ and $T_2$ are also block-diagonal with respect to this decomposition.
To get started, we consider the $2 \times 2$-matrix representation of  each $T_j$ with respect to the decomposition $\cH = \cH_u \oplus \cH_c$:
\begin{equation}\label{Tjs}
T_j=\begin{bmatrix}
      A_j & B_j \\
      C_j & D_j
    \end{bmatrix}:\cH_u\oplus\cH_c\to\cH_u\oplus\cH_c  \text{ for } j=1,2.
\end{equation}
Next apply (\ref{FinalIneq}) to obtain that
\begin{align}\label{CrucIneq2}
 \nonumber \begin{bmatrix} 0 & 0 \\ 0 & 2D_{T_{c}}^2  \end{bmatrix} &
 - \operatorname{Re}\left(\omega \begin{bmatrix} A_1-A_2^*T_u & B_1-C_2^*T_{c} \\ C_1-B_2^*T_u & D_1-D_2^*T_{c}  \end{bmatrix}\right) \\
    &  -\operatorname{Re}\left(\zeta \begin{bmatrix} A_2-A_1^*T_u & B_2-C_1^*T_{c} \\ C_2-B_1^*T_u & D_2-D_1^*T_{c} \end{bmatrix}\right)
    \succeq  0 \text{ for all }\omega,\zeta \in\mathbb{T}.
    \end{align}
    In particular, the $(1,1)$-entry in this inequality works out to be
    \begin{align}\label{(11)}
      {\mathbb P}_{11}(\omega,\zeta):= \operatorname{Re}(\omega(A_1-A_2^*T_u ))+\operatorname{Re}(\zeta(A_2-A_1^*T_u)) \preceq 0, \text{ for all } \omega,\zeta \in\mathbb{T}.
    \end{align}
   This in turn implies that
    \begin{align*}
    {\mathbb P}_{11}(\omega,1)+ {\mathbb P}_{11}(\omega,-1)&=2\operatorname{Re}(\omega(A_1-A_2^*T_u )) \preceq 0 \text{ and}\\
    {\mathbb P}_{11}(1,\zeta)+{\mathbb P}_{11}(-1,\zeta)&=2\operatorname{Re}(\zeta(A_2-A_1^*T_u)) \preceq 0.
    \end{align*}
   Now we apply Lemma \ref{L:ZeroOp} to conclude that
    \begin{align}\label{CrucIneq3}
     A_1=A_2^*T_u, \quad  A_2=A_1^*T_u.
    \end{align}
    This shows that the $(1,1)$-entry of the matrix on the left-hand side of (\ref{CrucIneq2}) is zero. Since the matrix is positive semi-definite, the $(1,2)$-entry (and hence also the $(2,1)$-entry) is also zero, i.e., for all $\omega,\zeta \in\mathbb{T}$
    \begin{align*}
    {\mathbb P}_{12}(\omega,\zeta):= \omega(B_1-C_2^*T_{c})+\bar{\omega}(C_1^*-T_u^*B_2)+\zeta(B_2-C_1^*T_{c})+\bar{\zeta}( C_2^*-T_u^*B_1)=0.
    \end{align*}
 In particular we then get that
    \begin{align*}
      {\mathbb P}(\omega):={\mathbb P}_{12}(\omega,1)+ {\mathbb P}_{12}(\omega,-1)=
      2\omega(B_1-C_2^*T_{c})+2\bar{\omega}(C_1^*-T_u^*B_2)=0
    \end{align*}
    for every $\omega\in\mathbb{T}$. This implies the first two of the following equations while the last two are obtained similarly:
    \begin{align}\label{Cruceq}
      B_1=C_2^*T_{c},\quad C_1^*=T_u^*B_2,\quad B_2=C_1^*T_{c}\quad\text{and}\quad C_2^*=T_u^*B_1.
    \end{align}
 Now from the commutativity of $T_j$ with $T$ we have the following for $j=1,2$:
 \begin{align}\label{Cruceq2}
A_jT_u=T_uA_j,\quad B_jT_{c}=T_uB_j,\quad C_jT_u=T_{c}C_j\quad \text{and}\quad T_{c}D_j=D_jT_{c}.
 \end{align}Let us note that commutativity of $(T_1,T_2)$ has been used in the beginning of the proof, viz., $F_1$ and $F_2$ are contractions because of the commutativity of $T_1$ and $T_2$.

 For $(i,j)=(1,2)$ or $(2,1)$ we obtain using the second and third equation in (\ref{Cruceq}) and the third equation in (\ref{Cruceq2}) that
 \begin{align}\label{Cruceq3}
 B_j^*T_u^2=C_iT_u=T_{c}C_i=T_{c}B_j^*T_u,
 \end{align}
 which implies that $B_j^*T_u=T_{c}B_j^*$, for $j=1,2$. Using this and the second equality in (\ref{Cruceq2}) we obtain
 \begin{align*}
T_{c}T_{c}^*B_j^*=T_{c}B_j^*T_u^*=B_j^*=B_j^*T_u^*T_u=T_{c}^*B_j^*T_{u}=T_{c}^*T_{c}B_j^*,
 \end{align*}
 which implies that $T_{c}$ is unitary on $\overline{\operatorname{Ran}}\, B_j^*$, for every $j=1,2$. Since $T_{c}$ is completely nonunitary, $B_j=0$ for each $j=1,2$. Similarly one can show that $C_j=0$, for each $j=1,2$. This completes the proof.
 \end{proof}

\section[Characteristic triple]{Characteristic triple as a complete unitary invariant}
As already discussed in Remark \ref{R:NFmodel}, 
it was proved by Sz.-Nagy--Foias  (see \cite[Chapter VI]{Nagy-Foias} that the characteristic function $\Theta_T$
for a c.n.u.\ contraction $T$ is a complete unitary invariant. This means that two c.n.u.\ contractions $T$ and $T'$
are unitarily equivalent if and only if their characteristic functions {\em{coincide}} in the sense that there exist
unitary operators $u: \mathcal{D}_T \to \mathcal{D}_{T'}$ and $u_{*}: \mathcal{D}_{T^*} \to \mathcal{D}_{{T'}^*}$ such that the following diagram commutes for every $z\in\mathbb D$:
\index{characteristic function!coincidence of}
\begin{align}\label{coindiagram}
\begin{CD}
\mathcal{D}_T @>\Theta_T(z)>> \mathcal{D}_{T^*}\\
@Vu VV @VVu_{*} V\\
\mathcal{D}_{T'} @>>\Theta_{T'}(z)> \mathcal{D}_{{T'}^*}
\end{CD}.
\end{align}
Theorem \ref{UnitaryInv} below shows that such a result holds for characteristic triples of pairs of
commuting contractions also. First we define a notion of coincidence for such a triple.

Let us recall (see Remark \ref{R:NFmodel} and also \cite{Nagy-Foias} for complete details) that a contractive analytic 
function $(\cD,\cD_*,\Theta)$\index{$(\cD,\cD_*,\Theta)$} is a $\cB(\cD,\cD_*)$-valued analytic function on 
$\mathbb{D}$ such that
$$
\|\Theta(z)\|\leq 1 \text{ for all } z\in\mathbb{D}.
$$
Such a function is called {\it purely contractive}\index{contractive analytic function!purely}
if $\Theta(0)$ does not preserve the norm of any nonzero
vector, i.e.,
\begin{equation}   \label{pureCAF}
\|\Theta(0)\xi\|_{\cD_*}<\|\xi\|_{\cD} \text{ for all nonzero }\xi\in\cD.
\end{equation}
We note that a Sz.-Nagy--Foias characteristic function $\Theta_T$ is always purely contractive
(see \cite[Section VI.1]{Nagy-Foias}),  and that it is
always the case that a general contractive analytic function $(\cD, \cD_*, \Theta)$ has a block diagonal
decomposition  $\Theta = \Theta' \oplus \Theta^0$ where $(\cD', \cD_*', \Theta')$ is a unitary constant function
and $(\cD^0, \cD_*^0, \Theta^0)$ is purely contractive.  A key easily checked property of this decomposition
is the following:

\begin{obs}   \label{O:reduction}
 The model space associated with a contractive analytic function $\Theta$ is defined to be
$$
\cH_{\Theta}:= \begin{bmatrix} H^2(\cD_*) \\ \overline{ \Delta_\Theta L^2(\cD)} \end{bmatrix}
 \ominus \begin{bmatrix} \Theta \\ \Delta_\Theta \end{bmatrix} H^2(\cD)
$$
where here we set 
$$
 \Delta_\Theta(\zeta) = ( I_\cD - \Theta(\zeta)^* \Theta(\zeta))^{\frac{1}{2}}.
$$
We define the  model operator\index{$T_\Theta$} associated with the contractive analytic function $\Theta$ to be
$$
T_{\Theta} = P_{\cH_{\Theta}} \left.  \begin{bmatrix}  M_z & 0 \\ 0 & M_\zeta \end{bmatrix} \right|_{\cH_{\Theta}}.
$$
The observation here is that the model operator $T_\Theta$ 
remains exactly the same (after some natural identification of respective coefficient spaces) when $\Theta$ is replaced by its purely contractive part $\Theta^0$.
Thus only purely contractive analytic functions are relevant when discussing Sz.-Nagy--Foias functional models.
\end{obs}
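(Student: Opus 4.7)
The strategy is a direct computation: decompose the ambient space and the ``Beurling-type'' subspace $\sbm{\Theta \\ \Delta_\Theta} H^2(\cD)$ along the splitting $\Theta = \Theta' \oplus \Theta^0$, and observe that the unitary-constant piece contributes nothing to the quotient space $\cH_\Theta$.

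First I would record the splittings. Write $\cD = \cD' \oplus \cD^0$ and $\cD_* = \cD'_* \oplus \cD_*^0$, with $\Theta'(z) \equiv \Theta'_0 \colon \cD' \to \cD'_*$ a unitary constant and $(\cD^0, \cD_*^0, \Theta^0)$ purely contractive. Since $(\Theta'_0)^* \Theta'_0 = I_{\cD'}$, the pointwise defect operator is block-diagonal:
\[
 \Delta_\Theta(\zeta) = \begin{bmatrix} 0 & 0 \\ 0 & \Delta_{\Theta^0}(\zeta) \end{bmatrix} \colon \cD' \oplus \cD^0 \to \cD' \oplus \cD^0,
\]
so $\overline{\Delta_\Theta L^2(\cD)} = \overline{\Delta_{\Theta^0} L^2(\cD^0)}$ under the natural identification. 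In particular, the ambient model space for $\Theta$ factors as
\[
 \cK_\Theta := \begin{bmatrix} H^2(\cD_*) \\ \overline{\Delta_\Theta L^2(\cD)} \end{bmatrix} \;\cong\; H^2(\cD_*') \;\oplus\; \begin{bmatrix} H^2(\cD_*^0) \\ \overline{\Delta_{\Theta^0} L^2(\cD^0)} \end{bmatrix} = H^2(\cD_*') \oplus \cK_{\Theta^0}.
\]

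Next I would compute the Beurling-type subspace. Using the block-diagonal form of $\Theta$ and $\Delta_\Theta$,
\[
 \begin{bmatrix} \Theta \\ \Delta_\Theta \end{bmatrix} H^2(\cD) \;=\; \big( \Theta'_0 \cdot H^2(\cD') \big) \oplus \begin{bmatrix} \Theta^0 \\ \Delta_{\Theta^0} \end{bmatrix} H^2(\cD^0).
\]
Because $\Theta'_0 \colon \cD' \to \cD_*'$ is a constant unitary, multiplication by it carries $H^2(\cD')$ unitarily onto $H^2(\cD_*')$. Hence subtracting this subspace off from $\cK_\Theta$ kills the entire $H^2(\cD_*')$-summand, leaving
\[
  \cH_\Theta \;=\; \cK_\Theta \ominus \begin{bmatrix} \Theta \\ \Delta_\Theta \end{bmatrix} H^2(\cD)  \;\cong\; \cK_{\Theta^0} \ominus \begin{bmatrix} \Theta^0 \\ \Delta_{\Theta^0} \end{bmatrix} H^2(\cD^0) \;=\; \cH_{\Theta^0}.
\]

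Finally I would observe that the ambient dilation operator $\sbm{M_z & 0 \\ 0 & M_\zeta}$ on $\cK_\Theta$ respects the splitting $\cK_\Theta = H^2(\cD_*') \oplus \cK_{\Theta^0}$ (since the $M_z$ on $H^2(\cD_*)$ decomposes as $M_z^{\cD_*'} \oplus M_z^{\cD_*^0}$ and $\cH_{\Theta^0} \subset \cK_{\Theta^0}$). Consequently, the compression to $\cH_\Theta$ agrees under the above unitary identification with the compression of $\sbm{M_z & 0 \\ 0 & M_\zeta}$ to $\cH_{\Theta^0}$, which is by definition $T_{\Theta^0}$. Thus $T_\Theta$ and $T_{\Theta^0}$ coincide, as claimed.

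There is essentially no analytic obstacle here; the only care required is the bookkeeping with the direct-sum decomposition, in particular the observation that, since $\Theta'_0$ is a \emph{constant} unitary (not merely a unitary-valued inner function), $\Theta'_0 \cdot H^2(\cD')$ fills out all of $H^2(\cD_*')$, which is precisely what wipes out the unitary-constant summand from the functional model.
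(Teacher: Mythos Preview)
Your argument is correct and is exactly the direct verification the paper has in mind; the paper itself does not give a proof but simply calls this ``a key easily checked property'' and states the observation. You have supplied the bookkeeping details—namely that the unitary-constant summand forces $\Delta_\Theta|_{\cD'} = 0$ and $\Theta'_0 \cdot H^2(\cD') = H^2(\cD'_*)$, so the $\cD'$-component vanishes from both the ambient space and the quotient—which is precisely what ``easily checked'' is hiding.
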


In view of Observation \ref{O:reduction}, for the moment we consider only purely contractive analytic functions.

\begin{definition}\label{coincidence}
\index{contractive analytic function!coincidence of}
Let $(\mathcal{D},\mathcal{D}_*,\Theta)$, $(\mathcal{D'},\mathcal{D'_*},\Theta')$ be two purely contractive
analytic functions, let $(G_1,G_2)$ on $\mathcal{D}_*$ and  $(G_1',G_2')$ on $\mathcal{D'_*}$
be two pairs of contractions, and let $(W_1,W_2)$ on $\overline{\Delta_\Theta L^2(\mathcal{D})}$
$(W_1',W_2')$ on $\overline{\Delta_{\Theta'} L^2(\mathcal{D'})}$ be two pairs of commuting
unitaries having product equal to $M_{\zeta}$ on the respective spaces. We say that the two triples $((G_1,G_2),(W_1,W_2),\Theta)$ 
and $((G_1',G_2'),(W_1',W_2'),\Theta')$ {\em coincide} if:
\begin{itemize}
  \item[(i)] $(\mathcal{D},\mathcal{D_*},\Theta)$ and $(\mathcal{D'},\mathcal{D'_*},\Theta')$ coincide, i.e.,
there exist unitary operators $u: \mathcal{D} \to \mathcal{D'}$ and $u_{*}: \mathcal{D}_{*} \to \mathcal{D'}_{*}$
such that 
$$ 
u_* \Theta(z) = \Theta'(z) u \text{ for all } z \in {\mathbb D},
$$
i.e., the diagram (\ref{coindiagram}) commutes with $\Theta$ and $\Theta'$ in place of $\Theta_T$
and $\Theta_{T'}$, respectively.

\item[(ii)] the unitary operators $u$, $u_*$ also have the  intertwining properties
\begin{eqnarray}
\begin{cases}
(G_1',G_2')=u_*(G_1,G_2)u_*^*=(u_*G_1u_*^*,u_*G_2u_*^*) \text{ and }\\
(W_1',W_2')=\omega_u(W_1,W_2)\omega_u^*=(\omega_uW_1\omega_u^*,\omega_uW_2\omega_u^*),
\end{cases}
\end{eqnarray}
where $\omega_u:\overline{\Delta_{\Theta} L^2(\mathcal{D})}\to\overline{\Delta_{\Theta'} L^2(\mathcal{D'})}$
is the unitary map induced by $u$ defined by
\begin{eqnarray}\label{omega-u}\index{$\omega_u$}
\omega_u:=(I_{L^2}\otimes u)|_{\overline{\Delta_{\Theta} L^2(\mathcal{D})}}.
\end{eqnarray}
\end{itemize}
\end{definition}

\begin{theorem}\label{UnitaryInv}
Let $(T_1,T_2)$ and $(T_1',T_2')$ be two pairs of commuting contractions such that their products $T=T_1T_2$ and $T'=T_1'T_2'$ are c.n.u.\ contractions. 
Then $(T_1,T_2)$ and $(T_1',T_2')$ are unitarily equivalent if and only if their characteristic triples coincide.
\end{theorem}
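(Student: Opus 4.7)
The plan is to prove the forward direction by transporting each of the three pieces of the characteristic triple through the implementing unitary, and the converse by building an explicit unitary between the two Sz.-Nagy--Foias model spaces supplied by Theorem \ref{Thm:SNFmodelPair}.

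For the ``only if'' direction, assume $\phi\colon\cH\to\cH'$ is a unitary with $\phi(T_1,T_2)=(T_1',T_2')\phi$, so that $\phi T=T'\phi$. Classical Sz.-Nagy--Foias theory (Remark \ref{R:NFmodel}) produces unitaries $u\colon\cD_T\to\cD_{T'}$ and $u_*\colon\cD_{T^*}\to\cD_{T'^*}$ defined densely by $uD_Th=D_{T'}\phi h$ and $u_*D_{T^*}h=D_{T'^*}\phi h$, satisfying $u_*\Theta_T(z)=\Theta_{T'}(z)u$ for $z\in\mathbb D$. For the Fundamental-Operator pair, Definition \ref{D:FundOps} applied to $(T_1^*,T_2^*)$ characterizes $(G_1,G_2)$ as the unique solution in $\cB(\cD_{T^*})$ of
\[
T_1^*-T_2T^*=D_{T^*}G_1D_{T^*},\qquad T_2^*-T_1T^*=D_{T^*}G_2D_{T^*};
\]
conjugating these identities by $\phi$ and using $u_*D_{T^*}=D_{T'^*}\phi|_\cH$ shows that $u_*G_ju_*^*$ satisfies the analogous pair of equations for $(T_1'^*,T_2'^*)$, so the uniqueness clause of Theorem \ref{T:FundOps} forces $u_*G_ju_*^*=G_j'$. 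For the canonical unitary pair, Lemma \ref{L:uniqueness-flats} gives a unitary $\tau_\phi\colon\cQ_{T^*}\to\cQ_{T'^*}$ intertwining $(W_{\flat 1},W_{\flat 2})$ with $(W_{\flat 1}',W_{\flat 2}')$; conjugating by the unitary identification $\omega_{\rm NF,D}$ of \eqref{WandVNFs} then yields $\omega_uW_{\sharp j}\omega_u^*=W_{\sharp j}'$, completing coincidence in the sense of Definition \ref{coincidence}.

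For the converse, assume the two characteristic triples coincide via $u$, $u_*$, $\omega_u$, and set
\[
\widehat\tau:=\begin{bmatrix} I_{H^2}\otimes u_* & 0\\ 0 & \omega_u\end{bmatrix}\colon\cK_{\Theta_T}\to\cK_{\Theta_{T'}}.
\]
The intertwining $u_*\Theta_T(\zeta)=\Theta_{T'}(\zeta)u$ yields (taking adjoints and pointwise positive square roots) $u\Delta_{\Theta_T}(\zeta)u^*=\Delta_{\Theta_{T'}}(\zeta)$ a.e.\ on $\mathbb T$, and hence $(I_{L^2}\otimes u)$ restricts to the unitary $\omega_u$ from $\overline{\Delta_{\Theta_T}L^2(\cD_T)}$ onto $\overline{\Delta_{\Theta_{T'}}L^2(\cD_{T'})}$, consistent with \eqref{omega-u}. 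Consequently
\[
\widehat\tau\begin{bmatrix}\Theta_T\\ \Delta_{\Theta_T}\end{bmatrix}f=\begin{bmatrix}\Theta_{T'}\\ \Delta_{\Theta_{T'}}\end{bmatrix}(I_{H^2}\otimes u)f,\qquad f\in H^2(\cD_T),
\]
which shows that $\widehat\tau\cH_{\Theta_T}=\cH_{\Theta_{T'}}$. Condition (ii) of Definition \ref{coincidence} in the explicit forms $u_*(G_j^*+zG_{3-j})=(G_j'^*+zG_{3-j}')u_*$ and $\omega_uW_{\sharp j}=W_{\sharp j}'\omega_u$ then gives the intertwining $\widehat\tau\,\underline{\bbW}_{\rm NF}=\underline{\bbW}'_{\rm NF}\,\widehat\tau$ on $\cK_{\Theta_T}$ with notation as in \eqref{Vs}, and this restricts under compression to an intertwining of the functional models on $\cH_{\Theta_T}$ and $\cH_{\Theta_{T'}}$. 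Composing $\widehat\tau|_{\cH_{\Theta_T}}$ with $(\Pi'_{\rm NF,0})^*$ on the left and $\Pi_{\rm NF,0}$ on the right (as in \eqref{PiNF0}) produces the required unitary $\phi\colon\cH\to\cH'$ with $\phi(T_1,T_2)=(T_1',T_2')\phi$.

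The main obstacle I foresee is the careful bookkeeping in the converse direction to check that the single diagonal ansatz $\widehat\tau$ simultaneously (a) carries the inner-defect subspace $\sbm{\Theta_T\\ \Delta_{\Theta_T}}H^2(\cD_T)$ onto its primed counterpart, and (b) intertwines the non-commuting pair $(\bbW_{{\rm NF},1},\bbW_{{\rm NF},2})$ with $(\bbW'_{{\rm NF},1},\bbW'_{{\rm NF},2})$. This works precisely because condition (ii) of coincidence uses the same pair of unitaries $(u_*,\omega_u)$ that are already forced (via \eqref{omega-u} and condition (i)) to be compatible with the characteristic function $\Theta_T$ of the product. The c.n.u.\ hypothesis on $T=T_1T_2$ enters exactly at this point, to invoke Sz.-Nagy--Foias model faithfulness (Remark \ref{R:NFmodel}); by the canonical decomposition of Theorem \ref{Thm:CanDecPair} the hypothesis is without essential loss of generality.
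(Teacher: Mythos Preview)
Your converse direction matches the paper's argument essentially word for word: build the diagonal unitary $\widehat\tau=\sbm{I_{H^2}\otimes u_*&0\\0&\omega_u}$, check it carries $\sbm{\Theta_T\\\Delta_{\Theta_T}}H^2(\cD_T)$ onto its primed counterpart and intertwines $\underline{\bbW}_{\rm NF}$ with $\underline{\bbW}'_{\rm NF}$, then compress to $\cH_{\Theta_T}$. The forward direction for $\Theta_T$ and $(G_1,G_2)$ is also the same.

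The gap is in your treatment of $(W_{\sharp1},W_{\sharp2})$ in the forward direction. Lemma \ref{L:uniqueness-flats} indeed gives a unitary $\tau_\phi\colon\cQ_{T^*}\to\cQ_{T'^*}$ intertwining the $W_\flat$'s, and conjugating by the $\omega_{\rm NF,D}$'s produces a unitary $\omega_{\rm NF,D}'\,\tau_\phi\,\omega_{\rm NF,D}^*$ from $\overline{\Delta_{\Theta_T}L^2(\cD_T)}$ to $\overline{\Delta_{\Theta_{T'}}L^2(\cD_{T'})}$ intertwining $(W_{\sharp1},W_{\sharp2})$ with $(W'_{\sharp1},W'_{\sharp2})$. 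But Definition~\ref{coincidence} requires the intertwiner to be the \emph{specific} unitary $\omega_u=(I_{L^2}\otimes u)|_{\overline{\Delta_{\Theta_T}L^2(\cD_T)}}$ determined by the coincidence of characteristic functions, and you have not shown that $\omega_{\rm NF,D}'\,\tau_\phi\,\omega_{\rm NF,D}^*=\omega_u$. This identification is not formal: $\omega_u$ is defined directly from $u$, while the other map goes through the abstract residual-space identifications $\omega_{\rm NF,D}$, $\omega_{\rm NF,D}'$.

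The paper handles this point differently and more cleanly. Rather than trying to identify the two candidate unitaries, it transports the primed pseudo-commuting contractive lift back to $\cK_{\Theta_T}$ via $\sbm{I_{H^2}\otimes u_*^*&0\\0&\omega_u^*}$, obtaining a second pseudo-commuting contractive lift of $(T_1,T_2,T)$ on $\cK_{\Theta_T}$ whose third component agrees with $V_{\rm NF}$. Then the uniqueness clause of Corollary~\ref{C:FundOp} forces the two lifts to be identical, which reads off $\omega_u^*(W'_{\sharp1},W'_{\sharp2})\omega_u=(W_{\sharp1},W_{\sharp2})$ directly. Your approach can be repaired by inserting exactly this uniqueness step (or by proving the identification of unitaries via uniqueness of the unitary implementing equivalence of minimal isometric lifts), but as written it is incomplete.
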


\begin{proof}
Let $(T_1,T_2)$ on $\mathcal{H}$ and $(T_1',T_2')$ on $\mathcal{H}'$ be unitarily equivalent via the unitary operator $U:\mathcal{H}\to\mathcal{H}'$. 
Let 
$$((G_1,G_2),(W_{\sharp1},W_{\sharp2}),\Theta_T), \quad ((G'_1,G'_2),(W_{\sharp1}',W_{\sharp2}'),\Theta_{T'})
$$
be their respective characteristic triples.  It is easy to see that $UD_T=D_{T'}U$ and $UD_{T^*}=D_{T'^*}U$ and that the unitaries
\begin{eqnarray}
u:=U|_{\mathcal{D}_T}: \mathcal{D}_{T} \to \mathcal{D}_{T'}\text{ and }u_{*}:=U|_{\mathcal{D}_{T^*}}: \mathcal{D}_{T^*} \to \mathcal{D}_{T'^{*}}
\end{eqnarray} have the following property:
\begin{eqnarray}\label{coincd}
u_*\Theta_T(z)=\Theta_{T'}(z) u \text{ for all } z \in {\mathbb D}.
\end{eqnarray} 
Hence $\Theta_T$ and $\Theta_{T'}$ coincide. 

We next show that the unitary $u_*$ also implements a unitary equivalence of $(G_1,G_2)$ 
with $(G_1',G_2')$ as follows.
Note first that since by definition $(G_1, G_2)$ and $(G'_1, G'_2)$ are the Fundamental-Operator pairs for
$(T_1^*, T_2^*)$ and $(T_1^{\prime *}, T_2^{\prime *})$ respectively, we have:
$$
 T_i^* - T_j T^* = D_{T^*} G_i D_{T^*},   \quad
 T_i^{\prime*} - T'_j T^{\prime *} = D_{T^{\prime *}} G'_i D_{T^{\prime *}} \text{ for }
 (i,j) = (1,2) \text{ or } (2,1).
 $$
It then follows that
\begin{eqnarray}\label{fundequiv}
u_*(G_1,G_2)=(G_1',G_2')u_*.
\end{eqnarray} 
We have seen in Theorem \ref{Thm:NFPseudo} that for a pair $(T_1,T_2)$ of commuting contractions, $(\Pi_{\rm NF}, \underline{\bbW}_{\rm NF})$ 
is a pseudo-commuting contractive lift of $(\underline{T} = (T_1,T_2, T_1T_2)$ acting on the space $\cK_{\Theta_T}$.
Let $(\Pi'_{\rm NF}, \underline{\bbW'}_{\rm NF})$ be the corresponding pseudo-commuting contractive lift of $\underline{T'} = (T_1',T_2', T_1'T_2')$ acting on the space 
$\mathcal{K}_{\Theta_{T'}}$.
Let  $\Pi''$ denote the  isometry
\begin{equation}\label{Pi''}
\Pi'':= \sbm{ I_{H^2}\otimes u_*^* & 0 \\  0 & \omega_u^* } \Pi_{\rm NF}'U \colon \mathcal{H}  \to 
\sbm{ H^2(\mathcal{D}_{T^*})\\ \overline{\Delta_{\Theta_T}L^2(\mathcal{D}_{T})} }
\end{equation}
where $\omega_u:\overline{\Delta_{\Theta_T}L^2(\mathcal{D}_T)}\to\overline{\Delta_{\Theta_{T'}}L^2(\mathcal{D}_{T'})}$ is the unitary
$\omega_u=(I_{L^2}\otimes u)|_{\overline{\Delta_{\Theta_T}L^2(\mathcal{D}_T)}}$. 
We observe that
$(\Pi'',\mathcal{K}_{\rm NF},\underline{\bbW''})$ is also a pseudo-commuting contractive lift of $(T_1,T_2, T)$, where
$(W_1'',W_2'')=\omega_u^*(W_{\sharp1}',W_{\sharp2}')\omega_u$ and
$$
\underline{\bbW''}:=\left( \sbm{ M_{G_1^*+zG_2} & 0 \\ 0 &  W_1''}, \sbm{M_{G_2^*+zG_1} & 0 \\ 0 &  W_2''},
\sbm{ M_z & 0 \\ 0 &  M_{\zeta}|_{\overline{\Delta_{\Theta_T}(L^2(\cD_T))}}} \right).
$$
By making use of  (\ref{fundequiv}), we see that, for $(i,j)=(1,2)$ or $(2,1)$, we have
\begin{eqnarray*}
\Pi''T_i^*&=& \sbm{ I_{H^2}\otimes u_*^* & 0 \\ 0 &  \omega_u^*} \Pi_{\rm NF}'{T'_i}^*U \text{ (by \eqref{Pi''})} \\
&=& \sbm{I_{H^2}\otimes u_*^* & 0 \\0 & \omega_u^* } \sbm{ M_{{G'_i}^*+z{G'_j}}^*  & 0 \\ 0 & {W'_{\sharp i}}^*}
\Pi_{\rm NF}'U  \\
&=& \sbm{ M_{G_i^*+zG_j} & 0 \\ 0 &  {W''_i}^*}  \sbm{ I_{H^2}\otimes u_*^* & 0 \\ 0 &  \omega_u^*} \Pi_{\rm NF}'U
\text{ (by \eqref{fundequiv}).}
\end{eqnarray*}
Now since the last entry of $\underline{\bbW''}$ is the same as that of $\underline{\bbW}_{\rm NF}$, applying
Corollary \ref{C:FundOp}, we get $\underline{\bbW''}=\underline{\bbW}_{\rm NF}$ and we conclude that
$$
(W_1'',W_2'')=\omega_u^*(W_{\sharp1}',W_{\sharp2}')\omega_u=(W_{\sharp1},W_{\sharp2}),
$$
This together with equations (\ref{coincd}) and (\ref{fundequiv}) establishes the first part of the theorem.

 Conversely, let $((G_1,G_2),(W_{\sharp1},W_{\sharp2}),\Theta_T)$ and $((G'_1, G'_2),(W_{\sharp1}',W_{\sharp2}'),\Theta_{T'})$ be the characteristic triples of $(T_1,T_2)$ and 
 $(T_1',T_2')$ respectively, and suppose the respective characteristic triples coincide. Thus there exist unitaries $u:\mathcal{D}_T\to\mathcal{D}_{T'}$ and $u_*:\mathcal{D}_{T^*}\to\mathcal{D}_{T'^*}$ such that part $(i)$ and part $(ii)$ in Definition \ref{coincidence} hold. Let $\omega_u$ be the unitary induced by $u$ as defined in $(\ref{omega-u})$. Then it is easy to see that the unitary
\begin{equation}\label{unitary-coin}
\begin{bmatrix} I_{H^2}\otimes u_*  & 0 \\  0 & \omega_u \end{bmatrix}  \colon
      \begin{bmatrix} H^2(\mathcal{D}_{T^*}) \\ \overline{\Delta_{\Theta_T}L^2(\mathcal{D}_{T})} \end{bmatrix}
                                               \to \begin{bmatrix} H^2(\mathcal{D}_{T'^*} )\\
                                                    \overline{\Delta_{\Theta_{T'}}L^2(\mathcal{D}_{T'})} \end{bmatrix}
\end{equation}
intertwines
\begin{align*}
    &\underline{\bbW}=\left(\begin{bmatrix} M_{G_1^*+zG_2} & 0 \\ 0 &  W_{\sharp1} \end{bmatrix},
    \begin{bmatrix} M_{G_2^*+zG_1} & 0 \\ 0 &  W_{\sharp 2}\end{bmatrix}, 
    \begin{bmatrix} M_z^{\cD_{T^*}} & 0 \\ 0 &  M_{\zeta}^{\cD_T}|_{\overline{\Delta_{\Theta_T}(L^2(\cD_T)})} \end{bmatrix} \right) 
\end{align*}
with 
$$
 \underline{\bbW'}=\left( \begin{bmatrix} M_{G_1^{\prime *}+zG'_2} & 0 \\ 0 & W_{\sharp1}' \end{bmatrix},
 \begin{bmatrix} M_{G_2^{\prime *} +zG'_1} & 0 \\ 0 &  W_{\sharp2}' \end{bmatrix},
 \begin{bmatrix} M_z^{\cD_{T^{\prime *}}} & 0 \\ 0 &  M_{\zeta}|_{\overline{\Delta_{\Theta_{T'}}(L^2(\cD_{T'}))}}  \end{bmatrix}    \right).
 $$
Also, the unitary in $(\ref{unitary-coin})$ clearly takes $\sbm{ \Theta_T \\  \Delta_{\Theta_T} } H^2(\cD_T)$ onto
$\sbm{ \Theta_{T'} \\  \Delta_{\Theta_{T'}} }  H^2(\mathcal{D}_{T'})\}$ and hence
\begin{align*}
\begin{bmatrix} H^2(\cD_{T^*}) \\ \overline{ \Delta_{\Theta_T} L^2(\cD_T)} \end{bmatrix}
 \ominus \begin{bmatrix} \Theta_T \\ \Delta_{\Theta_T} \end{bmatrix} H^2(\cD_T)
\text{ onto }\begin{bmatrix} H^2(\cD_{T'^*}) \\ \overline{ \Delta_{\Theta_{T'}} L^2(\cD_{T'})} \end{bmatrix}
 \ominus \begin{bmatrix} \Theta_{T'} \\ \Delta_{\Theta_{T'}} \end{bmatrix} H^2(\cD_{T'}).
\end{align*}
Thus that the functional models for $(T_1,T_2)$ and $(T_1',T_2')$ as in (\ref{NFmodelPair}) are unitarily equivalent and hence by (\ref{NFmodelPair}) the pairs $(T_1,T_2)$ and $(T_1',T_2')$ are unitarily equivalent also.
\end{proof}

\section{Admissible triples}
In this section, we consider general contractive analytic  functions $(\cD, \cD_*, \Theta)$ and do not insist
that $\Theta$ be also pure.
We start with a  contractive analytic function $(\cD,\cD_*,\Theta)$, a commuting unitary operator-pair $(W_1,W_2)$, and a pair of contraction operators $(G_1,G_2)$ 
and investigate when the triple $((G_1,G_2),$ $(W_1,W_2),$ $\Theta)$ gives rise to a commuting contractive operator-pair $(T_1,T_2)$  such that $T=T_1T_2$ is 
completely non-unitary (c.n.u.).

If $((G_1, G_2), (W_1, W_2), \Theta)$ is equal to the characteristic triple $(G_1, G_2),$ $(W_{\sharp 1},$ $W_{\sharp 2}),$ $\Theta_T)$ for a commuting contractive
operator-pair $(T_1, T_2)$,  it is easy to check from the fact that $(\Pi_{\rm NF}, \bbW_{\rm NF,1}, \bbW_{\rm NF,2}, V_{\rm NF})$ is a pseudo-commuting contractive lift of 
$(T_1, T_2)$  that the characteristic triple $((G_1, G_2), (W_1, W_2), \Theta)$ in particular satisfies the set of {\em admissibility
conditions} listed in following definition of {\em admissibility conditions} for such a triple.

\begin{definition}  \label{D:admis-cond} {\rm  \textbf{Admissibility conditions:}\index{admissible conditions}
For $(i,j) = (1,2)$ or $(2,1)$ we have:
 \begin{enumerate}
\item $M_{G_{i}^* + z G_{j}} \oplus W_{ i}$ is a contraction.
\item $W_{1}W_{2}=W_{2}W_{1}=M_{\zeta}|_{\overline{\Delta_\Theta L^2(\cD)}}$.
\item The space $\cQ_{\Theta}:=\{\Theta f\oplus\Delta_\Theta f:f\in H^2(\cD)\}$ is
jointly invariant under
$(M_{G_{1}^* + z G_{2}} \oplus W_{1},M_{G_{2}^* + z G_{1}} \oplus W_{2},M_z\oplus
M_{\zeta}|_{\overline{\Delta_\Theta L^2(\cD)}})$.
\item With $\cK_\Theta:=H^2(\cD_{*})\oplus\overline{\Delta_\Theta L^2(\cD)}$ and $\cH_{\Theta}:=\cK_\Theta\ominus\cQ_{\Theta}$ we have
$$(M_{G_{i}^* + z G_{ j}}^*\oplus {W_{i}}^*)(M_{G_{ j}^* + z G_{i}}^*\oplus {W_{j}}^*)|_{\cH_{\Theta}}=(M_z^*\oplus M_{\zeta}^*|_{\overline{\Delta_\Theta L^2(\cD)}})|_{\cH_{\Theta}}.$$
\end{enumerate}
In particular, since condition (4) holds for both $(i,j) = (1,2)$ and $(i,j) = (2,1)$, we see that
{\em $T_1^*: = (M_{G_{1}^* + z G_{ 2}}^*\oplus {W_{1}}^*)|_{\cH_\Theta}$ commutes with
$T_2^*: = (M_{G_{2}^* + z G_{ 1}}^*\oplus {W_{2}}^*)|_{\cH_\Theta}$.}
}\end{definition}

This motivates the following definition.

\begin{definition}\label{D:AdmissTriple}\index{admissible triple}
Let $(\mathcal{D},\mathcal{D}_*,\Theta)$ be a  contractive analytic function and $(G_1,G_2)$ on $\mathcal{D}_*$
be a pair of contractions. Let $(W_1,W_2)$ be a pair of commuting unitaries on $\overline{\Delta_\Theta L^2(\mathcal{D})}$. We say that the triple $\Xi=((G_1,G_2),(W_1,W_2),\Theta)$ is {\em admissible} if it satisfies
the admissibility conditions $(1)$--$(4)$  in Definition \ref{D:admis-cond}.
We then say that the triple\index{functional model for admissible triples}
\begin{align}\label{AdmisFuncModel}
\nonumber\underline{\bf T}_{\Xi}&:=({\bf T}_1,{\bf T}_2,{\bf T}_1{\bf T}_2)_{\Xi}\\
&:=P_{\mathcal{H}_\Theta}(M_{G_1^*+zG_2}\oplus W_1,M_{G_2^*+zG_1}\oplus W_2,M_z\oplus M_{\zeta}|_{\overline{\Delta_\Theta(L^2(\cD))}})|_{\mathcal{H}_\Theta}
\end{align}
is the functional model associated with the admissible triple $((G_1,G_2),(W_1,W_2),\Theta)$.

Let us also say that the admissible triple $((G_1,G_2),(W_1,W_2),\Theta)$ is {\em pure}
\index{admissible triple!pure} 
if its
last component $\Theta$ is a purely contractive analytic function.  
\end{definition}

Then we have the following analogue
of Observation \ref{O:reduction} for the Sz.-Nagy--Foias model.

\begin{proposition}   \label{P:pure-adm-triple}
Suppose $((G_1, G_2), (W_1, W_2), \Theta)$ is an admissible triple and that $\Theta$ has a
(possibly nontrivial) decomposition $\Theta = \Theta' \oplus \Theta^0$ with $(\cD', \cD'_*, \Theta')$ a unitary constant and $\Theta^0$ a purely contractive analytic function. Then there is an admissible triple of the form $((G_1^0, G_2^0),
(W_1^0, W_2^0), \Theta^0)$ so that the functional model for $((G_1, G_2), (W_1, W_2), \Theta)$
is unitarily equivalent to the functional model for $((G_1^0, G_2^0), (W_1^0, W_2^0), \Theta^0)$.
\end{proposition}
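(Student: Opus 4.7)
The plan is to exhibit the reduced admissible triple explicitly via the block decompositions induced by $\Theta = \Theta' \oplus \Theta^0$, and then verify both the admissibility conditions for the reduced triple and the coincidence of functional models. The argument will be essentially bookkeeping built around one clean observation.

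First I would unpack the ambient decompositions. Writing $\cD = \cD' \oplus \cD^0$ and $\cD_* = \cD_*' \oplus \cD_*^0$, the fact that $\Theta'$ is a unitary constant gives $\Delta_\Theta = 0 \oplus \Delta_{\Theta^0}$, so $\overline{\Delta_\Theta L^2(\cD)} = \overline{\Delta_{\Theta^0} L^2(\cD^0)}$, and also $\Theta' \cdot H^2(\cD') = H^2(\cD_*')$. Consequently one reads off
\[
\cK_\Theta = H^2(\cD_*') \oplus \cK_{\Theta^0}, \qquad \cQ_\Theta = H^2(\cD_*') \oplus \cQ_{\Theta^0},
\]
and so $\cH_\Theta = \cK_\Theta \ominus \cQ_\Theta$ coincides with $\cH_{\Theta^0}$ under the canonical embedding $\cK_{\Theta^0} \hookrightarrow \cK_\Theta$.

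Next I would write each $G_i$ as a $2 \times 2$ block with respect to $\cD_* = \cD_*' \oplus \cD_*^0$, say $G_i = \sbm{A_i & B_i \\ C_i & D_i}$, and set $G_i^0 := D_i = P_{\cD_*^0} G_i|_{\cD_*^0}$ together with $W_i^0 := W_i$ (via the identification of defect $L^2$-spaces above). The heart of the argument is the direct computation: for $h = h_1 \oplus h_2 \in \cH_{\Theta^0} \subset H^2(\cD_*^0) \oplus \overline{\Delta_{\Theta^0} L^2(\cD^0)}$, viewed as $0 \oplus h_1 \oplus h_2 \in \cK_\Theta$, the expansion
\[
(M_{G_i^* + zG_j} \oplus W_i)(0 \oplus h_1 \oplus h_2) = M_{C_i^* + zB_j} h_1 \oplus M_{D_i^* + zD_j} h_1 \oplus W_i h_2
\]
has its first summand in $H^2(\cD_*') \subset \cQ_\Theta$. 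Projecting to $\cH_\Theta = \cH_{\Theta^0}$ therefore annihilates that term, yielding
\[
P_{\cH_\Theta}(M_{G_i^* + zG_j} \oplus W_i)h = P_{\cH_{\Theta^0}}(M_{G_i^{0*} + zG_j^0} \oplus W_i^0)h,
\]
so the two functional-model operator-triples coincide via the identity map $\cH_\Theta = \cH_{\Theta^0}$; this identity serves as the implementing unitary equivalence.

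Finally I would verify admissibility conditions (1)--(4) for the candidate triple $((G_1^0, G_2^0), (W_1^0, W_2^0), \Theta^0)$. Condition (1) is immediate, since $M_{G_i^{0*} + zG_j^0} \oplus W_i^0$ is the compression of the contraction $M_{G_i^* + zG_j} \oplus W_i$ to the subspace $H^2(\cD_*^0) \oplus \overline{\Delta_\Theta L^2(\cD)}$. Condition (2) is trivial from $W_i^0 = W_i$. Condition (3) follows by applying admissibility of $\Xi$ to input vectors $0 \oplus f^0 \in H^2(\cD)$ and reading off the second summand of $\cQ_\Theta = H^2(\cD_*') \oplus \cQ_{\Theta^0}$. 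Condition (4) is transferred by the identification of the preceding paragraph: since the compressed operators on $\cH_\Theta$ and $\cH_{\Theta^0}$ agree, the commutativity and product relations required for $\Xi^0$ inherit directly from those for $\Xi$. No individual step presents a serious obstacle; the principal care required is simply tracking the block-matrix entries accurately through each of these four verifications.
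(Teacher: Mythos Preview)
Your proof is correct and takes a genuinely different route from the paper's. The paper argues abstractly: it observes (via the classical Sz.-Nagy--Foias result) that the model contraction $\mathbf{T} = \mathbf{T}_1 \mathbf{T}_2$ on $\cH_\Theta$ is c.n.u.\ with characteristic function coinciding with $\Theta^0$, then takes the \emph{characteristic triple} $\widetilde\Xi$ of $(\mathbf{T}_1, \mathbf{T}_2)$ (automatically admissible, being a characteristic triple), invokes Theorem~\ref{Thm:SNFmodelPair} to get unitary equivalence with the original functional model, and finally transports $\widetilde\Xi$ along the coincidence unitaries $u, u_*$ to obtain a triple with last component exactly $\Theta^0$. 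Your approach instead writes everything in block form and compresses explicitly, yielding the concrete formulas $G_i^0 = P_{\cD_*^0} G_i|_{\cD_*^0}$ and $W_i^0 = W_i$, and verifying the admissibility axioms and the equality of models by hand.

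What each buys: your argument is more elementary and self-contained (no appeal to Theorem~\ref{Thm:SNFmodelPair} or the identification of $\Theta_{\mathbf T}$ with $\Theta^0$), and it produces the reduced data in closed form. The paper's argument is shorter to state and situates the result conceptually within its model-theory framework, but it does not make the formulas for $G_i^0$ explicit. One small remark on your write-up: your transfer of condition~(4) via ``the compressed operators agree'' implicitly uses that, once condition~(3) is established, $\cH_{\Theta^0}$ is coinvariant for $M_{G_i^{0*}+zG_j^0}\oplus W_i^0$, so that the adjoint of the compression equals the restriction of the adjoint; this is routine but worth stating.
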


We shall refer to  $((G_1^0, G_2^0), (W_1^0, W_2^0), \Theta^0)$ as the {\em pure part} of the admissible triple
$((G_1, G_2), (W_1, W_2), \Theta)$. 
\index{admissible triple!pure part of}

\begin{proof}  We suppose that $((G_1, G_2), (W_1, W_2), \Theta)$ is an admissible triple and that $\Theta$
has a (possibly nontrivial) decomposition $\Theta = \Theta' \oplus \Theta^0$ with
$(\cD', \cD'_*, \Theta')$ a unitary constant function and $(\cD^0, \cD_*^0, \Theta^0)$ a purely contractive
analytic function.  Let
$$
\cH_\Theta =  H^2(\cD_*) \oplus \overline{ \Delta_\Theta L^2(\cD)}  \ominus \{ \Theta f \oplus \Delta_\Theta f \colon f \in  H^2(\cD)\}
$$
be the Sz.-Nagy--Foias functional model space associated with $\Theta$
(and hence also the functional model space associated with the admissible triple
$((G_1, G_2), (W_1, W_2), \Theta)$), and let
$$
({\mathbf T}_1^*, {\mathbf T}_2^*, {\mathbf T}^*) =
\left((M_{G_1^* + z G_2} \oplus W_1)^*, (M_{G_2^* + z G_1} \oplus W_2)^*,
(M_z \oplus M_\zeta|_{\overline{\Delta_\Theta L^2(\cD)}})^* \right) \big|_{\cH(\Theta)}
$$
be the associated functional-model triple of contraction operators. (with ${\mathbf T} = {\mathbf T}_1 {\mathbf T}_2$).
As a result of \cite[Theorem VI.3.1]{Nagy-Foias}, we know that ${\mathbf T}$ is c.n.u.\ with
characteristic function $\Theta_{\mathbf T}$ coinciding with $\Theta^0$.  Thus the characteristic triple for
$({\mathbf T}_1^*, {\mathbf T}_2^*, {\mathbf T}^*)$ has the form
$$
\widetilde \Xi : = ((\widetilde G_1, \widetilde G_2), (\widetilde W_1, \widetilde W_2), \Theta_{\mathbf T})
$$
and by Theorem \ref{Thm:SNFmodelPair} it follows that $({\mathbf T}_1, {\mathbf T}_2, {\mathbf T})$
is unitarily equivalent to the model operators
associated with $\widetilde \Xi$.  As already noted, $\Theta_{\mathbf T}$ coincides with $\Theta^0$;
hence there are unitary operators $u \colon \cD_T \to \cD^0$, $u_* \colon \cD_{T^*} \to \cD_*^0$ so that
$$
 \Theta^0(z) u = u_* \Theta_T(z) \text{ for all } z \in {\mathbb D}.
$$
Define operators $G_1^0$, $G_2^0$ on $\cD_*^0$ and $W_1^0$, $W_2^0$ on
$\overline{\Delta_{\Theta^0} L^2(\cD^0)}$ by
$$
  G_i^0 = u_* \widetilde G_i u_*^*, \quad W_i^0 = (u \otimes I_{L^2}) \widetilde W_i (u^* \otimes I_{L^2})|_{\overline{\Delta_{\Theta^0} L^2(\cD^0)}}
$$
for $i = 1,2$.  Then by construction the triple
$$
  \Xi^0 = \left( (G_1^0, G_2^0), (W_1^0, W_2^0), \Theta^0 \right)
 $$
 coincides with $\widetilde \Xi$ and hence is also admissible.  Then by Theorem \ref{UnitaryInv}
 the commuting contractive pair $({\mathbf T}_1, {\mathbf T}_2)$ is also unitarily equivalent
 to the functional-model commuting contractive pair associated with the admissible triple $\Xi^0$.
This completes the proof of Proposition \ref{P:pure-adm-triple}.
\end{proof}

\begin{proposition}\label{P:admissPCC}
Let $\Xi=((G_1,G_2),(W_1,W_2),\Theta)$ be a pure admissible triple
and let  $\underline{T}_\Xi$ be the functional model associated with $\Xi$. Then the model triple of operators
\begin{align*}
\underline{\bbW}=(M_{G_1^*+zG_2}\oplus W_1, \quad M_{G_2^*+zG_1}\oplus W_2, \quad M_z\oplus M_{\zeta}|_{\overline{\Delta_\Theta(L^2(\cD))}})
\end{align*} 
on $\cK_\Theta$ is a pseudo-commuting contractive lift of $\underline{\mathbf T}_\Xi$ with the inclusion map $i:\cH_\Theta\to\cK_\Theta$ as the 
associated isometric embedding operator.
\end{proposition}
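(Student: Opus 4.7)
Here is a plan for the proof of Proposition \ref{P:admissPCC}.

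My approach is to verify the four-part definition of \emph{pseudo-commuting contractive lift} (Definition \ref{D:pcc}) directly from the four admissibility conditions in Definition \ref{D:admis-cond}, and then invoke the classical Sz.-Nagy--Foias theorem (Theorem A in the introduction) for the minimality of the embedded isometric lift of $T=T_1T_2$. First I would fix the triple $\Xi=((G_1,G_2),(W_1,W_2),\Theta)$ and set
\[
\bbW_1=M_{G_1^*+zG_2}\oplus W_1,\qquad \bbW_2=M_{G_2^*+zG_1}\oplus W_2,\qquad \bbW=M_z\oplus M_\zeta|_{\overline{\Delta_\Theta L^2(\cD)}}
\]
on $\cK_\Theta$, noting that admissibility (1) gives that $\bbW_1,\bbW_2$ are contractions and that $\bbW$ is automatically an isometry since $M_z$ is an isometry on $H^2(\cD_*)$ and $M_\zeta|_{\overline{\Delta_\Theta L^2(\cD)}}$ is unitary.

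The algebraic conditions (ii) and (iii) in Definition \ref{D:pcc} would then be verified by a straightforward computation on each block. For commutation of $\bbW_j$ with $\bbW$, on the $H^2(\cD_*)$-block one uses commutativity of $M_z$ with a multiplier, and on the residual block one uses admissibility (2) together with the commutativity of $(W_1,W_2)$ to conclude that each $W_j$ commutes with $W_1W_2=M_\zeta|_{\overline{\Delta_\Theta L^2(\cD)}}$. For the identity $\bbW_1=\bbW_2^*\bbW$, on the $H^2(\cD_*)$-block the pencil identity
\[
(I_{H^2}\otimes G_2+M_z^*\otimes G_1^*)M_z=I_{H^2}\otimes G_1^*+M_z\otimes G_2
\]
gives $M_{G_2^*+zG_1}^*M_z=M_{G_1^*+zG_2}$, while on the residual block $W_2^*(W_1W_2)=W_1$ by (2). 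The companion identity $\bbW_2=\bbW_1^*\bbW$ (cf.\ Remark \ref{R:iii'}) follows either symmetrically or as the general consequence of $\bbW$ being an isometry commuting with $\bbW_2$.

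Next I would check the lifting condition (iv): that $\operatorname{Ran} i=\cH_\Theta$ is coinvariant under $(\bbW_1,\bbW_2,\bbW)$, with $(\bbW_1^*,\bbW_2^*,\bbW^*)|_{\cH_\Theta}=(\mathbf T_1^*,\mathbf T_2^*,(\mathbf T_1\mathbf T_2)^*)$. Coinvariance is exactly admissibility (3), which says $\cQ_\Theta=\cK_\Theta\ominus\cH_\Theta$ is jointly invariant under $(\bbW_1,\bbW_2,\bbW)$. Using coinvariance, for $h\in\cH_\Theta$ one has $\bbW_j^*h=\mathbf T_j^*h$, and then admissibility (4) (for both $(i,j)=(1,2)$ and $(2,1)$) gives $\bbW^*h=\bbW_1^*\bbW_2^*h=\mathbf T_1^*\mathbf T_2^*h$ and similarly $\bbW^*h=\mathbf T_2^*\mathbf T_1^*h$, so in particular the functional-model compression $\mathbf T_1\mathbf T_2=\mathbf T_2\mathbf T_1$ equals the compression of $\bbW$, which is what is needed for $(i,\bbW_1,\bbW_2,\bbW)$ to be a lift of $\underline{\mathbf T}_\Xi=(\mathbf T_1,\mathbf T_2,\mathbf T_1\mathbf T_2)_\Xi$.

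Finally, for condition (v) (minimality of the embedded isometric lift $(i,\bbW)$ of $\mathbf T_1\mathbf T_2$), I would invoke Theorem A of the introduction and \cite[Theorem VI.3.1]{Nagy-Foias}: since $\Theta$ is purely contractive, $\bbW=V_\Theta$ on $\cK_\Theta$ is by definition the minimal isometric lift of the c.n.u.\ contraction $T_\Theta=P_{\cH_\Theta}V_\Theta|_{\cH_\Theta}$; and by construction this $T_\Theta$ coincides with $\mathbf T_1\mathbf T_2$, so $(i,\bbW)$ is minimal as a lift of $\mathbf T_1\mathbf T_2$ as required. The only mildly subtle step is matching the lift relation for the product component with the identity $\mathbf T_1\mathbf T_2=P_{\cH_\Theta}\bbW|_{\cH_\Theta}$, but this is precisely what admissibility (4) was designed to ensure, so no real obstacle arises; the argument is essentially a bookkeeping exercise that unpacks the admissibility axioms into the axioms of Definition \ref{D:pcc}.
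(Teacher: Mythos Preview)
Your proposal is correct and follows essentially the same approach as the paper's proof: both verify the pseudo-commuting contractive triple axioms from admissibility conditions (1)--(2), the coinvariance/lift property from (3)--(4), and minimality from \cite[Theorem VI.3.1]{Nagy-Foias}. The only cosmetic difference is that the paper cites Step~1 in the proof of Theorem~\ref{Thm:DPseudo} for the pencil identities $\bbW_j\bbW=\bbW\bbW_j$ and $\bbW_1=\bbW_2^*\bbW$, whereas you write out these block computations directly---but the underlying calculations are identical.
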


\begin{proof}
By the analysis of Case 1 in the proof of Theorem \ref{Thm:DPseudo} we see that the content of the admissibility conditions (1), (2) in Definition \ref{D:admis-cond} is that the triple
$\underline{\bbW} = (\bbW_1, \bbW_2, \bbW)$  given by
$$
\bbW_1 = M_{G_1^* + z G_2} \oplus W_1, \quad \bbW_2 = M_{G_2^* + z G_1} \oplus W_2, \quad \bbW = M_z^{\cD_*} \oplus M_\zeta |_{ \overline{\Delta_\Theta L^2(\cD)}},
$$
all acting on $H^2(\cD_*) \oplus \overline{ \Delta_\Theta L^2(\cD)}$,  is the general form for a pseudo-commuting contractive triple with third component specified to be
$\bbW = M_z^{\cD_*} \oplus M_\zeta|_{ \overline{  \Delta_\Theta L^2(\cD)}}$.  Thus it follows that $\underline{\bbW}$ is a pseudo-commuting contractive triple.
Condition (3) in Definition \ref{D:admis-cond}  is equivalent to saying that $\cH_\Theta:=  \cK_\Theta \ominus \cQ_\Theta$ is jointly invariant for the adjoint
$\underline{\bbW}^* = (\bbW_1^*, \bbW_2^*, \bbW^*)$ and we can define $\underline{\mathbf T}^*$ on $\cH(\Theta)$ by
\begin{equation}  \label{i-lift}
\underline{\mathbf T}^* =\underline{\bbW}^*|_{\cH(\Theta)} = :({\mathbf T}_1^*, {\mathbf T}^*_2, {\mathbf T}^*).
\end{equation}
As a consequence of admissibility condition (4) in Definition \ref{D:admis-cond} we see that ${\mathbf T}_1^*$ and ${\mathbf T}_2^*$ commute, and the
product operator ${\mathbf T}_1^* {\mathbf T}_2^*$  is equal to $M_z^* \oplus M_\zeta^*|_{\overline{\Delta_\Theta L^2(\cD)}} = {\mathbf T}^*$, and we conclude that
$\underline{\mathbf T}$ is equal to the functional model operator-triple \eqref{AdmisFuncModel} associated with the admissible triple $\Xi$.
Next note that equation \eqref{i-lift} is just the statement that $(i, \underline{\bbW})$ is a lift of ${\mathbf T}_\Xi$ on $\cH(\Theta)$, where the isometric embedding operator
$i \colon \cH_\Theta \to \cK_\Theta$ is just the inclusion map.
Finally,  the fact that $(i, {\mathbb W})$ is a minimal lift of ${\mathbf T}$ is part of the assertion of Theorem VI.3.1 in \cite{Nagy-Foias}.  We can now conclude that 
$(i, {\mathbb W}_1, {\mathbb W}_2, {\mathbb W})$ is a pseudo-commuting lift of $({\mathbf T}_1, {\mathbf T}_2, {\mathbf T} = {\mathbf T}_1 {\mathbf T}_2)$
in the sense of Definition  \ref{D:pcc}. This completes the proof.
\end{proof}

For $\Theta$ a purely contractive analytic function, we have the following result.

\begin{theorem}\label{AdmisCharc}
Let $(\mathcal{D},\mathcal{D}_*,\Theta)$ be a purely contractive analytic function, let $(G_1,$ $G_2)$ on $\mathcal{D}_*$ be a pair of contractions, and let $(W_1,W_2)$ on 
$\overline{\Delta_\Theta L^2(\mathcal{D})}$ be a pair of
commuting unitaries such that their product $W_1 W_2$ is equal to $M_{\zeta}|_{\overline{\Delta_\Theta L^2(\cD)}}$.  Then
$((G_1,G_2),(W_1,W_2),\Theta)$ is admissible if and only if it coincides with the characteristic triple of
some commuting contractive pair $(T_1, T_2)$ with  product operator $T = T_1 T_2$ equal to a c.n.u.\ contraction. In fact, the triple
$((G_1,G_2),(W_1,W_2),\Theta)$ coincides with the characteristic triple of its functional model as
defined in (\ref{AdmisFuncModel}).
\end{theorem}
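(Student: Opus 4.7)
The plan is to handle the two directions, with the forward implication ``admissibility $\Rightarrow$ coincidence with a characteristic triple'' carrying the bulk of the work; the ``in fact'' addendum will follow from the forward argument applied to the functional model itself.

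For the easy direction, suppose $\Xi = ((G_1, G_2), (W_1, W_2), \Theta)$ coincides with the characteristic triple $\Xi_{\underline T}$ of some commuting contractive pair $(T_1, T_2)$ whose product $T = T_1T_2$ is c.n.u. By Theorem \ref{Thm:NFPseudo}, $(\Pi_{\rm NF}, \underline{\bbW}_{\rm NF})$ is a pseudo-commuting contractive lift of $(T_1, T_2, T)$, and I would verify the four admissibility conditions of Definition \ref{D:admis-cond} directly for $\Xi_{\underline T}$: conditions (1) and (2) are built into the definition of a pseudo-commuting contractive triple; (3) and (4) hold because $\operatorname{Ran}\Pi_{\rm NF} = \cH_{\Theta_T}$ is coinvariant for $\underline{\bbW}_{\rm NF}$ and the compressions of the first two entries to this subspace recover $(T_1, T_2)$. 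These conditions then transfer to $\Xi$ through the unitaries implementing the coincidence.

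For the forward direction, assume $\Xi$ is admissible. Form the functional model $\underline{\bf T}_\Xi = ({\bf T}_1, {\bf T}_2, {\bf T})$ on $\cH_\Theta$ by \eqref{AdmisFuncModel}. Since $\Theta$ is purely contractive, \cite[Theorem VI.3.1]{Nagy-Foias} gives that ${\bf T}$ is c.n.u.\ with characteristic function $\Theta_{\bf T}$ coinciding with $\Theta$ via unitaries $u \colon \cD \to \cD_{\bf T}$, $u_* \colon \cD_* \to \cD_{{\bf T}^*}$ satisfying $u_* \Theta(z) = \Theta_{\bf T}(z) u$. I take the candidate pair to be $({\bf T}_1, {\bf T}_2)$ and must show that its characteristic triple $\Xi_{\underline{\bf T}_\Xi} = ((G_1^{\bf T}, G_2^{\bf T}), (W^{\bf T}_{\sharp 1}, W^{\bf T}_{\sharp 2}), \Theta_{\bf T})$ coincides with $\Xi$; the $\Theta$-component is already handled, so the work is in matching the remaining two components.

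The key move, which I expect to be the most delicate step, compares two pseudo-commuting contractive lifts of $\underline{\bf T}_\Xi$: first, $(i_{\cH_\Theta \to \cK_\Theta}, \underline{\bbW})$ on $\cK_\Theta$ built directly from $\Xi$, which is a pseudo-commuting contractive lift by Proposition \ref{P:admissPCC}; and second, the Sz.-Nagy--Foias-model lift $(\Pi^{\bf T}_{\rm NF}, \underline{\bbW}^{\bf T}_{\rm NF})$ on $\cK_{\Theta_{\bf T}}$ from Theorem \ref{Thm:NFPseudo} applied to $({\bf T}_1, {\bf T}_2)$, whose first two components are built from $\Xi_{\underline{\bf T}_\Xi}$. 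The coincidence $u_*\Theta = \Theta_{\bf T} u$ forces $u \Delta_\Theta(\zeta)^2 u^* = \Delta_{\Theta_{\bf T}}(\zeta)^2$; a routine check then shows that
\begin{align*}
\tau := \begin{bmatrix} I_{H^2}\otimes u_* & 0 \\ 0 & \omega_u \end{bmatrix} \colon \cK_\Theta \to \cK_{\Theta_{\bf T}}
\end{align*}
is a unitary intertwining the two copies of the minimal isometric lift of ${\bf T}$, carries $\cH_\Theta$ onto $\cH_{\Theta_{\bf T}}$, and satisfies $\tau\, i_{\cH_\Theta\to\cK_\Theta} = \Pi^{\bf T}_{\rm NF}$. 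Chasing the coincidence of $\Theta$ with $\Theta_{\bf T}$ through the wandering-subspace structure to verify this last equality is the main technical obstacle. Once it is in place, the ``furthermore'' clause of Corollary \ref{C:FundOp} automatically forces $\tau$ to intertwine also the first two components of the two lifts, which reads off as $u_* G_i u_*^* = G_i^{\bf T}$ and $\omega_u W_i \omega_u^* = W^{\bf T}_{\sharp i}$ for $i = 1,2$, i.e., the required coincidence of triples in the sense of Definition \ref{coincidence}.
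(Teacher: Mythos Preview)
Your proposal is correct and follows essentially the same route as the paper: build the functional model $\underline{\bf T}_\Xi$, invoke \cite[Theorem VI.3.1]{Nagy-Foias} to get the coincidence $\Theta \cong \Theta_{\bf T}$ via $(u, u_*)$, compare the two pseudo-commuting contractive lifts $(i_{\cH_\Theta \to \cK_\Theta}, \underline{\bbW})$ and $(\Pi^{\bf T}_{\rm NF}, \underline{\bbW}^{\bf T}_{\rm NF})$, and use the uniqueness in Corollary~\ref{C:FundOp} to force $u_*\oplus\omega_u$ to intertwine all components. The step you flag as the main technical obstacle---that $\tau\, i_{\cH_\Theta\to\cK_\Theta} = \Pi^{\bf T}_{\rm NF}$---is exactly the point the paper handles somewhat tersely (the paper phrases it as ``Theorem~\ref{Thm:NFPseudo} and the diagram above shows that $(i'\circ\tau, \underline{\bbW'})$ is also a pseudo-commuting contractive lift''), and your instinct that it requires tracing the Sz.-Nagy--Foias coincidence unitaries through the wandering-subspace structure is on target.
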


\begin{proof}
We have already observed that the characteristic triple of a pair $(T_1,T_2)$ of commuting contractions with
$T=T_1T_2$ being a c.n.u.\ contraction is indeed a pure admissible triple (since characteristic functions
$\Theta_T$ are necessarily purely contractive analytic functions).

Conversely suppose that $\Xi=((G_1,G_2),(W_1,W_2),\Theta)$ is a pure admissible triple. This means
that the pair $({\bf T}_1,{\bf T}_2)$ defined on
$$
\mathcal{H}_\Theta:=\big(H^2(\mathcal{D_*})\oplus \overline{\Delta_\Theta L^2(\mathcal{D})}\big)\ominus \{\Theta f\oplus \Delta_\Theta f:f\in H^2(\mathcal{D})\}
$$
by
$$
({\bf T}_1,{\bf T}_2):=P_{\mathcal{H}_\Theta}(M_{G_1^*+zG_2}\oplus W_1,M_{G_2^*+zG_1}
\oplus W_2)|_{\mathcal{H}_\Theta}
$$
is a commuting pair of contractions with product operator given by
\begin{align}\label{producT}
{\bf T}:={\bf T}_1{\bf T}_2=
P_{\mathcal{H}_{\Theta}}(M_z\oplus M_{\zeta}|_{\overline{\Delta_\Theta(L^2(\cD))}})|_{\mathcal{H}_\Theta}.
\end{align}
By the Sz.-Nagy--Foias model theory for a single contraction operator $T$ (see \cite[Theorem VI.3.1]{Nagy-Foias}),
we conclude that ${\bf T}$ is a c.n.u.\ contraction.
We claim that the triple $((G_1,G_2),(W_1,W_2),\Theta)$
coincides with the characteristic triple for $({\bf T}_1,{\bf T}_2)$, which we assume to be
$((G'_1,G'_2),(W'_1,W'_2),\Theta_{{\bf T}})$. Since $\Theta$ is a purely contractive analytic function,
by (\ref{producT}) and Theorem VI.3.1 in  \cite{Nagy-Foias}, we conclude that $\Theta$ coincides
with $\Theta_{{\bf T}}$.  By definition this  means
that there exist unitaries $u:\mathcal{D}\to\mathcal{D}_{\bf T}$ and $u_*:\mathcal{D}_*\to\mathcal{D}_{{\bf T}^*}$
 such that $\Theta_{{\bf T}}u=u_*\Theta$. Then the unitary operator $u_*\oplus\omega_u$ takes
the space $\cK_\Theta:= H^2(\mathcal{D_*})\oplus\overline{\Delta_\Theta L^2(\mathcal{D})}$ onto the space
 $\cK_{\Theta_{\bT}}:= H^2(\mathcal{D}_{\bT^*})\oplus\overline{\Delta_{\Theta_{\bf T}} L^2(\cD_{\bf T})}$ and also 
 $$  u_* \oplus \omega_u \colon  \{ \Theta f \oplus \Delta_\Theta f \colon f \in H^2(\cD) \} \underset{\rm onto}\to 
  \{\Theta_\bT g \oplus \Delta_{\Theta_\bT} g \colon g \in H^2(\cD_\bT) \}.
  $$
  We can therefore conclude that furthermore $u_*\oplus \omega_u$ maps $\cH_\Theta$ onto $\cH_{\Theta_{\bT}}$ where
  $$
  \cH_\Theta := \cK_\Theta \ominus \{ \Theta f \oplus \Delta_\Theta f \colon f \in H^2(\cD) \}, \quad
  \cH_{\Theta_\bT}:= \cK_{\Theta_\bT} \ominus \{ \Theta_\bT g + \Delta_{\Theta_\bT} h \colon g \in H^2(\cD_\bT \}.
  $$
 Denote by $\tau$ the restriction of $u_*\oplus\omega_u$ to $\mathcal{H}_\Theta$.
 Then we have the following commuting diagram, where $i$ and $i'$ are the inclusion maps:
 $$
\begin{CD}
\mathcal{H}_\Theta @> i>> \mathcal K_\Theta\\
@V\tau VV @VVu_*\oplus\omega_u V\\
\mathcal{H}_{\rm NF}@>>i'> \mathcal{K}_{\rm NF}
\end{CD}
$$
By Proposition \ref{P:admissPCC}, we know that $(i, \underline{\bbW})$ is a pseudo-commuting contractive lift of $\underline{\bf T}_\Xi$ on $\cK_\Theta$, where
$$
\underline{\bbW}=(M_{G_1^*+zG_2}\oplus W_1,M_{G_2^*+zG_1}\oplus W_2,
M_z\oplus M_{\zeta}|_{\overline{\Delta_\Theta(L^2(\cD))}}).
$$
Theorem \ref{Thm:NFPseudo} and the diagram above shows that $(i'\circ\tau, \underline{\bbW'})$ is also a pseudo-commuting contractive lift of $\underline{\bf T}_\Xi$,
on $\cK_{\Theta_{\bT}}$
where
$$
\underline{\bbW'}=(M_{G_1'^*+zG_2'}\oplus W_1',M_{G_2'^*+zG_1'}\oplus W_2',M_z\oplus
M_{\zeta}|_{\overline{\Delta_{{\bf T}}(L^2(\cD_{\bf T}))}}).
$$
Now by the uniqueness result in Corollary \ref{C:FundOp}, there exists a unitary $U:\mathcal{K}_\Theta\to\mathcal{K}_{\Theta_\bT}$ such that 
$U\underline{\bbW}=\underline{\bbW'}U$ and $U \circ i=i'\circ\tau$. Since the last entries of $\underline{\bbW}$ and $\underline{\bbW'}$ are the minimal
isometric dilations of $T=T_1T_2$, such a unitary is in fact unique. By the above commuting diagram we see that $u_*\oplus\omega_u$ is one such unitary. Therefore we get
$$
(u_*\oplus\omega_u)\underline{W}=\underline{W'}(u_*\oplus\omega_u).
$$
Consequently $((G_1,G_2),(W_1,W_2),\Theta)$ coincides with $((G'_1,G'_2),(W'_1,W'_2),\Theta_{{\bf T}})$
and the theorem follows.
\end{proof}

Let us mention   that the results of this and the previous section can be stated more succinctly in the language of Category Theory
as follows.

\begin{proposition}   \label{P:catlan}
Define the following categories:
\begin{enumerate}
\item[(i)]
Let ${\mathfrak C}_1$ be the category of all commuting pairs of contraction operators ${\mathbf T} = (T_1, T_2)$
where we set $T = T_1 \cdot T_2 = T_2 \cdot T_1$ and we assume that $T$ is c.n.u.
\item[(ii)] Let ${\mathfrak C}_2$ be the category of all purely contractive admissible triples
$\Xi=((G_1,G_2),$  $(W_1,W_2), \Theta)$.
\end{enumerate}
Define functors ${\mathfrak f} \colon {\mathfrak C}_1 \to {\mathfrak C}_2$ and
${\mathfrak g} \colon {\mathfrak C}_2 \to {\mathfrak C}_1$  by
\begin{align*}
& {\mathfrak f} \colon {\mathbf T} \mapsto \Xi_{\mathbf T} =
\text{ characteristic triple for } {\mathbf T}, \\
& {\mathfrak g}  \colon \Xi \mapsto {\mathbf T}_\Xi
= \text{functional-model commuting contractive pair} \\
& \quad \quad \quad \quad \text{ associated with } \Xi
\text{ as in } \eqref{AdmisFuncModel}.
\end{align*}
Then, for ${\mathbf T}, {\mathbf T}' \in {\mathfrak C}_1$ and $\Xi,
\Xi'  \in {\mathfrak C}_2$, we have
 \begin{enumerate}
\item  ${\mathbf T} \underset{u}\cong {\mathbf T'} \Leftrightarrow
{\mathfrak f}({\mathbf T}) \underset{c}\cong {\mathfrak f}({\mathbf T}')$,

\smallskip

\item  $\Xi \underset{c} \cong \Xi'\Leftrightarrow
{\mathfrak g}(\Xi) \underset{u} \cong
{\mathfrak g}(\Xi')$,

\smallskip

\item ${\mathfrak g} \circ {\mathfrak f}({\mathbf T}) \underset{u}\cong {\mathbf T}$,

\smallskip

\item ${\mathfrak f} \circ {\mathfrak g}(\Xi) \underset{c}\cong
(\Xi)$
\end{enumerate}
where $\underset{u} \cong$ denotes {\em unitary equivalence of operator tuples} and
$\underset{c} \cong$ denotes {\em coincidence of admissible triples}.
\end{proposition}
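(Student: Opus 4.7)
The plan is to recognize that the four assertions are essentially a categorical repackaging of the major structural theorems already established in the chapter, so that the proof reduces to invoking the right result for each item and chaining them together.

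First I would dispose of item (1): this is precisely the content of Theorem \ref{UnitaryInv}, which asserts that for two commuting contractive pairs whose product is c.n.u., unitary equivalence of the pairs is equivalent to coincidence of their characteristic triples. Next, item (3) is a direct restatement of Theorem \ref{Thm:SNFmodelPair}: for any $\mathbf{T}\in\mathfrak{C}_1$, the commuting pair $\mathbf{T}=(T_1,T_2)$ is unitarily equivalent, via the map $\Pi_{\mathrm{NF},0}\colon\mathcal{H}\to\mathcal{H}_{\Theta_T}$ of \eqref{PiNF0}, to the functional-model pair $\mathbf{T}_{\Xi_{\underline{T}}}=\mathfrak{g}\circ\mathfrak{f}(\mathbf{T})$ defined by \eqref{NFmodelPair}, which coincides with \eqref{AdmisFuncModel}. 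For item (4), I would appeal to Theorem \ref{AdmisCharc}, which asserts that any pure admissible triple $\Xi$ coincides with the characteristic triple of the functional-model pair $\mathbf{T}_\Xi=\mathfrak{g}(\Xi)$ it produces; this is exactly the statement $\mathfrak{f}\circ\mathfrak{g}(\Xi)\underset{c}\cong\Xi$.

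It remains to establish item (2) by a short diagram chase using (1), (3), (4). For the forward direction, suppose $\Xi\underset{c}\cong\Xi'$. By (4) applied to both triples together with transitivity of coincidence, $\mathfrak{f}\circ\mathfrak{g}(\Xi)\underset{c}\cong\Xi\underset{c}\cong\Xi'\underset{c}\cong\mathfrak{f}\circ\mathfrak{g}(\Xi')$, so invoking (1) with $\mathbf{T}=\mathfrak{g}(\Xi)$ and $\mathbf{T}'=\mathfrak{g}(\Xi')$ gives $\mathfrak{g}(\Xi)\underset{u}\cong\mathfrak{g}(\Xi')$. Conversely, if $\mathfrak{g}(\Xi)\underset{u}\cong\mathfrak{g}(\Xi')$, then (1) yields $\mathfrak{f}\circ\mathfrak{g}(\Xi)\underset{c}\cong\mathfrak{f}\circ\mathfrak{g}(\Xi')$, and applying (4) at both ends produces $\Xi\underset{c}\cong\Xi'$. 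Note that in order to apply (1), one must verify that $\mathfrak{g}(\Xi)$ indeed lies in $\mathfrak{C}_1$, i.e.\ that its product operator is c.n.u.; but this was already observed in the proof of Theorem \ref{AdmisCharc}, where \eqref{producT} together with \cite[Theorem VI.3.1]{Nagy-Foias} identifies the product of the functional-model pair with the Sz.-Nagy--Foias model contraction associated with the purely contractive function $\Theta$, which is automatically c.n.u.

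I do not anticipate any serious obstacle: all the hard analysis, including the uniqueness of the pseudo-commuting contractive lift via the Fundamental-Operator pair (Corollary \ref{C:FundOp}), the identification of $\omega_u$ with the induced unitary on the defect space (equation \eqref{omega-u}), and the verification of admissibility for characteristic triples, has already been carried out in Theorems \ref{Thm:SNFmodelPair}, \ref{UnitaryInv}, and \ref{AdmisCharc}. The only genuinely new content in the proposition is the categorical bookkeeping, together with the small check that $\mathfrak{g}$ lands inside $\mathfrak{C}_1$, which as noted is immediate from the purity of $\Theta$ and Sz.-Nagy--Foias model theory for a single contraction.
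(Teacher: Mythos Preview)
Your proposal is correct and matches the paper's intent exactly. The paper does not supply a separate proof of this proposition; it is introduced with the sentence ``the results of this and the previous section can be stated more succinctly in the language of Category Theory as follows,'' so the proposition is meant precisely as a categorical repackaging of Theorems \ref{Thm:SNFmodelPair}, \ref{UnitaryInv}, and \ref{AdmisCharc}, which is what you have unpacked item by item, including the small bootstrap argument for item (2).
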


Finally,  we now show how it is possible to apply the theory of admissible triples to the following factorization problem:

\smallskip

\noindent
\textbf{Commuting Contractive Factorization Problem:}  {\sl Given a c.n.u.\ contraction operator $T$, find all commuting contractive operator-pairs $(T_1, T_2)$ which generate a  commuting contractive factorization of $T$:  $T = T_1 T_2 = T_2 T_1$.}

\begin{corollary}  \label{C:contr-fact} Solutions $(T_1, T_2)$ of the commuting contractive factorization problem are in one-to-one correspondence with pairs $\left((G_1, G_2),
(W_1, W_2) \right)$ which complete the characteristic function $\Theta_T$ to an admissible triple, i.e., with such pairs such that
$$
  \left( (G_1, G_2), (W_1, W_2), \Theta_T \right)
$$
is an admissible triple.
\end{corollary}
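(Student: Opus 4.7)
The plan is to exhibit explicit mutually inverse maps $\Phi$ and $\Psi$ between the two sets, leveraging the normalization afforded by Sz.-Nagy--Foias Theorem A. Without loss of generality, by identifying $T$ with its Sz.-Nagy--Foias functional-model operator $T_{\Theta_T}$ via the unitary $\Pi_{\rm NF,0}$ of Remark \ref{R:NFmodel} (so that we may take $\cH = \cH_{\Theta_T}$ and $T = T_{\Theta_T}$), we obtain canonical identifications for the defect spaces $\cD_T$, $\cD_{T^*}$ and for the residual space $\overline{\Delta_{\Theta_T} L^2(\cD_T)}$. This normalization is crucial: it turns the various ``coincidences'' in the background theorems into honest equalities.

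First I would define the forward map $\Phi$ sending a commuting contractive factorization $(T_1, T_2)$ of $T$ to its characteristic-triple data $((G_1, G_2), (W_{\sharp 1}, W_{\sharp 2}))$, as given in Definition \ref{D:char-triple}. Well-definedness and admissibility of $((G_1, G_2), (W_{\sharp 1}, W_{\sharp 2}), \Theta_T)$ are precisely the content of the motivating observation immediately preceding Definition \ref{D:admis-cond}, where it was noted that the Sz.-Nagy--Foias-model pseudo-commuting contractive lift $(\Pi_{\rm NF}, \bbW_{{\rm NF},1}, \bbW_{{\rm NF},2}, V_{\rm NF})$ of Theorem \ref{Thm:NFPseudo} automatically verifies conditions (1)--(4) of Definition \ref{D:admis-cond}. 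Thus $\Phi$ does land in the target set.

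Next I would define the backward map $\Psi$ sending an admissible-completion pair $((G_1, G_2), (W_1, W_2))$ to the functional-model pair $({\mathbf T}_1, {\mathbf T}_2) = ({\mathbf T}_{\Xi,1}, {\mathbf T}_{\Xi,2})$ from \eqref{AdmisFuncModel}, which acts on $\cH_{\Theta_T} = \cH$. Since
\begin{equation*}
{\mathbf T}_1 {\mathbf T}_2 = P_{\cH_{\Theta_T}} \left( M_z \oplus M_\zeta|_{\overline{\Delta_{\Theta_T} L^2(\cD_T)}}\right) \big|_{\cH_{\Theta_T}} = T_{\Theta_T} = T,
\end{equation*}
this produces a genuine commuting contractive factorization of $T$.

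For the bijection, I would verify $\Phi \circ \Psi = \mathrm{id}$ by invoking Theorem \ref{AdmisCharc}: the characteristic triple of $\Psi((G_1, G_2), (W_1, W_2))$ coincides with the original admissible triple $((G_1, G_2), (W_1, W_2), \Theta_T)$, and under the fixed identifications set up in the first paragraph the coincidence collapses to equality. For $\Psi \circ \Phi = \mathrm{id}$, I would appeal to Theorem \ref{Thm:SNFmodelPair}, which identifies the original pair $(T_1, T_2)$ with the functional-model pair built from its characteristic triple via the unitary $\Pi_{\rm NF,0}$; because we already normalized so that this unitary is the identity, the composition reduces to the identity on factorizations. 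The main obstacle is bookkeeping rather than substance: one must ensure that all the unitary identifications among $\cD_T$, $\cD_{T^*}$, and $\overline{\Delta_{\Theta_T} L^2(\cD_T)}$ arising from Theorems \ref{UnitaryInv}, \ref{Thm:SNFmodelPair}, and \ref{AdmisCharc} are compatible with the single normalization made at the start, so that ``coincidence of triples'' becomes literal equality and the correspondence is one-to-one on the nose rather than only modulo a unitary change of coordinates.
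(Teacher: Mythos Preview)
Your proposal is correct and follows essentially the same two directions as the paper's proof: forward via the characteristic triple (Definition \ref{D:char-triple} and Theorem \ref{Thm:NFPseudo}), backward via the functional-model pair \eqref{AdmisFuncModel}. In fact your argument is more complete than the paper's, which merely exhibits the two maps without explicitly verifying they are mutually inverse; your normalization $T = T_{\Theta_T}$ and the appeals to Theorems \ref{AdmisCharc} and \ref{Thm:SNFmodelPair} to check $\Phi\circ\Psi = \mathrm{id}$ and $\Psi\circ\Phi = \mathrm{id}$ fill exactly the gap needed to justify the phrase ``one-to-one correspondence.''
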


\begin{proof}  If $(T_1, T_2)$ is a solution of the commuting contractive factorization problem for $T$, then from the definitions we see that the characteristic triple of $(T_1, T_2)$
solves the admissible-triple completion problem for $\Theta_T$.  

Conversely, suppose that $\left( (G_1, G_2), (W_1, W_2) \right)$ solves the
admissible-triple completion problem for $T$.  Then the model operator-triple $\underline{\bT}_\Xi$ for $\Xi = \left( (G_1, G_2), (W_1,\right. $ $\left. W_2), \Theta_T \right)$
(see \eqref{AdmisFuncModel})  generates a commuting contractive operator-pair $(\bT_1, \bT_2)$ with product $\bT_1 \bT_2 = \bT_2 \bT_1$
equal to 
$$
\bT:= P_{\cH(\Theta_T)} ( M_z^{\cD_{T^*}} \oplus  M_\zeta|_{\overline{\Delta_{\Theta_T} L^2(\cD_T)}}) |_{\cH(\Theta_T)}
$$ 
which is the Sz.-Nagy--Foias functional-model operator for $T$ and hence is unitarily equivalent to $T$.
\end{proof}

\begin{example}  \label{E:ComContrFact}  As an illustrative example of the previous result,  let us suppose that $(\cD, \cD_*, \Theta)$ is a finite Blaschke-Potapov-product 
matrix inner function (so $\dim \cD_* < \infty$). Then the model space $\cH_\Theta$ collapses to $\cH_\Theta = H^2(\cD_*) \ominus \Theta H^2(\cD)$ and is finite-dimensional.
As a mildly simplifying assumption, let us suppose that $\cH_\Theta$ has a basis consisting of vector-valued kernel functions 
$$
 {\mathfrak B} = \{ d_{m, n} k_{w_m} \colon 1 \le m \le M, \, 1 \le n \le n_m \}
$$
where $w_1, \dots, w_M$ are distinct points in ${\mathbb D}$, where for each $m$ ($1 \le m \le M$), the set $\{ d_{m, 1}, \dots, d_{m, n_m}\}$
is a linearly independent set of vectors in $\cD_*$ (so $n_m \le \dim \cD_*$ for each $1 \le m \le M$), and
where in general, for  $w \in {\mathbb D}$ and $d \in \cD_*$ the function  $(d k_{w})(z) = \frac{d}{1 - z \overline{w}}$ is the $H^2(\cD_*)$-kernel function 
for evaluation of $h\in H^2(\cD_*)$ at the point $w \in {\mathbb D}$ in direction $d \in \cD_*$:
$$
   \langle h, \,  d k_{w} \rangle_{H^2(\cD_*)} = \langle h(w), d \rangle_{\cD_*},
 $$
Since $\Theta$ is inner, the second component $(W_1, W_2)$ of any admissible triple solving the admissible-triple completion problem for $\Theta$ is vacuous 
so any  solution of the admissible-triple completion problem consists 
simply of two matrices $G_1, G_2$ considered as operators on $\cD_*$.
Given such a pair of matrices, 
let us define two matrix pencils.
$$
\varphi_1(z) = G_1^* + z G_2, \quad \varphi_2(z) = G_2^* + z G_1.
$$
Then $(G_1, G_2)$ solves the admissible-triple completion problem for $\Theta$ if and only if:
\begin{itemize}
\item[(i)] the operators $M_{\varphi_1}$ and $M_{\varphi_2}$ are contractions on $H^2(\cD_*)$, i.e. $\| \varphi_i(z) \| \le 1$ for all $z \in {\mathbb D}$ and $i=1,2$.
\item[(ii)] the space $\cH_\Theta$ is jointly invariant for $(M_{\varphi_1}^*, M_{\varphi_2}^*)$.
\item[(iii)] $M_{\varphi_1}^*M_{\varphi_2}^*|_{\cH_\Theta}=M_{\varphi_2}^*M_{\varphi_1}^*|_{\cH_\Theta} =(M_z^{\cD_*})^*|_{\cH_\Theta}$.
\end{itemize}
If we can find a solution $(G_1, G_2)$ of conditions (ii) and (iii), we get a solution of condition (i) simply by a rescaling $(G_1, G_2) \mapsto (\mu G_1, \mu G_2)$ for a scalar $\mu >0$
sufficiently small, so it suffices to consider conditions (ii) and (iii).
Let us note the action of $M_{\varphi_i}^*$ on a  general kernel function:
$$
 M_{\varphi_i}^* \colon  d \, k_w \mapsto (G_i  + \overline{w} G_j^*)d \, k_w.
$$
Let us introduce for $1 \le m \le M$ the subspace
$$
   \cS_m = \bigvee \{ d_{m, n}  \colon  1 \le n  \le n_m  \}.
$$
Then condition (ii) amounts to the collection of invariant subspace conditions:
\begin{itemize}
\item[(ii$^\prime$)]  
  $G_i + \overline{w}_m G_j^* \colon \cS_m \to \cS_m$  for  $1 \le m \le N$ for $(i,j) = (1,2)$ and $(2,1)$.
\end{itemize}

Let us note next that for a general kernel function $d\,k_w$ we have, for $(i,j) = (1,2)$ or $(2,1)$,
\begin{align*}
M_{\varphi_i}^* M_{\varphi_j}^* (d\, k_w) & = M_{\varphi_i}^* (G_j + \overline{w} G_i^*) d k_w  \\
& = G_i (G_j + \overline{w} G_i^*) d k_w + \overline{w} G_j^* (G_j + \overline{w} G_i^*) d  k_w  \\
& = \left(G_i G_j + \overline{w} (G_i G_i^* + G_j^* G_j) + \overline{w}^2 G_j^* G_i^* \right)  d k_w
\end{align*}
and hence condition (iii) boils down to the condition:  
\begin{itemize}
\item[(iii$^\prime$)]  For $1 \le m \le M$, for all  $d \in \cS_m$ we have
\begin{align*}
& \left(G_1G_2 + \overline{w}_m (G_1 G_1^* + G_2^* G_2) + \overline{w}_m^2 G_2^* G_1^* \right) d \\
& \quad = \left(G_2 G_1 + \overline{w}_m (G_2 G_2^* + G_1^* G_1) + \overline{w}_m^2 G_1^* G_2^* \right) d\\
& \quad = \overline{w}_m d
\end{align*}
\end{itemize}

It appears to be difficult to sort out how to solve these conditions (ii$^\prime$) and (iii$\prime$) in general.  Therefore we resort to discussing a couple of more tractable special
cases where a complete solution can be found.

\smallskip

\noindent
\textbf{Special Case 1:  $\cD_* = {\mathbb C}$, $M > 1$ and $w_1 = 0$.}  In this case condition (ii$^\prime$) is automatic as $\cS_m$ is the whole space ${\mathbb C}$
and any solution $(G_1, G_2)$ of the admissible-triple completion problem has the form $(G_1, G_2) = (g_1, g_2)$ where $g_1, g_2$ are complex numbers.  
We shall show: 

\smallskip

\noindent
\textbf{Claim:}
{\em The only solution of the admissible-triple completion problem for this special case is that either $(g_1, g_2)$ or $(g_2, g_2)$ is in the set
${\mathbb T} \times \{0\}$.}

\smallskip
Indeed, note that 
condition (ii$^\prime$) now assumes the form
\begin{equation}   \label{scalar-form}
 g_1 g_2 + \overline{w}_m ( |g_1|^2 + | g_2 |^2) + \overline{w}_m^2  \overline{g}_2 \overline{g}_1
 = g_2 g_1 + \overline{w}_ (|g_2|^2 + | g_1|^2) + \overline{w}_m^2 \overline{g_1} \overline{g}_2 = \overline{w}_m
 \end{equation}
 for $m= 1, \dots, M$.  In particular, if we apply criterion \eqref{scalar-form} to the case $m = 1$ where by hypothesis $w_1 = 0$ we get
 $$
   g_1 g_2 = 0
 $$
 and hence at least one of $g_1$ and $g_2$ is equal to $0$.  If both $g_1 = g_2 = 0$ and we apply \eqref{scalar-form} to the case  where $w_m \ne 0$,
 we get $0 = \overline{w}_m$ leading to a contradiction.  Hence exactly one of $g_1$, $g_2$ is zero and the other is non-zero;  say that $g_1 \ne 0$ and $g_2 = 0$.  Apply 
 \eqref{scalar-form} again to the case of any $\alpha$ where $w_m \ne 0$ to get
 $$  \overline{w}_m |g_1|^2 = \overline{w}_m
 $$
 leading to the conclusion that $|g_1 | = 1$.   Similarly, if we assume that $g_1 = 0$, we are forced to the solution $(g_1, g_2) = (0, \omega)$ for some
 $\omega \in {\mathbb T}$.
  Conversely, one can easily check that  $(g_1, g_2) = (\omega, 0)$ or $(g_1, g_2) = (0, \omega)$  for some unimodular  $\omega \in {\mathbb T}$
 leads to a solution of \eqref{scalar-form} for all $m = 1, \dots, M$, and the Claim follows.
 
 As a corollary we get a solution of the associated  commuting contractive factorization problem:
 {\em Suppose that $T$ is a contraction operator on a finite-dimensional Hilbert space $\cH$ with $\dim \cH \ge 2$ having defect rank $\operatorname{rank} (I - T T^*)$
 equal to  $1$ and having distinct simple eigenvalues all in the open unit disk,  one of which is $0$, and suppose that $T_1$ and $T_2$ are commuting contractive operators
 on $\cH$ such that $T = T_1 T_2$.  Then there is unimodular $\omega \in {\mathbb T}$ so that}
 $$
 \text{ either } (T_1, T_2) = (\omega T, \overline{\omega} I)  \text{ or } (T_1, T_2) = (\overline{\omega} I, \omega T).
 $$

 \smallskip
 
 \noindent
 \textbf{Special Case 2:   $\Theta = ( \{0\}, \cD_*, 0)$.}
 We consider next the limiting special case where we replace the finite point-set $(w_1, \dots, w_M)$ with the whole unit disk ${\mathbb D}$
 and the associated coefficient space $\cS_m$ (now indexed as $\cS_{w_m}$) with the whole coefficient space $\cD_*$.  
 Then the span of the kernels $\{ d k_w \colon d \in \cD_*, w \in {\mathbb D} \}$
 is the whole space $H^2(\cD_*)$ which can be viewed as $\cH_\Theta$ where $\Theta$ is the zero function from $\{0\}$ into $\cD_*$ (and hence can be viewed now as a
 non-square inner function). Then we seek a solution of the admissible-triple completion problem for the case where we have the non-square inner function
 $(\{0\}, \cD_*, \Theta = 0)$.  Note again that the third component $(W_1, W_2)$ of an admissible triple solving the admissible-triple completion problem for the case
 $\Theta = (\{0\}, \cD_*, 0)$ is again vacuous since in this case the input space is trivial:  $\cD = 0$.  
 Thus we wish to find an operator-pair $(G_1, G_2)$ on the coefficient Hilbert space $\cD_*$ so that
 conditions (i), (ii), (iii) hold on $\cH_\Theta$ which now is equal to all of $H^2(\cD_*)$.   As the kernel functions $\{ d k_w \colon d \in \cD_*, w \in {\mathbb D} \}$
 has span equal to all of $H^2(\cD_*) = \cH_\Theta$, we can again formulate the problem just as was done above, but now with the point-set $\{ w_1, w_2, \dots \}$ equal to all of ${\mathbb D}$ and with the associated vector spaces $\cS_w$ equal to the whole space $\cD_*$ for each $w \in {\mathbb D}$.  This leads us to
 conditions (i), (ii), (iii) having to hold.
 Again condition (i) can be handled by a rescaling of $(G_1, G_2)$ and conditions (ii) and (iii) become conditions (ii$^\prime$) and (iii$^\prime$).
 Condition (ii$^\prime$) is again automatic since $\cS_w = \cD_*$ for all $w \in {\mathbb D}$.   Then condition (iii$^\prime$) must hold for all $w_m \in {\mathbb D}$.
 This leads to the  system of quadratic operator polynomial equations in the variable $\overline{w} \in {\mathbb D}$:
 \begin{align*}
 & G_1 G_2 + \overline{w} (G_1 G_1^* + G_2 ^* G_2) + \overline{w}^2 G_2^* G_1^* \\
 & \quad = G_2 G_1 + \overline{w} (G_2 G_2^* + G_1^* G_1) + \overline{w}^2 G_1^* G_2^*
   = \overline{w} I_{\cD_*}.
 \end{align*}
 As this must hold for all $w \in {\mathbb D}$, we are led to the system of operator equations
 \begin{align*}
 & G_1 G_2 = G_2 G_1 = 0, \\
 & G_1 G_1^* + G_2^* G_2 = G_2 G_2^* + G_1^* G_1 = I_{\cD_*}, \\
 & G_2^* G_1^* = G_1^* G_2^* = 0.
 \end{align*}
 Note that the third line of equations is just the adjoint version of the first line and hence can be dropped.  Next observe that this slimmed-down system of operator equations
 is exactly the same as the system of equations \eqref{PIconds} appearing in Lemma \ref{relations-of-E-lem} (with $(G_1, G_2)$ replacing the $(E_1, E_2)$
 appearing there).  By Lemma \ref{relations-of-E-lem} we know that any solution has the form
 $$
   (G_1, G_2) = (U^* P^\perp, PU)
 $$
 where $U$ is a unitary operator and $P$ is a projection operator on $\cD_*$.  
 As a corollary we get  what we can call {\em the Berger-Coburn-Lebow solution of the associated commuting contractive factorization problem:}
 {\em if $(T_1, T_2)$ is a commuting contractive operator-pair on $H^2(\cD_*)$ solving the factorization problem
 $$
    M_z^{\cD_*} = T_1 T_2 = T_2 T_1,
 $$
 then there is a projection $P$ and unitary $U$ on $\cD_*$ so that}
 $$
   T_1 = M_{ P^\perp U + z P U}, \quad T_2 = M_{U^*P + z U^* P^\perp}.
 $$ 
 In the special case where $\cD_* = {\mathbb C}$, the possibilities for projection operators $P$  and unitary operators $U$ are limited to
 $$
   P = 0 \text{ or } P = 1, \quad U= \omega \in {\mathbb T}.
 $$
 Then solutions of the associated commuting contractive factorization problem {\em find a commuting contractive operator-pair $(T_1, T_2)$ on $H^2$ so that
 $M_z  = T_1 T_2 = T_2 T_2$} has the same trivial form as was the case for Special Case 1 discussed above:  {\em there is an $\omega \in {\mathbb T}$ so that}
 $$
   (T_1, T_2)  = (\omega I_{H^2}, \overline{\omega} M_z) \text{ or } (T_1, T_2) = (\omega M_z, \overline{\omega} I_{H^2}).
 $$
 \end{example}

\numberwithin{theorem}{chapter}
\numberwithin{equation}{chapter}

\chapter[Joint invariant subspaces]{Characterization of joint invariant subspaces for pairs of commuting contractions}
\label{C:invsub}

In this chapter we characterize invariant subspaces for pairs $(T_1,T_2)$ of commuting contractions
such that $T=T_1T_2$ is a c.n.u.\ contraction. Sz.-Nagy and Foias characterized how invariant subspaces for
c.n.u.~contractions arise in the functional model (see \cite[Chapter VII]{Nagy-Foias}). They showed that invariant subspaces of a c.n.u.~contraction $T$
are in one-to-one correspondence with regular factorizations of the characteristic function of $T$.
Recall that, given contractive analytic functions $(\cD, \cD_*, \Theta)$ $(\cD, \cF, \Theta')$, $(\cF, \cD_*, \Theta'')$
such that there is a factorization, $\Theta = \Theta'' \Theta'$,  the factorization is said to be {\em regular}
\index{regular factorization for!contractive analytic functions} 
if the 
contractive operator factorization $\Theta(\zeta) = \Theta''(\zeta) \Theta'(\zeta)$ is regular in the sense discussed in Remark \ref{R:specATuple}. 
This means that the map $Z \colon
\overline{ \Delta_\Theta L^2(\cD) } \to \overline{ \Delta_{\Theta''} L^2(\cF)} \oplus  \overline{ \Delta_{\Theta'} L^2(\cD)}$ defined densely by
\begin{equation}   \label{Z'}
 Z \colon \Delta_\Theta(\zeta) h(\zeta)  \mapsto \Delta_{\Theta''} (\zeta) \Theta'(\zeta) h(\zeta) \oplus \Delta_{\Theta'}(\zeta) h(\zeta),
\end{equation}
which is necessarily an isometry (the pointwise version of the map \eqref{Z'} given in \S \ref{S:SpcATuple}), is actually surjective and hence unitary.   
A minor complication in the theory is that the factors in a regular factorization of a purely contractive analytic
function need not again be purely contractive.  We now recall
their result  as we shall have use of it later in this section.

\begin{theorem}[Sz.-Nagy--Foias]\label{NFcnu}
Let $(\cD,\cD_*,\Theta)$ be a purely contractive analytic function and $\bf T$ be the contraction on
$$
\bH := \cH_\Theta=H^2(\cD_*)\oplus\overline{\Delta_{\Theta} L^2(\cD)}\ominus\{\Theta f\oplus\Delta_{\Theta} f: f\in H^2(\cD)\}
$$
defined by
$$
  {\bf T}=P_{\bH}\big(M_z\oplus M_{\zeta}\big)|_{\cH_\bH}.
$$
A subspace $\bH'$ of $\bH$ is invariant under ${\bf T}$ if and only if there exist  contractive analytic functions
$(\mathcal{D},\mathcal{F},\Theta')$, $(\mathcal{F},\mathcal{D}_{*},\Theta'')$ such that
$$
  \Theta=\Theta'' \Theta'
$$
is a regular factorization, and with the unitary $Z$ as in \eqref{Z'} we have
\begin{align} \label{cnuH'}
\bH'=\{\Theta''f\oplus Z^{-1}(\Delta_{\Theta''}f\oplus g):\;f\in H^2(\mathcal{F}),g\in\overline{\Delta_{\Theta'}L^2(\mathcal{D})}\}\\\nonumber
\ominus\;\{\Theta h\oplus\Delta_{\Theta}h:h\in H^2(\mathcal{D})\}
\end{align}
and
\begin{align} \label{cnuH''}
\bH'' := \bH\ominus\bH'=H^2(\cD_*)\oplus &Z^{-1}(\overline{\Delta_{\Theta''}L^2(\mathcal{F})}\oplus\{0\})\\\nonumber
&\ominus\{\Theta''f\oplus Z^{-1}(\Delta_{\Theta''}f\oplus 0):f\in H^2(\mathcal{F})\}.
\end{align}
Moreover, the characteristic function of ${\mathbf T}|_{{\mathbb H}'}$ coincides with the purely contractive part
of $\Theta'$, and the characteristic function of $P_{{\mathbb H}''} {\mathbf T}|_{{\mathbb H}''}$
coincides with the purely contractive part of $\Theta''$.
\end{theorem}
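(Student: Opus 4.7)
The plan is to follow the Sz.-Nagy--Foias scheme (as in \cite[Chapter VII]{Nagy-Foias}) adapted to the ambient space $\bK_\Theta = H^2(\cD_*) \oplus \overline{\Delta_\Theta L^2(\cD)}$ and then to extract the characteristic function statement at the end. For the sufficiency direction, I would fix a regular factorization $\Theta = \Theta'' \Theta'$ with intermediate coefficient space $\cF$ and verify directly that the space $\bH'$ defined by \eqref{cnuH'} sits inside $\bH_\Theta$ (because $\{\Theta h \oplus \Delta_\Theta h\}$ is manifestly contained in the ``large" space in the first line of \eqref{cnuH'} by writing $\Theta h = \Theta''(\Theta' h)$ and using the defining property of $Z$) and is invariant under ${\bf T} = P_{\bH_\Theta}(M_z \oplus M_\zeta)|_{\bH_\Theta}$; the latter reduces to checking that $M_z \oplus M_\zeta$ sends the large space in the first line of \eqref{cnuH'} to itself modulo $\bQ_\Theta := \{\Theta h \oplus \Delta_\Theta h\}$, which is a direct computation using $Z M_\zeta = (M_\zeta \oplus M_\zeta) Z$.

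For the necessity direction, I would start with an arbitrary ${\bf T}$-invariant subspace $\bH' \subset \bH_\Theta$ and lift the problem to $\bK_\Theta$. The minimal isometric lift of ${\bf T}$ is $V = M_z \oplus M_\zeta$ on $\bK_\Theta$ with $\bH_\Theta$ coinvariant, so setting $\bK' := \bigvee_{n\ge 0} V^n \bH'$ one obtains $(V|_{\bK'}, \bK')$ as the minimal isometric lift of ${\bf T}' := {\bf T}|_{\bH'}$ and $(V, \bK_\Theta \ominus (\bK' \ominus \bH'))$ as a lift of the compression ${\bf T}'' := P_{\bH''}{\bf T}|_{\bH''}$ on $\bH'' := \bH_\Theta \ominus \bH'$. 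Running the coordinate-free Sz.-Nagy--Foias geometry from Section 2.3 on these two lifts produces wandering subspaces whose Fourier transforms yield the two contractive analytic functions $\Theta'$ and $\Theta''$, along with the intermediate coefficient space $\cF$; the factorization $\Theta = \Theta''\Theta'$ is then read off from the compatibility of the two Wold decompositions inside $\bK_\Theta$. The hard technical step is proving that this factorization is \emph{regular}, i.e.\ that the map $Z$ of \eqref{Z'} is surjective (and not merely isometric); this requires identifying $\overline{\Delta_\Theta L^2(\cD)}$ as the residual part of $\bK_\Theta$ and tracking how it splits into the residual parts associated with $\Theta'$ and $\Theta''$ --- the very computation that underlies Proposition VII.3.2 of \cite{Nagy-Foias}. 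Once regularity is in hand, the formulas \eqref{cnuH'}--\eqref{cnuH''} for $\bH'$ and $\bH''$ drop out by direct unwinding of $Z$.

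For the final moreover-clause, I would identify the minimal isometric lift $(V|_{\bK'}, \bK')$ of ${\bf T}'$ with its Sz.-Nagy--Foias functional model by comparing the two-fold geometric decomposition of $\bK'$ (as in \eqref{cKdecoms}) with the intrinsic one coming from ${\bf T}'$. Under this identification, the wandering subspace $\cL'_* = \overline{(I - V|_{\bK'}{\bf T}'^{*})\bH'}$ is unitarily identified with the output coefficient space of $\Theta'$, namely $\cF$, via $\iota'_* \colon (I - V{\bf T}'^*)h' \mapsto D_{{\bf T}'^*}h'$, and the restricted projection $\bTheta' = P_{M(\cL'_*)}|_{M(\cL')}$ becomes multiplication by a function that, by uniqueness of the Beurling--Lax representer up to a unitary constant factor, coincides with the purely contractive part of $\Theta'$ (as discussed in Remark \ref{R:NFmodel} and Observation \ref{O:reduction}). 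The statement for ${\bf T}''$ is completely analogous: the compression ${\bf T}''$ is realized as $P_{\bH''}V|_{\bH''}$, its minimal isometric lift is built from the complementary wandering subspace, and the Sz.-Nagy--Foias identification gives $\Theta_{{\bf T}''}$ coinciding with the purely contractive part of $\Theta''$. The main obstacle throughout is the regularity assertion --- everything else is bookkeeping once the unitary $Z$ of \eqref{Z'} is known to be surjective, but proving surjectivity is precisely the content that makes the correspondence between invariant subspaces and regular factorizations non-trivial.
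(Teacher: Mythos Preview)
The paper does not give its own proof of this theorem: it is stated as a classical result of Sz.-Nagy and Foias, introduced with ``We now recall their result as we shall have use of it later in this section'' and attributed to \cite[Chapter VII]{Nagy-Foias}, with no proof supplied. So there is no paper-proof to compare your proposal against.

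That said, your outline is a faithful sketch of the classical argument in \cite{Nagy-Foias}. The sufficiency direction is indeed a direct verification once one observes that the isometry $\cI \colon f \oplus g \mapsto \Theta'' f \oplus Z^{-1}(\Delta_{\Theta''} f \oplus g)$ intertwines $M_z \oplus M_\zeta$ with itself (your $Z M_\zeta = (M_\zeta \oplus M_\zeta) Z$ observation). For necessity your plan --- lift $\bH'$ to $\bK' = \bigvee_{n\ge 0} V^n \bH'$, run the coordinate-free geometry on the two nested lifts, extract $\Theta', \Theta''$ from the wandering subspaces, and then establish regularity by identifying how the residual space $\overline{\Delta_\Theta L^2(\cD)}$ splits --- is exactly the Sz.-Nagy--Foias route, and you correctly flag regularity as the substantive step. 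The moreover-clause via identification of minimal isometric lifts is also the standard argument. One small point: in the necessity direction the passage from the abstract wandering-subspace characteristic functions to the concrete factors $\Theta', \Theta''$ (with the correct coefficient spaces $\cD, \cF, \cD_*$) requires a careful matching of the embedded lift geometry with the ambient functional-model coordinates; this is straightforward but worth spelling out rather than absorbing into ``compatibility of the two Wold decompositions.''
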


\begin{remark}  \label{R:BL}  For future reference let us mention the special case of Theorem \ref{NFcnu} where the input coefficient space
$\cD$ is taken to be the zero space and the contractive analytic function $\Theta$ is just the zero function $\Theta(z) = 0 \colon \{0\} \to \cD_*$.  In this
case the only contractive analytic factorizations $\Theta = \Theta'' \Theta'$ have $\Theta''$ equal to an arbitrary contractive analytic function
$(\cF, \cD_*, \Theta'')$ and $\Theta'$ equal to a zero function $(\{0\}, \cF, \Theta')$ ($\Theta'(z) = 0 \colon \{0\} \to \cF$).  The only such factorizations which are
regular are those for which $\Theta''$ is an inner function.    In this case, in the context of Theorem \ref{NFcnu}, $\bH = H^2(\cD_*)$, ${\bf{T}} = M_z$ on
$H^2(\cD_*)$, and the invariant subspace corresponding to the regular factorization $0 = \Theta'' \cdot 0$ is $\bH' = \Theta'' H^2(\cF)$.
Thus Theorem \ref{NFcnu} can be thought of as a generalization of the Beurling-Lax Theorem\index{Beurling-Lax Theorem}.
\end{remark}

Let $T$ be a c.n.u.\ contraction such that $T=T_1T_2$ for a pair $(T_1,T_2)$ of commuting contractions.
It is natural that one would need more conditions than (\ref{cnuH'}) and (\ref{cnuH''}) for an invariant
subspace of $T$ to be jointly invariant under $(T_1,T_2)$.

\begin{theorem} \label{T:inv-cnu}
Let $(\cD,\cD_*,\Theta)$ be a purely contractive analytic function and let  the triple $((G_1,G_2),(W_1,W_2),\Theta)$
be admissible. Define the pair $({\bf T_1},{\bf T_2})$ of commuting contractions on
\begin{align}\label{cnuMspace}
\bH=H^2(\cD_*)\oplus\overline{\Delta_{\Theta} L^2(\cD)}\ominus\{\Theta f\oplus\Delta_{\Theta} f: f\in H^2(\cD)\}
\end{align}
by
\begin{align}\label{Mop}
 ({\bf T_1},{\bf T_2})=P_{\bH}\big(M_{G_1^*+zG_2}\oplus W_1,M_{G_2^*+zG_1}\oplus W_2\big)|_{\bH}.
\end{align}
A subspace $\bH'$ of $\bH$ is jointly invariant under $({\bf T_1},{\bf T_2})$ if and only if there exist a Hilbert space $\cF$, two contractions
$G'_1$, $G'_2$ in $\mathcal{B}(\mathcal{F})$, two contractive analytic functions $(\mathcal{D},\mathcal{F},\Theta')$, $(\mathcal{F},\mathcal{D}_{*},\Theta'')$
such that
\begin{eqnarray*}
\Theta=\Theta''  \Theta'
\end{eqnarray*}is a regular factorization, a pair $(W'_1,W'_2)$ of unitary operators on $\overline{\Delta_{\Theta'}L^2(\cD)}$ with the property
\begin{align}\label{propW'}
W'_1W'_2=W'_2W'_1=M_{\zeta}|_{\overline{\Delta_{\Theta'}L^2(\cD)}},
\end{align}and also, with $Z$ the pointwise unitary operator  as in (\ref{Z'}),
\begin{align}\label{H'}
\bH'=\{\Theta''f\oplus Z^{-1}(\Delta_{\Theta''}f\oplus g):\;f\in H^2(\mathcal{F}),g\in\overline{\Delta_{\Theta'}L^2(\cD)}\}\\\nonumber
\ominus\;\{\Theta h\oplus\Delta_{\Theta}h:h\in H^2(\cD)\},
\end{align}
\begin{align}\label{H''}
\bH'':=\bH\ominus\bH'=&\;H^2(\cD_*)\oplus Z^{-1}(\overline{\Delta_{\Theta''}L^2(\mathcal{F})}\oplus\{0\})\\
&\nonumber\ominus\{\Theta''f\oplus Z^{-1}(\Delta_{\Theta''}f\oplus 0):f\in H^2(\mathcal{F})\},
\end{align}
and for every $f\in H^2(\mathcal{F})$ and $g\in\overline{\Delta_{\Theta'}L^2(\cD)}$
\begin{align}\label{ExtraCond}
\begin{bmatrix}
      M_{G_i^*+zG_j} & 0 \\
      0 & W_i \\
     \end{bmatrix}
\begin{bmatrix}
\Theta''f\\Z^{-1}(\Delta_{\Theta''}f\oplus g)
\end{bmatrix}
=\begin{bmatrix}
\Theta''M_{G'_i+zG'_j}f\\Z^{-1}(\Delta_{\Theta''}M_{G'_i+zG'_j}f\oplus W'_i g)
\end{bmatrix},
\end{align}where $(i,j)=(1,2),(2,1)$.
\end{theorem}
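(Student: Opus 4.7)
My plan is to reduce the problem to Theorem \ref{NFcnu} (the single-contraction Sz.-Nagy--Foias invariant-subspace theorem) together with the uniqueness of characteristic triples from Theorem \ref{UnitaryInv}, joined by a carefully chosen isometric embedding of the small lift space into the large one.

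For the necessity direction, suppose $\bH'$ is jointly invariant for $(\bT_1,\bT_2)$. Then $\bH'$ is invariant for the product $\bT = \bT_1\bT_2$, so Theorem \ref{NFcnu} already hands us a regular factorization $\Theta = \Theta''\Theta'$ together with the explicit descriptions \eqref{H'} of $\bH'$ and \eqref{H''} of $\bH''$. The natural map
\[
\iota_{\Theta',\Theta}\colon \begin{bmatrix} f \\ g \end{bmatrix}\mapsto \begin{bmatrix} \Theta''f \\ Z^{-1}(\Delta_{\Theta''}f \oplus g)\end{bmatrix}
\]
is an isometry from $\cK_{\Theta'}=H^2(\cF)\oplus \overline{\Delta_{\Theta'}L^2(\cD)}$ into $\cK_\Theta$ (regularity of the factorization, i.e.\ unitarity of $Z$, is exactly what makes this isometric), and it carries $\{\Theta h\oplus\Delta_{\Theta'} h: h\in H^2(\cD)\}$ onto $\{\Theta h\oplus\Delta_\Theta h:h\in H^2(\cD)\}$; hence it sends $\cH_{\Theta'}$ isometrically onto $\bH'$. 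Under $\iota_{\Theta',\Theta}$ the operator $\bT|_{\bH'}$ transports to the model operator $P_{\cH_{\Theta'}}(M_z\oplus M_\zeta)|_{\cH_{\Theta'}}$, so the characteristic function of $\bT|_{\bH'}$ coincides with the purely contractive part of $\Theta'$, as Theorem \ref{NFcnu} states.

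Next I would consider the commuting contractive pair $(\bT_1|_{\bH'},\bT_2|_{\bH'})$; after applying the canonical decomposition of Theorem \ref{Thm:CanDecPair} to its product and invoking Theorem \ref{AdmisCharc} on the c.n.u.\ part (then re-enlarging by the unitary part via Proposition \ref{P:pure-adm-triple}), one obtains an admissible triple $\Xi' = ((G'_1,G'_2),(W'_1,W'_2),\Theta')$ associated to $(\bT_1|_{\bH'},\bT_2|_{\bH'})$ whose functional model is unitarily equivalent to $(\bT_1|_{\bH'},\bT_2|_{\bH'})$ via $\iota_{\Theta',\Theta}$. Pushing the intertwining
\[
\iota_{\Theta',\Theta}\bigl(M_{G'^{*}_{\,i}+zG'_j}\oplus W'_i\bigr) = \bigl(M_{G^{*}_{i}+zG_j}\oplus W_i\bigr)\iota_{\Theta',\Theta}\quad\text{on }\cH_{\Theta'}
\]
through the definition of $\iota_{\Theta',\Theta}$ and observing that both sides agree modulo the subspace $\{\Theta h\oplus\Delta_\Theta h\}$ yields exactly identity \eqref{ExtraCond} on all of $\cK_{\Theta'}$ (the ``modulo'' can be removed because admissibility condition (3) makes that subspace invariant under the relevant operators).

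For sufficiency, given the factorization $\Theta=\Theta''\Theta'$ and the data $(G'_1,G'_2,W'_1,W'_2)$ satisfying \eqref{ExtraCond} together with \eqref{propW'}, identity \eqref{ExtraCond} says precisely that the ``large'' subspace $\{\Theta''f\oplus Z^{-1}(\Delta_{\Theta''}f\oplus g):f\in H^2(\cF),\,g\in\overline{\Delta_{\Theta'}L^2(\cD)}\}$ is jointly invariant under $M_{G^{*}_{i}+zG_j}\oplus W_i$ for $(i,j)=(1,2),(2,1)$. Admissibility condition (3) already guarantees that $\{\Theta h\oplus\Delta_\Theta h\}$ is invariant under the same operators, so on taking the orthogonal complement inside this large subspace we conclude that $\bH'$ given by \eqref{H'} is invariant under the compressions $\bT_1$ and $\bT_2$. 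The main obstacle is the bookkeeping in the middle step: one must verify that the admissible triple $\Xi'$ built from $(\bT_1|_{\bH'},\bT_2|_{\bH'})$ sits on the correct coefficient spaces (namely $\cF$ and $\overline{\Delta_{\Theta'}L^2(\cD)}$, rather than on the abstract defect spaces) and that the resulting pencils $G'^{*}_i+zG'_j$ appear in the form written on the right of \eqref{ExtraCond}; tracking the conjugation convention through the coincidence maps of Theorem \ref{UnitaryInv} is the delicate point, and the rest of the argument is essentially functorial.
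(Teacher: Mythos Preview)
Your sufficiency argument is correct and is essentially the paper's: the range of your isometry $\iota_{\Theta',\Theta}$ (the paper calls it $\mathcal{I}$) is invariant under each $M_{G_i^*+zG_j}\oplus W_i$ by \eqref{ExtraCond}, and together with admissibility condition~(3) this forces $\bH'$ to be invariant under $\bT_i$.

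Your necessity argument, however, has a real gap, and it is precisely the ``delicate point'' you flag. After invoking Theorem~\ref{NFcnu} you propose to manufacture $(G_1',G_2',W_1',W_2')$ by passing through the characteristic triple of $(\bT_1|_{\bH'},\bT_2|_{\bH'})$. But Theorem~\ref{AdmisCharc} produces a triple whose third component is the \emph{purely contractive part} of $\Theta'$, with first and second components living on the abstract defect spaces $\cD_{(\bT|_{\bH'})^*}$ and $\overline{\Delta_{\Theta_{\bT|_{\bH'}}}L^2(\cD_{\bT|_{\bH'}})}$, not on $\cF$ and $\overline{\Delta_{\Theta'}L^2(\cD)}$. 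Proposition~\ref{P:pure-adm-triple} goes the wrong way for your ``re-enlarging'': it strips a unitary constant summand off, it does not canonically put one back on. Even if you could transport the data to $\cF$, your intertwining $\iota_{\Theta',\Theta}(M_{G_i'^*+zG_j'}\oplus W_i')=(M_{G_i^*+zG_j}\oplus W_i)\iota_{\Theta',\Theta}$ would only be established on $\cH_{\Theta'}$, and your argument for removing the ``modulo $\{\Theta h\oplus\Delta_\Theta h\}$'' is invalid: two operators that agree on a subspace and each leave a complementary invariant subspace invariant need not agree on that complement.

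The paper avoids this circularity entirely. Once $\operatorname{Ran}\iota_{\Theta',\Theta}$ is known to be invariant under each $M_{G_i^*+zG_j}\oplus W_i$ (which follows from $\bT_i$-invariance of $\bH'$ together with admissibility condition~(3)), the paper \emph{defines} $X_i\oplus W_i':=\iota_{\Theta',\Theta}^*(M_{G_i^*+zG_j}\oplus W_i)\iota_{\Theta',\Theta}$ directly on $\cK_{\Theta'}$, so the intertwining holds on the full space by construction. The block-diagonal splitting and the linear-pencil form $X_i=M_{G_i'^*+zG_j'}$ are then read off from the pseudo-commuting relations $M_{\varphi_2}=M_{\varphi_1}^*M_z$ and $M_{\varphi_1}=M_{\varphi_2}^*M_z$; the unitary and commutativity properties of $(W_1',W_2')$ come from showing $Z^{-1}(\{0\}\oplus\overline{\Delta_{\Theta'}L^2(\cD)})$ is reducing for $(W_1,W_2)$. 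Only \emph{after} this direct construction does the paper (in the subsequent Theorem~\ref{T:inv-char}) show that $((G_1',G_2'),(W_1',W_2'),\Theta')$ is admissible and its pure part coincides with the characteristic triple of the restriction---the result you are trying to use as an input.
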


\begin{proof}
We first prove the easier part---the proof of sufficiency.   Suppose that
$$
((G_1,G_2),(W_1,W_2),\Theta)
$$
is a purely contractive admissible triple (i.e., $( \cD, \cD_*, \Theta)$ is a purely contractive analytic function)
such that $\Theta$ has a regular factorization $\Theta = \Theta'' \Theta'$ with $(\cD, \cF, \Theta')$ and
$(\cF, \cD_*, \Theta'')$ contractive analytic functions.  We suppose also that $G_1'$ and $G_2'$ are
contraction operators on $\cF$, $W_1'$, $W_2'$ are unitary operators on $\overline{\Delta_{\Theta'} L^2(\cD)}$
so that \eqref{propW'} and \eqref{ExtraCond} hold.  Then we have all the ingredients to define ${\mathbb H}'$
and ${\mathbb H}''$ as in \eqref{H'} and \eqref{H''}.  Note next that ${\mathbb H}'$ is indeed a subspace of
${\mathbb H}$.  We wish to show that the space $\bH'$ given in (\ref{H'}) is
jointly invariant under the pair $({\bf T_1},\bf{T_2})$ defined in (\ref{Mop}). Firstly, it is easy to see that $\bH'$
is a subspace of $\bH$. Since the operator
\begin{align}\label{iso-cnu}
{\mathcal I} \colon H^2(\mathcal{F})\oplus\overline{\Delta_{\Theta'}L^2(\cD)} &\to
H^2(\cD_*)\oplus\overline{\Delta_{\Theta}L^2(\cD)}\\\nonumber
f\oplus g &\mapsto \Theta''f\oplus Z^{-1}(\Delta_{\Theta''}f\oplus g)
\end{align}
is an isometry, the space
$$
\{\Theta''f\oplus Z^{-1}(\Delta_{\Theta''}f\oplus g):f\in H^2(\mathcal{F})
\text{ and } g \in \overline{\Delta_{\Theta'}L^2(\cD)}\}
$$
is closed and by (\ref{ExtraCond}) we see that it is jointly invariant under
$$
(M_{G_1^*+zG_2}\oplus W_1,M_{G_2^*+zG_1}\oplus W_2,M_z\oplus M_{\zeta}).$$
We also see that
$$
\operatorname{Ran}\, \mathcal{I}=\big(H^2(\cD_*)\oplus \overline{\Delta_{\Theta} L^2(\cD)} \big)
\ominus(\bH\ominus\mathbb{H}').
$$
Now the sufficiency follows from the definition of $({\bf T_1},\bf{T_2})$ and from the general fact that if $V$
is an operator on $\mathcal{K}$ containing $\mathcal{H}$, $V(\cK\ominus\cH)\subset\cK\ominus\cH$,
and $V^*|_{\mathcal{H}}=T^*$, then for a subspace $\mathcal{H}'$ of $\mathcal{H}$,
$$
V(\mathcal{K}\ominus(\mathcal{H}\ominus\mathcal{H}'))\subseteq (\mathcal{K}\ominus(\mathcal{H}\ominus\mathcal{H}'))
\text{ if and only if } T(\mathcal{H}')\subseteq\mathcal{H}'.
$$

Now we show that the conditions are necessary. The first step of the proof is an application of
Theorem \ref{NFcnu}. Indeed, if $\bH'\subset\bH$ is jointly invariant under $({\bf T_1},\bf{T_2})$,
then it is also invariant under the product ${\bf T_1T_2}$ and by definition of admissibility
$$
{\bf T}={\bf T_1T_2}={\bf T_2T_1}=P_{\bH}\big(M_z\oplus M_{\zeta}\big)|_{\bH}.
$$
Hence by Theorem \ref{NFcnu}, there exist two contractive analytic functions
$$
(\mathcal{D},\mathcal{F},\Theta'), \quad (\mathcal{F},\mathcal{D}_{*},\Theta'')
$$
such that $\Theta=\Theta''\Theta'$ is a regular factorization and the
spaces $\bH'$ and $\bH''$ are realized as in (\ref{H'}) and (\ref{H''}), respectively. It only remains to
produce contraction operators $G_1'$, $G_2'$ on $\cF$ and unitary operators $W_1'$, $W_2'$ on
$\overline{\Delta_{\Theta'} L^2(\cD)}$ so that conditions \eqref{propW'} and \eqref{ExtraCond} hold.
Note that, once we have found
$G_1'$, $G_2'$, $W_1'$, $W_2'$, verification of \eqref{ExtraCond} breaks up into three linear pieces,
where $(i,j) = (1,2)$ or $(2,1)$:
\begin{align}
& M_{G_i^* + z G_j} \Theta'' f = \Theta'' M_{G_i' + z G'_j} f \text{ for all } f \in H^2(\cF), \label{verify1} \\
& W_i (Z^{-1} (\Delta_{\Theta''} f \oplus 0)= Z^{-1} (\Delta_{\Theta''} M_{G_i' + z G_j'} f \oplus 0)
\text{ for all } f \in H^2(\cF),  \label{verify2} \\
& W_i Z^{-1} (0 \oplus g) = Z^{-1}(0 \oplus  W_i' g) \text{ for all } g \in \overline{\Delta_{\Theta'} L^2(\cD)}.
\label{verify3}
\end{align}

As a first step, we define operators $X_i$ on $H^2(\mathcal{F})$ and $W_i'$ on
$\overline{\Delta_{\Theta'}L^2(\cD)}$, for $i=1,2$, such that for every $f\in H^2(\mathcal{F})$ and
$g\in\overline{\Delta_{\Theta'}L^2(\cD)}$,
\begin{align}\label{Intertwining2}
\nonumber\begin{bmatrix}
      M_{G_i^*+zG_j} & 0 \\
      0 & W_i \\
     \end{bmatrix}\mathcal{I}
\begin{bmatrix}
f\\g
\end{bmatrix}&=
\begin{bmatrix}
M_{G_i^*+zG_j} & 0 \\
      0 & W_i \\
\end{bmatrix}
\begin{bmatrix}
\Theta''f\\Z^{-1}(\Delta_{\Theta''}f\oplus g)
\end{bmatrix}\\
&=\begin{bmatrix}
\Theta''X_if\\Z^{-1}(\Delta_{\Theta''}X_if\oplus W_i'g)
\end{bmatrix}=
\mathcal{I}
\begin{bmatrix}
     X_i & 0 \\
      0 & W_i' \\
\end{bmatrix}
\begin{bmatrix}
f\\g
\end{bmatrix},
\end{align}
where $\cI$ is the isometry as defined in (\ref{iso-cnu}). The operators $X_1,X_2$ and $W_1',W_2'$ are
well-defined because the operator $\mathcal{I}$ is an isometry. Indeed, it follows that $X_1,X_2$ and
$W_1',W_2'$ are contractions. Since the unitary $Z$ commutes with $M_{\zeta}$, it is easy to see from the
definition of $\mathcal I$ that it has the following intertwining property
\begin{equation} \label{Intertwining1}
  \mathcal{I} (M_z \oplus M_{\zeta}|_{\overline{\Delta_{\Theta'}L^2(\cD)}}) =
  (M_z\oplus M_{\zeta}|_{\overline{\Delta_{\Theta}L^2(\cD)}})\mathcal{I}.
\end{equation}

From the intertwining properties (\ref{Intertwining1}) and (\ref{Intertwining2}) of $\mathcal{I}$, we get for $i=1,2$
$$
(X_i\oplus W_i')(M_z\oplus M_{\zeta}|_{\overline{\Delta_{\Theta'}L^2(\cD)}})=
(M_z\oplus M_{\zeta}|_{\overline{\Delta_{\Theta'}L^2(\cD)}})(X_i\oplus W_i'),
$$
which implies that $(X_1,X_2)=(M_{\varphi_1},M_{\varphi_2})$, for some $\varphi_1$ and $\varphi_2$ in
$L^\infty(\mathcal{B}(\mathcal{F}))$. We next show that $\varphi_1$ and $\varphi_2$ are actually linear pencils.
 Toward this end,  notice from (\ref{Intertwining2}) that
\begin{align}\label{subeqn1}
% \nonumber % Remove numbering (before each equation)
  \begin{bmatrix}
     M_{\varphi_1} & 0 \\
      0 & W_1' \\
\end{bmatrix}&= \mathcal{I}^*
\begin{bmatrix}
      M_{G_1^*+zG_2} & 0 \\
      0 & W_1 \\
\end{bmatrix}\mathcal{I} \\\label{subeqn}
\begin{bmatrix}
     M_{\varphi_2} & 0 \\
      0 & W_2' \\
\end{bmatrix}&= \mathcal{I}^*
\begin{bmatrix}
      M_{G_2^*+zG_1} & 0 \\
      0 & W_2 \\
\end{bmatrix}\mathcal{I}
\end{align}
Now multiplying (\ref{subeqn1}) on the left by $M_z^*\oplus M_{\zeta}^*|_{\overline{\Delta_{\Theta'}L^2(\cD)}}$,
then using the intertwining property (\ref{Intertwining1}) of $\cI$ and then remembering that $(W_1,W_2)$ is a commuting pair of unitaries such that $W_1W_2=M_{\zeta}^*|_{\overline{\Delta_{\Theta}L^2(\cD)}}$, we get
$$
\begin{bmatrix}
     M_{z}^* & 0 \\
      0 & M^*_{\zeta}|_{\overline{\Delta_{\Theta'}L^2(\cD)}} \\
\end{bmatrix}
\begin{bmatrix}
     M_{\varphi_1} & 0 \\
      0 & W_1' \\
\end{bmatrix}
=\mathcal{I}^*
\begin{bmatrix}
      M^*_{G_2^*+zG_1} & 0 \\
      0 & W_2^* \\
\end{bmatrix}\mathcal{I}=
\begin{bmatrix}
     M_{\varphi_2}^* & 0 \\
      0 & W'^*_2 \\
\end{bmatrix}.
$$
Consequently, $M_{\varphi_2}=M_{\varphi_1}^*M_z$. A similar argument as above yields
$M_{\varphi_1}=M_{\varphi_2}^*M_z$. Considering these two relations and the power series
expansions of $\varphi_1$ and $\varphi_2$, we get
\begin{equation}   \label{varphi}
\varphi_1(z)=G_1'^*+zG_2' \text{ and }\varphi_2(z)=G_2'^*+zG_1',
\end{equation}
for some $G_1',G_2'\in\mathcal{B}(\mathcal{F})$. The fact that $M_{\varphi_1}$ (and $M_{\varphi_2}$)
is a contraction implies that $G_1'$ and $G_2'$ are contractions too.  Recalling \eqref{Intertwining2}
and the substitution $(X_1, X_2) = (M_{\varphi_1}, M_{\varphi_2})$ where $\varphi_1$ and $\varphi_2$
are given by \eqref{varphi},  we see that we have established \eqref{verify1} with the choice of $G_1'$, $G_2'$
as in \eqref{varphi}.

Next note that the bottom component of  \eqref{Intertwining2} gives us
\begin{equation}          \label{ConseqInt2}
W_iZ^{-1}(\Delta_{\Theta''}f\oplus g)=Z^{-1}(\Delta_{\Theta''}X_if\oplus W_i'g).
\end{equation}
for all $f \in H^2(\cF)$, $g \in \overline{\Delta_{\Theta'} L^2(\cD)}$, and $i=1,2$.
In particular, setting $g=0$  and recalling that $X_i = M_{\varphi_i} = M_{G_i^{\prime *} + z G_j'}$, we get
\begin{equation}   \label{verify2'}
W_i  Z^{-1}  (\Delta_{\Theta''}   f \oplus 0)       = Z^{-1} (\Delta_{\Theta''} M_{G_i^{\prime *} + z G_j'} f \oplus 0),
\end{equation}
thereby verifying \eqref{verify2}. We next consider \eqref{ConseqInt2} with $f=0$ and $g$ equal to a general element
of $\overline{\Delta_{\Theta'} L^2(\cD)}$ to get
\begin{equation}   \label{verify3'}
W_i  Z^{-1}(0\oplus g)=Z^{-1}(0\oplus W'_i g),
\end{equation}
thereby verifying \eqref{verify3} and hence also completing the proof of \eqref{ExtraCond}.

It remains to show that $(W_1', W_2')$ is a commuting pair of unitary operators satisfying condition
\eqref{propW'}.  Toward this goal, let us rewrite \eqref{verify3'} in the form
\begin{equation}  \label{verify3''}
 Z W_i Z^{-1} (0 \oplus g) = 0 \oplus W_i' g.
\end{equation}
which implies that, for $i=1,2$,
$$
ZW_iZ^{-1}(\{0\}\oplus\overline{\Delta_{\Theta'}L^2(\cD)})\subseteq(\{0\}\oplus\overline{\Delta_{\Theta'}L^2(\cD)}).
$$
On the other hand, using \eqref{verify2'} and noting that $M_{\zeta}|_{\overline{\Delta_{\Theta}L^2(\cD)}}$ commutes with $Z,W_1,W_2$ and $\Delta_{\Theta''}$,
we get for every $f\in H^2(\mathcal{F})$ and $n\geq 0$
$$
ZW_iZ^{-1}(\Delta_{\Theta''}e^{-int}f\oplus 0)=(\Delta_{\Theta''}e^{-int}X_if\oplus 0),
$$
which implies that $ZW_iZ^{-1}(\overline{\Delta_{\Theta}L^2(\cD)}\oplus \{0\})\subseteq
(\overline{\Delta_{\Theta'}L^2(\cD)}\oplus\{0\})$, for $i=1,2$. We conclude that $Z^{-1} ( \{0\} \oplus
\overline{\Delta_{\Theta'} L^2(\cD)})$ is a reducing subspace for the pair of unitaries $(W_1, W_2)$
and hence $(W_1, W_2) |_{Z^{-1} (\{0\} \oplus \overline{\Delta_{\Theta'} L^2(\cD)}}$ is a pair of
commuting unitary operators.  The intertwining \eqref{verify3''} shows that the pair $(W_1', W_2')$ on
$\overline{\Delta_{\Delta'} L^2(\cD)}$  is jointly unitarily equivalent to the commuting unitary pair
$(W_1, W_2)|_{Z^{-1}(\{0\} \oplus \overline{\Delta_\Theta'} L^2(\cD)}$ and hence is itself a commuting
unitary pair.  Furthermore, since $W_1 W_2 = M_\zeta$ in particular on $Z^{-1} ( \{0\} \oplus \overline{\Delta_{\Theta'}
L^2(\cS)})$ and $M_\zeta$ commutes past $Z$ and $Z^{-1}$, we conclude that
condition \eqref{propW'} holds as well.
This completes the proof of the necessary part.
\end{proof}

As we see from the last part of the statement of Theorem \ref{T:inv-cnu}, Sz.-Nagy and Foias went on to
prove that, under the conditions of Theorem \ref{NFcnu}, the characteristic functions
of $\bf T|_{\bH'}$ and $P_{\bH\ominus\bH'} {\bf T}|_{\bH\ominus\bH'}$ coincide with the purely contractive parts of
$\Theta'$ and $\Theta''$, respectively. We next find an analogous result (at least for the first part of this statement)
for pairs of commuting contractions.
The strategy of the proof is the same as that of Sz.-Nagy--Foias, namely: application of model theory.

\begin{theorem}  \label{T:inv-char}
Under the conditions of Theorem \ref{T:inv-cnu}, let $\bH'$ be a joint invariant subspace of $\bH$ induced by the
regular factorization $\Theta=\Theta''\Theta'$. Then with the notations as in Theorem \ref{T:inv-cnu},
the triple $((G_1',G_2'),(W_1',W_2'),\Theta')$ is admissible and its purely contractive part coincides
with the characteristic triple for $({\bf T_1},{\bf T_2})|_{\bH '}$.
\end{theorem}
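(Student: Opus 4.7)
The plan is to prove the theorem in three stages. First, I would establish admissibility of the triple $\Xi' := ((G_1',G_2'),(W_1',W_2'),\Theta')$ by transferring the admissibility of $\Xi$ through the isometric embedding
$$
\mathcal{I}\colon \cK_{\Theta'} \to \cK_\Theta, \quad f\oplus g\mapsto \Theta'' f \oplus Z^{-1}(\Delta_{\Theta''}f\oplus g),
$$
built already in the proof of Theorem \ref{T:inv-cnu}. The operator $\mathcal{I}$ comes equipped with the intertwining relations
$A_i\,\mathcal{I}=\mathcal{I}\,A_i'$ and $A\,\mathcal{I}=\mathcal{I}\,A'$, where $A_i := M_{G_i^*+zG_j}\oplus W_i$, $A:=M_z\oplus M_\zeta$ on $\cK_\Theta$ and $A_i', A'$ are the corresponding primed objects on $\cK_{\Theta'}$. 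Two key geometric facts will be used throughout: $\mathcal{I}$ carries $\cQ_{\Theta'}$ onto $\cQ_\Theta$ (by the factorization $\Theta=\Theta''\Theta'$ together with the definition of $Z$), and hence $\mathcal{I}(\cH_{\Theta'})=\bH'$ and $\operatorname{Ran}\mathcal{I}=\bH'\oplus\cQ_\Theta=\cK_\Theta\ominus(\bH\ominus\bH')$.

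\textbf{Step 1: Admissibility of $\Xi'$.} Conditions (1) and (2) in Definition \ref{D:admis-cond} are immediate from the construction: contractivity of $M_{G_1'^*+zG_2'}$ and $M_{G_2'^*+zG_1'}$ was verified in the proof of Theorem \ref{T:inv-cnu}, $W_1',W_2'$ were shown unitary there, and condition (2) is exactly the hypothesis \eqref{propW'}. For condition (3), apply $A_i'$ to a typical element $\Theta' f\oplus\Delta_{\Theta'}f\in\cQ_{\Theta'}$ and push it through $\mathcal{I}$; the intertwining gives $\mathcal{I}(A_i'(\Theta' f\oplus\Delta_{\Theta'}f)) = A_i(\Theta f\oplus\Delta_\Theta f)\in\cQ_\Theta$ (by admissibility of $\Xi$), and injectivity of $\mathcal{I}$ forces $A_i'(\Theta' f\oplus\Delta_{\Theta'}f)\in\cQ_{\Theta'}$. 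For condition (4), the adjoint version of the intertwining gives $A_i'^* = \mathcal{I}^* A_i^*\mathcal{I}$ on $\cK_{\Theta'}$. For $h\in\cH_{\Theta'}$, the key observation is that $\operatorname{Ran}\mathcal{I}$ is jointly invariant under $A_1, A_2, A$ (by \eqref{ExtraCond}), so $A_j^*\mathcal{I}h\in \cH_\Theta$, and decomposing $A_j^*\mathcal{I}h = x+y$ with $x\in\bH'$, $y\in\bH\ominus\bH'$, one checks $\mathcal{I}^* y=0$ because $\bH\ominus\bH' \perp \operatorname{Ran}\mathcal{I}$. This removes the projection $\mathcal{I}\mathcal{I}^*=P_{\operatorname{Ran}\mathcal{I}}$ that would a priori obstruct the calculation $A_i'^*A_j'^*h = \mathcal{I}^*A_i^*A_j^*\mathcal{I}h$, and then condition (4) for $\Xi$ applied at $\mathcal{I}h\in\bH'\subset\cH_\Theta$ yields condition (4) for $\Xi'$ at $h$.

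\textbf{Steps 2 and 3: Identifying the model and concluding.} Once $\Xi'$ is admissible, $\cH_{\Theta'}$ is coinvariant for $(A_1',A_2',A')$ and carries the functional model pair $({\mathbf T}_{\Xi',1},{\mathbf T}_{\Xi',2})$. I would then show the unitary $\mathcal{I}|_{\cH_{\Theta'}}\colon \cH_{\Theta'}\to\bH'$ intertwines this model with $({\mathbf T}_1,{\mathbf T}_2)|_{\bH'}$. Given $h\in\cH_{\Theta'}$, decomposing $A_i'h=h_0+q_0\in\cH_{\Theta'}\oplus\cQ_{\Theta'}$ and applying $\mathcal{I}$ produces $A_i\,\mathcal{I}h=\mathcal{I}h_0+\mathcal{I}q_0\in\bH'\oplus\cQ_\Theta$; projecting to $\bH$ recovers $\mathcal{I}h_0$, which is exactly ${\mathbf T}_i\mathcal{I}h$ on $\bH'$. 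Thus $\mathcal{I}{\mathbf T}_{\Xi',i}={\mathbf T}_i|_{\bH'}\mathcal{I}$. Finally, since ${\mathbf T}|_{\bH'}$ is c.n.u.\ (as the invariant restriction of the c.n.u.\ contraction $\mathbf T$), Theorem \ref{AdmisCharc} identifies the purely contractive part of $\Xi'$ with the characteristic triple of $({\mathbf T}_{\Xi',1},{\mathbf T}_{\Xi',2})$, and Theorem \ref{UnitaryInv} transports this coincidence across the unitary equivalence to the characteristic triple of $({\mathbf T}_1,{\mathbf T}_2)|_{\bH'}$.

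\textbf{Main obstacle.} I expect the only genuinely delicate point to be the proof of admissibility condition (4) for $\Xi'$. The naive calculation $A_i'^*A_j'^* = \mathcal{I}^*A_i^*\mathcal{I}\cdot\mathcal{I}^*A_j^*\mathcal{I}$ leaves an intermediate factor $\mathcal{I}\mathcal{I}^*=P_{\operatorname{Ran}\mathcal{I}}$ that spoils the cancellation; correcting this requires precisely the joint invariance of $\operatorname{Ran}\mathcal{I}$ under the ambient triple (encoded in \eqref{ExtraCond}), together with the orthogonality $\bH\ominus\bH'\perp\operatorname{Ran}\mathcal{I}$. Once this geometry is exploited, everything else is essentially a translation via $\mathcal{I}$ of properties already established for $\Xi$.
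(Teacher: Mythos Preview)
Your proposal is correct and follows essentially the same approach as the paper: both arguments transfer the admissibility of $\Xi$ to $\Xi'$ through the isometry $\mathcal{I}$ and then invoke Theorems \ref{UnitaryInv} and \ref{AdmisCharc} (together with Proposition \ref{P:pure-adm-triple}, which you need since $\Theta'$ may not be purely contractive). The paper streamlines your Step 1 by passing at the outset to the unitary $U=\mathcal{I}^*|_{\operatorname{Ran}\mathcal{I}}$ and recording the full unitary equivalences $U(M_{G_i^*+zG_j}\oplus W_i)U^*=M_{G_i'^*+zG_j'}\oplus W_i'$ and $U(M_z\oplus M_\zeta)U^*=M_z\oplus M_\zeta$; this makes admissibility condition (4) immediate and dissolves the projection issue you isolate as the main obstacle, since adjoints and products now transfer automatically under a genuine unitary equivalence.
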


\begin{proof}
With the isometry $\cI$ as in (\ref{iso-cnu}), define a unitary $U:=\cI^*|_{\operatorname{Ran }\cI}$. Therefore
\begin{align}\label{iso-cnu^*}
\nonumber
U:\{\Theta''f\oplus Z^{-1}(\Delta_{\Theta''}f\oplus g):f\in H^2(\mathcal{F}),\;g\in\overline{\Delta_{\Theta'}L^2(\cD)}\}
&\to H^2(\mathcal{F})\oplus\overline{\Delta_{\Theta'}L^2(\cD)} \\
U \colon \Theta''f\oplus Z^{-1}(\Delta_{\Theta''}f\oplus g) \mapsto  f\oplus g.
\end{align}
For every $g\in H^2(\cD)$,
\begin{eqnarray}\label{ortho}
U(\Theta g\oplus\Delta_{\Theta}g)=U(\Theta''\Theta'g\oplus Z^{-1}(\Delta_{\Theta''}\Theta'g\oplus\Delta_{\Theta'}g))
=\Theta'g\oplus\Delta_{\Theta'}g,
\end{eqnarray}which implies that $U$ takes $\bH'$ as given in (\ref{H'}) onto the Hilbert space
\begin{align}\label{fracH'}
\mathfrak{H}':=H^2(\mathcal{F})\oplus\overline{\Delta_{\Theta'}L^2(\cD)}\ominus\{\Theta'g\oplus
\Delta_{\Theta'}g:g\in H^2(\cD)\}
\end{align}
The basis of the proof is the following unitary equivalences:
\begin{align}
 \label{goalcnu}  U(M_{G_i^*+zG_j}\oplus W_i) U^*&=M_{G_i'^*+zG_j'}\oplus W_i' \text{ for } (i,j)=(1,2),(2,1), \\
 \label{goalcnu1} U(M_z\oplus M_{\zeta})U^*&=(M_z\oplus M_{\zeta}).
\end{align}
To verify \eqref{goalcnu}--\eqref{goalcnu1}, proceed as follows.
Since $M_{\zeta}$ commutes with $Z$ and $\Delta_{\Theta''}$, (\ref{goalcnu1}) follows easily.
We establish equation (\ref{goalcnu}) only for $(i,j)=(1,2)$ and omit the proof for the other case because it is similar.
For $f\in H^2(\mathcal{F})$ and $g\in\overline{\Delta_{\Theta'}L^2(\cD)}$,
\begin{align*}
&U(M_{G_1^*+zG_2}\oplus W_1)U^*(f\oplus g)=U(M_{G_1^*+zG_2}\oplus W_1)(\Theta''f\oplus Z^{-1}(\Delta_{\Theta''}f\oplus g))\\
=&\; U(\Theta''M_{G_1'^*+zG_2'}f\oplus Z^{-1}(\Delta_{\Theta''}M_{G_1'^*+zG_2'}f\oplus W_1'g))\;[\text{by }(\ref{ExtraCond})]\\
=&\; M_{G_1'^*+zG_2'}f\oplus W_1'g
\end{align*}
and \eqref{goalcnu} also follows.

We now show that the triple $((G_1',G_2'),(W_1',W_2'),\Theta')$ is admissible. Recall that in the course of the
proof of Theorem \ref{T:inv-cnu}, we saw that both $G_1'$ and $G_2'$ are contractions and that $(W_1',W_2')$
is a pair of commuting unitaries satisfying (\ref{propW'}). From (\ref{goalcnu}) we see that for every $f\in H^2(\cF)$
and $(i,j)=(1,2),(2,1)$,
\begin{align*}
(M_{G_i'^*+zG_j'}\oplus W_i') (\Theta'f\oplus\Delta_{\Theta'}f)
=&\;U(M_{G_i^*+zG_j}\oplus W_i) U^*(\Theta'f\oplus\Delta_{\Theta'}f)\\
=&\;U(M_{G_i^*+zG_j}\oplus W_i)\big(\Theta''\Theta'f\oplus Z^{-1}(\Delta_{\Theta''}\Theta'f\oplus \Delta_{\Theta'}f\big)\\
=&\;U(M_{G_i^*+zG_j}\oplus W_i)\big(\Theta f\oplus \Delta_{\Theta}f\big).
\end{align*}
From the admissibility of $((G_1,G_2),(W_1,W_2),\Theta)$, we know that each of the contraction operators
$(M_{G_i^*+zG_j}\oplus W_i)$ takes the space
$\{\Theta f\oplus \Delta_{\Theta}f:f\in H^2(\cD)\}$ into itself. Therefore from the last term of the above computation and (\ref{ortho}), we see that for each $(i,j)=(1,2),(2,1)$,
$$
(M_{G_i'^*+zG_j'}\oplus W_i')\big(\{\Theta' f\oplus \Delta_{\Theta'}f:f\in H^2(\cD)\}\big)\subset\{\Theta' f\oplus \Delta_{\Theta'}f:f\in H^2(\cD)\}.
$$
From (\ref{goalcnu}) it is also clear that for each $(i,j)=(1,2),(2,1)$, the operators $(M_{G_i'^*+zG_j'}\oplus W_i')$ are contractions and that with
$\mathfrak{H}'$ as in (\ref{fracH'})
\begin{align*}
&(M_{G_i'^*+zG_j'}\oplus W_i')^*(M_{G_j'^*+zG_i'}\oplus W_j')^*|_{\mathfrak{H}'}=U(M_{G_i^*+zG_j}
\oplus W_i)^*(M_{G_j^*+zG_i}\oplus W_j)^*|_{\bH}\\
=&\;U(M_z\oplus M_{\zeta})|_{\bH}=U(M_z\oplus M_{\zeta})U^*|_{\mathfrak{H}'}=
(M_z\oplus M_{\zeta})|_{\mathfrak{H}'} \quad [\text{by } (\ref{goalcnu1})].
\end{align*}
This completes the proof of admissibility of $((G_1',G_2'),(W_1',W_2'),\Theta')$.

And finally to prove the last part we first observe that
\begin{align*}
({\bf T_1},{\bf T_2})=P_{\bH'}(M_{G_1^*+zG_2}\oplus W_1,M_{G_2^*+zG_1}\oplus W_2)|_{\bH'}.
\end{align*}
Now from equations (\ref{goalcnu}) and (\ref{goalcnu1}) again and from the fact that
$U(\bH')=\mathfrak{H}'$ (hence $UP_{\bH'}=P_{\mathfrak{H}'}U$), we conclude that
\begin{align*}
({\bf T_1},{\bf T_2},{\bf T_1T_2})|_{\bH'}=P_{\bH'}(M_{G_1^*+zG_2}\oplus W_1,M_{G_2^*+zG_1}\oplus W_2,M_z\oplus M_{\zeta})|_{\bH'}
\end{align*}
is unitarily equivalent to the functional model associated to $((G_1',G_2'),(W_1',W_2'),\Theta')$, i.e.,
\begin{align*}
P_{\mathfrak H'}(M_{G_1^*+zG_2}\oplus W_1,M_{G_2^*+zG_1}\oplus W_2,M_z\oplus M_{\zeta})|_{\mathfrak H'}
\end{align*}
via the unitary $U|_{\bH'}:\bH'\to\mathfrak{H}'$. Therefore appeal to Theorem \ref{UnitaryInv},
Theorem \ref{AdmisCharc} and Proposition \ref{P:pure-adm-triple} completes the proof.
\end{proof}

In case the purely contractive analytic function $(\cD,\cD_{*},\Theta)$ is inner, the results above are much
simpler, as in the following statement.

\begin{theorem}  \label{T:inv-inner}
Let $(\cD,\cD_*,\Theta)$ be an inner function and $((G_1,G_2),\Theta)$ be an admissible pair.
Define the pair $({\bf T_1},{\bf T_2})$ of commuting contractions on
\begin{align}\label{cnuMspace-inner}
\bH=H^2(\cD_*)\ominus\{\Theta f: f\in H^2(\cD)\}
\end{align}by
\begin{align}\label{Mop-inner}
 ({\bf T_1},{\bf T_2})=P_{\bH}\big(M_{G_1^*+zG_2},M_{G_2^*+zG_1}\big)|_{\bH}.
\end{align}
A subspace $\bH'$ of $\bH$ is jointly invariant under $({\bf T_1},{\bf T_2})$ if and only if there exist two inner functions
$(\mathcal{D},\mathcal{F},\Theta')$, $(\mathcal{F},\mathcal{D}_{*},\Theta'')$ such that
\begin{eqnarray*}
\Theta=\Theta''\Theta'
\end{eqnarray*}
is a regular factorization,
\begin{align}
&  \bH'=\{\Theta''f:f\in H^2(\mathcal{F})\}\ominus\{\Theta h:h\in H^2(\cD)\}, \notag  \\
& \bH'':=\bH\ominus\bH'=\;  H^2(\cD_*)\ominus\{\Theta''f:f\in H^2(\mathcal{F})\},
\label{Hprimes-inner}
\end{align}
and two contractions $G'_1,G'_2$ in $\mathcal{B}(\mathcal{F})$ such that
\begin{equation}   \label{ExtraCond-inner}
M_{G_i^*+zG_j}M_{\Theta''}=M_{\Theta''}M_{G_i'^*+zG'_j}.
\end{equation}
Moreover, the pair $((G_1',G_2'),\Theta')$ coincides with the characteristic pair for $(T_1,T_2)|_{\mathcal{H}'}$.
\end{theorem}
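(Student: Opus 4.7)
The plan is to deduce Theorem \ref{T:inv-inner} as the inner-function specialization of Theorems \ref{T:inv-cnu} and \ref{T:inv-char}. The key simplifying observation is that when $(\cD,\cD_*,\Theta)$ is inner, the pointwise defect $\Delta_\Theta(\zeta) = (I - \Theta(\zeta)^*\Theta(\zeta))^{1/2}$ vanishes a.e.\ on $\mathbb T$, so $\overline{\Delta_\Theta L^2(\cD)} = \{0\}$. Consequently the second components $W_1,W_2$ in the admissible triple act on the zero space and drop out, the model space $\bH$ collapses to $H^2(\cD_*)\ominus \Theta H^2(\cD)$, and the model operators \eqref{Mop-inner} take the advertised form. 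Thus an ``admissible pair'' $((G_1,G_2),\Theta)$ is simply the degenerate case of an admissible triple in which the unitary-multiplication part is trivial.

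Next I would handle the factorization side. Suppose $\Theta = \Theta''\Theta'$ is a regular factorization into contractive analytic functions $(\cD,\cF,\Theta')$ and $(\cF,\cD_*,\Theta'')$. Because $\Theta$ is inner, $\Delta_\Theta = 0$, so the isometry $Z$ of \eqref{Z'} maps the zero space into $\overline{\Delta_{\Theta''}L^2(\cF)}\oplus \overline{\Delta_{\Theta'}L^2(\cD)}$; regularity (i.e.\ surjectivity of $Z$) then forces both $\Delta_{\Theta''} = 0$ and $\Delta_{\Theta'} = 0$ a.e., so both factors are inner. In particular the subspaces $\{0\}\oplus \overline{\Delta_{\Theta'}L^2(\cD)}$ and $\overline{\Delta_{\Theta''}L^2(\cF)}\oplus\{0\}$ are trivial, the spaces $\bH'$ and $\bH''$ of \eqref{H'}, \eqref{H''} collapse to the expressions \eqref{Hprimes-inner}, and the unitaries $W_1',W_2'$ of Theorem \ref{T:inv-cnu} act on the zero space and disappear. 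The intertwining \eqref{ExtraCond}, evaluated on elements of the form $\Theta''f\oplus 0$, reduces to exactly $M_{G_i^*+zG_j}M_{\Theta''} = M_{\Theta''}M_{G_i'^* + zG_j'}$, which is \eqref{ExtraCond-inner}; the bottom component of \eqref{ExtraCond} becomes vacuous.

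With this dictionary in place, both directions of the equivalence follow: for sufficiency, one applies Theorem \ref{T:inv-cnu} with $W_i,W_i'$ trivial and $\Theta',\Theta''$ inner, and reads off that the resulting $\bH'$ is jointly invariant under $(\mathbf T_1,\mathbf T_2)$; for necessity, one applies Theorem \ref{T:inv-cnu} directly and observes via the $\Delta_\Theta = 0$ argument above that the data it produces necessarily takes the simplified inner form stated. The ``moreover'' statement concerning coincidence with the characteristic pair for $(\mathbf T_1,\mathbf T_2)|_{\bH'}$ is then an immediate transcription of Theorem \ref{T:inv-char}: the admissible triple $((G_1',G_2'),(W_1',W_2'),\Theta')$ produced there is pure (its last component $\Theta'$ is inner, hence in particular purely contractive), its second component is trivial, and its coincidence class collapses to the coincidence class of the admissible pair $((G_1',G_2'),\Theta')$.

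The main conceptual obstacle is really only the verification that regularity of a factorization of an inner function forces both factors to be inner; once that is established (essentially by inspection of the definition of $Z$), everything else is a routine translation in which the $L^2$-residual components of the Sz.-Nagy--Foias and admissible-triple machinery vanish. No new computations are required beyond those already carried out in Theorems \ref{T:inv-cnu} and \ref{T:inv-char}.
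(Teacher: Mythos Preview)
Your proposal is correct and matches the paper's intent: the paper presents Theorem \ref{T:inv-inner} without an explicit proof, introducing it simply as the case where ``the results above are much simpler,'' so the intended argument is precisely the specialization of Theorems \ref{T:inv-cnu} and \ref{T:inv-char} that you carry out. Your key observation that regularity of $\Theta = \Theta''\Theta'$ with $\Theta$ inner forces both factors inner (via $\operatorname{Ran} Z = \{0\}$) is exactly the point that makes the collapse work.

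One small caveat: your parenthetical claim that ``$\Theta'$ is inner, hence in particular purely contractive'' is not literally true (a constant unitary is inner but not purely contractive). Theorem \ref{T:inv-char} asserts coincidence with the \emph{purely contractive part} of the triple, and the ``moreover'' clause in Theorem \ref{T:inv-inner} should be read the same way; the paper's phrasing there is slightly loose. This does not affect the substance of your argument.
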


Another interesting simplification is the case where the input space $\cD$ is the zero space and the purely contractive analytic functions $(\{0\} , \cD_*, \Theta)$ is
necessarily the zero function $\Theta(z) = 0 \colon \{ 0\} \to \cD_*$ for all $z \in {\mathbb D}$.  If $\Theta = \Theta'' \Theta'$
is any factorization
into contractive analytic functions $(\cF, \cD_*, \Theta'')$,  $(\{0\}, \cF, \Theta')$, then $\Theta'$ is forced to be the zero
function.  One can then show that the
factorization $ 0 = \Theta'' \cdot 0$ is regular exactly when $\Theta''$ is inner, so $I - \Theta''(\zeta)^* \Theta''(\zeta) = I_\cF$
for a.e. $\zeta \in {\mathbb T}$.
Then Theorem \ref{T:inv-cnu} simplifies to the following form; in view of Remark \ref{R:BL},
this result can be viewed as a bivariate version of the
Beurling-Lax Theorem.
\index{Beurling-Lax Theorem!bivariate version of}  
We note that an analogue of this result appears in the context of model theory for a commuting 
pair of operators $(S,P)$ having
the symmetrized bidisk as a spectral set (i.e., a $\Gamma$-contraction)---see Sarkar \cite[Theorem 3.3]{Sarkar2015}, and for a triple of commuting operators $(A,B,P)$ having the tetrablock as a spectral set (i.e., a tetrablock contraction)---see \cite[Theorem 3.1]{SauNYJM}.

\begin{theorem}  \label{T:inv-0}
Suppose that $(\cD_*, P, U)$ is a  BCL tuple.  Define a pair of commuting isometries $(\bf T_1, \bf T_2)$ on
$\bH = H^2(\cD_*)$ as the associated BCL2 model pair of commuting isometries
$$
  ( \bT_1, \bT_2) =  (M_{ U^* ( P^\perp + z P)},  M_{(P + zP^\perp) U}).
$$
A subspace $\bH'$ of $\bH$ is jointly invariant under $(\bT_1, \bT_2)$ if and only if there exist an inner function
$(\cF, \cD_*, \Theta'')$ and another BCL tuple $(\cF, P', U')$ so that
\begin{equation}  \label{Hprimes-0}
\bH' = \Theta'' H^2(\cF), \quad \bH'' = H^2(\cD_*) \ominus \bH'  = H^2(\cD_*) \ominus \Theta'' H^2(\cF)
\end{equation}
and lastly
$$
M_{ U^* ( P^\perp + z P)} M_{\Theta''} = M_{\Theta''} M_{ U'^* ( P'^\perp + z P')}, \quad
M_{(P + z P^\perp) U} M_{\Theta''}  = M_{\Theta''} M_{(P' + z P'^\perp) U'}.
$$
\end{theorem}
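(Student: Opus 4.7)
My plan is to derive Theorem \ref{T:inv-0} as a specialization of Theorem \ref{T:inv-cnu} in the ``trivial input'' limiting case $(\cD, \cD_*, \Theta) = (\{0\}, \cD_*, 0)$, which is precisely the setting discussed in Remark \ref{R:BL} as a bivariate Beurling--Lax theorem. In this limit the defect $\Delta_\Theta$ acts on the zero space, so $\overline{\Delta_\Theta L^2(\cD)} = \{0\}$, the ``unitary component'' $(W_1,W_2)$ of the admissible triple disappears, and the model space $\cH_\Theta$ collapses to all of $H^2(\cD_*)$. The admissible triple reduces to a contraction pair $(G_1, G_2)$ on $\cD_*$ with $M_{G_1^* + zG_2}$ and $M_{G_2^* + zG_1}$ contractive, commuting with product $M_z^{\cD_*}$; matching coefficients identifies $(G_1, G_2)$ with $(P^\perp U, U^*P)$ so that $(\bT_1, \bT_2) = (M_{G_1^* + zG_2}, M_{G_2^* + zG_1})$ as in the statement.

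For the sufficiency direction, given an inner function $(\cF, \cD_*, \Theta'')$ and a BCL tuple $(\cF, P', U')$ satisfying the stated intertwinings, $\bH' = \Theta'' H^2(\cF)$ is by construction $M_{\Theta''}$-invariant, and the intertwining $M_{U^*(P^\perp+zP)}M_{\Theta''} = M_{\Theta''}M_{U'^*(P'^{\perp}+zP')}$ gives $\bT_1 \bH' = \Theta'' M_{U'^*(P'^{\perp}+zP')} H^2(\cF) \subseteq \Theta'' H^2(\cF) = \bH'$, and likewise for $\bT_2$. This can be checked in two lines directly, without invoking Theorem \ref{T:inv-cnu}.

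For the necessity direction I would apply Theorem \ref{T:inv-cnu} to the admissible triple $((G_1,G_2), \text{trivial}, 0)$ with $(G_1,G_2) = (P^\perp U, U^*P)$. A regular factorization $0 = \Theta'' \cdot \Theta'$ with $\Theta' : \{0\} \to \cF$ forces $\Theta'$ to be the zero function, and an examination of the $Z$-map in \eqref{Z'} in this degenerate setting shows regularity is equivalent to $\Theta''$ being inner (exactly as summarized in Remark \ref{R:BL}). Formula \eqref{cnuH'} then collapses to $\bH' = \Theta'' H^2(\cF)$, and condition \eqref{ExtraCond} (after recording that the proof of Theorem \ref{T:inv-cnu} identifies the new multipliers as pencils $\varphi'_i(z) = G_i^{\prime *} + zG'_j$ via \eqref{varphi}) yields the intertwinings
\[
M_{G_i^* + zG_j} M_{\Theta''} = M_{\Theta''} M_{G_i^{\prime *} + zG'_j}, \qquad (i,j) = (1,2),\,(2,1).
\]

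The remaining task is to verify that the new pair $(G'_1, G'_2)$ is necessarily of the BCL form $(P'^{\perp} U',\, U'^* P')$, which is what dresses up the intertwinings into the form asserted in Theorem \ref{T:inv-0}. Here I would apply Theorem \ref{T:inv-char}: the pair $(({\bf T}_1, {\bf T}_2))|_{\bH'}$ is unitarily equivalent (via $\cI|_{\bH'}$) to the functional-model pair associated with $((G'_1, G'_2), \text{trivial}, 0)$, which acts on all of $H^2(\cF)$ as $(M_{G_1^{\prime *} + zG'_2}, M_{G_2^{\prime *} + zG'_1})$. Since $({\bf T}_1, {\bf T}_2)$ are isometries and $\bH'$ is jointly invariant, this restricted pair is a commuting isometric pair on $H^2(\cF)$ with product $M_z^\cF$. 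Feeding its defining pencils into the identification argument in the proof of Theorem \ref{Thm:BCLmodel} (equations \eqref{BCLEqn1}--\eqref{BCLEqn3}) and then invoking Lemma \ref{relations-of-E-lem} produces the required projection $P'$ and unitary $U'$ on $\cF$ with $G_1^{\prime *} = U'^* P'^{\perp}$ and $G_2^{\prime *} = P' U'$, so that $G_i^{\prime *} + zG'_j = U'^{*}(P'^{\perp} + zP')$ for $(i,j) = (1,2)$ and the symmetric form for $(2,1)$. The main obstacle I foresee is bookkeeping: carefully tracking which $G$-operators get starred in \eqref{ExtraCond} (the statement there is slightly ambiguous compared to the derivation via \eqref{varphi}) and matching the resulting pencils to the standardized form $U'^{*}(P'^{\perp} + zP')$ demanded in Theorem \ref{T:inv-0}.
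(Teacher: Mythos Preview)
Your proposal is correct and follows exactly the approach the paper takes: Theorem \ref{T:inv-0} is stated there without explicit proof, presented as the specialization of Theorem \ref{T:inv-cnu} to the degenerate case $(\cD,\cD_*,\Theta)=(\{0\},\cD_*,0)$, with the discussion preceding the theorem and Remark \ref{R:BL} supplying precisely the reductions you describe (trivial $\Delta_\Theta$, $\Theta'=0$, regularity of $0=\Theta''\cdot 0$ equivalent to $\Theta''$ inner). The one piece of work the paper leaves implicit---that the contractions $G'_1,G'_2$ produced by Theorem \ref{T:inv-cnu} must arrange themselves into BCL form $(P'^\perp U',\,U'^*P')$---you have correctly identified and your plan via Theorem \ref{T:inv-char} together with Lemma \ref{relations-of-E-lem} is the natural route; your bookkeeping worry about the stars in \eqref{ExtraCond} is also well-founded, as the derivation through \eqref{varphi} is the reliable source for the correct form.
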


A particular choice of BCL tuple in Theorem \ref{T:inv-0} is $(\cD_*, P, U) = (\ell^2_{\mathbb Z}, P_{[1, \infty)}, \bS)$ as in Example \ref{E:BCLbidisk}, so that
the resulting $(\bT_1, \bT_2)$ is just the BCL1 model for the bidisk shift-pair $(M_{z_1}, M_{z_2})$ on $H^2_{{\mathbb D}^2}$.  In this model the operator $M_{z_1 z_2}$
on $H^2_{{\mathbb D}^2}$ becomes the operator $M_z$ on $H^2(\ell^2_{\mathbb Z})$.  The standard single-variable Beurling-Lax Theorem tells us that
invariant subspaces for $M_{z_1 z_2}$ on the Hardy space of the bidisk are then in one-to-one correspondence with inner functions of the form
$(\cF, \ell^2_{\mathbb Z}, \Theta'')$; the latter object is not easy to classify since the target coefficient space $\ell^2_{\mathbb Z}$ is infinite-dimensional.  Joint invariant subspaces
for $(M_{z_1}, M_{z_2})$ on $H^2_{{\mathbb D}^2}$ then correspond to such inner $\Theta''$ such that in addition
$\Theta'' \cdot H^2(\ell^2_{\mathbb Z})$ is jointly invariant for $(M_{ \bS^* ( P^\perp + z P)}, M_{(P + z P^\perp) \bS})$
(where $P = P_{\ell^2_{[1, \infty)}}$).  The tradeoff between the bidisk setting versus the BCL-setting is:  in the bidisk setting one has scalar-valued functions
at the cost of the functions being of two variables, while in the BCL setting one has single-variable functions but with values in the infinite-dimensional space
($\ell^2_{\mathbb Z}$).  Characterizing joint invariant subspaces in either setting appears to be rather intractable.  There has been by now much work on the problem
leading to deeper appreciation as to how complicated the structure is in the bidisk setting:  a small sample of such work is \cite{Burdak, Douglas-Yan, SY, Yang-survey}.

\bibliographystyle{amsalpha}

\begin{thebibliography}{99}

%\bibitem{awy} A. A. Abouhajar, M. C. White and N. J. Young, {\em A Schwarz lemma for a domain related to $\mu$-synthesis}, J. Geom. Anal. 17 (2007), 717-750.

%%\bibitem{AMc-book} J. Agler and J. McCarthy, Pick interpolation and Hilbert function spaces. Graduate Studies in Mathematics, 44. American Mathematical Society, Providence, RI, 2002.

%\bibitem{Agler'85}J. Agler, {\em Rational dilation on an annulus}, Ann. of Math. 121 (1985) 537-563.

%\bibitem{Agler-Hellinger}   J.~Agler, {\em On the representation of certain holomorphic functions defined on a polydisc}, Topics in Operator Theory: Ernst D. Hellinger Memorial Volume, 47 - 66, Oper. Theory Adv. Appl., 48, Birkh\"auser, Basel, 1990.

%\bibitem{AHR'08}J.~Agler, J.~Harland, B.~J.~Raphael, {\em Classical function theory, operator dilation theory, and machine computation on multiply-connected domains}, Mem. Amer. Math. Soc. 191 (892) (2008), viii+159 pp. MR2375060.

%\bibitem{AMcC-book}  J.~Agler and J.E.~McCarthy, {\em Pick Interpolation and Hilbert Function Spaces}  Graduate Studies in Mathematic \textbf{44}, Amer.~ Math.~Soc., Providence, RI, 2002

\bibitem{A-M-Dist-Var} J. Agler and J. McCarthy, {\em{Distinguished Varieties}}, Acta. Math. \textbf{194} (2005), 133-153.

%\bibitem{AY'99} J. Agler and N. J. Young, {\em A commutant lifting theorem for a domain in $\mathbb C^2$ and spectral interpolation}, J. Funct. Anal. 161 (1999), no. 2, 452-477.

%\bibitem{AY'03}  J. Agler and N. J. Young, {\em  A moel theory for $\Gamma$-contractions}, J.~Operator Theory 49 (2003), 45-60.

%\bibitem{APV}  D.~Alpay, A~Pinhas, and V.~Vinnikov, V., {\em de Branges spaces on compact Riemann surfaces and a Beurling-Lax type theorem}, Adv. Math. 373 (2020), 107315, 66 pp. 

%\bibitem{AV1}   D.~Alpay and V.~Vinnikov, {\em Victor Indefinite Hardy spaces on finite bordered Riemann surfaces},  J. Funct. Anal. 172 (2000), no. 1, 221 - 248.


%\bibitem{AV2}   D.~Alpay and V.~Vinnikov, {\em Finite dimensional de Branges spaces on Riemann surfaces},  J. Funct. Anal. 189 (2002), no. 2, 283 - 324.


\bibitem{AEM}  C.-G. Ambrozie, M.~ Engli\v s, and V.~M\"uller, 
{\em Operator tuples and analytic models over general domains in ${\mathbb C}^n$}, 
J. Operator Theory 47 (2002), no. 2, 287-302. 


%%\bibitem{awy-cor} A. A. Abouhajar, M. C. White and N. J. Young, {\em Corrections to 'A
%%Schwarz lemma for a domain related to $\mu$-synthesis'}, available
%%online at
%%http://www1.maths.leeds.ac.uk/~nicholas/abstracts/correction.pdf.

\bibitem{ando} T. And\^o, {\em{On a Pair of Commuting Contractions}}, Acta Sci. Math. (Szeged) \textbf{24} (1963), 88-90.

%\bibitem{Ando-triple}T. Ando, {\em{Unitary dilation for a triple of commutative contractions}}, Bull. Acad. Polon. Sci. SÃ©r. Sci. Math. Astronom. Phys. 24 (1976), 851-853.
%\bibitem{Archer}J. R. Archer, {\em{Unitary dilations of commutative contractions}}, Ph.D. Thesis, University of Newcastle upon Tyne, 2004.
%\bibitem{Arv'72} W. B. Arveson, {\em Subalgebras of $C^*$-algebra}. II., Acta Math. 128 (1972), 271-308.
%%\bibitem{Arveson} W. B. Arveson, {\em Subalgebras of $C^*$-algebra}, Acta Math. 123 (1969), 141-224.
%\bibitem{BBM}B. Bagchi, T. Bhattacharyya, G. Misra, {\em{Some thoughts on Ando's theorem and Parrott's example}}, Linear Algebra Appl. 341 (2002), 357-367.

\bibitem{ArovGrossman}  D.Z.\ Arov and L.Z.\ Grossman, {\em Scattering matrices in the theory of unitary extension of isometric operators},
Math.\ Nachr.\ \textbf{157} (1992), 105--123.

%\bibitem{AV} D.~Alpay and V.~Vinnikov, 


%\bibitem{Arv} W.B.\ Arveson, {\em An Invitation to $C^*$-algebras}, Graduate Texts in Mathematics \textbf{39}, Springer-Verlag,
%Heidelberg, New York, 1976.

\bibitem{Ball-Memoir}  J.A.\ Ball, {\em  Factorization and model theory for contraction operators with unitary part}, 
 Mem. Amer. Math. Soc. 13 (1978), no. 198.

\bibitem{Ball-Winnepeg}  J.A.\ Ball,  {\em Linear systems, operator model theory and scattering: multivariable generalizations},
in: Operator Theory and its Applications (Winnipeg, MB, 1998), 151--178, Fields Inst.\ Commun \textbf{25}, Amer.\ Math.\ Soc., Providence, 2000.

\bibitem{BB-Montreal} J.A.\ Ball and V.\ Bolotnikov,  {\em Canonical de Branges-Rovnyak model transfer-function realization for multivariable Schur-class functions},
Hilbert Spaces of Analytic Functions, 1--39, CRM Proc.\ Lecture Notes \textbf{51}, Amer.\ Math.\ Soc., Providence, RI, 2010.

%\bibitem{BB} J.A.\ Ball and V.\ Bolotnikov,  {\em de Branges-Rovnyak spaces:  basics and theory}, in: Operator Theory Vol.1  (ed. D.\ Alpay) pp.\ 631--679, Springer, Basel, 2015.


%\bibitem{BBF} J. A. Ball, V. Bolotnikov and Q. Fang, {\em{Multivariable backward-shift-invariant subspaces
%and observability operators}}, Multidim Syst Sign Process 18 (2007), 191-248.

\bibitem{BC}  J.A.\ Ball and N.\ Cohen, {\em de Branges-Rovnyak operator models and systems theory:  a survey},
in: Topics in Matrix and  Operator Theory, pp. 103--136. Oper.\ Th.\ Adv.\  Appl.\  \textbf{50},
Birkh\"auser, Basel, 1991.

\bibitem{BGR} J.A.\ Ball, I.\ Gohberg, and L.\ Rodman, {\em Interolation Theory of Rational Matrix Functions},
Oper.\ Theory Adv.\ Appl. \textbf{45}  Birkh\"auser-Verlag, Basel, 1990.


%\bibitem{BK} J.A.\ Ball and T.L.\ Kriete, {\em Operator-valued Nevanlinna-Pick kernels and the functional models for contraction operators},   Integral Equations and Operator Theory \textbf{10} (1987) no.\ 1, 17--61.

%\bibitem{BSV05} J.A.\ Ball, C.\ Sadosky and V.\ Vinnikov,  {\em Scattering systems with seeral evolutions and multidimensional input/state/output systems}, Integr.\ Equ.\ Oper.\ Theory \textbf{52} (2005), 323-393.
 

%\bibitem{BLT}J. A. Ball, W. S. Li, D. Timotin and T. T. Trent, {\em{A commutant lifting theorem on the polydisc}}, 
%Indiana Univ. Math. J. 48 (1999), no. 2, 653-675.

%\bibitem{BS-JFA'18} J.~A.~Ball and H.~Sau, {\em Rational dilation of tetrablock contractions revisited}, J. Funct. Anal. 278, 108275, 2020

\bibitem{BS-Douglas} J.A.\ Ball and H.\ Sau, {\em Functional models for commuting Hilbert-space contractions}, 
in: Operator Theory, Operator Algebras and Their Interactions with Geometry and Topology:  Ronald G.\ Douglas Memorial Volume, 11 - 54, Oper.\ Theory Adv. Appl. \textbf{278}, Birkh\"auser, Cham, 2020.

%\bibitem{BS-Eschmeier} J.A.\ Ball and H.\ Sau, {\em Dilation theory and functional models for tetrablock contractions}, Complex Analysis and Operator Theory (2023) 17:25 (40 pages)

\bibitem{BV}  J.A.\ Ball and V.\ Vinnikov, {\em Hardy spaces on a finite bordered Riemann surface, multivariable
operator model theory and Fourier analysis along a unimodular curve},
Systems, Approximation, Singular Integral Operators, and Related Topics (Bordeaux, 2000) 37--56,
Oper.\ Theory Adv.\ Appl.\ \textbf{129}, Birkh\"auser, Basel, 2001.

\bibitem{BDF1} H.\  Bercovici, R.G.\  Douglas and C.\ Foias, {\em On the classification of multi-isometries}, Acta Sci.\ Math.\
(Szeged) 72 (2006), 639-661.

%\bibitem{BDF2}   H.\  Bercovici, R.G.\  Douglas and C.\ Foias, {\em Bi-isometries commutant lifting}, in: Characteristic Functions, Scattering Functions, and Transfer Functions,  Oper.\ Theory Adv.\ Appl. \textbf{197} (2010), Birkh\"auser-Verlag, Basel, 51--76.


\bibitem{BDF3}  H.\  Bercovici, R.G.\  Douglas and C.\ Foias, {\em Canonical models for bi-isometries}, in: 
A Panorama of Modern Operator Theory and Related Topics
(The Israel Gohberg Memorial Volume), Oper.\ Theory Adv.\ Appl. \textbf{218,} Birkh\"auser, Basel, 2012.



\bibitem{B-C-L} C. A. Berger, L. A. Coburn and A. Lebow, {\em{Representation and index theory for 
$\mathcal C^*$-algebras generated by
commuting isometries}}, J. Funct. Anal. \textbf{27} (1978), no. 1, 51-99.

\bibitem{Tirtha-tetrablock} T. Bhattacharyya, {\em{The tetrablock as a spectral set}},   
Indiana Univ. Math. J. \textbf{63} (2014), 1601-1629.

%\bibitem{BhowBhat}T.\ Bhattacharyya and  M.\ Bhowmik, {\em A note on the dilation of a certain family of tetrablock contractions}, arXiv:2204.10753 [math.FA]

%%\bibitem{BDS-Polydisk} T. Bhattacharyya, B. K. Das and H. Sau, {\em{On certain Toeplitz operators and associated completely positive maps}}, arXiv:1712.02237 [math.FA].

%\bibitem{BLS} T. Bhattacharyya, S. Lata and H. Sau, {\em{Admissible fundamental operators}}, J. Math. Anal. Appl. \textbf{425} (2015), no. 2, 983-1003.


%\bibitem{BNS-OaM}  T.\ Bhattacharyya, E.K.\ Narayanan and J.\ Sarkar, {\em Analytic model of doubly commuting contractions}, Operators \& Matrices \textbf{11} no.\ 1 (2017), 101--113.

%\bibitem{BhatPal} T.~Bhattacharyya and S~Pal,  {\em A fuctional model for pure $\Gamma$-contractions}, J.~Operator T	h.~\textbf{71} no.~2 (2014), 327 - 334.

\bibitem{B-P-SR} T. Bhattacharyya, S. Pal and S. Shyam Roy, {\em{Dilations of $\Gamma$-contractions
by solving operator equations}}, Adv. Math. \textbf{230} (2012) 577-606.


\bibitem{BRK} T. Bhattacharyya, S. Rastogi and U. V. Kumar, {\em The joint spectrum for a commuting pair of isometries in certain cases},  Complex Anal. Oper. Theory 16, 83 (2022).

%\bibitem{BhatSau} T.~Bhattacharyya and H.~Sau, {\em Explicit and unique construction of tetrablock unitary dilation in a certain case}, Complex Anal. Oper. Theory 10 (2016), no. 4, 749-768. 

%\bibitem{BhatSau'17} T. Bhattacharyya and H. Sau, {\em{$\Gamma$-unitaries, dilation and a natural example}}, Publ. Res. Inst. Math. Sci. 53 (2017), 261-285.

%\bibitem{sir and me1} T. Bhattacharyya and H. Sau, Explicit and unique construction of tetrablock unitary dilation in a certain case,  Complex Anal. Oper. Theory, 10 (2016), 749-768.

%\bibitem{Cole-Wermer}B. J. Cole and J. Wermer, {\em{And\^o's theorem and sums of squares}}, Indiana Univ. Math. J., 48(1999), 767-791.

%%\bibitem{bhat-me realization} T. Bhattacharyya and H. Sau, {\em{Holomorphic functions on the symmetrized bidisk - realization, interpolation and extension}}, available at http://arxiv.org/pdf/1511.08962.pdf


%\bibitem{BisaiPal} B.~Bisai and S.~Pal, The fundamental operator tuples associated with the symmetrized polydisc. (English summary) New York J. Math. 27 (2021), 349-362. 

\bibitem{Burdak}  Z.\ Burdak, {\em On the model and invariant subspaces for pairs of commuting isometries}, Integr. Equ. Oper. Theory (2019) 91: 22. https://doi.org/10.1007/s00020-019-2516-4.

\bibitem{BKPS} Z.\ Burdak, M.\ Kosiek, P.\ Pagacz, and M.\   S\l oci\'nski, {\em On the commuting isometries}, Linear Algebra Appl.\ \textbf{516}
(2017), 167--185.


\bibitem{BKS} Z.\ Burdak, M.\ Kosiek, and M.\ S\l oci\'nski, {\em Compatible pairs of commuting isometries}, Linear Algebra Appl.\ \textbf{479}
(2015), 216--259.

\bibitem{CV}  R.E.~Curto and F.-H.~Vasilescu,  {\em Standard operator models in the polydisc},  Indiana Univ. Math. J. 42 (1993), no.~3, 791-810.


%\bibitem{D-S} B. K. Das and J. Sarkar, {\em{And\^o dilations, von Neumann inequality, and distinguished varieties}}, J. Funct. Anal. \textbf{272} (2017), no. 5, 2114-2131.


\bibitem{D-S-S} B. K. Das, S. Sarkar and J. Sarkar, {\em{Factorizations of contractions}}, Adv. Math. \textbf{322} (2017), 186-200.


%\bibitem{dBR1} L. de Branges and J.\ Rovnyak, {\em Canonical models in quantum scattering theory}, in: Perturbation Theory and its Applications in Quantum Mechanics (Proc.\ Adv.\ Sem.\ Math.\ Res.\ Center, U.S.\ Army, Theoret.\ Chem.\ Inst., Univ.\ of Wisconsin, Madison, Wis., 1965),  pp. 295--392, Wiley, New York, 1966.

%\bibitem{dBR2}  L. de Branges and J.\ Rovnyak, {\em Square Summable Power Series}, Holt, Rinehart and Winston, New York-Toronto, Ont.-London, 1966


\bibitem{Dix}  J.\ Dixmier, {\em von Neumann Algebras} (with a preface by E.C.\ Lance), translated from the second French edition by F.\ Jellett,
North-Holland Math.\ Library \textbf{27}, North Holland Publishing Co., Amsterdam/New York, 1981.

%\bibitem{DM'05} M.~A.~Dritschel, S.~ McCullough, {\em The failure of rational dilation on a triply connected domain} J. Amer. Math. Soc. 18 (2005), 873-918. 

\bibitem{Douglas} R. G. Douglas, {\em{On majorization, factorization and range inclusion of operators on Hilbert space}}, Proc. Amer. Math. Soc. \textbf{17} (1966), 413-415.


\bibitem{Doug-Dilation} R. G. Douglas, {\em Structure theory for operators. I.}, J. Reine Angew. Math. \textbf{232} (1968) 180-193.


\bibitem{Douglas-Yan}  R.G.\ Douglas and K.\ Yan, {\em On the rigidity of Hardy submodules}, Integral Equations and Operator Theory
\textbf{13} (1990), 350--363.

%\bibitem{DoylePackard} J.C.~Doyle and A.~Packard, {\em The complex structued singular value}, Automatica \textbf{29} no.~1, 71 - 109 (1993).

%\bibitem{DullerudPaganini} G.E.~Dullerud and F.~Paganini, {\em A Course in Robust Conrol Theory: A Convex Approach}, Texts in Applied Mathematics \textbf{36}, Springer, New York (2000).

\bibitem{Durszt}  E.~Durszt, {\em Contractions as restricted shifts},  Acta Sci.~Math., \textbf{48} (1985), 129-134.

%\bibitem{DymGohberg} H.~Dym and I.~Gohberg, {\em A maximum entropy principle for contractive intertwining dilations}, J.~Func.~Anal. \textbf{65}  (1986), 83 - 125.

%\bibitem{FF-OT44} C.~Foias and A.E.~Frazho, {\em The Dommutant Lifting Approach to Interpolation Problems},  Operator Theory: Advances and Applications \textbf{44},  Birkh\"auser Verlag, Basel, 1990.

\bibitem{GG}  D.~Ga\c{s}par and P.~ Ga\c{s}par, {\em Wold decompositions and the unitary model for bi-isometries},
 Integral Equations Operator Theory 49 (2004), no.~4, 419 - 433. 

%\bibitem{GS}  D.\ Ga\c{s}par and N.\ Suciu, {\em On the intertwinings of regular dilations}, Ann.\ Pol.\ Math. LXVI (1997), 105 - 121.

%\bibitem{LiTimotin}  W.S.~Li and D.~Timotin, {\em The central And\^o dilation and related orthogonality properties}, J.~Funct.~Anal.~\textbf{154} (1998), 1 - 16.

%\bibitem{LKMV}  M.S.~Liv\v sic, N.~Kravitsky, A.S.~Markus, and V.~Vinnikov, {\em Theory of commuting nonselfadjoint operators}, Mathematics and its Applications, 332. Kluwer Academic Publishers Group, Dordrecht, 1995. xviii+313 pp. ISBN: 0-7923-3588-0 

\bibitem{Mandrekar1988}  V.\ Mandrekar, {\em The validity of Beurling theorem in polydisks}, Proc.\ Amer.\ Math.\ Soc.\ \textbf{103} (1988),
145--148.

%\bibitem{MSS}   A.~Maji,  J.~Sarkar, and T.R.~Sankar, {\em Pairs of commuting isometries, I}, Studia Math. 248 (2019), no.~2, 171 - 189. 

%\bibitem{DMP} R. G. Douglas, P. S. Muhly and C. Pearcy, \textit{Lifting commuting operators}, Michigan
%Math. J., 15 (1968), 385-395.
%\bibitem{FO-FR} C. Foias and A. Frazho, {\em{The Commutant Lifting Approach To Interpolation Problems}}, Operator Theory: Advances and Applications, 
%Vol 44, Birkhauser Verlag, Basel, 1990.
%\bibitem{Gadidov}R. Gadidov, {\em{Choice sequences and Ando dilations}} An. Univ. Craiova Mat. Fiz.-Chim. 15 (1987), 14-18.
%\bibitem{Li-Ti}W. S. Li, D. Timotin, {\em{The central Ando dilation and related orthogonality properties}}, J. Funct. Anal. 154 (1998), 1-16.



%\bibitem{NV1}  N.K.\ Nikolskii and V. Vasyunin, {\em Notes on two function models}, in: The Bieberbach Conjecture (West Lafayette, Ind., 1985),  pp. 113--141,  Math.\ Surveys Monogr. \textbf{21}, Amer.\ Math.\ Soc., Providence, RI, 1986.


%\bibitem{NV2}  N.K.\ Nikolskii and V.\ Vasyunin. {\em A unified approach to function models and the transcription problem}, The Gohberg Anniversary Collection, Vol. II (Calgary, AB. 1988) pp.\ 405--434, Oper.\  Theory Adv.\ Appl.\  \textbf{41}, Birkh\"auser, Basel, 1989.

\bibitem{PalJMAA}  S.\ Pal, {\em Canonical decomposition of a tetrablock contraction and operator model},
J.\ Math.\ Anal.\ Appl. \textbf{438} (2016), 274--284.

%\bibitem{Parrott} S. Parrott, {\em{Unitary dialtions for commuting contractions}}, Pacific J. math., 34(1970), 481-490.

%\bibitem{Popescu}G. Popescu, {\em{And\^o dilations and inequalities on noncommuting varieties}}, J. Funct. Anal., 272(2017), 3669-3711.

%\bibitem{PaulsenJFA'92}V.~Paulsen, {\em Representations of function algebras, abstract operator Spaces and Banach space geometry}, J. Funct. Anal., 109 (1992), 113 - 129.

%\bibitem{Paulsen'02} V.~Paulsen, {\em Completely bounded maps and operator algebras}, Cambridge Studies in Advanced Mathematics, 78. Cambridge University Press, Cambridge, 2002. xii+300 pp. ISBN: 0-521-81669-6

%\bibitem{Pisier'03} G. Pisier, {\em An Introduction to the Theory of Operator Spaces}, Cambridge University Press, 2003.

%\bibitem{Popovici}  D.\ Popovici, {\em A Wold-type decomposition for commuting isometric piars}, Porc.\ Amer.\ Math.\ Soc. \textbf{132} (8) (2004), 2303--2314,

%\bibitem{Popovici05} D.\ Popovici, {\em On $C_{0 \cdot}$ multi-contractions having a regular dilation}, Studia Math.\ \textbf{170} (3) (2005), 297--302.

\bibitem{Rosenblum} M.\ Rosenblum, {\em On a theorem of Fuglede and Putnam}, J.\ London Math.\ Soc.\ \textbf{33} (1958), 396--377.

%\bibitem{RR} M.\ Rosenblum and J. Rovnyak,  {\em Hardy Classes and Operator Theory: Corrected reprint of the 1985 original}, Dover Publications, Inc., Mineola, NY, 1997.

%\bibitem{Sarason}  D.\ Sarason, {\em Sub-Hardy Hilbert spaces in the unit disk}, University of Arkansas Lecture Notes in the Mathematical Sciences \textbf{10},  John Wiley \& Sons,  New York,  1994.


\bibitem{SarkarLAA} J.\ Sarkar, {\em Wold decomposition for doubly commuting isometries}, Linear Algebra Appl.
\textbf{445} (2014), 289--301.

\bibitem{Sarkar2015}  J.\ Sarkar, {\em Operator theory on symmetrized bidisc}, Ind.\ U.\ Math.\ J. \textbf{64} no.\ 3 (2015), 847--873.

\bibitem{SSW} J.\ Sarkar, A.\ Sasane, and B.\ Wick, {\em Doubly commuting submodules of the Hardy module over polydiscs}, Studia Math.\ \textbf{27901} (2013), 179--192.


\bibitem{SauNYJM} H. Sau, {\em {A note on tetrablock contractions}}, New York J. Math. {\bf 21} (2015) 1347-1369.

\bibitem{sauAndo} H. Sau, {\em{And\^{o} dilations for a pair of commuting contractions: two explicit constructions and functional models}}, arXiv:1710.11368 [math.FA].

\bibitem{Schaffer} J.J. Sch\"affer, {\em On unitary dilations of contractions}, Proc. Amer. Math. Soc. 6 (1955), 322. MR 16,934c.

\bibitem{SY}  M.\ Seto and R.\ Yang, {\em Inner sequence based invariant subspaces in $H^2(\bD^2)$}, Proc.\ Amer.\ Math.\ Soc. \textbf{135} no.\ 8
(2017), 2519--2526.

%\bibitem{SV} Shamovich, Eli; Vinnikov, Victor, {\em  Dilations of semigroups of contractions through vessels}, Integral Equations Operator Theory 87 (2017), no. 1, 45 - 80.

\bibitem{Slo1980} M.\ S\l oci\'nski, {\em On the Wold-type decomposition of a pair of commuting isometries},
Annales Polonici Mathematici XXXVII (1980), 255--262.

%\bibitem{Slo1985} M.\ S\l oci\'nski, {\em Models for doubly commuting contractions},  Annales Polonici Mathematici XLV (1985), 23--42.

%\bibitem{Slo1980} M.\ S\l oci\'nski, {\em On the Wold-type decomposition of a pair of commutative isometries}, Annales Polonici Mathematici
%XXXVII (1980), 255--262.

%\bibitem{Slo1985} M.\ S\l oci\'nski, {\em Models for doubly commuting contractions}, Annales Polonici Mathematici
%XLV (1985), 23--42.

%\bibitem{Suciu} I.\ Suciu, {On the semigroups of isometries}, Studia Math.\ \textbf{30} (1968), 101--110.

\bibitem{sz-nagy}B.  Sz.-Nagy, {\em{Sur les contractions de l'espace de Hilbert}}, Acta Sci. Math. \textbf{15} (1953), 87-92.

\bibitem{NFintertwine}  B.\ Sz.-Nagy and C.\ Foias, {\em  On the structure of intertwining operators}, Acta Sci.\ Math.\ (Szeged)
\textbf{35} (1973), 225--254.

\bibitem{SzNF-RegularFactor} B. Sz.-Nagy, C. Foias, {\em{Regular Factorizations of Contractions}}, Proc. Amer. Math. Soc. 43(1974), 91-93.

\bibitem{Nagy-Foias} B.~Sz.-Nagy, C.~ Foias, and H.~Bercovici,  K$\acute{\text{e}}$rchy, L. {\em{Harmonic Analysis of Operators on Hilbert space}}, Revised and enlarged edition, Universitext, Springer, New York, 2010.

\bibitem{Timotin} D.~Timotin, {\em Regular dilations and models for multicontractions}, Indiana Univ.~Math.~J.~\textbf{47} no.3 (1993), 671 - 684.

%\bibitem{Varopoulos} Varopoulos, N.Th., {\em On an inequality of von Neumann and an application of the metric theoy of tensor producs to operators theory}, J.~Funct.~Anal. \textbf{16} (1974), 83-100.

%\bibitem{Vinnikov-MSRI} Vinnikov, Victor, {\em Commuting operators and function theory on a Riemann surface}, Holomorphic Spaces (Berkeley, CA, 1995), 445-476, Math.~Sci.~Res.~Inst.~Publ. \textbf{33}, Cambridge Univ. Press, Cambridge, 1998.

\bibitem{vonN-Wold} J. von. Neumann, {\em Allgemeine Eigenwerttheorie Hermitescher Funktionaloperatoren}, Math. Ann. \textbf{102} (1930), 49-131.

%%\bibitem{von-N} J. von Neumann, {\em{Eine Spectraltheorie f$\ddot{\text u}$r allgemeine Operatoren eines unit$\ddot{\text a}$ren Raumes}}, Math. Nachr. 4 (1951), 258-281.

\bibitem{Wold} H. Wold, {\em{A study in the analysis of stationary time series,}}, 2nd edition, Stockholm, 1954.

\bibitem{Yang-survey} R.\ Yang, {\em A Brief Survey of Operator Theory in $H^2({\mathbb D}^2)$}, Lecture Notes for 2018 Summer School
Dalian University of Technology, India, available at arXiv:1810.00133v4.

\end{thebibliography}

\printindex

\end{document}